 \documentclass[a4paper,11pt]{article}
 \pagestyle{plain}
 \setlength{\oddsidemargin}{12pt}
 \setlength{\evensidemargin}{12pt}
 \setlength{\topmargin}{0pt}
 \setlength{\textwidth}{15cm}
 \setlength{\textheight}{21.5cm}
 \setlength{\parindent}{0.5cm}
 \setlength{\parskip}{1ex plus 0.2ex minus0.2ex}

\synctex=1
\usepackage{pdfsync}

 \usepackage[plainpages=false]{hyperref}
 \usepackage{amsfonts,amsmath,amssymb,amsthm}
 \usepackage{latexsym,lscape,rawfonts,mathrsfs}
 \usepackage{mathtools}
 \usepackage[utf8]{inputenc}
 \usepackage[overload]{empheq}
 \usepackage{cases}

 \usepackage[dvips]{color}
 \usepackage{multicol}


 \usepackage[all]{xy}
 \usepackage{eufrak}
 \usepackage{makeidx}
 \usepackage{graphicx,psfrag}
 \usepackage{pstool}
 \usepackage{float}

 \usepackage{array,tabularx}

 \usepackage{setspace}

 \usepackage{appendix}
 


\usepackage{txfonts}


 
 \newcommand{\C}{\ensuremath{\mathbb{C}}}
 \newcommand{\D}[2]{\ensuremath{ \frac{\partial{#1}}{\partial{#2}}}}

 \newcommand{\R}{\ensuremath{\mathbb{R}}}
 
 \newcommand{\CP}{\ensuremath{\mathbb{CP}}}

 \newcommand{\ba}{\begin{align*}}
 \newcommand{\ea}{\end{align*}}


 \DeclareMathOperator{\diam}{diam}


 \newcommand{\norm}[2]{{ \ensuremath{\left\|} #1 \ensuremath{\right\|}}_{#2}}

 \makeatletter
 \def\ExtendSymbol#1#2#3#4#5{\ext@arrow 0099{\arrowfill@#1#2#3}{#4}{#5}}
 
 \makeatother

 \makeatletter
 \def\ExtendSymbol#1#2#3#4#5{\ext@arrow 0099{\arrowfill@#1#2#3}{#4}{#5}}
 \newcommand\longright[2][]{\ExtendSymbol{-}{-}{\rightarrow}{#1}{#2}}
 \makeatother

 \definecolor{orange}{rgb}{1,0.5,0}
 \definecolor{brown}{rgb}{0.48,0.33,0.19}
 \definecolor{miao}{cmyk}{0.5,0,0.2,0.2}
 \definecolor{qiao}{gray}{0.96}


\newtheorem{prop}{Proposition}[section]

\newtheorem{proposition}[prop]{Proposition}

\newtheorem{theorem}[prop]{Theorem}

\newtheorem{lemma}[prop]{Lemma}
\newtheorem{claim}[prop]{Claim}

\newtheorem{corollary}[prop]{Corollary}

\newtheorem{remark}[prop]{Remark}

\newtheorem{definition}[prop]{Definition}

\newtheorem{conjecture}[prop]{Conjecture}
\newtheorem{question}[prop]{Question}

\numberwithin{equation}{section}


\title{The local entropy along Ricci flow\\ \large---Part B: the pseudo-locality theorems}
  
\author{Bing Wang}
\date{}

\begin{document}
\maketitle

\begin{abstract}
 We localize the entropy functionals of G. Perelman and generalize his no-local-collapsing theorem and pseudo-locality theorem.
 Our generalization is technically inspired by further development of Li-Yau estimates along the Ricci flow. 
 It has various applications, including to show the continuous dependence of the Ricci flow with respect to the initial metric 
 in Gromov-Hausdorff topology with Ricci curvature bounded below,  and to show the compactness of the moduli of K\"ahler manifolds
 with bounded scalar curvature and a rough locally almost Euclidean condition.  
 \end{abstract}

\tableofcontents

\section{Introduction}

In the celebrated work~\cite{Pe1}, Perelman introduced his famous functionals $\boldsymbol{\mu}$ and $\boldsymbol{\nu}$, which are monotone along the Ricci flow.
These functionals can be naturally localized(cf.~\cite{ROS},~\cite{QZhbook} and~\cite{BWang17local}). 
Inspired by further developing the seminal estimates of Li-Yau~\cite{LiYau},
the author~\cite{BWang17local} investigated the evolution of localized functionals along the Ricci flow and derived effective monotonicity formulas for them. 
Based on these formulas, the author improved the no-local-collapsing theorem of Perelman to the following version. 

\begin{theorem}[\textbf{Improved version of no-local-collapsing}, cf. Theorem 1.1 of~\cite{BWang17local}] 
  For every $A>1$ there exists $\kappa=\kappa(m,A)>0$ with the following property. 
  Suppose $\{(M^{m}, g(t)), 0 \leq t \leq  r_0^2\}$ is a Ricci flow solution such that 
  \begin{align}
   r_0^2 |Rm|(x,t) \leq m^{-1}, \quad \forall \; x \in B_{g(0)}(x_0, r_0),  \; 0 \leq t \leq r_0^2;   \quad   r_0^{-m} \left| B_{g(0)}(x_0, r_0) \right|_{dv_{g(0)}} \geq A^{-1}.  \label{eqn:ML27_3}
  \end{align}
  Then we have
  \begin{align}
    r^{-m} \left|B_{g(t_0)}(x,r) \right|_{dv_{g(t_0)}} \geq \kappa      \label{eqn:ML30_1}
  \end{align}
  whenever $A^{-1}r_0^2 \leq t_0 \leq r_0^2$, $0<r \leq r_0$, and $B_{g(t_0)}(x,r) \subset B_{g(t_0)}(x_0,Ar_0)$ is a geodesic ball satisfying $r^2R(\cdot, t_0) \leq 1$. 
\label{thmin:ML14_1}
\end{theorem}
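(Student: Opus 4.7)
The plan is to follow Perelman's entropy-based proof of $\kappa$-non-collapsing, but with his global $\boldsymbol{\mu}$-functional replaced by the local version developed in Part A~\cite{BWang17local}. At the final time $t_0$, set $\tau = r^2$ and choose a bump $\phi$ supported in $B_{g(t_0)}(x,r)$ with $|\nabla\phi| \lesssim r^{-1}$; let $u = (4\pi\tau)^{-m/2} e^{-f}$ be normalized so that $\int u\, dv_{g(t_0)} = 1$. Perelman's original test-function calculation, using only the pointwise bound $r^2 R(\cdot, t_0) \leq 1$ on the support of $\phi$, yields an upper bound of the form
\[
\mathcal{W}(g(t_0), f, \tau) \;\leq\; C(m) + \log\!\left( r^{-m}\,|B_{g(t_0)}(x,r)|_{dv_{g(t_0)}}\right).
\]
Hence it is enough to produce a matching lower bound $\mathcal{W}(g(t_0), f, \tau) \geq -C(m,A)$; the desired conclusion \eqref{eqn:ML30_1} then follows with $\kappa = e^{-C(m,A)}$.

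To obtain this lower bound I would evolve $u$ backward in time via the conjugate heat equation from $t_0$ down to $t = 0$, and apply the effective monotonicity of the localized $\boldsymbol{\mu}$-functional proved in Part A. Two geometric inputs feed into this. First, the inclusion $B_{g(t_0)}(x,r) \subset B_{g(t_0)}(x_0,Ar_0)$, combined with forward distance-distortion estimates deduced from the bounded-curvature hypothesis in \eqref{eqn:ML27_3} near $x_0$ at time $0$, guarantees that the backward conjugate heat evolution of $u$ remains, modulo exponentially small tails, inside $B_{g(0)}(x_0, r_0)$; the constraint $t_0 \geq A^{-1} r_0^2$ supplies enough diffusion time for the Gaussian tail estimates to kick in. Second, on $B_{g(0)}(x_0, r_0)$ the metric is quasi-Euclidean: the curvature is bounded by $m^{-1} r_0^{-2}$ and Bishop--Gromov upgrades the single-scale volume-ratio lower bound in \eqref{eqn:ML27_3} to a lower bound at every scale up to $r_0$. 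On such a ball the localized $\boldsymbol{\mu}$ admits a lower bound $-C(m,A)$ by direct comparison with the Euclidean entropy plus a small curvature correction, closing the chain.

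The main obstacle is the localization error in the monotonicity step. Perelman's global $\mathcal{W}$-monotonicity is already a delicate computation, and introducing a spatial cut-off produces commutator and boundary terms under $\partial_t$; one must show that these have the right sign or are absorbed by manifestly exponentially small quantities. This is where the Li--Yau-type differential Harnack estimates along the Ricci flow enter: they yield effective two-sided Gaussian bounds on the conjugate heat kernel, which force the cut-off error terms in the monotonicity formula to decay exponentially in the distance to $x_0$. The same mechanism dictates why $\kappa$ must depend on $A$: as $A$ grows, $B_{g(t_0)}(x,r)$ may sit deeper inside the region where no a priori curvature control is assumed, so larger distance tails must be absorbed into the error.
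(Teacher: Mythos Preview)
This theorem is not proved in the present paper: it is quoted from the companion paper Part~A (reference~\cite{BWang17local}), as indicated by the ``cf.~Theorem~1.1 of~\cite{BWang17local}'' in its header. So there is no proof here to compare against directly.

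That said, your sketch is broadly compatible with the machinery from Part~A that is summarized in Section~\ref{sec:pre} of this paper. Reading between the lines, the intended argument is: (i) the curvature and volume assumptions~\eqref{eqn:ML27_3} on $B_{g(0)}(x_0,r_0)$ yield a lower bound on the local functional $\boldsymbol{\mu}(\Omega_0,g(0),\tau)$ for suitable $\tau$ (this is the content of Lemma~\ref{lma:MJ25_1} and related estimates); (ii) the almost-monotonicity formula, Theorem~\ref{thm:CA02_3}, propagates this lower bound forward to a lower bound on $\boldsymbol{\nu}(\Omega_{t_0}',g(t_0),\tau_{t_0})$, the Ricci-bound hypothesis~\eqref{eqn:MJ17_11} being supplied by the $|Rm|$-bound in~\eqref{eqn:ML27_3}; (iii) Theorem~\ref{thm:CF21_3} converts the $\boldsymbol{\nu}$-lower-bound plus the scalar bound $r^2 R\le 1$ into the volume-ratio estimate~\eqref{eqn:ML30_1}.

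Your proposal captures this chain, but with one difference of emphasis worth flagging. You describe the localization in terms of evolving a single test function $u$ backward by the conjugate heat equation and controlling its Gaussian tails so that it stays inside $B_{g(0)}(x_0,r_0)$. The actual framework works instead with the \emph{local} functionals $\boldsymbol{\mu}$, $\boldsymbol{\nu}$ defined via Dirichlet boundary conditions (see~\eqref{eqn:MJ16_a}--\eqref{eqn:MJ16_e}), and the localization error is the explicit $-A^{-2}$ in~\eqref{eqn:MJ17_9}, not a tail estimate on a freely-evolving heat kernel. This matters because the curvature control in~\eqref{eqn:ML27_3} is only on $B_{g(0)}(x_0,r_0)$, not on all of $M$, so unconstrained two-sided Gaussian bounds on the conjugate heat kernel are not available; the Dirichlet formulation sidesteps this by never letting mass leave the domain in the first place. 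Your final paragraph already anticipates that the localization step is the delicate one, but the resolution is structurally different from what you describe.
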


Theorem~\ref{thmin:ML14_1} improves the no-local-collapsing theorem of Perelman(cf. Theorem 8.2 of~\cite{Pe1}).
Here we replace the requirement of space-time condition by a time-slice condition around $(x,t)$. 
Namely, we only need $R(\cdot, t) \leq r^{-2}$ in the ball $B_{g(t)}(x,r)$ to conclude non-collapsing of $B_{g(t)}(x,r)$ whenever $(x,t)$ is not very far away from $(x_0, 0)$. 
However, we still require space-time condition (\ref{eqn:ML27_3}) around the base $(x_0, 0)$.
The condition (\ref{eqn:ML27_3}) is not optimal and can be replaced by Ricci curvature bound.  In fact, in Theorem 7.2 of~\cite{BWang17local}, we assumed a much weaker condition
\begin{align}
  t |Rc|(x,t)<(m-1)A,  \quad \forall \; x \in B_{g(t)}(x,\sqrt{t}), \quad 0<t<T   \label{eqn:RH17_6}
\end{align}
to play the same role as (\ref{eqn:ML27_3}). 
Although the condition (\ref{eqn:ML27_3}) can be obtained in the K\"ahler setting under  many circumstances, it looks artificial in the Riemannian setting. 
A condition depending only on the initial metric $g(0)$ should be more natural.  This leads us to develop the following pseudo-locality theorem, 
which improves the known pseudo-locality theorems(cf. Theorem 10.1 and Corollary 10.3 of Perelman~\cite{Pe1}, also Proposition 3.1 and Theorem 3.1 of Tian-Wang~\cite{TiWa}).

 \begin{figure}[H]
 \begin{center}
 \psfrag{A}[c][c]{\color{green}{Locally almost Euclidean initial time slice}}
 \psfrag{A1}[c][c]{\color{green}{$B_{g(0)}(x_0, r_0)$}}
 \psfrag{B}[l][l]{\color{blue}{Nearby space-time with bounded geometry}}
 \psfrag{C}[c][c]{$M$}
 \psfrag{D}[c][c]{$t$}
 \psfrag{E}[c][c]{$t=(\delta r_0)^2$}
 \psfrag{F}[c][c]{$(x_0, 0)$}
 \includegraphics[width=0.5 \columnwidth]{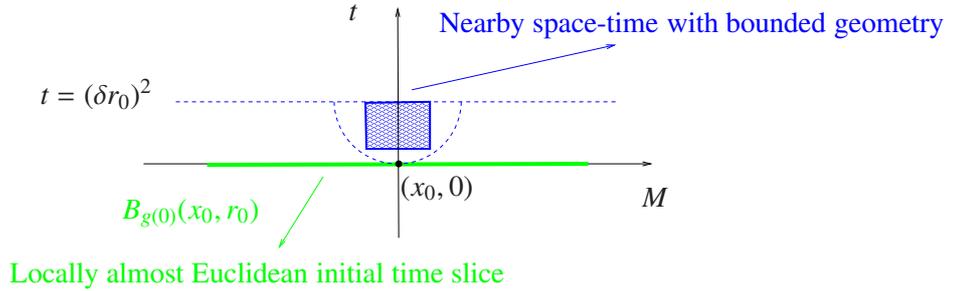}
 \caption{Existence of space-time with bounded geometry}
 \label{fig:step2}
 \end{center}
 \end{figure}

\begin{theorem}[\textbf{Improved version of pseudo-locality}]

For each $\alpha \in (0, \frac{1}{100m})$,  there exists $\delta=\delta(\alpha,m)$ with the following properties. 

Suppose $\left\{(M^m, g(t)),  0 \leq t \leq T\right\}$ is a Ricci flow solution satisfying
\begin{align}
 \inf_{0<t \leq T} \boldsymbol{\mu}\left(B_{g(0)}\left(x_0, \delta^{-1} \sqrt{t} \right), g(0), t \right) \geq -\delta^2. 
\label{eqn:MK30_1} 
\end{align}
Then for each $t \in (0,T]$ and $x \in B_{g(t)}\left(x_0, \alpha^{-1}\sqrt{t} \right)$, we have
\begin{align}
 & t|Rm|(x,t) \leq \alpha,   \label{eqn:ML28_1}\\
 & \inf_{\rho \in (0, \alpha^{-1} \sqrt{t})} \rho^{-m}\left|B_{g(t)}\left(x, \rho \right) \right|_{dv_{g(t)}} \geq (1-\alpha) \omega_m ,  \label{eqn:ML28_2}\\
 & t^{-\frac{1}{2}} \cdot  inj(x,t) \geq \alpha^{-1}. \label{eqn:RH27_4}   
\end{align}
In particular, (\ref{eqn:ML28_1}),(\ref{eqn:ML28_2}) and (\ref{eqn:RH27_4}) hold whenever the following condition is satisfied: 
\begin{align}
  \boldsymbol{\nu}\left( B_{g(0)}\left(x_0, \delta^{-1}\sqrt{T} \right), g(0), T \right) \geq -\delta^{2}.
  \label{eqn:RH17_4}
\end{align}
\label{thmin:ML14_2}
\end{theorem}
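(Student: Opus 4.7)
The plan is to follow the classical contradiction-plus-point-picking strategy of Perelman (Theorem 10.1 of~\cite{Pe1}), but with Perelman's global entropy replaced by the localized entropy $\boldsymbol{\mu}(B_{g(0)}(x_0,\cdot),g(0),\cdot)$ whose monotonicity along the flow was established in~\cite{BWang17local}. The novelty is not in the topology of the argument but in verifying that the localized entropy is strong enough to play exactly the same role as Perelman's global one after a parabolic rescaling.

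First I would assume the three conclusions fail and, for a fixed $\alpha$, construct a sequence of Ricci flows $\{(M_k^m, g_k(t))\}$ with basepoints $(x_{0,k},0)$ and $\delta_k \downarrow 0$ satisfying (\ref{eqn:MK30_1}) but producing a point $(y_k, s_k)$ with $y_k \in B_{g_k(s_k)}(x_{0,k},\alpha^{-1}\sqrt{s_k})$ and $s_k |Rm|(y_k,s_k) > \alpha$. Applying Perelman's point-picking lemma inside the parabolic neighborhood $B_{g_k(0)}(x_{0,k},\delta_k^{-1}\sqrt{s_k}) \times [0,s_k]$, I replace $(y_k,s_k)$ by a point $(z_k,\tau_k)$ where $|Rm|(z_k,\tau_k)=Q_k^2$ is the largest curvature in a definite backward-parabolic neighborhood, and where $(z_k,\tau_k)$ is not too far from $(x_{0,k},0)$ on the scale $Q_k^{-1}$. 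Rescaling by $Q_k$ gives a sequence $\tilde g_k(t)$ with $|Rm| \leq 4$ on a parabolic ball of size comparable to $1$ around the new base, and $|Rm|(\tilde z_k,0)=1$.

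The key step is to transport the entropy hypothesis under the rescaling. By the scale invariance of $\boldsymbol{\mu}$ and the hypothesis (\ref{eqn:MK30_1}) at scale $t=\tau_k$, the localized $\boldsymbol{\mu}$-entropy of the initial slice on the very large rescaled ball $B_{\tilde g_k(0)}(\tilde x_{0,k},\delta_k^{-1})$, tested against scale $\sim 1$, is at least $-\delta_k^2$. Using the localized monotonicity formula from~\cite{BWang17local} along $\tilde g_k(t)$, this lower bound is preserved up to the rescaled time of $(\tilde z_k,0)$. Extracting a Cheeger-Hamilton smooth limit Ricci flow $(M_\infty, g_\infty(t))$ (which exists because curvature is bounded and non-collapsing on the limit is supplied by Theorem~\ref{thmin:ML14_1} applied to $\tilde g_k$), I obtain a nontrivial complete Ricci flow whose localized $\boldsymbol{\mu}$-entropy at every finite scale is $\geq 0$, hence identically zero. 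The rigidity case of the $\boldsymbol{\mu}$-functional then forces $(M_\infty,g_\infty(0))$ to be flat Euclidean space $\R^m$, contradicting $|Rm|(\tilde z_\infty,0)=1$. This proves (\ref{eqn:ML28_1}).

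With the curvature bound (\ref{eqn:ML28_1}) in hand, the volume bound (\ref{eqn:ML28_2}) follows by combining the local $\boldsymbol{\mu}$-lower bound with the standard Bishop-type argument: a contradiction sequence of balls with $\rho^{-m}|B_{g_k(t_k)}(x_k,\rho_k)| < (1-\alpha)\omega_m$ would again produce a smooth limit flow on which the localized entropy vanishes, forcing equality in the Euclidean isoperimetric/log-Sobolev inequality and thus $\R^m$, contradicting the volume gap. The injectivity radius estimate (\ref{eqn:RH27_4}) is then immediate from (\ref{eqn:ML28_1}), (\ref{eqn:ML28_2}) and Cheeger-Gromov-Taylor. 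The main obstacle I expect is the rescaling step: the hypothesis (\ref{eqn:MK30_1}) is a one-parameter family indexed by $t$, and one must carefully match the scale $\delta^{-1}\sqrt{\tau_k}$ to the blow-up factor $Q_k$ so that the localized entropy bound survives on a ball whose rescaled radius tends to infinity; this is where the uniform quantifier over $t$ in (\ref{eqn:MK30_1}) is essential, and also where the deduction of the $\boldsymbol{\nu}$-version (\ref{eqn:RH17_4}) becomes cleaner, since $\boldsymbol{\nu}$ at the single scale $T$ automatically dominates $\boldsymbol{\mu}$ at every scale $t \leq T$.
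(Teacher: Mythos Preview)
Your outline captures the right ingredients --- contradiction, point selection, localized entropy monotonicity, rigidity of the Euclidean limit --- and your closing remark about why the $\inf_{t}$ quantifier in (\ref{eqn:MK30_1}) is exactly what allows the rescaling to work is on point. But two places in the sketch are genuinely thinner than what the paper requires.

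First, the localized monotonicity formula you invoke from~\cite{BWang17local} (recorded here as Theorem~\ref{thm:CA02_3}) is \emph{not} unconditional: it needs the Ricci upper bound $t\cdot Rc \leq (m-1)\alpha_m A$ on $B_{g(t)}(x_0,\sqrt{t})$ for all $t$ up to the time in question. You write ``this lower bound is preserved'' without saying where that Ricci bound comes from; in fact it comes from the conclusion of the theorem itself, and this circularity has to be broken by a first-failure-time argument. The paper does this explicitly: Theorem~\ref{thm:MJ22_1} assumes (\ref{eqn:MJ23_14}) holds on $[0,r_0^2]$ and fails first at $t=r_0^2$, so that (\ref{eqn:MJ23_1}) is available and Proposition~\ref{prn:CA04_1} can be fed into the monotonicity. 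Your sketch should make the same move.

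Second, the paper deliberately does \emph{not} use Perelman's iterative point-picking. Instead it maximizes the scale-invariant auxiliary function $F(x,t)=\sqrt{|Rm|}\bigl(2K\sqrt{t}-d_{g(t)}(x,x_0)\bigr)$ over the parabola-shaped domain $\mathcal{D}$ (Proposition~\ref{prn:ML23_1}). The point of this choice, emphasized in Remark~\ref{rmk:RJ04_1}, is that the maximizer automatically stays in the ``central'' part $B_{g(t)}(x_0,2K\sqrt{t})$, so that the entropy hypothesis --- which is stated on balls centered at $x_0$ with $t$-dependent radius --- continues to cover the selected point after rescaling. Perelman's point-picking gives a point at controlled $Q_k^{-1}$-distance from the \emph{previous} bad point, not from $x_0$, and your claim that $(z_k,\tau_k)$ is ``not too far from $(x_{0,k},0)$ on the scale $Q_k^{-1}$'' needs justification that you have not supplied. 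Relatedly, the paper proceeds in two stages --- a rough bound $t|Rm|\leq\alpha_m A$ on the small ball $B(x_0,\sqrt{t})$ first (Theorem~\ref{thm:MJ22_1}), then the refined bound on the large ball $B(x_0,\alpha^{-1}\sqrt{t})$ (Theorem~\ref{thm:MJ24_1}) --- and states after the latter proof that the intermediate step is ``technically necessary'' to obtain curvature control on a domain large enough for the gap lemma (Proposition~\ref{prn:MJ24_1}) to apply. Your one-shot argument would need to confront the same size-of-domain issue.
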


The inequality  (\ref{eqn:MK30_1}) can be interpreted  as a new ``almost Euclidean" condition for the domain $B_{g(0)}\left(x_0, \delta^{-1} \sqrt{T} \right)$ equipped with initial metric $g(0)$. 
The functionals $\boldsymbol{\mu}$ and $\boldsymbol{\nu}$ are the localized functionals of Perelman(cf.(\ref{eqn:MJ16_a})-(\ref{eqn:MJ16_2}) for precise definitions).  
The left hand side of (\ref{eqn:MK30_1}) is completely determined by the geometry of  $B_{g(0)}\left(x_0, \delta^{-1} \sqrt{T} \right)$ with initial metric $g(0)$. 
In particular, (\ref{eqn:MK30_1}) holds whenever the ball $B_{g(0)}\left(x_0, \delta^{-1} \sqrt{T} \right)$ satisfies one of the following conditions:
either the ball  has small scalar curvature lower bound and almost Euclidean isoperimetric constant, or the ball has small Ricci curvature lower bound and almost Euclidean volume ratio. 
Therefore, it is not hard to see(cf. Corollary~\ref{cly:MJ23_1} and Corollary~\ref{cly:MJ23_2}) that (\ref{eqn:MK30_1}) is weaker than the conditions in the original version and in Tian-Wang's version of
Perelman's pseudo-locality theorem.  
While it is known now that almost non-negative Ricci curvature and almost Euclidean volume ratio together imply almost Euclidean isoperimetric constant~\cite{CaMo},
the proof is complicated and involves the deep theory of RCD spaces and optimal transportation. 
Therefore, it is still worthwhile to provide a more self-contained and elementary proof to unify the pseudo-locality theorems of Perelman~\cite{Pe1} and Tian-Wang~\cite{TiWa}.  
More importantly, the local entropy has an obvious advantage over the isoperimetric constant: the local entropy is almost monotone along the Ricci flow(cf. Theorem~\ref{thm:CA02_3}), however, 
the monotonicity of isoperimetric constant along the Ricci flow is not clear.  This advantage makes Theorem~\ref{thmin:ML14_2} an effective tool to estimate the lifespan of the Ricci 
flow solution(cf. Corollary~\ref{cly:CK21_1} and Corollary~\ref{cly:RH27_1}). 

In Theorem~\ref{thmin:ML14_2}, if we assume $r_0=\delta^{-1} \sqrt{T}$ and the ball $B_{g(0)}(x_0,r_0)$ is ``almost Euclidean", then we obtain curvature and volume estimate around base point $x_0$
on the time period $(0, (\delta r_0)^2]$, as illustrated in Figure~\ref{fig:step2}. Consequently, the condition (\ref{eqn:RH17_6}) is naturally implied by the almost Euclidean initial condition.  
If one allows time-shifting and rescaling, then even the much stronger condition (\ref{eqn:ML27_3}) can be obtained by a rough almost Euclidean initial condition like (\ref{eqn:MK30_1}).
Therefore, even in the Riemannian setting, the condition in Theorem~\ref{thmin:ML14_1} can be deduced from local information of the initial metric.

Theorem~\ref{thmin:ML14_2} can be regarded as a bridge between the Ricci flow theory and the structure theory of Cheeger-Colding concerning 
the Riemannian manifolds with Ricci curvature bounded below.
Both theories are of fundamental importance and have essentially changed the landscape of Riemannian geometry in last two decades.
In the resolution of the Hamilton-Tian conjecture on Fano K\"ahler Ricci flow(cf.~\cite{CW17A},~\cite{CW17B} and~\cite{CW19}) by Chen and the author,  
it is a key point to further develop the Cheeger-Colding theory to obtain a compactness theorem for non-collapsed Calabi-Yau spaces with mild singularities.
Such a compactness result then played a crucial role in showing the convergence of the K\"ahler Ricci flow,  via the Bergman Kernel and Perelman's reduced volume.
The compactness theory for the moduli space of gradient shrinking solitons-the singularity model spaces for Ricci flows-has also been developed in a series of
works~\cite{LiLiWang18},~\cite{HuangLiWang18} and~\cite{YLBW19}. 
Along the same line,  we shall in this paper discuss further applications of the Cheeger-Colding theory in the study of Ricci flows.
On the other hand, we shall also see how one can exploit Ricci flow techniques to improve the understanding of the Cheeger-Colding theory.  
The following theorem directly relates the Ricci flow to the Cheeger-Colding theory. 

\begin{theorem}[\textbf{Continuous dependence on initial data in the Gromov-Hausdorff topology}]
  Suppose that  $\mathcal{N}=\{(N^{m}, h(t)), 0 \leq t \leq T\}$ is a (unnormalized, volume normalized, or $c$-normalized) Ricci flow solution initiated from $h(0)=h$, 
  a smooth Riemannian metric on a closed manifold $N^{m}$. 
  For each $\epsilon$ small, there exists $\delta=\delta(\mathcal{N}, \epsilon)$ with the following properties. 

  Suppose $(M^{m}, g)$ is a Riemannian manifold satisfying
  \begin{align}
    Rc>-(m-1)\epsilon^{-1},  \quad d_{GH}\left\{ (M,g), (N,h) \right\}<\delta. 
    \label{eqn:RG05_4}
  \end{align}
  Then the (unnormalized, volume normalized, or $c$-normalized) Ricci flow initiated from $(M,g)$ exists on $[0, T]$. 
  Furthermore, there exists a family of diffeomorphisms $\left\{ \Phi_{t}: N \to M, \; \epsilon \leq t \leq T \right\}$ such that
 \begin{align}
   \sup_{t \in [\epsilon, T]} \norm{\Phi_{t}^{*}g(t)-h(t)}{C^{[\epsilon^{-1}]}(N,h(t))} < \epsilon.   \label{eqn:RG05_2}
 \end{align}
 \label{thm:RG05_3}
\end{theorem}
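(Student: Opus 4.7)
The plan is to combine Theorem~\ref{thmin:ML14_2} with the classical smooth dependence of Ricci flow on initial data. A routine rescaling reduces the three normalizations to the unnormalized case, so I assume this from now on. The argument splits into four steps: verify the entropy hypothesis~(\ref{eqn:MK30_1}) for every point of $(M,g)$; invoke Theorem~\ref{thmin:ML14_2} to obtain short-time existence together with the pointwise bounds $|Rm|(\cdot,t) \leq \alpha/t$ and $inj(\cdot,t) \geq \alpha^{-1}\sqrt{t}$; deduce smooth closeness of $g(t_0)$ to $h(t_0)$ at a definite positive time $t_0$; finally propagate this smooth closeness to $[t_0, T]$ by standard PDE dependence.

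For the first step, since $h$ is smooth on a closed manifold, there exists $r_1 = r_1(\mathcal{N})>0$ with the property that
\begin{align*}
  \inf_{0 < s \leq r_1^2}\boldsymbol{\mu}\!\left(B_{h}(y, \delta^{-1}\sqrt{s}), h, s\right) \geq -\delta^2/2
\end{align*}
for every $y \in N$. A Ricci lower bound together with $\delta$-GH-closeness to $(N,h)$ yields almost Euclidean volume ratios on $(M,g)$ at scales up to $r_1$. By the Cavalletti-Mondino isoperimetric inequality mentioned in the introduction this upgrades to an almost Euclidean isoperimetric constant, which in turn forces a lower bound on the local $\boldsymbol{\mu}$ functional. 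Choosing $\delta$ small in terms of $\mathcal{N}$ and $\epsilon$, the hypothesis~(\ref{eqn:MK30_1}) is satisfied for every $x_0 \in M$ on some definite interval $[0,T_1]$ with $T_1=T_1(\mathcal{N})$, so Theorem~\ref{thmin:ML14_2} immediately gives short-time existence from $(M,g)$ with the stated curvature, volume and injectivity radius bounds on $(0,T_1]$.

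Set $t_0:=\min\{\epsilon/2, T_1/2\}$. In Step 3 the uniform bounds from Step 2 place $(M, g(t_0))$ into Hamilton's compactness class, so a contradiction argument along a sequence $\delta_k\to 0$ extracts a smooth limit flow on $[0, T_1]$; this limit flow agrees with $\mathcal{N}$ at $t=0$ in the GH sense, and since both flows are smooth on $(0, T_1]$ with controlled geometry, uniqueness of the smooth Ricci flow (combined with the backward continuity of $g(t)$ at $t=0^{+}$ granted by pseudo-locality) identifies it with $\mathcal{N}$ on $[0, t_0]$. This yields a diffeomorphism $\Phi_{t_0}:N \to M$ making $\Phi_{t_0}^* g(t_0)$ arbitrarily $C^k$-close to $h(t_0)$. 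Step 4 then invokes the smooth continuous dependence of solutions to the Ricci flow PDE (Shi's estimates plus a bootstrap over the compact interval $[t_0,T]$ where $h$ has uniform smooth bounds): a sufficiently small $C^k$-perturbation of $h(t_0)$ flows on all of $[t_0,T]$ and remains $C^{[\epsilon^{-1}]}$-close to $h(t)$ there, producing the family $\Phi_t$ and estimate~(\ref{eqn:RG05_2}).

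The main obstacle is Step 1: the local $\boldsymbol{\mu}$ functional is an infimum over test functions, so a lower bound on $(N,h)$ does not automatically transfer to $(M,g)$ under mere metric closeness. The transfer has to be routed through an almost-rigidity result for Ricci lower bounds, either via the isoperimetric inequality cited above or by lifting near-minimizers through approximate $\epsilon$-isometries; either route requires the quantitative geometric input provided by the Cheeger-Colding theory. A secondary difficulty is Step 3, the propagation of GH-closeness from $t=0$ to $t=t_0$, which must be handled by compactness and backward uniqueness rather than by quantitative estimates, since the Gromov-Hausdorff distance is not a priori monotone along the Ricci flow.
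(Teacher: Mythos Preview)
Your Steps 1, 2 and 4 are in the right spirit, but Step 3 contains a genuine gap, and it sits exactly where you yourself flag the ``secondary difficulty''. You propose to identify the subsequential limit flow $(\bar M,\bar g(t))$ on $(0,T_1]$ with $\mathcal N$ by ``compactness and backward uniqueness rather than by quantitative estimates''. This does not work. Compactness produces a limit flow, but to conclude it equals $\mathcal N$ you must show that $(\bar M,\bar g(t))\to (N,h)$ in GH as $t\to 0^+$, and this in turn requires that $d_{GH}\bigl((M_k,g_k(t)),(M_k,g_k(0))\bigr)\to 0$ \emph{uniformly in $k$}. That uniformity is precisely a quantitative distance-distortion estimate between $g_k(0)$ and $g_k(t_0)$. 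Pseudo-locality alone does not grant it: the bound $t|Rm|\le\alpha$ gives one direction ($d_{g(t)}\ge d_{g(0)}-C\sqrt t$ via Hamilton--Perelman), but since $|Rc|$ blows up like $1/t$ and no Ricci lower bound survives to positive time, the reverse inequality (distances do not over-expand) does not follow. Backward uniqueness is irrelevant here: it compares two flows that already agree at a positive time, which is what you are trying to establish.

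The paper closes this gap by a different, quantitative route. The Ricci lower bound at $t=0$ provides a volume-ratio \emph{upper} bound (Bishop--Gromov); the local maximum principle (Theorem~\ref{thm:RD10_1}) makes $R$ almost non-negative along the flow, so the volume element is almost decreasing; combined with the almost-Euclidean volume ratio at $t=t_0$ supplied by pseudo-locality, this forces $\int_0^{t_0}\!\int_\Omega |R|\,dv\,dt$ to be small, i.e.\ the local almost-Einstein condition of Tian--Wang holds. That condition then yields the two-sided distance distortion (Proposition~\ref{prn:RE04_3}, Theorem~\ref{thm:RE05_5}). With it, the GH-approximation at $t=0$ is transported to $t=t_0$ and deformed to an honest diffeomorphism by a center-of-mass argument (Lemma~\ref{lma:PB01_1}), giving $C^0$-closeness of $\tilde G^*g(t_0)$ to $h$ (not $h(t_0)$; Lemma~\ref{lma:PI27_1}). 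Propagation over $[t_0,T]$ is then carried out via the Ricci--DeTurck flow with background $h$ (Lemma~\ref{lma:RF24_1}), which is strictly parabolic; your Step~4 invokes ``smooth continuous dependence of the Ricci flow'', but since the Ricci flow is only weakly parabolic, this step also needs the DeTurck gauge-fixing made explicit.
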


As in~\cite{BWang17local}, we call a solution of the following equation as normalized Ricci flow: 
\begin{align}
    \frac{\partial}{\partial t} g= -2 \left\{ Rc + \lambda(t)g \right\}.    \label{eqn:ML27_2} 
\end{align}
If $\lambda(t)=\frac{1}{m} \fint_{M} R dv$, then the volume is fixed and the flow is called volume normalized.
If $\lambda(t)=c$, then the flow is called $c$-normalized. 
Theorem~\ref{thm:RG05_3} claims that the Ricci flow depends on the initial data continuously in the Gromov-Hausdorff topology, when the Ricci curvature is bounded from below.
The diffeomorphism $\Phi_{t}:N \to M$ in Theorem~\ref{thm:RG05_3} can be constructed explicitly.
Consequently, Theorem~\ref{thm:RG05_3} implies that if $M_i$ converges to $N$ in the Gromov-Hausdorff topology with
Ricci bounded below, then $M_i$ is diffeomorphic to $N$ for large $i$.
Therefore, we provide an alternative approach to construct diffeomorphisms, independent of the Reifenberg method, as done in~\cite{CC}.
In the presence of the local isometry group actions, the diffeomorphisms constructed from the Ricci flows seem natural and respect
the group actions. For further discussions, see~\cite{HuangWang20A},~\cite{HuangWang20B} and~\cite{HRW20A}. 
The conditions in Theorem~\ref{thm:RG05_3} are not optimal and could be modified in various applications. 
In~\cite{HRW20B}, we will develop a version of Theorem~\ref{thm:RG05_3} for certain complete non-compact initial data, and use this to show that
certain torus bundles with almost non-negative Ricci curvature are trivial. 

Theorem~\ref{thm:RG05_3} implies the commutativity of taking Gromov-Hausdorff limit and running the Ricci flow, under the condition that the initial Ricci curvatures
are uniformly bounded below. The proof of Theorem~\ref{thm:RG05_3} relies heavily on the improved estimates (\ref{eqn:ML28_1}), (\ref{eqn:ML28_2}) and (\ref{eqn:RH27_4}) in Theorem~\ref{thmin:ML14_2}. 
Note that the application of pseudo-locality type theorems are heavily studied in the Ricci flow literature(cf.~\cite{CTY},~\cite{SimTop},~\cite{BCRW},~\cite{LaiYi},~\cite{MT2},~\cite{LeeTam},~\cite{DeScSi},~\cite{YuLi20}, etc.).
Typically, the other (than Perelman's) pseudo-locality type theorems rely on stronger curvature or dimensional 
assumptions(e.g. $BK \geq -C$, $PIC_{1} \geq -C$, $\dim \leq 3$, etc.) and the analysis of ancient solutions with non-negativity of the strong curvature assumptions, which are preserved by the Ricci flow. 
A key difference in Theorem~\ref{thm:RG05_3} is that the non-negativity of Ricci curvature is not preserved in dimension higher than three(cf.~\cite{BoWil07},~\cite{Maximo}).

Theorem~\ref{thm:RG05_3} can be used to show the global convergence of Ricci flows near(in the Gromov-Hausdorff topology)stable Einstein manifolds(cf.~Theorem~\ref{thm:RF12_2}). 
Although the Einstein constant we consider could be negative, zero or positive, as discussed in Section~\ref{sec:globalstability}, we shall focus only on the case of positive Einstein constant
in this introduction.  Among other things,  we have the following global convergence theorem for normalized Ricci flows. 

\begin{theorem}[\textbf{Stability of normalized Ricci flow near sphere}]
  For each $m \geq 3$, there is a constant $\delta_{0}=\delta_{0}(m)$ with the following property.
  Suppose $(M^{m}, g)$ is a Riemannian manifold satisfying 
  \begin{align}
    \begin{cases}
    &Rc \geq (m-1)(1-\delta), \\
    &|M|_{dv_{g}}=(m+1)\omega_{m+1}, 
    \end{cases}
    \label{eqn:RG26_1}  
  \end{align}
  where $\omega_{m+1}$ is the volume of unit ball in $\R^{m+1}$, and $\delta\in (0, \delta_{0})$. 
  Then the normalized Ricci flow initiated from $(M^{m}, g)$ exists immortally and converges to a round metric $g_{\infty}$ exponentially fast.
  The limit metric $(M, g_{\infty})$ has constant sectional curvature $1$. 

  Furthermore, for each pair of points $x,y \in M$ satisfying $d_{g}(x,y) \leq 1$, the following distance bi-H\"older estimate holds:
 \begin{align}
   e^{-\psi} d_{g}^{1+\psi}(x,y) \leq d_{g_{\infty}}(x,y) \leq e^{\psi} d_{g}^{1-\psi}(x,y),    \label{eqn:RG26_2} 
 \end{align}
 where $\psi=\psi(\delta|m)$ such that $\displaystyle \lim_{\delta \to 0} \psi(\delta|m)=0$.    
  \label{thm:RG26_3}
\end{theorem}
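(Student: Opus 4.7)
The plan is to combine Cheeger--Colding almost rigidity, the continuous dependence Theorem~\ref{thm:RG05_3}, and a classical stability theorem for the normalized Ricci flow near the round sphere. First I would observe that the hypotheses (\ref{eqn:RG26_1}) force $(M,g)$ to be Gromov--Hausdorff close to the unit round sphere $(S^{m}, g_{S^{m}})$ of constant sectional curvature $1$: since $Rc \geq (m-1)(1-\delta)$ and $|M|_{dv_{g}} = (m+1)\omega_{m+1}$ attains the Bishop--Gromov upper bound, Colding's volume continuity theorem gives $d_{GH}((M,g),(S^{m}, g_{S^{m}})) \to 0$ as $\delta \to 0$. Since $(S^{m}, g_{S^{m}})$ is a stationary solution of the normalized Ricci flow with $\lambda \equiv m-1$, I would apply Theorem~\ref{thm:RG05_3} with reference flow $\mathcal{N} = \{(S^{m}, g_{S^{m}}),\, 0 \leq t \leq T_{0}\}$ for any fixed $T_{0} = T_{0}(m)$ to produce a diffeomorphism $\Phi: S^{m} \to M$ making $\|\Phi^{*}g(T_{0}) - g_{S^{m}}\|_{C^{k}(S^{m})}$ arbitrarily small.

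Once $g(T_{0})$ is $C^{k}$-close to $g_{S^{m}}$ for $k$ sufficiently large, I would invoke the classical stability theorem for the normalized Ricci flow in a $C^{k}$-neighborhood of round metrics (in the spirit of Huisken, Ye, B\"ohm--Wilking, and Brendle--Schoen): the flow extends immortally and converges $C^{\infty}$-exponentially to a round metric $g_{\infty}$ on $M$ of constant sectional curvature $1$, delivering the immortal existence and exponential convergence claims.

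For the bi-H\"older comparison (\ref{eqn:RG26_2}) I would track distance distortion from $t = 0$ via pseudo-locality. Theorem~\ref{thmin:ML14_2}, applied at appropriately small scales where the Gromov--Hausdorff closeness to the smooth sphere verifies (\ref{eqn:MK30_1}), yields $|Rm|(x,t) \leq \alpha/t$ for $t \in (0, T_{0}]$ with $\alpha = \alpha(\delta|m) \to 0$, and hence the pointwise comparison
\begin{align*}
 (t_1/t_2)^{2\alpha}\, g(t_1) \leq g(t_2) \leq (t_2/t_1)^{2\alpha}\, g(t_1), \qquad 0 < t_1 < t_2 \leq T_0.
\end{align*}
Letting $t_1 \to 0^{+}$ along the initial-time continuity of the flow and splicing with the exponential $C^{0}$-convergence $g(t) \to g_{\infty}$ from the previous paragraph, a Simon--Topping-style distance distortion argument then produces $d_{g_{\infty}}(x,y) \leq e^{\psi} d_{g}(x,y)^{1-\psi}$ and its reverse on pairs with $d_{g}(x,y) \leq 1$, with $\psi(\delta|m) \to 0$ as $\delta \to 0$.

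The hard part will be exactly this H\"older comparison near $t = 0$, since the curvature bound $\alpha/t$ from Theorem~\ref{thmin:ML14_2} is not integrable at the origin; the H\"older nature of (\ref{eqn:RG26_2}) arises precisely from this singular integration, and quantitative care is needed to ensure that the exponent $\psi(\delta|m)$ tends to zero. The other steps---Gromov--Hausdorff closeness, invoking Theorem~\ref{thm:RG05_3}, and the classical stability near the round metric---are comparatively standard once these tools are in place.
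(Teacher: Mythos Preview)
Your argument for immortal existence and exponential convergence is correct and matches the paper's approach: Colding's volume rigidity gives Gromov--Hausdorff closeness to $(S^{m},g_{round})$, then the continuous dependence theorem places $g(T_{0})$ in a smooth neighborhood of the round metric where classical stability (Hamilton, Huisken, B\"ohm--Wilking, Brendle--Schoen) takes over.

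The gap is in your derivation of the bi-H\"older estimate (\ref{eqn:RG26_2}). Your pointwise metric comparison
\[
(t_{1}/t_{2})^{2\alpha}\,g(t_{1}) \leq g(t_{2}) \leq (t_{2}/t_{1})^{2\alpha}\,g(t_{1}), \qquad 0<t_{1}<t_{2}\leq T_{0},
\]
obtained by integrating $|Rc|\leq C\alpha/t$, is correct for $t_{1}>0$, but sending $t_{1}\to 0^{+}$ kills both bounds: the lower factor $(t_{1}/t_{2})^{\alpha}\to 0$ and the upper factor $(t_{2}/t_{1})^{\alpha}\to\infty$, so neither inequality survives in any useful form. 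The non-integrability you flag is not the source of a H\"older exponent --- it simply means this route gives nothing at $t_{1}=0$. The H\"older exponent in (\ref{eqn:RG26_2}) does come from integrating $|Rc|\leq\psi/t$, but only over the interval $[\rho^{2},T_{0}]$ where $\rho=d_{g(0)}(x,y)$; the remaining jump from $t=0$ to $t=\rho^{2}$ must be handled by a genuinely different mechanism.

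The paper bridges that jump via the almost-Einstein distance distortion estimate of Tian--Wang (packaged here as Corollary~\ref{cly:CK21_1} and Theorem~\ref{thm:RE05_5}). The ingredients are: (i) the initial Ricci lower bound gives a volume-ratio \emph{upper} bound on $g(0)$-balls via Bishop--Gromov; (ii) the local maximum principle makes scalar curvature almost nonnegative along the flow, so the volume element is almost non-increasing; (iii) pseudo-locality gives almost-Euclidean volume ratio at positive time. Together these force $\int_{0}^{1}\int_{\Omega}|R|\,dv\,dt$ to be small (the ``almost Einstein'' condition), and then Lemma~\ref{lma:PA05_1} controls the distance distortion between $d_{g(0)}$ and $d_{g(\rho^{2})}$ at scale $\rho$. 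This volume-balancing argument is what replaces the divergent curvature integral near $t=0$, and it is not recoverable from the curvature bound $t|Rm|\leq\alpha$ alone.
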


Theorem~\ref{thm:RG26_3} corresponds to certain rigidity phenomenon.  
If $\delta=0$ in (\ref{eqn:RG26_1}), the proof of Bishop-Gromov volume comparison implies that $(M,g)$ is isometric to the round sphere $(S^{m}, g_{round})$ of sectional curvature $1$. 
Consequently, the normalized Ricci flow initiated from $(M,g)$ is static, which means $g_{\infty}=g$.
Therefore, Theorem~\ref{thm:RG26_3} can be regarded as an effective version of the aforementioned rigidity phenomenon. 
In K\"ahler geometry, there is a similar rigidity phenomenon with the model space $(\CP^{n}, g_{FS})$, where  $n=\frac{m}{2}$ is the complex dimension.
Note that the Ricci curvature of $(\CP^{n}, g_{FS})$ is $2(n+1)$. 
In the same vein, replacing $(S^{m},g_{round})$ by $(\CP^{n}, g_{FS})$, one can expect a K\"ahler version of Theorem~\ref{thm:RG26_3}, which is realized by the following theorem.

\begin{theorem}[\textbf{Stability of normalized Ricci flow near projective space}]
  For each integer $n \geq 1$, there exists a constant $\delta_{0}=\delta_{0}(n)$ with the following properties.

Suppose $(M^{n},g,J)$ is a Fano manifold whose metric form is proportional to  $c_1(M,J)$ and satisfies
 \begin{align}
    \begin{cases}
    &Rc \geq 2(n+1)(1-\delta), \\
    &|M|_{dv_{g}}=\Omega_n,
    \end{cases}
    \label{eqn:RG26_4}  
  \end{align}
where $\Omega_{n}$ is the volume of $(\CP^{n}, g_{FS})$, and $\delta \in (0, \delta_{0})$. 
Then the normalized Ricci flow initiated from $(M, g)$ exists immortally and converges smoothly(without further diffeomorphisms) to a metric $g_{\infty}$  such that
$(M, g_{\infty}, J)$ is biholomorphic-isometric to $(\CP^{n}, g_{FS}, J_{FS})$. Namely, there exists a diffeomorphism $\Phi:\CP^{n} \to M$ such that
$\Phi^{*} g_{\infty}=g_{FS}$ and $\Phi^{*}(J)=J_{FS}$. 
Furthermore, for each pair of points $x,y \in M$ satisfying $d_{g}(x,y) \leq 1$, the distance bi-H\"older estimate (\ref{eqn:RG26_2}) holds. 
\label{thm:RG26_5}
\end{theorem}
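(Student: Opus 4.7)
The strategy is to follow the outline of Theorem~\ref{thm:RG26_3} (round sphere case) with the rigidity and local stability steps adapted to the Kähler category. First, by a contradiction/compactness argument I reduce to a smooth closeness statement: for $\delta$ small enough, $(M,g,J)$ must be smoothly close (modulo diffeomorphism) to $(\CP^n,g_{FS},J_{FS})$. Taking a sequence of counterexamples with $\delta_k\to 0$ and applying Gromov compactness, the Cheeger--Colding theory produces a non-collapsed Ricci-limit space $(X_\infty,d_\infty)$; the uniform lower bound $Rc\geq 2(n+1)(1-\delta_k)$, the fixed volume $\Omega_n$, and the Kähler structures together with $[\omega_k]\propto c_1$ all pass to the limit. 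I then invoke the Kähler--Einstein maximal-volume rigidity (equivalently, equality in the Chern-number inequality $\int_M c_1^n\leq (n+1)^n$ forces $M=\CP^n$) to conclude $X_\infty=(\CP^n,g_{FS})$; smoothness of the convergence follows from the $\epsilon$-regularity for non-collapsed Einstein limits.

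With closeness in hand, I apply Theorem~\ref{thm:RG05_3} to the reference flow $\mathcal{N}=\{(\CP^n,g_{FS})\}$, which is static: with the choice $\lambda\equiv -2(n+1)$ in (\ref{eqn:ML27_2}), the Kähler--Einstein condition $Rc(g_{FS})=2(n+1)g_{FS}$ makes $g_{FS}$ a fixed point of the normalized flow. This yields long-time existence of the normalized Ricci flow from $(M,g)$ on any preassigned $[0,T]$ and a diffeomorphism $\Phi_T$ with $\Phi_T^*g(T)$ smoothly close to $g_{FS}$. For $T$ large and the perturbation small, I then invoke the dynamical stability of the normalized Kähler--Ricci flow at the stable KE metric $g_{FS}$: the Lichnerowicz operator on $(\CP^n,g_{FS})$ has a positive spectral gap on Kähler deformations transverse to the action of $\Aut(\CP^n,J_{FS})=PGL(n+1,\C)$, yielding exponential convergence to a nearby KE metric $g_\infty$. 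Since the Kähler class $[\omega]\propto c_1$ is preserved along the flow and the KE metric in this class is unique up to biholomorphism, $(M,g_\infty,J)$ is biholomorphic-isometric to $(\CP^n,g_{FS},J_{FS})$.

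The bi-Hölder distance estimate (\ref{eqn:RG26_2}) follows by applying the improved pseudo-locality theorem (Theorem~\ref{thmin:ML14_2}) pointwise on the initial slice: the GH-closeness and the Ricci lower bound, via Corollary~\ref{cly:MJ23_2}, show that the local $\boldsymbol{\mu}$-entropy condition (\ref{eqn:MK30_1}) holds uniformly at all small scales centered at arbitrary $x_0\in M$. This yields $t|Rm|(x,t)\leq\alpha$ on the initial time interval, and Hamilton's distance distortion lemma then controls how much $d_{g(t)}$ deviates from $d_g$ on small geodesic balls; combined with the exponential convergence $g(t)\to g_\infty$ on the long-time regime and the standard ODE for $\frac{d}{dt}d_{g(t)}$ under a Ricci lower bound, one integrates along the flow to obtain the two-sided Hölder comparison between $d_g$ and $d_{g_\infty}$, with $\psi(\delta|n)\to 0$ as $\delta\to 0$.

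The main obstacle is the rigidity step: showing that the Gromov--Hausdorff limit is the smooth space $(\CP^n,g_{FS})$ rather than a singular Kähler--Einstein variety. This requires either exploiting the $\boldsymbol{\mu}$-entropy bound to rule out small-scale bubbling, or a direct Kähler volume-comparison argument in the non-collapsed setting; the Chen--Donaldson--Sun compactness theory for Fano Kähler--Einstein manifolds, suitably adapted to the almost-maximal-volume regime relevant here, is the natural input, though one must carefully verify that no codimension-two singularity can appear in the limit under the specific volume normalization $|M|=\Omega_n$ together with the Ricci almost-lower-bound.
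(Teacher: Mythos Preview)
The central gap is the rigidity step at $t=0$. You want to show that if $(M_k,g_k,J_k)$ satisfies (\ref{eqn:RG26_4}) with $\delta_k\to 0$, then the Gromov--Hausdorff limit of the \emph{initial} metrics $(M_k,g_k)$ is the smooth space $(\CP^n,g_{FS})$, and then feed this into Theorem~\ref{thm:RG05_3}. But no Colding-type equivalence theorem is available here: the statement ``$|M|$ close to $\Omega_n$ implies $(M,g)$ GH-close to $(\CP^n,g_{FS})$'' is precisely what the paper derives as a \emph{consequence} of Theorem~\ref{thm:RG26_5} (Corollary~\ref{cly:RF09_1}), not an input. Zhang's volume rigidity only yields that $(M,J)$ is biholomorphic to $\CP^n$; it says nothing about the metric $g$. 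The Cheeger--Colding limit of the initial slices could a priori be a singular K\"ahler--Einstein variety, and the CDS machinery you invoke applies to degenerations produced via test configurations and partial-$C^0$-estimates, which in this setting are supplied by the flow structure, not by the initial data alone. You correctly flag this as ``the main obstacle'' but do not resolve it; as written the argument is circular.

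The paper's route avoids this circularity by reversing the order of operations. After noting $(M,J)\cong\CP^n$ and immortal existence (Cao), it uses Jiang's refinement of Perelman's estimates to get uniform bounds along the flow, and the maximum principle on $R$ to obtain the almost-Einstein condition $\int_0^1\int_M|R-4n(n+1)|\,dv\,dt\leq C\delta$. The key compactness is then run at time $t=1$: Chen--Wang's structure theory for polarized K\"ahler Ricci flows (\cite{CW17B}) gives $\hat{C}^\infty$-convergence of $(M_i,g_i(1))$ to a K\"ahler--Einstein conifold, the partial-$C^0$-estimate produces a test configuration degenerating $\CP^n$ to this limit, and CDS stability forces the limit to be smooth $(\CP^n,g_{FS})$; backward pseudo-locality upgrades this to smooth convergence (Claim~\ref{clm:RE17_4}). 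Only \emph{after} this does the paper transfer information back to $t=0$: using the almost-Einstein condition, the distance almost-expanding estimate, and a measure-theoretic argument on the regular/singular decomposition of the Cheeger--Colding limit of $(M_i,g_i(0))$, it shows the volume radius of $g(0)$ is uniformly bounded below (Claim~\ref{clm:RE17_5}). This yields the entropy radius bound needed to run Corollary~\ref{cly:CK21_1} for the bi-H\"older estimate, and GH-closeness of $g(0)$ to $(\CP^n,g_{FS})$ then follows from combining (\ref{eqn:RI27_4}) with (\ref{eqn:RI27_5}). Exponential convergence is obtained directly from the smooth closeness of $g(1)$ to $g_{FS}$ via the curvature-derivative bounds (\ref{eqn:RI29_1}), not through Theorem~\ref{thm:RG05_3}.
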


Note that the bi-H\"older distance estimate in Theorem~\ref{thm:RG26_5} implies the following conclusion(cf. Corollary~\ref{cly:RF09_1}).
Under the condition $Rc \geq 2(n+1)$ and $g$ is in a K\"ahler class
proportional to $c_{1}(M,J)$,
the K\"ahler manifold $(M,g,J)$ is close to $(\CP^{n},g_{FS},J_{FS})$ in Gromov-Hausdorff topology if and only
if the volume of $(M,g,J)$ is close to $\Omega_{n}$. This result is the K\"ahler version of the theorems of Colding(cf.~\cite{Co1} and~\cite{Co2}). 
Theorem~\ref{thm:RG26_3} also has a non-compact version.  In fact, from Bishop-Gromov volume comparison, it is clear that a complete Riemannian manifold $(M^{m}, g)$ with 
non-negative Ricci curvature and asymptotic volume ratio $AVR(M,g)=1$ must be isometric to Euclidean space $(\R^{m}, g_{E})$. Here 
\begin{align*}
  AVR(M,g) \coloneqq  \lim_{r \to \infty} \omega_{m}^{-1} r^{-m} |B(x_0,r)|
\end{align*}
for some $x_0 \in M$. It is not hard to see that the above definition is independent of the choice of base point $x_0 \in M$.
The Ricci flow started from $(M,g)$ is then static and thus is always isometric to the Euclidean space.  
This rigidity phenomenon is the non-compact counterpart of the one underlying Theorem~\ref{thm:RG26_3}.
Therefore, the ``almost rigidity'' shall be some effective estimate of the Ricci flow, which is realized by the following theorem.

\begin{theorem}[\textbf{Almost Euclidean immortal Ricci flow}]
  For each $m \geq 3$, there is a constant $\delta_{0}=\delta_{0}(m)$ with the following property.
  Suppose $(M^{m}, g)$ is a complete non-compact  Riemannian manifold satisfying 
  \begin{align}
    \begin{cases}
    &Rc \geq 0, \\
    &AVR(M,g) \geq 1-\delta,  
  \end{cases}
    \label{eqn:RH19_1}  
  \end{align}
  for some $\delta \in (0, \delta_{0})$. 
  Then there exists an immortal Ricci flow solution initiated from $(M^{m}, g)$ satisfying
  \begin{align}
    &t |Rm|(x,t) \leq \psi(\delta|m), \label{eqn:RH19_2}\\
    &t^{-\frac{1}{2}} \cdot inj(x,t) \geq \psi^{-1}(\delta|m),  \label{eqn:RH19_3}
  \end{align}
  for every $x \in M$ and $t>0$.   Furthermore, for each pair of points $x,y \in M$  and $t>0$, the following distance distortion estimate holds:
  \begin{align}
    \left| \log \frac{d_{g(t)}(x,y)}{d_{g(0)}(x,y)} \right|<\psi(\delta|m) \cdot \left\{ 1+ \log_{+} \frac{\sqrt{t}}{d_{g(0)}(x,y)} \right\}.  
    \label{eqn:RH19_4}
  \end{align}
  \label{thm:RH19_5}
\end{theorem}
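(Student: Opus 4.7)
The plan is to feed the hypothesis (\ref{eqn:RH19_1}) into Theorem~\ref{thmin:ML14_2} uniformly at every base point and every time scale, construct the flow by a compact exhaustion, and finally convert the resulting $t|Rm|\leq \psi$ estimate into the Hölder-type distance distortion (\ref{eqn:RH19_4}).

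First, I would translate (\ref{eqn:RH19_1}) into the entropy hypothesis (\ref{eqn:MK30_1}). Under $Rc\geq 0$, Bishop--Gromov monotonicity says that $\omega_{m}^{-1}r^{-m}|B_{g}(x_{0},r)|$ is non-increasing in $r$ and tends to $AVR(M,g)\geq 1-\delta$, so
\begin{align*}
 r^{-m}\left|B_{g}(x_{0},r)\right|_{dv_{g}} \geq (1-\delta)\,\omega_{m},\qquad \forall\, x_{0}\in M,\; r>0.
\end{align*}
The conversion cited after Theorem~\ref{thmin:ML14_2} (cf.\ Corollary~\ref{cly:MJ23_2})---a ball with almost non-negative Ricci lower bound and almost Euclidean volume ratio has local $\boldsymbol{\mu}$-entropy close to zero---then produces
\begin{align*}
 \inf_{0<t\leq T} \boldsymbol{\mu}\bigl(B_{g}(x_{0},\bar\delta^{-1}\sqrt{t}),g,t\bigr) \geq -\bar\delta^{\,2},\qquad \forall\, x_{0}\in M,\; T>0,
\end{align*}
with $\bar\delta=\bar\delta(\delta,m)\to 0$ as $\delta\to 0$.

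Next, I would construct the Ricci flow by approximation, since $|Rm|$ is not assumed bounded a priori. Exhaust $(M,g)$ by a sequence of smooth complete metrics $g_{i}$ of bounded geometry that agree with $g$ on larger and larger geodesic balls and, up to a controlled worsening of $\delta$, preserve both $Rc\geq -\eta_{i}$ and the volume lower bound on every scale. Shi's theorem then gives short-time existence of a Ricci flow from each $g_{i}$; the entropy bound from Step~1 propagates uniformly on the interior, so Theorem~\ref{thmin:ML14_2} applied at every base point and every positive time yields
\begin{align*}
 t|Rm_{i}|(x,t)\leq \psi(\delta|m),\qquad t^{-\frac{1}{2}}\,inj_{i}(x,t)\geq \psi^{-1}(\delta|m)
\end{align*}
on the interior. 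The curvature decay satisfies the standard extension criterion, so each interior flow is immortal, and Hamilton's Cheeger--Gromov compactness produces a smooth limit $g(t)$ on $M$ that is immortal and satisfies (\ref{eqn:RH19_2})--(\ref{eqn:RH19_3}). The main obstacle is engineering the $g_{i}$ so that the entropy input of Step~1 genuinely survives the cut-off and no volume comparison is lost in the interior---this is the delicate part, because the conditions $Rc\geq 0$ and $AVR\geq 1-\delta$ are both fragile under naive smoothing at infinity.

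Finally, I would establish (\ref{eqn:RH19_4}). Non-negativity of Ricci is preserved along the flow (the decay $|Rm|(\cdot,t)\leq \psi/t$ legitimizes the maximum principle), so $\frac{d}{dt}d_{g(t)}(x,y)\leq 0$, yielding $d_{g(t)}(x,y)\leq d_{g(0)}(x,y)$, the upper half of (\ref{eqn:RH19_4}). For the lower half I would invoke the Hamilton--Perelman distance-distortion lemma: if $Rc\leq (m-1)K$ on $B_{g(t)}(x,r)\cup B_{g(t)}(y,r)$ then $\frac{d}{dt}d_{g(t)}(x,y)\geq -C(m)(Kr+r^{-1})$. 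With $K=\psi/t$ and $r=\sqrt{t}$ this becomes $\frac{d}{dt}d_{g(t)}(x,y)\geq -C\sqrt{\psi}\,t^{-1/2}$, whose integration gives the additive correction $d_{g(t)}(x,y)\geq d_{g(0)}(x,y)-C\sqrt{\psi}\sqrt{t}$ in the regime $d_{g(0)}(x,y)\gtrsim \sqrt{t}$. Organizing the integration dyadically on multiplicative time scales, and using the injectivity radius lower bound to control the geometry on small scales, converts the additive $\sqrt{t}$ correction into the multiplicative $(\sqrt{t}/d_{g(0)}(x,y))^{\psi}$ factor, producing exactly the bound (\ref{eqn:RH19_4}).
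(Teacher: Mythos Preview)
Your Steps~1 and~2 are essentially the paper's route: convert $Rc\geq 0$ plus $AVR\geq 1-\delta$ into an entropy lower bound (the paper goes through the isoperimetric estimate of Cavalletti--Mondino and then Lemma~\ref{lma:MJ25_1}, but your appeal to Lemma~\ref{lma:MJ23_1}/Corollary~\ref{cly:MJ23_2} works equally well), then run a Hochard-type conformal exhaustion so that Shi's short-time existence applies and the entropy input survives in the interior (this is exactly Corollary~\ref{cly:CK21_1} and Corollary~\ref{cly:RH27_1}).

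Step~3 contains a genuine error. Your upper bound on $d_{g(t)}(x,y)$ relies on the claim that $Rc\geq 0$ is preserved along the flow, whence $\frac{d}{dt}d_{g(t)}(x,y)\leq 0$. This is false in dimension $m\geq 4$: non-negative Ricci curvature is \emph{not} preserved under Ricci flow (B\"ohm--Wilking~\cite{BoWil07}, M\'aximo~\cite{Maximo}), a point the paper stresses explicitly after Theorem~\ref{thm:RG05_3}. So the inequality $d_{g(t)}\leq d_{g(0)}$ simply does not follow, and nothing in the curvature decay $t|Rm|\leq\psi$ rescues it.

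The paper obtains (\ref{eqn:RH19_4}) by a different mechanism (Theorem~\ref{thm:RE05_5} and Proposition~\ref{prn:RE04_3}). For the lower bound on $d_{g(t)}$ in the regime $d_{g(0)}\geq\sqrt{t}$, it uses the Hamilton--Perelman estimate much as you do. For the upper bound, however, it subdivides a $g(0)$-shortest geodesic into segments of length $\sim\sqrt{t}$ and applies a \emph{short-range} distortion estimate to each segment. That short-range estimate is the substantive part: the local maximum principle (Theorem~\ref{thm:RD10_1}) applied to $-R$ gives $R\geq -\psi$ along the flow (only the \emph{scalar} lower bound is preserved, not the full Ricci tensor), so volume is almost non-increasing; combining this with the Hamilton--Perelman almost-expansion of distance and the Bishop--Gromov \emph{upper} volume-ratio bound (\ref{eqn:RE04_1c}) forces $\int_{0}^{1}\!\int_{\Omega}|R|\,dv\,dt\leq\psi$, the locally almost-Einstein condition of Tian--Wang. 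Then Lemma~\ref{lma:PA05_1} (from~\cite{TiWa}) converts this into a two-sided distance bound on scale $\sqrt{t}$. Your proposal does not contain this almost-Einstein ingredient, and without it the upper half of (\ref{eqn:RH19_4}) remains unproved.
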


If we set $\delta=0$ in Theorem~\ref{thm:RH19_5}, then we return to the rigidity phenomenon discussed before. 
Theorem~\ref{thm:RH19_5} has a topological application.  Based on the curvature estimate (\ref{eqn:RH19_2}), the injectivity radius estimate (\ref{eqn:RH19_3}) 
and the distance distortion estimate (\ref{eqn:RH19_4}), 
we can construct(cf. Theorem~\ref{thm:CK21_2}) a canonical diffeomorphism from $M$ to the Euclidean space $\R^m$. 
Again, the existence of the diffeomorphism from $M$ to $\R^m$ was already known by Cheeger-Colding~\cite{CC}, 
via the Reifenberg method.  We expect our construction here to provide new perspectives on the Reifenberg method, which has far-reaching influences in geometric analysis
and is still quickly evolving right now(cf.~\cite{CJN} and~\cite{HuangHZ}). 

Comparing Theorem~\ref{thm:RH19_5} with its compact version Theorem~\ref{thm:RG26_3}, one immediately realizes that we miss the discussion of exponential convergence of the flow in 
Theorem~\ref{thm:RH19_5}.  By the curvature and injectivity radius estimates (\ref{eqn:RH19_2}) and (\ref{eqn:RH19_3}), we have the convergence of
$(M, x_0, g(t_{i}))$ to Euclidean space $(\R^{m}, 0, g_{E})$ in the pointed-Cheeger-Gromov topology, by taking subsequence if necessary. 
However, in Cheeger-Gromov convergence, one needs further modifications by diffeomorphisms, which make the convergence weaker than the convergence of the metric tensors in fixed coordinates,
as achieved by Theorem~\ref{thm:RG26_3}. 
Actually, in fixed coordinate charts, the immortal Ricci flow in Theorem~\ref{thm:RH19_5} must diverge unless it is the static Euclidean flow.  
Detailed analysis on the immortal Ricci flow(cf. Proposition~\ref{prn:RE14_12}) in Theorem~\ref{thm:RH19_5} shows that
\begin{align}
  \liminf_{t \to \infty} R(x,t) t \geq \psi(\delta|m),   \label{eqn:RI24_1}
\end{align}
and the above limit is independent of the choice of $x \in M$. Therefore, $dv_{g(t)}$ will degenerate as $t \to \infty$. 
The scalar curvature lower bound in (\ref{eqn:RI24_1}) is basically caused by the fact that the tangent cone of $(M,g)$ at
infinity is not isometric to $(\R^{m}, g_{E})$.  If we relax the non-negative Ricci curvature condition to non-negative scalar curvature condition,
then it is possible that the tangent cone of $(M,g)$ at infinity is isometric to $(\R^{m}, g_{E})$ and $(M,g)$ itself is non-flat. 
For example, when $(M,g)$ is an AE manifold with non-negative scalar curvature, the almost non-negativity of $\boldsymbol{\nu}$-functional can be combined with the work
of Y. Li~\cite{YuLi} to obtain the global existence and convergence of the flow in fixed coordinate charts.

The failure of the convergence of the immortal flow in Theorem~\ref{thm:RH19_5} suggests that the metric distortion estimate (\ref{eqn:RH19_4}) is optimal.
Note that if the right hand side of (\ref{eqn:RH19_4}) is uniformly bounded, then we have uniform metric bi-Lipschitz equivalence along the flow, which cannot hold by 
our previous discussion.
In fact, even on closed K\"ahler manifolds with Ricci curvature bounded from below and almost Euclidean initial condition on a fixed scale, 
we explicitly construct examples which show the failure of the bi-Lipschitz equivalence.
For more details, see Proposition~\ref{prn:RG12_5} and related discussion in Section~\ref{sec:globalstability}. 
Therefore, the single condition of the Ricci curvature lower bound is too weak for the metric bi-Lipschitz equivalence to hold along the Ricci flow.
Inspired by the fundamental works on complex Monge-Amp\`{e}re equation by Yau~\cite{Yau} and Chen-Cheng~\cite{ChenCheng1}, 
we are able to show that the  metric bi-Lipschitz equivalence holds under the almost Euclidean condition,
if we replace the lower bound of the Ricci curvature by the two-sided bound of the scalar curvature.
Based on the metric bi-Lipschitz equivalence, we can essentially improve the pseudo-locality theorem in the K\"ahler setting.

\begin{theorem}[\textbf{Pseudo-locality in the K\"ahler Ricci flow}]
  For each  $\alpha \in (0,1)$ and $Q>1$, there is a constant $\epsilon=\epsilon(n,\alpha,Q)$ with the following properties. 

  Suppose $\left\{ (M^{n}, g(t), J), 0 \leq t \leq 1 \right\}$ is a Ricci flow solution, $x_0 \in M$ and $r \in (0,1)$.
  Suppose 
    \begin{align}
       \begin{cases}
	 &\boldsymbol{\bar{\nu}}(B_{g(0)}(x_0, r), g(0), r^2) \geq -\epsilon;\\
	 &|R(x,0)| \leq r^{-2}, \quad \forall\; x \in B_{g(0)}(x_0, r).    
       \end{cases}
   \label{eqn:RG26_9}
   \end{align}
   Then there is a biholomorphic map $\Phi: B_{g(0)}(x_0, \epsilon r) \to U \subset  \C^n$ such that
  \begin{align}
    & B(0, Q^{-1}) \subset U \subset B(0, Q);  \label{eqn:RG26_6} \\
    & \norm{(\epsilon r)^{-2}\Phi_{*} g(t)}{C^{1,\alpha}(U)} < Q,  \;  \forall \; t \in [0, \epsilon^2 r^2];   \label{eqn:RG26_7}\\
    & |R|(x,t) < Q (\epsilon r)^{-2}, \quad \forall \; x \in U, \; t \in [0, \epsilon^2 r^2].    \label{eqn:RG27_1}
  \end{align}
\label{thm:RG26_8}
\end{theorem}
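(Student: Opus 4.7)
The plan is to combine Theorem~\ref{thmin:ML14_2} with complex Monge--Amp\`ere estimates \'a la Yau and Chen--Cheng to upgrade the Riemannian pseudo-locality, under which only a Cheeger--Gromov picture is available, to a K\"ahler picture in which $g(t)$ remains bi-Lipschitz equivalent to $g(0)$ in a fixed holomorphic chart $\Phi$.

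First, I would apply Theorem~\ref{thmin:ML14_2} after observing that $\boldsymbol{\bar{\nu}}\ge -\epsilon$ in (\ref{eqn:RG26_9}) dominates the infimum in time of $\boldsymbol{\mu}$ on the relevant ball, so hypothesis (\ref{eqn:RH17_4}) is met at scale $\epsilon r$. This yields, for $t\in(0,\epsilon^{2}r^{2}]$, a smooth Ricci flow on a parabolic neighborhood of $(x_{0},0)$ satisfying $t|Rm|(x,t)\le\alpha$, the almost-Euclidean volume lower bound, and $t^{-1/2}\,inj(x,t)\ge\alpha^{-1}$. In particular, for each such $t$ we obtain bounded geometry on balls of $g(t)$-radius comparable to $\sqrt{t}$.

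Second, I would propagate the initial bound $|R(\cdot,0)|\le r^{-2}$ forward in time. The scalar curvature evolves by $\partial_{t}R=\Lap R+2|Rc|^{2}$, and the interior $|Rm|$-bound from step one controls $|Rc|^{2}$ away from $t=0$; combining this with a localized maximum principle based on a space-time cutoff adapted to the $\boldsymbol{\bar{\nu}}$ almost-monotonicity of Theorem~\ref{thm:CA02_3}, I expect to obtain $|R|(x,t)\le Q(\epsilon r)^{-2}$ on the parabolic neighborhood, yielding~(\ref{eqn:RG27_1}).

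Third --- the heart of the argument --- I would write the flow locally as $g(t)=g(0)+\st\,\ddb\varphi(t)$, so that $\varphi$ satisfies a parabolic complex Monge--Amp\`ere equation $\partial_{t}\varphi=\log\bigl(\det(g(0)+\varphi_{k\bar{l}})/\det g(0)\bigr)+h$, and apply the a priori estimates of Yau~\cite{Yau} and Chen--Cheng~\cite{ChenCheng1} in a localized form. The almost-Euclidean local entropy controls the local Sobolev and Poincar\'e constants, while the two-sided scalar curvature bound produces an $L^{\infty}$ bound on the right-hand side of the Monge--Amp\`ere equation. Together these inputs should yield a $C^{0}$ estimate for $\varphi$ and a uniform $C^{2}$ estimate of the form $C^{-1}g(0)\le g(t)\le Cg(0)$. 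This bi-Lipschitz equivalence in fixed coordinates is exactly the K\"ahler feature emphasized in the introduction, and this step is the principal obstacle: the Chen--Cheng argument is global, and carrying it out in the present local setting requires combining Moser iteration against the local Sobolev inequality supplied by $\boldsymbol{\bar{\nu}}$ with a cutoff barrier built from the smooth interior $|Rm|$-bound of step one.

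Finally, I would construct the biholomorphic map $\Phi$. Using the bi-Lipschitz control together with the smoothness estimates of step one, an almost-holomorphic chart coming from exponential-normal coordinates at a small positive time can be corrected by solving a $\db$-equation via H\"ormander's $L^{2}$ method to produce genuine holomorphic coordinates on $B_{g(0)}(x_{0},\epsilon r)$. After shrinking and a linear normalization this furnishes $\Phi$ with image $U\subset\C^{n}$ satisfying~(\ref{eqn:RG26_6}); the $C^{1,\alpha}$-bound (\ref{eqn:RG26_7}) then follows from standard parabolic Schauder regularity applied to $\Phi_{*}g(t)$ in these coordinates, bootstrapping off the $C^{2}$ estimate of step three, while~(\ref{eqn:RG27_1}) is already established in step two. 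The constants are reconciled by taking $\epsilon$ small enough depending on $n$, $\alpha$, and $Q$.
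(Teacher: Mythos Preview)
Your Step~3 has a circularity. The Evans--Krylov estimate for the complex Monge--Amp\`ere equation $\det(\delta_{\alpha\bar\beta}+\varphi_{\alpha\bar\beta})=e^{F}$ requires uniform ellipticity, i.e.\ $C^{-1}\delta_{\alpha\bar\beta}\le \delta_{\alpha\bar\beta}+\varphi_{\alpha\bar\beta}\le C\delta_{\alpha\bar\beta}$, which is precisely the bi-Lipschitz equivalence $C^{-1}g(0)\le g(t)\le Cg(0)$ you are trying to prove; Moser iteration against the Sobolev inequality supplied by $\boldsymbol{\bar\nu}$ does not manufacture this ellipticity. The paper breaks the circle by a contradiction/point-picking argument: it minimizes $I_{y}/d_{g(0)}^{2}(y,\partial B)$, where $I_{y}$ is the first time the $e^{\xi}$-bi-Lipschitz equivalence fails at $y$, and rescales so that the minimum value is $1$ at $y_{0}$. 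By this choice, $e^{\xi}$-bi-Lipschitz \emph{does} hold on a large ball up to time $\tfrac14$ (and hence, by pseudo-locality on $[\tfrac14,1]$, up to time $1$); this supplies the uniform ellipticity needed to run De~Giorgi--Nash--Moser on $-\Delta_{\varphi_{0}}F=R$ and then Evans--Krylov to obtain $C^{3,\frac12}$-bounds on the potential at $t=0$.

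You are also missing the second essential ingredient: the almost-Einstein distance-distortion machinery of Tian--Wang~\cite{TiWa}. After the regularity step, the blow-up limit of $g(0)$ is a flat metric $\hat g$ on $\C^{n}$ with $\det\hat g=1$, but this alone does not force $\hat g=\delta_{\alpha\bar\beta}$. The paper proves $\int_{0}^{1}\!\int_{V}|R|\,dv\,dt\to 0$ and invokes Lemma~\ref{lma:PA05_1} to get $\sup_{V}|d_{g(0)}-d_{g(1)}|\to 0$; since $g(1)\to\delta_{\alpha\bar\beta}$ by pseudo-locality, this pins down $\hat g=\delta_{\alpha\bar\beta}$ and contradicts the assumed failure at $y_{0}$. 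Finally, your Step~2 for the \emph{upper} bound of $R$ does not work as written: $|Rc|^{2}\lesssim t^{-2}$ from Step~1 is not integrable at $t=0$, so the reaction term in $\partial_{t}R=\Delta R+2|Rc|^{2}$ cannot be absorbed. The paper instead applies Theorem~\ref{thm:RD10_1} to the K\"ahler-specific sub-heat-solution $R+|\nabla\dot{\tilde\varphi}|^{2}$, where $\dot{\tilde\varphi}$ is a local Ricci potential normalized so that $\nabla\dot{\tilde\varphi}(x_{0},0)=0$; the required smallness of $|\nabla\dot{\tilde\varphi}_{0}|^{2}$ comes from the $C^{1,\frac12}$-estimate on $F=\dot\varphi_{0}$ established in the bi-Lipschitz step.
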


Note that the estimate (\ref{eqn:RG26_7}) implies that the metric $g(t)$ and $g(0)$ are uniformly equivalent in the $C^{1,\alpha}$-topology nearby $(x_0, 0)$.
In particular, we have the bi-Lipschitz equivalence
\begin{align*}
  \frac{1}{Q} g(y,0) \leq g(y,t) \leq Q g(y,0), \quad \forall\; y \in B_{g(0)}(x_0, \epsilon r), \quad t \in [0, \epsilon^2 r^{2}]. 
\end{align*}
Theorem~\ref{thm:RG26_8} provides us a method to construct uniform holomorphic charts from a very weak assumption (\ref{eqn:RG26_9}).
Note that the lower bound of $\boldsymbol{\bar{\nu}}$ in (\ref{eqn:RG26_9}) can be achieved if the underlying local isoperimetric constant is very close to the Euclidean one.
Under such a condition and Ricci curvature lower bound, similar construction of holomorphic charts was developed in Liu-Sz\'{e}kelyhidi~\cite{LiuSze}.
The key improvements of Theorem~\ref{thm:RG26_8} are the estimates (\ref{eqn:RG26_7}) and (\ref{eqn:RG27_1}), which imply the following theorem. 

\begin{theorem}[\textbf{Preservation of two-sided scalar curvature bound}]
  Suppose $(M_i^{n}, g_i, J_{i})$ is a sequence of K\"ahler manifolds with uniformly bounded scalar curvatures and diameters. 
  Suppose this sequence converges to a smooth Riemannian manifold $(N, g)$ in $C^{0}$-topology in the sense that there exist diffeomorphisms
  $f_{i}: N \to M_i$ such that 
  \begin{align}
    \lim_{i \to \infty} \norm{f_{i}^{*}g_i-g}{C^{0}(g)}=0. \label{eqn:RH16_1} 
  \end{align}
  Then $N$ admits a natural smooth K\"ahler structure $J$. 
  Furthermore, both the upper and lower bounds of the scalar curvature are continuous under the convergence. 
  Namely, if the scalar curvature satisfies  $\sigma_{a,i} \leq R_{g_{i}} \leq \sigma_{b,i}$ for each $i$, then we have
  \begin{align}
    \sigma_{a} \leq R_{g} \leq \sigma_{b},    \label{eqn:RH16_2}
  \end{align}
  where
  \begin{align}
    \sigma_{a} \coloneqq \limsup_{i \to \infty} \sigma_{a,i}, \quad \sigma_{b} \coloneqq \liminf_{i \to \infty} \sigma_{b,i}.  \label{eqn:RH16_3}
  \end{align}
  If $\sigma_{a}=\sigma_{b}$, then $(N,g,J)$ is a smooth cscK(constant scalar curvature K\"ahler) manifold. 
  \label{thm:RH16_1}
\end{theorem}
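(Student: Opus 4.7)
The plan is to apply the pseudo-locality Theorem~\ref{thm:RG26_8} at every point of $N$ to produce compatible biholomorphic charts and assemble them into a smooth K\"ahler structure on $N$, then invoke Chen-Cheng type elliptic regularity for K\"ahler metrics with bounded scalar curvature---the mechanism alluded to in the discussion preceding Theorem~\ref{thm:RG26_8}---to upgrade the $C^{0}$-convergence of the metrics to a convergence strong enough that the two-sided scalar curvature bound passes to the limit.

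First I would verify the hypotheses of Theorem~\ref{thm:RG26_8} near any given $p\in N$. Setting $x_{i}=f_{i}(p)$, the smoothness of $g$ and the $C^{0}$-convergence $f_{i}^{*}g_{i}\to g$ force $B_{g_{i}}(x_{i},r)$ to be almost Euclidean at every fixed small scale $r$, which I would convert into $\boldsymbol{\bar{\nu}}(B_{g_{i}}(x_{i},r),g_{i},r^{2})\geq -\epsilon$ for $i$ large; the uniform scalar curvature bound supplies $|R_{g_{i}}|\leq r^{-2}$ once $r$ is chosen small enough. Theorem~\ref{thm:RG26_8} then produces biholomorphic charts $\Phi_{i,p}:B_{g_{i}}(x_{i},\epsilon r)\to U_{i,p}\subset\C^{n}$ satisfying (\ref{eqn:RG26_6})--(\ref{eqn:RG27_1}). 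Normalizing $\Phi_{i,p}(x_{i})=0$ makes the family $\Phi_{i,p}\circ f_{i}$ equi-$C^{1,\alpha}$ on a fixed neighbourhood of $p$, so after a diagonal subsequence it converges in $C^{1,\alpha/2}$ to a bi-H\"older embedding $\Phi_{p}:W_{p}\to\C^{n}$. Pulling back the standard complex structure on $\C^{n}$ defines a local complex structure on $W_{p}$, and the transition maps between two such charts are $C^{1,\alpha}$-limits of biholomorphisms of open sets of $\C^{n}$, hence themselves biholomorphic, so these local structures glue to a global integrable $J$ on $N$. The compatibility $g_{i}(J_{i}\cdot,J_{i}\cdot)=g_{i}(\cdot,\cdot)$ and the closedness $d\omega_{g_{i}}=0$ pass to the $C^{0}$-limit, and since $g$ is smooth by hypothesis, so is the induced $J$, so that $(N,g,J)$ is a smooth K\"ahler manifold.

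For the continuity of $R$, I would combine the uniform $C^{1,\alpha}$ control on $(\Phi_{i,p})_{*}g_{i}$ with the two-sided bound $\sigma_{a,i}\leq R_{g_{i}}\leq\sigma_{b,i}$ and feed them into Chen-Cheng type $C^{2,\alpha}$-estimates for K\"ahler metrics with bounded scalar curvature; this upgrades the $C^{1,\alpha}$ bounds to uniform $C^{2,\alpha}$ bounds in each chart. A further subsequential $C^{2}$-limit then yields pointwise convergence $R_{g_{i}}\to R_{g}$, so that $\sigma_{a,i}\leq R_{g_{i}}\leq\sigma_{b,i}$ sharpens in the limit to $\sigma_{a}\leq R_{g}\leq\sigma_{b}$. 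When $\sigma_{a}=\sigma_{b}$ this squeezes $R_{g}$ to the constant $\sigma_{a}$, exhibiting $(N,g,J)$ as a smooth cscK manifold.

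\paragraph{Main obstacle.} The hardest point is the upper bound on $R_{g}$: upper scalar curvature bounds are not preserved under the Ricci flow (the maximum principle applied to $\partial_{t}R=\Delta R+2|\mathrm{Rc}|^{2}$ only gives lower bounds), and the bare $C^{1,\alpha}$-convergence supplied by Theorem~\ref{thm:RG26_8} is insufficient to control a second-order quantity like $R$. The mechanism that rescues this is precisely the K\"ahler-specific Chen-Cheng regularity---only in the K\"ahler setting does a bounded scalar curvature combine with the complex Monge-Amp\`ere equation to yield a $C^{2,\alpha}$-estimate, and it is this estimate, invoked inside the biholomorphic charts produced by Theorem~\ref{thm:RG26_8}, that promotes the weak $C^{0}$-convergence of metrics into a convergence strong enough to transport the upper scalar curvature bound to the limit.
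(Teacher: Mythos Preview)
Your proposal has a genuine gap at exactly the point you flagged as the main obstacle. You claim that feeding the uniform $C^{1,\alpha}$ control on $(\Phi_{i,p})_*g_i$ together with the two-sided bound on $R_{g_i}$ into Chen--Cheng type estimates upgrades the metric regularity to $C^{2,\alpha}$, whence $R_{g_i}\to R_g$ pointwise. This step fails. The Chen--Cheng mechanism (decompose $-\Delta_\varphi\log\det g_\varphi=R$ into $\det g_\varphi=e^F$ and $-\Delta_\varphi F=R$, then alternate De Giorgi--Nash--Moser, Evans--Krylov, and Schauder) only gives $\varphi\in C^{3,\alpha}$, i.e.\ $g_\varphi\in C^{1,\alpha}$, when $R$ is merely bounded; to bootstrap one step further you would need $R\in C^\alpha$, which you do not have. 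So the convergence you can actually extract from this route is only $C^{1,\alpha}$ for the metrics, which says nothing about the second-order quantity $R$ in the limit.

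The paper sidesteps this entirely by running the K\"ahler Ricci flow from each $g_i$ and from $g$. For any fixed $t>0$ the flows $g_i(t)$ converge to $g(t)$ in $C^\infty$ (this is the Ricci--DeTurck stability argument of Theorem~\ref{thm:PB03_1}), so there is no regularity obstruction at positive time. The upper bound along each flow is supplied by Proposition~\ref{prn:RC13_1}, which applies the localized maximum principle of Theorem~\ref{thm:RD10_1} to the K\"ahler-specific sub-heat-solution $R+|\nabla\dot{\tilde\varphi}|^2$ (with $\dot{\tilde\varphi}$ a suitably normalized local Ricci potential) to obtain $R_{g_i}(x,t)<\sigma_{b,i}+C t^{1/5}$. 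Passing $i\to\infty$ at fixed $t$ and then letting $t\to 0^+$ along the \emph{smooth} limit flow gives $R_g\le\sigma_b$. This is the genuine K\"ahler input: not an elliptic $C^{2,\alpha}$ estimate at $t=0$, but the existence of a second sub-heat-solution that pins down $R$ from above in a parabolic sense.

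A secondary issue: your inference ``since $g$ is smooth by hypothesis, so is the induced $J$'' is too quick. The limit charts you construct are a priori only $C^{1,\alpha}$, and the limit metric $h$ expressed in those holomorphic charts is a priori only $C^{1,\alpha}$; the paper closes this by observing that $(N,g)$ and $(N,h)$ are isometric and then invoking the regularity theory for isometries in harmonic coordinates (Calabi--Hartman, Taylor) to conclude $h$, and hence $J$, is smooth.
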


The preservation of scalar curvature lower bound along convergence has been well-studied, for example, in Lott~\cite{Lott}, Honda~\cite{Honda}, etc. 
See also Sormani~\cite{Sormani16} for a survey.   
Without any curvature assumption, the preservation of scalar curvature lower bound under $C^{0}$-convergence was first proved by Gromov~\cite{Gmv} in the Riemannian setting.   
An alternative proof was later provided by Bamler~\cite{Bamler}, using the Ricci flow. Theorem~\ref{thm:RH16_1} indicates that in the K\"ahler setting, both the lower and upper bounds of the scalar curvature 
are preserved under the $C^{0}$-convergence. This is reasonable since the scalar curvature in K\"ahler geometry carries much more information than that in Riemannian geometry.
Calabi~\cite{Calabi} initiated the study of cscK metrics and introduced the Calabi flow to search for such metrics. 
In~\cite{Don04}, Donaldson pointed out that in the space of all complex structures compatible with a given metric form, the scalar curvature function can be regarded as a moment map.
The famous Yau-Tian-Donaldson conjecture predicts that the existence of cscK metrics is equivalent to the stability of complex structures.
For more information in this direction, see the most recent breakthrough of Chen-Cheng(cf.~\cite{ChenCheng1},~\cite{ChenCheng2},~\cite{ChenCheng3} and the references therein).   
We hope Theorem~\ref{thm:RH16_1} could shed some light on the problem of finding cscK metrics.\\

We now briefly describe how to prove the aforementioned theorems. 

\begin{proof}[Outline proof of Theorem~\ref{thmin:ML14_2}:] 
  We shall basically follow the ideas of Perelman~\cite{Pe1}, with some technical differences.  One key difference is the size of the domain where the curvature and volume estimates hold.
  Similar to~\cite{TiWa}, the balls we concern here are $B_{g(t)}(x_0, \sqrt{t})$,  rather than $B_{g(t)}(x_0, \epsilon r_0)$ as in Theorem 10.1 of Perelman~\cite{Pe1}. 
  Such difference is crucial for the subsequent blowup argument under the condition (\ref{eqn:MK30_1}) and is very helpful to simplify the steps of the proof(cf. Remark~\ref{rmk:RJ04_1}).

  We argue by contradiction.
  We first prove (\ref{eqn:ML28_1}) for $\alpha=A$ sufficiently large.  By continuity, (\ref{eqn:ML28_1}) holds for very small $t$. Let $t_0$ be the first time that they start to fail.  
  So there exists a point $(y_0, t_0)$ with $|Rm|(y_0,t_0)=\alpha$ and $y_0 \in B_{g(t)}(x_0,\sqrt{t_0})$.   Via an auxiliary function, we then obtain a local(in the space-time) maximum curvature point
  $(z_0, s_0)$ such that $s_0 \in [0.5 t_0, t_0]$,  $z_0 \in B_{g(s_0)}(x_0, 10 \sqrt{s_0})$. 
  According to the local functional monotonicity, the almost Euclidean $\boldsymbol{\mu}$-functional at time $t=0$ implies an almost Euclidean $\boldsymbol{\nu}$-functional value
  of the proper neighborhood containing  $(z_0, s_0)$. 
  After a proper rescaling, the point $(z_0, s_0)$ has $|Rm|$ being equal to $1$ and nearby curvature all bounded by $2$.
  By scaling invariance of $\boldsymbol{\nu}$, the neighborhood of $(z_0, s_0)$ has $\boldsymbol{\nu}$ value very close to $0$ also. 
  Recall that the lower bound of $\boldsymbol{\nu}$ implies the lower bound of volume ratio.   Therefore we can take a limit of the neighborhoods around $(z_0, s_0)$ and obtain a complete Riemannian manifold with 
  $\boldsymbol{\nu} \geq 0$, which must be  Euclidean. However, the limit cannot be flat since its base point has curvature $1$. Contradiction.  

  We then proceed to prove (\ref{eqn:ML28_1}) for arbitrary small $\alpha \in (0, \frac{1}{100m})$. 
  Note that $\boldsymbol{\mu}$ is monotone for the containment relationship.  Applying the first step to all balls inside the given initial ball, for each point $(x,t)$ satisfying  
  $d_{g(t)}(x, \partial B_{g(0)}(x_0, r_0))> A \sqrt{t}$, we obtain rough curvature estimates.  Another blowup argument implies that the curvature bound $t|Rm|(\cdot, t)<A$ can be
  improved to $t|Rm|(\cdot, t)<\alpha$ when $d_{g(t)}(x,x_0)<\sqrt{t}$. Since the blowup limit is Euclidean,  the argument also implies the volume estimate (\ref{eqn:ML28_2}) and
  the injectivity radius estimate (\ref{eqn:RH27_4}). 

  For full details of the proof, see Section~\ref{sec:rigidity} and Section~\ref{sec:pseudo}. 
\end{proof}

\begin{proof}[Outline proof of Theorem~\ref{thm:RG05_3}:]
  By the volume continuity of Colding~\cite{Co}, we know that for large $i$, there is a uniform scale $r_0$ such that each geodesic ball under this scale has almost Euclidean volume ratio.
  Then one can apply Theorem~\ref{thmin:ML14_2} to obtain a uniform existence time of the flow, say on $[0, 1]$.
  Fix a small positive number $\xi_0$.  
  By the curvature estimate (\ref{eqn:ML28_1}) and volume estimate (\ref{eqn:ML28_2}), it is clear that each unit geodesic ball under $\xi_{0}^{-1} g(\xi_0)$ 
  is very close to the unit ball in the Euclidean space.
  The curvature condition also forces the distance to be almost expanding along the flow.  
  Together with the almost non-negativity of the scalar curvature, we see that each unit ball of 
  the metric $\xi_{0}^{-1}g(0)$ has almost Euclidean volume ratio and almost non-negative Ricci curvature.    These bounds imply a local almost Einstein condition in the sense of Tian-Wang~\cite{TiWa}.
  Following the argument in Section 4 of~\cite{TiWa}, we obtain a distance distortion estimate(cf. Chen-Rong-Xu~\cite{CRX} also) between $\xi_{0}^{-1}g(\xi_{0})$ and $\xi_{0}^{-1}g(0)$.    
  Note that $(M, \xi_{0}^{-1}g(0))$ converges to $(N, \xi_{0}^{-1} h(0))$. So we have Gromov-Hausdorff approximation $\hat{F}: (M, \xi_{0}^{-1}g(0)) \to (N, \xi_{0}^{-1} h(0))$
  and $\hat{G}: (N, \xi_{0}^{-1} h(0)) \to  (M, \xi_{0}^{-1}g(0))$.
  By the metric distortion estimate on these two Ricci flows, they induce Gromov-Hausdorff approximations $\tilde{F}$ and $\tilde{G}$ between $(M, \xi_{0}^{-1}g(\xi_0))$ and $(N, \xi_{0}^{-1}h(\xi_{0}))$,
  both of which are almost flat and have almost Euclidean volume ratio on scale $10$. Then standard center of mass argument can be applied to deform $\tilde{G}$ to $G$, which is a diffeomorphism
  from $N$ to $M$ such that $G^{*}\left( \xi_{0}^{-1} g(\xi_0) \right)$ is very close to $\xi_{0}^{-1} h(\xi_{0})$ in the $C^{0}$-sense.  
  In particular, $G^{*}\left(  g(\xi_0) \right)$ is close to $h(\xi_{0})$ in $C^{0}$-norm. 
  Since the flow on $N$ is smooth, we know that $h(\xi_0)$ has to be very close to $h(0)$ in $C^{0}$-sense also.  Therefore, we obtain the $C^{0}$-closeness between $g(\xi_0)$ and $h(0)$.
  Then we run the Ricci-DeTurck flow from $g(\xi_0)$ and $h(0)$ respectively, it follows from the strict parabolicity of the Ricci-DeTurck flow that such $C^0$-closeness is preserved until time $T$, 
  up to a multiplicative constant.
  Finally, we construct diffeomorphisms which pull back the Ricci-Deturk flows to the Ricci flows. This construction is standard in the proof of the uniqueness of
  the Ricci flow solution.   The proximity of Ricci-DeTurck flows then implies the closeness of the Ricci flows in $C^{0}$-sense. 
  The high-order derivatives approximation of the Ricci flow then follows from Shi's estimate(cf.~\cite{Shi}).

  The complete details of the proof can be found in Section~\ref{sec:cdepend}. 
\end{proof}

\begin{proof}[Outline proof of Theorem~\ref{thm:RG26_3}:] 

  It follows from the work of Colding(cf.~\cite{Co1} and ~\cite{Co2}) that the condition (\ref{eqn:RG26_1}) implies that $d_{GH}\left( (M^{m},g), (S^{m}, g_{round}) \right)$ is very small. 
  Applying Theorem~\ref{thmin:ML14_2} nearby the static normalized Ricci flow initiated from $(S^{m}, g_{round})$, we see that $g(1)$ is very close to $g_{round}$ in
  the $C^{5}$-topology. In particular, the curvature operator of $g(1)$ is sufficiently pinched. Therefore, one can apply Hamilton's work(cf.~\cite{Ha82},~\cite{Ha86} if $\dim \leq 4$)
  and Huisken's work(cf.~\cite{Huisken85} if $\dim \geq 5$) to obtain the exponential convergence of the normalized Ricci flow initiated from $g(1)$.
  Alternatively, one can apply other related convergence results(cf.~B\"ohm-Wilking~\cite{BoWil} and Brendle-Schoen~\cite{BrSch}, etc.) here to draw the same conclusion. 

  The same argument works for every $g(\xi)$ for each fixed $\xi$. Then the metric $g(\xi)$ and $g(\infty)$ are bi-Lipschitz with bi-Lipschitz constant almost $1$.
  The metric distortion estimate (\ref{eqn:RG26_2}) is basically induced by
  the distortion between $g(\xi)$ and $g=g(0)$, which can be estimated by the pseudo-locality theorem and the local almost Einstein condition implied by the given condition. 

  The detailed proof is contained in Section~\ref{sec:globalstability}. 
\end{proof}

\begin{proof}[Outline proof of Theorem~\ref{thm:RG26_5}:]  
  Note that condition (\ref{eqn:RG26_4}) implies that $(M^{n}, J)$ is biholomorphic to $\CP^{n}$(cf.~Zhang~\cite{KZhang}). As the metric class is proportional to $2\pi c_1(M, J)$, the volume condition 
  forces $g$ and $g_{FS}$ to locate in the same K\"ahler class.  In the current situation, the normalized Ricci flow initiated from $g$ is the standard Fano K\"ahler Ricci flow and thus  
  it exists immortally(cf. Cao~\cite{HDC}). The difficulty here is to obtain the metric distortion estimate (\ref{eqn:RG26_2}).
  Compared with the Riemannian version Theorem~\ref{thm:RG26_3}, an equivalence theorem similar to Colding's(cf.~\cite{Co1} and~\cite{Co2}) is absent here.
  We shall apply the techniques in Tian-Wang~\cite{TiWa} and Chen-Wang~\cite{CW17B} to overcome this difficulty.   In fact, by an improvement of Perelman's estimate(cf. Jiang~\cite{Jiang}), we can apply
  Chen-Wang's compactness theorem for the flow space time $M \times [0.5, 2]$.  As $\delta \to 0$, we know this flow space-time converges to a static(possibly singular) Einstein space-time, which can only be 
  the standard Fubini-Study space-time on $\CP^{n}$ by the partial-$C^{0}$-estimate and the stability condition satisfied by $\CP^{n}$(cf.~\cite{CDS1},~\cite{CDS2},~\cite{CDS3} and~\cite{CSW}).
  Consequently, the degeneration must be smooth. Therefore, if $\delta$ is sufficiently small,  the flow has positive bisectional curvature  on $M \times [0.5, 2]$.  
  Then the global exponential convergence of the flow follows from Chen-Tian~\cite{ChenTian}(See also Phong-Song-Weinkove~\cite{PhSoWe}).
  As $\delta \to 0$, we can obtain an almost Einstein sequence in the sense that $\int_{0}^{1}\int_{M} |Rc-2(n+1)g|dvdt \to 0$.  At each time $t_0 \in (0,1)$, we know from the previous argument that $g(t_0)$ converges
  smoothly to $g_{FS}$. For time $t=0$, we know $g(0)$ converges to a possibly singular Einstein manifold by Tian-Wang~\cite{TiWa}. A gap argument then shows that the limit of $g(0)$ must be smooth and
  be isometric to $g_{FS}$ also. This forces the almost Euclidean condition (\ref{eqn:RH17_4}) to be satisfied on a fixed scale. 
  Then we can apply pseudo-locality, i.e., Theorem~\ref{thmin:ML14_2}, and the almost Einstein condition
  to obtain the distance distortion estimate, in the spirit of section 4 of~\cite{TiWa}. 

  The detailed proof can be found in Section~\ref{sec:globalstability}. 
\end{proof}

\begin{proof}[Outline proof of Theorem~\ref{thm:RH19_5}:] 
  This basically follows from the application of the pseudo-locality theorem and the Hochard's construction~\cite{Hochard}.   Similar results were already obtained by F. He~\cite{HeFei}.  
  The Ricci flow is known to have a solution existing for a fixed short period of time, say on $[0, 1]$. The key point now is to extend this solution to an immortal solution. 
  We realize this by taking advantage of our version of pseudo-locality in terms of $\boldsymbol{\nu}$,  condition (\ref{eqn:RH17_4}). 
  Note that by our local functional monotonicity proved in~\cite{BWang17local}, it is not hard to see that $\boldsymbol{\nu}(M, g(t))$ is non-decreasing.  
  Therefore, at time $t=1$, we still have $\boldsymbol{\nu}(M, g(1)) \geq -\epsilon$, which guarantees the flow to exist for another unit time peoriod. 
  Repeating such steps, we obtain an immortal Ricci flow solution.   Then the curvature estimate (\ref{eqn:RH19_2}) and the injectivity radius estimate (\ref{eqn:RH19_3}) 
  follow naturally from (\ref{eqn:ML28_1}) and (\ref{eqn:RH27_4}). 
  We remark that the constants in (\ref{eqn:RH19_2}) and (\ref{eqn:RH19_3}) being sufficiently small is very important for our construction of the canonical diffeomorphism from $M$ to $\R^{m}$. 

  The readers are referred to Section~\ref{sec:topstable} for the complete proof and related further discussions. 
\end{proof} 

\begin{proof}[Outline proof of Theorem~\ref{thm:RG26_8}:]
The scalar curvature bound basically follows from the maximum principle and the evolution equation of $R$ and $R+|\nabla \dot{\varphi}|^2$ where $\dot{\varphi}$ is the local Ricci potential.
The $C^{1,\alpha}$-estimate of metrics follows from the scalar curvature bound together with the metric uniform bi-Lipschitz bound. 
The key difficulty is to show the uniform metric bi-Lipschitz equivalence in a short time period near the center of the ball.  
This equivalence is established through the combination of distance distortion estimates originated from Tian-Wang~\cite{TiWa} and the regularity estimates for K\"ahler metrics with bounded scalar
curvature in Chen-Cheng~\cite{ChenCheng1}. We shall focus on this key difficulty in this outline.  

 By the pseudo-locality theorem in the Riemannian setting, one should be able to construct uniform holomorphic charts(cf.~\cite{LiuSze}) for $t$ small. 
 We need to show at this time the metric $g(t)$ and $g(0)$ are uniformly bi-Lipschitz near the center part.  
 The proof follows from a contradiction argument. Suppose not, we can choose $t_0$ to be the first time such that the metric $g(t_0)$ is $(1+\xi)$-bi-Lipschitz equivalent to $g(0)$ at $(x_0, t_0)$ 
 for some fixed small $\xi$. Then we rescale to let $t_0=1$ and let $\epsilon \to 0$. The blowup limit of the initial balls will be a Euclidean space, in light of the assumed metric equivalence, 
 and the elliptic PDE estimates for bounded scalar curvature metric in a given holomorphic coordinate chart.   The elliptic PDE estimates together with the metric distortion induced from the almost Einstein 
 condition forces the identity map  $(M, g(1)) \mapsto (M, g(0))$ to converge to an isometry.  On the one hand, the metric $g(1)$ converges in smooth topology 
 to $\delta_{k\bar{l}}$, the standard Euclidean metric on $\C^{n}$.  On the other hand,  the metric $g(0)$ converges in $C^{1,\alpha}$-topology to a limit metric $\hat{g}$ on $\C^{n}$  which satisfies
 $\det \hat{g}=1$ on $\C^{n}$. The Evans-Krylov estimate for complex Monge-Amp\`{e}re equation then implies that $\hat{g}$ must be a constant matrix valued function on $\C^{n}$. 
 Applying the metric distortion estimate again, one obtains that $\hat{g}$ must also be $\delta_{k \bar{l}}$.  Since the convergence topology is $C^{1,\alpha}$,  we obtain that 
 the equivalence constant between $g(x_0, 0)$ and $g(x_0, 1)$ can be arbitrarily close to $1$, which contradicts the assumption. 

 The complete proof is written in more detail in Section~\ref{sec:kpseudo}.  
\end{proof}

\begin{proof}[Outline proof of Theorem~\ref{thm:RH16_1}:] 
  The $C^{0}$-convergence implies that under a fixed scale $r_0$, all the manifolds we consider have almost Euclidean isoperimetric constants.  
  In particular, we are able to apply the pseudo-locality theorem to run the K\"ahler Ricci flow initiated from $(M_i, g_i)$ for a fixed period of time $[0, \xi_0]$.
  Applying Theorem~\ref{thm:RG26_8}, we know $(M_i, g_i(\xi_0), J_i)$ converges to a smooth K\"ahler manifold $(M_{\infty}, g_{\infty}(\xi_{0}), J_{\infty})$ and
  $g_{i}$ converges in $C^{1,\alpha}$-topology to a limit metric $g_{\infty}(0)$, which is at least $C^{1,\alpha}$.  The Riemannian manifold $(M_{\infty}, g_{\infty}(0))$
  is isometric to the smooth Riemannian manifold $(N, g)$.  
  Since the holomorphic charts are automatically harmonic and the metrics have the best regularity under the harmonic charts, we know that $(M_{\infty}, g_{\infty}(0), J_{\infty})$ is a smooth K\"ahler manifold. 
  There is a smooth diffeomorphism $F: N \to M_{\infty}$ such that $J=F^{*} J_{\infty}$, $g=F^{*} g_{\infty}(0)$.

  For each $t \in [0, \xi_0]$, we show by the maximum principle that $\sigma_{a}-C t^{\beta} \leq R_{i} \leq \sigma_{b}+C t^{\beta}$ for some uniform constant $C$ and $\beta \in (0,1)$.
  Then the scalar curvature bounds naturally follow from the commutativity of taking $C^{0}$-limits with running the K\"ahler Ricci flow, and the aforementioned scalar curvature bounds along the evolution. 

  For further details of the proof and other related results, see Section~\ref{sec:kpseudo}. 
\end{proof}

Some notations frequently used in this paper are listed below with pointers to their definitions. \\

\noindent 
\textbf{List of important notations}

\begin{itemize}
\item $\boldsymbol{\mu}$, $\boldsymbol{\nu}$: local functionals with scalar curvature. Defined in (\ref{eqn:MJ16_1}).
\item $\bar{\boldsymbol{\mu}}$, $\bar{\boldsymbol{\nu}}$: local functionals without scalar curvature. Defined in (\ref{eqn:MJ16_2}). 
\item $\mathbf{er}_{\epsilon_0}$, $\mathbf{Ir}_{\epsilon_0}$, $\mathbf{vr}_{\epsilon_0}$: entropy radius, isoperimetric radius and volume radius. Defined in Definition~\ref{dfn:RH26_3}. 
\item $\mathbf{I}$: isoperimetric constant.  First appears in Lemma~\ref{lma:MJ25_1}.  
\item $m$: the default  real dimension of a Riemannian manifold $(M, g)$.
\item $\omega_m$: the volume of standard unit ball in $\R^m$.  Defined in Theorem~\ref{thm:RG26_3}. 
\item $\alpha_{m}$: a small dimensional constant defined in (\ref{eqn:RJ13_3}). 
\item $n$: the default complex dimension of a K\"ahler manifold $(M, g, J)$.   
\item $\Omega_{n}$: the volume of $(\CP^{n}, g_{FS})$.  Defined in Theorem~\ref{thm:RG26_3}. 
\item $\psi(\epsilon|m,  \gamma_{1}, \cdots, \gamma_{k})$: a small constant depending on parameters $m, \gamma_{1}, \cdots, \gamma_{k}$ and $\epsilon$ 
  such that $\displaystyle \lim_{\epsilon \to 0}\psi(\epsilon|m,  \gamma_{1}, \cdots, \gamma_{k})=0$.  First appears in Theorem~\ref{thm:RG26_3}.  
\item $f_{+}$: $\max\left\{ f,0 \right\}$.  Similar notation is defined for other functions. Defined in Theorem~\ref{thm:RD10_1}. 
\item $\log_{+}$: $\max \left\{ \log, 0 \right\}$. Defined in Proposition~\ref{prn:RG08_1}. 
\item $\square$:  Heat operator $\partial_t-\Delta$. Defined in Theorem~\ref{thm:RD10_1}.  
\item $cscK$: constant scalar curvature K\"ahler.  Defined in Theorem~\ref{thm:RH16_1}.  
\item $\mathcal{M}_{cscK}(n,r_0,\bar{\epsilon}, \sigma, D)$: moduli of cscK manifolds with proper bounds.  Defined in Theorem~\ref{thm:RG23_1}. 
\end{itemize}

 This paper is organized as follows. 
 In Section~\ref{sec:pre}, we list some elementary estimates of the Ricci flow and many properties of the local functionals.  
 Most of these properties were already proved in~\cite{BWang17local} except for some minor improvements. 
 In Section~\ref{sec:rigidity}, we prove several rigidity theorems as the preparation for the pseudo-locality theorem.   In particular, we generalize Anderson's gap theorem for Ricci-flat metrics to gap theorems for
 various critical metrics, including Einstein metrics and cscK metrics, in terms of the $\boldsymbol{\nu}$-functionals.  In Section~\ref{sec:pseudo}, we prove Theorem~\ref{thmin:ML14_2} and
 discuss its relationship with the known pseudo-locality theorems. 
 In Section~\ref{sec:topstable}, we show distance distortion estimate and topological stability, based on the almost-Einstein techniques in~\cite{TiWa}. 
 We also discuss the asymptotic behavior of the immortal Ricci flow in Theorem~\ref{thm:RH19_5} in detail.  
 In Section~\ref{sec:cdepend}, we prove the continuous dependence of the Ricci flow with respect to the initial metric in Gromov-Hausdorff topology, i.e., Theorem~\ref{thm:RG05_3}.
 In Section~\ref{sec:globalstability}, we extend the continuous dependence to the immortal Ricci flow solutions, with which we prove Theorem~\ref{thm:RG26_3} and Theorem~\ref{thm:RG26_5}.  We also 
 construct examples where the bi-H\"older distance estimate cannot be improved to bi-Lipschitz distance estimate.
 Then in Section~\ref{sec:kpseudo}, we prove the K\"ahler version of the pseudo-locality theorem, i.e., Theorem~\ref{thm:RG26_8}, and apply it to prove Theorem~\ref{thm:RH16_1}.
 Finally, we discuss the future study and make several conjectures in Section~\ref{sec:further}.\\

{\bf Acknowledgements}:  
This paper is partially supported by NSFC-11971452 and research fund from USTC.
The author would like to thank Weiyong He,  Shaosai Huang, Haozhao Li, Yu Li and Kai Zheng for helpful discussions.
Particular thanks go to Shaosai Huang and Yu Li for carefully reading the early versions of this manuscript and
providing many helpful suggestions to improve the exposition.

\section{Preliminaries}
\label{sec:pre}

In this paper, we study Ricci flows on Riemannian manifolds of dimension $m \geq 2$. For convenience of notation, we define
\begin{align}
 \alpha_{m} \coloneqq \frac{1}{100m}.   \label{eqn:RJ13_3}
\end{align}
Each Ricci flow $\left\{ (M, g(t)), 0 <t <T \right\}$ has bounded $|Rm|$ on every compact subset $I$ of $(0,T)$. 
Namely, we always assume 
\begin{align}
  \sup_{x \in M, t \in I} |Rm|(x,t)<\infty, \label{eqn:RJ18_1} 
\end{align}
where $|Rm|(x,t)$ means $|Rm|_{g(t)}(x)$.

The local entropy functionals are the basic tools we use.  
For the convenience of the readers, we shall quickly review their definitions and elementary properties in this section.
Let $(M, g)$ be a complete Riemannian manifold of dimension $m$, and $\Omega$ be a connected, open subset of $M$ with smooth boundary.
Then we can regard $(\Omega, \partial \Omega, g)$ as a smooth manifold with boundary.  Let $\boldsymbol{a}$ be a smooth function on $\bar{\Omega}$, and $\tau$ be a positive constant.
Then we define
\begin{align}
  &\mathscr{S}(\Omega) \coloneqq \left\{ \varphi \left| \varphi \in W_0^{1,2}(\Omega), \quad \varphi \geq 0,  \quad \int_{\Omega} \varphi^2 dv=1 \right. \right\}, \label{eqn:MJ16_a}\\
  &\mathcal{W}^{(\boldsymbol{a})}(\Omega, g, \varphi, \tau) \coloneqq -m -\frac{m}{2} \log (4\pi \tau) + \int_{\Omega} \left\{ \tau\left(\boldsymbol{a}\varphi^2+4\left|\nabla \varphi \right|^2 \right)-2\varphi^2 \log \varphi \right\}dv, \label{eqn:MJ16_b} \\
  &\boldsymbol{\mu}^{(\boldsymbol{a})} \left( \Omega, g, \tau \right) \coloneqq \inf_{\varphi \in \mathscr{S}(\Omega)} \mathcal{W}^{(\boldsymbol{a})}(\Omega, g, \varphi, \tau), \label{eqn:MJ16_c}\\
  &\boldsymbol{\nu}^{(\boldsymbol{a})} \left( \Omega, g, \tau \right) \coloneqq \inf_{s \in (0, \tau]} \boldsymbol{\mu}^{(\boldsymbol{a})} \left( \Omega, g, s \right),  \label{eqn:MJ16_d}\\
  &\boldsymbol{\nu}^{(\boldsymbol{a})} \left( \Omega, g\right) \coloneqq \inf_{\tau \in (0, \infty)} \boldsymbol{\mu}^{(\boldsymbol{a})} \left( \Omega, g, \tau \right).  \label{eqn:MJ16_e}
\end{align} 
For each smooth function $\boldsymbol{a}$ and positive number $\tau>0$, 
$\boldsymbol{\mu}^{(\boldsymbol{a})}(\Omega, g, \tau)$ is achieved by a function $\varphi \in W_{0}^{1,2}(\Omega)$ whenever $\Omega$ is bounded.    
Moreover, $\varphi$ is positive and smooth in $\Omega$, and $\varphi$ satisfies the following Euler-Lagrangian equation
\begin{align}
  -4\tau \Delta \varphi  + \tau \boldsymbol{a} \varphi -2\varphi \log \varphi - \left( \boldsymbol{\mu}^{(\boldsymbol{a})} +m+\frac{m}{2}\log (4\pi \tau) \right) \varphi=0.
  \label{eqn:GH01_1}  
\end{align}
We call $\varphi$ as the minimizer function of $\boldsymbol{\mu}^{(\boldsymbol{a})}(\Omega, g, \tau)$.  
The cases $\boldsymbol{a}=0$ and $R$ are the most important cases for the application of $\boldsymbol{\mu}^{(\boldsymbol{a})}$ in the study of the Ricci flow.
For simplicity of notation, we define
\begin{align}
   &\boldsymbol{\mu} \coloneqq \boldsymbol{\mu}^{(R)}, \quad \boldsymbol{\nu} \coloneqq  \boldsymbol{\nu}^{(R)};   \label{eqn:MJ16_1}\\
   &\bar{\boldsymbol{\mu}} \coloneqq  \boldsymbol{\mu}^{(0)}, \quad \bar{\boldsymbol{\nu}} \coloneqq  \boldsymbol{\nu}^{(0)}.  \label{eqn:MJ16_2}
\end{align}

Now we list the elementary properties of the local functionals, which will be repeatedly used in this paper.
Except Proposition~\ref{prn:RG08_1}, part of Proposition~\ref{prn:RH31_1} and part of Theorem~\ref{thm:CA02_3},  
all the results listed in this section were already known or proved in~\cite{BWang17local}.

\begin{proposition}[\textbf{Monotonicity induced by inclusion}, cf. Proposition 2.1 of~\cite{BWang17local}]
  Suppose $\Omega_1, \Omega_2$ are bounded domains of $M$ satisfying $\Omega_1 \subsetneq \Omega_2$.  Then we have
  \begin{align}
   &\boldsymbol{\mu}^{(\boldsymbol{a})}(\Omega_1, \tau) > \boldsymbol{\mu}^{(\boldsymbol{a})}(\Omega_2, \tau),  \label{eqn:CA01_1}\\
   &\boldsymbol{\nu}^{(\boldsymbol{a})}(\Omega_1, \tau) \geq \boldsymbol{\nu}^{(\boldsymbol{a})}(\Omega_2, \tau).  \label{eqn:CA01_2}
  \end{align}
\label{prn:CA01_2}  
\end{proposition}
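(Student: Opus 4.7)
The plan is to exploit the variational characterization of $\boldsymbol{\mu}^{(\boldsymbol{a})}$ and the key observation that zero-extension gives a natural inclusion $\mathscr{S}(\Omega_1) \hookrightarrow \mathscr{S}(\Omega_2)$. Since $\Omega_1 \subsetneq \Omega_2$, any test function $\varphi \in W_0^{1,2}(\Omega_1)$, when extended by zero to $\Omega_2 \setminus \Omega_1$, lies in $W_0^{1,2}(\Omega_2)$, preserves the normalization $\int \varphi^2 \, dv = 1$, and yields the same value of $\mathcal{W}^{(\boldsymbol{a})}(\cdot, g, \varphi, \tau)$ (since the integrand vanishes wherever $\varphi=0$). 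Taking the infimum over the smaller class can only give a larger value, so the non-strict inequality $\boldsymbol{\mu}^{(\boldsymbol{a})}(\Omega_1, \tau) \geq \boldsymbol{\mu}^{(\boldsymbol{a})}(\Omega_2, \tau)$ is immediate.

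The main step is upgrading this to a \emph{strict} inequality. Here I would use the regularity statement recorded right after (\ref{eqn:GH01_1}): the infimum $\boldsymbol{\mu}^{(\boldsymbol{a})}(\Omega_2, g, \tau)$ is achieved by a minimizer $\varphi_2$ which is smooth and strictly positive throughout the open set $\Omega_2$. Let $\varphi_1 \in \mathscr{S}(\Omega_1)$ achieve $\boldsymbol{\mu}^{(\boldsymbol{a})}(\Omega_1, \tau)$ and denote by $\tilde\varphi_1$ its zero-extension to $\Omega_2$. Then $\tilde\varphi_1 \in \mathscr{S}(\Omega_2)$ but $\tilde\varphi_1 \equiv 0$ on the nonempty open set $\Omega_2 \setminus \overline{\Omega_1}$, so $\tilde\varphi_1 \neq \varphi_2$. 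Because $\varphi_2$ satisfies the Euler--Lagrange equation (\ref{eqn:GH01_1}), which is a strictly convex variational problem in the reformulation $u = \varphi^2$ (or equivalently, because any competitor that fails to solve the equation yields a strictly larger energy), one has
\begin{equation*}
\boldsymbol{\mu}^{(\boldsymbol{a})}(\Omega_1, \tau) = \mathcal{W}^{(\boldsymbol{a})}(\Omega_1, g, \varphi_1, \tau) = \mathcal{W}^{(\boldsymbol{a})}(\Omega_2, g, \tilde\varphi_1, \tau) > \mathcal{W}^{(\boldsymbol{a})}(\Omega_2, g, \varphi_2, \tau) = \boldsymbol{\mu}^{(\boldsymbol{a})}(\Omega_2, \tau),
\end{equation*}
which is precisely (\ref{eqn:CA01_1}). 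The inequality for $\boldsymbol{\nu}^{(\boldsymbol{a})}$ then follows by taking the infimum of the $\boldsymbol{\mu}^{(\boldsymbol{a})}$ inequality over $s \in (0,\tau]$, giving (\ref{eqn:CA01_2}); note strictness is generally lost under an infimum, which is consistent with the statement.

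The main obstacle, and the only nontrivial point, is justifying strict inequality as opposed to merely $\geq$. The cleanest way is to invoke uniqueness/strict minimality of $\varphi_2$ among $\mathscr{S}(\Omega_2)$: since $\varphi_2$ is smooth and positive on the open set $\Omega_2$ while $\tilde\varphi_1$ vanishes identically on $\Omega_2 \setminus \overline{\Omega_1}$, the two cannot agree and so $\tilde\varphi_1$ is not a minimizer, forcing the strict drop. If one prefers to avoid uniqueness, the same conclusion can be drawn by taking a small smooth perturbation supported in $\Omega_2 \setminus \overline{\Omega_1}$: a direct computation of the first variation of $\mathcal{W}^{(\boldsymbol{a})}$ at $\tilde\varphi_1$ along a bump function $\eta$ compactly supported in $\Omega_2 \setminus \overline{\Omega_1}$ shows that the functional strictly decreases (the log term $-2\varphi^2 \log \varphi$ has $-\infty$ slope at $\varphi = 0^+$), so $\tilde\varphi_1$ cannot minimize over $\mathscr{S}(\Omega_2)$. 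Either route closes the proof; the rest is bookkeeping.
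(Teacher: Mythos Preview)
Your argument is correct and follows the standard approach (which is also the one in \cite{BWang17local}, to which the paper defers): zero-extension gives $\mathscr{S}(\Omega_1)\hookrightarrow\mathscr{S}(\Omega_2)$, yielding $\geq$, and strictness follows because any minimizer on $\Omega_2$ is strictly positive there while the extended $\tilde\varphi_1$ vanishes on the nonempty open set $\Omega_2\setminus\overline{\Omega_1}$, so $\tilde\varphi_1$ cannot be a minimizer.

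One small caution: your parenthetical claim that the problem is ``strictly convex in the reformulation $u=\varphi^2$'' is not quite right as stated---the entropy term $-u\log u$ is concave, and uniqueness of the minimizer is not asserted in the paper. Fortunately you do not need uniqueness at all: the paper records (just before (\ref{eqn:GH01_1})) that \emph{every} minimizer is smooth and positive on $\Omega_2$, so $\tilde\varphi_1$, which vanishes on an open set, fails to minimize and the strict inequality follows immediately. Your alternative first-variation argument also works. The deduction of (\ref{eqn:CA01_2}) by taking $\inf_{s\in(0,\tau]}$ is correct, including the observation that strictness need not survive the infimum.
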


\begin{proposition}[\textbf{Non-positivity of $\boldsymbol{\nu}^{(\boldsymbol{a})}$}, cf. Proposition 2.2 of~\cite{BWang17local}]
For each bounded domain $\Omega \subset M$, we have
\begin{align}
  \boldsymbol{\nu}^{(\boldsymbol{a})}(\Omega, \tau) \leq 0.     \label{eqn:CA01_3}
\end{align}
\label{prn:CA01_3}
\end{proposition}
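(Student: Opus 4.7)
The plan is to construct, for each $\tau>0$, an explicit family of test functions $\varphi_s \in \mathscr{S}(\Omega)$ with arbitrarily small scale parameter $s \in (0,\tau]$ such that $\mathcal{W}^{(\boldsymbol{a})}(\Omega,g,\varphi_s,s) \to 0$ as $s \to 0^+$. Since $\boldsymbol{\nu}^{(\boldsymbol{a})}(\Omega,\tau)=\inf_{s\in(0,\tau]}\boldsymbol{\mu}^{(\boldsymbol{a})}(\Omega,g,s)$ and $\boldsymbol{\mu}^{(\boldsymbol{a})}(\Omega,g,s)\leq \mathcal{W}^{(\boldsymbol{a})}(\Omega,g,\varphi_s,s)$ by definition, this immediately gives $\boldsymbol{\nu}^{(\boldsymbol{a})}(\Omega,\tau)\leq 0$. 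The motivating observation is that on $(\R^{m}, g_{E})$ the standard Gaussian density $u=(4\pi s)^{-m/2}e^{-|x|^2/(4s)}$ makes $\mathcal{W}^{(0)}$ vanish identically; a rescaled and cut-off version of this Gaussian on $\Omega$ should therefore give values of $\mathcal{W}^{(\boldsymbol{a})}$ close to $0$ once $s$ is small.

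Concretely, I would fix an interior point $x_0\in\Omega$, pick $r_0>0$ so small that $B_{g}(x_0,r_0)\subset\Omega$ and normal coordinates are valid on this ball, then let $\eta:[0,\infty)\to[0,1]$ be a fixed smooth cutoff with $\eta\equiv 1$ on $[0,1/2]$ and $\eta\equiv 0$ on $[1,\infty)$. For small $s$, define
\begin{equation*}
  \tilde{\varphi}_s(x) \coloneqq \eta\!\left(\frac{d_{g}(x,x_0)}{r_0}\right)(4\pi s)^{-m/4}\exp\!\left(-\frac{d_{g}(x,x_0)^2}{8s}\right),\qquad \varphi_s \coloneqq Z_s^{-1/2}\,\tilde{\varphi}_s,
\end{equation*}
where $Z_s=\int_\Omega \tilde{\varphi}_s^2\, dv$ is the normalization constant, so that $\varphi_s\in\mathscr{S}(\Omega)$.

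Next I would expand each term in $\mathcal{W}^{(\boldsymbol{a})}(\Omega,g,\varphi_s,s)$. In the normal coordinates, $dv=(1+O(|x|^2))\,dx$ and $d_g(x,x_0)=|x|+O(|x|^3)$, so the bulk integrals differ from their Euclidean counterparts by factors $1+O(s)$; the cut-off tail, by Gaussian concentration, contributes at most $O(s^N)$ for every $N$. Therefore $Z_s=1+O(s)$, and a direct computation (using $\int u|x|^2\,dx=2ms$ on $\R^m$) gives
\begin{equation*}
 \int_\Omega\bigl\{4s|\nabla\varphi_s|^2-2\varphi_s^2\log\varphi_s\bigr\}dv = m+\frac{m}{2}\log(4\pi s)+O(s).
\end{equation*}
The scalar term is controlled by $s\int_\Omega \boldsymbol{a}\varphi_s^2\, dv \leq s\,\|\boldsymbol{a}\|_{C^0(\bar{\Omega})}\to 0$. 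Combining these with the explicit constants $-m-\tfrac{m}{2}\log(4\pi s)$ in the definition of $\mathcal{W}^{(\boldsymbol{a})}$ yields $\mathcal{W}^{(\boldsymbol{a})}(\Omega,g,\varphi_s,s)=O(s)\to 0$ as $s\to 0^+$.

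The main technical point is verifying that the three sources of error---the non-Euclidean volume form, the deviation between $d_g$ and $|x|$, and the cutoff $\eta$---all contribute terms that tend to $0$ with $s$, so that the Euclidean model computation survives intact; after that, the conclusion $\boldsymbol{\nu}^{(\boldsymbol{a})}(\Omega,\tau)\leq 0$ follows by taking $s\to 0^+$ along any sequence in $(0,\tau]$. No obstruction arises from $\boldsymbol{a}$ since it is smooth and hence bounded on $\bar{\Omega}$, which is why the same proof works uniformly for every admissible choice of $\boldsymbol{a}$ (in particular for $\boldsymbol{a}=0$ and $\boldsymbol{a}=R$).
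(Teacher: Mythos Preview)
Your proposal is correct and is precisely the standard argument for this fact: localize the Euclidean Gaussian minimizer via normal coordinates and a cutoff, verify that the three error sources (volume form, distance expansion, cutoff tail) are all $O(s)$ or smaller, and conclude $\limsup_{s\to 0^+}\boldsymbol{\mu}^{(\boldsymbol{a})}(\Omega,g,s)\leq 0$. The paper does not reproduce a proof here; it simply cites Proposition~2.2 of~\cite{BWang17local}, where exactly this Gaussian test-function computation is carried out, so your approach coincides with the one the author has in mind.
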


\begin{proposition}[\textbf{Continuity of $\boldsymbol{\mu}^{(\boldsymbol{a})}$}, cf. Proposition 2.3 of~\cite{BWang17local}]
 Suppose $D$ is a possibly unbounded domain of $M$ with an exhaustion $D=\cup_{i=1}^{\infty} \Omega_i$ by bounded domains. 
 In other words, we have 
 \begin{align*}
    \Omega_1 \subset \Omega_2 \subset \cdots \subset \Omega_k \subset \cdots \subset D
 \end{align*}
 and each $\Omega_i$ is a bounded domain.  Then for each $\tau>0$ we have 
\begin{align}
  \boldsymbol{\mu}^{(\boldsymbol{a})}(D, \tau)= \lim_{i \to \infty} \boldsymbol{\mu}^{(\boldsymbol{a})}(\Omega_i, \tau).   \label{eqn:CA02_3}
\end{align}
\label{prn:CA02_1}
\end{proposition}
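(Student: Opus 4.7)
The plan is to verify $L \coloneqq \lim_{i\to\infty} \boldsymbol{\mu}^{(\boldsymbol{a})}(\Omega_i, \tau) = \boldsymbol{\mu}^{(\boldsymbol{a})}(D, \tau)$ by establishing the two inequalities separately. The direction $L \geq \boldsymbol{\mu}^{(\boldsymbol{a})}(D, \tau)$ is nearly immediate from Proposition~\ref{prn:CA01_2}: each $\varphi_i \in \mathscr{S}(\Omega_i)$ extends by zero to an element of $\mathscr{S}(D)$ with identical $\mathcal{W}^{(\boldsymbol{a})}$-value, so $\boldsymbol{\mu}^{(\boldsymbol{a})}(\Omega_i,\tau) \geq \boldsymbol{\mu}^{(\boldsymbol{a})}(D,\tau)$; strict monotonicity of the sequence in $i$ (again from Proposition~\ref{prn:CA01_2}) then ensures $L$ exists and dominates $\boldsymbol{\mu}^{(\boldsymbol{a})}(D, \tau)$.

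For the reverse inequality I would, for each $\epsilon > 0$, produce a non-negative smooth function $\psi$ with compact support in $D$, $\|\psi\|_{L^2} = 1$, and $\mathcal{W}^{(\boldsymbol{a})}(D, g, \psi, \tau) \leq \boldsymbol{\mu}^{(\boldsymbol{a})}(D, \tau) + \epsilon$. With such $\psi$ in hand, its support lies inside some $\Omega_{i_0}$ by the exhaustion property, and regarding $\psi$ as an element of $\mathscr{S}(\Omega_i)$ for every $i \geq i_0$ yields
\begin{align*}
  \boldsymbol{\mu}^{(\boldsymbol{a})}(\Omega_i, \tau) \leq \mathcal{W}^{(\boldsymbol{a})}(\Omega_i, g, \psi, \tau) = \mathcal{W}^{(\boldsymbol{a})}(D, g, \psi, \tau) \leq \boldsymbol{\mu}^{(\boldsymbol{a})}(D, \tau) + \epsilon,
\end{align*}
so passing $i \to \infty$ and then $\epsilon \to 0$ closes the argument.

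The main obstacle is therefore the density claim reducing the infimum over $\mathscr{S}(D)$ to an infimum over non-negative $\psi \in C_c^\infty(D)$ with $\|\psi\|_{L^2}=1$. Starting from a nearly optimal $\varphi \in \mathscr{S}(D)$, I would use that $W_0^{1,2}(D)$ is by definition the $W^{1,2}$-closure of $C_c^\infty(D)$, replace the approximants by their absolute values (a $1$-Lipschitz operation on $W^{1,2}$), renormalize in $L^2$, and argue continuity of $\mathcal{W}^{(\boldsymbol{a})}$ along the resulting sequence. The Dirichlet term and the $\boldsymbol{a}$-potential term pose no difficulty on the bounded set containing all supports. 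The subtle ingredient is the entropy $\int \psi^2 \log \psi\, dv$: on $\{\psi \geq 1\}$ I would exploit the pointwise bound $\psi^2 \log \psi \leq C_p\, \psi^p$ for any $p > 2$ together with the Sobolev embedding $W^{1,2} \hookrightarrow L^p$ on bounded domains (valid for some $p > 2$ in every dimension $m \geq 2$) to upgrade strong $W^{1,2}$ convergence to $L^p$ convergence; on $\{0 \leq \psi < 1\}$ the integrand is uniformly bounded in $[-\tfrac{1}{2e}, 0]$, so dominated convergence applies after passing to an a.e.\ convergent subsequence. Combining these two regions gives $L^1$-continuity of the entropy along the approximation, hence continuity of $\mathcal{W}^{(\boldsymbol{a})}$, and completes the proof.
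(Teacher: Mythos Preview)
The paper does not supply its own proof of this proposition; it is simply quoted from \cite{BWang17local}.  Your variational argument---one inequality from inclusion monotonicity, the other from density of compactly supported competitors in $\mathscr{S}(D)$---is exactly the standard route, and it is what one expects the cited proof to contain.

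One point deserves tightening.  You assert that the Dirichlet, potential, and entropy terms are handled ``on the bounded set containing all supports,'' and your dominated-convergence step on $\{\psi<1\}$ relies on the constant bound $\tfrac{1}{2e}$ being integrable there.  But a generic $W^{1,2}$-approximating sequence $\psi_n\in C_c^\infty(D)$ need not have supports contained in a single bounded set, so this claim is not automatic.  The clean fix is a two-stage approximation: first multiply the near-minimizer $\varphi$ by a cutoff supported in some fixed $\Omega_N$ (controlling the change in $\mathcal{W}^{(\boldsymbol{a})}$ via $\|\varphi\|_{W^{1,2}(D\setminus\Omega_{N-1})}\to 0$), then approximate the truncated function inside $\Omega_N$ by $C_c^\infty(\Omega_N)$.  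All subsequent analysis then takes place on the fixed bounded domain $\Omega_N$, where your Sobolev embedding and dominated-convergence arguments go through verbatim.  With that adjustment your proof is complete.
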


\begin{corollary}[\textbf{Continuity of local-$\boldsymbol{\mu}$-functional under $C^{\infty}$-Cheeger-Gromov convergence}, cf. Corollary 2.5 of~\cite{BWang17local}]
 Suppose $(M_i^m, x_i, g_i)$ is a sequence of pointed Riemannian manifolds such that
 \begin{align}
  (M_i^m, x_i, g_i)  \longright{C^{\infty}-Cheeger-Gromov} (M_{\infty}^m, x_{\infty}, g_{\infty}).     \label{eqn:CB19_4}
 \end{align}
 Suppose $\Omega_{\infty}$ is a bounded open set in $M_{\infty}$ with smooth boundary, $\Omega_i \subset M_i$ are the sets converging to $\Omega_{\infty}$
 along the convergence (\ref{eqn:CB19_4}).  Then for each $\tau>0$ we have
 \begin{align}
    \lim_{i \to \infty} \boldsymbol{\mu}(\Omega_i, g_i, \tau)=\boldsymbol{\mu}(\Omega_{\infty}, g_{\infty}, \tau).  \label{eqn:CB19_6}
 \end{align}
\label{cly:CB19_2} 
\end{corollary}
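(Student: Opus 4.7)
The plan is to separate the two sources of variation in the statement: variation of the underlying metric on a fixed domain, and variation of the domain with the metric held fixed. Let $\Phi_i : U_i \subset M_\infty \to V_i \subset M_i$ be the diffeomorphisms from the $C^{\infty}$-Cheeger-Gromov convergence in (\ref{eqn:CB19_4}), and set $\tilde g_i \coloneqq \Phi_i^{*} g_i$, so $\tilde g_i \to g_\infty$ smoothly on compacta of $M_\infty$. By the hypothesis that $\Omega_i$ converges to $\Omega_\infty$, the sets $\tilde \Omega_i \coloneqq \Phi_i^{-1}(\Omega_i)$ are defined for large $i$ and approximate $\Omega_\infty$. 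Using the smoothness of $\partial\Omega_\infty$, for each small $\delta > 0$ choose bounded smooth domains $\Omega_\delta^- \subsetneq \Omega_\infty \subsetneq \Omega_\delta^+$ with $\Omega_\delta^- \nearrow \Omega_\infty$ and $\Omega_\delta^+ \searrow \bar\Omega_\infty$ as $\delta \to 0$. For $i$ sufficiently large, $\Omega_\delta^- \subset \tilde\Omega_i \subset \Omega_\delta^+$, so inclusion monotonicity (Proposition~\ref{prn:CA01_2}) together with diffeomorphism invariance of $\boldsymbol{\mu}$ yields the sandwich
\begin{align*}
\boldsymbol{\mu}(\Omega_\delta^+, \tilde g_i, \tau) \;\le\; \boldsymbol{\mu}(\Omega_i, g_i, \tau) \;\le\; \boldsymbol{\mu}(\Omega_\delta^-, \tilde g_i, \tau).
\end{align*}

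The next step is to establish continuity in the metric on a fixed smooth bounded domain $\Omega$: if $\tilde g_i \to g_\infty$ in $C^{\infty}(\bar\Omega)$, then $\boldsymbol{\mu}(\Omega, \tilde g_i, \tau) \to \boldsymbol{\mu}(\Omega, g_\infty, \tau)$. For the upper semicontinuity I would take the smooth minimizer $\varphi_\infty$ of $\boldsymbol{\mu}(\Omega, g_\infty, \tau)$, which vanishes on $\partial\Omega$ by (\ref{eqn:GH01_1}) and elliptic boundary regularity, renormalize it to have unit $L^2$ norm with respect to $\tilde g_i$ (a correction factor $1 + o(1)$), and use it as a test function in $\mathcal{W}^{(R)}(\Omega, \tilde g_i, \cdot, \tau)$; since $dv$, $R$, and $|\nabla \varphi_\infty|^2$ depend smoothly on the metric, the functional values converge. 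For the lower semicontinuity I would let $\varphi_i$ be the minimizer for $\boldsymbol{\mu}(\Omega, \tilde g_i, \tau)$. The minimizer values are bounded (for instance, using a fixed cut-off as test function on both sides), hence (\ref{eqn:GH01_1}) gives uniform $L^\infty$ bounds on $\varphi_i$ via a Moser iteration that absorbs the nonlinearity $-2\varphi_i \log \varphi_i$; then standard linear elliptic Schauder theory up to the boundary provides uniform $C^{k}(\bar\Omega)$ bounds, and a subsequence of $\varphi_i$ converges smoothly to some $\varphi_\infty \in \mathscr{S}(\Omega)$, yielding $\mathcal{W}^{(R)}(\Omega, g_\infty, \varphi_\infty, \tau) = \lim \boldsymbol{\mu}(\Omega, \tilde g_i, \tau) \ge \boldsymbol{\mu}(\Omega, g_\infty, \tau)$.

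Combining these two inputs, I send $i \to \infty$ in the sandwich at fixed $\delta$ and obtain
\begin{align*}
\boldsymbol{\mu}(\Omega_\delta^+, g_\infty, \tau) \;\le\; \liminf_{i \to \infty} \boldsymbol{\mu}(\Omega_i, g_i, \tau) \;\le\; \limsup_{i \to \infty} \boldsymbol{\mu}(\Omega_i, g_i, \tau) \;\le\; \boldsymbol{\mu}(\Omega_\delta^-, g_\infty, \tau).
\end{align*}
Proposition~\ref{prn:CA02_1} applied to the increasing exhaustion $\Omega_\delta^- \nearrow \Omega_\infty$ gives $\boldsymbol{\mu}(\Omega_\delta^-, g_\infty, \tau) \to \boldsymbol{\mu}(\Omega_\infty, g_\infty, \tau)$ as $\delta \to 0$. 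For the decreasing family $\Omega_\delta^+ \searrow \Omega_\infty$, I would extend the minimizer $\varphi_\infty$ of $\boldsymbol{\mu}(\Omega_\infty, g_\infty, \tau)$ by zero; the smoothness of $\partial\Omega_\infty$ and the vanishing of $\varphi_\infty$ on $\partial\Omega_\infty$ ensure this extension lies in $W_0^{1,2}(\Omega_\delta^+)$, and using it as a test function gives $\boldsymbol{\mu}(\Omega_\delta^+, g_\infty, \tau) \ge \boldsymbol{\mu}(\Omega_\infty, g_\infty, \tau) + o(1)$, which combined with inclusion monotonicity closes the squeeze.

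The step I expect to be the main obstacle is the lower semicontinuity in the metric-continuity claim of paragraph two: one must extract a smooth subsequential limit of the minimizers $\varphi_i$ all the way up to $\partial\Omega$, and the non-linear entropy term $-2\varphi \log \varphi$ in (\ref{eqn:GH01_1}) is the source of the only serious difficulty. Once the $L^\infty$ bound on $\varphi_i$ is in hand, the nonlinearity becomes a Lipschitz lower-order term in a linear elliptic equation with smoothly converging coefficients, after which Schauder estimates and the standard compactness/diagonal argument complete the proof.
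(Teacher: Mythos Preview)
The paper does not supply its own proof of this corollary; it is quoted from \cite{BWang17local} as background in Section~\ref{sec:pre}, so there is no in-paper argument to compare against. Your overall architecture---pull back via $\Phi_i$, sandwich $\tilde\Omega_i$ between fixed smooth domains $\Omega_\delta^{\pm}$, prove metric-continuity of $\boldsymbol{\mu}$ on a fixed domain by test-function/minimizer-compactness, then let $\delta\to 0$---is sound and is the standard route.

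There is, however, a genuine slip in your final paragraph. Extending the minimizer $\varphi_\infty$ of $\boldsymbol{\mu}(\Omega_\infty,g_\infty,\tau)$ by zero and plugging it into $\mathcal{W}^{(R)}$ on $\Omega_\delta^+$ yields
\[
\boldsymbol{\mu}(\Omega_\delta^+,g_\infty,\tau)\;\le\;\mathcal{W}^{(R)}(\Omega_\delta^+,g_\infty,\tilde\varphi_\infty,\tau)
\;=\;\boldsymbol{\mu}(\Omega_\infty,g_\infty,\tau),
\]
i.e.\ the inequality goes the \emph{wrong} way (this is just inclusion monotonicity again). What you actually need is $\liminf_{\delta\to 0}\boldsymbol{\mu}(\Omega_\delta^+,g_\infty,\tau)\ge\boldsymbol{\mu}(\Omega_\infty,g_\infty,\tau)$, and a test function coming from the smaller domain cannot deliver that.

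The fix is to reuse the minimizer-compactness argument you already outlined for the metric variation: take the minimizers $\psi_\delta$ of $\boldsymbol{\mu}(\Omega_\delta^+,g_\infty,\tau)$, observe that they all live in a fixed $\Omega_{\delta_0}^+$ and satisfy (\ref{eqn:GH01_1}) with uniformly bounded $\boldsymbol{\mu}$-values, obtain uniform $C^{k}(\bar\Omega_{\delta_0}^+)$ bounds, and pass to a subsequential limit $\psi$. The limit is supported in $\cap_\delta \Omega_\delta^+=\bar\Omega_\infty$, and since $\partial\Omega_\infty$ is smooth one has $\psi\in W_0^{1,2}(\Omega_\infty)$; hence $\boldsymbol{\mu}(\Omega_\infty,g_\infty,\tau)\le\mathcal{W}^{(R)}(\Omega_\infty,g_\infty,\psi,\tau)=\lim_\delta\boldsymbol{\mu}(\Omega_\delta^+,g_\infty,\tau)$, which closes the squeeze. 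Alternatively you can bypass $\Omega_\delta^+$ entirely and run this compactness argument directly on the minimizers of $\boldsymbol{\mu}(\tilde\Omega_i,\tilde g_i,\tau)$, handling domain and metric variation simultaneously for the $\liminf$ direction.
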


\begin{lemma}[cf. Lemma 3.1 of~\cite{BWang17local}]
  Suppose $-\underline{\Lambda} \leq R \leq \bar{\Lambda}$ on $\Omega$. Then for each $\tau>0$ we have
 \begin{align}
  &\boldsymbol{\mu}(\Omega,\tau) - \bar{\Lambda} \tau \leq \bar{\boldsymbol{\mu}}(\Omega,\tau) \leq \boldsymbol{\mu}(\Omega,\tau) +\underline{\Lambda} \tau,  \label{eqn:MJ16_6}\\
  &\boldsymbol{\nu}(\Omega,\tau) -\bar{\Lambda}  \tau \leq \bar{\boldsymbol{\nu}}(\Omega,\tau) \leq \boldsymbol{\nu}(\Omega,\tau)+ \underline{\Lambda} \tau.  \label{eqn:MJ16_7}
 \end{align}
\label{lma:MJ16_1} 
\end{lemma}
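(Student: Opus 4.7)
The proof is essentially a direct book-keeping exercise from the definitions, so the plan is short. The central observation is that for any admissible test function $\varphi \in \mathscr{S}(\Omega)$ the difference between the two $\mathcal{W}$-functionals is
\[
  \mathcal{W}^{(R)}(\Omega,g,\varphi,\tau) - \mathcal{W}^{(0)}(\Omega,g,\varphi,\tau) = \tau \int_{\Omega} R \varphi^{2}\, dv,
\]
and, since $\int_{\Omega} \varphi^{2}\, dv = 1$, the pointwise bound $-\underline{\Lambda} \leq R \leq \bar{\Lambda}$ gives
\[
  -\underline{\Lambda}\tau \;\leq\; \mathcal{W}^{(R)} - \mathcal{W}^{(0)} \;\leq\; \bar{\Lambda}\tau.
\]

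First I would use this to settle the $\boldsymbol{\mu}$-inequality (\ref{eqn:MJ16_6}). Rewriting, $\mathcal{W}^{(R)}(\varphi) - \bar{\Lambda}\tau \leq \mathcal{W}^{(0)}(\varphi) \leq \mathcal{W}^{(R)}(\varphi) + \underline{\Lambda}\tau$ for every $\varphi \in \mathscr{S}(\Omega)$. Taking the infimum over $\varphi$ on all three pieces (the constant shifts commute with $\inf$) yields exactly $\boldsymbol{\mu}(\Omega,\tau) - \bar{\Lambda}\tau \leq \bar{\boldsymbol{\mu}}(\Omega,\tau) \leq \boldsymbol{\mu}(\Omega,\tau) + \underline{\Lambda}\tau$.

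Next, for the $\boldsymbol{\nu}$-inequality (\ref{eqn:MJ16_7}), I would note that we may assume without loss of generality that $\bar{\Lambda}, \underline{\Lambda} \geq 0$, because enlarging either constant only enlarges the permissible bound for $R$ and preserves the inequality we want. Applying the just-proved estimate at every scale $s \in (0,\tau]$ and using $s \leq \tau$ (together with $\bar{\Lambda}, \underline{\Lambda} \geq 0$) gives
\[
  \boldsymbol{\mu}(\Omega,s) - \bar{\Lambda}\tau \;\leq\; \boldsymbol{\mu}(\Omega,s) - \bar{\Lambda} s \;\leq\; \bar{\boldsymbol{\mu}}(\Omega,s) \;\leq\; \boldsymbol{\mu}(\Omega,s) + \underline{\Lambda} s \;\leq\; \boldsymbol{\mu}(\Omega,s) + \underline{\Lambda}\tau.
\]
Since the outer bounds are now constants in $s$, taking $\inf_{s \in (0,\tau]}$ transforms $\boldsymbol{\mu}(\Omega,s)$ into $\boldsymbol{\nu}(\Omega,\tau)$ and $\bar{\boldsymbol{\mu}}(\Omega,s)$ into $\bar{\boldsymbol{\nu}}(\Omega,\tau)$, producing the desired chain $\boldsymbol{\nu}(\Omega,\tau) - \bar{\Lambda}\tau \leq \bar{\boldsymbol{\nu}}(\Omega,\tau) \leq \boldsymbol{\nu}(\Omega,\tau) + \underline{\Lambda}\tau$.

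There is no real obstacle here; the only subtle point is the reduction to $\bar{\Lambda}, \underline{\Lambda} \geq 0$ in the $\boldsymbol{\nu}$-step, without which the direction of the inequality $\pm\Lambda s$ versus $\pm\Lambda \tau$ could fail when one takes the infimum over $s \in (0,\tau]$. Once that reduction is in place, both statements drop out of the trivial identity for $\mathcal{W}^{(R)} - \mathcal{W}^{(0)}$.
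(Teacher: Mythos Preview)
Your proposal is correct and follows the natural approach: the identity $\mathcal{W}^{(R)}-\mathcal{W}^{(0)}=\tau\int_\Omega R\varphi^2\,dv$ together with the normalization $\int_\Omega\varphi^2\,dv=1$ immediately gives the $\boldsymbol{\mu}$-estimate, and taking the infimum over $s\in(0,\tau]$ gives the $\boldsymbol{\nu}$-estimate. The paper does not supply its own proof here (it simply cites Lemma~3.1 of \cite{BWang17local}), but this is exactly the argument one expects.

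One minor comment: your justification for the reduction to $\bar\Lambda,\underline\Lambda\ge 0$ is slightly imprecise. Enlarging $\bar\Lambda$ or $\underline\Lambda$ \emph{weakens} the conclusion (the bounds $\boldsymbol{\nu}-\bar\Lambda\tau$ and $\boldsymbol{\nu}+\underline\Lambda\tau$ move outward), so proving the weaker version does not recover the stronger one. In practice, however, the notation $-\underline\Lambda\le R\le\bar\Lambda$ is meant with $\bar\Lambda,\underline\Lambda\ge 0$ throughout the paper (see, e.g., its use in Proposition~\ref{prn:RH31_1} and Lemma~\ref{lma:MJ25_1}), so the issue does not arise. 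If you want to be careful, simply state at the outset that $\bar\Lambda,\underline\Lambda\ge 0$ is part of the hypothesis.
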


\begin{proposition}[cf. Proposition 3.2 of~\cite{BWang17local}]
    Consequently, for each $0<\tau_1<\tau_2$,  we have
\begin{align}
  &\bar{\boldsymbol{\mu}}(\Omega, \tau_1) \leq \bar{\boldsymbol{\mu}}(\Omega, \tau_2) + \frac{m}{2} \log \frac{\tau_2}{\tau_1}, \label{eqn:RH31_2} \\
  &\boldsymbol{\mu}(\Omega, \tau_1) \leq \boldsymbol{\mu}(\Omega, \tau_2) + \frac{m}{2} \log \frac{\tau_2}{\tau_1} +\bar{\Lambda} \tau_1+\underline{\Lambda} \tau_2, \label{eqn:RH31_3} \\
  &\bar{\boldsymbol{\nu}}(\Omega, \tau_2) \leq  \bar{\boldsymbol{\nu}}(\Omega, \tau_1) \leq \bar{\boldsymbol{\nu}}(\Omega, \tau_2) + \frac{m}{2} \log \frac{\tau_2}{\tau_1},  \label{eqn:CF10_1}\\     
  &\boldsymbol{\nu}(\Omega, \tau_2) \leq  \boldsymbol{\nu}(\Omega, \tau_1) \leq \boldsymbol{\nu}(\Omega, \tau_2) + \frac{m}{2} \log \frac{\tau_2}{\tau_1} +\bar{\Lambda} \tau_1+\underline{\Lambda} \tau_2.  \label{eqn:CF10_3}   
\end{align} 
\label{prn:RH31_1}  
\end{proposition}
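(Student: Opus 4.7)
The plan is to prove the four inequalities in the order they are stated, treating (2.13) as the core estimate and deriving the others from it together with Lemma~\ref{lma:MJ16_1}.

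First I would prove (2.13). The only $\tau$-dependence in $\bar{\mathcal{W}}(\Omega, g, \varphi, \tau)$ occurs in the constant $-\tfrac{m}{2}\log(4\pi\tau)$ and in the kinetic term $4\tau\int_\Omega|\nabla\varphi|^2\,dv$; the entropy term $-2\int\varphi^2\log\varphi\,dv$ is $\tau$-independent, and the scalar curvature term is absent because $\boldsymbol{a}=0$. Fixing $\varepsilon>0$, choose $\varphi\in\mathscr{S}(\Omega)$ with $\bar{\mathcal{W}}(\Omega,g,\varphi,\tau_2)\le\bar{\boldsymbol{\mu}}(\Omega,\tau_2)+\varepsilon$. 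Using this same $\varphi$ as a test function at parameter $\tau_1$ gives
\begin{align*}
\bar{\mathcal{W}}(\Omega,g,\varphi,\tau_1)-\bar{\mathcal{W}}(\Omega,g,\varphi,\tau_2)
=\tfrac{m}{2}\log\tfrac{\tau_2}{\tau_1}+4(\tau_1-\tau_2)\int_\Omega|\nabla\varphi|^2\,dv,
\end{align*}
and since $\tau_1<\tau_2$ the second term is nonpositive. Therefore
$\bar{\boldsymbol{\mu}}(\Omega,\tau_1)\le\bar{\boldsymbol{\mu}}(\Omega,\tau_2)+\varepsilon+\tfrac{m}{2}\log\tfrac{\tau_2}{\tau_1}$, and letting $\varepsilon\to 0$ yields (2.13). (If $\Omega$ is unbounded, Proposition~\ref{prn:CA02_1} reduces to the bounded case via exhaustion.)

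Next, (2.14) follows by chaining (2.13) with Lemma~\ref{lma:MJ16_1}. Namely, from (\ref{eqn:MJ16_6}),
\begin{align*}
\boldsymbol{\mu}(\Omega,\tau_1)\le \bar{\boldsymbol{\mu}}(\Omega,\tau_1)+\bar{\Lambda}\tau_1
\le \bar{\boldsymbol{\mu}}(\Omega,\tau_2)+\tfrac{m}{2}\log\tfrac{\tau_2}{\tau_1}+\bar{\Lambda}\tau_1
\le \boldsymbol{\mu}(\Omega,\tau_2)+\underline{\Lambda}\tau_2+\tfrac{m}{2}\log\tfrac{\tau_2}{\tau_1}+\bar{\Lambda}\tau_1,
\end{align*}
using the upper and lower bounds from (\ref{eqn:MJ16_6}) at the two ends.

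Finally, for (2.15) and (2.16) I would pass to the $\boldsymbol{\nu}$-functionals via the definition $\boldsymbol{\nu}^{(\boldsymbol{a})}(\Omega,\tau)=\inf_{s\in(0,\tau]}\boldsymbol{\mu}^{(\boldsymbol{a})}(\Omega,s)$. The left inequalities are immediate because $(0,\tau_1]\subset(0,\tau_2]$ enlarges the infimum set. For the right inequality in (2.15), fix any $s\in(0,\tau_2]$ and split into two cases: if $s\le\tau_1$ then $\bar{\boldsymbol{\mu}}(\Omega,s)\ge\bar{\boldsymbol{\nu}}(\Omega,\tau_1)-\tfrac{m}{2}\log\tfrac{\tau_2}{\tau_1}$ trivially (the log term is nonnegative); if $\tau_1<s\le\tau_2$, apply (2.13) with $(\tau_1,s)$ to get $\bar{\boldsymbol{\mu}}(\Omega,s)\ge\bar{\boldsymbol{\mu}}(\Omega,\tau_1)-\tfrac{m}{2}\log\tfrac{s}{\tau_1}\ge\bar{\boldsymbol{\nu}}(\Omega,\tau_1)-\tfrac{m}{2}\log\tfrac{\tau_2}{\tau_1}$. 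Taking the infimum over $s$ gives the right inequality of (2.15). The right inequality of (2.16) is the same argument using (2.14) in place of (2.13), which produces the extra $\bar{\Lambda}\tau_1+\underline{\Lambda}\tau_2$ term.

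There is no serious obstacle here; the argument is essentially a bookkeeping exercise with the variational definitions. The only point that requires slight care is the case analysis in the $\boldsymbol{\nu}$-step, where one must verify that the estimate on $\bar{\boldsymbol{\mu}}(\Omega,s)$ is uniform for all $s\in(0,\tau_2]$ so that it survives taking the infimum. Everything else follows from direct substitution into the functional $\bar{\mathcal{W}}$ and invoking Lemma~\ref{lma:MJ16_1} to account for the scalar curvature.
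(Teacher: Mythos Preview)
Your proposal is correct and follows essentially the same approach as the paper: compare the $\bar{\mathcal{W}}$-functional at the two scales using a (near-)minimizer of $\bar{\boldsymbol{\mu}}(\Omega,\tau_2)$ as test function, then deduce (\ref{eqn:RH31_3}) by sandwiching with Lemma~\ref{lma:MJ16_1}. The only cosmetic difference is that the paper uses the exact minimizer (available for bounded $\Omega$) and the Euler--Lagrange equation to record the slightly sharper intermediate inequality (\ref{eqn:RI10_1}), which is reused later in Proposition~\ref{prn:RG08_1}; your $\varepsilon$-argument suffices for the present proposition. For (\ref{eqn:CF10_1}) and (\ref{eqn:CF10_3}) the paper simply cites~\cite{BWang17local}, whereas you supply a clean direct derivation via the case split on $s\le\tau_1$ versus $\tau_1<s\le\tau_2$; that argument is correct.
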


\begin{proof}
  Note that (\ref{eqn:CF10_1}) and (\ref{eqn:CF10_3}) were already proved in Proposition 3.2 of~\cite{BWang17local}. The inequalities (\ref{eqn:RH31_2}) and (\ref{eqn:RH31_3})
  were implied by the proof of Proposition 3.2~\cite{BWang17local}. We provide the proof here for the convenience of the reader.  

  We shall first prove (\ref{eqn:RH31_2}), which means that the quantity $\bar{\boldsymbol{\mu}}(\Omega, \tau) + \frac{m}{2} \log \tau$ is a monotonically non-decreasing function of $\tau$. 
  Let $\varphi$ be the minimizer function of $\bar{\boldsymbol{\mu}}(\Omega, \tau_2)$.  Then the Euler-Lagrange equation (\ref{eqn:GH01_1}) reads as
  \begin{align*}
   \left( \bar{\boldsymbol{\mu}}(\Omega, \tau_2) +m+\frac{m}{2}\log (4\pi \tau_2) \right) \varphi=-4\tau_2 \Delta \varphi -2\varphi \log \varphi. 
  \end{align*}
  Multiplying both sides of the above equation by $\varphi$ and integrating on $\Omega$, we obtain
  \begin{align*}
     \bar{\boldsymbol{\mu}}(\Omega, \tau_2) +m+\frac{m}{2}\log (4\pi \tau_2) &=4\tau_2 \int_{\Omega} |\nabla \varphi|^2 dv  - \int_{\Omega} \varphi^2 \log \varphi^2 dv\\
     &=4\tau_1 \int_{\Omega} |\nabla \varphi|^2 dv  - \int_{\Omega} \varphi^2 \log \varphi^2 dv + 4(\tau_2-\tau_1) \int_{\Omega} |\nabla \varphi|^2 dv\\
         &\geq \bar{\boldsymbol{\mu}}(\Omega, \tau_1) +m+\frac{m}{2}\log (4\pi \tau_1)+ 4(\tau_2-\tau_1) \int_{\Omega} |\nabla \varphi|^2 dv,
  \end{align*}
  where we used the definition equation (\ref{eqn:MJ16_c}) in the last step. In particular, we have
  \begin{align}
  \bar{\boldsymbol{\mu}}(\Omega, \tau_2) +\frac{m}{2}\log \tau_2 
  \geq \bar{\boldsymbol{\mu}}(\Omega, \tau_1) +\frac{m}{2}\log  \tau_1 + 4(\tau_2-\tau_1) \int_{\Omega} |\nabla \varphi|^2 dv.
  \label{eqn:RI10_1}
  \end{align}
  Then (\ref{eqn:RH31_2}) follows immediately from the above inequality. 
  Note that (\ref{eqn:RH31_3}) follows from the combination of (\ref{eqn:MJ16_6}) and (\ref{eqn:RH31_2}).  The proof of the proposition is complete. 
\end{proof}

\begin{proposition}
  Suppose $\Omega \subset M$ is a bounded set, $g$ and $h$ are two Riemannian metrics on $M$ satisfying
  \begin{align}
    \lambda  g(x) \leq  h(x) \leq \lambda^{-1} g(x), \quad \forall \; x \in \Omega,  \label{eqn:RI08_2}
  \end{align}
  for some $\lambda \in (0,1)$.  Then for each interval $[\tau_{1}, \tau_{2}] \subset \R^{+}$, we have
  \begin{align} 
    \inf_{s \in [\tau_{1}, \tau_{2}]} \bar{\boldsymbol{\mu}}(\Omega, h, s) 
    \geq \lambda^{m}  \bar{\boldsymbol{\nu}}(\Omega, g, \tau_{2})
    +(1-\lambda^{m}) \Gamma(\Omega,h,\tau_{1},\tau_{2})
  +\frac{m}{2}\lambda^{m} \log \lambda^{m+2}.
  \label{eqn:RI10_7}
  \end{align}
  In the above inequality we denote
  \begin{align}
    \Gamma(\Omega,h,\tau_{1}, \tau_{2}) \coloneqq 2\bar{\boldsymbol{\nu}}(\Omega,h,\tau_{2})-2\log_{+} \frac{|\Omega|_{dv_{h}}}{\tau_{2}^{\frac{m}{2}}} +m\log \frac{\tau_1}{\tau_2}+\frac{m}{2} \log (\pi e^2)-\frac{2}{e},   
   \label{eqn:RI10_5}  
  \end{align}
  where $\log_{+}x=\max\left\{ \log x, 0 \right\}$. 
  \label{prn:RG08_1}
\end{proposition}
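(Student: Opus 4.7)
The plan is to apply the variational characterization of $\bar{\boldsymbol{\mu}}$: I will convert a minimizer $\varphi$ of $\bar{\boldsymbol{\mu}}(\Omega, h, s)$ (for arbitrary $s \in [\tau_{1}, \tau_{2}]$, normalized by $\int\varphi^{2}\, dv_{h}=1$) into a test function for $\bar{\boldsymbol{\mu}}(\Omega, g, \tau')$ at a suitable $\tau' \leq \tau_{2}$, and absorb the error from the metric comparison using $\bar{\boldsymbol{\nu}}(\Omega, h, \tau_{2})$ together with the elementary bound $-x^{2}\log x \leq \tfrac{1}{2e}$. Setting $A \coloneqq \int_{\Omega}\varphi^{2}\, dv_{g}$, condition (\ref{eqn:RI08_2}) forces $A \in [\lambda^{m/2}, \lambda^{-m/2}]$ together with the pointwise inequalities $|\nabla\varphi|^{2}_{g} \leq \lambda^{-1}|\nabla\varphi|^{2}_{h}$ and $dv_{g} \leq \lambda^{-m/2}\, dv_{h}$. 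Then $\tilde\varphi \coloneqq A^{-1/2}\varphi$ is admissible in $\bar{\boldsymbol{\mu}}(\Omega, g, \tau')$, and the choice $\tau' = s\lambda^{m+2}$ (which lies in $(0, \tau_{2}]$ since $\lambda < 1$ and $s \leq \tau_{2}$) ensures $\tfrac{4\tau'\lambda^{m}}{A}\int|\nabla\varphi|^{2}_{g}\, dv_{g} \leq 4s\int|\nabla\varphi|^{2}_{h}\, dv_{h}$ via these pointwise bounds combined with $A \geq \lambda^{m/2}$; this same choice also manufactures the shift $\tfrac{m}{2}\lambda^{m}\log\lambda^{m+2}$ appearing in (\ref{eqn:RI10_7}) through $\tfrac{m}{2}\log(4\pi\tau') = \tfrac{m}{2}\log(4\pi s) + \tfrac{m}{2}\log\lambda^{m+2}$.

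After rearranging the chain $\bar{\boldsymbol{\nu}}(\Omega, g, \tau_{2}) \leq \bar{\boldsymbol{\mu}}(\Omega, g, \tau') \leq \bar{\mathcal{W}}(g, \tilde\varphi, \tau')$ (multiplied by $\lambda^{m}$) against the identity $\bar{\boldsymbol{\mu}}(\Omega, h, s) = \bar{\mathcal{W}}(h, \varphi, s)$, the target inequality reduces to a lower bound of the form $(1-\lambda^{m})[\Gamma + m + \tfrac{m}{2}\log(4\pi s)]$ for the residual
\begin{align*}
  \mathcal{R} \coloneqq -\lambda^{m}\log A + 2\int_{\Omega}\varphi^{2}\log\varphi \cdot \Bigl[\tfrac{\lambda^{m}\rho}{A}-1\Bigr] dv_{h}, \qquad \rho \coloneqq \tfrac{dv_{g}}{dv_{h}}.
\end{align*}
The key structural observation is that $A = \int\varphi^{2}\rho\, dv_{h}$ is the $\varphi^{2}\, dv_{h}$-weighted average of $\rho$, so pointwise $\lambda^{m}\rho \leq \lambda^{m/2} \leq A$, making the bracket in $\mathcal{R}$ non-positive with modulus bounded by $1-\lambda^{2m} \leq 2(1-\lambda^{m})$. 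Splitting $\Omega = \{\varphi \leq 1\} \cup \{\varphi > 1\}$: on the first region both factors of the integrand are non-positive (so the contribution is non-negative), and the uniform bound $-\varphi^{2}\log\varphi \leq \tfrac{1}{2e}$ there controls the resulting loss, accounting for the $-\tfrac{2}{e}$ and $\tfrac{m}{2}\log(\pi e^{2})$ constants in $\Gamma$.

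The main obstacle is the tail region $\{\varphi > 1\}$, on which the integrand in $\mathcal{R}$ is non-positive while $\varphi^{2}\log\varphi$ is a priori unbounded above. To control it, I exploit $\bar{\mathcal{W}}(h, \varphi, \tau_{1}) \geq \bar{\boldsymbol{\mu}}(\Omega, h, \tau_{1}) \geq \bar{\boldsymbol{\nu}}(\Omega, h, \tau_{2})$ (using the $\tau$-monotonicity of $\bar{\boldsymbol{\nu}}$ from Proposition~\ref{prn:RH31_1}) applied with $\varphi$ itself as a test function, namely
\begin{align*}
  -2\int_{\Omega}\varphi^{2}\log\varphi\, dv_{h} \geq \bar{\boldsymbol{\nu}}(\Omega, h, \tau_{2}) + m + \tfrac{m}{2}\log(4\pi\tau_{1}) - 4\tau_{1}\int|\nabla\varphi|^{2}_{h}\, dv_{h}.
\end{align*}
Scaling by a factor of $2(1-\lambda^{m})$ then yields the $2\bar{\boldsymbol{\nu}}(\Omega, h, \tau_{2})$ and $m\log(\tau_{1}/\tau_{2})$ contributions to $\Gamma$, while the term $-2\log_{+}(|\Omega|_{h}/\tau_{2}^{m/2})$ arises from a Jensen-type estimate applied to the probability measure $\varphi^{2}\, dv_{h}$ restricted to $\{\varphi > 1\}$, with $|\Omega|_{h}$ entering as the relevant volume normalization. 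Assembling all contributions and taking the infimum over $s \in [\tau_{1}, \tau_{2}]$ completes the proof of (\ref{eqn:RI10_7}).
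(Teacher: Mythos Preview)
Your skeleton matches the paper's: take the minimizer $\varphi$ of $\bar{\boldsymbol{\mu}}(\Omega,h,s)$, rescale it to a $g$-admissible test function, and control the residual. The choice $\tau'=s\lambda^{m+2}$ and the bracket bound $\frac{\lambda^m\rho}{A}-1\in[\lambda^{2m}-1,0]$ are fine. The gap is in your treatment of the residual $\mathcal{R}$ on the tail.

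First, the Dirichlet term you introduce is uncontrolled. Your inequality
\[
-2\int_{\Omega}\varphi^{2}\log\varphi\,dv_{h}\ \ge\ \bar{\boldsymbol{\nu}}(\Omega,h,\tau_{2})+m+\tfrac{m}{2}\log(4\pi\tau_{1})-4\tau_{1}\int_{\Omega}|\nabla\varphi|_{h}^{2}\,dv_{h}
\]
leaves $4\tau_{1}\int|\nabla\varphi|_{h}^{2}$ on the wrong side; after multiplying by $2(1-\lambda^{m})$ you incur a loss of size $8(1-\lambda^{m})\tau_{1}\int|\nabla\varphi|_{h}^{2}$, and the leftover from the gradient comparison (at best $4s(1-\lambda^{m+1})\int|\nabla\varphi|_{h}^{2}$) does not absorb it when $s$ is near $\tau_{1}$. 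The paper avoids this by bounding the Dirichlet energy \emph{first}, via (\ref{eqn:RI10_1}) with $\tau_{2}=s$, $\tau_{1}=s/2$, giving $4s\int|\nabla\varphi|_{h}^{2}\le 2\bigl(\bar{\boldsymbol{\mu}}(\Omega,h,s)-\bar{\boldsymbol{\nu}}(\Omega,h,s)\bigr)+m\log 2$; this is where $\bar{\boldsymbol{\nu}}(\Omega,h,\cdot)$ actually enters the estimate.

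Second, the mechanism you propose for $-2\log_{+}(|\Omega|_{h}/\tau_{2}^{m/2})$ does not work. Jensen for $x\mapsto x\log x$ gives a \emph{lower} bound, so applying it on $\{\varphi>1\}$ cannot yield the required \emph{upper} bound on $\int_{\{\varphi>1\}}\varphi^{2}\log\varphi\,dv_{h}$. If instead you pass from $\int_{\{\varphi>1\}}$ to $\int_{\Omega}$ using the pointwise bound $-\varphi^{2}\log\varphi\le\frac{1}{2e}$ on $\{\varphi\le 1\}$, the cost is $|\Omega|_{h}/(2e)$, not a logarithm of volume. The paper's remedy is to split at the $\tau$-dependent threshold $\varphi=s^{-m/4}$: on $\Omega_{-}=\{\varphi^{2}s^{m/2}\le 1\}$ Jensen (with the \emph{uniform} measure $dv_{h}/|\Omega_{-}|_{h}$, not $\varphi^{2}dv_{h}$) gives
\[
\int_{\Omega_{-}}\varphi^{2}\log\bigl(\varphi^{2}s^{m/2}\bigr)\,dv_{h}\ \ge\ -e^{-1}-\log_{+}\frac{|\Omega|_{h}}{s^{m/2}},
\]
which, together with the Dirichlet bound above, produces the estimate for $\int_{\Omega}\varphi^{2}\bigl|\log(\varphi^{2}s^{m/2})\bigr|\,dv_{h}$ containing $\bar{\boldsymbol{\mu}}(\Omega,h,s)$; absorbing that $\bar{\boldsymbol{\mu}}$ into the left side is what generates the factor $\lambda^{m}$. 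The term $m\log(\tau_{1}/\tau_{2})$ then comes from bounding $\log_{+}(|\Omega|_{h}/s^{m/2})$ uniformly over $s\in[\tau_{1},\tau_{2}]$, not from a test at scale $\tau_{1}$.
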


\begin{proof}
  Fix $\tau>0$. Suppose $\varphi$ is a minimizer for $\bar{\boldsymbol{\mu}}(\Omega, h, \tau)$. It follows from definition(cf. (\ref{eqn:GH01_1})) that  
  \begin{align}
    \bar{\boldsymbol{\mu}}(\Omega,h,\tau) +m+\frac{m}{2}\log (4\pi)
    =\underbrace{4\tau \int_{\Omega} |\nabla \varphi|_{h}^2 dv_{h}}_{I} - \underbrace{\int_{\Omega} \varphi^2  \left( \log \varphi^{2} + \frac{m}{2} \log \tau \right)  dv_{h}}_{II}.
    \label{eqn:RI11_1}  
  \end{align}

  We first estimate $I$ in (\ref{eqn:RI11_1}).   
  Applying (\ref{eqn:RI10_1}) with $\tau_2=\tau$ and $\tau_1= 0.5 \tau$, we have
  \begin{align*}
    2\tau \int_{\Omega} |\nabla \varphi|_{h}^2 dv_{h} \leq \bar{\boldsymbol{\mu}}(\Omega, h, \tau) -\bar{\boldsymbol{\mu}}(\Omega, h, 0.5 \tau) +\frac{m}{2}\log 2. 
  \end{align*}
  Since $\bar{\boldsymbol{\mu}}(\Omega,h, 0.5\tau) \geq \bar{\boldsymbol{\nu}}(\Omega,h,\tau)$ by definition(cf. (\ref{eqn:MJ16_d}) and (\ref{eqn:MJ16_2})), it follows from the above inequality that 
  \begin{align}
   0 \leq I=4\tau \int_{\Omega} |\nabla \varphi|_{h}^2 dv_{h} \leq 2\left( \bar{\boldsymbol{\mu}}(\Omega,h, \tau) -\bar{\boldsymbol{\nu}}(\Omega,h, \tau)  \right) + m \log 2. 
    \label{eqn:RI10_2}  
  \end{align}
  Then we estimate $II$.  Note that
  \begin{align*}
    II=\int_{\Omega} \varphi^2  \left( \log \varphi^{2} + \frac{m}{2} \log \tau \right)  dv_{h}
    =\int_{\Omega} \varphi^{2} \log \left( \varphi^{2} \tau^{\frac{m}{2}} \right) dv_{h}
    =\tau^{-\frac{m}{2}}\int_{\Omega} \left( \varphi^{2} \tau^{\frac{m}{2}} \right) \log \left( \varphi^{2} \tau^{\frac{m}{2}} \right) dv_{h}. 
  \end{align*}
  We decompose $\Omega$ as
  \begin{align*}
    \Omega_{+} \coloneqq \left\{ x \in \Omega| \varphi(x)> \tau^{-\frac{m}{4}} \right\}, \quad \Omega_{-} \coloneqq \left\{ x \in \Omega |\varphi(x) \leq \tau^{-\frac{m}{4}} \right\}. 
  \end{align*}
  Since $x\log x$ is a convex function, it follows from Jensen's inequality that  
  \begin{align*}
    0 &\geq \int_{\Omega_{-}} \varphi^{2} \log \left( \varphi^{2} \tau^{\frac{m}{2}} \right) dv_{h}
    =\frac{|\Omega_{-}|_{dv_{h}}}{\tau^{\frac{m}{2}}}\int_{\Omega_{-}} \left( \varphi^{2} \tau^{\frac{m}{2}} \right) \log \left( \varphi^{2} \tau^{\frac{m}{2}} \right) \frac{dv_{h}}{|\Omega_{-}|_{dv_{h}}}
    \notag\\
    &\geq \frac{|\Omega_{-}|_{dv_{h}}}{\tau^{\frac{m}{2}}} \left\{ \int_{\Omega_{-}} \left( \varphi^{2} \tau^{\frac{m}{2}} \right) \frac{dv_{h}}{|\Omega_{-}|_{dv_{h}}} \right\} 
   \log \left\{  \int_{\Omega_{-}} \left( \varphi^{2} \tau^{\frac{m}{2}} \right) \frac{dv_{h}}{|\Omega_{-}|_{dv_{h}}} \right\} \notag\\
   &=\left\{ \int_{\Omega_{-}} \varphi^{2} dv_{h} \right\} \cdot \left\{ \log \left(\int_{\Omega_{-}} \varphi^{2} dv_{h} \right) -\log \frac{|\Omega_{-}|_{dv_{h}}}{\tau^{\frac{m}{2}}} \right\}. 
  \end{align*}
  Let $a^2 \coloneqq \int_{\Omega_{-}} \varphi^{2} dv_{h}$. Then $0<a^2 \leq 1$. Since $x\log x \geq -e^{-1}$ on $(0,1]$, the above inequality implies that
  \begin{align}
    0 &\geq \int_{\Omega_{-}} \varphi^{2} \log \left( \varphi^{2} \tau^{\frac{m}{2}} \right) dv_{h} \geq a^{2} \log a^{2} -a^{2} \log \frac{|\Omega_{-}|_{dv_{h}}}{\tau^{\frac{m}{2}}}
    \geq -e^{-1} -\log_{+} \frac{|\Omega|_{dv_{h}}}{\tau^{\frac{m}{2}}}. 
  \label{eqn:RI10_3}  
  \end{align}
  Putting (\ref{eqn:RI10_2}) into (\ref{eqn:RI11_1}), we obtain 
  \begin{align}
      II=I-\bar{\boldsymbol{\mu}}(\Omega,h,\tau) -m-\frac{m}{2}\log (4\pi) 
      \leq \bar{\boldsymbol{\mu}}(\Omega,h,\tau) -2\bar{\boldsymbol{\nu}}(\Omega,h,\tau)-m-\frac{m}{2} \log \pi. 
  \label{eqn:RI10_4}  
  \end{align}
  Consequently, by the combination of (\ref{eqn:RI10_3}) and (\ref{eqn:RI10_4}), we have  
   \begin{align}
    |II| &\leq \int_{\Omega} \varphi^{2} \left| \log \left( \varphi^{2} \tau^{\frac{m}{2}} \right) \right| dv_{h} \notag\\
    &=\int_{\Omega_{+}} \varphi^2 \log \left( \varphi^{2} \tau^{\frac{m}{2}} \right)  dv_{h}-\int_{\Omega_{-}} \varphi^2 \log \left( \varphi^{2} \tau^{\frac{m}{2}} \right) dv_{h} \notag\\
    &=\int_{\Omega} \varphi^2 \log \left( \varphi^{2} \tau^{\frac{m}{2}} \right)  dv_{h}-2\int_{\Omega_{-}} \varphi^2 \log \left( \varphi^{2} \tau^{\frac{m}{2}} \right) dv_{h} \notag\\
    &\leq \frac{2}{e} + 2\log \frac{|\Omega|_{dv_{h}}}{\tau^{\frac{m}{2}}} + \bar{\boldsymbol{\mu}}(\Omega,h,\tau) -2\bar{\boldsymbol{\nu}}(\Omega,h,\tau)-\frac{m}{2} \log (\pi e^2) \notag\\
    &\leq 2\log \frac{|\Omega|_{dv_{h}}}{\tau^{\frac{m}{2}}} + \bar{\boldsymbol{\mu}}(\Omega,h,\tau) -2\bar{\boldsymbol{\nu}}(\Omega,h,\tau) -c_{m},
    \label{eqn:RI10_6}  
  \end{align}
  where
  \begin{align}
   c_{m} \coloneqq \frac{m}{2} \log (\pi e^2)-\frac{2}{e}. 
  \label{eqn:RI23_4}
  \end{align}

 We proceed to combine the estimates in (\ref{eqn:RI10_2}) and (\ref{eqn:RI10_6}) to relate the value of  $I-II$ with $\bar{\boldsymbol{\mu}}(\Omega,g, \cdot)$. 
 Recall that  $\int_{\Omega} \varphi^2 dv_{h}=1$. By (\ref{eqn:RI08_2}), it is clear that
 \begin{align}
  \lambda^{\frac{m}{2}} dv_{g} \leq dv_{h} \leq \lambda^{-\frac{m}{2}} dv_{g}. 
  \label{eqn:RI08_3}
 \end{align}
  Consequently, we have  
  \begin{align}
    \lambda^{\frac{m}{2}} \leq  c^{2} \coloneqq \int_{\Omega} \varphi^{2} dv_{g}  \leq \lambda^{-\frac{m}{2}}.   \label{eqn:RI08_4} 
  \end{align}
  Define  $\tilde{\varphi} \coloneqq c^{-1} \varphi$. Then $\tilde{\varphi}$ satisfies the normalization condition $ \int_{\Omega} \tilde{\varphi}^2 dv_{g}=1$ under the metric $g$.  
  Now we have
  \begin{align*}
    &I=4\tau \int_{\Omega} |\nabla \varphi|_{h}^2 dv_{h}=4\tau c^2 \int_{\Omega} |\nabla \tilde{\varphi}|_{h}^2 dv_{h} \geq 4\tau \lambda^{m+1} \int_{\Omega} |\nabla \tilde{\varphi}|_{g}^{2} dv_{g},\\
    &II=\int_{\Omega} \varphi^{2} \log \left( \varphi^{2} \tau^{\frac{m}{2}} \right) dv_{h}. 
  \end{align*}
  Subtracting $II$ from $I$ yields that 
  \begin{align}
    &\quad I-II \notag\\
    &\geq 4\tau \lambda^{m+1} \int_{\Omega} |\nabla \tilde{\varphi}|_{g}^{2} dv_{g}-\int_{\Omega} \tilde{\varphi}^2 \log \tilde{\varphi}^2 dv_{g} 
    +\int_{\Omega} \tilde{\varphi}^2 \log \tilde{\varphi}^2 dv_{g}
    -\int_{\Omega} \varphi^{2} \log \left( \varphi^{2} \tau^{\frac{m}{2}} \right) dv_{h} \notag\\
    &\geq \bar{\boldsymbol{\mu}}(\Omega,g,\tau \lambda^{m+1})+m+\frac{m}{2} \log (4\pi \lambda^{m+1})
    +\int_{\Omega} \tilde{\varphi}^2 \log \tilde{\varphi}^2 dv_{g}
    -\int_{\Omega} \varphi^{2} \log  \varphi^{2} dv_{h}.  
    \label{eqn:RI11_2} 
  \end{align}
  Recall from (\ref{eqn:RI08_4}) that
  \begin{align*}
  &\int_{\Omega} \tilde{\varphi}^2 \log \tilde{\varphi}^2 dv_{g}-\int_{\Omega} \varphi^{2} \log  \varphi^{2} dv_{h}\\
  &=\int_{\Omega} \tilde{\varphi}^2 \log \left( \tilde{\varphi}^2 \tau^{\frac{m}{2}}\right) dv_{g}-\int_{\Omega} \varphi^{2} \log  \left( \varphi^{2} \tau^{\frac{m}{2}} \right) dv_{h}\\
  &=\log c^{-2} + c^{-2}\int_{\Omega} \varphi^2 \log \left(\varphi^2  \tau^{\frac{m}{2}}\right) dv_{g}
   -\int_{\Omega} \varphi^{2} \log  \left( \varphi^{2} \tau^{\frac{m}{2}} \right) dv_{h}\\
   &=\log c^{-2} + \int_{\Omega} \varphi^2 \log \left(\varphi^2  \tau^{\frac{m}{2}}\right) \left\{ c^{-2}\frac{dv_{g}}{dv_{h}}-1 \right\} dv_{h}. 
  \end{align*}
  In light of  (\ref{eqn:RI08_3}) and (\ref{eqn:RI08_4}), we can estimate 
  \begin{align*}
    &\quad \left|\int_{\Omega} \varphi^2 \log \left(\varphi^2  \tau^{\frac{m}{2}}\right) \left\{ c^{-2}\frac{dv_{g}}{dv_{h}}-1 \right\} dv_{h}\right|\\
    &\leq \int_{\Omega} \left| \varphi^2 \log \left( \varphi^2 \tau^{\frac{m}{2}} \right) \right| \cdot \left| c^{-2} \frac{dv_{g}}{dv_{h}} - 1 \right| dv_{h}
     \leq \left\{ \lambda^{-m}-1 \right\} \cdot \int_{\Omega} \left| \varphi^2 \log \left(\varphi^2 \tau^{\frac{m}{2}} \right) \right| dv_{h}\\
     &\leq  \left\{ \lambda^{-m}-1 \right\} \cdot \left\{ 2\log_{+} \frac{|\Omega|_{dv_{h}}}{\tau^{\frac{m}{2}}} + \bar{\boldsymbol{\mu}}(\Omega,h,\tau) -2\bar{\boldsymbol{\nu}}(\Omega,h,\tau) -c_{m} \right\}. 
  \end{align*}
  Noting that $\log c^{-2} \geq \log \lambda^{\frac{m}{2}}$ by (\ref{eqn:RI08_4}). 
  Plugging these inequalities into (\ref{eqn:RI11_1}) and (\ref{eqn:RI11_2}), and using (\ref{eqn:RI10_6}), we obtain 
  \begin{align*}
    &\quad \bar{\boldsymbol{\mu}}(\Omega, h, \tau)-\bar{\boldsymbol{\mu}}(\Omega,g,\tau \lambda^{m+1})\\
    &\geq \frac{m}{2} \log \lambda^{m+2}-\left\{ \lambda^{-m}-1 \right\} \cdot 
    \left\{ 2\log_{+} \frac{|\Omega|_{dv_{h}}}{\tau^{\frac{m}{2}}} + \bar{\boldsymbol{\mu}}(\Omega,h,\tau) -2\bar{\boldsymbol{\nu}}(\Omega,h,\tau) -c_{m} \right\}. 
  \end{align*}
  It follows that
  \begin{align*}
  \lambda^{-m} \bar{\boldsymbol{\mu}}(\Omega, h, \tau) \geq \bar{\boldsymbol{\mu}}(\Omega,g,\tau \lambda^{m+1}) + \frac{m}{2} \log \lambda^{m+2}
    -\left\{ \lambda^{-m}-1 \right\} \cdot  \left\{ 2\log_{+} \frac{|\Omega|_{dv_{h}}}{\tau^{\frac{m}{2}}}-2\bar{\boldsymbol{\nu}}(\Omega,h,\tau) -c_{m} \right\},
  \end{align*}
  whence we have  
  \begin{align*}
  \bar{\boldsymbol{\mu}}(\Omega, h, \tau) \geq 
  \lambda^{m}  \bar{\boldsymbol{\mu}}(\Omega, g, \lambda^{m+1} \tau)
  -(1-\lambda^{m}) \left\{ 2\log_{+} \frac{|\Omega|_{dv_{h}}}{\tau^{\frac{m}{2}}} -2\bar{\boldsymbol{\nu}}(\Omega,h,\tau) -c_{m} \right\} 
  +\frac{m}{2}\lambda^{m} \log \lambda^{m+2}.
  \end{align*}
Since $\lambda \in (0,1)$, it is clear that 
$\displaystyle \bar{\boldsymbol{\mu}}(\Omega, g, \lambda^{m+1} \tau) \geq \inf_{s \in (0,\tau)} \bar{\boldsymbol{\mu}}(\Omega, g, \tau)=\bar{\boldsymbol{\nu}}(\Omega,g,\tau)$.
Plugging this fact and (\ref{eqn:RI10_5}) into the above inequality, for each $\delta \in (0, 1]$, we arrive at 
  \begin{align*} 
    &\quad \inf_{s \in [\delta \tau, \tau]} \bar{\boldsymbol{\mu}}(\Omega, h, s) \\
    &\geq \lambda^{m}  \bar{\boldsymbol{\nu}}(\Omega, g, \tau)
    +(1-\lambda^{m}) \left\{2\bar{\boldsymbol{\nu}}(\Omega,h,\tau) +c_{m}-2\log_{+} \frac{|\Omega|_{dv_{h}}}{\tau^{\frac{m}{2}}} +m\log \delta \right\} 
    +\frac{m}{2}\lambda^{m} \log \lambda^{m+2}.
  \end{align*}
  Replacing $[\delta \tau, \tau]$ by $[\tau_{1}, \tau_{2}]$ and using the definition of $c_{m}$ (\ref{eqn:RI23_4}), we obtain (\ref{eqn:RI10_7}) directly from the above inequality and (\ref{eqn:RI10_5}).  
\end{proof}

\begin{theorem}[\textbf{Lower bound of volume ratio in terms of $\boldsymbol{\nu}$ and scalar curvature}, cf. Theorem 3.3 of~\cite{BWang17local}]
Suppose $B=B(x_0,r_0) \subset \Omega$ is a geodesic ball, then we have
\begin{align}
 &\frac{|B|}{\omega_m r_0^m} \geq  e^{\bar{\boldsymbol{\nu}}-2^{m+7}} \geq  e^{\boldsymbol{\nu}-2^{m+7}-\bar{\Lambda}r_0^2}
 \label{eqn:GC28_2}   
\end{align}
where $\bar{\boldsymbol{\nu}}=\bar{\boldsymbol{\nu}}(B,r_0^2), \; \boldsymbol{\nu}=\boldsymbol{\nu}(B,r_0^2)$. 
\label{thm:CF21_3}
\end{theorem}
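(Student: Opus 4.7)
The second inequality in (\ref{eqn:GC28_2}) will follow immediately from (\ref{eqn:MJ16_7}) of Lemma~\ref{lma:MJ16_1} applied with $\Omega=B$, $\tau=r_0^2$: it yields $\bar{\boldsymbol{\nu}}\geq \boldsymbol{\nu}-\bar{\Lambda}r_0^2$, whence $e^{\bar{\boldsymbol{\nu}}-2^{m+7}}\geq e^{\boldsymbol{\nu}-2^{m+7}-\bar{\Lambda}r_0^2}$. Thus the task reduces to the first inequality.

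My plan for the first inequality is to exhibit an explicit radial cutoff test function in $\mathscr{S}(B)$ at a carefully chosen dyadic scale, realizing the desired upper bound on $\bar{\boldsymbol{\nu}}(B,r_0^2)=\inf_{s\in(0,r_0^2]}\bar{\boldsymbol{\mu}}(B,s)$. A direct computation verifies scale invariance of $\mathcal{W}^{(0)}$ under $g\mapsto\lambda^{-2}g$, $\tau\mapsto\lambda^{-2}\tau$, $\varphi\mapsto\lambda^{m/2}\varphi$, so I may assume $r_0=1$ throughout.

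The core step is a dyadic pigeonhole. Setting $V_k:=|B(x_0,2^{-k})|$, smoothness of $(M,g)$ at $x_0$ gives $V_k\cdot 2^{mk}\to\omega_m$, so the ratios $V_k/V_{k+1}\to 2^m$ as $k\to\infty$. Hence there exists a smallest $k^{*}\geq 0$ with $V_{k^{*}}\leq 2^{m+1}V_{k^{*}+1}$, while $V_k>2^{m+1}V_{k+1}$ for all $k<k^{*}$. Setting $r^{*}:=2^{-k^{*}}\in(0,1]$ and telescoping the strict inequalities will give the key comparison
\begin{align*}
\frac{|B(x_0,r^{*})|}{(r^{*})^{m}}\;=\;V_{k^{*}}\cdot 2^{mk^{*}}\;\leq\;V_0\cdot 2^{-k^{*}}\;\leq\;|B|.
\end{align*}

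I will then take the piecewise-linear radial cutoff $\eta$ equal to $1$ on $B(x_0,r^{*}/2)$, vanishing on $\partial B(x_0,r^{*})$, with $|\nabla\eta|\leq 2/r^{*}$, and set $\varphi:=c\eta\in\mathscr{S}(B)$ with $c^{-2}=\int\eta^2\,dv$. Testing at scale $s=(r^{*})^2\leq r_0^2$, the doubling condition $|B(x_0,r^{*})|\leq 2^{m+1}|B(x_0,r^{*}/2)|$ yields $c^2|B(x_0,r^{*})|\leq 2^{m+1}$, which bounds the gradient contribution of $\mathcal{W}^{(0)}(B,g,\varphi,(r^{*})^2)$ by $2^{m+5}$; using $\sup_{x\in[0,1]}(-x^2\log x^2)=e^{-1}$, the entropy contribution is at most $\log|B(x_0,r^{*})|+2^{m+1}/e$. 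Combining these estimates with the constant term $-m-\frac{m}{2}\log(4\pi(r^{*})^2)$ and invoking the comparison above, I expect
\begin{align*}
\bar{\boldsymbol{\nu}}(B,r_0^2)\;\leq\;\mathcal{W}^{(0)}(B,g,\varphi,(r^{*})^2)\;\leq\;\log\frac{|B|}{\omega_m r_0^m}+2^{m+7},
\end{align*}
since the residual $-m-\frac{m}{2}\log(4\pi)+\log\omega_m+2^{m+5}+2^{m+1}/e$ is comfortably dominated by $2^{m+7}$ (indeed by $2^{m+6}$). Exponentiating will give the first inequality in (\ref{eqn:GC28_2}).

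The main obstacle is the pigeonhole step: without any curvature or volume hypothesis, the doubling constant at scale $r_0$ cannot be bounded a priori, so the argument must slide down to a possibly much smaller scale $r^{*}$ where uniform doubling is available. The telescope estimate is precisely what prevents this descent from degrading the final volume comparison, since the $-k^{*}$ factor gained from the strict inequalities exactly cancels the a priori loss $(r^{*})^{-m}=2^{mk^{*}}$ coming from comparing to the smaller Euclidean reference ball.
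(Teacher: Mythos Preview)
The present paper does not actually prove this theorem: it is quoted from \cite{BWang17local} (Theorem~3.3 there) as part of the preliminaries in Section~\ref{sec:pre}, so there is no in-paper proof to compare against. Judged on its own merits, your argument is correct. The reduction of the second inequality to Lemma~\ref{lma:MJ16_1} is immediate, and for the first inequality your dyadic pigeonhole plus telescope is exactly the right mechanism: the strict inequalities $V_k>2^{m+1}V_{k+1}$ for $k<k^{*}$ yield $V_{k^{*}}\cdot 2^{mk^{*}}\le 2^{-k^{*}}V_0\le |B|$, and the radial cutoff at scale $r^{*}$ then produces the bound $\bar{\boldsymbol{\mu}}(B,(r^{*})^{2})\le \log\frac{|B|}{\omega_m r_0^m}+2^{m+7}$ with room to spare in the residual constant. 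This is the standard route for converting a local log-Sobolev (entropy) lower bound into a volume ratio lower bound, and is in the same spirit as the original argument in \cite{BWang17local}.
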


\begin{lemma}[\textbf{Estimate of functionals by isoperimetric constant and scalar lower bound}, cf. Lemma 3.5 of~\cite{BWang17local}]
Suppose $\Omega$ is a bounded domain in $(M^{m}, g)$, $\tilde{\Omega}$ is a ball in $(\R^{m}, g_{E})$ such that $|\tilde{\Omega}|=|\Omega|$.
Define
\begin{align}
  \lambda \coloneqq \frac{\mathbf{I}(\Omega)}{\mathbf{I}_m}.   
  \label{eqn:CF20_4}
\end{align}
Then we have
\begin{align}
   \bar{\boldsymbol{\mu}}(\Omega, g, \tau) \geq  \bar{\boldsymbol{\mu}} \left(\tilde{\Omega}, g_{E}, \tau \lambda^2 \right) + m\log \lambda.   \label{eqn:MJ26_5}
\end{align}
Consequently, we have
\begin{align}
     &\bar{\boldsymbol{\nu}}(\Omega, g, \tau) \geq   m\log \lambda,   \label{eqn:MJ26_6}\\
     &\boldsymbol{\mu}(\Omega, g, \tau) \geq \bar{\boldsymbol{\mu}} \left(\tilde{\Omega}, g_{E}, \tau \lambda^2 \right) + m\log \lambda - \underline{\Lambda} \tau, \label{eqn:MJ26_8}\\
     &\boldsymbol{\nu}(\Omega, g, \tau) \geq m\log \lambda -\underline{\Lambda} \tau. \label{eqn:MJ26_9}
\end{align}
\label{lma:MJ25_1}
\end{lemma}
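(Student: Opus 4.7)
The plan is to establish (\ref{eqn:MJ26_5}) via a P\'olya--Szeg\H{o}-type symmetrization argument calibrated to the isoperimetric constant $\mathbf{I}(\Omega)$, and then deduce (\ref{eqn:MJ26_6})--(\ref{eqn:MJ26_9}) as formal consequences. First I would let $\varphi \in \mathscr{S}(\Omega)$ be the positive, smooth minimizer of $\bar{\boldsymbol{\mu}}(\Omega, g, \tau)$ and introduce its symmetric decreasing rearrangement $\tilde{\varphi}$ on the Euclidean ball $\tilde{\Omega}$. Since $|\tilde{\Omega}| = |\Omega|$ and rearrangement preserves the distribution function of $\varphi$, one has $\tilde\varphi \in \mathscr{S}(\tilde\Omega)$ with
\[
\int_{\tilde\Omega} \tilde\varphi^{\,2}\, dx = \int_\Omega \varphi^{2}\, dv = 1, \qquad
\int_{\tilde\Omega} \tilde\varphi^{\,2} \log \tilde\varphi \, dx = \int_\Omega \varphi^{2} \log \varphi \, dv.
\]
Thus in (\ref{eqn:MJ16_b}) only the gradient term is affected by the rearrangement.

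The key analytic estimate is the modified P\'olya--Szeg\H{o} inequality
\[
\int_\Omega |\nabla \varphi|^2 \, dv \;\geq\; \lambda^{2}\, \int_{\tilde\Omega} |\nabla \tilde\varphi|^2 \, dx.
\]
I would derive it via coarea: setting $\mu(t) \coloneqq |\{\varphi > t\}|_{dv}$ and using $-\mu'(t) = \int_{\{\varphi = t\}} |\nabla\varphi|^{-1}\, d\sigma$, the Cauchy--Schwarz inequality on each level set yields $\int_{\{\varphi = t\}} |\nabla\varphi|\, d\sigma \geq |\partial\{\varphi > t\}|^2/(-\mu'(t))$, and the isoperimetric inequality $|\partial U| \geq \mathbf{I}(\Omega)\, |U|^{(m-1)/m}$ controls the numerator. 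For the radially decreasing $\tilde\varphi$ on $\tilde\Omega$, the level sets are Euclidean balls and $|\nabla\tilde\varphi|$ is constant on each of them, so the same Cauchy--Schwarz step becomes an equality with the Euclidean sharp constant $\mathbf{I}_m$; dividing the two expressions (their integrands over $t$ have identical $\mu$-dependence since the distribution function is preserved) produces the factor $\lambda^2 = \mathbf{I}(\Omega)^2/\mathbf{I}_m^2$.

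Substituting the preservation identities and the gradient estimate into $\mathcal{W}^{(0)}(\Omega, g, \varphi, \tau)$ and using $\tfrac{m}{2}\log(4\pi\tau\lambda^2) - \tfrac{m}{2}\log(4\pi\tau) = m\log\lambda$ produces $\mathcal{W}^{(0)}(\Omega, g, \varphi, \tau) \geq \mathcal{W}^{(0)}(\tilde\Omega, g_E, \tilde\varphi, \tau\lambda^2) + m\log\lambda$, which after taking infimum over $\tilde\varphi$ is exactly (\ref{eqn:MJ26_5}). For (\ref{eqn:MJ26_6}), applying the same estimate with $\tau$ replaced by any $s \in (0, \tau]$ and taking infimum reduces matters to $\bar{\boldsymbol{\nu}}(\tilde\Omega, g_E, \tau\lambda^2) \geq 0$; this vanishing follows from the scale invariance $\bar{\boldsymbol{\mu}}(\tilde\Omega, g_E, s) = \bar{\boldsymbol{\mu}}(s^{-1/2}\tilde\Omega, g_E, 1)$, the exhaustion of $\R^m$ by the dilated Euclidean balls as $s \to 0^+$, and Proposition~\ref{prn:CA02_1}, which together yield $\lim_{s \to 0^+}\bar{\boldsymbol{\mu}}(\tilde\Omega, g_E, s) = \bar{\boldsymbol{\mu}}(\R^m, g_E, 1) = 0$. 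Finally, (\ref{eqn:MJ26_8}) and (\ref{eqn:MJ26_9}) follow from (\ref{eqn:MJ26_5}) and (\ref{eqn:MJ26_6}) by invoking the scalar-curvature correction $\boldsymbol{\mu}(\Omega, g, \tau) \geq \bar{\boldsymbol{\mu}}(\Omega, g, \tau) - \underline{\Lambda}\tau$ from Lemma~\ref{lma:MJ16_1}.

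The main technical obstacle is the rigorous implementation of the P\'olya--Szeg\H{o} step: one must justify the Cauchy--Schwarz / coarea manipulation for a $W^{1,2}$ function whose gradient may vanish on a set of positive measure, and verify that the symmetric decreasing rearrangement maps $W_0^{1,2}(\Omega)$ into $W_0^{1,2}(\tilde\Omega)$ while preserving both the normalization and the entropy integral. A standard mollification combined with the a.e.\ differentiability of the distribution function and Federer's regularity of level sets handles this, so the remaining calculations reduce to the direct substitutions indicated above.
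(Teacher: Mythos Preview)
The paper does not supply its own proof of this lemma; it is quoted verbatim from \cite{BWang17local} as a preliminary result. Your proposal---symmetric decreasing rearrangement onto the Euclidean ball of equal volume, combined with the P\'olya--Szeg\H{o} inequality calibrated by the isoperimetric ratio $\lambda$---is the standard and essentially canonical argument for a statement of this type, and it is correct. The coarea/Cauchy--Schwarz derivation of $\int_\Omega |\nabla\varphi|^2\,dv \geq \lambda^2 \int_{\tilde\Omega}|\nabla\tilde\varphi|^2\,dx$ is sound, and the algebraic identification $4\tau\lambda^2 = 4(\tau\lambda^2)$ together with $\tfrac{m}{2}\log(4\pi\tau) = \tfrac{m}{2}\log(4\pi\tau\lambda^2) - m\log\lambda$ yields (\ref{eqn:MJ26_5}) exactly as you indicate.

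One small simplification for (\ref{eqn:MJ26_6}): rather than passing to the limit $s\to 0^+$ via scaling and Proposition~\ref{prn:CA02_1}, you can observe directly from Proposition~\ref{prn:CA01_2} (monotonicity under inclusion) that $\bar{\boldsymbol{\mu}}(\tilde\Omega, g_E, s) \geq \bar{\boldsymbol{\mu}}(\R^m, g_E, s) = 0$ for every $s>0$, the last equality being the sharp Euclidean log-Sobolev inequality. Your limiting argument is not wrong, but it only shows $\lim_{s\to 0^+}\bar{\boldsymbol{\mu}}(\tilde\Omega, g_E, s)=0$; to conclude that the infimum over $s\in(0,\tau\lambda^2]$ is nonnegative you still need the monotonicity in $s$ (equivalently, Proposition~\ref{prn:CA01_2} applied to the dilated balls), so you may as well invoke it from the start.
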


\begin{theorem}[\textbf{Almost monotonicity of local-$\boldsymbol{\mu}$-functional}, cf. Theorem 5.4 of~\cite{BWang17local}]
Let $A \geq 1000m$ be a large constant. 
Let $\{(M^m, g(t)), 0 \leq t \leq T\}$ be a Ricci flow solution satisfying
\begin{align}
    t \cdot Rc(x,t) \leq (m-1)\alpha_{m} A, \quad \forall \; x \in B_{g(t)} \left(x_0, \sqrt{t} \right), \; t \in (0, T].   \label{eqn:MJ17_11}
\end{align}
Then we have
\begin{align}
  \boldsymbol{\mu}(\Omega_T', g(T), \tau_T) -\boldsymbol{\mu}(\Omega_0, g(0), \tau_{T}+T) \geq -A^{-2},  \quad \forall \;  \tau_T \in (0, A^2 T),  \label{eqn:MJ17_9}
\end{align} 
where $\Omega_T'=B_{g(T)}\left(x_0,  8A \sqrt{T} \right)$ and $\Omega_0=B_{g(0)}\left(x_0, 20A\sqrt{T} \right)$.   In particular, we have
\begin{align}
  \boldsymbol{\nu}(\Omega_T', g(T), \tau_T) \geq -A^{-2} +\inf_{\tau \in [T, \tau_T+T]} \boldsymbol{\mu}(\Omega_0, g(0), \tau) \geq -A^{-2} + \boldsymbol{\nu}(\Omega_0, g(0), \tau_{T}+T). 
  \label{eqn:MJ17_10}
\end{align}
Furthermore, if $-\underline{\Lambda} \leq R \leq \bar{\Lambda}$ on $\Omega_{0}$, then we have
\begin{align}
  \boldsymbol{\nu}(\Omega_T', g(T), \tau_T) \geq -A^{-2} + \boldsymbol{\mu}(\Omega_0, g(0), T) -\frac{m}{2} \log \frac{\tau_{T}+T}{T} -\bar{\Lambda}T - \underline{\Lambda}(\tau_{T}+T).  
  \label{eqn:RG03_1}
\end{align}
\label{thm:CA02_3}
\end{theorem}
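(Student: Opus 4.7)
The plan is to combine Perelman's entropy monotonicity under the backward conjugate heat flow with a cutoff argument that localizes the resulting competitor to $\Omega_0$. First I would let $\varphi_T$ be the minimizer realizing $\boldsymbol{\mu}(\Omega_T', g(T), \tau_T)$, extended by zero to all of $M$, and solve the conjugate heat equation $\partial_t u = -\Delta u + Ru$ backward along the Ricci flow from $t=T$ down to $t=0$ with terminal data $u(\cdot, T) = \varphi_T^2$. Setting $\tau(t) = \tau_T + T - t$ and $u = (4\pi\tau)^{-m/2} e^{-f}$, Perelman's pointwise monotonicity of $\mathcal{W}(g(t), f(t), \tau(t))$ along the conjugate heat flow---valid on noncompact $M$ as long as $u$ has sufficient decay at infinity, which follows from the finite initial support and heat-kernel Gaussian bounds---yields
\[
\mathcal{W}(g(0), f(0), \tau_T + T) \leq \mathcal{W}(g(T), f(T), \tau_T) = \boldsymbol{\mu}(\Omega_T', g(T), \tau_T).
\]

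Second, since $\varphi_0 := (4\pi(\tau_T + T))^{-m/4} e^{-f(\cdot,0)/2}$ is smooth, positive, and $L^2(M)$-normalized but not compactly supported in $\Omega_0$, I would introduce a cutoff $\eta \in C_c^\infty(\Omega_0)$ with $\eta \equiv 1$ on $B_{g(0)}(x_0, 15A\sqrt{T})$ and $|\nabla \eta|_{g(0)} \leq C/(A\sqrt{T})$, and take $\tilde{\varphi} := \eta \varphi_0 / \|\eta \varphi_0\|_{L^2(\Omega_0)}$ as an admissible test function for $\boldsymbol{\mu}(\Omega_0, g(0), \tau_T + T)$. The problem then reduces to
\[
\mathcal{W}(\Omega_0, g(0), \tilde{\varphi}, \tau_T + T) - \mathcal{W}(g(0), f(0), \tau_T + T) \leq A^{-2},
\]
which splits into controlling (i) the extra gradient term $\tau \int |\nabla \eta|^2 \varphi_0^2 \, dv$ supported in the annulus $\{15A\sqrt{T} \leq d_{g(0)}(x_0,\cdot) \leq 20A\sqrt{T}\}$, and (ii) normalization, entropy, and scalar-curvature corrections proportional to the mass escape $\int_{M \setminus B_{g(0)}(x_0, 15A\sqrt{T})} \varphi_0^2 \, dv_{g(0)}$.

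The main technical step is the mass-escape bound
\[
\int_{M \setminus B_{g(0)}(x_0, 15A\sqrt{T})} \varphi_0^2 \, dv_{g(0)} \leq C_{m}^{-1} A^{-2},
\]
which is where the hypothesis $t \cdot Rc \leq (m-1)\alpha_{m} A$ with $\alpha_{m} = 1/(100m)$ enters in a delicate way. Hamilton's distance-distortion inequality, applied on balls of radius $\sqrt{t}$ where the Ricci upper bound holds, shows that $B_{g(T)}(x_0, 8A\sqrt{T})$ (at time $T$) sits inside $B_{g(0)}(x_0, 9A\sqrt{T})$ (at time $0$), so that $u(\cdot, 0)$ can concentrate outside $B_{g(0)}(x_0, 15A\sqrt{T})$ only by spreading at least $\sim 6A\sqrt{T}$ from its initial support; combined with a Gaussian upper bound for the conjugate heat kernel at scale $\sqrt{\tau_T + T} \leq \sqrt{A^2 + 1}\sqrt{T}$---obtainable from the Ricci upper bound via a Li-Yau / reduced-distance estimate---this gives the desired smallness, provided $A \geq 1000 m$. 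The calibration $\alpha_{m} A \ll A$ is precisely what makes the distortion contribution subdominant to the separation between the two domains.

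Finally, the inequality (\ref{eqn:MJ17_10}) follows from (\ref{eqn:MJ17_9}) by rerunning the argument with $\tau_T$ varying over $(0, A^2 T)$ and taking the infimum, using $\boldsymbol{\nu}(\Omega_T', g(T), \tau_T) = \inf_{s \in (0, \tau_T]} \boldsymbol{\mu}(\Omega_T', g(T), s)$; the refined form (\ref{eqn:RG03_1}) follows by applying (\ref{eqn:RH31_3}) of Proposition~\ref{prn:RH31_1} with $\tau_1 = T$ and $\tau_2 = \tau_T + T$ to the right-hand side of (\ref{eqn:MJ17_10}). The hard part will be the mass-escape step: producing a Gaussian-type decay of $\varphi_0^2$ under only a one-sided Ricci upper bound, with constants sharp enough that the final error is $O(A^{-2})$ rather than some weaker function of $A$.
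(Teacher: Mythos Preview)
Your overall strategy is correct and matches what is done in the companion paper \cite{BWang17local}: evolve the minimizer by the conjugate heat equation, invoke Perelman's monotonicity, and cut off to localize to $\Omega_0$. The present paper does not reprove (\ref{eqn:MJ17_9}) or (\ref{eqn:MJ17_10}) at all---it simply cites Theorem~5.4 of \cite{BWang17local}---and only supplies the derivation of (\ref{eqn:RG03_1}), which you handle exactly as the paper does: apply (\ref{eqn:RH31_3}) with $\tau_1=T$ and let $\tau_2$ run through $[T,\tau_T+T]$ on the right-hand side of (\ref{eqn:MJ17_10}).

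There is one genuine slip in your mass-escape analysis. You write that the conjugate heat kernel spreads at scale $\sqrt{\tau_T+T}\le\sqrt{A^2+1}\,\sqrt{T}$; with a separation of $6A\sqrt{T}$ this would give only a fixed Gaussian factor $\sim e^{-9}$, not $O(A^{-2})$, and the argument would collapse precisely when $\tau_T$ is near $A^2T$. The parameter $\tau$ is a scale in the $\mathcal{W}$-functional, not the duration of the backward evolution: the conjugate heat equation is solved over the Ricci-flow interval $[0,T]$, so the correct diffusion scale is $\sqrt{T}$. With that correction the ratio (separation)$^2$/(time) is $\sim A^2$, yielding $e^{-cA^2}\ll A^{-2}$, and everything goes through. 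In \cite{BWang17local} this step is handled not via a raw heat-kernel Gaussian bound but by coupling $u$ with a time-dependent barrier built from $d_{g(t)}(x_0,\cdot)+C\sqrt{t}$, for which Lemma~8.3(a) of \cite{Pe1} and the hypothesis (\ref{eqn:MJ17_11}) supply the needed differential inequality using only the Ricci upper bound near $x_0$; this is how one avoids needing two-sided Ricci control to get Gaussian decay.
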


\begin{proof}
Note that the condition (\ref{eqn:MJ17_11}) is slightly modified to be compatible with later applications. 
It is clear that (\ref{eqn:MJ17_9}) and (\ref{eqn:MJ17_10}) are already proved in Theorem 5.4 of~\cite{BWang17local}, we only need to prove (\ref{eqn:RG03_1}).
  In (\ref{eqn:RH31_3}), by setting $\tau_1=T$ annd $\tau_2$ running through $[T, \tau_{T}+T]$, we obtain (\ref{eqn:RG03_1}). 
\end{proof}

\section{Rigidity and gap theorems}  
\label{sec:rigidity}

In this section, we first list some important properties of the Ricci flows(cf. Proposition~\ref{prn:RH24_1}, Proposition~\ref{prn:RB06_1}, Corollary~\ref{cly:RH30_1} and Proposition~\ref{prn:MJ24_1}). 
They are implied or inspired by the proof of pseudo-locality theorem of Perelman~\cite{Pe1}, with further improvements. 
Then in Theorem~\ref{thm:RH27_10} and Theorem~\ref{thm:RH30_18},  we use $\boldsymbol{\nu}$-entropy as a tool to generalize Anderson's gap theorem and $\epsilon$-regularity theorem. 
We observe that Einstein condition is not essentially used and we are able to generalize the gap theorem and the $\epsilon$-regularity theorem for cscK metrics, which is achieved in Theorem~\ref{thm:PA08_1}
and Theorem~\ref{thm:PA05_2}. Finally, we obtain a compactness property for some moduli of cscK metrics in Theorem~\ref{thm:RG23_1}.\\ 

Let us reformulate Proposition 4.8  and Proposition 4.9 of~\cite{BWang17local} as the following rigidity properties.
\begin{proposition}[\textbf{Rigidity of Ricci flow in terms of $\boldsymbol{\mu}$}]
  Suppose $\{(M^{m}, g(t)), 0 \leq t \leq T\}$ is a Ricci flow solution satisfying
  \begin{align}
    \boldsymbol{\mu}(M, g(0), T) \geq 0. 
    \label{eqn:RH24_2}
  \end{align}
  Then the flow is the static flow on Euclidean space $(\R^{m}, g_{E})$. 
\label{prn:RH24_1}
\end{proposition}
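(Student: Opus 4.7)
The plan is to first upgrade the one-sided bound in the hypothesis to equality, and then use a sharp logarithmic Sobolev rigidity to force $(M,g(0))$ to be flat Euclidean space. From Proposition~\ref{prn:CA01_3} applied to an arbitrary bounded domain $\Omega \subset M$, we have $\boldsymbol{\nu}(\Omega, g(0), T)\leq 0$ and in particular $\boldsymbol{\mu}(\Omega, g(0), T)\leq 0$. Taking an exhaustion $\Omega_i \nearrow M$ by bounded domains and invoking the continuity statement in Proposition~\ref{prn:CA02_1}, I get $\boldsymbol{\mu}(M, g(0), T) = \lim_i \boldsymbol{\mu}(\Omega_i, g(0), T) \leq 0$. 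Combined with the hypothesis (\ref{eqn:RH24_2}), this pins down
\[
 \boldsymbol{\mu}(M, g(0), T) = 0.
\]

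Next, I would show that this equality forces $(M, g(0))$ to be isometric to $(\R^m, g_E)$. Since $\mathcal{W}^{(R)}$ is exactly Perelman's $\mathcal{W}$-functional, the value $\boldsymbol{\mu}(M,g(0),T)=0$ says that the full-manifold Perelman $\mu$-invariant vanishes at scale $T$. The classical sharp log-Sobolev inequality on $\R^m$ gives $\mu(\R^m,g_E,T)=0$, attained by the Gaussian $(4\pi T)^{-m/4}e^{-|x|^2/(4T)}$; conversely, on a complete Riemannian manifold, the rigidity case of the Perelman--log-Sobolev inequality is known to force Euclidean space. Concretely, from the exhaustion I extract a sequence of near-minimizers $\varphi_i = e^{-u_i/2}$ on $\Omega_i$ and pass to a limit $u$ on $M$ satisfying the Euler--Lagrange equation
\[
 2T\bigl(2\Delta u - |\nabla u|^2 + R\bigr) + (u-m) = 0,
\]
together with the Perelman identity which, at a minimizer with $\boldsymbol{\mu}=0$, yields pointwise
\[
 Rc + \nabla^2 u - \frac{g}{2T} \equiv 0.
\]
Tracing gives $R + \Delta u = m/(2T)$; combined with the equation above this forces $R\equiv 0$ and $\nabla^2 u \equiv g/(2T)$. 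By Tashiro's rigidity theorem for manifolds admitting a function with Hessian a constant multiple of the metric, $(M,g(0))$ is isometric to $(\R^m,g_E)$ with $u=|x|^2/(4T)+\text{const}$.

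The main obstacle is carrying out the extraction of a bona fide limiting minimizer in the non-compact setting: the infimum over $W^{1,2}_0(\Omega_i)$ may not be attained in the limit because of concentration or loss of mass at infinity. To handle this I would exploit the quantitative defect form of the log-Sobolev inequality, or alternatively adapt the direct argument suggested by Propositions 4.8--4.9 of~\cite{BWang17local}, which work throughout with the localized functionals and their monotonicity rather than passing to a global limit. Either way the conclusion is that $g(0)$ is the flat Euclidean metric.

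Finally, once $(M,g(0))=(\R^m,g_E)$, the standing bounded-geometry assumption (\ref{eqn:RJ18_1}) on the given Ricci flow places us in the uniqueness class for the Ricci flow starting from the flat metric; the static flow is a solution, and by uniqueness $g(t)\equiv g_E$ for all $t\in[0,T]$, as claimed.
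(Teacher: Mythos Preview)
There is a genuine error in your first step. From Proposition~\ref{prn:CA01_3} you correctly obtain $\boldsymbol{\nu}(\Omega, g(0), T) \leq 0$ for every bounded $\Omega$, but this does \emph{not} give $\boldsymbol{\mu}(\Omega, g(0), T) \leq 0$: by definition $\boldsymbol{\nu}(\Omega, T) = \inf_{s \in (0, T]} \boldsymbol{\mu}(\Omega, s) \leq \boldsymbol{\mu}(\Omega, T)$, so the implication runs the wrong way. In fact, for a bounded domain the value $\boldsymbol{\mu}(\Omega, g, T)$ is typically strictly positive once $T$ is large relative to the diameter of $\Omega$ (the Dirichlet condition together with the Poincar\'e inequality bounds $\int |\nabla \varphi|^2$ away from zero, so the term $4T\int |\nabla \varphi|^2$ dominates). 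Hence the upper bound $\boldsymbol{\mu}(M, g(0), T) \leq 0$ cannot be obtained by a purely static exhaustion argument at time $t=0$; the existence of the Ricci flow on $[0,T]$ is genuinely needed.

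The paper's proof simply cites Proposition~4.8 of \cite{BWang17local}, and the mechanism there (visible in the proof of Proposition~\ref{prn:RB06_1} just below) is parabolic rather than elliptic: one uses Perelman's monotonicity of $t \mapsto \boldsymbol{\mu}(M, g(t), T-t)$ along the given flow, together with the small-scale asymptotic $\boldsymbol{\mu}(M, g, \tau) \to 0$ as $\tau \to 0^{+}$, to pin down $\boldsymbol{\mu}(M, g(t), T-t) \equiv 0$ for all $t$. The vanishing of the derivative of the monotone quantity then yields the shrinking gradient soliton equation directly, with no need to extract a global minimizer at a single time slice; under the standing bounded-curvature hypothesis (\ref{eqn:RJ18_1}) the only such soliton is Euclidean. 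Your route, by contrast, tries to run a one-time log-Sobolev rigidity at $t=0$; even after repairing the first step via the flow monotonicity, you would still face the minimizer-extraction problem you yourself flag, which the parabolic argument bypasses entirely.
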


\begin{proof}
  This is nothing but reformulation of Proposition 4.8 of~\cite{BWang17local}. 
\end{proof}

\begin{proposition}[\textbf{Rigidity of Riemannian manifolds in terms of $\boldsymbol{\nu}$}]
Suppose $(M^{m}, g)$ is a complete Riemannian manifold with bounded curvature satisfying  
\begin{align}
   \boldsymbol{\nu}(M,g,T)\geq 0  \label{eqn:CA22_1}
\end{align}
for some $T>0$. Then $(M, g)$ is isometric to the Euclidean space $(\R^{m}, g_{E})$.
\label{prn:RB06_1}
\end{proposition}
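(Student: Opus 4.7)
The plan is to reduce this directly to Proposition~\ref{prn:RH24_1} by starting a Ricci flow from $(M,g)$ and showing the flow must be static Euclidean. Since only the hypothesis that $\boldsymbol{\mu}(M, g(0), T') \geq 0$ for some positive $T'$ (equal to the lifespan used in Proposition~\ref{prn:RH24_1}) is needed to invoke that rigidity, and since $\boldsymbol{\nu}(M, g, T) \geq 0$ gives lower bounds on $\boldsymbol{\mu}(M, g, s)$ for every $s \in (0, T]$, we have a lot of freedom in the choice of $T'$.

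First I would apply Shi's short-time existence theorem: since $(M, g)$ is complete with bounded Riemann tensor $|Rm|_{g} \leq K$, there exists $s_0 = s_0(K, m) \in (0, T]$ and a smooth Ricci flow solution $\{(M, g(t)),\ 0 \leq t \leq s_0\}$ with $g(0) = g$ and uniformly bounded curvature on $[0, s_0]$. In particular, this is a Ricci flow solution in the sense of (\ref{eqn:RJ18_1}), so Proposition~\ref{prn:RH24_1} is applicable to it.

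Next I would transfer the hypothesis. By the definition (\ref{eqn:MJ16_d}), the assumption $\boldsymbol{\nu}(M, g, T) = \inf_{s \in (0, T]} \boldsymbol{\mu}(M, g, s) \geq 0$ implies $\boldsymbol{\mu}(M, g, s) \geq 0$ for every $s \in (0, T]$. Taking $s = s_0$, I get
\begin{align*}
\boldsymbol{\mu}(M, g(0), s_0) \geq 0.
\end{align*}
This is precisely the hypothesis of Proposition~\ref{prn:RH24_1} (with $s_0$ playing the role of $T$), so it follows that the flow $\{g(t)\}_{0 \leq t \leq s_0}$ is the static Euclidean flow on $(\R^{m}, g_E)$. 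In particular, $(M, g) = (M, g(0))$ is isometric to $(\R^m, g_E)$, which is the desired conclusion.

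There is essentially no obstacle here: the only delicate point is verifying that a Ricci flow in the sense required by Proposition~\ref{prn:RH24_1} exists in the first place, which is handled by Shi's theorem given the bounded curvature hypothesis. The cleverness has already been invested in Proposition~\ref{prn:RH24_1} (via the monotonicity/effective rigidity for the functional $\boldsymbol{\mu}$); the role of $\boldsymbol{\nu}$ here is simply to package the required $\boldsymbol{\mu}$-bound at an \emph{a priori} unknown time scale $s_0$ into a single scale-free condition on the initial metric.
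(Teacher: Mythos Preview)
Your proof is correct. Both you and the paper start a short-time Ricci flow via Shi's theorem (using the bounded curvature hypothesis) and then extract from the assumption $\boldsymbol{\nu}(M,g,T)\geq 0$ a $\boldsymbol{\mu}$-bound at the lifespan scale; the difference is only in how the rigidity is then concluded. You invoke Proposition~\ref{prn:RH24_1} directly as a black box, which is the cleanest route. The paper instead cites the non-positivity result (Proposition 4.9 of \cite{BWang17local}) to pin down $\boldsymbol{\mu}(M,g(t),\delta_0-t)\equiv 0$, and then derives the shrinking soliton equation from the vanishing of $\frac{d}{dt}\boldsymbol{\mu}(M,g(t),\delta_0-t)$ before concluding via bounded curvature. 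Your argument is more economical, while the paper's version is more self-contained in that it exposes the soliton mechanism rather than hiding it inside Proposition~\ref{prn:RH24_1}; but since Proposition~\ref{prn:RH24_1} is stated and proved immediately before, there is no real loss in citing it as you do.
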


\begin{proof}
As $(M,g)$ has bounded curvature, we can run Ricci flow from $(M,g)$ for a short time period $\delta_0$ for some $\delta_0 \in (0,T]$. By Proposition 4.9 of~\cite{BWang17local}, we have 
\begin{align}
  \boldsymbol{\nu}(M,g,\delta_0) \leq \boldsymbol{\mu}(M,g,\delta_0) \leq 0.    \label{eqn:CA05_1}
\end{align}
On the other hand,   since $\delta_0 \in (0,T]$, it follows from definition of $\boldsymbol{\nu}$ and (\ref{eqn:CA22_1}) that
\begin{align}
 0 \leq \boldsymbol{\nu}(M,g,T) \leq \boldsymbol{\nu}(M,g,\delta_0).  \label{eqn:RB06_1}
\end{align}
Combining (\ref{eqn:RB06_1}) with (\ref{eqn:CA05_1}), we obtain
\begin{align*}
  \boldsymbol{\nu}(M,g,\delta_0) =\boldsymbol{\mu}(M,g,\delta_0) =0.  
\end{align*}
As $\boldsymbol{\nu}(M, g(t), \delta_0-t)$ is monotonically nondecreasing along the flow, it is non-negative for each $t \in (0,\delta_0)$. 
Replacing $g$ by $g(t)$, $\delta_{0}$ by $\delta_{0}-t$ in the previous argument, we obtain 
\begin{align*}
   \boldsymbol{\mu}(M, g(t), \delta_0-t) =\boldsymbol{\nu}(M, g(t), \delta_0-t)=0, \quad \forall t \in (0, \delta_0). 
\end{align*}
In particular, $\frac{d}{dt}  \boldsymbol{\mu}(M, g(t), \delta_0-t) =0$.  Then we know that $\{(M, g(t)), 0 \leq t \leq \delta_0\}$ satisfies the shrinking gradient Ricci soliton equation
\begin{align*}
    R_{ij} + f(t)_{ij}-\frac{g_{ij}(t)}{\delta_0-t} \equiv 0.
\end{align*}
Then the bounded curvature condition of the flow implies that $\{(M, g(t)), 0 \leq t \leq \delta_0\}$ must be isometric to Euclidean space, as done in Proposition 4.8 and 4.9 of~\cite{BWang17local}. 
In particular, $(M,g)$ is isometric to Euclidean space. 
\end{proof}

By Lemma~\ref{lma:MJ16_1},  the following corollary immediately follows from Proposition~\ref{prn:RB06_1}.  

\begin{corollary}[\textbf{Rigidity of Riemannian manifolds in terms of $\overline{\boldsymbol{\nu}}$ and $R$}]
Suppose $(M^{m}, g)$ is a complete Riemannian manifold with bounded curvature satisfying  
\begin{align}
   \bar{\boldsymbol{\nu}}(M,g,T)\geq 0, \quad  R \geq 0,   \label{eqn:RB06_3}
\end{align}
for some $T>0$. Then $(M, g)$ is isometric to the Euclidean space $(\R^{m}, g_{E})$.
\label{cly:RH30_1}
\end{corollary}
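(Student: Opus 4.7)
My plan is to reduce Corollary~\ref{cly:RH30_1} directly to the already-established Proposition~\ref{prn:RB06_1}, using Lemma~\ref{lma:MJ16_1} to convert the hypothesis $\bar{\boldsymbol{\nu}}(M,g,T)\geq 0$ into the hypothesis $\boldsymbol{\nu}(M,g,T)\geq 0$. The key observation is that the scalar curvature condition $R\geq 0$ means the constant $\underline{\Lambda}$ in Lemma~\ref{lma:MJ16_1} can be taken to be $0$ on \emph{every} subdomain of $M$, which collapses one side of the inequality (\ref{eqn:MJ16_6}) into the clean bound $\boldsymbol{\mu}(\Omega,\tau)\geq \bar{\boldsymbol{\mu}}(\Omega,\tau)$ for every bounded $\Omega\subset M$.

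The only technical point is that Lemma~\ref{lma:MJ16_1} is stated for a bounded domain $\Omega$, while the hypothesis is on all of $M$. First, I would fix an exhaustion $\Omega_{1}\subset \Omega_{2}\subset\cdots\subset M$ of $M$ by bounded open sets with smooth boundary; this is possible since $M$ is complete. For each $\Omega_{i}$, applying Lemma~\ref{lma:MJ16_1} with $\underline{\Lambda}=0$ gives $\boldsymbol{\mu}(\Omega_{i},g,s)\geq \bar{\boldsymbol{\mu}}(\Omega_{i},g,s)$ for every $s>0$. Passing to the limit in $i$ and using Proposition~\ref{prn:CA02_1} (applied to both $\boldsymbol{\mu}$ and $\bar{\boldsymbol{\mu}}$), I obtain
\begin{align*}
  \boldsymbol{\mu}(M,g,s)\geq \bar{\boldsymbol{\mu}}(M,g,s), \qquad \forall\; s>0.
\end{align*}
Taking infimum over $s\in(0,T]$ then yields $\boldsymbol{\nu}(M,g,T)\geq \bar{\boldsymbol{\nu}}(M,g,T)\geq 0$, where the last inequality is the hypothesis.

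At this point the hypothesis of Proposition~\ref{prn:RB06_1} is in force: $(M,g)$ is a complete Riemannian manifold with bounded curvature satisfying $\boldsymbol{\nu}(M,g,T)\geq 0$. Invoking that proposition concludes that $(M,g)$ is isometric to $(\R^{m},g_{E})$. I do not expect any real obstacle here; the passage from bounded domains to $M$ via exhaustion is the only place where a little care is needed, but Proposition~\ref{prn:CA02_1} handles this exactly, and the sign $\underline{\Lambda}=0$ comes for free from the non-negativity of scalar curvature on every sub-region.
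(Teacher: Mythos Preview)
Your proof is correct and follows essentially the same route as the paper, which simply says the corollary ``immediately follows from Proposition~\ref{prn:RB06_1}'' via Lemma~\ref{lma:MJ16_1}. Your exhaustion argument is a bit more cautious than necessary (the inequality $\boldsymbol{\mu}\geq\bar{\boldsymbol{\mu}}$ when $R\geq 0$ is immediate from the definitions on any domain, and one could also invoke (\ref{eqn:MJ16_7}) directly for $\boldsymbol{\nu}$), but it is correct and harmless.
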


A quantitative version of Proposition~\ref{prn:RH24_1} is the following gap property for local $\boldsymbol{\mu}$-functional. 

\begin{proposition}[\textbf{$\boldsymbol{\mu}$-gap of Ricci flow with bounded curvature}]
For each $\alpha \in (0,1]$, $H \in [1, \infty)$, $L \in [1, \infty)$,  there is a constant $\delta=\delta(\alpha,m,H,L)$ with the following property.

Suppose $\{(M^m, x_0, g(t)),  0 \leq t \leq 1\}$ is a Ricci flow solution satisfying
\begin{align}
\begin{cases}
&|Rm|(x,t) \leq H, \quad \forall \; x \in B_{g(t)}(x_0, \delta^{-1}), \; t \in [0,1];\\
&\boldsymbol{\mu}(B_{g(0)}(x_0, \delta^{-1}), g(0), 1) \geq -\delta. 
\end{cases}
\label{eqn:MJ24_0}
\end{align}
Then we have
\begin{align}
 &\sup_{x \in B_{g(1)}(x_0,L)}|Rm|(x, 1) <\alpha,   \label{eqn:MJ24_1}\\
 &\inf_{x \in B_{g(1)}(x_0,L)} |B_{g(1)}(x, 1)| \geq (1-\alpha) \omega_{m},  \label{eqn:MJ24_5}\\
 &\inf_{x \in B_{g(1)}(x_0, L)} inj(x,1) > \alpha^{-1}.    \label{eqn:RH27_6} 
\end{align}
\label{prn:MJ24_1}
\end{proposition}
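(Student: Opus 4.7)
The plan is to argue by contradiction via a smooth Cheeger-Gromov compactness and the rigidity statement Proposition~\ref{prn:RH24_1}. Suppose the proposition fails for some fixed $\alpha, H, L$. Then there exist $\delta_i \to 0$ and Ricci flows $\{(M_i^m, x_0^i, g_i(t)), 0 \leq t \leq 1\}$ satisfying (\ref{eqn:MJ24_0}) with $\delta=\delta_i$, but such that for each $i$ at least one of (\ref{eqn:MJ24_1}), (\ref{eqn:MJ24_5}), (\ref{eqn:RH27_6}) fails at some point $y_i \in B_{g_i(1)}(x_0^i, L)$.

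The first step is to produce a smooth pointed Cheeger-Gromov limit of the space-times $(M_i, x_0^i, g_i(t))$. On any fixed compact subset of parabolic space-time around $(x_0^i, 0)$, curvature is uniformly bounded by $H$ for large $i$. What remains is a uniform injectivity radius lower bound at $x_0^i$ relative to $g_i(0)$. This is supplied by the almost-Euclidean functional hypothesis: using monotonicity under inclusion (Proposition~\ref{prn:CA01_2}) and Proposition~\ref{prn:RH31_1} to compare $\boldsymbol{\nu}$ on small sub-balls with the assumed lower bound on $\boldsymbol{\mu}$, and then invoking Theorem~\ref{thm:CF21_3}, one obtains a uniform volume ratio lower bound on geodesic balls of fixed size inside $B_{g_i(0)}(x_0^i, \delta_i^{-1})$. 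Combined with $|Rm|\leq H$, this yields the desired $inj$-bound. Passing to a subsequence, we obtain smooth convergence to a limit Ricci flow $\{(M_\infty, x_0^\infty, g_\infty(t)), 0 \leq t \leq 1\}$ of complete Riemannian manifolds with $|Rm|\leq H$ throughout.

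The second step is to identify the limit. By the exhaustion continuity of Proposition~\ref{prn:CA02_1} and the continuity of the local $\boldsymbol{\mu}$-functional under smooth Cheeger-Gromov convergence (Corollary~\ref{cly:CB19_2}), the assumed lower bound $\boldsymbol{\mu}(B_{g_i(0)}(x_0^i, \delta_i^{-1}), g_i(0), 1) \geq -\delta_i$ passes to
\begin{align*}
\boldsymbol{\mu}(M_\infty, g_\infty(0), 1) \geq 0.
\end{align*}
Proposition~\ref{prn:RH24_1} then forces the limit to be the static flow on $(\R^m, g_E)$.

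The final step is to extract the contradiction. The points $y_i \in B_{g_i(1)}(x_0^i, L)$ sub-converge under the diffeomorphisms of Cheeger-Gromov convergence to some $y_\infty \in B_{g_\infty(1)}(x_0^\infty, L) \subset \R^m$. Smooth convergence on a fixed neighborhood of $y_\infty$ forces $|Rm|(y_i,1) \to 0$, $|B_{g_i(1)}(y_i,1)| \to \omega_m$, and $inj(y_i,1) \to \infty$, contradicting the failure of (\ref{eqn:MJ24_1})--(\ref{eqn:RH27_6}) at $y_i$. The main obstacle I anticipate is the preparatory work for step one: verifying that the local $\boldsymbol{\mu}$ hypothesis, via the chain Proposition~\ref{prn:CA01_2}, Proposition~\ref{prn:RH31_1}, and Theorem~\ref{thm:CF21_3}, gives the non-collapsing needed to guarantee smooth (as opposed to merely $C^0$ Gromov-Hausdorff) convergence on uniform scales; once smooth convergence is in place, the rigidity of Proposition~\ref{prn:RH24_1} closes the argument immediately.
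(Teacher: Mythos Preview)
Your contradiction–compactness–rigidity strategy is exactly the paper's, and steps two and three would go through essentially unchanged. The genuine gap is in your step one: the chain Proposition~\ref{prn:CA01_2} $\to$ Proposition~\ref{prn:RH31_1} $\to$ Theorem~\ref{thm:CF21_3} cannot manufacture the $\boldsymbol{\nu}$-lower-bound that Theorem~\ref{thm:CF21_3} requires at $t=0$ from the single-scale hypothesis $\boldsymbol{\mu}(B_{g_i(0)}(x_0^i,\delta_i^{-1}),g_i(0),1)\geq -\delta_i$. Every inequality in Proposition~\ref{prn:RH31_1} bounds $\boldsymbol{\mu}(\Omega,\tau_1)$ from \emph{above} (not below) in terms of $\boldsymbol{\mu}(\Omega,\tau_2)$ when $\tau_1<\tau_2$, so nothing there controls $\boldsymbol{\nu}(\Omega,1)=\inf_{s\le 1}\boldsymbol{\mu}(\Omega,s)$ from a lower bound on $\boldsymbol{\mu}(\Omega,1)$ alone.

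The paper closes this gap not at $t=0$ but at $t=1$, and this is where the almost-monotonicity Theorem~\ref{thm:CA02_3} enters. Since $|Rm|\le H$ forces the Ricci condition (\ref{eqn:MJ17_11}) for any $A$ with $\alpha_m A\ge H$, one may apply (\ref{eqn:RG03_1}) with $T=\tau_T=1$ to convert the single-scale bound $\boldsymbol{\mu}(\Omega_0,g(0),1)\ge -\delta$ into a genuine $\boldsymbol{\nu}$-bound $\boldsymbol{\nu}(B_{g(1)}(x_0,0.2\delta^{-1}),g(1),1)\ge -C(m,H)$; only then do Theorem~\ref{thm:CF21_3} and Cheeger--Gromov--Taylor give the rough injectivity-radius estimate (\ref{eqn:RG01_5}), at time $t=1$. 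Hamilton's compactness is accordingly taken on the half-open interval $(0,1]$, and because the limit is unavailable at $t=0$, the paper uses (\ref{eqn:MJ17_9}) once more (applied before the limit, then letting $A\to\infty$ via Proposition~\ref{prn:CA02_1}) to transport the entropy to an interior time, obtaining $\boldsymbol{\mu}(\bar M,\bar g(0.5),0.5)\ge 0$, and invokes Proposition~\ref{prn:RH24_1} on $[0.5,1]$ rather than on $[0,1]$. With this replacement, the rest of your argument is fine; note also that the paper makes the injectivity conclusion (\ref{eqn:RH27_6}) rigorous by a short geodesic-lasso argument against the already-established rough lower bound (\ref{eqn:RG01_5}).
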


\begin{proof}
 Before we start the main stream of the proof, let us do a technical preparation.  
 We observe that under the condition (\ref{eqn:MJ24_0}), the lower bound of $\boldsymbol{\mu}$-functional implies a rough injectivity radius estimate
 for points near $x_0$ at positive time.  
 In fact, by the local monotonicity formula (\ref{eqn:RG03_1}) with $T=\tau_{T}=1$ and proper choice of $\Omega_{1}'$ and $\Omega_{0}$, we obtain  
 \begin{align*}
   \boldsymbol{\nu}(B_{g(1)}(x_0, 0.2 \delta^{-1}), g(1), 1) &\geq -100 \delta^2 -\delta-\frac{m}{2}\log 2  -3m(m-1)H \geq -3m^2 H. 
 \end{align*}
 Exploiting the curvature condition in (\ref{eqn:MJ24_0}), it follows from Theorem~\ref{thm:CF21_3} that
 \begin{align*}
 r^{-m}|B(x,r)|_{g(1)} \geq c(m,H), \quad \forall \; r \in (0,1],  \; x \in B_{g(1)}(x_0, 0.1 \delta^{-1}).
 \end{align*}
 Therefore, by Cheeger-Gromov-Taylor~\cite{ChGrTa}, we obtain the desired rough injectivity radius estimate
 \begin{align}
   inj(x, 1) \geq c(m,H), \quad \forall \; x \in B_{g(1)}(x_0, 0.1 \delta^{-1}). \label{eqn:RG01_5} 
 \end{align}

Now we start the main stream of proof. 
We first show (\ref{eqn:MJ24_1}) via a contradiction argument.
If (\ref{eqn:MJ24_1}) were wrong, we could find a sequence of Ricci flow solutions $\left\{ (M_i, x_i, g_i(t)), 0 \leq t \leq 1\right\}$ satisfying the condition (\ref{eqn:MJ24_0}) for number $\delta_i \to 0$
and violating the (\ref{eqn:MJ24_1}). In other words, for some points $z_i \in B_{g_i(1)}(x_i, L)$, we have
\begin{align}
  H \geq |Rm|(z_i, 1) \geq \alpha.   \label{eqn:MJ24_7}
\end{align}
By the curvature estimate (\ref{eqn:MJ24_7}) and the rough injectivity radius estimate (\ref{eqn:RG01_5}), we can apply Hamilton's compactness theorem(cf.~\cite{Ha95c}) to obtain
 \begin{align}
   & \{(M_i, x_i, g_i(t)),  0 < t \leq 1\}  \longright{C^{\infty}-Cheeger-Gromov}  \{(\bar{M}, \bar{x}, \bar{g}(t)),  0 < t \leq 1\}.  \label{eqn:RG03_5} 
 \end{align}
 In light of the local monotonicity formula (\ref{eqn:MJ17_9}), the lower bound of $\boldsymbol{\mu}$ by $-\delta_i$ in (\ref{eqn:MJ24_0}) and the above convergence together
 imply that $\boldsymbol{\mu}(\bar{M}, \bar{g}(0.5), 0.5) \geq 0$.  Then it follows from Proposition~\ref{prn:RH24_1} that the limit flow solution on interval $[0.5, 1]$ 
 is a static flow on Euclidean space $(\R^m, g_{E})$. 
 In particular, we have
 \begin{align*}
   &\lim_{i \to \infty} |Rm|(z_i,1)=|Rm|(\bar{z},1)=0,
 \end{align*}
 where $\bar{z}$ is the limit of $z_i$. The existence of $\bar{z}$ is guaranteed by the fact that $d_{g_i(1)}(z_i,x_i) \leq L$ uniformly. 
 The above equations mean that
 \begin{align*}
    |Rm|(z_i,1)<\alpha
 \end{align*}
 for large $i$, which contradicts the assumption of the choice of $z_i$ in (\ref{eqn:MJ24_7}). 
 Therefore, we finish the proof of (\ref{eqn:MJ24_1}). 
 
 The proof of (\ref{eqn:MJ24_5}) follows the same route as that of (\ref{eqn:MJ24_1}), up to an application of Bishop-Gromov volume comparison, as almost-flat curvature bound is already provided by (\ref{eqn:MJ24_1}).
 Similarly, one can prove (\ref{eqn:RH27_6}). A key point is that when (\ref{eqn:MJ24_1}) holds(up to adjusting $L$ to $2L$ if necessary), in $B_{g(1)}(x_0, L)$,
 we have conjugate radius bounded from below by $2\alpha^{-1}$ under the condition (\ref{eqn:MJ24_0}). If (\ref{eqn:RH27_6})
 fails at some point $y_{i}$, we can find a smooth unit-speed geodesic lasso(cf. Lemma 5.6 of Cheeger-Ebin~\cite{ChEbin}) $\gamma_i: [0, \rho_i] \mapsto (M_i, g_{i}(1))$ with $\gamma_i(0)=\gamma_i(\rho_i)=y_{i}$
 and $\rho_i \leq 2\alpha^{-1}$.  Combining this inequality with (\ref{eqn:RG01_5}), we obtain that $c_0 \leq \rho_i \leq 2\alpha^{-1}$. 
 Passing to the limit along (\ref{eqn:RG03_5}), we obtain a unit-speed geodesic lasso on Euclidean space $(\R^{m}, g_{E})$ with length $\rho_{\infty} \in [c_0, 2\alpha^{-1}]$, which is absurd. 
 This contradiction establishes the proof of (\ref{eqn:RH27_6}).  The proof of the proposition is complete. 
 \end{proof}

 \begin{theorem}[\textbf{$\boldsymbol{\nu}$-gap of Einstein manifold}]
  For each non-flat Einstein manifold $\left(M^{m}, g\right)$, we have
  \begin{align}
    \boldsymbol{\nu}(M,g) \leq -\epsilon        \label{eqn:RH27_9}
  \end{align}
  for some universal small constant $\epsilon=\epsilon(m)$. 
  \label{thm:RH27_10}
\end{theorem}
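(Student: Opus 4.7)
The plan is to argue by contradiction, reducing to the rigidity statement of Proposition~\ref{prn:RB06_1}. Suppose no such $\epsilon$ exists; then there is a sequence of non-flat Einstein manifolds $(M_{i}^{m}, g_{i})$ with $\boldsymbol{\nu}(M_{i}, g_{i}) > -i^{-1}$. Since $\boldsymbol{\mu}(M, \lambda^{2}g, \lambda^{2}\tau) = \boldsymbol{\mu}(M, g, \tau)$ (so that $\boldsymbol{\nu}$ is scale invariant) and $Rc(\lambda^{2}g) = c\lambda^{-2}(\lambda^{2}g)$ whenever $Rc(g) = cg$, both $\boldsymbol{\nu}$ and the Einstein property behave well under a homothetic rescaling of the metric. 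This gives us the freedom to normalize the curvature at a suitably chosen base point.

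First I would apply a Hamilton-Perelman point-picking argument to select, for each $i$, a point $x_{i} \in M_{i}$ and a scale $\rho_{i} > 0$ with $|Rm|_{g_{i}}(x_{i}) = \rho_{i}^{-2}$ and $|Rm|_{g_{i}} \leq 4\rho_{i}^{-2}$ on $B_{g_{i}}(x_{i}, A_{i}\rho_{i})$ for some $A_{i} \to \infty$. Rescaling by $\tilde{g}_{i} \coloneqq \rho_{i}^{-2}g_{i}$ produces pointed complete Einstein manifolds $(M_{i}, x_{i}, \tilde{g}_{i})$ satisfying $|Rm|(x_{i}) = 1$, $|Rm| \leq 4$ on $B_{\tilde{g}_{i}}(x_{i}, A_{i})$, $\boldsymbol{\nu}(M_{i}, \tilde{g}_{i}) > -i^{-1}$, and Einstein constants $\tilde{c}_{i}$ bounded by $C(m)$ since $|\tilde{c}_{i}| = |R|/m \leq C|Rm| \leq C(m)$. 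The inclusion monotonicity in Proposition~\ref{prn:CA01_2} gives $\boldsymbol{\nu}(B_{\tilde{g}_{i}}(x_{i}, r), \tilde{g}_{i}, r^{2}) \geq \boldsymbol{\nu}(M_{i}, \tilde{g}_{i}) > -i^{-1}$, and together with the scalar bound $|R| \leq mC(m)$, Theorem~\ref{thm:CF21_3} yields a uniform lower bound of $r^{-m}|B_{\tilde{g}_{i}}(x_{i}, r)|$ for $r \leq 1$; Cheeger-Gromov-Taylor~\cite{ChGrTa} upgrades this to a uniform positive injectivity radius at $x_{i}$.

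Since the Einstein equation is elliptic in harmonic coordinates, bounded $|Rm|$ controls all higher covariant derivatives, so a subsequence converges in the pointed $C^{\infty}$-Cheeger-Gromov topology to a complete Riemannian manifold $(M_{\infty}, x_{\infty}, g_{\infty})$ with $|Rm|(x_{\infty}) = 1$ and $|Rm| \leq 4$ on all of $M_{\infty}$. For each bounded $\Omega \subset M_{\infty}$, Corollary~\ref{cly:CB19_2} gives $\boldsymbol{\mu}(\Omega, g_{\infty}, \tau) \geq \limsup_{i \to \infty} \boldsymbol{\nu}(M_{i}, \tilde{g}_{i}) \geq 0$, and exhausting $M_{\infty}$ via Proposition~\ref{prn:CA02_1} yields $\boldsymbol{\mu}(M_{\infty}, g_{\infty}, \tau) \geq 0$ for every $\tau > 0$, whence $\boldsymbol{\nu}(M_{\infty}, g_{\infty}) \geq 0$. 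Proposition~\ref{prn:RB06_1} then forces $(M_{\infty}, g_{\infty})$ to be isometric to $(\R^{m}, g_{E})$, which contradicts $|Rm|(x_{\infty}) = 1$. The main obstacle will be orchestrating the point-picking so that the curvature control genuinely holds on balls of radius $A_{i}\rho_{i}$ with $A_{i} \to \infty$: in the compact case one simply chooses $x_{i}$ at a maximum of $|Rm|$, but in the complete non-compact case one must invoke the standard iterative Perelman selection to trade off the location of $x_{i}$ against the size of the good curvature neighborhood, so that the limit is a complete manifold rather than a single ball.
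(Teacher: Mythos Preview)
Your proposal is correct and follows essentially the same route as the paper: contradiction, point-selection to normalize $|Rm|=1$ with nearby curvature $\leq 4$ on balls of diverging radius, volume/injectivity radius from Theorem~\ref{thm:CF21_3} and Cheeger--Gromov--Taylor, Einstein regularity in harmonic coordinates, $C^{\infty}$-Cheeger--Gromov compactness, then $\boldsymbol{\nu}(M_{\infty},g_{\infty})\geq 0$ via Corollary~\ref{cly:CB19_2} and Proposition~\ref{prn:CA02_1}, and finally Proposition~\ref{prn:RB06_1}. The one place the paper differs from what you sketch is precisely the point you flag as the ``main obstacle'': instead of Perelman's iterative point-picking, the paper fixes any $x_{0}$ with $|Rm|(x_{0})=1$ after rescaling, takes $\Omega=B(x_{0},2L)$, and lets $y_{0}$ maximize the scale-invariant quantity $|Rm|(\cdot)\,d^{2}(\cdot,\partial\Omega)$ over $\Omega$; this single maximization immediately yields $|Rm|\leq 4$ on $B_{\tilde g}(y_{0},L)$ after rescaling by $Q=|Rm|(y_{0})$, avoiding any iteration and handling the non-compact case uniformly.
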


\begin{proof}
  Let $(M^{m}, g)$ be a complete non-flat manifold such that $\boldsymbol{\nu}(M,g) \geq -\epsilon$.
  By the non-flatness, we can find a point $x_0 \in M$ such that $|Rm|(x_0) \neq 0$.
  Since $\boldsymbol{\nu}(M,g)$ is scaling invariant,  up to rescaling, we can assume $|Rm|(x_0)=1$.
  Fix $L >>1$.  In the ball $\Omega=B(x_{0},2L)$, let $y_0$ be a point where $|Rm|(\cdot) d^{2}(\cdot, \partial \Omega)$ achieves the maximum value $F^2$(See Figure~\ref{fig:pointselecting_ball}).
  Let $Q=|Rm|(y_{0})$.  It is clear that 
  \begin{align*}
    F^2=|Rm|(y_0) d^{2}(y_0, \partial \Omega) \geq |Rm|(x_0) d^{2}(x_0, \partial \Omega)= 4L^{2}. 
  \end{align*}
  Let $\tilde{g}=Q g$. Then we have $|Rm|_{\tilde{g}}(y_0)=1$ and $d_{\tilde{g}}(y_0, \partial \Omega)=F \geq 2L$. 
  For each point $x \in B_{\tilde{g}}(y_0, L)$, it follows from the triangle inequality that $d_{\tilde{g}}(x, \partial \Omega) \geq F-L$.  Consequently, by the scaling invariance of $|Rm|d^2$,
  we have
  \begin{align}
    |Rm|_{\tilde{g}}(x) \leq  \frac{|Rm|_{\tilde{g}}(y_0) d_{\tilde{g}}^{2}(y_0, \partial \Omega)}{d_{\tilde{g}}^{2}(x, \partial \Omega)} \leq \frac{F^2}{\left(F-L \right)^{2}}
    =\frac{1}{\left(1-\frac{L}{F} \right)^{2}} \leq 4, \label{eqn:RH30_2} 
  \end{align}
  for each point $x \in B_{\tilde{g}}(y_0, L)$.   Note  that 
  \begin{align}
    \boldsymbol{\nu}(B_{\tilde{g}}(y_0, L),\tilde{g}) \geq \boldsymbol{\nu}(M,\tilde{g})=\boldsymbol{\nu}(M,g) \geq -\epsilon. \label{eqn:RH30_3} 
  \end{align} 
  Combining (\ref{eqn:RH30_2}) and (\ref{eqn:RH30_3}), we can apply Theorem~\ref{thm:CF21_3} to obtain volume lower bound for each unit geodesic ball contained in $B_{\tilde{g}}(y_0, L)$. 
  Consequently, it follows from Cheeger-Gromov-Taylor~\cite{ChGrTa} that
  \begin{align}
    inj_{\tilde{g}}(x)> c_0(m), \quad \forall \; x \in B_{\tilde{g}}(y_0, L-1).   \label{eqn:RH30_4}
  \end{align}

  Now we finish the proof of the theorem through a blowup argument.
  If the theorem were wrong, we could find a sequence of non-flat Einstein manifolds $(M_i, g_i)$ satisfying $\boldsymbol{\nu}(M_i, g_i) \geq -\epsilon_i$ with $\epsilon_i \to 0$.
  For each $i$, we choose $L_i$ such that $L_i \to \infty$. By the above discussion, modulo rescaling if necessary, we can find $y_i \in M_i$ such that 
  \begin{align}
    & |Rm|_{g_{i}}(y_i)=1,      \label{eqn:RH30_5}
  \end{align}
  and 
  \begin{align}
     \sup_{x \in B(y_i, L_i)} |Rm|_{g_i}(x) \leq 4, 
      \quad \inf_{x \in B(y_i, L_i-1)} inj_{g_i}(x) \geq c_0,
      \quad \boldsymbol{\nu}(B(y_i, L_i), g_i) \geq -\epsilon_i.
      \label{eqn:RH30_15}    
  \end{align}
  Since each $g_i$ is an Einstein manifold, we can obtain the harmonic radius estimate by Anderson~\cite{Anderson90}.
  In each harmonic coordinate, the Einstein equation can be applied to obtain uniform higher order derivative estimates of the metrics. 
  Consequently, we have the following compactness property:
  \begin{align}
    (M_i, y_i, g_i) \longright{C^{\infty}-Cheeger-Gromov} (M_{\infty}, y_{\infty}, g_{\infty}),  \label{eqn:RH30_11}
  \end{align}
  where the limit is a smooth manifold. We claim that 
  \begin{align}
    \boldsymbol{\nu}(M_{\infty}, g_{\infty}) \geq 0. \label{eqn:RH30_7}
  \end{align}
  Actually, let $\cup_{k=1}^{\infty} \Omega_k$ be an exhaustion of $M_{\infty}$ by domains $\Omega_k$ with smooth boundaries $\partial \Omega_k$. 
  Fixing $k$ and $\tau>0$, we can apply the continuity of $\boldsymbol{\mu}$-functionals(cf. Corollary~\ref{cly:CB19_2}) to obtain
   \begin{align*}
     \boldsymbol{\mu}(\Omega_k, g_{\infty}, \tau)= \lim_{i \to \infty} \boldsymbol{\mu}(\Omega_k, g_i, \tau) 
     \geq  \lim_{i \to \infty} \boldsymbol{\nu}(\Omega_k, g_i, \tau) \geq  \lim_{i \to \infty} \boldsymbol{\nu}(M_{i}, g_i) \geq  0. 
   \end{align*}
   Applying Proposition~\ref{prn:CA02_1} on $M_{\infty}$, we obtain $\boldsymbol{\mu}(M_{\infty}, g_{\infty}, \tau) \geq 0$ for each positive $\tau$. 
   Consequently, it follows from definition that 
  \begin{align*}
   \boldsymbol{\nu}(M_{\infty}, g_{\infty})=\inf_{\tau \in (0, \infty)} \boldsymbol{\mu}(M_{\infty}, g_{\infty}, \tau)  \geq 0, 
  \end{align*}
  which is (\ref{eqn:RH30_7}).  In particular, $\boldsymbol{\nu}(M_{\infty}, g_{\infty},1) \geq \boldsymbol{\nu}(M_{\infty}, g_{\infty}) \geq 0$.
  Then it follows from Proposition~\ref{prn:RB06_1} that $(M_{\infty}, g_{\infty})$ is isometric to $(\R^{m}, g_{E})$. 
  In particular, we have $|Rm|_{g_{\infty}}(y_{\infty})=0$ which contradicts (\ref{eqn:RH30_5}) via the $C^{\infty}$-Cheeger-Gromov convergence. 
\end{proof}

 \begin{figure}[H]
 \begin{center}
 \psfrag{Omega}[c][c]{$\Omega=B(x_0, 2L)$}
 \psfrag{x}[l][l]{$y_{0}$}
 \psfrag{y}[l][l]{$x_{0}$}
 \psfrag{2L}[l][l]{$2L$}
 \psfrag{B}[l][l]{\textcolor{blue}{$B(y_0,LQ^{-\frac{1}{2}})$}}
 \psfrag{C}[l][l]{$|Rm|(x_{0})=1$}
 \psfrag{D}[l][l]{$|Rm|(y_{0})=Q$}
 \includegraphics[width=0.5 \columnwidth]{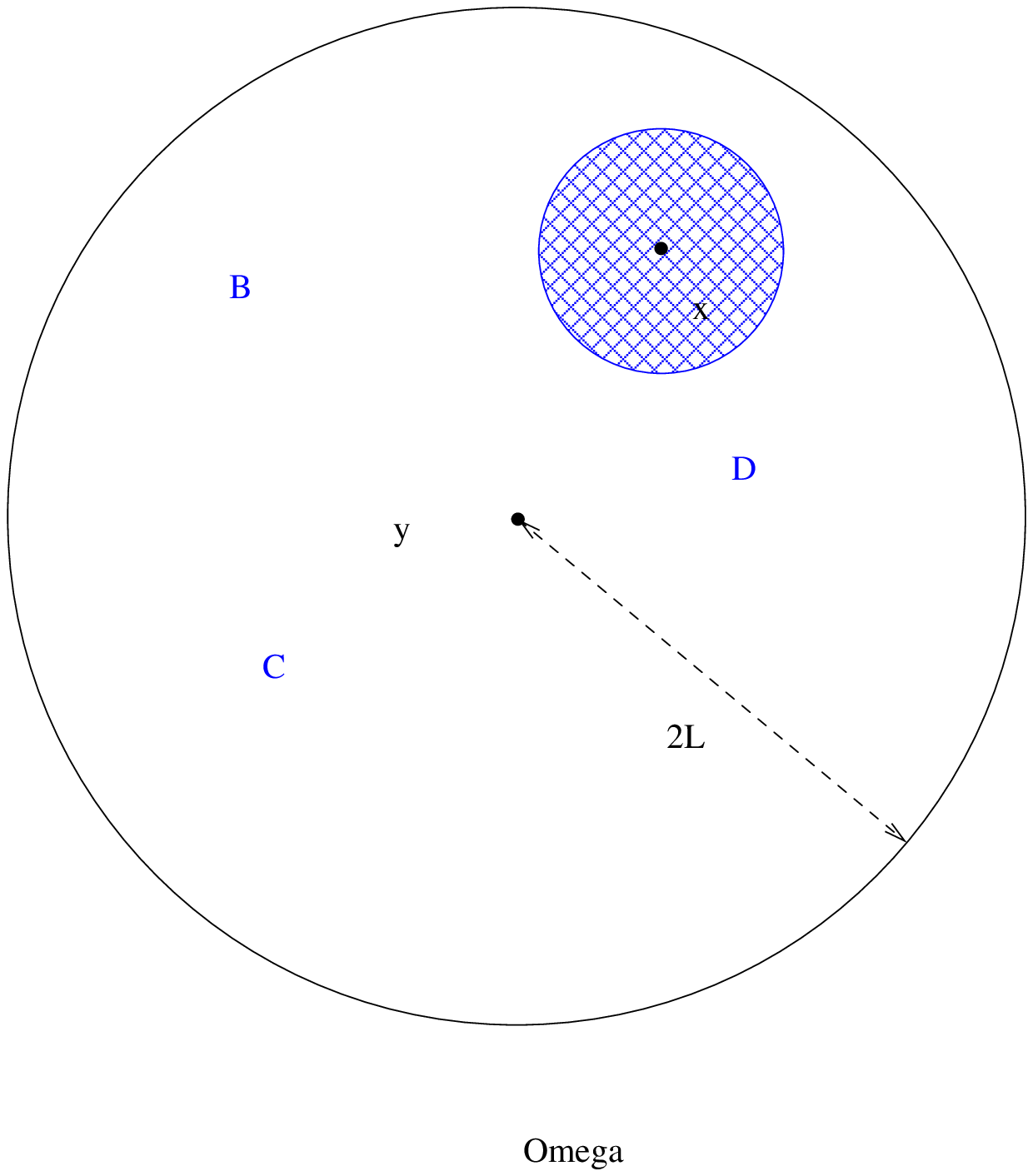}
 \caption{Existence of well-behaved domain}
 \label{fig:pointselecting_ball}
 \end{center}
 \end{figure}

 Theorem~\ref{thm:RH27_10} can be regarded as a generalization of the famous gap theorem of Anderson for Ricci-flat manifolds(cf.~Lemma 3.1 of~\cite{Anderson90}), in terms of asymptotical volume ratio. 
 A quantitative version of Theorem~\ref{thm:RH27_10} is the following $\epsilon$-regularity theorem. 

\begin{theorem}[\textbf{$\epsilon$-regularity of Einstein manifold}]
  Suppose $(M^m, x_0, g)$ is an Einstein manifold satisfying
   \begin{align}
       \begin{cases}
       &\boldsymbol{\bar{\nu}}(B(x_0, 1), g, 1) \geq -\epsilon;  \\
       &Rc(z)=c\cdot g(z), \quad \forall \; z \in B(x_0, 1).   
       \end{cases}
   \label{eqn:RH30_16}
   \end{align}
  Here $\epsilon$ is the small constant in Theorem~\ref{thm:RH27_10}, $c$ is a constant satisfying $|c| \leq m-1$.
  Then for each non-negative integer $k$, there exists a constant $C_k=C_k(m)$ such that
  \begin{align}
    \sup_{z \in B(x_0, 1)}  |\nabla^{k} Rm| d^{2+k}(z, \partial B(x_0, 1)) \leq C_k.  \label{eqn:RH30_17}
  \end{align}
  \label{thm:RH30_18}
\end{theorem}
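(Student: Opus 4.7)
The plan is to establish (\ref{eqn:RH30_17}) by induction on $k$, doing the substantive work at $k=0$ via a blowup/contradiction argument that closely mirrors the proof of Theorem~\ref{thm:RH27_10}, and then bootstrapping to higher $k$ through Anderson's harmonic radius theorem combined with the Einstein equation. The main obstacle will be the $k=0$ step; specifically, the need to promote the single-scale hypothesis $\bar{\boldsymbol{\nu}}(B(x_0,1), g, 1) \geq -\epsilon$ to a full entropy lower bound $\bar{\boldsymbol{\nu}}(M_\infty, g_\infty) \geq -\epsilon$ on the blowup limit so that Theorem~\ref{thm:RH27_10} becomes applicable.

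For $k=0$, I would argue by contradiction, assuming a sequence of Einstein manifolds $(M_i, x_i, g_i)$ satisfying (\ref{eqn:RH30_16}) but with $F_i^2 \coloneqq \sup_{z \in B(x_i,1)} |Rm|(z)\, d^{2}(z, \partial B(x_i,1)) \to \infty$. Picking a maximizer $y_i$ and setting $Q_i = |Rm|(y_i)$, $\tilde g_i = Q_i g_i$, the estimate (\ref{eqn:RH30_2}) carries over verbatim to give $|Rm|_{\tilde g_i} \leq 4$ on $B_{\tilde g_i}(y_i, L_i)$ with $L_i \to \infty$. Since $B_{\tilde g_i}(y_i, L_i) \subset B(x_i, 1)$ as subsets of $M_i$, combining the inclusion monotonicity (\ref{eqn:CA01_2}), the scale-invariance $\bar{\boldsymbol{\mu}}(\Omega, \lambda^2 g, \lambda^2 \tau) = \bar{\boldsymbol{\mu}}(\Omega, g, \tau)$, and the monotonicity of $\bar{\boldsymbol{\nu}}$ in its third argument (Proposition~\ref{prn:RH31_1}) yields
\begin{align*}
\bar{\boldsymbol{\nu}}\bigl(B_{\tilde g_i}(y_i, L_i),\, \tilde g_i,\, \tau\bigr) \;\geq\; \bar{\boldsymbol{\nu}}\bigl(B(x_i,1),\, g_i,\, \tau/Q_i\bigr) \;\geq\; -\epsilon, \qquad \forall\, \tau \in (0, Q_i].
\end{align*}
Meanwhile, the rescaled Einstein constant $c/Q_i$ tends to $0$, so $|Rc|_{\tilde g_i}$ and $R_{\tilde g_i}$ converge uniformly to zero.

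Next, Theorem~\ref{thm:CF21_3} supplies uniform volume non-collapsing on $B_{\tilde g_i}(y_i, L_i - 1)$ from the above $\bar{\boldsymbol{\nu}}$ bound and vanishing scalar curvature, and Anderson's harmonic radius theorem~\cite{Anderson90} together with the Einstein equation produces uniform $C^{k,\alpha}$-control of $\tilde g_i$ in harmonic charts. Extracting a subsequence, I obtain
\begin{align*}
(M_i, y_i, \tilde g_i) \longrightarrow (M_\infty, y_\infty, g_\infty)
\end{align*}
in the pointed $C^{\infty}$-Cheeger--Gromov topology, with $g_\infty$ Ricci-flat and $|Rm|_{g_\infty}(y_\infty) = 1$. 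Exhausting $M_\infty$ by bounded domains and applying Corollary~\ref{cly:CB19_2} and Proposition~\ref{prn:CA02_1} promotes the previous estimate to $\bar{\boldsymbol{\nu}}(M_\infty, g_\infty, \tau) \geq -\epsilon$ for every $\tau > 0$, hence $\bar{\boldsymbol{\nu}}(M_\infty, g_\infty) \geq -\epsilon$. Since $R_{g_\infty} \equiv 0$, Lemma~\ref{lma:MJ16_1} converts this into $\boldsymbol{\nu}(M_\infty, g_\infty) \geq -\epsilon$, contradicting Theorem~\ref{thm:RH27_10} applied to the non-flat Einstein manifold $(M_\infty, g_\infty)$ (shrinking the constant $\epsilon$ in the statement by a harmless factor $1/2$ if strict inequality is required).

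For the higher-order bounds ($k \geq 1$), at any $z \in B(x_0, 1)$ I would set $\rho = \tfrac{1}{2} d(z, \partial B(x_0, 1))$ and rescale by $\rho^{-2}$, so that the $k=0$ bound already proved gives $|Rm| \leq C$ on a unit ball. Anderson's theorem again furnishes a harmonic coordinate chart of definite size, in which the Einstein equation becomes the quasilinear elliptic system $\Delta_g g_{ij} = Q(g, \partial g)$ with smooth $Q$, and standard Schauder bootstrap yields $\|g_{ij}\|_{C^{k+2,\alpha}} \leq C_k$, hence $|\nabla^k Rm| \leq C_k$ on a slightly smaller ball. Unscaling restores the factor $\rho^{-(k+2)}$ claimed in (\ref{eqn:RH30_17}).
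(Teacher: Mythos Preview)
Your proposal is correct and follows essentially the same route as the paper: a point-selection/blowup contradiction for $k=0$ that reduces to Theorem~\ref{thm:RH27_10} on the Ricci-flat limit, followed by standard Schauder bootstrap in harmonic coordinates for $k\geq 1$. The only cosmetic differences are that you track the $\bar{\boldsymbol{\nu}}$ bound for all scales $\tau\in(0,Q_i]$ (the paper records it just at $\tau=L_i^2$) and you make explicit the passage $\bar{\boldsymbol{\nu}}=\boldsymbol{\nu}$ on the Ricci-flat limit via Lemma~\ref{lma:MJ16_1}, as well as the harmless $\epsilon\to\epsilon/2$ adjustment for the non-strict inequality; the paper leaves both implicit.
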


\begin{proof}
  It suffices to prove (\ref{eqn:RH30_17}) for the case $k=0$, the higher order(i.e. $k \geq 1$) estimates follow from the estimate when $k=0$, 
  via elliptic regularity of Einstein manifolds in harmonic coordinate charts. 
  Therefore, we focus on the proof of
  \begin{align}
     \sup_{z \in B(x_0, 1)}  |Rm|(z) d^{2}(z, \partial B(x_0, 1)) \leq C.   \label{eqn:PA05_6}
  \end{align}
  Let $y_0$ be a point in $B(x_0,1)$ where the supreme of the left hand side of (\ref{eqn:PA05_6}), which we denote by $F_{0}^2$, is achieved.
  Let $L_{0} \coloneqq 0.5 F_0=0.5 \sqrt{|Rm|(y_0)} d(y_0, \partial B(x_0, 1))$ and let $Q_0 \coloneqq |Rm|(y_0)$.  

  We now prove (\ref{eqn:PA05_6}) by a contradiction argument. 
  If (\ref{eqn:PA05_6}) were wrong, we could find a sequence $(M_i, x_i, g_i)$ satisfying (\ref{eqn:RH30_16}) but violating (\ref{eqn:PA05_6}).
  Then we have
       \begin{align}
	 &Q_i \geq F_i^2=|Rm|(y_i) d^{2}(y_i, \partial B(x_i, 1))=4L_{i}^{2} \to \infty;  \label{eqn:RH30_19a}\\
       &\sup_{B\left(y_i, L_i Q_i^{-\frac{1}{2}}\right)}|Rm| \leq 4Q_i;  \label{eqn:RH30_19b}\\
       & B\left(y_i, L_i Q_i^{-\frac{1}{2}} \right) \subset B(x_i, 1).  \label{eqn:RH30_19c}
       \end{align}
   Let $\tilde{g}_i \coloneqq Q_i g_i$. 
   Notice that, in light of Proposition~\ref{prn:CA01_2}, the relationship (\ref{eqn:RH30_19c}) implies that
   \begin{align*}
     \boldsymbol{\bar{\nu}}(B_{\tilde{g}_i}(y_i, L_i), \tilde{g}_i, L_i^2) \geq \boldsymbol{\bar{\nu}}(B_{g_i}(y_i, L_iQ_i^{-\frac{1}{2}}), g_i, 1)
     \geq \boldsymbol{\bar{\nu}}(B_{g_i}(x_i, 1), g_i, 1) \geq  -\epsilon.
   \end{align*}
   In summary, we have
       \begin{align}
       &|Rm|_{\tilde{g}_i}(y_i)=1;  \label{eqn:PA05_7a}\\
       &|Rm|_{\tilde{g}_i}(z) \leq 4, \quad \forall \; z \in B_{\tilde{g}_i}(y_i, L_i);   \label{eqn:PA05_7b}\\
       &\boldsymbol{\bar{\nu}}\left( B_{\tilde{g}_i}(y_i, L_i), \tilde{g}_i, L_i^2 \right) \geq -\epsilon.   \label{eqn:PA05_7c}
       \end{align}
  Around $y_i$, the Ricci curvatures are tending to zero:
  \begin{align}
    |Rc|_{\tilde{g}_i}(z)=Q_i^{-1}c_i \to 0, \quad \forall \; z \in B_{\tilde{g}_i(0)}(y_i, L_i).    \label{eqn:PA08_4}
  \end{align}
  Similar to the proof of Theorem~\ref{thm:RH27_10}, the harmonic radius in concern is uniformly bounded from below.
  In harmonic coordinate, the Einstein condition implies that the metrics satisfy a strictly elliptic PDE.
  For each positive integer $k$, the standard elliptic PDE estimate implies that the $k$-th order curvature derivatives around $y_i$ are uniformly bounded.
  As discussed from (\ref{eqn:RH30_3}) to (\ref{eqn:RH30_4}), we know that (\ref{eqn:PA05_7c}) provides a uniform lower bound of injectivity radius.  So we have 
  \begin{align*}
    \left( M_i, y_i, \tilde{g}_i \right) \longright{C^{\infty}-Cheeger-Gromov} \left( M_{\infty}, y_{\infty}, \tilde{g}_{\infty} \right),
  \end{align*}
  where $M_{\infty}$ is a smooth manifold. It follows from the smooth convergence, (\ref{eqn:PA05_7a}) and (\ref{eqn:PA08_4}) that
   \begin{align}
       &|Rm|_{\tilde{g}_{\infty}}(y_{\infty})=1;  \label{eqn:PA08_5a}\\
       &Rc_{\tilde{g}_{\infty}} \equiv 0.  \label{eqn:PA08_5b}
   \end{align}
  Following the same argument as in the proof of Theorem~\ref{thm:RH27_10}, the condition (\ref{eqn:PA05_7c}) implies that
  \begin{align*}
    \bar{\boldsymbol{\nu}}\left( M_{\infty}, \tilde{g}_{\infty} \right) = \inf_{\tau \in (0,\infty)} \bar{\boldsymbol{\nu}}\left( M_{\infty}, \tilde{g}_{\infty}, \tau \right) \geq -\epsilon. 
  \end{align*}
  In view of this entropy lower bound and the Ricci-flatness (\ref{eqn:PA08_5b}), we can apply Theorem~\ref{thm:RH27_10} to obtain that $(M_{\infty}, \tilde{g}_{\infty})$ is actually
  isometric to the Euclidean space $\left( \R^{m}, g_{E} \right)$. In particular, we have $|Rm|_{\tilde{g}_{\infty}}(y_{\infty})=0$, which contradicts (\ref{eqn:PA08_5a}). 
  The proof of (\ref{eqn:PA05_6}) and consequently the proof of the theorem is complete. 
\end{proof}

Checking the proof of Theorem~\ref{thm:RH27_10} and Theorem~\ref{thm:RH30_18}, it is clear that the Einstein condition is only used for regularity improvement and bounding scalar curvature.
For critical metrics with these two properties, one can easily follow the same argument to obtain gap theorem and $\epsilon$-regularity theorem.
For example, we have the analogies of Theorem~\ref{thm:RH27_10} and Theorem~\ref{thm:RH30_18} for cscK manifolds. 

\begin{theorem}[\textbf{$\boldsymbol{\nu}$-gap of cscK manifold}]
  For each non-flat cscK manifold $\left( M^{n}, g, J \right)$, we have
  \begin{align}
    \boldsymbol{\nu}(M,g) \leq -\epsilon        \label{eqn:RB15_3}
  \end{align}
  for some universal small constant $\epsilon=\epsilon(n)$. 
  \label{thm:PA08_1}
\end{theorem}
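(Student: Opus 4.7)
The plan is to mirror the proof of Theorem~\ref{thm:RH27_10} verbatim in its overall architecture (contradiction, point-picking, rescaling, smooth convergence, rigidity of $\boldsymbol{\nu}\geq 0$ on the limit), with the only genuinely new input being the replacement of Einstein regularity by the Chen-Cheng-type regularity for K\"ahler metrics with bounded scalar curvature. Suppose the theorem fails. Then there exist non-flat cscK manifolds $(M_{i}^{n},g_{i},J_{i})$ with $\boldsymbol{\nu}(M_{i},g_{i})\geq -\epsilon_{i}$, $\epsilon_{i}\to 0$. Pick scales $L_{i}\to\infty$, and repeat the point-selection procedure from the proof of Theorem~\ref{thm:RH27_10}: on a ball $B(x_{0},2L_{i})$ with $|Rm|(x_{0})=1$, maximize $|Rm|(\cdot)\,d^{2}(\cdot,\partial B)$ to find $y_{i}$ with $Q_{i}=|Rm|(y_{i})\geq 1$ and then rescale by $\tilde g_{i}=Q_{i}g_{i}$ to obtain $|Rm|_{\tilde g_{i}}(y_{i})=1$, $|Rm|_{\tilde g_{i}}\leq 4$ on $B_{\tilde g_{i}}(y_{i},L_{i})$, and (by Proposition~\ref{prn:CA01_2} and scale invariance) $\boldsymbol{\nu}(B_{\tilde g_{i}}(y_{i},L_{i}),\tilde g_{i})\geq -\epsilon_{i}$.

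Two crucial observations then take the place of the two uses of the Einstein condition in Theorem~\ref{thm:RH27_10}. First, because each $g_{i}$ is cscK its scalar curvature is a constant, and the pointwise bound $|R|\leq C(n)|Rm|$ combined with $|Rm|_{\tilde g_{i}}(y_{i})=1$ forces $|R_{\tilde g_{i}}|\leq C(n)$ identically on $M_{i}$. So after rescaling the whole sequence has uniformly bounded scalar curvature and, by Lemma~\ref{lma:MJ16_1}, $\bar{\boldsymbol{\nu}}$ and $\boldsymbol{\nu}$ differ by at most a uniform constant times the scale. Second, combining $\boldsymbol{\nu}\geq -\epsilon_{i}$ with the bounded curvature gives, via Theorem~\ref{thm:CF21_3} and Cheeger-Gromov-Taylor~\cite{ChGrTa}, a uniform lower bound of the injectivity radius on $B_{\tilde g_{i}}(y_{i},L_{i}-1)$, exactly as in (\ref{eqn:RH30_4}).

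The main obstacle, and the only point where the argument departs from Theorem~\ref{thm:RH27_10}, is upgrading these bounds to $C^{\infty}$-Cheeger-Gromov subconvergence, since cscK is a much weaker PDE than Einstein. To handle this I would pass to $J_{i}$-holomorphic coordinate charts of uniform size (guaranteed by the injectivity and curvature bounds together with the fact that holomorphic charts are harmonic): in such charts the cscK equation reads $\log\det(g_{k\bar l})=F$ with $\Delta_{g}F=-R=\mathrm{const}$, which is a complex Monge-Amp\`ere equation with bounded right-hand side. Together with the uniform $C^{1,1}$-bound on the metric coming from $|Rm|\leq 4$, the Evans-Krylov and Chen-Cheng~\cite{ChenCheng1} estimates upgrade this to uniform $C^{k,\alpha}$-bounds for all $k$. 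Bootstrapping via the cscK equation $\Delta R=0$ coupled to the Ricci identity in K\"ahler form then yields uniform higher derivative bounds on $Rm$.

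Granting this regularity, $(M_{i},y_{i},\tilde g_{i},J_{i})$ converges, after passing to a subsequence, in the $C^{\infty}$-Cheeger-Gromov topology to a complete smooth K\"ahler manifold $(M_{\infty},y_{\infty},\tilde g_{\infty},J_{\infty})$. Exactly as in the proof of Theorem~\ref{thm:RH27_10}, the monotonicity/continuity package (Proposition~\ref{prn:CA02_1} and Corollary~\ref{cly:CB19_2}) propagates the lower bound $\boldsymbol{\nu}\geq -\epsilon_{i}$ to $\boldsymbol{\nu}(M_{\infty},\tilde g_{\infty})\geq 0$. Proposition~\ref{prn:RB06_1} then forces $(M_{\infty},\tilde g_{\infty})$ to be isometric to $(\R^{2n},g_{E})$, in particular $|Rm|_{\tilde g_{\infty}}(y_{\infty})=0$; this contradicts $|Rm|_{\tilde g_{\infty}}(y_{\infty})=1$ obtained from the smooth convergence, completing the proof. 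The whole argument is in fact uniform in $n$ as long as the Chen-Cheng estimates are, so the gap constant depends only on $n$.
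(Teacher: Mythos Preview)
Your proposal is correct and follows essentially the same route as the paper's proof: contradiction plus the point-selection/rescaling from Theorem~\ref{thm:RH27_10}, uniform injectivity radius via Theorem~\ref{thm:CF21_3} and Cheeger--Gromov--Taylor, regularity in holomorphic charts to obtain $C^{\infty}$-Cheeger--Gromov convergence, and then the rigidity Proposition~\ref{prn:RB06_1} on the limit. The only minor difference is in the regularity step: the paper uses the Tian--Yau construction of uniform holomorphic charts (Proposition~1.2 of~\cite{TianYau1}) together with straightforward Schauder bootstrapping on the system~(\ref{eqn:RB12_1}), whereas you invoke the heavier Chen--Cheng and Evans--Krylov machinery; since $|Rm|\leq 4$ is already in hand and $R$ is constant, the simpler Schauder route suffices.
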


\begin{proof}
  The strategy for the proof is the same as that in Theorem~\ref{thm:RH27_10}.  The only difference is the step to obtain the smooth convergence (\ref{eqn:RH30_11}).  
  We shall replace the harmonic coordinate estimate by holomorphic coordinate estimate in the current setting.

  As discussed in the proof of Theorem~\ref{thm:RH27_10}, for each non-flat Riemannian manifold $(M,g)$ satisfying $\boldsymbol{\nu}(M,g) > -\epsilon$ and each fixed constant $L>>1$, 
  up to rescaling, we can choose a base point
  $x_0 \in M$ such that 
  \begin{align}
     &\qquad |Rm|(x_0)=1,      \label{eqn:RH30_12}\\
     &\sup_{x \in B(x_0, L)} |Rm|(x) \leq 4, 
      \quad \inf_{x \in B(x_0, L-1)} inj(x) \geq c_0,
      \quad \boldsymbol{\nu}(B(x_0, L), g) \geq -\epsilon.
      \label{eqn:RH30_13}
  \end{align}
  Now we apply the condition that $(M,g,J)$ is cscK.  Note that in holomorphic coordinate chart, we have the equation 
  \begin{align}
       \begin{cases}
       &R_{\alpha \bar{\beta}}=-\partial_{\alpha} \partial_{\bar{\beta}} \log \det g_{\gamma \bar{\lambda}},\\
       &R=-\Delta_{g} \log \det g_{\gamma \bar{\lambda}}. 
       \end{cases}
      \label{eqn:RB12_1}
   \end{align}
  Since $\nabla R \equiv 0$, one can apply Schauder estimate to obtain a uniform size $c$ with the following properties:
  each ball $B(y,c) \subset B(x_0, L-1)$ admits a holomorphic chart $\{z_1, \cdots, z_n\}$ such that the metric $g$ in this chart have uniform $C^{2, \frac{1}{2}}$-norm(cf.~Proposition~1.2 of Tian-Yau~\cite{TianYau1}).
  By the first inequality in (\ref{eqn:RH30_13}), the scalar curvature in concern is a constant which is uniformly bounded by $C(n)$. 
  Therefore, applying the bootstrapping argument on (\ref{eqn:RB12_1}) then implies uniform $C^{k,\frac{1}{2}}$-norm estimate of $g$ for each $k$.  In particular, we have
   \begin{align}
     |\nabla^k Rm|(z) \leq C_k, \quad \forall \; z \in B(x_0, L-1).   \label{eqn:RH30_14}  
   \end{align}

  Then we are ready to finish the proof of this theorem. 
  If the theorem were wrong, we could find a sequence of non-flat cscK manifolds $(M_i, g_i, J_i)$ satisfying $\boldsymbol{\nu}(M_i, g_i) \geq -\epsilon_i$ with $\epsilon_i \to 0$.
  For each $i$, we choose $L_i$ such that $L_i \to \infty$ and (\ref{eqn:RH30_12}) and (\ref{eqn:RH30_13}) hold, if we replace $x_0$ by $x_i$, replace $L$ by $L_{i}$.  
  The cscK condition implies (\ref{eqn:RH30_14}) holds, if we replace $x_0$ by $x_i$. 
  Therefore, it follows from (\ref{eqn:RH30_12}), (\ref{eqn:RH30_13}) and (\ref{eqn:RH30_14}) that the $C^{\infty}$-Cheeger-Gromov convergence (\ref{eqn:RH30_11}) hold.
  Same as the proof in Theorem~\ref{thm:RH27_10}, we know $\boldsymbol{\nu}(M_{\infty}, g_{\infty})=0$ and consequently $(M_{\infty}, g_{\infty})$ is isometric to $(\R^{2n}, g_{E})$. 
  Therefore, it follows from (\ref{eqn:RH30_12}) and the previous discussion that 
  \begin{align*}
    0=|Rm|_{g_{\infty}}(x_{\infty})=\lim_{i \to \infty} |Rm|_{g_i}(x_i)=1. 
  \end{align*}
  This contradiction establishes the proof of the theorem. 
\end{proof}

\begin{theorem}[\textbf{$\epsilon$-regularity of cscK manifold}]
  Suppose $(M^n, x_0, g,J)$ is a K\"ahler manifold satisfying
   \begin{align}
       \begin{cases}
       &\boldsymbol{\bar{\nu}}(B(x_0, 1), g, 1) \geq -\epsilon;  \\
       &R(z)=c, \quad \forall \; z \in B(x_0, 1).   
       \end{cases}
   \label{eqn:PA05_4}
   \end{align}
  Here $\epsilon$ is the small constant in Theorem~\ref{thm:PA08_1}, $c$ is a constant satisfying $|c| \leq 2(n+1)$.
  Then for each non-negative integer $k$, there exists a constant $C_k=C_k(n)$ such that
  \begin{align}
    \sup_{z \in B(x_0, 1)}  |\nabla^{k} Rm| d^{2+k}(z, \partial B(x_0, 1)) \leq C_k.  \label{eqn:PA05_5}
  \end{align}
  \label{thm:PA05_2}
\end{theorem}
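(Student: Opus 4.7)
The strategy is to mirror the proof of Theorem~\ref{thm:RH30_18}, using Theorem~\ref{thm:PA08_1} as the ``rigid model'' in place of Theorem~\ref{thm:RH27_10}, while replacing all appeals to harmonic coordinate regularity with the holomorphic coordinate argument from the proof of Theorem~\ref{thm:PA08_1}. As in the Einstein case, I first reduce to $k=0$: once the pointwise curvature bound $|Rm|(z)\,d^{2}(z,\partial B(x_0,1))\leq C_0$ is established, the higher-order estimates follow by bootstrapping (\ref{eqn:RB12_1}) inside the uniform-size holomorphic charts guaranteed by Tian--Yau's $C^{2,1/2}$-regularity together with the constant-scalar-curvature equation.

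For the $k=0$ estimate I argue by contradiction with a point-selecting/blowup scheme. Assume $(M_i^n, x_i, g_i, J_i)$ satisfy (\ref{eqn:PA05_4}) but $F_i^2 \coloneqq \sup_{z\in B(x_i,1)} |Rm|(z)\,d^{2}(z,\partial B(x_i,1)) \to \infty$, achieved at some $y_i$. Set $Q_i\coloneqq |Rm|(y_i)$, $L_i\coloneqq F_i/2$, and $\tilde g_i\coloneqq Q_i g_i$. The usual triangle-inequality comparison (as in (\ref{eqn:RH30_2})) gives
\begin{align*}
  |Rm|_{\tilde g_i}(y_i)=1, \qquad \sup_{z\in B_{\tilde g_i}(y_i,L_i)} |Rm|_{\tilde g_i}(z) \leq 4, \qquad L_i\to\infty,
\end{align*}
while scaling invariance of $\bar{\boldsymbol{\nu}}$ and the containment monotonicity of Proposition~\ref{prn:CA01_2} yield
\begin{align*}
  \bar{\boldsymbol{\nu}}\bigl(B_{\tilde g_i}(y_i,L_i),\, \tilde g_i,\, L_i^2\bigr) \geq \bar{\boldsymbol{\nu}}\bigl(B_{g_i}(x_i,1), g_i, 1\bigr) \geq -\epsilon.
\end{align*}
The rescaled scalar curvatures are constants $Q_i^{-1}c_i\to 0$, so each $(M_i,\tilde g_i, J_i)$ remains cscK with scalar curvature tending to zero.

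Next I extract a smooth limit. Because $\bar{\boldsymbol{\nu}}$ is bounded below on balls of bounded geometry, Theorem~\ref{thm:CF21_3} (and Lemma~\ref{lma:MJ16_1} to pass to $\boldsymbol{\nu}$ using the small scalar curvature) gives a volume lower bound, hence via Cheeger--Gromov--Taylor~\cite{ChGrTa} a uniform injectivity radius lower bound on $B_{\tilde g_i}(y_i,L_i-1)$. Exactly as in the proof of Theorem~\ref{thm:PA08_1}, constant scalar curvature combined with uniform curvature bound produces uniform holomorphic charts of definite size in which the metric has $C^{2,1/2}$-bounds by Tian--Yau, and bootstrapping (\ref{eqn:RB12_1}) upgrades these to $C^{k,1/2}$-bounds for every $k$. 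Therefore
\begin{align*}
  (M_i, y_i, \tilde g_i, J_i) \sconv (M_\infty, y_\infty, \tilde g_\infty, J_\infty),
\end{align*}
where the limit is a smooth K\"ahler manifold with $|Rm|_{\tilde g_\infty}(y_\infty)=1$ and $R_{\tilde g_\infty}\equiv 0$; in particular it is cscK.

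Finally, the same argument via Corollary~\ref{cly:CB19_2} and Proposition~\ref{prn:CA02_1} used in Theorem~\ref{thm:RH30_18} shows $\boldsymbol{\mu}(\Omega_k,\tilde g_\infty,\tau)\geq 0$ for every bounded domain $\Omega_k$ in an exhaustion and every $\tau>0$, hence $\boldsymbol{\nu}(M_\infty,\tilde g_\infty)\geq 0$. Applying Theorem~\ref{thm:PA08_1} to the cscK manifold $(M_\infty,\tilde g_\infty,J_\infty)$ forces it to be flat, contradicting $|Rm|_{\tilde g_\infty}(y_\infty)=1$. The main technical obstacle, compared to the Einstein case, is the regularity step: one must verify that constant (indeed almost-zero after rescaling) scalar curvature, rather than a full Einstein equation, is enough to run the holomorphic bootstrap uniformly along the sequence. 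This is handled exactly as in the proof of Theorem~\ref{thm:PA08_1} and is the only place where the K\"ahler structure enters in an essential way.
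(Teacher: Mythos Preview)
Your proposal is correct and follows essentially the same approach as the paper, which itself simply says to mirror the proof of Theorem~\ref{thm:RH30_18} with Theorem~\ref{thm:PA08_1} supplying the gap and the holomorphic-chart bootstrap of (\ref{eqn:RB12_1}) supplying the regularity. One small correction: since $\epsilon$ is fixed here (unlike in Theorem~\ref{thm:RH27_10} where $\epsilon_i\to 0$), the limit only satisfies $\bar{\boldsymbol{\nu}}(M_\infty,\tilde g_\infty)\geq -\epsilon$ rather than $\geq 0$, but as the limit is scalar-flat this equals $\boldsymbol{\nu}(M_\infty,\tilde g_\infty)\geq -\epsilon$ and Theorem~\ref{thm:PA08_1} still forces flatness.
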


\begin{proof}
  Following the strategy of the proof of Theorem~\ref{thm:RH30_18}, we can reduce the estimate (\ref{eqn:PA05_5}) to Theorem~\ref{thm:PA08_1}. 
  The key point is the regularity improvement, which follows from the bootstrapping of the equation (\ref{eqn:RB12_1}) in holomorphic chart.
  Then the proof is almost the same as that in Theorem~\ref{thm:RH30_18}. We leave the details for interested readers. 
\end{proof}

As an application of Theorem~\ref{thm:PA05_2}, we have the following compactness property.

\begin{theorem}[\textbf{Compactness of a subset of cscK moduli}]
 There exists $\bar{\epsilon}=\bar{\epsilon}(n)$ sufficiently small such that the following properties hold. 

 Let $\mathcal{M}_{cscK}(n,r_0,\bar{\epsilon}, \sigma, D)$ be the moduli space of cscK metrics $(M^{n}, g, J)$ satisfying
  \begin{align}
    &\inf_{x \in M} \mathbf{I}(B(x,r_0),g) \geq  (1-\bar{\epsilon}) \mathbf{I}(\R^{2n}), \label{eqn:RG23_2}\\
    &\sup_{M} |R| \leq \sigma, \label{eqn:RG23_3}\\
    &\diam(M^{n},g)\leq D.  \label{eqn:RG23_4}
  \end{align}
  Here $\mathbf{I}$ means the isoperimetric constant.  
  Then the moduli $\mathcal{M}_{cscK}(n,r_0,\bar{\epsilon},\sigma,D)$ is compact with respect to the $C^{\infty}$-Cheeger-Gromov topology. 
  \label{thm:RG23_1}
\end{theorem}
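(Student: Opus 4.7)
The plan is to prove sequential compactness directly: take any sequence $(M_{i}^{n},g_{i},J_{i})$ in $\mathcal{M}_{cscK}(n,r_{0},\bar{\epsilon},\sigma,D)$ and extract a subsequence converging in the $C^{\infty}$-Cheeger--Gromov topology to a limit that still lies in the same moduli. The core of the argument is to promote the isoperimetric assumption into uniform $C^{\infty}$-bounds on curvature together with a positive lower bound on the injectivity radius, then to invoke classical Cheeger--Gromov compactness.

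First I would convert the isoperimetric hypothesis into an entropy lower bound on all small scales. Applying Lemma~\ref{lma:MJ25_1} to each ball $B(x,r_{0})$ with $\lambda=(1-\bar{\epsilon})$, one obtains $\bar{\boldsymbol{\nu}}(B(x,r_{0}),g_{i},\tau)\geq 2n\log(1-\bar{\epsilon})$ for every $\tau>0$, independently of $\tau$. The containment monotonicity of Proposition~\ref{prn:CA01_2} then upgrades this to $\bar{\boldsymbol{\nu}}(B(x,r_{1}),g_{i},r_{1}^{2})\geq 2n\log(1-\bar{\epsilon})$ for every $r_{1}\in(0,r_{0}]$ and every $x\in M_{i}$. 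Now set $r_{1}:=\min\{r_{0},\sqrt{2(n+1)/\sigma}\}$ and rescale $\tilde{g}_{i}:=r_{1}^{-2}g_{i}$: the new scalar curvature is a constant of absolute value at most $2(n+1)$, and by scale-invariance of $\bar{\boldsymbol{\nu}}$ the hypothesis $\bar{\boldsymbol{\nu}}(B_{\tilde{g}_{i}}(x,1),\tilde{g}_{i},1)\geq -\epsilon$ of Theorem~\ref{thm:PA05_2} holds provided $\bar{\epsilon}=\bar{\epsilon}(n)$ is chosen small. Theorem~\ref{thm:PA05_2} applied at each center $x$ gives $|\nabla^{k}Rm|_{\tilde{g}_{i}}(x)\leq C_{k}(n)$, which unscales to $|\nabla^{k}Rm|_{g_{i}}\leq C_{k}(n)r_{1}^{-(k+2)}$ pointwise on $M_{i}$. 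In parallel, Theorem~\ref{thm:CF21_3} converts the same entropy bound into a uniform volume non-collapsing estimate $|B(x,r)|_{g_{i}}\geq c(n)r^{2n}$ on scales $r\leq r_{1}$, whence Cheeger--Gromov--Taylor yields a positive lower bound on $inj(\cdot,g_{i})$.

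With uniform bounds on $|\nabla^{k}Rm|_{g_{i}}$ for every $k$, uniform injectivity radius lower bound, and $\diam(M_{i},g_{i})\leq D$, the Cheeger--Gromov compactness theorem extracts a subsequence converging smoothly to a closed Riemannian manifold $(M_{\infty},g_{\infty})$; the complex structures $J_{i}$ are $g_{i}$-parallel, so their pullbacks converge in $C^{\infty}$ to a parallel integrable compatible $J_{\infty}$, and constancy of $R$ with $|R_{g_{\infty}}|\leq \sigma$ passes to the limit, rendering $(M_{\infty},g_{\infty},J_{\infty})$ cscK. The subtlest step --- which I expect to be the main obstacle --- is verifying that the isoperimetric bound (\ref{eqn:RG23_2}) survives at the limit. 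This should follow from pulling back near-minimizers of the isoperimetric problem in $B(x_{\infty},r_{0})$ along the Cheeger--Gromov diffeomorphisms into $B(x_{i},r_{0})\subset M_{i}$ and noting that volumes and boundary areas of smoothly converging regions converge; a density argument may be needed to handle possibly non-smooth minimizers, but smoothness of the limit metric allows one to appeal to standard regularity of isoperimetric minimizers. This places $(M_{\infty},g_{\infty},J_{\infty})$ back into $\mathcal{M}_{cscK}(n,r_{0},\bar{\epsilon},\sigma,D)$ and completes the compactness argument.
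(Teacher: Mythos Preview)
Your argument is correct and follows essentially the same route as the paper: convert the isoperimetric hypothesis into a $\bar{\boldsymbol{\nu}}$-lower bound via Lemma~\ref{lma:MJ25_1}, rescale so that $|R|\leq 2(n+1)$, apply the cscK $\epsilon$-regularity Theorem~\ref{thm:PA05_2} to obtain uniform $|\nabla^{k}Rm|$-bounds, deduce an injectivity radius lower bound, and invoke Hamilton/Cheeger--Gromov compactness. The paper's proof is terser---it uses $\boldsymbol{\nu}$ rather than $\bar{\boldsymbol{\nu}}$ and a slightly different choice of scale $\rho_{0}=\min\{\sqrt{\epsilon/(2\sigma)},r_{0}\}$---but the logic is identical; in particular the paper does not single out the survival of the isoperimetric bound at the limit as a difficulty, treating closedness of the moduli as routine under $C^{\infty}$-convergence (which it is, by exactly the pull-back argument you sketch).
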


\begin{proof}
 Define
 \begin{align*}
   \bar{\epsilon} \coloneqq e^{\frac{\epsilon}{4n}}-1, \quad \rho_0 \coloneqq \min \left\{ \sqrt{\frac{\epsilon}{2 \sigma}}, r_0 \right\}, 
 \end{align*}
 where $\epsilon$ is the small constant in Theorem~\ref{thm:PA08_1}.  Clearly, $\rho_{0}^{2} \leq r_0^2$. 
 It follows from Lemma~\ref{lma:MJ25_1} that  
 \begin{align*}
    \boldsymbol{\nu}\left( B(x,\rho_{0}^{2}), g, \rho_{0}^{2} \right) \geq \boldsymbol{\nu}\left( B(x,r_{0}), g, r_{0}^{2} \right) \geq -0.5 \epsilon - \sigma \rho_0^2 \geq -\epsilon.  
 \end{align*}
 In view of Theorem~\ref{thm:PA05_2}, we have   
  \begin{align*}
    &\sup_{x \in M} |\nabla^{k} Rm| \leq C_k \rho_{0}^{-k}, \quad \forall \; k \in \{0, 1, 2, 3, \cdots \};\\
    &\inf_{x \in M} inj(x) \geq c_0 \rho_{0}.  
  \end{align*}
 Then it follows from the compactness theorem of Hamilton that  $\mathcal{M}_{cscK}(n,r_0,\bar{\epsilon},\sigma,D)$ is compact with respect to the $C^{\infty}$-Cheeger-Gromov topology.
\end{proof}

By Theorem~\ref{thm:RG23_1}, if $(M_i,g_i,J_i) \in \mathcal{M}_{cscK}(n,r_0,\bar{\epsilon}, \sigma, D)$, then by taking subsequence if necessary, we have
\begin{align}
  \left( M_i, g_i, J_i \right) \longright{C^{\infty}-Cheeger-Gromov} (M_{\infty}, g_{\infty}, J_{\infty}).   \label{eqn:RG25_2}
\end{align}
Let $\sigma_{i}$ be the scalar curvature of $(M_i,g_i,J_{i})$, then $\sigma_{i} \to \sigma_{\infty}$. If we drop the constant scalar curvature condition, 
can we still have a compactness theorem? We shall finally achieve this in Theorem~\ref{thm:RB21_12},  in a weaker sense that the convergence topology is $C^{1,\alpha}$-Cheeger-Gromov topology, rather than
the $C^{\infty}$-Cheeger-Gromov topology in (\ref{eqn:RG25_2}).  Furthermore, if the limit metric is smooth, then the precise upper and lower bound of the scalar curvature
is continuous under such convergence. Note that in this case we do not have the regularity improvement property like Theorem~\ref{thm:RG23_1}, as the underlying metric need not to satisfy
a critical PDE.   Instead, we apply the Ricci flow to construct holomorphic coordinates and then develop a priori estimates.  All such estimates will be built on the pseudo-locality theorems with
several more technical steps.  In the proof of Theorem~\ref{thm:RH30_18} and Theorem~\ref{thm:PA05_2}, the rigidity property Proposition~\ref{prn:RB06_1} plays an important role.
Correspondingly,  the rigidity property Proposition~\ref{prn:RH24_1} will play a similar role in the proof of the pseudo-locality theorems. 
Therefore, we can regard Proposition~\ref{prn:RH24_1} and its quantitative version,  Proposition~\ref{prn:MJ24_1}, as the prelude of the pseudo-locality theorems in Ricci flow.
Then Theorem~\ref{thm:RH30_18} and Theorem~\ref{thm:PA05_2} are nothing but the baby models of the pseudo-locality theorems in the Riemannian and the K\"ahler Ricci flow.

\section{The pseudo-locality theorems in the Ricci flow}
\label{sec:pseudo}

In this section, we shall prove the improved pseudo-locality, Theorem~\ref{thmin:ML14_2}.
The basic idea of the proof follows from Perelman~\cite{Pe1}.  
However, as the conditions and consequences in Theorem~\ref{thmin:ML14_2} refine those of Perelman, 
we shall adjust the organizations of Perelman's proof to include more intermediate steps, which are important 
for the further generalization of the pseudo-locality theorem under the K\"ahler setting in Section~\ref{sec:kpseudo}.
Based on the gap constant chosen in Proposition~\ref{prn:MJ24_1}, we actually do not need further blowup argument in the proof of Theorem~\ref{thmin:ML14_2}.
Consequently, if the constant in Proposition~\ref{prn:MJ24_1} is explicitly calculated, then all the constants in Theorem~\ref{thmin:ML14_2} can also 
be calculated.  Therefore, our proof provides an approach for estimating the gap constant in Theorem~\ref{thmin:ML14_2} explicitly.

We briefly describe the structure of this section.  In Proposition~\ref{prn:ML23_1}, we apply traditional maximum principle to choose a well-controlled space-time
domain in the Ricci flow where Proposition~\ref{prn:MJ24_1} can be applied.  Based on the existence of such a domain, we then show in Proposition~\ref{prn:CA04_1} that
a rough bound of curvature and a delicate bound of entropy together imply curvature improvement.  Consequently,  this improvement can be combined with a maximum principle type
argument to show that the pseudo-locality theorem holds with rough coefficients, which is done in Theorem~\ref{thm:MJ22_1}.   Then we apply Proposition~\ref{prn:MJ24_1} again, with better coefficients,
to improve Theorem~\ref{thm:MJ22_1} to  Theorem~\ref{thm:MJ24_1}, Corollary~\ref{cly:RJ18_0} and Corollary~\ref{cly:RJ17_9}, which are refined versions of the pseudo-locality theorem.
Then we discuss how to relate our version of pseudo-locality theorem with the existing versions.  In Lemma~\ref{lma:MJ23_1}, 
we show that the condition (\ref{eqn:MK30_1}) is natural and can be obtained from Cheeger-Colding theory if Ricci curvature is bounded from below. 
Then we relate versions of Perelman and Tian-Wang to our current versions of pseudo-locality theorem in Corollary~\ref{cly:MJ23_1}
and~\ref{cly:MJ23_2}.  Combining all the discussions,  we finally conclude the proof of Theorem~\ref{thmin:ML14_2} at the end of this section.

\begin{proposition}[\textbf{Choice of well-behaved space-time domain}]
 Suppose $\left\{(M^m, g(t)),  0 \leq t \leq r_0^2 \right\}$ is a Ricci flow solution satisfying
 \begin{align}
   t \cdot Rc(x,t) \leq (m-1)\alpha_{m}A, \quad  \quad \forall \; x \in B_{g(t)}\left(x_0, \sqrt{t} \right),  \; t \in [0, r_0^2],  \label{eqn:MJ23_3}
 \end{align}
 for some large constant $A$ satisfying
 \begin{align}
   A \geq \alpha_{m}^{-5} K^{4},    
   \label{eqn:RJ13_2}
 \end{align}
 where $K \geq 1$ is a parameter. Define
 \begin{align}
   &\Omega_t \coloneqq B_{g(t)}\left(x_0, 2K\sqrt{t} \right), \label{eqn:MJ22_4}\\
   &\mathcal{D} \coloneqq \left\{ (x,t) \left| x \in \Omega_t, \quad  0 \leq t \leq r_0^2 \right. \right\}, \label{eqn:MJ20_0} \\
   &F(x,t) \coloneqq \sqrt{|Rm|(x,t)} \left( 2K \sqrt{t}-d_{g(t)}(x,x_{0}) \right).   \label{eqn:MJ19_0}
 \end{align}
 Let $(y_0,t_0)$ be a point where $F$ achieves the maximum value on $\mathcal{D}$.  Define
 \begin{align}
   Q_0 \coloneqq |Rm|(y_0, t_0), \quad F_0 \coloneqq F(y_0,t_0).  \label{eqn:MJ22_1}
 \end{align}
 If
 \begin{align}
   F_0^2 \geq \frac{\alpha_{m}A}{2}, 
  \label{eqn:RI06_3}
 \end{align}
 then for every $t \in \left[t_0-Q_0^{-1}, t_0 \right]$,  we have
 \begin{align}
   &B_{g(t)}\left(y_0, \frac{1}{5}F_0 Q_0^{-\frac{1}{2}} \right) \subset  B_{g(t)}\left(x_0, \left( 2-\frac{3F_{0}}{5\sqrt{Q_{0}t_{0}}} \right) \sqrt{t} \right) \subset \Omega_t.     \label{eqn:MJ19_3}
 \end{align}
 Consequently, we have the curvature estimate
 \begin{align}
  |Rm|(x,t) \leq 4 Q_0, \quad \forall \; x \in B_{g(t)} \left(y_0, \frac{1}{5}F_0 Q_0^{-\frac{1}{2}} \right), \; t \in \left[t_0-Q_0^{-1}, t_0 \right]. \label{eqn:MJ19_1}   
 \end{align} 
\label{prn:ML23_1} 
\end{proposition}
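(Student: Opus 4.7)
The proof is a Perelman-style point-picking argument. The maximality of $F$ at $(y_0, t_0)$ supplies a pointwise curvature bound on any parabolic neighborhood of $(y_0, t_0)$ that we can verify lies inside $\mathcal{D}$. The geometric inclusion (\ref{eqn:MJ19_3}) then follows from the triangle inequality combined with a backward-in-time distance distortion estimate driven by the Ricci upper bound (\ref{eqn:MJ23_3}). I would organize the proof as a continuity argument in $t$, sweeping backward from $t_0$.

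The first step extracts the quantitative slack that drives everything. Because $(y_0, t_0) \in \mathcal{D}$ we have $d_{g(t_0)}(y_0, x_0) < 2K\sqrt{t_0}$, and so $F_0 < 2K\sqrt{Q_0 t_0}$. Combined with (\ref{eqn:RI06_3}) and (\ref{eqn:RJ13_2}), this forces $Q_0 t_0 \geq \alpha_{m} A/(8K^2) \geq \alpha_{m}^{-4}K^{2}/8$, which is enormous. In particular $F_0/\sqrt{Q_0}$ and $1/\sqrt{Q_0}$ are both much smaller than $\sqrt{t_0}$; $t_0 - Q_0^{-1} \geq t_0/2$; and $F_0 \geq \alpha_{m}^{-2}K^{2}/\sqrt{2}$ dominates any fixed dimensional constant $C(m)$.

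Next, let $t_{\ast}$ be the infimum of $\tau \in [t_0 - Q_0^{-1}, t_0]$ for which both (\ref{eqn:MJ19_3}) and (\ref{eqn:MJ19_1}) hold on $[\tau, t_0]$. At $t=t_0$ both hold directly: for $x\in B_{g(t_0)}(y_0, F_0 Q_0^{-1/2}/5)$, the triangle inequality and $d_{g(t_0)}(y_0,x_0) = 2K\sqrt{t_0} - F_0/\sqrt{Q_0}$ give $2K\sqrt{t_0} - d_{g(t_0)}(x, x_0) \geq 4F_0/(5\sqrt{Q_0})$, so $F(x,t_0) \leq F_0$ forces $|Rm|(x,t_0) \leq 25Q_0/16 < 4Q_0$, and a second triangle inequality verifies the inclusion. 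Repeating the computation at general $t$ shows that \emph{once} the distortion bound $|d_{g(t)}(y_0,x_0) - d_{g(t_0)}(y_0,x_0)| \leq F_0/(10\sqrt{Q_0})$ is available on $[t_{\ast}, t_0]$, the quantity $2K\sqrt{t}-d_{g(t)}(x,x_0)$ stays above $F_0/(2\sqrt{Q_0})$, the maximality of $F$ yields $|Rm|(x,t) \leq 4Q_0$, and the inclusion follows. Combined with openness, this identifies $t_{\ast}=t_0-Q_0^{-1}$.

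The main obstacle is the distance distortion estimate in the previous paragraph. The Ricci bound (\ref{eqn:MJ23_3}) only applies on the small ball $B_{g(t)}(x_0, \sqrt{t})$ around $x_0$, whereas the minimizing geodesic to $y_0$ traverses a long region where we have no a priori curvature control. I would invoke the distance distortion lemma of Perelman (Lemma 8.3 of~\cite{Pe1}) at the small scale $r=Q_0^{-1/2}$, which by Step 1 is much less than $\sqrt{t}$. Near $x_0$, (\ref{eqn:MJ23_3}) gives $Rc \leq (m-1)\alpha_{m}A/t$; near $y_0$, the inductive curvature bound (\ref{eqn:MJ19_1}) holding on $[t_{\ast},t_0]$ gives $|Rc| \leq 4(m-1)Q_0$. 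Together these yield $\frac{d}{ds}d_{g(s)}(y_0,x_0) \geq -C(m)\sqrt{Q_0}$ on $[t_{\ast}, t_0]$, and integrating over an interval of length at most $Q_0^{-1}$ produces a distortion of order $C(m)/\sqrt{Q_0}$, negligible against $F_0/(10\sqrt{Q_0})$ thanks to the lower bound on $F_0$ from Step 1. This closes the continuity argument and simultaneously delivers both (\ref{eqn:MJ19_3}) and (\ref{eqn:MJ19_1}).
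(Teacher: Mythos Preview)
Your proposal is correct and follows essentially the same route as the paper: a backward-in-time continuity (equivalently contradiction) argument in which the maximality of $F$ supplies the curvature bound $|Rm|\leq 4Q_0$ on the candidate neighborhood, and Perelman's Lemma~8.3(b) at scale $Q_0^{-1/2}$ controls the drift of $d_{g(t)}(y_0,x_0)$ well enough to keep the inclusion closed. One small inaccuracy: the Ricci bound $(m-1)\alpha_m A/t$ near $x_0$, when fed into Perelman's lemma at scale $Q_0^{-1/2}$, produces a term of order $\big(\alpha_m A/(Q_0 t)\big)\sqrt{Q_0}\sim K^2\sqrt{Q_0}$ rather than $C(m)\sqrt{Q_0}$ (this is exactly the paper's $-40mK^2\sqrt{Q_0}$); your final comparison still goes through because $F_0\gtrsim m^2K^2$ absorbs the extra $K^2$.
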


\begin{proof}
  The existence of $(y_{0}, t_{0})$ is guaranteed by the compactness of $\mathcal{D}$.
  The choice of $(y_0, t_0)$ is intuitively illustrated in Figure~\ref{fig:welldomain}, where
 \begin{align*}
   &\partial \Omega_t=\left\{(x,t)|d_{g(t)}(x, x_0)=2K\sqrt{t} \right\}, \\
   &\partial \Omega_t'=\left\{(x,t) \left|d_{g(t)}(x, x_0)= \left(2K-\frac{3F_{0}}{5\sqrt{Q_{0}t_{0}}} \right) \sqrt{t} \right. \right\},\\ 
  &\partial \Omega_t''=\left\{(x,t) \left|d_{g(t)}(x, x_0)=\sqrt{t}  \right.\right\},\\
  &\mathcal{H}=\left\{ (x,t) \left| d_{g(t)}(x,y_0) \leq \frac{1}{5} F_0Q_0^{-\frac{1}{2}},   t \in \left[t_0-Q_0^{-1}, t_0 \right] \right.\right\}. 
 \end{align*}

 \begin{figure}
 \begin{center}
 \psfrag{A}[c][c]{$M$}
 \psfrag{B}[c][c]{$t$}
 \psfrag{C}[c][c]{$t=0$}
 \psfrag{D}[c][c]{$t=r_0^2$}
 \psfrag{E}[c][c]{$\partial \Omega_t$}
 \psfrag{F}[c][c]{$\color{red}{\partial \Omega_t'}$}
 \psfrag{G}[c][c]{$(x_0, 0)$}
 \psfrag{H}[c][c]{$(y_0,t_0)$}
 \psfrag{I}[c][c]{$\color{blue}{\mathcal{H}}$}
 \psfrag{J}[c][c]{$\color{green}{\partial \Omega_t''}$}
 \includegraphics[width=0.5 \columnwidth]{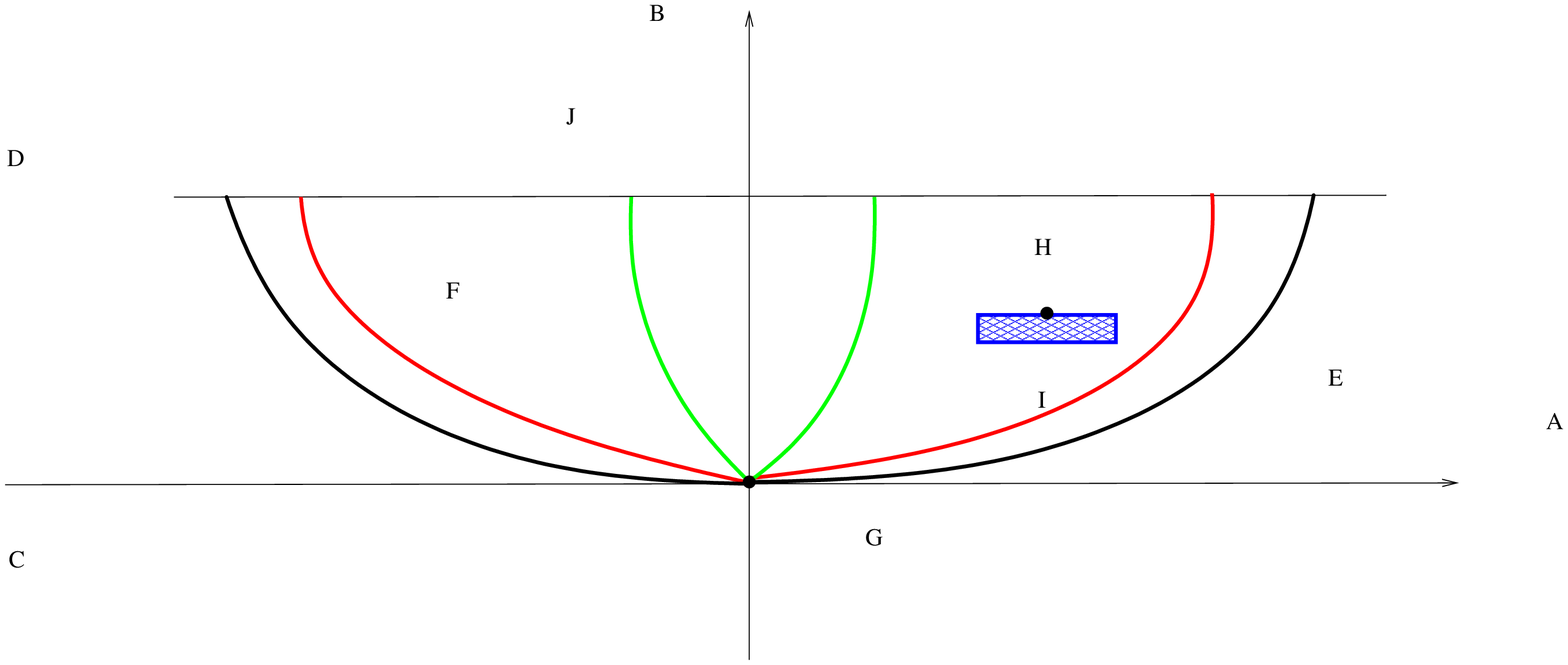}
 \caption{Existence of well-behaved space-time domain}
 \label{fig:welldomain}
 \end{center}
 \end{figure}
 
 \noindent
 By the choice of $(y_{0}, t_{0})$, it is clear that 
 \begin{align}
   2K\sqrt{t_{0}}-d_{g(t_0)}(y_0,x_0)=F_0Q_0^{-\frac{1}{2}}<2K\sqrt{t_{0}}.     \label{eqn:MJ22_2}
 \end{align}
 We shall show (\ref{eqn:MJ19_3}) and (\ref{eqn:MJ19_1}) under the assumption (\ref{eqn:RI06_3}).\\ 
 
 \noindent
 \textit{Step 1. (\ref{eqn:RI06_3}) implies  (\ref{eqn:MJ19_3}).}\\

 Since $(y_0,t_0)$ is the point where $F$ achieves the maximum value,  we have
 \begin{align*}
   F^2(x,t)&=|Rm|(x,t) \left(2\sqrt{t}-d_{g(t)}(x, x_{0}) \right)^{2} \leq F^2(y_0, t_0)=Q_0 \left(2\sqrt{t_{0}}-d_{g(t_{0})}(y_{0}, x_{0}) \right)^{2} 
 \end{align*}
 which implies that
 \begin{align}
   |Rm|(x,t) \leq \left\{ \frac{2\sqrt{t_{0}}-d_{g(t_{0})}(y_{0}, x_{0}) }{2\sqrt{t}-d_{g(t)}(x, x_{0}) } \right\}^{2} Q_0, \quad \forall \; (x,t) \in \mathcal{D}.     \label{eqn:MJ19_5}
 \end{align}
 For every $x \in B_{g(t_0)} \left(y_0, \frac{1}{5}F_0Q_0^{-\frac{1}{2}} \right)$, it follows from (\ref{eqn:MJ22_2}) and the triangle inequality that
\begin{align*}
  d_{g(t_{0})}(x,x_{0}) \leq d_{g(t_{0})}(x,y_{0}) + d_{g(t_{0})}(y_{0}, x_{0}) \leq 2K\sqrt{t_{0}} -\frac{4}{5}F_{0}Q_{0}^{-\frac{1}{2}}. 
\end{align*}
 Plugging the above inequalities into (\ref{eqn:MJ19_5}), we obtain
 \begin{align}
   |Rm|(x,t_0) \leq \frac{25}{16}Q_0 < 4Q_0, \quad \forall \; x \in B_{g(t_0)} \left(y_0, \frac{1}{5} F_0Q_0^{-\frac{1}{2}} \right).   \label{eqn:MJ19_2}
 \end{align}
 We now show (\ref{eqn:MJ19_3}) by contradiction argument. 
 Clearly, (\ref{eqn:MJ19_3}) holds at time $t=t_0$. If (\ref{eqn:MJ19_3}) fails for some $t \in \left[t_0-Q_0^{-1}, t_0 \right]$, we can choose an $s_0 \in \left[t_0-Q_0^{-1}, t_0 \right)$
 such that (\ref{eqn:MJ19_3}) holds for all $t \in [s_0, t_0]$ and starts to fail beyond $s_0$.   In particular, we can find a point 
 \begin{align*}
   z_0 \in \partial B_{g(s_0)}\left(y_0, \frac{1}{5}F_0Q_0^{-\frac{1}{2}} \right) \cap \partial B_{g(s_0)}\left(x_0, \left( 2K-\frac{3F_{0}}{5\sqrt{Q_{0}t_{0}}} \right) \sqrt{s_{0}} \right). 
 \end{align*}
 In other words, we have
 \begin{align*}
   d_{g(s_0)}(z_0,y_0)=\frac{1}{5}F_0Q_0^{-\frac{1}{2}}, \quad d_{g(s_0)}(z_0, x_0)=\left( 2K-\frac{3F_{0}}{5\sqrt{Q_{0}t_{0}}} \right) \sqrt{s_{0}}.  
 \end{align*}
 It follows from triangle inequality and the fact $0<s_{0}<t_{0}$ that
 \begin{align}
    d_{g(s_0)}(y_0, x_0) \geq d_{g(s_0)}(z_0, x_0)- d_{g(s_0)}(z_0,y_0)=2K\sqrt{s_{0}}-\frac{4}{5} F_0Q_0^{-\frac{1}{2}}.  \label{eqn:MJ19_4}
 \end{align}
 Since  (\ref{eqn:MJ19_3}) holds for each $t \in [s_0, t_0]$, it follows that  
 \begin{align}
   d_{g(t)}(x, x_{0}) \leq \left( 2-\frac{3F_{0}}{5\sqrt{Q_{0}t_{0}}} \right) \sqrt{t}   \label{eqn:MJ20_2}
 \end{align}
 for every $x \in B_{g(t)} \left( y_0, \frac{1}{5}F_0 Q_0^{-\frac{1}{2}}\right)$. 
 Plugging (\ref{eqn:MJ20_2}) into (\ref{eqn:MJ19_5}), we obtain
 \begin{align}
   |Rm|(x,t)  \leq \frac{25}{9} \cdot \frac{t_{0}}{t} \cdot Q_0, \quad \forall \; x \in B_{g(t)} \left( y_0, \frac{1}{5}F_0 Q_0^{-\frac{1}{2}}\right), \quad t \in [s_0, t_0].  
     \label{eqn:RG01_1}     
 \end{align}
 The term $\frac{t_{0}}{t}$ will be estimated in the following steps. 
 By (\ref{eqn:MJ22_2}), it is clear that 
 \begin{align}
   Q_{0}t_{0} \geq \frac{1}{4K^{2}}F_{0}^{2} \geq \frac{\alpha_{m}A}{8K^{2}}>1,
  \label{eqn:RJ15_5}
 \end{align}
 which yields
 \begin{align}
   \frac{t_0}{t} \leq \frac{t_0}{s_0} \leq \frac{t_0}{t_0-Q_0^{-1}}<\frac{Q_{0}t_{0}}{Q_{0}t_{0}-1} \leq \frac{ F_{0}^{2}}{F_{0}^{2}-4K^{2}} \leq \frac{\alpha_{m}A}{\alpha_{m}A-8K^{2}}<2 \label{eqn:MJ20_1}
 \end{align}
 for each $t \in [s_0, t_0] \subset [t_0-Q_0^{-1}, t_0]$.  It follows from the combination of (\ref{eqn:RJ15_5}) and (\ref{eqn:MJ20_1}) that
 \begin{align}
   Q_{0}t  \geq \frac{\alpha_{m}A-8K^{2}}{8K^{2}} >1.
   \label{eqn:RJ15_17}
 \end{align}
 Combining (\ref{eqn:MJ20_1}) with (\ref{eqn:RG01_1}) gives
 \begin{align}
   |Rm|(x,t)  \leq \frac{25}{9} \cdot \frac{\alpha_{m}A}{\alpha_{m}A-8K^{2}} \cdot Q_0 \leq  4Q_0, \quad \forall \; x \in B_{g(t)} \left( y_0, \frac{1}{5}F_0 Q_0^{-\frac{1}{2}}\right), \quad t \in [s_0, t_0]. 
     \label{eqn:RJ15_7}    
 \end{align}
 Thus we have the following Ricci curvature bound:
 \begin{align}
     |Rc|(x,t)   \leq  4(m-1)Q_0, \quad \forall \; x \in B_{g(t)} \left( y_0, \frac{1}{5}F_0 Q_0^{-\frac{1}{2}}\right), \quad t \in [s_0, t_0].   \label{eqn:MJ20_3}
 \end{align}
 On the other hand, we can also estimate the Ricci curvature around $(x_0, t_0)$. 
 Actually, in light of (\ref{eqn:RJ15_5}) and (\ref{eqn:MJ20_1}), the condition (\ref{eqn:MJ23_3}) gives 
 \begin{align}
   Rc(x,t)  &\leq \frac{(m-1)\alpha_{m}A}{t}=(m-1) \cdot \frac{\alpha_{m}A}{Q_0t_0} \cdot \frac{t_0}{t} \cdot Q_0 \leq  (m-1) \cdot 8K^{2} \cdot \frac{\alpha_{m}A}{\alpha_{m}A-8K^{2}} \cdot Q_0 \notag\\
   &\leq 16(m-1)K^{2}Q_{0},\label{eqn:RJ15_3}
 \end{align}
 for all $x \in B_{g(t)} \left( x_0, \sqrt{t}\right)$ and $t \in [s_0, t_0]$. In view of (\ref{eqn:RJ15_17}), it is clear that $\sqrt{t}>Q_{0}^{-\frac{1}{2}}$.
 Consequently, $B_{g(t)}\left(x_{0}, Q_{0}^{-\frac{1}{2}} \right) \subset B_{g(t)}\left(x_{0}, \sqrt{t} \right)$ for every $t \in [t_{0}-Q_{0}^{-1}, t_{0}]$.
 Thus, it follows from (\ref{eqn:RJ15_3}) that 
 \begin{align}
   Rc(x,t) \leq 16(m-1)K^{2}Q_{0}, \quad \forall \; x \in B_{g(t)} \left(x_{0}, Q_{0}^{-\frac{1}{2}} \right), \; t \in \left[t_{0}-Q_{0}^{-1}, t_{0} \right].  \label{eqn:MJ23_5}
 \end{align}
 Combining (\ref{eqn:MJ20_3}) and (\ref{eqn:MJ23_5}) gives
 \begin{align}
   Rc(x,t)  \leq 16(m-1)K^{2}Q_0,  \quad \forall \; x \in B_{g(t)} \left( x_0,  Q_{0}^{-\frac{1}{2}} \right) \cup B_{g(t)} \left( y_0,  Q_0^{-\frac{1}{2}}\right), \quad t \in [s_0, t_0].   \label{eqn:MJ23_6}
 \end{align}
 Therefore, we can apply the Hamilton-Perelman type distance estimate(cf. Lemma 8.3(b) of~\cite{Pe1}) to obtain
 \begin{align*}
   \frac{d}{dt} d_{g(t)}(x_0, y_0) \geq -2(m-1) \left\{ \frac{2}{3} \cdot 16 K^{2} Q_{0} \cdot Q_{0}^{-\frac{1}{2}} + Q_{0}^{\frac{1}{2}} \right\} \geq -40m K^{2} Q_{0}^{\frac{1}{2}}, 
 \end{align*}
 whose integration over time gives
\begin{align*}
  d_{g(t_0)}(y_0,x_0)-d_{g(s_0)}(y_0,x_0)=\int_{s_0}^{t_0} \frac{d}{dt} d_{g(t)}(x_0, y_0) dt\geq -40 m K^{2}Q_0^{\frac{1}{2}} (t_0-s_0) \geq  -40mK^{2}Q_0^{-\frac{1}{2}}.
 \end{align*}
 Plugging (\ref{eqn:MJ22_2}) into the above equation, we have 
 \begin{align*}
   d_{g(s_0)}(y_0, x_0) \leq d_{g(t_0)}(y_0,x_0) +40 m K^{2}Q_0^{-\frac{1}{2}} =2K\sqrt{t_{0}}-F_0 Q_0^{-\frac{1}{2}} +40 mK^{2}Q_0^{-\frac{1}{2}}.
 \end{align*}
 Combining with (\ref{eqn:MJ19_4}) and the assumption $s_{0} \in [t_{0}-Q_{0}^{-1}, t_{0}]$, the above inequality yields that
 \begin{align*}
   2K\sqrt{t_{0}-Q_{0}^{-1}}-\frac{4}{5} F_0Q_0^{-\frac{1}{2}} \leq 2K\sqrt{s_{0}}-\frac{4}{5} F_0Q_0^{-\frac{1}{2}} \leq  2K\sqrt{t_{0}}-F_0 Q_0^{-\frac{1}{2}} +40 mK^{2}Q_0^{-\frac{1}{2}}. 
 \end{align*}
 Since $\sqrt{1-(Q_{0}t_{0})^{-1}} \geq 1- (Q_{0}t_{0})^{-1}$, by elementary calculation, the above inequality can be simplified as 
 \begin{align}
   F_{0} \leq 200m K^{2} + \frac{10K}{\sqrt{Q_{0}t_{0}}} \leq 300m K^{2}.   \label{eqn:RJ15_6}
 \end{align}
 However, it follows from assumptions (\ref{eqn:RJ13_2}) and (\ref{eqn:RI06_3}) that 
 \begin{align*}
   F_{0} \geq \sqrt{\frac{\alpha_{m}A}{2}} \geq \frac{\alpha_{m}^{-2} K^{2}}{2}=5000m^{2}K^{2},
 \end{align*}
 which contradicts (\ref{eqn:RJ15_6}). Therefore, we obtain (\ref{eqn:MJ19_3}) and finish the proof of Step 1. \\

 \noindent
 \textit{Step 2. (\ref{eqn:RI06_3}) implies  (\ref{eqn:MJ19_1}).}\\

 By step 1, we know $s_0 \leq t_{0}-Q_{0}^{-1}$. Therefore, (\ref{eqn:MJ19_1}) follows directly from (\ref{eqn:RJ15_7}). 

 \end{proof}

\begin{remark}
  The method we applied to find the well-behaved space-time domain $\mathcal{H}$ in Proposition~\ref{prn:ML23_1} can be naturally regarded as a parabolic version of
  the one in Theorem~\ref{thm:RH27_10}, as illustrated by Figure~\ref{fig:pointselecting_ball} and Figure~\ref{fig:welldomain}. 
  In Perelman~\cite{Pe1}, such choice was achieved through an iterating point-selecting method.
  Here we follow a more traditional route to achieve a similar purpose, by choosing a proper auxiliary function and selecting maximum value point.
  Furthermore, the domain $\mathcal{H}$ obtained here is always in
  the ``central'' part, which property is not available in Perelman's original proof and is important for our improvement of Perelman's pseudo-locality theorem.
\label{rmk:RJ04_1}
\end{remark}

The reason we choose constant $\frac{\alpha_{m}A}{2}$ in condition (\ref{eqn:RI06_3}) is for the purpose of maximum principle and contradiction argument.
Roughly speaking, under the condition $F^{2} \leq \alpha_{m}A$ in a small domain, we can apply the entropy gap property to improve the estimate to $F^{2} \leq \frac{\alpha_{m}A}{2}$ in a large domain.
Following the notation in Proposition~\ref{prn:MJ24_1} , by setting  $\alpha=0.5$ and $H=L=4$, we define
\begin{align}
  &\delta_m \coloneqq \delta(0.5,m,4,4),   \label{eqn:RJ13_4}\\
  &A_{m} \coloneqq \max \left\{ \alpha_{m}^{-5}, \alpha_{m}^{-4} \delta_m^{-2} \right\}.   \label{eqn:MJ25_1}
\end{align}

\begin{proposition}[\textbf{Improvement of later curvature by almost non-negative initial functional value}]
Suppose that $\left\{(M^m, g(t)),  0 \leq t \leq r_0^2 \right\}$ is a Ricci flow solution satisfying
\begin{align}
 &t \cdot Rc(x,t) \leq (m-1) \alpha_{m} A, \quad \forall \; x \in B_{g(t)} \left(x_0, \sqrt{t} \right), \; t \in (0, r_0^2]; \label{eqn:MJ23_1}\\
 &\inf_{0<t \leq r_{0}^{2}} \boldsymbol{\mu}\left(B_{g(0)}\left(x_0, 20A\sqrt{t} \right), g(0), t\right)  
   \geq -A^{-2} \label{eqn:RJ15_8}  
\end{align}
for a constant
\begin{align}
  A \geq A_{m}K^2.   \quad \label{eqn:RJ16_2}
\end{align}
Here $K \geq 1$ is a parameter and $A_{m}$ is the constant defined in (\ref{eqn:MJ25_1}). 
Then we have curvature estimate
\begin{align}
  |Rm|(x,t)\left(2K\sqrt{t}-d_{g(t)}(x,x_{0}) \right)^{2}< \frac{\alpha_{m}A}{2}, \quad \forall \; x \in B_{g(t)} \left(x_0, 2K\sqrt{t} \right), \; t \in (0, r_0^2].  \label{eqn:RJ15_18}
\end{align}
In particular, we have
\begin{align}
t|Rm|(x,t) < \frac{\alpha_{m}A}{2K^{2}}, \quad \forall \; x \in B_{g(t)} \left(x_0, K\sqrt{t} \right), \; t \in (0, r_0^2].   \label{eqn:MJ23_2}  
\end{align}

\label{prn:CA04_1}
\end{proposition}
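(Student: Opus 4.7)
The plan is to argue by contradiction using the auxiliary function $F$ from Proposition~\ref{prn:ML23_1} combined with the gap estimate Proposition~\ref{prn:MJ24_1}. Assume (\ref{eqn:RJ15_18}) fails. Since $F \equiv 0$ on the initial slice and $F$ is continuous on the compact space-time domain $\mathcal{D}$ (with $K$ as in Proposition~\ref{prn:ML23_1}), the supremum $F_0 = F(y_0, t_0)$ is attained and satisfies $F_0^2 \geq \frac{\alpha_m A}{2}$. Since $A \geq A_m K^2 \geq \alpha_m^{-5} K^{4}$, hypothesis (\ref{eqn:RI06_3}) of Proposition~\ref{prn:ML23_1} holds, so we obtain the parabolic neighborhood on which $|Rm| \leq 4Q_0$ where $Q_0 = |Rm|(y_0, t_0)$, and the inclusion (\ref{eqn:MJ19_3}) into $\Omega_t$.

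I would then parabolically rescale $\tilde g(s) := Q_0 \, g\bigl( t_0 - Q_0^{-1} + Q_0^{-1} s \bigr)$ for $s \in [0,1]$ and set $T := t_0 - Q_0^{-1}$. By construction $|Rm|_{\tilde g(1)}(y_0) = 1$, and (\ref{eqn:MJ19_1}) rescales to $|Rm|_{\tilde g(s)} \leq 4$ on $B_{\tilde g(s)}(y_0, F_0/5)$ for all $s \in [0,1]$. From $F_0^2 \geq \alpha_m A/2$ and $A \geq A_m K^2 \geq \alpha_m^{-4} \delta_m^{-2} K^2$ one checks that $F_0/5 \geq \delta_m^{-1}$, so the curvature part of hypothesis (\ref{eqn:MJ24_0}) of Proposition~\ref{prn:MJ24_1} is met with the choices $\alpha = 1/2$, $H = L = 4$ and $\delta = \delta_m$.

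The main obstacle is verifying the entropy half of (\ref{eqn:MJ24_0}), namely $\boldsymbol{\mu}(B_{\tilde g(0)}(y_0, \delta_m^{-1}), \tilde g(0), 1) \geq -\delta_m$. By scaling invariance, this is equivalent to a lower bound on $\boldsymbol{\mu}(B_{g(T)}(y_0, \delta_m^{-1} Q_0^{-1/2}), g(T), Q_0^{-1})$. The inclusion (\ref{eqn:MJ19_3}) at $t = T$ and the fact that $\delta_m^{-1} Q_0^{-1/2}$ is negligible compared with $8A\sqrt{T}$ (since $A \geq A_m K^2$) give $B_{g(T)}(y_0, \delta_m^{-1} Q_0^{-1/2}) \subset B_{g(T)}(x_0, 8A\sqrt{T})$. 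Applying the inclusion monotonicity (Proposition~\ref{prn:CA01_2}) followed by the almost-monotonicity (Theorem~\ref{thm:CA02_3}) on $[0,T]$, whose hypothesis (\ref{eqn:MJ17_11}) is exactly (\ref{eqn:MJ23_1}), with $\tau_T = Q_0^{-1}$ and thus $\tau_T + T = t_0$, yields
\begin{equation*}
\boldsymbol{\mu}\bigl(B_{g(T)}(y_0, \delta_m^{-1} Q_0^{-1/2}), g(T), Q_0^{-1}\bigr) \geq -A^{-2} + \boldsymbol{\mu}\bigl(B_{g(0)}(x_0, 20A\sqrt{T}), g(0), t_0 \bigr).
\end{equation*}
Since $20 A \sqrt{T} \leq 20 A \sqrt{t_0}$, another application of Proposition~\ref{prn:CA01_2} combined with hypothesis (\ref{eqn:RJ15_8}) taken at $t = t_0$ bounds the right-hand side by $-2A^{-2}$, which is $\geq -\delta_m$ because $A \geq A_m \geq \alpha_m^{-4} \delta_m^{-2}$.

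Having verified both hypotheses of Proposition~\ref{prn:MJ24_1}, I conclude that $|Rm|_{\tilde g(1)}(y_0) < 1/2$, which contradicts the normalization $|Rm|_{\tilde g(1)}(y_0) = 1$. This establishes (\ref{eqn:RJ15_18}). The consequence (\ref{eqn:MJ23_2}) follows immediately by specializing (\ref{eqn:RJ15_18}) to $x \in B_{g(t)}(x_0, K\sqrt{t})$, for which $2K\sqrt{t} - d_{g(t)}(x, x_0) \geq K\sqrt{t}$, so $t\,|Rm|(x,t) \leq K^{-2}(2K\sqrt{t} - d_{g(t)}(x,x_0))^{2}\,|Rm|(x,t) < \alpha_m A/(2K^2)$.
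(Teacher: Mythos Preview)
Your proof is correct and follows essentially the same approach as the paper: contradict by assuming $F_0^2 \geq \alpha_m A/2$, invoke Proposition~\ref{prn:ML23_1} for the well-behaved parabolic neighborhood, rescale by $Q_0$, and confront the entropy lower bound (obtained via inclusion monotonicity and Theorem~\ref{thm:CA02_3}) with the gap furnished by Proposition~\ref{prn:MJ24_1}. The only cosmetic difference is that you apply Proposition~\ref{prn:MJ24_1} in the direct direction (hypotheses imply $|Rm|_{\tilde g(1)}(y_0)<\tfrac12$, contradicting $=1$), whereas the paper uses the contrapositive ($|Rm|_{\tilde g(1)}(y_0)=1>\tfrac12$ forces $\boldsymbol{\mu}\le -\delta_m$, contradicting $\boldsymbol{\mu}\ge -2A^{-2}$); these are logically equivalent.
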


\begin{proof}
It is clear that (\ref{eqn:MJ23_2}) follows directly from (\ref{eqn:RJ15_18}).
Therefore, we shall only prove (\ref{eqn:RJ15_18}). 
Define $\Omega_{t}$, $\mathcal{D}$ and $F$ as in (\ref{eqn:MJ22_4}), (\ref{eqn:MJ20_0}) and (\ref{eqn:MJ19_0}) in the proof of Proposition~\ref{prn:ML23_1}.
Then (\ref{eqn:RJ15_18}) is equivalent to
\begin{align}
  F^{2}(x,t)<\frac{\alpha_{m}A}{2}.  \label{eqn:RJ15_9}
\end{align}
We argue by contradiction. For otherwise, we have 
\begin{align}
  F_{0}^{2}=F^{2}(y_{0},t_{0})=\sup_{\mathcal{D}} F^{2}  \geq \frac{\alpha_{m}A}{2}.  \label{eqn:RJ14_2}
\end{align}
Then Proposition~\ref{prn:ML23_1} applies.  Around the maximum value point $(y_{0}, t_{0})$,  the conditions (\ref{eqn:MJ19_3}) and (\ref{eqn:MJ19_1}) now read as 
 \begin{align}
  &B_{g(t)}\left(y_0, \frac{1}{5}F_0 Q_0^{-\frac{1}{2}} \right)  \subset B_{g(t)}\left(x_0,  2K\sqrt{t} \right),
  \quad \forall \; t \in \left[t_0-Q_0^{-1}, t_0 \right];    \label{eqn:RJ15_10}\\
  &|Rm|(x,t) \leq 4 Q_0, \quad \forall \; x \in B_{g(t)} \left(y_0, \frac{1}{5}F_0 Q_0^{-\frac{1}{2}} \right), \; t \in \left[t_0-Q_0^{-1}, t_0 \right]. \label{eqn:RJ15_11}   
 \end{align}
 By their choices, it is clear that $2K \leq 8A$ and $F_{0}>5\delta_{m}^{-1}$. Thus we have
 \begin{align*}
   B_{g(t)}\left(y_0, \delta_{m}^{-1}Q_0^{-\frac{1}{2}} \right) \subset B_{g(t)}\left(y_0, \frac{1}{5} F_0 Q_0^{-\frac{1}{2}} \right)
   \subset B_{g(t)}\left(x_0,  2K\sqrt{t} \right) \subset B_{g(t)}\left(x_0,  8A\sqrt{t} \right) 
 \end{align*}
 for each $t \in [t_{0}-Q_{0}^{-1}, t_{0}]$. Consequently, we have
 \begin{align}
   \boldsymbol{\mu}\left(B_{g(t)}\left(x_0, \delta_{m}^{-1} Q_{0}^{-\frac{1}{2}}\right), g(t), Q_{0}^{-1} \right) 
   \geq 
   \boldsymbol{\mu}\left(B_{g(t)}\left(x_0, 8A\sqrt{t} \right), g(t), Q_{0}^{-1} \right) 
 \label{eqn:RJ17_6} 
 \end{align}
 for each $t \in [t_{0}-Q_{0}^{-1}, t_{0}]$. 
 In inequality (\ref{eqn:MJ17_9}) of Theorem~\ref{thm:CA02_3}, by setting $\tau_T=Q_{0}^{-1}$ and $T=t_{0}' \coloneqq t_{0}-Q_{0}^{-1}$, we  obtain
\begin{align*}
  &\quad \boldsymbol{\mu}\left(B_{g(t_{0}')}\left(x_0, 8A\sqrt{t_{0}'} \right), g(t_{0}'), Q_{0}^{-1} \right) 
   \geq -A^{-2} + \boldsymbol{\mu}\left(B_{g(0)}\left(x_0, 20A\sqrt{t_{0}'} \right), g(0), t_{0} \right)\\
  &\geq -A^{-2} + \boldsymbol{\mu}\left(B_{g(0)}\left(x_0, 20A\sqrt{t_{0}} \right), g(0), t_{0} \right)
   \geq -A^{-2} +\inf_{0<t \leq r_{0}^{2}} \boldsymbol{\mu}\left(B_{g(0)}\left(x_0, 20A\sqrt{t} \right), g(0), t\right). 
\end{align*}
Plugging (\ref{eqn:RJ15_8}) into the above inequality yields
\begin{align}
  \boldsymbol{\mu}\left(B_{g(t_{0}')}\left(x_0, 8A\sqrt{t_{0}'} \right), g(t_{0}'), Q_{0}^{-1} \right) \geq -2A^{-2}.   \label{eqn:RJ17_7} 
\end{align}
It follows from the combination of (\ref{eqn:RJ17_7}) and (\ref{eqn:RJ17_6}) that
\begin{align}
  \boldsymbol{\mu}\left(B_{g(t_{0}')}\left(x_0, \delta_{m}^{-1} Q_{0}^{-\frac{1}{2}}\right), g(t_{0}'), Q_{0}^{-1} \right) \geq -2A^{-2}.   \label{eqn:RJ17_8}
\end{align}
On the other hand, the curvature estimate (\ref{eqn:RJ15_11}) implies that
 \begin{align*}
   |Rm|(x,t) \leq 4 Q_0 =4|Rm|(y_0,t_0), \quad \forall \; x \in B_{g(t)} \left(y_0, \delta_{m}^{-1} Q_0^{-\frac{1}{2}} \right), \; t \in \left[t_0-Q_0^{-1}, t_0 \right]. 
 \end{align*} 
Let $\tilde{g}(t)=Q_{0}g\left(Q_{0}^{-1}t + t_{0}-Q_{0}^{-1} \right)$. Then $\left\{ (M^{m}, y_{0}, \tilde{g}(t)), 0 \leq t \leq 1 \right\}$ is a Ricci flow solution satisfying
\begin{align*}
  &|Rm|_{\tilde{g}}(x,t) \leq 4, \quad \forall \; x \in B_{\tilde{g}(t)}(y_{0}, \delta_{m}^{-1}), \; t \in [0, 1];\\
  &|Rm|_{\tilde{g}}(y_{0},1)=1>0.5.
\end{align*}
Therefore, we can apply Proposition~\ref{prn:MJ24_1} to obtain
\begin{align}
  \boldsymbol{\mu} \left( B_{g(t_0')}\left(y_0, \delta_{m}^{-1} Q_0^{-\frac{1}{2}} \right), g(t_0'), Q_0^{-1} \right)
  =\boldsymbol{\mu} \left( B_{\tilde{g}(0)}\left(y_0, \delta_{m}^{-1} \right), \tilde{g}(0), 1\right)  \leq - \delta_{m}.
  \label{eqn:RJ16_3}
\end{align}
Recall that $A$ satisfies (\ref{eqn:RJ16_2}),  $\delta_{m}$ and $A_{m}$ are defined in (\ref{eqn:RJ13_4}) and (\ref{eqn:MJ25_1}). 
It follows from the combination of (\ref{eqn:RJ17_8}) and (\ref{eqn:RJ16_3}) that $\delta_{m} \leq A^{-2}$, which is impossible by the choice of
$\delta_{m}$ and $A$. This contradiction establishes the proof of (\ref{eqn:RJ15_9}) and thus the proof of (\ref{eqn:MJ23_2}).  
\end{proof}

We are now ready to prove a rough pseudo-locality theorem.

\begin{theorem}[\textbf{Pseudo-locality with rough curvature estimate}]
Suppose $\left\{(M^m, g(t)),  0 \leq t \leq T\right\}$ is a Ricci flow solution satisfying
\begin{align}
 \inf_{0<t \leq T} \boldsymbol{\mu}\left(B_{g(0)}\left(x_0, 20A\sqrt{t} \right), g(0), t \right) \geq -A^{-2}
\label{eqn:MJ23_13}  
\end{align}
for some $A \geq A_{m}$, which is a large dimensional constant defined in (\ref{eqn:MJ25_1}). 
Then we have
\begin{align}
t|Rm|(x,t) \leq \alpha_{m}A, \quad \forall \; x \in B_{g(t)}\left(x_0, \sqrt{t} \right), \; t \in (0, T].  \label{eqn:MJ23_14}
\end{align}
\label{thm:MJ22_1}
\end{theorem}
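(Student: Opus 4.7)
The plan is to run a continuity/bootstrap argument on the time parameter, using Proposition~\ref{prn:CA04_1} with $K=1$ as the engine that converts a rough curvature bound into a strictly better one, thereby closing the bootstrap loop.

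More precisely, I would define
\begin{align*}
  T^* \coloneqq \sup\left\{ s \in (0,T] \;\middle|\; t|Rm|(x,t) \leq \alpha_m A \text{ for all } x \in B_{g(t)}(x_0, \sqrt{t}),\; t\in (0,s] \right\},
\end{align*}
and aim to prove $T^*=T$. That $T^*>0$ is immediate from the smoothness of $g(0)$ and the boundedness of $|Rm|$ on compact subsets of $(0,T)$: for $t$ sufficiently small, $t|Rm|$ is arbitrarily small on $B_{g(t)}(x_0,\sqrt{t})$.

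Next I would argue $T^*=T$ by contradiction, assuming $T^*<T$. On the interval $[0,T^*]$, by the definition of $T^*$ we have $t|Rc|(x,t) \leq (m-1)t|Rm|(x,t) \leq (m-1)\alpha_m A$ on $B_{g(t)}(x_0,\sqrt{t})$, which is exactly hypothesis (\ref{eqn:MJ23_1}) with $r_0^2 = T^*$. The entropy hypothesis (\ref{eqn:RJ15_8}) is inherited from (\ref{eqn:MJ23_13}) since $T^* \leq T$, and the hypothesis $A \geq A_m K^2$ holds with $K=1$ because $A \geq A_m$. Hence Proposition~\ref{prn:CA04_1} applies with $K=1$, $r_0^2=T^*$, and yields the strictly improved estimate
\begin{align*}
  t|Rm|(x,t) < \tfrac{1}{2}\alpha_m A, \qquad \forall\; x \in B_{g(t)}(x_0, \sqrt{t}),\; t \in (0, T^*].
\end{align*}

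Finally, because this bound is strict at $t=T^*$ and the Ricci flow together with the distance function $d_{g(t)}(\cdot,x_0)$ depends smoothly on $t$ on $[0,T]$, I would use a standard continuity argument to extend the inequality $t|Rm|(x,t) \leq \alpha_m A$ to a slightly larger interval $(0, T^*+\varepsilon]$, contradicting the definition of $T^*$. The only technical point to handle carefully is verifying this continuity on the moving ball $B_{g(t)}(x_0,\sqrt{t})$ at the boundary time $T^*$; the factor-of-two slack produced by Proposition~\ref{prn:CA04_1} is precisely what makes this step harmless, so I do not anticipate any substantive difficulty. The entire theorem is thus a relatively short maximum-principle-style deduction once the hard work embedded in Proposition~\ref{prn:ML23_1} and Proposition~\ref{prn:CA04_1} is in hand.
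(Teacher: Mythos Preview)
Your proposal is correct and follows essentially the same approach as the paper: both arguments use smoothness to start, then feed the assumed bound $t|Rm|\leq\alpha_m A$ into Proposition~\ref{prn:CA04_1} with $K=1$ to obtain the strictly better bound $t|Rm|<\tfrac{1}{2}\alpha_m A$, closing a continuity loop. The only cosmetic difference is that the paper phrases the contradiction via a first failure time $r_0^2$ and a specific point $z_0$ with $r_0^2|Rm|(z_0,r_0^2)=\alpha_m A$, which is immediately contradicted by the halved bound from Proposition~\ref{prn:CA04_1}, thereby avoiding your final ``extend past $T^*$ by continuity'' step; but this is a matter of packaging, not substance.
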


\begin{proof}
Since we are working on a smooth Ricci flow solution, it is clear that (\ref{eqn:MJ23_14}) holds when $t$ is very small.
Therefore, if (\ref{eqn:MJ23_14}) fails at some $(x,t)$, then there must exist a first time $t=r_0^2 \in (0,T)$ and a point $z_0 \in B_{g(r_0^2)}(x_0, r_0)$ such that
 (\ref{eqn:MJ23_14}) holds for all $(x, t)$ satisfying $x \in B_{g(t)}\left(x_0, \sqrt{t} \right)$ and $t \in \left(0, r_0^2 \right]$. Moreover, we have
\begin{align}
  r_0^2 |Rm|(z_0, r_0^2)=\alpha_{m}A.    \label{eqn:MJ23_21}
\end{align}
Since $|Rc| \leq (m-1)|Rm|$, the above curvature condition guarantees that Proposition~\ref{prn:CA04_1} can be applied on the flow $\{(M^m, x_{0}, g(t)), 0 \leq t \leq r_0^2\}$. 
It follows from (\ref{eqn:MJ23_13}) that
\begin{align*}
  \inf_{0<t \leq r_{0}^{2}}  \boldsymbol{\mu}\left(B_{g(0)}\left(x_0, 20A\sqrt{t} \right), g(0), t\right) 
 \geq \inf_{0<t \leq T}  \boldsymbol{\mu}\left(B_{g(0)}\left(x_0, 20A\sqrt{t} \right), g(0), t\right) 
 \geq -A^{-2}. 
\end{align*}
Therefore, we can apply (\ref{eqn:MJ23_2}) in Proposition~\ref{prn:CA04_1} with $K=1$ to obtain
\begin{align*}
  |Rm|(z_0, r_0^2) \leq \frac{\alpha_{m}A}{2 r_0^2},
\end{align*}
which contradicts (\ref{eqn:MJ23_21}).  This contradiction establishes the proof of (\ref{eqn:MJ23_14}). 

\end{proof}

Based on Proposition~\ref{prn:MJ24_1} and Proposition~\ref{prn:CA04_1}, we can improve Theorem~\ref{thm:MJ22_1} to the following version.

\begin{theorem}[\textbf{Pseudo-locality with refined coefficients}]

  For each $\alpha \in (0, \alpha_{m})$,  there exists a large constant $A_{\alpha}$ depending on $\alpha$ and $m$ with the following properties. 

Suppose $\left\{(M^m, g(t)),  0 \leq t \leq T\right\}$ is a Ricci flow solution satisfying
\begin{align}
  \inf_{0<t \leq T} \boldsymbol{\mu}\left(B_{g(0)}\left(x_0, 20A \sqrt{t} \right), g(0), t \right) \geq -A^{-2}
\label{eqn:MJ24_2}  
\end{align}
for some $A \geq A_{\alpha}$. 
Then for each $t \in (0, T]$ and $x \in B_{g(t)}\left(x_0, \alpha^{-1}\sqrt{t} \right)$, we have
\begin{align}
\begin{cases}
 & t|Rm|(x,t) \leq \alpha,  \\
 &  \inf_{\rho \in (0, \alpha^{-1} \sqrt{t})} \rho^{-m}\left|B_{g(t)}\left(x, \rho \right) \right|_{dv_{g(t)}} \geq (1-\alpha) \omega_m,\\
 &t^{-\frac{1}{2}} \cdot  inj(x,t) \geq \alpha^{-1}. 
\end{cases} 
\label{eqn:MJ24_3}
\end{align}
\label{thm:MJ24_1}
\end{theorem}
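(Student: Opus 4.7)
The plan is to upgrade the rough pseudo-locality of Theorem~\ref{thm:MJ22_1} to the sharp estimates (\ref{eqn:MJ24_3}) via a two-step bootstrap: first enlarge the ball on which a rough curvature bound holds using Proposition~\ref{prn:CA04_1}, then remove the excess constant through a blow-up contradiction powered by the gap Proposition~\ref{prn:MJ24_1}.

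First I would fix $A_\alpha \coloneqq A_m \alpha^{-2}$, or larger, and let $A \geq A_\alpha$. Theorem~\ref{thm:MJ22_1} yields $t|Rm|(y,t) \leq \alpha_m A$ on $B_{g(t)}(x_0, \sqrt{t})$, which supplies the Ricci hypothesis (\ref{eqn:MJ23_1}) of Proposition~\ref{prn:CA04_1}. Applying the latter with $K = \alpha^{-1}$ produces the rough bound
\[
t|Rm|(y,t) \leq \frac{\alpha_m A \alpha^2}{2}, \qquad y \in B_{g(t)}\bigl(x_0, \alpha^{-1}\sqrt{t}\bigr), \quad t \in (0, T].
\]
This controls curvature on the enlarged parabolic neighborhood by $C(m,\alpha,A)/t$, but not yet by $\alpha/t$.

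To sharpen the constant I would argue by contradiction. Suppose the conclusion fails for some $\alpha_0 \in (0, \alpha_m)$: there exist sequences $A_i \to \infty$ and Ricci flows $\{(M_i^m, g_i(t))\}$ satisfying (\ref{eqn:MJ24_2}) with $A = A_i$, and points $x_i \in B_{g_i(t_i)}(x_0^i, \alpha_0^{-1}\sqrt{t_i})$ violating one of (\ref{eqn:MJ24_3}). Parabolically rescaling by $t_i$ reduces to $t_i = 1$, preserving (\ref{eqn:MJ24_2}) by the scale invariance of $\boldsymbol{\mu}$. A maximum-principle point-picking in the spirit of Proposition~\ref{prn:ML23_1}, applied to the function $|Rm|(y,s)\bigl(\alpha_0^{-1}\sqrt{s}-d_{g_i(s)}(y,x_0^i)\bigr)^2$ on the parabolic cylinder, selects $(y_i, s_i)$ with $Q_i \coloneqq |Rm|_{g_i}(y_i, s_i)$ such that $|Rm|_{g_i} \leq 4 Q_i$ on a backward parabolic neighborhood of $(y_i, s_i)$ of size $\sim Q_i^{-1/2}$ in space. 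Setting $\hat g_i(s) \coloneqq Q_i g_i(s_i + s/Q_i)$ then yields a blow-up with $|Rm|_{\hat g_i}(y_i, 0) = 1$ and $|Rm|_{\hat g_i} \leq 4$ on $B_{\hat g_i(s)}(y_i, L_i)$ for $s \in [-1, 0]$ with $L_i \to \infty$.

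The last step is to transfer the entropy hypothesis to $\hat g_i$. Combining the scale invariance of $\boldsymbol{\mu}$, the containment monotonicity of Proposition~\ref{prn:CA01_2} and the almost monotonicity of Theorem~\ref{thm:CA02_3}, the initial bound (\ref{eqn:MJ24_2}) with defect $A_i^{-2}$ propagates to $\boldsymbol{\mu}(B_{\hat g_i(-1)}(y_i, \delta_i^{-1}), \hat g_i(-1), 1) \geq -\delta_i$ with $\delta_i \to 0$. Proposition~\ref{prn:MJ24_1}, applied to $\hat g_i$ on the shifted interval $[0,1]$, then forces $|Rm|_{\hat g_i}(y_i, 0)$ to be arbitrarily small for $i$ large, contradicting $|Rm|_{\hat g_i}(y_i, 0) = 1$. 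Violations of the volume lower bound (\ref{eqn:MJ24_5}) and of the injectivity radius bound (\ref{eqn:RH27_6}) are handled by the same blow-up, invoking the corresponding conclusions of Proposition~\ref{prn:MJ24_1}. The principal obstacle is the entropy transfer: one must balance the spatial factor $20A$ in the hypothesis, the defect $A^{-2}$, the temporal normalization, and the curvature scale $Q_i$ produced by point-picking, so that the defect $\delta_i$ vanishes on a rescaled ball whose radius tends to infinity. This is precisely the tight coupling encoded in the pair $(20A, A^{-2})$ in (\ref{eqn:MJ24_2}) together with the scale invariance of $\boldsymbol{\mu}$.
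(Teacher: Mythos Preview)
There is a genuine gap in your second step. After point-picking on the ball of radius $\alpha_0^{-1}\sqrt{s}$, the neighborhood on which $|Rm|\leq 4Q_i$ holds has spatial radius $\sim F_{0,i}Q_i^{-1/2}$ (this is the content of Proposition~\ref{prn:ML23_1}), so after rescaling by $Q_i$ you obtain $L_i\sim F_{0,i}$. Nothing forces $F_{0,i}\to\infty$: the violating point has $|Rm|(x_i,1)\in[\alpha_0,\tfrac12\alpha_m A_i\alpha_0^2]$, and $F^2(x_i,1)=|Rm|(x_i,1)\bigl(\alpha_0^{-1}-d(x_i,x_0^i)\bigr)^2$ can even be arbitrarily small if $x_i$ approaches the boundary of the ball. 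Worse, the backward interval $[s_i-Q_i^{-1},s_i]$ need not lie inside $[0,1]$: from $F_{0,i}^2\leq Q_i\,\alpha_0^{-2}s_i$ one gets only $Q_is_i\geq\alpha_0^2F_{0,i}^2$, and even a lower bound $F_{0,i}^2\gtrsim\alpha_0^{-1}$ yields $Q_is_i\gtrsim\alpha_0<1$. So the rescaled flow on $[-1,0]$ may not exist, and Proposition~\ref{prn:MJ24_1} cannot be invoked on it.

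The paper dispenses with the second blow-up altogether. The point you are missing is that the rough bound from Proposition~\ref{prn:CA04_1} can be made \emph{dimensional}, independent of $A$: one applies it not with the full $A$ and $K=\alpha^{-1}$ but with the smaller pair $A'=A_mK_\alpha^2\leq A$ and a large $K_\alpha$ chosen via the constant $\hat\delta_\alpha\coloneqq\delta(0.1\alpha,m,A_m,10\alpha^{-1})$ from Proposition~\ref{prn:MJ24_1}. This yields $t|Rm|<\tfrac12\alpha_mA_m$---a constant depending only on $m$---on the large ball $B_{g(t)}\bigl(x_0,\hat\delta_\alpha^{-1}\sqrt t\,\bigr)$. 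One then applies Proposition~\ref{prn:MJ24_1} \emph{directly} to the flow on $[0.5,1]$ (after normalizing $t_i=1$), with $H=A_m$, $L=10\alpha^{-1}$, and ball radius $\hat\delta_\alpha^{-1}$; the required $\boldsymbol{\mu}$-bound at time $0.5$ comes from Theorem~\ref{thm:CA02_3} exactly as you outlined. This delivers all three conclusions of (\ref{eqn:MJ24_3}) with margin, contradicting the assumed failure at the first bad time.
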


\begin{proof}
Using the notation in (\ref{eqn:MJ25_1}) and Proposition~\ref{prn:MJ24_1}, we define 
\begin{align}
  &\hat{\delta}_{\alpha} \coloneqq \delta(0.1 \alpha,m,A_{m}, 10 \alpha^{-1}), \label{eqn:RJ17_2} \\
  &K_{\alpha} \coloneqq 10 \max\left\{  \hat{\delta}_{\alpha}^{-1}, \alpha^{-1} \right\},  \label{eqn:RJ17_3}\\
  &A_{\alpha} \coloneqq \max\left\{ A_{m}, \hat{\delta}_{\alpha}^{-1} \right\} K_{\alpha}^{2}.  \label{eqn:RJ17_4}
\end{align}
After we fix the notations as above, we are going to prove (\ref{eqn:MJ24_3}) by a contradiction argument. 
If (\ref{eqn:MJ24_3}) were wrong,  we could find some $t=r_0^2<T$ such that (\ref{eqn:MJ24_3}) fails the first time at $t$.
Up to rescaling, we can assume $r_{0}=1 \leq T$ without loss of generality. 
Therefore, we can find a point $z_0 \in B_{g(1)} \left(x_0, \alpha^{-1} \right)$ such
that either
\begin{align}
  |Rm|(z_0, 1)=\alpha,   \label{eqn:MJ24_10}
\end{align}
or 
\begin{align}
\rho_0^{-m} |B_{g(1)}(z_0, \rho_0)|=(1-\alpha) \omega_m, \quad \textrm{for some} \quad \rho_0 \in (0, \alpha^{-1}],   \label{eqn:MJ24_11}
\end{align}
or
\begin{align}
  inj(z_0, 1) = \alpha^{-1}. 
  \label{eqn:RH27_5}
\end{align}

According to the choice of $A_{\alpha}$ and $K_{\alpha}$ in (\ref{eqn:RJ17_3}) and (\ref{eqn:RJ17_4}), it is clear that $A_{\alpha} \geq A_{m}K_{\alpha}^{2}$ and $K_{\alpha} \geq 1$.
Thus (\ref{eqn:MJ24_2}) implies that 
\begin{align}
  \inf_{0<t \leq 1} \boldsymbol{\mu}\left(B_{g(0)}\left(x_0, 20A_{m}K_{\alpha}^{2} \sqrt{t} \right), g(0), t \right)  \geq -\left( A_{m} K_{\alpha}^{2} \right)^{-2}. 
\label{eqn:RJ17_1}  
\end{align}
Therefore, Theorem~\ref{thm:MJ22_1} applies and the following condition holds:
\begin{align*}
Rc(x,t) \leq (m-1)\alpha_{m}A, \quad \forall \; x \in B_{g(t)}(x_{0}, \sqrt{t}), \; t \in (0, 1].
\end{align*}
Consequently, Proposition~\ref{prn:CA04_1} applies. Since $\hat{\delta}_{\alpha}^{-1} \leq K_{\alpha}$ by (\ref{eqn:RJ17_3}),  it follows from (\ref{eqn:MJ23_2}) that 
\begin{align}
t|Rm| < \frac{\alpha_{m}A_{m}}{2}<0.1A_{m}, \quad \forall \; x \in B_{g(t)}(x_{0},  \hat{\delta}_{\alpha}^{-1} \sqrt{t}), \; t \in (0, 1]. \label{eqn:RJ17_5}
\end{align}
Note that $A_{\alpha} \geq \hat{\delta}_{\alpha}^{-1}$ by (\ref{eqn:RJ17_3}) and (\ref{eqn:RJ17_4}). 
Following the deduction of (\ref{eqn:RJ17_8}) in Proposition~\ref{prn:CA04_1}, it follows from (\ref{eqn:MJ24_2}) that 
\begin{align}
  \boldsymbol{\mu}\left(B_{g(0.5)}\left(x_0, 5 \hat{\delta}_{\alpha}^{-1}  \right), g(0.5), 0.5 \right) 
  \geq \inf_{0<t \leq T} \boldsymbol{\mu}\left(B_{g(0)}\left(x_0, 20A \sqrt{t} \right), g(0), t \right) - \hat{\delta}_{\alpha}^{2}  \geq -2\hat{\delta}_{\alpha}^{2}.
  \label{eqn:MJ24_8} 
\end{align}
Similar to the argument around (\ref{eqn:RJ16_3}) in the proof of Proposition~\ref{prn:CA04_1}, we can apply proper rescaling and time-shifting
so that we can apply Proposition~\ref{prn:MJ24_1} here.  
It follows from (\ref{eqn:RJ17_5}) and (\ref{eqn:MJ24_8}) and the definition of $\delta_{0}$ in (\ref{eqn:RJ17_2}) that 

\begin{align*}
    &|Rm|(x,1) \leq 0.5 \alpha, \\
    &\inf_{\rho \in (0, 2\alpha^{-1})} \rho^{-m} \left|B_{g(1)}(x,\rho) \right|_{dv_{g(1)}}  \geq (1-0.5 \alpha) \omega_{m}, \\
    &inj(x,1) \geq 2\alpha^{-1}, 
\end{align*}
for all $x \in B_{g(1)}(x_0, 2 \alpha^{-1})$. 
Therefore, the above inequalities imply that the identities (\ref{eqn:MJ24_10})-(\ref{eqn:RH27_5}) all fail at $(z_0, 1)$. 
This contradicts our choice of $(z_0, 1)$. 
The proof of Theorem~\ref{thm:MJ24_1} is complete. 
\end{proof}

Note that Theorem~\ref{thm:MJ22_1} is an intermediate step for Theorem~\ref{thm:MJ24_1}, the pseudo-locality theorem with refined coefficients.   
This step is technically necessary for the application of Proposition~\ref{prn:CA04_1}, where one can improve curvature estimate in a ``big" domain whenever there is
a ``large" curvature point appears in the ``central" space-time.    If we try to prove  (\ref{eqn:MJ24_3}) firstly by contradiction, then Proposition~\ref{prn:CA04_1} cannot be applied to obtain a priori curvature bound
in a ``big" domain, which prevents the further application of  Proposition~\ref{prn:MJ24_1}.    

\begin{corollary}[\textbf{Pseudo-locality in terms of $\boldsymbol{\nu}$}]
Suppose $\left\{(M^m, g(t)),  0 \leq t \leq T\right\}$ is a Ricci flow solution satisfying
\begin{align}
  \boldsymbol{\nu}\left(B_{g(0)}\left(x_0, 20 A \sqrt{T} \right), g(0), T \right)  \geq -A^{-2}, 
\label{eqn:ML13_3}  
\end{align}
where  $A \geq A_{\alpha}$(cf. (\ref{eqn:RJ17_4})). 
Then (\ref{eqn:MJ24_3}) holds.   
\label{cly:RJ18_0}
\end{corollary}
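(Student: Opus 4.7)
The plan is to reduce Corollary~\ref{cly:RJ18_0} directly to Theorem~\ref{thm:MJ24_1} by showing that the single $\boldsymbol{\nu}$-lower bound on the largest scale $20A\sqrt{T}$ automatically forces the family of $\boldsymbol{\mu}$-lower bounds required in (\ref{eqn:MJ24_2}). No new blow-up argument is needed; only two elementary properties of the local functionals are invoked.

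First, I would fix any $t \in (0, T]$ and observe that, since $t \leq T$, the inclusion $B_{g(0)}(x_0, 20A\sqrt{t}) \subseteq B_{g(0)}(x_0, 20A\sqrt{T})$ holds automatically. Applying the inclusion monotonicity of Proposition~\ref{prn:CA01_2} to the larger ball, we then get
\begin{align*}
\boldsymbol{\mu}\!\left(B_{g(0)}(x_0, 20A\sqrt{t}), g(0), t\right) \;\geq\; \boldsymbol{\mu}\!\left(B_{g(0)}(x_0, 20A\sqrt{T}), g(0), t\right).
\end{align*}
Second, since $t \in (0, T]$, the definition (\ref{eqn:MJ16_d}) of $\boldsymbol{\nu}$ and the hypothesis (\ref{eqn:ML13_3}) together give
\begin{align*}
\boldsymbol{\mu}\!\left(B_{g(0)}(x_0, 20A\sqrt{T}), g(0), t\right) \;\geq\; \boldsymbol{\nu}\!\left(B_{g(0)}(x_0, 20A\sqrt{T}), g(0), T\right) \;\geq\; -A^{-2}.
\end{align*}
Combining these two inequalities, the chain holds uniformly in $t$, which is exactly the hypothesis (\ref{eqn:MJ24_2}) of Theorem~\ref{thm:MJ24_1}. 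Since $A \geq A_{\alpha}$ by assumption, Theorem~\ref{thm:MJ24_1} applies verbatim and delivers the conclusion (\ref{eqn:MJ24_3}).

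There is essentially no obstacle here; the only things to verify are that the inclusion monotonicity (Proposition~\ref{prn:CA01_2}) is valid for the non-strict case $t = T$ (which it is, via the non-strict inequality in (\ref{eqn:CA01_1})) and that the infimum defining $\boldsymbol{\nu}(\cdot, T)$ is taken over $(0, T]$ so that the value at $s = t$ is included. Both are immediate from the definitions in Section~\ref{sec:pre}, so the corollary follows mechanically from Theorem~\ref{thm:MJ24_1}.
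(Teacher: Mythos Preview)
Your proposal is correct and follows essentially the same approach as the paper: both reduce the $\boldsymbol{\nu}$-hypothesis to the $\boldsymbol{\mu}$-hypothesis (\ref{eqn:MJ24_2}) of Theorem~\ref{thm:MJ24_1} via inclusion monotonicity and the definition of $\boldsymbol{\nu}$, differing only in the order these two steps are applied. One small caveat: the inequality (\ref{eqn:CA01_1}) is actually stated as a \emph{strict} inequality for strict inclusion, so your remark about the ``non-strict inequality in (\ref{eqn:CA01_1})'' is slightly off---at $t=T$ the two balls coincide and the inequality becomes an equality trivially, which is all you need.
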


\begin{proof}
  For each $0<t \leq T$,  it is clear that
  \begin{align*}
      &\quad \boldsymbol{\mu}\left(B_{g(0)}\left(x_0, 20A\sqrt{t} \right), g(0), t\right) \geq \boldsymbol{\nu}\left(B_{g(0)}\left(x_0, 20A\sqrt{t} \right), g(0), t\right)\\
      &\geq \boldsymbol{\nu}\left(B_{g(0)}\left(x_0, 20A\sqrt{t} \right), g(0), T\right) \geq \boldsymbol{\nu}\left(B_{g(0)}\left(x_0, 20A\sqrt{T} \right), g(0), T\right). 
  \end{align*}
  Taking infimum on the two ends of the above inequalities and applying (\ref{eqn:ML13_3}), we obtain
  \begin{align*}
  \inf_{t \in (0,T]} \boldsymbol{\mu}\left(B_{g(0)}\left(x_0, 20A\sqrt{t} \right), g(0), t\right)
  \geq \boldsymbol{\nu}\left(B_{g(0)}\left(x_0, 20A\sqrt{T} \right), g(0), T\right) \geq -A^{-2}. 
  \end{align*}
  Therefore,  we can apply Theorem~\ref{thm:MJ24_1} to obtain (\ref{eqn:MJ24_3}). 
\end{proof}

\begin{corollary}[\textbf{$\boldsymbol{\nu}$-value and the existence time of flow}]
  Suppose $(M^{m},g)$ is a complete Riemannian manifold with bounded curvature satisfying
  \begin{align}
    \inf_{x \in M} \boldsymbol{\nu}(B(x,r),g,r^{2}) \geq -A^{-2}  \label{eqn:RJ17_10}
  \end{align}
  for some $r>0$ and $A \geq A_{\alpha}$(cf. (\ref{eqn:RJ17_4})).  Then there exists a Ricci flow solution  initiated from $g(0)=g$ and satisfies (\ref{eqn:MJ24_3}) on $M \times [0, T)$ with
  \begin{align}
     T \geq \frac{r^{2}}{400A^{2}}.  \label{eqn:RJ18_2}
  \end{align}
   \label{cly:RJ17_9}
\end{corollary}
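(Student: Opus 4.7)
The plan is to apply Corollary~\ref{cly:RJ18_0} at each base point $x_0 \in M$ for a uniformly chosen lifespan, combining this with Shi's short-time existence for initial data with bounded curvature. Since $(M,g)$ has bounded curvature, there is some maximal $T^{*}>0$ such that the Ricci flow $\{(M,g(t)),\,0\le t<T^{*}\}$ with $g(0)=g$ exists and has bounded curvature on each compact subinterval of $[0,T^{*})$. Set $T_{0}\coloneqq r^{2}/(400A^{2})$; the goal is to show $T^{*}\ge T_{0}$ and that (\ref{eqn:MJ24_3}) holds on $M\times[0,T_{0})$.

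Fix any $T\in(0,\min\{T^{*},T_{0}\}]$ and any $x_{0}\in M$. The choice of $T_{0}$ gives $20A\sqrt{T}\le 20A\sqrt{T_{0}}=r$ and $T\le T_{0}\le r^{2}$ (using $A\ge 1$). By Proposition~\ref{prn:CA01_2}, enlarging the domain can only decrease $\boldsymbol{\nu}$, and by the definition (\ref{eqn:MJ16_d}), $\boldsymbol{\nu}(\Omega,g,\tau)$ is nonincreasing in $\tau$. Chaining these two monotonicities with the hypothesis (\ref{eqn:RJ17_10}) yields
\[
\boldsymbol{\nu}\!\left(B_{g(0)}\!\left(x_{0},\,20A\sqrt{T}\right),\,g(0),\,T\right)
\ge \boldsymbol{\nu}\!\left(B_{g(0)}(x_{0},r),\,g(0),\,r^{2}\right)\ge -A^{-2}.
\]
Thus the hypothesis (\ref{eqn:ML13_3}) of Corollary~\ref{cly:RJ18_0} is satisfied on the flow $\{(M,g(t)),\,0\le t\le T\}$ for every base point $x_{0}$, and the conclusion (\ref{eqn:MJ24_3}) holds. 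In particular, specializing to $x=x_{0}$ gives $t|Rm|(x_{0},t)\le\alpha$, and since $x_{0}\in M$ is arbitrary we obtain the uniform curvature bound $|Rm|(x,t)\le\alpha/t$ on all of $M\times(0,\min\{T^{*},T_{0}\}]$. The volume-ratio and injectivity-radius assertions follow simultaneously at every $x_{0}$.

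It remains to rule out $T^{*}<T_{0}$. If this held, the bound $|Rm|(\cdot,t)\le\alpha/t\le 2\alpha/T^{*}$ on $[T^{*}/2,T^{*})$ would give a uniform curvature bound up to $T^{*}$, so the standard Ricci flow extension theorem (via Shi's derivative estimates) would prolong the flow past $T^{*}$, contradicting maximality. Hence $T^{*}\ge T_{0}$, and the estimates (\ref{eqn:MJ24_3}) hold on $M\times[0,T_{0})$ as desired, establishing (\ref{eqn:RJ18_2}). The only nontrivial point is the combination of the two monotonicities of $\boldsymbol{\nu}$ to reduce the hypothesis of Corollary~\ref{cly:RJ18_0} to (\ref{eqn:RJ17_10}); once this is observed, the result is essentially immediate from Corollary~\ref{cly:RJ18_0} and standard Ricci flow extension, so there is no substantive obstacle.
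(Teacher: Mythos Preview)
Your proof is correct and follows essentially the same approach as the paper: invoke Shi's short-time existence, verify the hypothesis of Corollary~\ref{cly:RJ18_0} at every base point via the two monotonicities of $\boldsymbol{\nu}$ (in domain and in $\tau$), and use the resulting curvature bound together with the standard extension theorem to push the flow past $T_{0}=r^{2}/(400A^{2})$. The paper organizes this as a direct contradiction argument while you first establish the estimates on $[0,\min\{T^{*},T_{0}\})$ and then rule out $T^{*}<T_{0}$, but the content is identical.
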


\begin{proof}
  By the work of Shi(cf.~Theorem 1.1 of~\cite{Shi}), there exists a Ricci flow $\{(M,g(t)), 0 \leq t <T\}$ with  $T>0$ being the maximal existence time.  
  We now show (\ref{eqn:RJ18_2}) by a contradiction argument. For otherwise, we have $T<\frac{r^{2}}{400A^{2}}$. 
  It follows from (\ref{eqn:RJ17_10}) that 
  \begin{align*}
    &\quad \boldsymbol{\nu}\left(B\left(x,20A\sqrt{T} \right),g,T \right) \geq \boldsymbol{\nu}\left(B\left(x,20A\sqrt{T} \right),g,\frac{r^{2}}{400A^{2}} \right)\\ 
    &\geq  \boldsymbol{\nu}\left(B(x,r),g,\frac{r^{2}}{400A^{2}} \right) \geq \boldsymbol{\nu}(B(x,r),g,r^{2}) \geq -A^{-2}.   
  \end{align*}
  Therefore, Corollary~\ref{cly:RJ18_0} applies and inequality (\ref{eqn:MJ24_3}) holds until time $T$. In particular, we have
  \begin{align*}
    \lim_{t \to T_{-}}  \sup_{x \in M} |Rm|(x,t) < \frac{\alpha}{T}<\infty,
  \end{align*}
  which contradicts the assumption that $T$ is the maximal existence time(cf.~\cite{Ha82}). 
\end{proof}

\begin{remark}
One can regard Corollary~\ref{cly:RJ18_0} as the ``almost local" version, or the quantitative version,  of the rigidity property Proposition~\ref{prn:RB06_1},
in terms of the $\boldsymbol{\nu}$-functional.   Theorem~\ref{thm:MJ24_1} is the further localization of Corollary~\ref{cly:RJ18_0} and it is the ``almost local" version of Proposition~\ref{prn:RH24_1} in terms of $\boldsymbol{\mu}$-functional. 
In short, the pseudo-locality theorems can be illustrated as quantitative versions of rigidity properties. 
\label{rmk:RH25_1}
\end{remark}

\begin{remark}
In Theorem~\ref{thm:MJ24_1}, Corollary~\ref{cly:RJ18_0} and Corollary~\ref{cly:RJ17_9}, for each  small $\alpha$, we can choose $A$ large enough such that (\ref{eqn:MJ24_3}) holds for the given $\alpha$. 
For simplicity of notation, we shall replace $\alpha$ by $\psi(A^{-1}|m)$ in future discussion. 
\label{rmk:RJ18_8}
\end{remark}

The condition (\ref{eqn:MJ24_2}) looks clumsy. However, we shall see from the following lemma that it is natural under the condition of Ricci curvature being uniformly bounded from below.

\begin{lemma}[\textbf{$\boldsymbol{\mu}$ by almost Euclidean volume ratio and Ricci curvature lower bound}]
  For each pair of positive numbers $(\eta, A)$, there exists a constant $\xi=\xi(\eta, m, A)$ with the following property.

Suppose 
\begin{align}
    &Rc(x, 0) \geq -(m-1)\xi,  \quad \forall \; x \in B\left(x_0,  \xi^{-1} \right); \label{eqn:MJ23_18}\\
    &\xi^{m} \left|B \left(x_0, \xi^{-1} \right) \right|  \geq (1-\xi) \omega_m.  \label{eqn:MJ23_19}
  \end{align}
 Then we have
 \begin{align}
   &\bar{\boldsymbol{\mu}} (B(x_0,20 A), g, 1) \geq -0.5 \eta,     \label{eqn:MJ23_20}\\
   &\boldsymbol{\mu} (B(x_0,20 A), g, 1) \geq - \eta.    \label{eqn:MJ25_2}
  \end{align}
\label{lma:MJ23_1}
\end{lemma}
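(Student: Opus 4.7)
The plan is to reduce the statement to an isoperimetric estimate via Lemma~\ref{lma:MJ25_1}, which yields the \emph{scale-free} lower bound
\begin{align*}
\bar{\boldsymbol{\mu}}(\Omega,g,\tau)\;\geq\;\bar{\boldsymbol{\nu}}(\Omega,g,\tau)\;\geq\; m\log \lambda,\qquad \lambda\coloneqq \frac{\mathbf{I}(\Omega)}{\mathbf{I}_m}.
\end{align*}
Applied to $\Omega=B(x_0,20A)$, inequality (\ref{eqn:MJ23_20}) will follow as soon as the isoperimetric ratio satisfies $\lambda\geq e^{-\eta/(2m)}$. Granting (\ref{eqn:MJ23_20}), the second bound (\ref{eqn:MJ25_2}) is immediate from (\ref{eqn:MJ16_6}) in Lemma~\ref{lma:MJ16_1}: under $Rc\geq -(m-1)\xi$ one has $R\geq -m(m-1)\xi$, hence $\underline{\Lambda}\leq m(m-1)\xi$, and choosing $\xi$ small enough that $m(m-1)\xi\leq 0.5\eta$ produces $\boldsymbol{\mu}\geq \bar{\boldsymbol{\mu}}-m(m-1)\xi\geq -\eta$. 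So (\ref{eqn:MJ23_20}) is the only inequality I really need to prove.

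The core task is therefore the following purely Riemannian claim: for each $\epsilon_0>0$ and each $A>0$ there is $\xi_0=\xi_0(\epsilon_0,m,A)$ such that whenever $\xi<\xi_0$ the hypotheses (\ref{eqn:MJ23_18})--(\ref{eqn:MJ23_19}) force $\mathbf{I}(B(x_0,20A),g)\geq (1-\epsilon_0)\mathbf{I}_m$. I would prove this by contradiction and Gromov--Hausdorff compactness. Assuming failure, pick a sequence $(M_i,x_i,g_i)$ with $\xi_i\to 0$ that satisfies (\ref{eqn:MJ23_18})--(\ref{eqn:MJ23_19}) but violates the isoperimetric bound by a definite amount. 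The uniform lower bound $Rc\geq -(m-1)\xi_i$ combined with Gromov's precompactness theorem yields a pointed Gromov--Hausdorff subsequential limit $(X,x_\infty,d_\infty)$. Bishop--Gromov monotonicity, the volume hypothesis (\ref{eqn:MJ23_19}) on the diverging scale $\xi_i^{-1}$, and Colding's volume continuity theorem together force the limit to have Euclidean volume growth $|B(x_\infty,r)|=\omega_m r^m$ for every $r<\infty$; Cheeger--Colding's Euclidean rigidity then identifies $(X,x_\infty,d_\infty)$ isometrically with $(\R^m,0,g_E)$.

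The final ingredient I would use is the continuity of the isoperimetric constant on geodesic balls under pointed Gromov--Hausdorff convergence with Ricci bounded below, which supplies $\mathbf{I}(B(x_i,20A),g_i)/\mathbf{I}_m\to 1$ and contradicts the supposed deficit; this continuity is essentially the content of Cao--Mo~\cite{CaMo}, which I would invoke as a black box (a more elementary but longer route goes via Cheeger--Colding's almost-rigidity for minimizing clusters together with the Euclidean isoperimetric inequality in the limit). I expect this last isoperimetric continuity input to be the main obstacle of the argument: once it is granted, every other step is a routine combination of results already established in Section~\ref{sec:pre}, namely Lemma~\ref{lma:MJ25_1} and Lemma~\ref{lma:MJ16_1}, with classical Cheeger--Colding convergence theory.
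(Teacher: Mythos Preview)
Your argument is correct, and the deduction of (\ref{eqn:MJ25_2}) from (\ref{eqn:MJ23_20}) via Lemma~\ref{lma:MJ16_1} is exactly what the paper does. The treatment of (\ref{eqn:MJ23_20}), however, is genuinely different. You factor through the isoperimetric constant: Lemma~\ref{lma:MJ25_1} reduces the $\bar{\boldsymbol{\mu}}$-bound to $\mathbf{I}(B(x_0,20A))\geq(1-\epsilon_0)\mathbf{I}_m$, and then you invoke Cavalletti--Mondino~\cite{CaMo} (not ``Cao--Mo'') to supply the almost-Euclidean isoperimetric inequality from almost non-negative Ricci and almost-Euclidean volume ratio. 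The paper instead bypasses isoperimetry entirely and follows Tian--Wang~\cite{TiWa}, Proposition~3.1: along the contradicting sequence it takes the minimizers $\varphi_i$ of $\bar{\boldsymbol{\mu}}(B(x_i,20A),g_i,1)$, obtains uniform $C^0$ and interior Lipschitz bounds for $\varphi_i$ via Sobolev/Moser and Cheng--Yau estimates on the Euler--Lagrange equation, and passes $\varphi_i\to\varphi_\infty$ on the Euclidean limit ball; the resulting equation with $\bar{\boldsymbol{\mu}}_\infty\leq-0.5\eta$ then contradicts the sharp log-Sobolev inequality on $\R^m$.

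What this buys: the paper's route is deliberately self-contained, using only Cheeger--Colding convergence and classical elliptic estimates, and indeed the introduction singles out avoiding \cite{CaMo} as one of the points of the lemma. Your route is shorter once \cite{CaMo} is granted, but that result rests on RCD/optimal-transport machinery that is considerably heavier than anything else in the argument, so the black box you flag as ``the main obstacle'' is exactly what the paper's approach is designed to eliminate.
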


\begin{proof}
 
 We shall show (\ref{eqn:MJ23_20}) first and then proceed to show (\ref{eqn:MJ25_2}). 

 The proof of (\ref{eqn:MJ23_20}) basically follows from the proof of Proposition 3.1 of Tian-Wang~\cite{TiWa}. We repeat the key steps here, using the local functionals, for the convenience of the readers. 

 For otherwise, we can choose a violating sequence for $\xi_i \to 0$ satisfying (\ref{eqn:MJ23_18}) and (\ref{eqn:MJ23_19}) for $\xi_i$.
 However, (\ref{eqn:MJ23_20}) is violated.  
 By Cheeger-Colding theory, we can assume that
 \begin{align*}
    (M_i, x_i, d_i)  \longright{pointed-Gromov-Hausdorff}  (M_{\infty}, x_{\infty}, d_{\infty}). 
 \end{align*}
 By volume convergence and Bishop-Gromov volume comparison, it is not hard to see that the asymptotic volume ratio of $M_{\infty}$ is $\omega_m$.
 This force that $M_{\infty}$ is actually isometric to $(\R^m, g_{E})$. 
 Suppose $\varphi_i$ is a minimizer function for $\bar{\boldsymbol{\mu}}(B(x_i,20A), g_i, 1)$. Then $\varphi_i$ satisfies the Euler-Lagrange equation
 \begin{align}
   -2\Delta \varphi_i -2 \varphi_i \log \varphi_i -m \left( 1+ \log \sqrt{4\pi} \right) \varphi_i= \bar{\boldsymbol{\mu}}_i \varphi_i,
   \label{eqn:RI06_1}  
 \end{align}
 for  $\bar{\boldsymbol{\mu}}_i=\bar{\boldsymbol{\mu}}(B(x_i, 20A), g_i, 1) \leq -\eta$. 
 Note that the Sobolev constant estimate and the Moser iteration argument guarantee that  $\frac{1}{C} \leq \varphi_{i, max} \leq C$ and $\bar{\boldsymbol{\mu}}_i \geq -C$ uniformly. 
 Furthermore, there are uniform interior Lipschitz estimate and uniform boundary  H\"{o}lder estimate, as applications of Cheng-Yau's estimate
 on the above equation.    Then we can show that $\varphi_i$ converges to a limit function $\varphi_{\infty}$ which is Lipschitz in each compact subset of $B(x_{\infty}, 20A)$ and vanishes on
 $\partial B(x_{\infty}, 20A)$.  Furthermore, by taking limit of (\ref{eqn:RI06_1}), we can show that $\varphi_{\infty}$ satisfies the equation
 \begin{align*}
   -2 \Delta \varphi_{\infty} -2 \varphi_{\infty}  \log \varphi_{\infty}-m \left( 1+ \log \sqrt{4\pi} \right) \varphi_{\infty}= \bar{\boldsymbol{\mu}}_{\infty} \varphi_{\infty},
 \end{align*}
 for some $-\infty<\bar{\boldsymbol{\mu}}_{\infty} \leq -\eta$ in the distribution sense on $B(x_{\infty}, 20A)$.   
 However, the above equation contradicts the Logarithmic Sobolev inequality in $(\R^m, g_{E})$ since
 \begin{align*}
    \varphi_{\infty} \in W_0^{1,2}(B(x_{\infty}, 20A)) \subset W_0^{1,2}(\R^{m}).
 \end{align*}
 Therefore, we finish the proof of (\ref{eqn:MJ23_20}). 
 
 We continue to show (\ref{eqn:MJ25_2}). Actually, it follows from (\ref{eqn:MJ16_6}) in Lemma~\ref{lma:MJ16_1} that
 \begin{align}
     \boldsymbol{\mu} (B(x_0,20 A), g, 1) - \bar{\boldsymbol{\mu}} (B(x_0,20 A), g, 1) \geq \underline{R} \geq -m(m-1)\xi.     \label{eqn:MJ25_3}
 \end{align}
 The $\underline{R}$ above means the lower bound of the scalar curvature.
 Note that $\xi$ is very small such that $B(x_0, 20A) \subset B(x_0, \xi^{-1})$, which fact together with (\ref{eqn:MJ23_18}) 
 are used in the deduction of the above inequality.
 By choosing $\xi$ even smaller, we can also assume that $2m(m-1)\xi \leq 0.5 \eta$.  Plugging this inequality into (\ref{eqn:MJ25_3}), we obtain
 \begin{align*}
   \boldsymbol{\mu} (B(x_0,20 A), g, 1)  \geq \bar{\boldsymbol{\mu}}  (B(x_0,20 A), g, 1) -0.5\eta. 
 \end{align*}
 Therefore, (\ref{eqn:MJ25_2}) follows from the combination of the above inequality with (\ref{eqn:MJ23_20}).
 The proof the lemma is complete. 
\end{proof}

Note that in Lemma~\ref{lma:MJ23_1}, we prove (\ref{eqn:MJ23_20}) first and then prove (\ref{eqn:MJ25_2}). This order is important.
Technically, one need uniform Lipschitz or H\"older continuity of minimizer functions to obtain a limit function.
To achieve such bounds, $\bar{\boldsymbol{\mu}}$ is more convenient than $\boldsymbol{\mu}$ as it does not contain the scalar curvature term $R$.
Actually, in order to develop Lipschitz or H\"older continuity for minimizers of $\boldsymbol{\mu}$,
one needs extra information of $\nabla R$, which is hard to obtain in general.

\begin{corollary}[\textbf{Perelman's pseudo-locality theorem}, cf. Theorem 10.1 and  Corollary 10.3 of~\cite{Pe1}]
For every $\alpha \in (0, \alpha_{m})$, there exist $\delta=\delta(\alpha,m)>0, \epsilon=\epsilon(\alpha,m)>0$ with the following properties.

Suppose $\left\{(M^m, g(t)),  0 \leq t \leq r_0^2\right\}$ is a Ricci flow solution satisfying the isoperimetric constant estimate and the scalar curvature estimate at time $t=0$:
\begin{align}
  &R(x,0) \geq -r_0^{-2}, \quad \forall \; x \in B_{g(0)}(x_0,r_0);  \label{eqn:ML13_2}\\
  &\mathbf{I}(B_{g(0)}(x_0, r_0), g(0)) \geq (1-\delta) \mathbf{I}(\R^m).  \label{eqn:ML13_1}
\end{align}
Then for each $t \in (0, (\epsilon r_0)^2]$ and $x \in B_{g(t)}\left(x_0, \epsilon r_0 \right)$, we have
\begin{align}
&|Rm|(x,t) \leq \alpha t^{-1}+ (\epsilon r_0)^{-2}, \label{eqn:MJ23_16}\\
&\left| B_{g(t)} \left(x, \sqrt{t} \right) \right|_{dv_{g(t)}} \geq \kappa(m) t^{\frac{m}{2}}, \label{eqn:MJ23_17}
\end{align}
where $\kappa(m)$ is a dimensional constant. 
\label{cly:MJ23_1}
\end{corollary}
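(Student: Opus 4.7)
The plan is to reduce Corollary~\ref{cly:MJ23_1} to the stronger Theorem~\ref{thm:MJ24_1} by verifying its entropy hypothesis (\ref{eqn:MJ24_2}) at every admissible base point, using (\ref{eqn:ML13_1}) and (\ref{eqn:ML13_2}).

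First I would fix $A = A_{\alpha}$ as provided by Theorem~\ref{thm:MJ24_1} and choose $\epsilon = \epsilon(\alpha, m)$ small enough that $40 A\epsilon \leq 1$. Then for every base point $x_1 \in B_{g(0)}(x_0, r_0/2)$ and every $\tau \in (0, (\epsilon r_0)^2]$, the ball $\Omega_\tau := B_{g(0)}(x_1, 20A\sqrt{\tau})$ is contained in $B_{g(0)}(x_0, r_0)$. Since the Dirichlet-type isoperimetric constant is monotone non-decreasing under domain inclusion, (\ref{eqn:ML13_1}) gives $\mathbf{I}(\Omega_\tau) \geq \mathbf{I}(B_{g(0)}(x_0, r_0)) \geq (1-\delta)\mathbf{I}_m$.

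Second, applying (\ref{eqn:MJ26_9}) of Lemma~\ref{lma:MJ25_1} with $\lambda \geq 1-\delta$, combined with the scalar curvature lower bound $\underline{\Lambda} = r_0^{-2}$ from (\ref{eqn:ML13_2}) and the trivial inequality $\boldsymbol{\mu} \geq \boldsymbol{\nu}$, I would deduce
\[
  \boldsymbol{\mu}(\Omega_\tau, g(0), \tau) \geq m\log(1-\delta) - r_0^{-2}\tau \geq m\log(1-\delta) - \epsilon^2.
\]
By further shrinking $\delta$ and $\epsilon$ in terms of $(\alpha, m)$, the right-hand side exceeds $-A^{-2}$, so the hypothesis (\ref{eqn:MJ24_2}) of Theorem~\ref{thm:MJ24_1} holds at the base point $x_1$ with time horizon $T = (\epsilon r_0)^2$. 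Setting $x = x_1$ in (\ref{eqn:MJ24_3}) then yields
\[
  t|Rm|(x_1, t) \leq \alpha, \qquad t^{-m/2}\,|B_{g(t)}(x_1, \sqrt{t})|_{dv_{g(t)}} \geq (1-\alpha)\omega_m,
\]
for every $x_1 \in B_{g(0)}(x_0, r_0/2)$ and $t \in (0, (\epsilon r_0)^2]$. This already delivers (\ref{eqn:MJ23_17}) with $\kappa(m) = \tfrac{1}{2}\omega_m$.

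Finally, I need to translate the estimate from base points $x_1 \in B_{g(0)}(x_0, r_0/2)$ to target points $x \in B_{g(t)}(x_0, \epsilon r_0)$. Using the Hamilton--Perelman distance distortion estimate (as invoked in the proof of Proposition~\ref{prn:ML23_1}) together with the Ricci curvature bound $|Rc| \leq (m-1)\alpha/t$ just derived on the interior, one checks that $B_{g(t)}(x_0, \epsilon r_0) \subset B_{g(0)}(x_0, r_0/2)$ once $\epsilon$ is chosen sufficiently small. The additive term $(\epsilon r_0)^{-2}$ in (\ref{eqn:MJ23_16}) provides the margin to absorb boundary effects as $t$ approaches $(\epsilon r_0)^2$ and the interior ball $B_{g(t)}(x_0, \alpha^{-1}\sqrt{t})$ swells to fill $B_{g(t)}(x_0, \epsilon r_0)$. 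I expect the main obstacle to be this last translation step, whose mild circularity—the distance comparison uses the curvature bound under construction—should be resolved by a standard continuity/time-bootstrap argument, splitting $t$ into the regimes $t \leq (\alpha \epsilon r_0)^2$ and $t > (\alpha\epsilon r_0)^2$.
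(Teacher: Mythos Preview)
Your proposal is correct and follows essentially the same route as the paper: convert the isoperimetric hypothesis into an entropy lower bound via Lemma~\ref{lma:MJ25_1}, apply the refined pseudo-locality (the paper uses Corollary~\ref{cly:RJ18_0}, you use the equivalent Theorem~\ref{thm:MJ24_1}) at every interior base point, and then close with the Hamilton--Perelman distance distortion to pass from the time-$0$ ball to the time-$t$ ball. Your worry about circularity in the last step is exactly what the paper dismisses by pointing to Proposition~\ref{prn:ML23_1}; the continuity argument you sketch is the right fix, and note that the additive term $(\epsilon r_0)^{-2}$ is not actually needed---you already obtain the stronger bound $t|Rm|\le\alpha$ on the whole region.
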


\begin{proof}
  Up to rescaling, we may assume $r_{0}=1$ without loss of generality.
  We shall show that (\ref{eqn:MJ23_16}) and (\ref{eqn:MJ23_17}) hold by choosing
  \begin{align}
    \epsilon^{2}=\delta=\alpha_{m}^{2}A^{-2}, 
    \label{eqn:RJ18_6}
  \end{align}
  where $A=A_{\alpha}$ is defined in (\ref{eqn:RJ17_4}), $\alpha_{m}$ is the dimensional constant defined in (\ref{eqn:RJ13_3}). 

  Plugging the assumption (\ref{eqn:ML13_1}) into (\ref{eqn:MJ26_9}) of Lemma~\ref{lma:MJ25_1}, we can estimate $\boldsymbol{\nu}$ in terms of isoperimetric constant as follows: 
  \begin{align*}
    \boldsymbol{\nu} \left(B_{g(0)} \left(x_{0}, 1\right), g(0), \epsilon^{2}  \right)
    \geq m \log (1- \delta)-\epsilon^{2}>-2m \delta -\epsilon^{2}=-(2m+1)\alpha_{m}^{2}A^{-2}>A^{-2}.  
  \end{align*}
  It is clear that $20A\epsilon =20\alpha_{m}=\frac{1}{5m}<1$. Thus the triangle inequality implies the relationship
  \begin{align*}
    B_{g(0)}\left(y_0, 20A\epsilon \right) \subset B_{g(0)}(x_0, 1), \quad \forall \; y_0 \in B_{g(0)}(x_0, 0.5). 
  \end{align*}
  Consequently, we have 
  \begin{align*}
    \boldsymbol{\nu} \left(B_{g(0)} \left(y_{0}, 20A\epsilon\right), g(0), \epsilon^{2}\right)
    \geq \boldsymbol{\nu} \left(B_{g(0)} \left(x_{0}, 1\right), g(0), \epsilon^{2}  \right) > -A^{-2}. 
  \end{align*}
  Then Corollary~\ref{cly:RJ18_0} applies and gives
  \begin{align}
  &\sup_{x \in B_{g(t)}(y_{0}, \sqrt{t})} t|Rm|(x,t) \leq \alpha, \label{eqn:ML14_7} \\
  &\inf_{x \in B_{g(t)}(y_{0}, \sqrt{t})}\inf_{\rho \in (0, \alpha^{-1} \sqrt{t})} \rho^{-m}\left|B_{g(t)}\left(x, \rho \right) \right|_{dv_{g(t)}} \geq (1-\alpha) \omega_m,  \label{eqn:ML14_8}
  \end{align}
  for every $y_{0} \in B_{g(0)}(x_{0}, 0.5)$ and $t \in [0, \epsilon^{2}]$. 
  Comparing (\ref{eqn:MJ23_16}) with (\ref{eqn:ML14_7}), (\ref{eqn:MJ23_17}) with (\ref{eqn:ML14_8}), 
  it is clear that  (\ref{eqn:MJ23_16}) and (\ref{eqn:MJ23_17}) follow from  (\ref{eqn:ML14_7}) and (\ref{eqn:ML14_8}) respectively,
  if we can prove the following containment relationship: 
  \begin{align}
  B_{g(t)}\left(x_0,  \epsilon \right) \subset B_{g(0)} \left(x_0, 0.2 \right)  \label{eqn:ML13_5}
  \end{align}
  for each $t \in [0, \epsilon^{2}]$.
  Following the discussion below (\ref{eqn:MJ23_6}) in the proof of Proposition~\ref{prn:ML23_1}, 
  this can be achieved by the standard application of  Hamilton-Perelman distance estimate(cf. Lemma 8.3(b) of~\cite{Pe1}), via the curvature control (\ref{eqn:ML14_7}).
  Since the proof is almost the same as the one in Proposition~\ref{prn:ML23_1}, we omit it here and leave the details to the interested readers. 
  After the proof of (\ref{eqn:ML13_5}), it is clear that (\ref{eqn:MJ23_16}) and (\ref{eqn:MJ23_17}) follow from  (\ref{eqn:ML14_7}) and (\ref{eqn:ML14_8}) respectively, as discussed previously.  
 The proof of Corollary~\ref{cly:MJ23_1} is complete. 
\end{proof}

\begin{corollary}[\textbf{Tian-Wang's version of pseudo-locality theorem}, cf. Proposition 3.1 and Theorem 3.1 of~\cite{TiWa}]
  For each $\alpha \in (0, \alpha_{m})$, there exist constants $\delta=\delta(\alpha, m), \epsilon=\epsilon(\alpha, m)$ with the following properties.

  Suppose $\left\{ (X^{m}, g(t)), 0 \leq t \leq 1 \right\}$ is a Ricci flow solution, $x_0 \in X$. Suppose
  \begin{align}
    &Rc(x, 0) \geq -(m-1)\delta^4,  \quad \forall \; x \in B_{g(0)}\left(x_0,  \delta^{-1} \right). \label{eqn:pseudocondition_1}\\
    &\delta^{m} \left|B_{g(0)}\left(x_0, \delta^{-1} \right) \right|_{dv_{g(0)}} \geq (1-\delta) \omega_m.  \label{eqn:pseudocondition_2}
  \end{align}
  Then for each $t \in (0, \epsilon^2]$ and $x \in B_{g(t)} \left(x_0, \sqrt{t} \right)$, we have
  \begin{align}
    &|Rm|(x, t) \leq \alpha t^{-1} +\epsilon^{-2},    \label{eqn:pseudo_ricci}\\
    &\left| B_{g(t)} \left(x, \sqrt{t} \right) \right|_{dv_{g(t)}} \geq \kappa' t^{\frac{m}{2}}, \label{eqn:pseudo_kappa}
  \end{align}
  where $\kappa'=\kappa'(m)$ is a universal constant.
\label{cly:MJ23_2}
\end{corollary}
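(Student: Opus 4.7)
The plan is to reduce the Tian--Wang hypotheses (\ref{eqn:pseudocondition_1})--(\ref{eqn:pseudocondition_2}) to the local-$\boldsymbol{\mu}$ entropy hypothesis required by Theorem~\ref{thm:MJ24_1}, by means of Lemma~\ref{lma:MJ23_1} and Bishop--Gromov volume comparison, after which Theorem~\ref{thm:MJ24_1} concludes the proof directly. Given $\alpha \in (0,\alpha_m)$, I would first set $A = A_{\alpha}$ as in Theorem~\ref{thm:MJ24_1}, and let $\xi_0 = \xi(A^{-2}, m, A)$ be the threshold produced by Lemma~\ref{lma:MJ23_1} with $\eta = A^{-2}$. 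The constants $\epsilon$ and $\delta$ will then be chosen small (depending only on $m,\alpha$) with $\epsilon \leq \delta \leq c(m)\,\xi_0$, where $c(m)$ is fixed by the Bishop--Gromov step below.

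The key intermediate step is to verify
\[
\inf_{0 < t \leq \epsilon^2}\; \boldsymbol{\mu}\bigl(B_{g(0)}(x_0, 20A\sqrt{t}),\, g(0),\, t\bigr) \;\geq\; -A^{-2}.
\]
For each $t \in (0,\epsilon^2]$, I would rescale by setting $\tilde{g} = t^{-1}g(0)$, so that the $t$-th term above equals $\boldsymbol{\mu}(B_{\tilde{g}}(x_0, 20A), \tilde{g}, 1)$; this is precisely the quantity controlled by Lemma~\ref{lma:MJ23_1} at unit scale. Relative to $\tilde{g}$, the initial Ricci bound reads $Rc_{\tilde{g}} \geq -(m-1)\delta^{4}t \geq -(m-1)\xi_0$ on $B_{\tilde{g}}(x_0, \delta^{-1}t^{-1/2}) \supset B_{\tilde{g}}(x_0, \xi_0^{-1})$, provided $\delta^{4}\epsilon^{2} \leq \xi_0$ and $\epsilon \leq \delta\,\xi_0$. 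For the volume requirement $\xi_0^{m}|B_{\tilde{g}}(x_0, \xi_0^{-1})| \geq (1-\xi_0)\omega_m$, I would invoke Bishop--Gromov with respect to the model space of constant curvature $-\delta^{4}$: on the range $r \leq \delta^{-1}$ one has $V_{-\delta^{4}}(r) = \omega_{m}r^{m}(1+O(\delta^{2}))$, and since $r \mapsto |B_g(x_0,r)|/V_{-\delta^{4}}(r)$ is non-increasing, the hypothesis $\delta^{m}|B_{g(0)}(x_0,\delta^{-1})| \geq (1-\delta)\omega_{m}$ transfers, after rescaling to $\tilde{g}$, to $\xi_{0}^{m}|B_{\tilde{g}}(x_0,\xi_{0}^{-1})| \geq (1-C(m)\delta)\omega_{m}$. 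Choosing $\delta$ small enough that $C(m)\delta \leq \xi_{0}$ verifies the hypothesis of Lemma~\ref{lma:MJ23_1}, which then delivers the desired bound for every $t \in (0,\epsilon^{2}]$.

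Once the infimum entropy bound is in hand, Theorem~\ref{thm:MJ24_1} applied with $T = \epsilon^{2}$ yields, for every $t \in (0,\epsilon^{2}]$ and every $x \in B_{g(t)}(x_{0},\alpha^{-1}\sqrt{t}) \supset B_{g(t)}(x_{0},\sqrt{t})$, the estimates $t|Rm|(x,t) \leq \alpha$ and $\rho^{-m}|B_{g(t)}(x,\rho)|_{dv_{g(t)}} \geq (1-\alpha)\omega_{m}$ for all $\rho \in (0,\alpha^{-1}\sqrt{t})$. The curvature estimate immediately gives (\ref{eqn:pseudo_ricci}), while specializing $\rho = \sqrt{t}$ in the second estimate gives (\ref{eqn:pseudo_kappa}) with $\kappa' = (1-\alpha)\omega_{m}$. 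The main technical hurdle is the Bishop--Gromov transfer of the almost Euclidean volume ratio from the ``macroscopic'' scale $\delta^{-1}$ down to the ``microscopic'' scale $\xi_{0}^{-1}t^{1/2}$ inside the rescaled metric; what makes this painless is that (\ref{eqn:pseudocondition_1}) posits a $\delta^{4}$ Ricci lower bound rather than merely $\delta$, so the model-space correction on the ball of radius $\delta^{-1}$ is only $O(\delta^{2})$, comfortably below the $\xi_{0}$ threshold once $\delta$ is chosen small. No further blowup argument beyond what is already built into Theorem~\ref{thm:MJ24_1} is required.
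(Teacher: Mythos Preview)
Your proposal is correct and follows essentially the same route as the paper: fix $A=A_{\alpha}$, use Bishop--Gromov comparison to push the almost-Euclidean volume ratio from scale $\delta^{-1}$ down to scale $\xi_0^{-1}\sqrt{t}$ (exploiting that the $\delta^{4}$ in the Ricci bound makes the model-space correction $O(\delta^{2})$), invoke Lemma~\ref{lma:MJ23_1} at each rescaled time to obtain the $\boldsymbol{\mu}$-lower bound, and then apply Theorem~\ref{thm:MJ24_1}. The only cosmetic point is that your $\kappa'=(1-\alpha)\omega_{m}$ depends on $\alpha$, but since $\alpha<\alpha_{m}<\tfrac{1}{2}$ one may take $\kappa'=\tfrac{1}{2}\omega_{m}$ as the universal constant.
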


\begin{proof}
  Fix $A=A_{\alpha}$ and $\xi=\xi(A^{-2}, m, A)$ as in Lemma~\ref{lma:MJ23_1}.  
   By choosing $\delta$ sufficiently small and applying Bishop-Gromov volume comparison, (\ref{eqn:pseudocondition_1}) and (\ref{eqn:pseudocondition_2}) yield that
   \begin{align*}
    & Rc(x,0) \geq -(m-1) \xi r^{-2}, \quad \xi^m r^{-m} |B(x_0, \xi^{-1} r)| \geq (1-\xi) \omega_m
   \end{align*}
 for each $r \in (0, 1]$. By rescaling each $r=\sqrt{t}$ to $1$, it follows from Lemma~\ref{lma:MJ23_1} that
   \begin{align*}
     \inf_{0<t \leq 1} \boldsymbol{\mu}\left(B_{g(0)}\left(x_0, 20A\sqrt{t} \right), g(0), t \right)  \geq -A^{-2}. 
   \end{align*}
 Consequently,  Theorem~\ref{thm:MJ24_1} can be applied to obtain (\ref{eqn:MJ24_3}), whence (\ref{eqn:pseudo_kappa}) and (\ref{eqn:pseudo_ricci}).
 The proof of Corollary~\ref{cly:MJ23_2} is complete. 
\end{proof}

By Corollary~\ref{cly:MJ23_1} and Corollary~\ref{cly:MJ23_2}, we see that the pseudo-locality of Perelman and Tian-Wang can be unified in terms of the local entropy,
without using the deep analysis from RCD theory and mass transportation by Cavalletti-Mondino~\cite{CaMo}. 

We remind the readers that Theorem~\ref{thmin:ML14_2} was already proved.

\begin{proof}[Proof of Theorem~\ref{thmin:ML14_2}]
  By setting $\delta=\alpha_{m}A_{\alpha}^{-1}$ and adjusting the balls if necessary, Theorem~\ref{thmin:ML14_2} is implied by Theorem~\ref{thm:MJ24_1} and Corollary~\ref{cly:RJ18_0}.  
\end{proof}

\section{Distance distortion and topological stability}
\label{sec:topstable}

In this section, we localize the distance distortion estimate in section 4 of Tian-Wang~\cite{TiWa} and provide topological applications.
The basic idea is that when the volume element is decreasing and the distance is expanding, we shall have a rough distance distortion estimate along the flow, if the initial volume ratio has an upper bound.
This estimate can be refined if we further have the almost Einstein condition, i.e.,
$\int_{0}^{1}\int_{\Omega} |R| dv dt$ is very small for some fixed domain $\Omega$.
Under the condition of Tian-Wang's version of the pseudo-locality theorem(cf. Corollary~\ref{cly:MJ23_2}), if the initial time slice has almost Euclidean volume ratio, then the later time slice is almost Euclidean too.
Fix a unit geodesic ball $\Omega$ at the initial time. On the one hand,  the loss of its volume along the flow is measured by $\int_{0}^{1}\int_{\Omega} |R| dv dt$. On the other hand, the ``size'' of the set $\Omega$ 
is expanding along the flow, which together with the almost Euclidean condition at both time-slices make the volume almost increasing. 
The combination of these two effects force $\int_{0}^{1}\int_{\Omega} |R| dv dt$ to be very small.
Namely, the almost Einstein condition holds locally on $\Omega$.  Therefore, the distance distortion estimate follows directly from section 4 of~\cite{TiWa}.
Although no new idea beyond~\cite{TiWa} is needed, here we shall provide more details under more general conditions,  which will be important for the topological application(cf. Corollary~\ref{cly:RH27_1}) and further improvement in the K\"ahler setting(cf.~Theorem~\ref{thm:RJ08_0}). 

We briefly overview the structure of this section.  We first develop a local maximum principle in Theorem~\ref{thm:RD10_1}, 
which guarantees the almost non-negativity of scalar curvature in the setting of the pseudo-locality theorem. This step is necessary for showing the almost-decreasing property of volume element.
Then we follow the idea described above to obtain the locally almost Einstein condition and the distance distortion estimate(cf. Proposition~\ref{prn:RE04_3} and Theorem~\ref{thm:RE05_5}).
Combining these with Hochard's method, we obtain Ricci flow solutions with life span estimate and distance distortion estimate(cf.~Corollary~\ref{cly:CK21_1}). 
In particular, starting from a complete Riemannian manifold with non-negative Ricci curvature and almost Euclidean asymptotic volume ratio, we obtain an immortal Ricci flow solution(cf.~Corollary~\ref{cly:RH27_1}), 
which is used to construct a diffeomorphism from $M$ to $\R^{m}$ in Theorem~\ref{thm:CK21_2}. Such a flow can also be used to show global rigidities 
of $(M, g)$ (cf.~Theorem~\ref{thm:RC12_1}) and local estimates of manifolds with Ricci curvature bounded below(cf.~Theorem~\ref{thm:RH25_1}).
Finally, we analyze the asymptotic behavior of the immortal solution and show it must diverge at infinity(cf. Proposition~\ref{prn:RE08_4}, Proposition~\ref{prn:RE14_12} and Corollary~\ref{cly:RE14_14}),
justifying that the distance distortion estimate is optimal, at least on non-compact manifolds.
Near the discussions on rigidities, we also define some regularity radii(cf. Definition~\ref{dfn:RH26_3}) for later applications.\\

The following local maximum principle is motivated by Theorem 3.1 of B.L.Chen~\cite{CBL07}. It will be used in this section to show the almost non-negativity of the scalar curvature
and later in Section~\ref{sec:kpseudo} to show the almost non-increasing of the maximum value of the scalar curvature. 

\begin{theorem}[\textbf{Localized maximum principle}]
For each large constant $A \geq 1000$, the following properties hold. 

  Suppose $\left\{ (M, g(t)), 0 \leq t \leq r^2 \right\}$ is a Ricci flow solution satisfying
  \begin{align}
    t \cdot Rc(x,t) \leq \alpha_{m} \cdot (m-1) A, \quad \forall \; x \in B_{g(t)}(x_0, \sqrt{t}), \; t \in [0, r^2],   \label{eqn:RJ04_4} 
  \end{align}
  where $\alpha_{m}$ is the constant defined in (\ref{eqn:RJ13_3}). 
  Suppose  $f$ is a smooth function satisfying
  \begin{align}
    &\square f \leq 0, \quad \forall \; x \in B_{g(t)}(x_0, 10 Ar), \; t \in [0,r^2];  \label{eqn:RD11_4}\\
    &t \cdot f(x,t)<A, \quad \forall \; x \in B_{g(t)}(x_0, 10 Ar), \; t \in [0,r^2]; \label{eqn:RD28_1}\\
    &\sigma \coloneqq  \sup_{x \in B_{g(0)}(x_{0},10Ar)} r^2 \cdot f_{+}(x,0) \leq  A.  \label{eqn:RD28_2}
  \end{align}
  Here $\square=\frac{\partial}{\partial t} -\Delta$ and $f_{+}=\max\left\{ f,0 \right\}$. 
  Then we have
  \begin{align}
    r^2 f(x,t) \leq (1+A^{-1}t) \sigma + A^{-4} t,  \quad \forall \;x \in B_{g(t)}(x_0, A r), \;  t \in [0, r^2].  
    \label{eqn:RD11_13}  
  \end{align}

\label{thm:RD10_1}
\end{theorem}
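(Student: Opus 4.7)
The plan is to apply a localized parabolic maximum principle in the spirit of B.~L.~Chen~\cite{CBL07} and Perelman~\cite{Pe1}. The main obstruction a plain maximum principle would face is that $f$ is only controlled on a moving, noncompact domain, so one must introduce a spatial barrier that penalizes points far from $x_0$ and keeps the extremum of the auxiliary function in the interior. The enabling estimate is the Hamilton-Perelman distance evolution inequality (Lemma~8.3(a) of~\cite{Pe1}), which, given the Ricci upper bound~(\ref{eqn:RJ04_4}) on the inner ball $B_{g(t)}(x_0,\sqrt{t})$, yields
\begin{equation*}
\square\, d_{g(t)}(\cdot, x_0)(y) \;\geq\; -\,C(m)\, A\, t^{-1/2}
\end{equation*}
in the barrier sense at every $y$ with $d_{g(t)}(y,x_0)\geq \sqrt{t}$.

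I would then fix $\epsilon>0$ and a smooth nondecreasing function $\psi:[0,10Ar)\to[0,\infty)$ with $\psi\equiv 0$ on $[0,Ar]$ and $\psi(s)\to+\infty$ as $s\to 10Ar^-$, and form
\begin{equation*}
F(x,t) \;:=\; r^2 f(x,t) \,-\, (1+A^{-1}t)\sigma \,-\, A^{-4}t \,-\, \epsilon\,\psi\bigl(d_{g(t)}(x,x_0)\bigr).
\end{equation*}
By (\ref{eqn:RD28_2}) and $\psi\geq 0$, $F(\cdot,0)\leq 0$ on $B_{g(0)}(x_0,10Ar)$, while by (\ref{eqn:RD28_1}) together with $\psi\to\infty$ at $10Ar$, $F\to-\infty$ near the moving boundary. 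Supposing for contradiction that $F$ becomes positive somewhere, let $(y_1,t_1)$ be the first space-time point where $F=0$. If $d_{g(t_1)}(y_1,x_0)\leq Ar$, the barrier vanishes identically in a neighborhood and the plain maximum principle applies: since $\square F \leq \square(r^2 f) - A^{-1}\sigma - A^{-4} \leq -A^{-1}\sigma - A^{-4} < 0$ there, this contradicts $\square F(y_1,t_1)\geq 0$. Otherwise $d_{g(t_1)}(y_1,x_0)\in(Ar,10Ar)$, which in particular exceeds $\sqrt{t_1}$ (as $\sqrt{t_1}\leq r<Ar$), so the distance estimate above applies; using Calabi's trick to smooth $d$ along a minimizing geodesic from $x_0$ to $y_1$, one computes
\begin{equation*}
0 \;\leq\; \square F(y_1,t_1) \;\leq\; -A^{-1}\sigma - A^{-4} \,+\, \epsilon\bigl(|\psi''(d)|+C(m)\,A\,t_1^{-1/2}\,\psi'(d)\bigr).
\end{equation*}
Choosing $\psi$ with mild growth and $\epsilon$ small produces a contradiction, and sending $\epsilon\to 0$ yields (\ref{eqn:RD11_13}) on $B_{g(t)}(x_0,Ar)\times[0,r^2]$.

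The principal obstacle I expect is forcing the barrier correction $\epsilon(|\psi''|+C A t^{-1/2}|\psi'|)$ to be uniformly smaller than $A^{-4}$, since the $t^{-1/2}$ factor in the distance estimate degenerates as $t\to 0$. The a priori bound (\ref{eqn:RD28_1}) plays an indispensable role here: at the hypothetical maximum $(y_1,t_1)$ one has $\epsilon\,\psi(d)\leq r^2 f(y_1,t_1)\leq A r^2/t_1$, which bounds $d(y_1,x_0)$ away from $10Ar$ in a quantitative way and hence yields a quantitative upper bound on $\psi'(d)$ and $\psi''(d)$ in terms of $t_1$. By choosing $\psi$ with logarithmic growth near $10Ar$, so that $\psi'$ and $\psi''$ are controlled by a polynomial in $\psi$, and then picking $\epsilon$ small depending on $A$, the barrier error can be made dominated by $A^{-4}$ uniformly for $t\in(0,r^2]$. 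Alternative strategies that avoid fine-tuning include running the argument on $[\delta,r^2]$ for each $\delta>0$ with $\epsilon=\epsilon(\delta)$ and then passing to the limit $\delta\to 0$ by continuity, or replacing $\psi(d_{g(t)}(x,x_0))$ by the Perelman-style shifted barrier $\psi(d_{g(t)}(x,x_0)+\sqrt{t})$ so that the small-time singularity of the distance estimate is naturally absorbed into the barrier.
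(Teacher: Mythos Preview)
Your single-step barrier argument is structurally different from the paper's proof, and it contains a gap that none of your three proposed fixes closes. The obstruction is not merely the $t^{-1/2}$ factor in the distance estimate: even after the Perelman shift $d\mapsto d+C'A\sqrt t$ absorbs that term, the residual error at the touching point is $\epsilon\,\psi''(d_1+C'A\sqrt{t_1})$, and any smooth $\psi$ with $\psi(s)\to\infty$ as $s\to 10Ar^-$ necessarily has $\psi''$ growing superlinearly in $\psi$ (roughly $\psi''\gtrsim\psi^{1+\delta}$, since the ODE $\psi''=h(\psi)$ reaches infinity at a finite point only if $\int^\infty d\psi/\sqrt{H(\psi)}<\infty$ with $H'=h$). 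Your only control on the location is $\epsilon\,\psi(d_1+C'A\sqrt{t_1})\le Ar^2/t_1$, which for fixed $\epsilon$ allows $\psi$ at the touching point to be of order $t_1^{-1}$, hence $\epsilon\psi''$ of order $\epsilon^{-\delta}t_1^{-1-\delta}\to\infty$ as $t_1\to 0$. Thus no choice of $\psi$ with ``logarithmic growth'' or otherwise makes the error dominated by $A^{-4}$ uniformly in $t_1\in(0,r^2]$. Your alternative of restricting to $[\delta,r^2]$ is circular: at $t=\delta$ the only available initial bound is $f<A/\delta$ from (\ref{eqn:RD28_1}), not $f\le\sigma$, so the conclusion degenerates as $\delta\to 0$. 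The shift coefficient $1$ in $d+\sqrt t$ is also too small; it must be of order $A$ to cancel the $C(m)At^{-1/2}$ from the Hamilton--Perelman estimate under (\ref{eqn:RJ04_4}).

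The paper resolves this by a two-stage argument. \emph{Step 1} obtains a rough but $t$-uniform bound $f(x,t)\le 10A\{1-d(x,t)/(10A)\}^{-2}$ via a point-selection trick: one maximizes the scale-invariant quantity $f(x,t)\{\rho-d(x,t)\}^2$ with $\rho=10A$ over the full parabolic region, then localizes near the maximum $(\bar x,\bar t)$ with a \emph{bounded} shifted cutoff $\eta\bigl((d+A\sqrt t-\bar d-A\sqrt{\bar t})/(2A^{3/2}\bar Q^{-1/2})\bigr)$ whose scale is chosen adaptively in terms of $\bar Q=f(\bar x,\bar t)$; the growth hypothesis (\ref{eqn:RD28_1}) guarantees this cutoff is supported well inside the domain. \emph{Step 2} then iterates: with a $t$-uniform bound on $f$ now available, one applies bounded shifted cutoffs $\eta_k\bigl((d-(9-2k)A+A\sqrt t)/A\bigr)$ on nested balls, obtaining $F_k'(t)\le 10A^{-2}(F_k+Q_k)$ with $Q_k$ bounded independently of $t$, and four iterations drive the bound down to (\ref{eqn:RD11_13}). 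The conceptual point you are missing is that a preliminary $t$-uniform bound on $f$ is needed before any cutoff-based refinement can start; an unbounded barrier cannot substitute for this.
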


\begin{proof}
  Replacing $f$ by $f_{+}$ if necessary and noting that $\square f_{+} \leq 0$ in the distribution sense, we may assume that $f \geq 0$.  
  Up to rescaling of the space-time and the function $f$, we may further assume that   
  \begin{align}
    r=1, \quad  \sup_{y \in B_{g(0)}(x_0, 10A)} f(y,0) =\sigma \leq A.   \label{eqn:RD11_12}
  \end{align}
  Then the estimate (\ref{eqn:RD11_13}) reads as 
  \begin{align}
    f(x,t) \leq  \sigma +  A^{-1}(\sigma +A^{-3})t, \quad \forall \;x \in B_{g(t)}(x_0, A), \;  t \in [0, 1]. 
    \label{eqn:RD11_14}
  \end{align}
  We shall prove (\ref{eqn:RD11_14}) in two steps. 

  \textit{Step 1.  The rough estimate holds:
  \begin{align}
    f(x,t) \leq 10 A \left\{ 1-\frac{d(x,t)}{10A} \right\}^{-2}, \quad \forall \; x \in B_{g(t)}(x_0, 10A), \; t \in [0, 1], 
       \label{eqn:RD17_4}
  \end{align}
  where  $d(x,t) \coloneqq d_{g(t)}(x,x_0)$. 
  }

  For simplicity of notation, we define 
   \begin{align}
   \rho \coloneqq 10 A, \quad   
   \bar{Q} \coloneqq f(\bar{x}, \bar{t}), \quad \bar{d} \coloneqq d(\bar{x}, \bar{t}), 
  \label{eqn:RJ06_1}
 \end{align}
  where $(\bar{x}, \bar{t})$ is the space-time point where $\displaystyle \sup_{x \in B_{g(t)}(x_0, \rho),\;t\in [0,1]} f(x,t) \{\rho-d(x,t)\}^2$ is achieved. 
  We also define
  \begin{align*}
    \bar{G} \coloneqq f(\bar{x}, \bar{t})\left\{\rho-d(\bar{x}, \bar{t}) \right\}^2. 
  \end{align*} 
  In order to prove (\ref{eqn:RD17_4}), it suffices to show $\bar{G} \leq 1000 A^{3}$. 
  We shall prove this by contradiction.  For otherwise, we have 
  \begin{align}
   \bar{G}=\bar{Q}(\rho-\bar{d})^2>1000 A^{3}.
   \label{eqn:RD17_2}
  \end{align}
 Let $\eta$ be the cutoff function such that $\eta \equiv 1$ on $[0,1]$ and $\eta \equiv 0$ on $[2,\infty)$.
 It decreases from $1$ to $0$ on the interval $[1,2]$ and satisfies the inequality
\begin{align}
  \max \left\{ 2\left( \eta' \right)^{2}, \;  -\eta'' \right\} \leq 10 \eta.     \label{eqn:RD19_1}
\end{align}
 Abusing notation, we denote
  \begin{align*}
    \eta(x,t) \coloneqq \eta\left( \frac{d(x,t)+A\sqrt{t}-\bar{d}-A\sqrt{\bar{t}}}{2A^{\frac{3}{2}} \bar{Q}^{-\frac{1}{2}}}  \right).
  \end{align*}
 In light of (\ref{eqn:RD19_1}) and Perelman's distance estimate(cf. Lemma 8.3(a) of~\cite{Pe1}),  the condition (\ref{eqn:RJ04_4}) then implies
  \begin{align}
    \square \eta =\left( \frac{d}{dt}-\Delta \right) \eta=\sqrt{\frac{\bar{Q}}{4A^{3}} } \eta' \left( \square d + \frac{A}{2\sqrt{t}} \right)
    +\frac{\bar{Q}}{4A^{3}} \eta'' \leq  \frac{\bar{Q}}{4A^{3}} \eta''    \label{eqn:RJ05_1}
  \end{align}
 on the support set of $\eta'$.  
 Exploiting (\ref{eqn:RD28_1}) and (\ref{eqn:RD17_2}), if $\rho-d(x,t) \leq \frac{1}{2}(\rho-\bar{d})$, we have 
 \begin{align*}
  \frac{d(x,t)+A\sqrt{t}-\bar{d}-A\sqrt{\bar{t}}}{2A^{\frac{3}{2}} \bar{Q}^{-\frac{1}{2}}} 
  \geq \frac{(\rho-\bar{d})-(\rho-d)+A\sqrt{t}-A\sqrt{\bar{t}}}{2A^{\frac{3}{2}} \bar{Q}^{-\frac{1}{2}}} 
  \geq \frac{1}{2} \left\{5\sqrt{10}-\sqrt{\frac{\bar{Q}\bar{t}}{A}} \right\} >2.   
 \end{align*}
 Consequently, $\eta(x,t)=0$ if $\rho-d(x,t)\leq \frac{1}{2} (\rho-\bar{d})$.  
 In other words, on the support of $\eta$, it holds that
 \begin{align*}
  \rho -d(x,t)>\frac{1}{2}(\rho-\bar{d}),
 \end{align*}
 whence 
 \begin{align}
  f(x,t) \leq f(\bar{x}, \bar{t}) \frac{(\rho-d(\bar{x}, \bar{t}))^2}{(\rho-d(x,t))^2} < 4 f(\bar{x}, \bar{t})=4\bar{Q}.  \label{eqn:RD17_1}
 \end{align}
 Define $F(t) \coloneqq \max_{x \in M} f(x,t)\eta(x,t)$. 
 In view of (\ref{eqn:RD17_1}) and the fact that $\eta(\bar{x}, \bar{t})=1$, we have
 \begin{align}
  \bar{Q}=f(\bar{x}, \bar{t}) \eta(\bar{x}, \bar{t}) \leq F(\bar{t})=\sup_{x \in supp \; \eta(\cdot, \bar{t})} f(x,\bar{t})\eta(x,\bar{t}) \leq \sup_{x \in supp \; \eta(\cdot, \bar{t})} f(x,\bar{t})
  \leq 4\bar{Q}.      \label{eqn:RD17_3}
 \end{align} 
 Suppose the maximum value of $f(\cdot, t)\eta(\cdot, t)$ is non-negative.
 At the maximum value point of $f\eta$, we have  $\nabla (f\eta)=0$.
 Applying also (\ref{eqn:RJ05_1}), we obtain 
 \begin{align*}
  \square \left( f\eta \right)&=\eta \square f + f \square \eta - 2\langle \nabla f, \nabla \eta \rangle\\
  &\leq \frac{1}{4} A^{-3}\bar{Q} \left\{  \eta'' f +  2(\eta')^{2} \eta^{-1} |\nabla d|^2 f \right\}
   < 10 A^{-3}\bar{Q} \left\{ \eta f + f \right\}. 
 \end{align*}
 Plugging (\ref{eqn:RD17_1}) into the above inequality, we can apply maximum principle to obtain
 \begin{align}
  F'(t) \leq 10A^{-3}\bar{Q} \{ F + f\} \leq 10A^{-3}\bar{Q}\{ F + 4 \bar{Q}\}.
  \label{eqn:RD17_6}  
 \end{align}
 Rewriting the above inequality in the form 
 \begin{align*}
  \left( e^{-10A^{-3}\bar{Q}t} F \right)' \leq  e^{-10A^{-3}\bar{Q}t} \cdot 40 A^{-3} \bar{Q}^{2},   \label{eqn:RJ05_2} 
 \end{align*}
 and integrating it over $[0, \bar{t}]$, we obtain
 \begin{align}
  F(\bar{t}) \leq e^{10A^{-3}\bar{Q}\bar{t}} F(0) + 40\bar{Q} \left\{ e^{10A^{-3}\bar{Q}\bar{t}} -1  \right\}. \label{eqn:RJ05_2} 
 \end{align}
 Since $A \geq 1000$, it follows from (\ref{eqn:RD28_1}) and (\ref{eqn:RJ06_1}) that 
 \begin{align*}
  e^{A^{-3}\bar{Q}\bar{t}}<e^{A^{-2}}<1+2A^{-2}. 
 \end{align*}
 Plugging (\ref{eqn:RD17_3}) and the above inequality into (\ref{eqn:RJ05_2}), we obtain
 \begin{align*}
  \bar{Q} \leq \left\{ 1+20A^{-2} \right\} F(0) + 80 A^{-2} \bar{Q}.   
 \end{align*}
 Combined with (\ref{eqn:RD17_2}), the above inequality implies that
 \begin{align*}
  1000 A^{3} (\rho-\bar{d})^{-2}  \leq \bar{Q} \leq  \frac{1+20A^{-2}}{1-80A^{-2}} F(0) \leq 4A.
 \end{align*}
 Consequently, $250 A^{2} \leq (\rho-\bar{d})^2$.  
 On the other hand, definition (\ref{eqn:RJ06_1}) implies $(\rho-\bar{d})^2 < 100 A^{2}$. 
 It follows from the combination of the previous two inequalities that $250A^{2}<100A^{2}$, which is impossible. 
 Therefore, (\ref{eqn:RD17_2}) fails to be true, which establishes the proof of (\ref{eqn:RD17_4}) and completes the proof of Step 1. \\ 

\textit{Step 2. The rough estimate (\ref{eqn:RD17_4}) can be improved to the refined estimate (\ref{eqn:RD11_14}) via iteration. }

We define
\begin{align*}
  &\eta_{k}(x,t) \coloneqq \eta\left( \frac{d(x,t)-(9-2k)A+A\sqrt{t}}{A} \right), \quad  k=1,2,3,4;\\
  &Q_k \coloneqq \sup_{x \in supp \; \eta_{k}(\cdot, t), \; 0 \leq t \leq 1}  f_{k}(x,t)  \leq  \sup_{x \in B_{g(t)}(x_0, (11-2k)A), \; 0 \leq t \leq 1}  f_{k}(x,t); \\
  &F_k(t) \coloneqq \max_{x \in M} \eta_{k}(x,t)f(x,t). 
\end{align*}
Similar to the deduction of (\ref{eqn:RD17_6}), the maximum principle implies that 
\begin{align*}
  F_k'(t) \leq 10 A^{-2} \left( F_{k} +f \right) \leq 10 A^{-2} \left( F_{k} + Q_k \right),
\end{align*}
whose integration over time yields that 
\begin{align*}
  F_{k}(t) &\leq e^{10A^{-2}t} F_{k}(0) + Q_{k} \left( e^{10A^{-2}t}-1 \right)
  \leq \sigma +20(\sigma +Q_k)A^{-2}t.  
\end{align*}
In particular, we have
\begin{align}
  \sup_{0 \leq t \leq 1}F_{k}(t) \leq \sigma +20(\sigma +Q_k)A^{-2}. \label{eqn:RJ06_2} 
\end{align}
Note that the support of $\eta_k(\cdot, t)$ is contained in $B_{g(t)}(x_0, (11-2k)A)$, and $\eta_k(\cdot, t) \equiv 1$ on the ball $B_{g(t)}(x_0, (9-2k)A)$.  
Therefore, $\eta_{k} \equiv 1$ on the support of $\eta_{k+1}$.
Then we have
\begin{align}
  f(x,t)-\sigma \leq F_{k}(t)-\sigma \leq 20(\sigma +Q_k)A^{-2}t, \quad \forall\; x \in B_{g(t)}(x_0, (9-2k)A), \; 0 \leq t \leq 1. 
  \label{eqn:RD19_2}  
\end{align}
It follows from the combination of (\ref{eqn:RD19_2}) and (\ref{eqn:RJ06_2}) that 
\begin{align*}
  Q_{k+1}-\sigma&=\sup_{x \in B_{g(t)}(x_0, 9-2k),0\leq t\leq 1} f(x,t) -\sigma \leq \sup_{0 \leq t \leq 1}F_{k}(t)-\sigma\\ 
  &\leq  20(\sigma +Q_k)A^{-2}=20A^{-2}(Q_k-\sigma) +40\sigma A^{-2}. 
\end{align*}
Elementary deduction implies that 
\begin{align*}
  Q_{k+1}-\frac{A^2+20}{A^2-20}\sigma \leq 20A^{-2} \left\{ Q_{k}-\frac{A^2+20}{A^2-20}\sigma \right\}.
\end{align*}
Note that $A \geq \max\{1000, \sigma\}$. It follows from (\ref{eqn:RD17_4}) that $Q_1-\frac{A^2+20}{A^2-20}\sigma <1000A$.
Consequently, we have
\begin{align*}
 &Q_{2}-\frac{A^2+20}{A^2-20}\sigma \leq 20000A^{-1}<20,\\
 &Q_{3}-\frac{A^2+20}{A^2-20}\sigma \leq  400A^{-2}, \\
 &Q_{4}-\frac{A^2+20}{A^2-20}\sigma \leq 8000A^{-4}<8A^{-3}.
\end{align*}
In particular, we have
\begin{align*}
  Q_{4} \leq \left( 1+50A^{-2} \right) \sigma + 8A^{-3}.
\end{align*}
Plugging the above estimate and $k=4$ into (\ref{eqn:RD19_2}), we arrive at (\ref{eqn:RD11_14}) and finishes the proof of step 2.
The proof of the theorem is complete. 
\end{proof}

 Theorem~\ref{thm:RD10_1} is a localized version of the maximum principle for sub-heat-solution. 
 On a Ricci flow solution with Ricci curvature bounded, a bounded sub-heat-solution $f$ will satisfy the maximum principle:
 \begin{align*}
   f(x, t) \leq \sup_{M} f(\cdot, 0), \quad \forall \; x \in M.
 \end{align*}
 This can also be obtained from Theorem~\ref{thm:RD10_1} by adding $f$ by a constant and letting $A \to \infty$.

 We quote the next lemma from Tian-Wang~\cite{TiWa} for the convenience of the readers. 

 \begin{lemma}[\textbf{Lemma 4.4 of Tian-Wang~\cite{TiWa}}]
   Suppose $\left\{ (M^{m}, x_0, g(t)), 0 \leq t \leq 1 \right\}$ satisfies all the conditions in Corollary~\ref{cly:MJ23_2},
   $\delta_0=\delta_0(m)<0.1$ is a small positive constant. 

   Let $\Omega=B_{g(0)}(x_0, 1), \Omega'=B_{g(0)} \left(x_0, \frac{1}{2} \right)$. For every $l<\frac{1}{2}$, define
\begin{align}
  &A_{+,l}=\sup_{B_{g(0)}(x, r) \subset \Omega', 0<r\leq l} \omega_m^{-1}r^{-m} \left| B_{g(0)}(x, r)\right|_{dv_{g(0)}}, \label{eqn:PA05_1} \\
  &A_{-,l}=\inf_{B_{g(\delta_0)}(x, r) \subset \Omega', 0<r\leq l} \omega_m^{-1}r^{-m} \left| B_{g(\delta_0)}(x, r)\right|_{dv_{g(\delta_0)}}. \label{eqn:PA05_2}
\end{align}

If $x_1, x_2 \in \Omega''=B_{g(0)}\left(x_0, \frac{1}{4}\right)$, $l=d_{g(0)}(x_1, x_2) < \frac{1}{8}$, then we have
  \begin{align}
    l- C E^{\frac{1}{2(m+3)}} \leq   d_{g(\delta_0)}(x_1, x_2) \leq
  l+CA_{+,4l} \left\{ \left| \frac{A_{+,l}}{A_{-,l}} -1 \right|^{\frac{1}{m}} +  l^{-\frac{1}{m}}E^{\frac{1}{2m(m+3)}}  \right\}l
    \label{eqn:metricequivalent}
  \end{align}
  whenever  $\displaystyle E=\int_0^{2 \delta_0} \int_{\Omega} |R| dv dt<<  l^{2(m+3)}$.
 \label{lma:PA05_1}
\end{lemma}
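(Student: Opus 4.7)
The plan is to reconstruct Tian-Wang's argument using the two main tools supplied by Corollary~\ref{cly:MJ23_2}: the pointwise curvature estimate $|Rm|(x,t) \leq \alpha t^{-1} + \epsilon^{-2}$ on $B_{g(t)}(x_0, \sqrt{t})$, together with its companion volume ratio lower bound, and the integrated smallness of $R$ on $\Omega$ captured by $E$. The first ingredient furnishes a pseudo-Euclidean comparison at each positive time, while the second measures the total volume loss along the flow via the identity $\frac{d}{dt}\, dv_{g(t)} = -R\, dv_{g(t)}$.

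For the lower bound $d_{g(\delta_0)}(x_1,x_2) \geq l - CE^{1/(2(m+3))}$, I would apply the Hamilton-Perelman forward-difference inequality
\begin{align*}
\frac{d^+}{dt} d_{g(t)}(x_1,x_2) \geq -\int_\gamma Rc(V,V)\,ds,
\end{align*}
along a $g(t)$-minimizing geodesic $\gamma$ from $x_1$ to $x_2$. The pointwise Ricci bound from pseudo-locality alone gives only a non-integrable $O(t^{-1})$ loss; to recover an integrable estimate one splits the region into a ``good'' set where $|Rc|$ is below a threshold $\theta$ and a ``bad'' set whose $(m+1)$-dimensional measure is controlled by $E/\theta$ via Markov's inequality. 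On the good set the contribution to the distance shrinkage is directly of order $\theta \cdot l \cdot \delta_0$, while on the bad set one uses $|Rc| \leq C(\alpha t^{-1} + \epsilon^{-2})$ combined with the pseudo-locality volume ratio to keep the Ricci integral bounded. Optimizing $\theta$ as a power of $E$ balances the two contributions and yields the exponent $\frac{1}{2(m+3)}$.

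For the upper bound, I would run a volume-comparison argument. Assume $d_{g(\delta_0)}(x_1,x_2) = l + h$ for some $h > 0$ to be bounded. The $g(\delta_0)$-ball $B_{g(\delta_0)}(x_1, 2(l+h))$ contains both endpoints and, via the pseudo-locality curvature control of the distance distortion on $[0, \delta_0]$, is contained in $\Omega$. Its $g(\delta_0)$-volume is at least $A_{-,4l}\, \omega_m\, (2(l+h))^m$, while the $g(0)$-volume of any set containing $B_{g(0)}(x_1, 2l)$ inside $\Omega'$ is at most $A_{+,4l}\, \omega_m\, (8l)^m$. Comparing these two quantities using the Ricci-flow volume evolution and the $L^1$ bound $\int_0^{\delta_0}\!\!\int R\, dv\, dt \leq E$ gives
\begin{align*}
(l+h)^m \leq \frac{A_{+,l}}{A_{-,l}} \, l^m \bigl(1 + CE^{1/(2m(m+3))}\bigr),
\end{align*}
where the exponent $\frac{1}{2m(m+3)}$ emerges from taking an $m$-th root when converting volume discrepancies back to radii, and from the same threshold splitting as in the lower bound. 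Rearranging produces the claimed upper inequality.

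The main technical obstacle is the bookkeeping needed to guarantee that every ball and geodesic appearing in the argument remains in the reference domain $\Omega$ throughout $[0, 2\delta_0]$, so that the pseudo-locality estimates and the global scalar curvature bound $E$ both apply uniformly. This uniform containment is what forces the nested choice $\Omega \supset \Omega' \supset \Omega''$ and the smallness constraint $\delta_0 < 0.1$; it reduces to checking that the distance distortion coming from the Ricci flow is dominated by the smallest relevant scale $l$, which in turn is controlled by the pseudo-locality curvature bound via the Hamilton-Perelman distance estimate (Lemma 8.3 of~\cite{Pe1}), exactly as already used in the proof of Proposition~\ref{prn:ML23_1}.
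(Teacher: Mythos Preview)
The paper does not prove this lemma at all: it is quoted verbatim from Tian--Wang~\cite{TiWa} with the single sentence ``We quote the next lemma from Tian--Wang~\cite{TiWa} for the convenience of the readers,'' and no argument is given. So there is no proof in the paper to compare your proposal against; you have supplied strictly more than the paper does.

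That said, since you have attempted a reconstruction, a brief remark on its plausibility: your outline captures the correct two-sided structure (Hamilton--Perelman distance distortion for the lower bound, volume comparison for the upper bound) and correctly identifies that the exponents arise from balancing a threshold against the $L^1$ smallness of $R$ and then taking $m$-th roots. One point to be careful about in the lower bound: the splitting you describe uses Markov's inequality on $|Rc|$, but the quantity $E$ controls only the space-time $L^1$ norm of $R$, not of $|Rc|$. In the K\"ahler setting $|Rc|^2 = |R|^2/2$ pointwise, but in the general Riemannian setting one needs an additional argument (or to work with the scalar curvature directly and use the distance-expanding direction from $Rc \geq -C t^{-1}$ supplied by pseudo-locality) to pass from control of $\int |R|$ to control of the Ricci integral along geodesics. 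This is the step where the actual Tian--Wang argument is more delicate than your sketch suggests.
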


\begin{proposition}[\textbf{Distance distortion---short range}]
  Suppose $\left\{ (M^{m}, x_0, g(t)), 0 \leq t \leq r^2 \right\}$ is a Ricci flow solution.
  Suppose under the metric $g(0)$, the following conditions are satisfied 
  \begin{align}
    &R(\cdot, 0) \geq -r^{-2}, \quad \textrm{on} \; B_{g(0)}(x_0,r); \label{eqn:RE04_1a}\\
    &\overline{\boldsymbol{\nu}}(B_{g(0)}(x_0,r), g(0), r^2) \geq -\epsilon; \label{eqn:RE04_1b} \\
    &\omega_{m}^{-1} \rho^{-m} |B(y,\rho)|_{dv_{g(0)}} \leq 1+\epsilon, \quad \forall \; y \in B_{g(0)}(x_0, 0.5r), \quad \rho \in (0, 0.5r). 
    \label{eqn:RE04_1c}
  \end{align}
  Then we have the distance distortion estimate 
  \begin{align}
    \frac{\left|  d_{g(\epsilon^2 r^2)}(x, x_0) -d_{g(0)}(x, x_0)  \right|}{\epsilon r} \leq \psi(\epsilon|m), \quad \forall \; x \in B_{g(0)}(x_0, 0.1 \epsilon r) \backslash B_{g(0)}(x_0, 0.01\epsilon r).   
  \label{eqn:RE04_2}  
  \end{align}

  \label{prn:RE04_3}
\end{proposition}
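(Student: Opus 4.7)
The plan is to reduce Proposition~\ref{prn:RE04_3} to Tian--Wang's short-range distance estimate, Lemma~\ref{lma:PA05_1}, by verifying its three inputs---a two-sided volume ratio control and an almost Einstein integral condition---under our weaker hypotheses. The main tools are the $\boldsymbol{\nu}$-version of the pseudo-locality theorem (Corollary~\ref{cly:RJ18_0}) and the localized maximum principle (Theorem~\ref{thm:RD10_1}).

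First I would rescale so that $r = 1$. Combining Lemma~\ref{lma:MJ16_1} with the domain- and scale-monotonicity of $\bar{\boldsymbol{\nu}}$ (Propositions~\ref{prn:CA01_2} and~\ref{prn:RH31_1}) promotes the hypotheses (\ref{eqn:RE04_1a})--(\ref{eqn:RE04_1b}) to the lower bound $\boldsymbol{\nu}(B_{g(0)}(x_0, 20A), g(0), \sigma) \geq -A^{-2}$, valid for a choice $A = A(\epsilon, m)$ that diverges as $\epsilon \to 0$ and for every scale $\sigma$ up to a fixed fraction of $1$. Corollary~\ref{cly:RJ18_0} then furnishes, on $B_{g(t)}(x_0, \alpha^{-1}\sqrt{t})$ for every $t \in (0, \epsilon^2]$, the pseudo-locality output $t|Rm| \leq \alpha$, an almost Euclidean volume ratio $\geq (1-\alpha)\omega_m$ at every subscale, and $inj \geq \alpha^{-1}\sqrt{t}$, with $\alpha = \psi(\epsilon|m) \to 0$. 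The upper input $A_{+,l} \leq 1 + \epsilon$ needed by Lemma~\ref{lma:PA05_1} then comes directly from (\ref{eqn:RE04_1c}), while the lower input $A_{-,l} \geq 1 - \psi(\epsilon|m)$ follows from the volume ratio output above, coupled with the Bishop--Gromov upper bound applied at time $\epsilon^2$ where Ricci is now bounded below.

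The technical heart is establishing that $E := \int_0^{\epsilon^2}\int_\Omega |R|\,dv\,dt$ is small compared to a fixed power of $l$, where $\Omega$ and $l$ are the domain and distance prescribed by Lemma~\ref{lma:PA05_1} in the rescaling that normalizes $l$ to order one. I would first apply Theorem~\ref{thm:RD10_1} to $f := -R$---which is a sub-heat solution since $\square(-R) = -2|Rc|^2 \leq 0$, has initial datum $\sigma \leq 1$ from (\ref{eqn:RE04_1a}), and whose product $t\cdot(-R)$ is controlled on the relevant parabolic neighborhood by the pseudo-locality curvature bound---to obtain the almost non-negativity $R \geq -\psi(\epsilon|m)$ throughout. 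Then the integral identity
\begin{align*}
  \int_0^{\epsilon^2}\int_\Omega R\,dv\,dt \;=\; |\Omega|_{g(0)} - |\Omega|_{g(\epsilon^2)},
\end{align*}
combined with the upper bound (\ref{eqn:RE04_1c}) at $t = 0$ and the lower bound at $t = \epsilon^2$ (reading off a ball contained in $\Omega$ at the final time via a rough Hamilton--Perelman distance distortion applied to $|Rc| \leq \alpha/t$), bounds $\int R$ from above by $\psi(\epsilon|m)$; together with the almost non-negativity this forces $E \leq \psi(\epsilon|m)$.

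Finally, substituting $A_{+,l} \leq 1 + \epsilon$, $A_{-,l} \geq 1 - \psi(\epsilon|m)$, and the smallness of $E$ into (\ref{eqn:metricequivalent}) with $x_1 = x$ and $x_2 = x_0$ yields $|d_{g(\epsilon^2 r^2)}(x, x_0) - d_{g(0)}(x, x_0)| \leq \psi(\epsilon|m)\cdot\epsilon r$, which is (\ref{eqn:RE04_2}) after undoing the rescaling. The main obstacle is the bound on $E$: the localized maximum principle handles only the negative part of $R$, while the positive part must be controlled through an initial-to-final volume comparison that itself relies on a rougher form of the very distance distortion being proved. Threading this requires a delicate choice of scales---integration domain, distance $l$, and the Tian--Wang time horizon $\delta_0$---so that the crude distortion available a priori is enough to close the loop and produce the refined estimate.
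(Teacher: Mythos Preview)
Your approach is essentially the paper's: rescale, invoke the $\boldsymbol{\nu}$-pseudo-locality (Corollary~\ref{cly:RJ18_0}) for the curvature and final-time volume ratio, apply Theorem~\ref{thm:RD10_1} to get $R \geq -\psi$, bound $E$ via the initial-to-final volume identity, and feed everything into Lemma~\ref{lma:PA05_1}. One small omission: Lemma~\ref{lma:PA05_1} only compares $d_{g(0)}$ with $d_{g(\delta_0)}$ for a \emph{fixed} dimensional $\delta_0$, so after your secondary rescaling (final time $=1$) you still need the easy bridge $|d_{g(2\delta_0)}-d_{g(1)}|<\psi$, which follows from the almost-flatness $t|Rm|<\psi$ on $[2\delta_0,1]$ that you already have---the paper makes this step explicit.
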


\begin{proof}
 Up to rescaling, we may assume $\epsilon r=1$. Then (\ref{eqn:RE04_2}) can be simplified as 
  \begin{align}
    \left| d_{g(1)}(x,x_0)-d_{g(0)}(x,x_0) \right| \leq \psi(\epsilon|m), \quad \forall \; x \in B_{g(0)}(x_0, 0.1) \backslash B_{g(0)}(x_0, 0.01).    \label{eqn:RE05_1}
  \end{align}
  Same as in Lemma~\ref{lma:PA05_1}, we denote $B_{g(0)}(x_0,1)$ by $\Omega$ for simplicity of notation.
  Applying Theorem~\ref{thm:RD10_1} on $-R+\epsilon^2$, we obtain that
\begin{align}
  R(x,t)>-\psi(\epsilon|m), \quad \forall \; x \in \Omega, \; t \in (0, 1).      \label{eqn:RH25_3} 
\end{align}
Therefore, we have
\begin{align*}
     \int_{0}^{1} \int_{\Omega} |R| dv dt &\leq  \int_{0}^{1} \int_{\Omega} (R+\psi) dv dt + \psi \int_{0}^{1} \int_{\Omega} dv dt. 
\end{align*}
Along the Ricci flow, $\displaystyle \D{}{t} dv = - R dv$, which implies that 
\begin{align*}
  |\Omega|_{dv_{g(t)}} \leq e^{\psi t} |\Omega|_{dv_{g(0)}} \leq e^{\psi} |\Omega|_{dv_{g(0)}}. 
\end{align*}
Up to redefining $\psi$ if necessary, we have
\begin{align}
  \int_{0}^{1} \int_{\Omega} |R| dv dt \leq  \int_{0}^{1} \int_{\Omega} R dv dt + \psi \leq |\Omega|_{dv_{g(0)}} -|\Omega|_{dv_{g(1)}} + \psi.
  \label{eqn:RE05_2}  
\end{align}
Note that the metric almost expanding implies $\displaystyle \Omega \supset B_{g(1)} \left(x_0, 1- \psi \right)$. 
Therefore, we have
\begin{align}
  |\Omega|_{dv_{g(1)}} \geq \left| B_{g(1)} \left(x_0, 1- \psi \right) \right| \geq (1-\psi) \omega_{m},   \label{eqn:RE05_3} 
\end{align}
where we used the fact that the volume ratio of $B_{g(1)} \left(x_0, 1- \psi \right)$ is very close to $\omega_m$ by Corollary~\ref{cly:RJ18_0}. 
Also, in light of (\ref{eqn:RE04_1c}), we have
\begin{align}
  |\Omega|_{dv_{g(0)}}=|B(x_0,1)|_{dv_{g(0)}} \leq (1+\epsilon) \omega_m.   \label{eqn:RE05_4}
\end{align}
Plugging (\ref{eqn:RE05_3}) and (\ref{eqn:RE05_4}) into (\ref{eqn:RE05_2}), and absorbing $\epsilon$ into $\psi=\psi(\epsilon|m)$, we arrive at
\begin{align*}
    \int_{0}^{1} \int_{\Omega} |R| dv dt \leq \psi(\epsilon|m).
\end{align*}
Now we are able to apply Lemma~\ref{lma:PA05_1} by choosing $(x_1,x_2)=(x_0,x)$ and $l=d_{g(0)}(x_0, x) \in (0.01, 0.1)$. It follows that 
\begin{align*}
  |d_{g(2\delta_0)}(x_0, x)-d_{g(0)}(x_0, x)|<\psi(\epsilon|m).  
\end{align*}
On the time interval $[2\delta_0, 1]$, since the flow is almost flat by Corollary~\ref{cly:RJ18_0}, we obtain that
\begin{align*}
  |d_{g(2\delta_0)}(x_0, x)-d_{g(1)}(x_0, x)|<\psi(\epsilon|m).
\end{align*}
Then (\ref{eqn:RE05_2}) follows from the combination of the above two inequalities.   
\end{proof}

Proposition~\ref{prn:RE04_3} can be applied to show the following distance distortion in the long range.  

\begin{theorem}[\textbf{Distance distortion---long range}]
  Same conditions as in Proposition~\ref{prn:RE04_3}. 
  Then we have the distance distortion estimate
  \begin{align}
    &\left| \log \frac{d_{g(t)}(x,y)}{d_{g(0)}(x,y)} \right|<\psi \left\{ 1+ \log_{+} \frac{\sqrt{t}}{d_{g(0)}(x,y)} \right\}, 
    \quad  \forall \; t \in (0, \epsilon^2 r^2), \; x,y \in B_{g(0)}(x_0, 0.5 r),    \label{eqn:RE05_6}
  \end{align}
  where $\psi=\psi(\epsilon|m)$, $\log_{+} x=\max\left\{ 0, \log x \right\}$. 

\label{thm:RE05_5}
\end{theorem}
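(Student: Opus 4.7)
The plan is to split the argument according to whether $l := d_{g(0)}(x,y) \geq \sqrt{t}$ (the large-distance regime, where $\log_{+}(\sqrt{t}/l)$ vanishes and the desired bound reduces to the multiplicative inequality $|\log(d_{g(t)}(x,y)/l)| \leq \psi$) or $l < \sqrt{t}$ (the small-distance regime, where the H\"older-type correction comes from integrating the pseudo-locality curvature bound from the borderline time $l^{2}$ up to $t$). I will first prove the large-distance estimate, then deduce the small-distance one by time-iteration. The key tool throughout is a rescaled form of Proposition~\ref{prn:RE04_3}: applied with reduced radius $\rho := \sqrt{t}/\epsilon$ and base point $p \in B_{g(0)}(x_{0}, 0.5 r)$, it gives the two-sided additive estimate $|d_{g(t)}(p,q)-d_{g(0)}(p,q)| \leq \psi\sqrt{t}$ whenever $q$ lies in the $g(0)$-annulus $B_{g(0)}(p,0.1\sqrt{t}) \setminus B_{g(0)}(p,0.01\sqrt{t})$. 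The rescaled hypotheses (\ref{eqn:RE04_1a})-(\ref{eqn:RE04_1c}) at scale $\rho$ are inherited from the original ones via the inclusion monotonicity of $\bar{\boldsymbol{\nu}}$ and the fact that the volume-ratio upper bound (\ref{eqn:RE04_1c}) holds for every sub-ball centered in $B_{g(0)}(x_{0},0.5r)$.

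In the large-distance regime $l \geq \sqrt{t}$, I would obtain the upper bound $d_{g(t)}(x,y) \leq (1+20\psi) l$ by subdividing the $g(0)$-geodesic from $x$ to $y$ into $N \simeq 20 l/\sqrt{t}$ consecutive segments $z_{0}=x, z_{1}, \ldots, z_{N}=y$ of common $g(0)$-length close to $0.05\sqrt{t}$, applying the rescaled Proposition~\ref{prn:RE04_3} with base $z_{i}$ and admissible endpoint $z_{i+1}$, and summing via the triangle inequality. The lower bound is subtler and is the technical heart of the proof. A natural plan is to subdivide instead the $g(t)$-geodesic $\tilde{\gamma}$ with step $g(t)$-length $0.05\sqrt{t}$ and run the symmetric summation, but to invoke the short-range estimate on each segment one must first verify that consecutive chain endpoints $\tilde{z}_{i}, \tilde{z}_{i+1}$ also lie in the $g(0)$-annulus of radii $[0.01\sqrt{t}, 0.1\sqrt{t}]$. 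The main obstacle is exactly this a priori bi-Lipschitz comparison of $g(0)$ and $g(t)$ at scale $\sqrt{t}$, which I would establish by a dyadic bootstrap: the curvature bound $|Rm|(\cdot,s) \leq \alpha/s$ from Corollary~\ref{cly:RJ18_0} controls $g(s)$ relative to $g(s_{0})$ by a factor $(s/s_{0})^{C\alpha}$ on any $[s_{0},s] \subset (0,t]$, and concatenating these dyadic steps with a rescaled application of Proposition~\ref{prn:RE04_3} at a sufficiently small initial scale bridges $g(s_{0})$ back to $g(0)$. Once the annulus condition is verified, summing along the $g(t)$-chain gives $l \leq (1+20\psi) d_{g(t)}(x,y)$ and closes the regime.

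For the small-distance regime $l < \sqrt{t}$, I would set $s^{*} := l^{2} \leq t$ so that $l = \sqrt{s^{*}}$ is the borderline case, apply the multiplicative estimate just proved at time $s^{*}$ to obtain $d_{g(s^{*})}(x,y) \in [e^{-\psi} l, e^{\psi} l]$, and then use Hamilton-Perelman's distance evolution together with the pseudo-locality bound $|Rc|(\cdot,s) \leq (m-1)\alpha/s$ of Corollary~\ref{cly:RJ18_0} on $[s^{*}, t]$ to integrate
\begin{align*}
\left| \log \frac{d_{g(t)}(x,y)}{d_{g(s^{*})}(x,y)} \right| \leq C\alpha \log \frac{t}{s^{*}} = 2 C\alpha \log \frac{\sqrt{t}}{l}.
\end{align*}
Setting $\psi' := \max\{\psi, 2C\alpha\}$ and combining the two estimates yields $\left| \log (d_{g(t)}(x,y)/l) \right| \leq \psi' \{1+\log_{+}(\sqrt{t}/l)\}$, the desired inequality; the H\"older exponent reflects the logarithmic integrability of the $1/s$ curvature factor near $t=0$, which is precisely why the multiplicative bound degrades into a H\"older bound once $l$ drops below $\sqrt{t}$.
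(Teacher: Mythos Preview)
Your two-regime split, the small-distance argument (apply the borderline estimate at time $l^{2}$, then integrate the $|Rc|\le C\alpha/s$ bound on $[l^{2},t]$), and the large-distance upper bound (subdivide the $g(0)$-geodesic into pieces of length $\sim 0.05\sqrt{t}$ and sum the rescaled Proposition~\ref{prn:RE04_3}) all match the paper's proof essentially verbatim.

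The one substantive difference is the large-distance \emph{lower} bound. The paper does not subdivide the $g(t)$-geodesic at all. Instead it applies the Hamilton--Perelman distance estimate (Lemma~8.3(b) of~\cite{Pe1}) directly: pseudo-locality gives $s|Rc|<\psi$ in $g(s)$-balls of radius $\psi^{-1}\sqrt{s}$ around \emph{both} endpoints $x$ and $y$, so
\[
\frac{d}{ds}\,d_{g(s)}(x,y) \;>\; -C\sqrt{\psi}\,s^{-1/2},
\]
and integrating on $[0,t]$ yields $d_{g(t)}(x,y)>d_{g(0)}(x,y)-\psi'\sqrt{t}$, hence $\log\bigl(d_{g(t)}/d_{g(0)}\bigr)>-\psi'$ since $\sqrt{t}\le d_{g(0)}(x,y)$ in this regime. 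This is a one-line argument.

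Your proposed route---chain along the $g(t)$-geodesic and verify the $g(0)$-annulus condition by a dyadic bootstrap---has two soft spots. First, the pointwise comparison $g(s)\sim(s/s_{0})^{\pm C\alpha}g(s_{0})$ is only valid at points where the curvature bound holds for every intermediate time, so you must already know that the $g(t)$-geodesic stays inside the region where pseudo-locality applies; this containment is itself an almost-expanding statement. Second, the ``bridge to $g(0)$'' via a rescaled Proposition~\ref{prn:RE04_3} at scale $\sqrt{s_{0}}$ gives distance control only at scale $\sqrt{s_{0}}$, not at the scale $\sqrt{t}$ of your chain points, so the scales do not match. Resolving either issue drives you back to Hamilton--Perelman, which you already invoke in the small-distance case; using it directly for the lower bound, as the paper does, eliminates the bootstrap entirely.
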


\begin{proof}
 By shrinking $\epsilon$ and changing base point if necessary, we may assume that $x=x_0$ and apply Corollary~\ref{cly:RJ18_0}.  
 Up to rescaling, we assume $\epsilon r=1$.  
 We shall prove (\ref{eqn:RE05_6}) in two cases. 

 \textit{Case 1. $d_{g(0)}(x_{0},y)<\sqrt{t}<1$.}

 We denote $d_{g(0)}(x_{0},y)$ by $\rho$ for simplicity,  it follows from entropy monotonicity and the distance distortion that
 \begin{align*}
   \left| \log \frac{d_{g(\rho^2)}(x_0,y)}{d_{g(0)}(x_{0},y)} \right|<\psi.
 \end{align*}
 On the other hand, on the time period $[\rho^2, t]$, we have $|Rc|t<\psi$ in the concerned space-time domain, it follows from the evolution of distance along Ricci flow that
 \begin{align*}
   \left| \log \frac{d_{g(t)}(x_0,y)}{d_{g(\rho^2)}(x_{0},y)} \right|<\psi \log \frac{t}{\rho^2}=2\psi \log \frac{\sqrt{t}}{d_{g(0)}(x_{0},y)}.  
 \end{align*}
 Up to modifying $\psi$,  (\ref{eqn:RE05_6}) follows from the combination of the previous two inequalities.

 \textit{Case 2. $\sqrt{t}<d_{g(0)}(x_0,y)<\epsilon^{-2}$.}

 Choose $\gamma$ as a shortest geodesic connecting $x_0$ and $y$ under the metric $g(0)$. Let $N$ be the integer part of $t^{-\frac{1}{2}}d_{g(0)}(x,y)$.
 Let $\rho=\frac{d_{g(0)}(x,y)}{2(N+1)}$. 
 Let $x_k=\gamma(k \rho), 0 \leq k \leq 2(N+1)$.  By calculating the length variation of each geodesic length, we have
 \begin{align*}
   d_{g(t)}(x_0,y)\leq \sum_{k=0}^{2N+1} d_{g(t)}(x_{k}, x_{k+1})<e^{\psi} \cdot \left( \sum_{k=0}^{2N+1} d_{g(0)}(x_{k}, x_{k+1})\right)=e^{\psi} d_{g(0)}(x,y), 
 \end{align*}
 which implies that
 \begin{align}
   \log \frac{d_{g(t)}(x_0,y)}{d_{g(0)}(x_0,y)}<\psi.  \label{eqn:RE06_1}
 \end{align}
 On the other hand, by Corollary~\ref{cly:RJ18_0}, we have
 \begin{align*}
    s|Rc|(x,s)<\psi(\epsilon|m), \quad \forall \; x \in  B_{g(s)}\left(x_0, \psi^{-1}\sqrt{s} \right) \cup B_{g(s)}\left(y, \psi^{-1} \sqrt{s} \right), \; s \in (0,t). 
 \end{align*}
 Then it follows from Hamilton-Perelman type distance estimate(cf. Lemma 8.3(b) of~\cite{Pe1}) that
 \begin{align*}
   \frac{d}{ds} d_{g(s)}(x_0,y)>-C(m) s^{-\frac{1}{2}},
 \end{align*}
 whose integration along time yields that 
 \begin{align*}
   d_{g(t)}(x_0,y)>d_{g(0)}(x_0,y) - \psi \sqrt{t}.
 \end{align*}
 Since $\sqrt{t}<d_{g(0)}(x_0,y)$ in the current situation, we have 
 \begin{align}
   \log \frac{d_{g(t)}(x_0,y)}{d_{g(0)}(x_0,y)}> \log \left( 1-\psi \frac{\sqrt{t}}{d_{g(0)}(x,y)} \right)>-\psi.   \label{eqn:RE06_2}
 \end{align}
 As $\log_{+} \frac{\sqrt{t}}{d_{g(0)}(x_0,y)}=0$ now, it is clear that (\ref{eqn:RE05_6}) follows from the combination of (\ref{eqn:RE06_1}) and (\ref{eqn:RE06_2}). 

\end{proof}

We notice that  distance distortion estimates similar to Proposition~\ref{prn:RE04_3} and Theorem~\ref{thm:RE05_5} were already obtained in Chen-Rong-Xu~\cite{CRX} and Huang-Kong-Rong-Xu~\cite{HKRX}.

\begin{corollary}[\textbf{$\bar{\boldsymbol{\nu}}$-entropy and lifespan of the Ricci flow}]
Suppose $M^{m}$ is a complete manifold satisfying 
\begin{align}
  \min \left\{ \boldsymbol{\bar{\nu}}(M, g, T), mT  Rc_{min}(x) \right\} \geq -0.1\epsilon, \quad \forall \; x \in M, 
  \label{eqn:RE06_3}
\end{align}
where $Rc_{min}(x)$ is the minimum eigenvalue of $Rc(x)$, $T$ is a positive number, $\epsilon<\bar{\epsilon}(m)$ is a sufficiently small positive number.
Let $\psi=\psi(\epsilon|m)$. 
Then there exists a Ricci flow solution $\left\{ (M, g(t))| 0 \leq t \leq T \right\}$ initiated from $g(0)=g$ with the following estimates: 
\begin{align}
  t|Rm|(x,t) \leq \psi, \quad  \left| \log  \left\{ \omega_{m}^{-1} t^{-\frac{m}{2}}  \left|B_{g(t)}\left(x, \sqrt{t} \right) \right|_{dv_{g(t)}} \right\}  \right| \leq \psi, 
  \quad inj(x,t) \geq \psi^{-1} \sqrt{t}. 
  \label{eqn:RE08_2}
\end{align}
Furthermore, the following distortion estimates holds:
  \begin{align}
    &\left| \log \frac{d_{g(t)}(x,y)}{d_{g(0)}(x,y)} \right|<\psi \left\{ 1+ \log_{+} \frac{\sqrt{t}}{d_{g(0)}(x,y)} \right\}, 
    \quad  \forall \; t \in (0, T), \; x,y \in M.     \label{eqn:RE08_1}\\
    &\left|  d_{g(0)}(x, y) -d_{g(t)}(x,y) \right| <  \psi \sqrt{t}, \quad \forall \; t \in (0, T), \; x,y \in M, \; d_{g(0)}(x,y) \leq \sqrt{t}.    \label{eqn:PI28_2} 
  \end{align}
\label{cly:CK21_1}
\end{corollary}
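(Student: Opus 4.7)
The plan is to verify the hypotheses of Corollary~\ref{cly:RJ18_0} around every point of $M$, apply it to propagate curvature, volume, and injectivity-radius bounds up to time $T$, and then derive the distance distortion from Theorem~\ref{thm:RE05_5}. Short-time existence of the flow---which cannot be obtained directly from Shi's theorem since the global curvature may be unbounded---will be supplied by Hochard's surgery construction, exactly as in the outline for Theorem~\ref{thm:RH19_5}. The first step is to translate the hypothesis into a $\boldsymbol{\nu}$-bound: the Ricci lower bound $Rc \geq -\frac{0.1\epsilon}{mT}g$ gives the pointwise scalar lower bound $R \geq -\frac{0.1\epsilon}{T}$ (since $R = \mathrm{tr}(Rc) \geq m \cdot Rc_{\min}$), so Lemma~\ref{lma:MJ16_1} with $\underline{\Lambda} = \frac{0.1\epsilon}{T}$ and $\tau = T$, combined with Proposition~\ref{prn:CA01_2}, yields
\[
\boldsymbol{\nu}(B_{g}(x_0, r), g, T) \;\geq\; \bar{\boldsymbol{\nu}}(M, g, T) - 0.1\epsilon \;\geq\; -0.2\epsilon
\]
for every geodesic ball $B_g(x_0, r)$. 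Setting $A \coloneqq \sqrt{5/\epsilon}$, so that $A^{-2} = 0.2\epsilon$, the hypothesis of Corollary~\ref{cly:RJ18_0} holds on every ball of the form $B_g(x_0, 20A\sqrt{T})$; for $\epsilon < \bar{\epsilon}(m)$ sufficiently small, $A \geq A_\alpha$ with $\alpha = \psi(\epsilon|m) \to 0$ (as in Remark~\ref{rmk:RJ18_8}).

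Next I would address existence. Using Hochard's construction, I would build approximate Ricci flows on an exhausting sequence of pre-compact domains, with cutoffs tailored so that the $\bar{\boldsymbol{\nu}}$ lower bound of the approximating initial data is essentially preserved. Corollary~\ref{cly:RJ18_0} then yields uniform estimates
\[
t|Rm|(x,t) \leq \psi, \qquad inj(x,t) \geq \psi^{-1}\sqrt{t}, \qquad (1-\psi)\omega_{m} \leq \rho^{-m}|B_{g(t)}(x,\rho)|_{dv_{g(t)}},
\]
for $\rho \leq \psi^{-1}\sqrt{t}$, valid uniformly across the approximations. Passing to a smooth Cheeger-Gromov limit produces a solution $(M, g(t))$ on $[0,T]$ starting from $g$ and satisfying the first and third bounds of (\ref{eqn:RE08_2}); the volume-ratio \emph{upper} bound in (\ref{eqn:RE08_2}) then follows from Bishop-Gromov applied at time $t$ under the almost-flat curvature control (giving an almost-Ricci-flat comparison).

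For the distortion estimates I would invoke Theorem~\ref{thm:RE05_5} on balls $B_{g(0)}(x_0, r)$ with $r$ chosen so that $r^{-2} \geq \frac{0.1\epsilon}{T}$, giving the scalar hypothesis (\ref{eqn:RE04_1a}); the entropy hypothesis (\ref{eqn:RE04_1b}) is inherited from the global $\bar{\boldsymbol{\nu}}$ bound via monotonicity; the volume-ratio hypothesis (\ref{eqn:RE04_1c}) follows from Bishop-Gromov under the Ricci lower bound. This yields (\ref{eqn:RE08_1}) for pairs $x,y$ within a single ball of controlled radius, and for pairs at larger distance I would chain balls along a minimizing geodesic at scale $\sqrt{t}$, exactly as in Case 2 of the proof of Theorem~\ref{thm:RE05_5}, to cover arbitrary $x, y \in M$. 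Estimate (\ref{eqn:PI28_2}) then follows from the curvature bound $t|Rm| \leq \psi$ combined with the Hamilton-Perelman distance estimate (Lemma 8.3(b) of~\cite{Pe1}) integrated over $[0,t]$.

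The main obstacle is the existence step: turning the local, pseudo-locality-based estimates into a genuine globally defined complete solution on $[0,T]$ when only a Ricci lower bound (rather than a two-sided curvature bound) is assumed. Hochard's cutoff technique is the standard tool for this, but the subtle point that must be verified carefully is that the almost-nonnegativity of $\bar{\boldsymbol{\nu}}$ passes to the approximating initial data with a quantitative loss tending to zero, so that Corollary~\ref{cly:RJ18_0} applies uniformly across the approximations with a common $\alpha = \psi(\epsilon|m)$, permitting the passage to the Cheeger-Gromov limit.
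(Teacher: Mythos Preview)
Your proposal is correct and follows essentially the same route as the paper. Two minor differences are worth noting. First, for the preservation of the entropy lower bound under Hochard's approximation---which you correctly identify as the delicate step---the paper's specific tool is Proposition~\ref{prn:RG08_1}, which quantifies how $\bar{\boldsymbol{\mu}}$ behaves under a bi-Lipschitz change of metric; this is applied on each small ball where the conformal factor is nearly constant, together with a uniform log-Sobolev bound (\ref{eqn:RI23_5}) and a volume-ratio bound (\ref{eqn:RI23_6}) to control the auxiliary term $\Gamma$. Second, for (\ref{eqn:PI28_2}) the paper does not use Hamilton--Perelman but instead deduces it directly from (\ref{eqn:RE08_1}) via the elementary inequality $\sup_{s\in[0,1]}\max\{|s - e^{\psi}s^{1-\psi}|,\,|s - e^{-\psi}s^{1+\psi}|\} \leq 10\psi$, applied with $s = d_{g(0)}(x,y)/\sqrt{t}$. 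Your Hamilton--Perelman argument gives the lower bound $d_{g(t)} \geq d_{g(0)} - C\sqrt{\psi t}$ cleanly (optimize $r_0 \sim \sqrt{t/\psi}$ in Lemma~8.3(b)), but the matching upper bound on $d_{g(t)}-d_{g(0)}$ near $t=0$ needs the short-range almost-Einstein distortion already encoded in (\ref{eqn:RE08_1}), so the paper's derivation is the more economical one.
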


\begin{proof}
Note that
\begin{align}
  \boldsymbol{\nu}(M,g,T) \geq  \left\{ \inf_{x \in M} R(x) \right\} T + \overline{\boldsymbol{\nu}}(M,g,T)
  \geq \left\{ \inf_{x \in M} Rc_{min}(x) \right\} mT + \overline{\boldsymbol{\nu}}(M,g,T)
  \geq -0.2\epsilon. 
  \label{eqn:RE07_1}  
\end{align}
If $(M,g)$ has bounded curvature, we may start the Ricci flow for a short time period. 
In light of (\ref{eqn:RE07_1}), we can apply Theorem~\ref{thmin:ML14_2} through the condition (\ref{eqn:RH17_4}). 
This pseudo-locality theorem implies that this flow existence time must be at least $T$.
The bounded curvature condition can be dropped by exploiting the method of Hochard~\cite{Hochard}. 
See also~\cite{HeFei} and~\cite{HuangWang20B} for more details. 
Actually, we can find a sequence of cutoff functions $0 \leq \eta_k=e^{-2f_{k}} \leq 1$ whose support set is $U_{k}=B(x_0, 2k)$ for some fixed $x_0 \in M$ and $\eta_k \equiv 1$ on $D_{k}=B(x_0,k)$.
Moreover, we can ensure that $\eta_{k}^{-1}$ satisfies some uniform bounds(see (2.5) of~\cite{HuangWang20B}). 
Clearly, we have
\begin{align}
  D_{k} \Subset U_{k}, \quad  M=\cup_{k=1}^{\infty} D_{k}.  \label{eqn:RI23_9}
\end{align}
Let $h_k \coloneqq \eta_{k}^{-1} g=e^{2f_{k}}g$.
Then $(U_{k}, h_{k})$ becomes a complete Riemannian manifold with $|Rm|$ bounded by $C_k$, a constant depending on $(U_{k},g)$. 
The scalar curvature of $(U_{k}, h_{k})$ is uniformly bounded from below 
\begin{align}
  R_{h_{k}} \geq -\Lambda,    \label{eqn:RI23_12}
\end{align}
which is independent of $k$(see (2.2) of~\cite{HeFei} or (2.27) of~\cite{HuangWang20B}).
Furthermore, fixing $\xi$ very small, there exists an $r_0$ independent of $k$ such that 
\begin{align}
  e^{2f_k(y)-\xi} g  \leq  h_{k} \leq e^{2f_k(y) +\xi} g, \quad \textrm{on} \; B_{h_{k}}(y,r_0).   \label{eqn:RI22_2}
\end{align}
Namely, the metric $h_k$ and $e^{2f_{k}(y)}g$ are quasi-isometric to each other in the ball $B_{h_k}(y,r_0)$. 
In~\cite{HeFei} and~\cite{HuangWang20B}, we are dealing with almost Euclidean isoperimetric condition for Perelman's pseudo-locality. Suppose each $r_0$-ball under metric $g$ has 
almost Euclidean isoperimetric constant, then the quasi-isometric condition (\ref{eqn:RI22_2}) is sufficient
to guarantee that each $r_0$-ball under $h_{k}$ has almost Euclidean isoperimetric constant(see (2.31) of~\cite{HuangWang20B}).
However, here we only have a weaker condition (\ref{eqn:RE06_3}). 
Then Theorem~\ref{thmin:ML14_2}, the improved pseudo-locality theorem shall take charge through the $\boldsymbol{\mu}$-functional condition (\ref{eqn:MK30_1}).
Actually,  it follows from (\ref{eqn:RI22_2}) that
\begin{align}
B_{h_{k}}(y,r) \subset B_{e^{2f_{k}(y)}g}(y,2r)=B_{g}(y, 2e^{-f_{k}(y)}r) \subset B_{g}(y, 2r), \quad \forall \; r \in (0, r_0].
  \label{eqn:RI23_7}  
\end{align}
Fix $r \in (0, r_{0}]$ where $r_0<0.1 \sqrt{T}$ is a uniform small constant to be determined later, through the requirement of many inequalities.  
It follows from (\ref{eqn:RE06_3}) that  
\begin{align}
  &\bar{\boldsymbol{\nu}}(B_{g}(y, 10r), g, 100r^2) \geq \bar{\boldsymbol{\nu}}(M,g,T) \geq -0.1\epsilon, \label{eqn:RI23_10} \\
  & Rc_{g} \geq -\frac{\epsilon}{mT}.  \label{eqn:RI23_11}
\end{align}
By Theorem~\ref{thm:CF21_3}, the inequality (\ref{eqn:RI23_10}) implies that the $r$-ball has volume ratio lower bound.
In view of (\ref{eqn:RI23_11}), we can apply Bishop-Gromov volume comparison. Using Lemma 4.1 and Lemma 4.2 of Wang~\cite{BWang08e}, 
we obtain a uniform isoperimetric constant bound for $\left( B_{g}(y,2r), g \right)$, which combined with the quasi-isometric condition (\ref{eqn:RI22_2}) in turn implies 
uniform isoperimetric constant estimate and consequently uniform $L^2$-Sobolev constant $C_{S}$ for $\left( B_{h_{k}}(y,r), h_{k} \right)$.
Then standard argument(cf. Section 3.1 of~\cite{SSHuang20} and references therein) shows that the log-Sobolev constant of  $\left( B_{h_{k}}(y,r), h_{k} \right)$ is uniformly bounded from below. Namely, we have
\begin{align}
  \bar{\boldsymbol{\nu}}(B_{h_{k}}(y, r), h_{k}, r^2) \geq -C_{LS}(m).  \label{eqn:RI23_5} 
\end{align}
The Bishop-Gromov volume comparison for $g$ and quasi-isometric condition (\ref{eqn:RI22_2}) together imply that
\begin{align}
  r^{-m}|B_{h_{k}}(y,r)|_{dv_{h_{k}}} \leq 2 \omega_{m}.  \label{eqn:RI23_6}
\end{align}
In Proposition~\ref{prn:RG08_1},  we let $\Omega=B_{h_{k}}(y,r)$ and $\delta=1$.
Using definition (\ref{eqn:RI10_5}), we have
\begin{align*}
  \Gamma(\Omega, h_k, r^2, r^2)
  =2\bar{\boldsymbol{\nu}}(\Omega,h, r^{2})-2\log_{+} \frac{|\Omega|_{dv_{h}}}{r^{m}} + \frac{m}{2} \log (\pi e^2)-\frac{2}{e}
  \geq -C_{\Gamma}(m). 
\end{align*}
Since $\bar{\boldsymbol{\nu}} \leq 0$ by Proposition~\ref{prn:CA01_3}, it follows from (\ref{eqn:RI10_7}) that 
\begin{align*}
  \bar{\boldsymbol{\mu}}\left( B_{h_{k}}(y,r), h_{k}, r^{2} \right)
  \geq \bar{\boldsymbol{\nu}}\left( B_{h_{k}}(y,r), e^{2f_{k}(y)} g, r^{2} \right)
  -C_{\Gamma} \cdot  (1-e^{-m\xi}) -\frac{m(m+2)}{2} e^{-m\xi} \xi.
\end{align*}
Fix $\epsilon>0$, by choosing $\xi$ sufficiently small correspondingly, we can make the last two terms of the above inequality being greater than $-0.1\epsilon$. 
On the other hand, by the containment condition (\ref{eqn:RI23_7}), we have
\begin{align*}
  \bar{\boldsymbol{\nu}}\left( B_{h_{k}}(y,r), e^{2f_{k}(y)} g, r^{2} \right)
  &\geq \bar{\boldsymbol{\nu}}\left(   B_{e^{2f_{k}(y)}g}(y,2r), e^{2f_{k}(y)} g, r^{2} \right)
  =\bar{\boldsymbol{\nu}}\left(   B_{g}\left(y,2e^{-f_{k}(y)}r \right), g, e^{-2f_{k}(y)}r^{2} \right)\\
  &\geq \bar{\boldsymbol{\nu}}\left( M, g, T \right) 
   \geq -0.1\epsilon.
\end{align*}
Combining the previous inequalities together and fixing $s=r$, we obtain 
\begin{align*}
  \bar{\boldsymbol{\mu}}\left( B_{h_{k}}(y,r), h_{k}, r^{2} \right) \geq -0.2\epsilon. 
\end{align*}
By the lower bound of scalar curvature (\ref{eqn:RI23_12}) and the fact $r \in (0, r_{0}]$ with $r_0$ very small, we have
\begin{align*}
  \boldsymbol{\mu} \left( B_{h_{k}}(y,r), h_{k}, r^{2} \right) 
  \geq -\Lambda r^{2}+ \bar{\boldsymbol{\mu}}\left( B_{h_{k}}(y,r), h_{k}, r^{2} \right)>-\epsilon. 
\end{align*}
Since $r \in (0, r_0]$ and $y \in U_{k}$ is arbitrary, we have 
\begin{align}
\inf_{r \in (0, r_0]} \boldsymbol{\mu}\left( B_{h_{k}}(y,r), h_{k}, r^{2} \right) \geq -\epsilon, \quad \forall \; y \in U_{k}.
\label{eqn:RI23_8}
\end{align}
Since $(U_{k}, h_{k})$ is a complete manifold with bounded curvature, the Ricci flow initiated from $(U_{k}, h_{k})$ may exist for a while.
The condition (\ref{eqn:RI23_8}) assures us to apply Theorem~\ref{thmin:ML14_2}. 
Following the same argument as that in the proof of Corollary~\ref{cly:RJ17_9}, we know the existence time of the Ricci flow is uniformly bounded from below by $\delta^{2} r_{0}^{2}$ for some $\delta$ independent of $k$.
Recall that $h_{k}=g$ on $\Omega_{k}=B(x_0, k)$.
In view of the exhaustion condition (\ref{eqn:RI23_9}), the curvature and volume estimates (\ref{eqn:ML28_1}) and (\ref{eqn:ML28_2}) in Theorem~\ref{thmin:ML14_2}, 
and the improved curvature estimate by B.L. Chen(cf. Theorem 3.1 of~\cite{CBL07}), 
we can take limit of the flows $\left\{(U_{k}, x_0, h_{k}(t)), 0 \leq t \leq  \delta^2 r_{0}^{2} \right\}$ in pointed-Cheeger-Gromov topology to obtain a limit Ricci flow 
$\left\{(M, x_{0}, g(t)), 0 \leq t \leq \delta^2 r_{0}^{2} \right\}$ satisfying $g(0)=g$. 
For each time $t \in (0, \delta^2 r_{0}^2)$, the manifold $(M, g(t))$ has bounded curvature via the pseudo-locality theorem.
In light of (\ref{eqn:RE07_1}),  we can apply the pseudo-locality theorem again to obtain the existence of $g(t)$ is at least $T$,
and the estimates in (\ref{eqn:RE08_2}) hold along this flow. 

The distance distortion estimate (\ref{eqn:RE08_1}) follows directly from Theorem~\ref{thm:RE05_5}.  
Then (\ref{eqn:PI28_2}) can be deduced from (\ref{eqn:RE08_1}). 
We denote $d_{g(t)}=d_{g(t)}(x,y)$ and $d_{g(0)}=d_{g(0)}(x,y)$. Since $d_{g(0)}\leq \sqrt{t}$, we have 
\begin{align*}
  \frac{\left|  d_{g(0)} -d_{g(t)} \right|}{\sqrt{t}}&= \frac{d_{g(0)}}{\sqrt{t}}  \cdot \left|  1- \frac{d_{g(t)}}{d_{g(0)}} \right|
  \leq  \frac{d_{g(0)}}{\sqrt{t}}  \cdot \max \left\{  \left|  1- e^{\psi} \left( \frac{d_{g(0)}}{\sqrt{t}} \right)^{-\psi} \right|,  \left|  1- e^{-\psi} \left( \frac{d_{g(0)}}{\sqrt{t}} \right)^{\psi} \right| \right\}\\
  &\leq \max \left\{ \sup_{s \in [0,1]} |s-e^{\psi}s^{1-\psi}|, \; \sup_{s \in [0,1]} |s-e^{-\psi} s^{1+\psi}| \right\} \leq 10\psi.
\end{align*}
Therefore, we obtain (\ref{eqn:PI28_2}), up to modifying $\psi$. 
\end{proof}

A particularly interesting case of Corollary~\ref{cly:CK21_1} is $T=\infty$ and $Rc \geq 0$. 
\begin{corollary}[\textbf{$\bar{\boldsymbol{\nu}}$-entropy and the immortal Ricci flow}] 
  Suppose $(M,g)$ is a complete Riemannian manifold satisfying
  \begin{align*}
    Rc \geq 0, \quad \bar{\boldsymbol{\nu}}(M,g) \geq -\epsilon \geq -\bar{\epsilon}(m), 
  \end{align*}
  then there exists an immortal Ricci flow solution $\left\{ (M, g(t)), 0 \leq t<\infty \right\}$ initiated from $g$ 
  satisfying (\ref{eqn:RE08_2}), (\ref{eqn:RE08_1}) and (\ref{eqn:PI28_2}) with $T=\infty$.  
\label{cly:RH27_1}
\end{corollary}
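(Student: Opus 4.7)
The plan is to deduce this directly from Corollary~\ref{cly:CK21_1} by verifying its hypothesis for arbitrarily large $T$, and then gluing the resulting finite-time flows by uniqueness.

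First, I would fix an arbitrary $T>0$ and check the condition (\ref{eqn:RE06_3}) with $0.1\epsilon$ replaced by $\epsilon$ (one only needs to shrink $\epsilon$ by the harmless factor $10$ at the outset, which does not affect the form of the final estimates). The condition $mT\cdot Rc_{min}(x)\geq 0\geq -\epsilon$ is immediate from the assumption $Rc\geq 0$. For the entropy bound, note that by the definitions (\ref{eqn:MJ16_d}) and (\ref{eqn:MJ16_e}) we have
\begin{align*}
\bar{\boldsymbol{\nu}}(M,g,T)=\inf_{s\in(0,T]}\bar{\boldsymbol{\mu}}(M,g,s)\geq \inf_{s\in(0,\infty)}\bar{\boldsymbol{\mu}}(M,g,s)=\bar{\boldsymbol{\nu}}(M,g)\geq -\epsilon,
\end{align*}
so (\ref{eqn:RE06_3}) is satisfied uniformly in $T$. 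Applying Corollary~\ref{cly:CK21_1} then produces a Ricci flow solution $\{(M,g_T(t)),\,0\leq t\leq T\}$ with $g_T(0)=g$, satisfying the curvature, volume, injectivity radius estimates of (\ref{eqn:RE08_2}) and the distance distortion estimates (\ref{eqn:RE08_1}) and (\ref{eqn:PI28_2}), all with a function $\psi=\psi(\epsilon|m)$ that does \emph{not} depend on $T$.

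Second, I would glue these finite-time flows into a single immortal one. For any $T_1<T_2$, both $g_{T_1}$ and $g_{T_2}|_{[0,T_1]}$ are Ricci flow solutions starting from $g$, and each has uniformly bounded curvature on any compact subinterval of $(0,T_1]$ by (\ref{eqn:RE08_2}). By B.L.~Chen's uniqueness theorem for Ricci flows with bounded curvature on each time slice (cf.~\cite{CBL07}), these two solutions must coincide on $[0,T_1]$. Hence the family $\{g_T\}_{T>0}$ is consistent and defines a single solution $g(t)$ on the full interval $[0,\infty)$ with $g(0)=g$. Since the estimates (\ref{eqn:RE08_2}), (\ref{eqn:RE08_1}) and (\ref{eqn:PI28_2}) hold on $[0,T]$ for every $T$ with the same $\psi(\epsilon|m)$, they hold on $[0,\infty)$, which yields the conclusion with $T=\infty$.

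The only genuinely nontrivial ingredient is already packaged in Corollary~\ref{cly:CK21_1}: the combination of the improved pseudo-locality theorem~\ref{thmin:ML14_2} with the Hochard cut-off construction to start the flow without an a~priori bounded curvature hypothesis, and the $T$-independence of $\psi$. I expect the only step that requires care is the verification that the constants produced by Corollary~\ref{cly:CK21_1} are indeed independent of $T$ (so that letting $T\to\infty$ preserves all the estimates); this is transparent from the statement, since $\psi$ there depends solely on $\epsilon$ and $m$. No additional blow-up or compactness argument is needed, and in particular there is no obstacle analogous to the long-time convergence difficulty addressed later in the paper---the corollary only asserts long-time \emph{existence} together with the scale-invariant estimates, not convergence to a limit metric.
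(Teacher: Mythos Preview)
Your proposal is correct and follows the paper's approach: the paper provides no separate argument, only the sentence preceding the statement that this is the case $T=\infty$, $Rc\geq 0$ of Corollary~\ref{cly:CK21_1}, and you have spelled out precisely this reduction.

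One minor caveat on the gluing step is worth recording. Chen's strong uniqueness in~\cite{CBL07} requires at least one of the two solutions to have curvature bounded on the \emph{closed} interval $[0,T_1]$, whereas the estimate $t|Rm|\leq\psi$ from (\ref{eqn:RE08_2}) gives no control at $t=0$. The cleanest fix is to note that in the proof of Corollary~\ref{cly:CK21_1} the Hochard cutoff scale $r_0$ can be fixed independently of $T$ (the constraint $r_0<0.1\sqrt{T}$ only relaxes as $T$ grows), so the short-time limit flow on $[0,\delta^2 r_0^2]$ is literally the same object for every $T$; the continuation from any positive-time slice, where curvature is already bounded, is then unique by the standard Chen--Zhu bounded-curvature uniqueness. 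Alternatively one may extend step by step as in the outline proof of Theorem~\ref{thm:RH19_5} in the introduction, using entropy monotonicity at each positive time.
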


The immortal Ricci flow solution in Corollary~\ref{cly:RH27_1} provides a new perspective to understand the geometry and topology of the initial Riemannian manifold $(M, g)$. 
For example, we have the following topological consequence, which is known by Cheeger-Colding~\cite{CC}.

\begin{theorem}[\textbf{Construction of diffeomorphisms to $\R^{m}$ via immortal Ricci flow solution}]
  There exists $\delta_{0}=\delta_{0}(m)$ with the following properties.

  Suppose $(M, g)$ is a complete Riemannian manifold with non-negative Ricci curvature and $AVR(M,g) \geq 1-\delta$ for some $\delta \in (0, \delta_{0}(m))$.
  Then there exists a canonical diffeomorphism map from $M$ to $\R^{m}$. 
  \label{thm:CK21_2}
\end{theorem}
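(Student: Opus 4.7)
The plan is to construct the diffeomorphism via the immortal Ricci flow produced by Corollary~\ref{cly:RH27_1}. First I would translate the geometric hypothesis into the entropy form used there. Combining $Rc\geq 0$ and $AVR(M,g)\geq 1-\delta$ with the Bishop--Gromov volume comparison, every geodesic ball of $(M,g)$ has volume ratio at least $1-\delta$. Applying Lemma~\ref{lma:MJ23_1} at every scale then yields $\bar{\boldsymbol{\mu}}(B(x,Lr),g,r^{2})\geq -\psi(\delta|m)$ for arbitrary $L$ and $r$; exhausting $M$ via Proposition~\ref{prn:CA02_1} and infimizing over $\tau$ as in Proposition~\ref{prn:RH31_1} gives the global bound $\bar{\boldsymbol{\nu}}(M,g)\geq -\psi(\delta|m)$. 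Corollary~\ref{cly:RH27_1} then produces an immortal Ricci flow $\{(M,g(t))\}_{t\geq 0}$ with $g(0)=g$ satisfying the curvature decay $t|Rm|(x,t)\leq\psi$, the injectivity radius bound $inj(x,t)\geq\psi^{-1}\sqrt t$, the almost Euclidean volume ratio on $\sqrt t$-balls, and the distortion estimates (\ref{eqn:RE08_1})--(\ref{eqn:PI28_2}).

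Next I would use the flow to build approximating maps and pass to a limit. Fix a base point $p_{0}\in M$ and pick a $g(0)$-orthonormal frame at $p_{0}$ to identify $T_{p_{0}}M$ with $\R^{m}$. For each $t>0$ set $\tilde g_{t}:=t^{-1}g(t)$ and $\Phi_{t}(v):=\exp_{p_{0},g(t)}(\sqrt t\,v)$. Uniformly in $t$, the rescaled metric satisfies $|Rm|_{\tilde g_{t}}\leq\psi$ and $inj_{\tilde g_{t}}\geq\psi^{-1}$, so on every fixed Euclidean ball $B_{\R^{m}}(0,R)$ the map $\Phi_{t}$ is a smooth local diffeomorphism into $(M,\tilde g_{t})$ with uniformly bounded derivatives once $t$ is large. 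The family $(M,p_{0},\tilde g_{t})$ is then $C^{1,\alpha}$-Cheeger--Gromov precompact, while the long-range distortion (\ref{eqn:RE08_1}) identifies its large-scale geometry with that of $(M,p_{0},t^{-1}g(0))$ up to a logarithmic factor. Since the Gromov--Hausdorff limit of the latter family is the Euclidean tangent cone $(\R^{m},0,g_{E})$ under $AVR\geq 1-\delta$, every subsequential limit of $(M,p_{0},\tilde g_{t})$ is isometric to $(\R^{m},0,g_{E})$. Combining this with a center-of-mass smoothing of the almost-isometries $\Phi_{t}$ I extract a smooth proper map $\Phi:\R^{m}\to M$ that is a local diffeomorphism; pulling $\Phi$ back through the short-range distance distortion (\ref{eqn:PI28_2}) then transports the construction from $(M,g(t))$ at large $t$ to $(M,g(0))=(M,g)$.

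The main obstacle will be upgrading $\Phi$ from a local diffeomorphism to a global one. Since the injectivity radius $inj(x,t)\geq\psi^{-1}\sqrt t$ is large but finite, a single exponential chart never covers $M$, so one has to patch infinitely many charts while ruling out folding or wrapping in the limit. The key tool will be volume accounting: the almost Euclidean lower bound on $|B_{g(t)}(p_{0},R\sqrt t)|$ from (\ref{eqn:RE08_2}) combined with the Bishop--Gromov upper bound $|B_{g(0)}(p_{0},r)|\leq\omega_{m}r^{m}$ coming from $Rc\geq 0$, transferred between time slices by (\ref{eqn:PI28_2}), pins the degree of $\Phi_{t}|_{B_{\R^{m}}(0,R)}$ to be exactly one and forces its image to exhaust $B_{g(t)}(p_{0},R\sqrt t)$ up to error $\psi$. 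Passing this rigid injectivity and surjectivity through the Cheeger--Gromov limit yields the desired global diffeomorphism $\Phi:\R^{m}\to M$, whose canonicality is inherited from the canonicality of the immortal flow and the exponential maps at $p_{0}$ along it.
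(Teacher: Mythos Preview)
Your entropy estimate in the first paragraph is fine (and is actually closer to the paper's own Lemma~\ref{lma:MJ23_1} than to the Cavalletti--Mondino route the paper cites), and invoking Corollary~\ref{cly:RH27_1} to produce the immortal flow with estimates (\ref{eqn:RE08_2})--(\ref{eqn:PI28_2}) is exactly right.

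The gap is in your second paragraph: the limit of $\Phi_{t}$ as $t\to\infty$ does not exist as a map into the fixed manifold $(M,g(0))$. For fixed $v\neq 0$, the point $\Phi_{t}(v)=\exp_{p_{0},g(t)}(\sqrt t\,v)$ lies at $\tilde g_{t}$-distance $|v|_{\tilde g_{t}}$ from $p_{0}$; by Corollary~\ref{cly:RE14_14} the metric $g(t)$ degenerates at $p_{0}$, so even $|v|_{\tilde g_{t}}$ need not stabilise, and the $g(0)$-location of $\Phi_{t}(v)$ wanders with $t$. In the other direction, for any fixed compact $K\subset M$, the preimage $\Phi_{t}^{-1}(K)$ shrinks to $0$ in $\R^{m}$ as $t\to\infty$. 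So neither $\Phi_{t}$ nor $\Phi_{t}^{-1}$ has a nontrivial pointwise limit, and Cheeger--Gromov convergence of $(M,\tilde g_{t})$ to $(\R^{m},g_{E})$ only produces diffeomorphisms \emph{after} composing with the unknown identification maps---it does not single out a canonical $\Phi:\R^{m}\to M$. The center-of-mass smoothing and the volume/degree argument in your third paragraph presuppose that such a $\Phi$ has already been extracted, so they do not close the gap.

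The paper avoids any limit in $t$. It uses \emph{all} times simultaneously: define the space-time hypersurface $M'=\{(x,t):d_{g(t)}(x,x_{0})=\sqrt t\}$ and the space-time exponential $Exp(v,t)=(\exp_{x_{0},g(t)}v,\,t)$. Since $inj(x_{0},t)\geq\psi^{-1}\sqrt t$ and $t|Rm|\leq\psi$, the map $Exp$ is a diffeomorphism near $M'$, so $M'\cong\{(v,t):|v|=\sqrt t\}\cong\R^{m}$. The projection $P:M'\to M$, $P(x,t)=x$, is shown to be bijective directly from the distortion estimates: surjectivity because $d_{g(t)}(x,x_{0})/\sqrt t$ runs from $\infty$ to $0$, and injectivity because (\ref{eqn:PI28_2}) forces any two crossing times $t_{1}<t_{2}$ to satisfy $(t_{2}/t_{1})^{1/2-\psi}\leq 1$. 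The key idea you are missing is precisely this: assign to each $x$ its own time $t_{x}$ via $d_{g(t_{x})}(x,x_{0})=\sqrt{t_{x}}$, rather than trying to use one large $t$ for the whole manifold.
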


\begin{proof}
  By the condition $Rc \geq 0$ and the isoperimetric constant estimate of Cavalletti-Mondino~\cite{CaMo}, we know that
\begin{align*}
  \boldsymbol{\nu}(M,g)\geq \overline{\boldsymbol{\nu}}(M, g) \geq 1- \psi(\delta|m).
\end{align*}
Therefore, if $\delta$ is sufficiently small,  we can apply Corollary~\ref{cly:RH27_1} to obtain an immortal Ricci flow solution $\left\{ (M, g(t)), 0 \leq t <\infty \right\}$
initiated from $g(0)=g$. Then estimates (\ref{eqn:RE08_2}) hold for any $x \in M$ and $t>0$.  
Define 
  \begin{align*}
    M' \coloneqq \left\{ (x,t) \left| d_{g(t)}(x,x_0)=\sqrt{t} \right.  \right\}.
  \end{align*}
By (\ref{eqn:RE08_2}), the distance function $d_{g(t)}(\cdot, x_0)$ is a smooth function nearby the scale $\sqrt{t}$. It is clear that
  $M'=\bigcup_{t \geq 0} \partial B_{g(t)}(x_0, \sqrt{t})$ is a smooth manifold.
We now define a space-time exponential map as follows
  \begin{align*}
    Exp: \R^{m} \times [0, \infty) &\mapsto M \times [0, \infty), \\
      (v, t)               &\mapsto  (exp_{x_0, g(t)} v, t). 
  \end{align*}
In view of (\ref{eqn:RE08_2}), the map $Exp|_{\Omega}$ is a diffeomorphism, where $\Omega=\left\{ (x,t)|d_{g(t)}(x,x_0) \leq 2\sqrt{t} \right\}$. 
In particular, $M'$ is diffeomorphic to $Exp^{-1}(M')=\left\{ (v,t) \left||v|=\sqrt{t} \right. \right\}$ via the diffeomorphism map $Exp^{-1}$. 
Note that $\left\{ (v,t) \left||v|=\sqrt{t} \right. \right\}$ is diffeomorphic to $\R^{m}$ through the obvious projection $P'$ defined as
\begin{align*}
  P'(v,t) \coloneqq  v \in \R^{m}. 
\end{align*}
In short, we have already obtained the diffeomorphism 
  \begin{align}
    M'   \longright{P' \circ Exp^{-1}}  \R^{m}.  \label{eqn:RC12_2}   
  \end{align}
Let $P$ be the obvious projection map from $M'$ to $M$ defined by
  \begin{align*}
     P(x,t) \coloneqq x, \quad \forall \; (x,t) \in M'.
  \end{align*}
The map $P$ is obviously smooth and non-degenerate everywhere. 
We claim that $P$ is both injective and surjective.
Actually,  fix each $x \in M \backslash \left\{ x_0 \right\}$, the function $\frac{d_{g(t)}(x,x_0)}{\sqrt{t}}$ is a continuous function of $t$ and 
  \begin{align*}
    \lim_{t \to 0}  \frac{d_{g(t)}(x,x_0)}{\sqrt{t}}=\infty, \quad \lim_{t \to \infty} \frac{d_{g(t)}(x,x_0)}{\sqrt{t}}=0, 
  \end{align*}
where we used the distance distortion estimate (\ref{eqn:RE08_1}) in the last equation. By continuity, there exists a time $t>0$ such that
  \begin{align*}
    \frac{d_{g(t)}(x,x_0)}{\sqrt{t}}=1,
  \end{align*}
which implies that $(x,t) \in M'$ and $x=P(x,t)$.  Clearly, $P(x_0,0)=x_0$.  Therefore, the surjectivity of $P$ is obtained.  We proceed to show that $P$ is 
  injective.  Suppose $P(x,t_1)=P(y, t_2)$ for some $t_1<t_2$.  It is clear that $x=y$ and we have
  \begin{align*}
    \frac{d_{g(t_1)}(x,x_0)}{\sqrt{t_1}}= \frac{d_{g(t_2)}(x,x_0)}{\sqrt{t_2}}=1.
  \end{align*}
By distance distortion estimate (\ref{eqn:PI28_2}), we know that
  \begin{align*}
    e^{-\psi} < \frac{d_{g(t_{i})}(x,x_0)}{d_{g(0)}(x,x_0)}<e^{\psi}, \quad  i=1,2.  
  \end{align*}
  Combining the above two steps, we have
  \begin{align}
    1 < \frac{\sqrt{t_2}}{\sqrt{t_1}}=\frac{d_{g(t_2)}(x,x_0)}{d_{g(t_1)}(x,x_0)}<e^{2\psi}.    \label{eqn:RC12_3} 
  \end{align}
  In particular, we have $t_2 \in (t_1, e^{4\psi} t_1)$.  Note that for $t \in [t_1, t_2]$, we can apply (\ref{eqn:RE08_2}) 
  to estimate the curvature nearby the shortest geodesic connecting $x$ and $x_0$. Then we apply the geodesic length  evolution equation along the Ricci flow to deduce that
  \begin{align*}
    \left| \frac{d}{dt} \log  d_{g(t)} \right|= \frac{1}{|\gamma|}\int_{\gamma} |Rc|(\dot{\gamma}, \dot{\gamma}) d\theta \leq \frac{\psi}{t}. 
  \end{align*}
  It follows that
  \begin{align}
    \frac{d_{g(t_2)}(x,x_0)}{d_{g(t_1)}(x,x_0)} \leq \left( \frac{t_2}{t_1} \right)^{\psi}.
    \label{eqn:RC12_4}  
  \end{align}
  Putting (\ref{eqn:RC12_4}) into (\ref{eqn:RC12_3}), we arrive at
  \begin{align*}
    \left( \frac{t_2}{t_1} \right)^{\frac{1}{2}-\psi} \leq 1, 
  \end{align*}
  which contradicts the assumption that $t_2>t_1$ and $\psi<<1$. In conclusion, $P(x,t_2)=P(y, t_1)$ implies that $(x,t_2)=(y,t_1)$. Namely, we showed that $P$ is injective. 
  In summary, we obtain $P: M' \to M$ is a smooth diffeomorphism. 
  Combining this diffeomorphism with the one in (\ref{eqn:RC12_2}), we obtain the desired canonical diffeomorphism map $P' \circ Exp^{-1} \circ P^{-1}$: 
  \begin{align*}
    M \longright{P^{-1}}  M'   \longright{P' \circ Exp^{-1}}  \R^{m}. 
  \end{align*}
  In particular, $M$ is diffeomorphic to $\R^{m}$. The proof of the theorem is complete. 
\end{proof}

The immortal Ricci flow solution in Corollary~\ref{cly:RH27_1} can also help us to show geometry rigidity properties related to $(M,g)$.

\begin{theorem}[\textbf{Equivalence of several rigidities}]
  Suppose $(M,g)$ is a complete Riemannian manifold with $Rc \geq 0$. Then we have the equivalence relationship
  \begin{align}
    \bar{\boldsymbol{\nu}}(M,g)=0   \Leftrightarrow \mathbf{I}(M,g)=\mathbf{I}(\R^{m},g_{E}) \Leftrightarrow AVR(M,g)=\omega_{m}.
    \label{eqn:RC12_5}
  \end{align}
  Furthermore, one of the identities hold in (\ref{eqn:RC12_5}) if and only if $(M,g)$ is isometric to $(\R^{m}, g_{E})$. 
  \label{thm:RC12_1}
\end{theorem}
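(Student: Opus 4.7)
The plan is to establish the three-way equivalence by showing that each of the conditions (a) $\bar{\boldsymbol{\nu}}(M,g)=0$, (b) $\mathbf{I}(M,g)=\mathbf{I}(\R^{m},g_{E})$, and (c) $AVR(M,g)=\omega_{m}$ separately implies that $(M,g)$ is isometric to $(\R^{m},g_{E})$; the reverse implications being trivial. For (c) $\Rightarrow$ Euclidean I would simply invoke the classical Bishop--Gromov rigidity theorem: under $Rc\geq 0$ the function $r\mapsto \omega_{m}^{-1}r^{-m}|B(x_{0},r)|$ is monotonically non-increasing with values in $(0,1]$, and equality of its limit with $1$ forces the ratio to be identically $1$, whence the equality case of Bishop--Gromov yields a global isometry with $(\R^{m},g_{E})$.

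For (b) $\Rightarrow$ (a), I would apply Lemma~\ref{lma:MJ25_1} to each bounded smooth domain $\Omega\subset M$. Admissible sets for the isoperimetric problem in $\Omega$ are also admissible in $M$, so $\mathbf{I}(\Omega,g)\geq \mathbf{I}(M,g)=\mathbf{I}(\R^{m})$; hence $\lambda=\mathbf{I}(\Omega,g)/\mathbf{I}_{m}\geq 1$, and inequality (\ref{eqn:MJ26_6}) gives $\bar{\boldsymbol{\nu}}(\Omega,g,\tau)\geq m\log\lambda\geq 0$ for every $\tau>0$. Exhausting $M$ by such $\Omega$ and using Proposition~\ref{prn:CA02_1} yields $\bar{\boldsymbol{\mu}}(M,g,\tau)\geq 0$ for all $\tau>0$, hence $\bar{\boldsymbol{\nu}}(M,g)\geq 0$; combined with the upper bound $\bar{\boldsymbol{\nu}}(M,g)\leq 0$ from Proposition~\ref{prn:CA01_3} this forces (a).

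The hard part is (a) $\Rightarrow$ Euclidean, since we do not assume bounded curvature on $(M,g)$ and therefore cannot apply the rigidity Proposition~\ref{prn:RB06_1} directly. The strategy is to regularize by Ricci flow. From $Rc\geq 0$ we have $R\geq 0$, so inequality (\ref{eqn:MJ16_7}) combined with $\bar{\boldsymbol{\nu}}(M,g)=0$ gives $\boldsymbol{\nu}(M,g)\geq \bar{\boldsymbol{\nu}}(M,g)=0$; together with $\boldsymbol{\nu}(M,g)\leq 0$ this forces $\boldsymbol{\nu}(M,g)=0$. Since $\bar{\boldsymbol{\nu}}(M,g)=0>-\bar{\epsilon}(m)$, Corollary~\ref{cly:RH27_1} produces an immortal Ricci flow $\{(M,g(t))\}_{t\geq 0}$ starting from $g$ with $|Rm|_{g(t)}\leq \psi/t$ at each positive time. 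The monotonicity of $\boldsymbol{\nu}$ along the Ricci flow (extracted from the local monotonicity formula Theorem~\ref{thm:CA02_3} via exhaustion) combined with $\boldsymbol{\nu}\leq 0$ forces $\boldsymbol{\nu}(M,g(t))=0$, hence $\boldsymbol{\nu}(M,g(t),T)\geq 0$ for every $T>0$. Because $(M,g(t))$ has bounded curvature at each $t>0$, Proposition~\ref{prn:RB06_1} now applies and gives that $(M,g(t))$ is isometric to $(\R^{m},g_{E})$ for every $t>0$. In particular $g(t)$ is Ricci-flat, so $\partial_{t}g(t)=-2\,Rc(g(t))=0$ on $(0,\infty)$, meaning $g(t)\equiv g_{\star}$ is stationary in $t$. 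Finally, the $C^{0}$-convergence $g(t)\to g$ as $t\to 0^{+}$ built into the Hochard-type construction in Corollary~\ref{cly:CK21_1}, and reflected in the distance distortion estimate (\ref{eqn:PI28_2}), forces $g=g_{\star}$, so $(M,g)$ itself is isometric to $(\R^{m},g_{E})$. The most delicate point is justifying the transfer of the Euclidean structure at positive times back to $t=0$, and this hinges on the continuity of the Hochard-type flow at the initial time.
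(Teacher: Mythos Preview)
Your proof is correct and follows the paper's approach closely: for the key implication $\bar{\boldsymbol{\nu}}(M,g)=0\Rightarrow$ Euclidean, both you and the paper invoke Corollary~\ref{cly:RH27_1} to produce a Ricci flow with bounded curvature at positive times, use $R_{g}\geq 0$ and the monotonicity of $\boldsymbol{\nu}$ (Theorem~\ref{thm:CA02_3}) to obtain $\boldsymbol{\nu}(M,g(t))\geq 0$, apply Proposition~\ref{prn:RB06_1} to conclude $(M,g(t))\cong(\R^{m},g_{E})$, and then pass to $t\to 0^{+}$ by continuity. The only cosmetic differences are that you route (b) through (a) via Lemma~\ref{lma:MJ25_1} whereas the paper simply declares the isoperimetric and volume-ratio rigidities to be well-known, and you spell out the stationarity of the flow for $t>0$ before taking the limit.
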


\begin{proof}
  It suffices to show each identity is equivalent to the manifold $(M^{m}, g)$ being isometric to the Euclidean space.
  The asymptotic volume ratio version and isoperimetric constant version are well-known. Therefore, we only need to show 
  that $\bar{\boldsymbol{\nu}}(M,g)=0$ implies $(M, g)$ is isometric to the Euclidean metric.  By Corollary~\ref{cly:RH27_1}, 
  there exists an immortal solution initiated from $(M, g)$ with curvature estimate (\ref{eqn:RE08_2}).  
  Applying the local monotonicity formula for $\boldsymbol{\nu}$-functional(cf.Theorem~\ref{thm:CA02_3}), for each $t>0$, we have
  \begin{align*}
    \boldsymbol{\nu}(M, g(t)) \geq \boldsymbol{\nu}(M, g(0)) =\boldsymbol{\nu}(M, g)  \geq \bar{\boldsymbol{\nu}}(M, g)=0, 
  \end{align*}
  where we used the fact $R_{g} \geq 0$ in the last inequality. 
  In light of Proposition~\ref{prn:RB06_1}, $(M, g(t))$ is isometric to the Euclidean metric. 
  Let $t \to 0$, the continuity of metrics $g(t)$ then yields that $(M, g)$ is isometric to $(\R^{m}, g_{E})$. 
\end{proof} 

The quantitive version of Theorem~\ref{thm:RC12_1} is the following theorem.

\begin{theorem}[\textbf{Equivalence of several quantitive rigidities}]
  Suppose $(M^{m}, g)$ is a complete Riemannian manifold and $B(x_0,1) \subset M$ is a geodesic ball with $Rc \geq -2(m-1)$.
  \begin{itemize}
    \item[(a)] For each small $\epsilon$, there exists $\delta=\delta(\epsilon,m)$ such that if $\log \frac{|B(x_0,r)|}{r^{m}}\geq -\delta$ for some $r \in (0, \delta)$, 
      then $\mathbf{I}(B(x_0, \delta r)) \geq (1-\epsilon) \mathbf{I}(\R^{m})$. 
    \item[(b)] For each small $\delta$,  there exists $\xi=\xi(\delta,m)$ such that if  $\mathbf{I}(B(x_0, r)) \geq (1-\xi) \mathbf{I}(\R^{m})$ for some $r \in (0, \xi)$, then $\bar{\boldsymbol{\nu}}(B(x_0, r)) \geq -\delta$.   
    \item[(c)] For each small $\xi$, there exists $\epsilon=\epsilon(\xi,m)$ such that if $\bar{\boldsymbol{\nu}}(B(x_0, r), r^2) \geq -\epsilon$ for some $r \in (0, \epsilon)$,
          then $\log \frac{|B(x_0,\rho)|}{\rho^{m}}\geq -\xi$ for each $\rho \in (0,\xi r)$.  
  \end{itemize} 
  \label{thm:RH25_1}
\end{theorem}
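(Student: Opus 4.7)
The plan is to handle the three implications separately: (b) is immediate from Lemma~\ref{lma:MJ25_1}, (a) follows from an external sharp isoperimetric inequality, and (c) requires a contradiction/compactness argument combined with smoothing along the Ricci flow. For part (b), the estimate (\ref{eqn:MJ26_6}) in Lemma~\ref{lma:MJ25_1} gives $\bar{\boldsymbol{\nu}}(B(x_0, r), g, r^2) \geq m \log \lambda$ with $\lambda = \mathbf{I}(B(x_0, r))/\mathbf{I}_m \geq 1 - \xi$, so $\bar{\boldsymbol{\nu}} \geq m\log(1-\xi) \geq -2m\xi$ for $\xi < 1/2$, and one takes $\xi(\delta, m) = \delta/(2m)$. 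For part (a), I would rescale to $r = 1$: the Ricci lower bound $Rc \geq -2(m-1)$ becomes $Rc \geq -2(m-1) r^2 \to 0$ (almost non-negative Ricci), while the volume hypothesis (interpreted sharply against the Bishop-Gromov upper bound) yields almost Euclidean volume ratio on the unit ball. The main theorem of Cavalletti-Mondino~\cite{CaMo} then supplies the almost Euclidean isoperimetric constant on the smaller ball $B(x_0, \delta r)$, with $\delta = \delta(\epsilon, m)$ controlling both the Ricci smallness and the quantitative deviation.

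For part (c), the core of the theorem, I argue by contradiction, imitating the rigidity-to-$\epsilon$-regularity scheme of Theorem~\ref{thm:RH30_18}. Assume the conclusion fails at some $\xi > 0$: extract a sequence $(M_i, x_i, g_i)$ with $Rc_{g_i} \geq -2(m-1)$, scales $r_i \to 0$ satisfying $\bar{\boldsymbol{\nu}}(B(x_i, r_i), g_i, r_i^2) \geq -\epsilon_i \to 0$, and radii $\rho_i \in (0, \xi r_i)$ at which $\log(|B(x_i, \rho_i)|/\rho_i^m) < -\xi$. Rescale $\tilde g_i \coloneqq r_i^{-2} g_i$: by scale invariance of $\bar{\boldsymbol{\nu}}$ one has $\bar{\boldsymbol{\nu}}(B_{\tilde g_i}(x_i, 1), \tilde g_i, 1) \geq -\epsilon_i$, the Ricci bound becomes $Rc_{\tilde g_i} \geq -2(m-1) r_i^2 \to 0$, and the violating scale $\tilde\rho_i \coloneqq \rho_i/r_i \in (0, \xi)$ still records the deficit. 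Lemma~\ref{lma:MJ16_1} converts the uniform scalar curvature lower bound into the stronger estimate $\boldsymbol{\nu}(B_{\tilde g_i}(x_i, 1), \tilde g_i, 1) \to 0$. Applying Corollary~\ref{cly:RJ18_0} together with the Hochard conformal cutoff construction, exactly as in the proof of Corollary~\ref{cly:CK21_1}, produces a local Ricci flow $\tilde g_i(t)$ on a neighborhood of $x_i$ over a uniform interval $[0, \kappa^2]$, with the uniform curvature, injectivity radius, and volume ratio bounds (\ref{eqn:RE08_2}) as well as the distance distortion estimates (\ref{eqn:RE08_1})--(\ref{eqn:PI28_2}). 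A subsequential $C^\infty$-Cheeger-Gromov limit of the time-$\kappa^2$ slices $(M_i, x_i, \tilde g_i(\kappa^2))$ produces a smooth pointed Riemannian manifold $(M_\infty, x_\infty, g_\infty)$ satisfying $\bar{\boldsymbol{\nu}}(M_\infty, g_\infty) \geq 0$ by the local $\boldsymbol{\nu}$-monotonicity (Theorem~\ref{thm:CA02_3}) and the continuity in Corollary~\ref{cly:CB19_2}, and $R_{g_\infty} \geq 0$ by the localized maximum principle Theorem~\ref{thm:RD10_1} applied to $-R$. Corollary~\ref{cly:RH30_1} then forces $(M_\infty, g_\infty) \cong (\R^m, g_E)$. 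Pulling volume information back from $t = \kappa^2$ to $t = 0$ through the distance distortion estimates (\ref{eqn:RE08_1})--(\ref{eqn:PI28_2}) and Colding's volume continuity under a uniform Ricci lower bound then contradicts the prescribed deficit $\log(|B_{\tilde g_i}(x_i, \tilde\rho_i)|/\tilde\rho_i^m) < -\xi$.

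The main obstacle is the Ricci-flow smoothing step in part (c). The scheme of Corollary~\ref{cly:CK21_1} is set up for a global $\bar{\boldsymbol{\nu}}$-bound on a complete manifold, whereas here the entropy control is only local, on $B(x_0, r)$. One must localize the Hochard conformal cutoff so that the resulting complete auxiliary manifolds $(U_k, h_k)$ still carry uniform $\boldsymbol{\mu}$-type (log-Sobolev) bounds at scale $r$ compatible with the improved pseudo-locality theorem of Section~\ref{sec:pseudo}; this is the analogue of the estimate (\ref{eqn:RI23_8}) in the proof of Corollary~\ref{cly:CK21_1}, but now derived from only a single $\bar{\boldsymbol{\nu}}$-bound on $B(x_0, r)$ rather than a global one. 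Once this localized cutoff is in place, the remaining pieces synthesize directly from the machinery of Sections~\ref{sec:pre}--\ref{sec:topstable}.
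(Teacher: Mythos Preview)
Your treatment of (a) and (b) matches the paper's exactly: (a) via Cavalletti--Mondino~\cite{CaMo} and (b) via (\ref{eqn:MJ26_6}) in Lemma~\ref{lma:MJ25_1}.

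For (c) your argument is correct but takes a genuinely different route. You run a contradiction with $\epsilon_i\to 0$, pass to a smooth Cheeger--Gromov limit of the flow at a fixed positive time, invoke the rigidity Corollary~\ref{cly:RH30_1} to identify the limit as $(\R^m,g_E)$, and then pull volume back to $t=0$ via distance distortion plus Colding continuity (combined with Bishop--Gromov to reduce to a fixed scale). The paper instead gives a direct, quantitative argument with a \emph{single} fixed small $\epsilon$: it rescales by $(\xi r)^{-2}$ rather than $r^{-2}$, applies Hochard's conformal cutoff on $B_{\tilde g}(x_0,4)\subset B_{\tilde g}(x_0,\xi^{-1})$ (so the cutoff sits well inside the region of entropy control), runs the flow to $t=1$, and then reads off the almost-Euclidean volume ratio \emph{directly} from the pseudo-locality estimate (\ref{eqn:ML28_2}) together with the inclusion $B_{\hat g(1)}(x_0,1-\psi)\subset B_{\hat g(0)}(x_0,1)$ (Hamilton--Perelman distance almost-expanding) and the almost-decreasing volume element. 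A final Bishop--Gromov step propagates this to all $\rho\in(0,\xi r)$. No limit, no rigidity theorem, no Colding continuity are needed. Your approach is in the spirit of Theorem~\ref{thm:RH30_18} and Proposition~\ref{prn:RE08_4} and is perfectly sound; the paper's is shorter and yields the dependence $\epsilon=\epsilon(\xi,m)$ more transparently. Note also that the paper's rescaling choice $(\xi r)^{-2}$ is what resolves the localization obstacle you flag at the end: with your rescaling by $r^{-2}$ the cutoff has no room outside the unit ball, whereas the paper's choice gives a factor-of-$\xi^{-1}$ margin.
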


\begin{proof}
  It is clear that (a) follows from the work of Cavalletti-Mondino~\cite{CaMo}, and (b) follows from Lemma~\ref{lma:MJ25_1}. 
We now focus on the proof of (c).  Suppose $\bar{\boldsymbol{\nu}}(B(x_0, r), r^2) \geq -\epsilon$ for some $r \in (0, \epsilon)$.
Let $\tilde{g}= (\xi r)^{-2}g$. Then in the ball $B_{\tilde{g}}(x_0, \xi^{-1}) \subset M$, we have
\begin{align}
  &Rc_{\tilde{g}} \geq -2(m-1)\xi^2 r^2 \geq -2(m-1)\xi^2 \epsilon^2, \label{eqn:RH26_1}\\
  &\bar{\boldsymbol{\nu}}(B_{\tilde{g}}(x_0, \xi^{-1}), \tilde{g}, \xi^{-2}) \geq -\epsilon.  \label{eqn:RH26_2}
\end{align}
Similar to the proof of Corollary~\ref{cly:CK21_1}, we can follow Hochard~\cite{Hochard} to construct a complete Riemannian manifold 
$(\hat{M}, \hat{g})$ with bounded curvature. The point is to choose a cutoff function $\eta$ which is identically $1$ inside $B_{\tilde{g}}(x_0,2)$ and vanishes outside $B_{\tilde{g}}(x_0, 4)$.
Then we set $\hat{M}=\left\{ x \in M| \eta(x)<\infty \right\}$ and $\hat{g}=e^{\frac{1}{\eta}}\tilde{g}$.  
Under this construction, it is clear that 
\begin{align}
  \hat{g}=\tilde{g}, \quad \textrm{on} \; B_{\tilde{g}}(x_0, 2). 
  \label{eqn:RJ04_1}
\end{align}
Explicit calculation shows that $(\hat{M}, \hat{g})$ is complete, has bounded curvature and satisfies 
\begin{align}
  R_{\hat{g}} \geq -\psi(\epsilon|m,\xi).
  \label{eqn:RH27_7}
\end{align}
Furthermore, by adjusting the parameter in cutoff function, we can assume that each unit ball in $(\hat{M}, \hat{g})$ is $\psi(\epsilon|\xi,m)$-close to a small ball in $B_{\tilde{g}}(x_0, 4)$
up to rescaling(cf. (\ref{eqn:RI22_2})).
Using the same argument as that in the proof of Corollary~\ref{cly:CK21_1}, we can apply Proposition~\ref{prn:RG08_1} to obtain that
\begin{align}
\inf_{r \in (0, \xi^{-1}]} \bar{\boldsymbol{\mu}}\left(B_{\hat{g}}(y, r), \hat{g}, r^{2} \right) \geq -\psi(\epsilon|m,\xi), \quad \forall \; y \in \hat{M}.  \label{eqn:RH27_8} 
\end{align}
Since $(\hat{M}, \hat{g})$ has bounded curvature, the Ricci flow initiated from $\hat{g}$ should exist for a short time period.
In light of (\ref{eqn:RH27_7}) and (\ref{eqn:RH27_8}), we can apply the pseudo-locality theorem to guarantee that the existence time is uniformly bounded below.
Without loss of generality, we may assume $\left\{ (\hat{M}, \hat{g}(t)), 0 \leq t \leq 1  \right\}$ is the Ricci flow initiated from
$\hat{g}(0)=\hat{g}$.  Along this flow, we have the curvature estimate $t|Rm|<\psi(\epsilon|m,\xi)$.
Thanks to (\ref{eqn:RH27_7}), the maximum principle on scalar curvature implies that $R \geq -\psi(\epsilon|m,\xi)$ along this flow.
Therefore the volume element is almost decreasing along the flow, in the sense that
\begin{align}
  \frac{dv_{\hat{g}(t)}}{dv_{\hat{g}}} \leq e^{\psi}, \quad \forall \; x \in \hat{M}, \; t \in [0,1].  \label{eqn:RJ04_2}
\end{align}
Applying the distance almost expanding estimate (cf. Lemma 8.3(b) of~\cite{Pe1}), we have
\begin{align}
  B_{\hat{g}(1)}(x_0, 1-\psi) \subset B_{\hat{g}(0)}(x_0, 1). \label{eqn:RJ04_3}
\end{align}
Since $g$ is the default metric, we have $B(x_0, \xi r)=B_{g}(x,\xi r)=B_{\tilde{g}}(x_0,1)$.
By (\ref{eqn:RJ04_1}), we know $\hat{g}(0)=\hat{g}=\tilde{g}$ on $B_{\tilde{g}}(x_0,2)$.
Then it follows from (\ref{eqn:RJ04_2}), (\ref{eqn:RJ04_3}) and the almost Euclidean volume ratio estimate (\ref{eqn:ML28_2}) that
\begin{align*}
  (\xi r)^{-m}|B(x_0,\xi r)|&=|B_{\tilde{g}}(x_0,1)|_{dv_{\tilde{g}}}=|B_{\hat{g}(0)}(x_0,1)|_{dv_{\hat{g}(0)}}\\
  &\geq |B_{\hat{g}(1)}(x_0,1-\psi)|_{dv_{\hat{g}(0)}} \geq e^{-\psi}|B_{\hat{g}(1)}(x_0,1-\psi)|_{dv_{\hat{g}(1)}}  \geq \omega_{m} -\psi.  
\end{align*}
In light of (\ref{eqn:RH26_1}), a further step of Bishop-Gromov volume comparison then yields that
\begin{align*}
  \log \frac{|B(x_0,\rho)|}{\rho^{m}}\geq -\psi(\epsilon|m,\xi) \geq -\xi, \quad \forall \rho \in (0,\xi r),
\end{align*}
by choosing $\epsilon$ sufficiently small. Therefore, we finish the proof of (c).  
\end{proof}

Because of Theorem~\ref{thm:RH25_1}, we are ready to define some radii for the convenience of further study. 

\begin{definition}
  Suppose $(M,g)$ is a closed manifold and $\epsilon_0=\epsilon_0(m)$ is sufficiently small.  Then we define
  \begin{align*}
    &\mathbf{vr}_{\epsilon_0}(x)\coloneqq \inf \left\{ r>0|r^{-m}|B(x,r)|<(1-\epsilon_0) \omega_{m} \right\}, \\
    &\mathbf{er}_{\epsilon_0}(x)\coloneqq \inf \left\{ r>0|\bar{\boldsymbol{\nu}}(B(x,r), r^2)< -\epsilon_0 \right\},\\
    &\mathbf{Ir}_{\epsilon_0}(x)\coloneqq \inf \left\{ r>0|\mathbf{I}(B(x,r))<(1-\epsilon_0) \mathbf{I}(\R^{m}) \right\}. 
  \end{align*}
  They are called volume radius, entropy radius and isoperimetric radius respectively. 
  Accordingly, we define
  \begin{align*}
    &\mathbf{vr}_{\epsilon_0}(M,g) \coloneqq  \inf_{x \in M}\mathbf{vr}_{\epsilon_0}(x,g), \\
    &\mathbf{er}_{\epsilon_0}(M,g) \coloneqq  \inf_{x \in M}\mathbf{er}_{\epsilon_0}(x,g), \\
    &\mathbf{Ir}_{\epsilon_0}(M,g) \coloneqq  \inf_{x \in M}\mathbf{Ir}_{\epsilon_0}(x,g). 
  \end{align*}
  When no confusion is caused, we may omit $\epsilon_0$ and $g$. 
  \label{dfn:RH26_3}
\end{definition}

\begin{remark}
Suppose $(M, g)$ is an Einstein manifold with $|Rc| \leq m-1$, by the regularity improvement of metrics under elliptic PDE, we can apply Theorem~\ref{thm:RH25_1} to obtain 
\begin{align*}
  \mathbf{vr}_{\epsilon} \leq C(m) \mathbf{Ir}_{\delta} \leq C(m) \mathbf{er}_{\xi} \leq C(m) \mathbf{vr}_{\epsilon'} \leq C(m) \mathbf{vr}_{\epsilon}.  
\end{align*}
Therefore, the different radii defined above are all equivalent(cf. Proposition 2.49 of~\cite{CW17A} for more information). 
Similar conclusions also hold on Ricci shrinkers(cf. Theorem 4.10 of~\cite{LiLiWang18}).
However, for general Riemannian manifolds, $ \mathbf{vr}_{\epsilon}$ is hard to apply, due to the lack of Bishop-Gromov volume comparison and a gap lemma of Anderson type(cf.~\cite{Anderson90}).
It becomes natural to exploit the entropy radius $\mathbf{er}_{\xi}$ instead to study the compactness of other critical metrics with bounded scalar curvature, e.g. the cscK metrics and extK(extremal K\"ahler) metrics. 
\label{rmk:RH26_4}
\end{remark}

Theorem~\ref{thm:RH25_1} shows that  quantitive rigidities hold for initial manifold.
A natural question is whether a quantitive rigidity holds for the flow initiated from $(M, g)$.  For example, shall the immortal flow in Corollary~\ref{cly:RH27_1} converge back to a Euclidean metric?
Unfortunately, we shall show that the flow must fail to converge.

\begin{proposition}[\textbf{Scalar curvature estimate by volume ratio}]
 For each $\delta \in (0, \delta_0(m))$, there is a small positive constant $\xi$ with the following property.

 Suppose $(M, x_0, g)$ is a complete manifold satisfying
 \begin{align}
   & Rc(x) \geq  -(m-1)\xi r^{-2}, \forall \; x \in B(x_0, \xi^{-1} r); \label{eqn:RE14_8} \\
   &1-\delta \leq  \omega_{m}^{-1} r^{-m}|B(x_0,r)| \leq 1- 0.5 \delta, \quad \forall\;  r \in [1, \xi^{-1}]. \label{eqn:RE14_9}
 \end{align}
 Let $\left\{ (M, g(t)), 0 \leq t \leq 1 \right\}$ be the Ricci flow initiated from $g(0)=g$.  Then 
 \begin{align}
   tR(x_0,t) \geq \xi, \quad \forall \; t \in [0.5 r^2, r^2].   \label{eqn:RE14_10}
 \end{align}
\label{prn:RE08_4}
\end{proposition}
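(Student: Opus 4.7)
\emph{Strategy.} I propose to argue by contradiction via a blow-up analysis, ultimately invoking Anderson's gap theorem (in the present framework, the Ricci-flat case of Theorem~\ref{thm:RH27_10}) to derive a contradiction with the upper volume bound in (\ref{eqn:RE14_9}). Suppose the statement fails for some fixed $\delta \in (0, \delta_{0}(m))$. Then there exist sequences $\xi_i \to 0$ and pointed complete Ricci flow solutions $\{(M_i, x_i, g_i(t))\}_{t\in[0,1]}$ whose initial data $g_i = g_i(0)$ satisfy, after rescaling to set $r = 1$,
\[
Rc_{g_i}(x) \geq -(m-1)\xi_i, \quad \forall\, x \in B_{g_i}(x_i, \xi_i^{-1}),
\qquad
1-\delta \leq \omega_m^{-1}\rho^{-m}|B_{g_i}(x_i,\rho)| \leq 1-0.5\delta, \quad \forall\, \rho \in [1,\xi_i^{-1}],
\]
together with times $t_i \in [0.5,1]$ obeying $t_i R_{g_i(t_i)}(x_i, t_i) < \xi_i$. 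Taking the trace of the initial Ricci bound yields $R_{g_i(0)} \geq -(m-1)m\xi_i$; inserting this into the maximum principle for $\partial_t R = \Delta R + 2|Rc|^2 \geq \Delta R + \tfrac{2}{m}R^2$ gives the uniform estimate $R_{g_i(t)} \geq -C(m)\xi_i$ on $M_i \times [0,1]$.

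\emph{Limit extraction.} Next I would pass to a sub-sequential pointed Cheeger-Gromov limit $(M_\infty, x_\infty, g_\infty(t))$ of the flows on $(0,1]$. The initial Ricci lower bound on the enlarging ball $B_{g_i}(x_i,\xi_i^{-1})$, together with the non-collapsing supplied by the lower volume ratio bound, places us in a regime where a Hochard-type surgery (in the spirit of the proof of Corollary~\ref{cly:CK21_1}) produces uniform curvature and injectivity-radius estimates on compact subsets of $(0,1]$. In the limit, $Rc_{g_\infty(t)} \geq 0$ (preservation of Ricci lower bound under the convergence), $R_{g_\infty(t)} \geq 0$ (from the previous paragraph), and $R_{g_\infty(t_\infty)}(x_\infty) = 0$ for some $t_\infty \in [0.5,1]$.

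\emph{Rigidity and contradiction.} The strong maximum principle applied to $\partial_t R = \Delta R + 2|Rc|^2 \geq \Delta R$, combined with $R_{g_\infty} \geq 0$ and $R_{g_\infty}(x_\infty, t_\infty) = 0$, forces $R_{g_\infty} \equiv 0$ on $M_\infty \times (0, t_\infty]$; feeding this back into the evolution of $R$ yields $|Rc_{g_\infty}| \equiv 0$ there, so $g_\infty(t)$ is Ricci-flat and static on $(0,t_\infty]$ and extends continuously to a Ricci-flat initial metric $g_\infty(0)$. Cheeger-Colding volume convergence transfers the volume ratio bounds in (\ref{eqn:RE14_9}) to the limit, giving
\[
1-\delta \leq \omega_m^{-1}\rho^{-m}|B_{g_\infty(0)}(x_\infty,\rho)| \leq 1-0.5\delta, \quad \forall\, \rho \geq 1,
\]
so $AVR(M_\infty, g_\infty(0)) \geq 1-\delta$. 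By Anderson's gap theorem, equivalently the Ricci-flat case of Theorem~\ref{thm:RH27_10}, provided $\delta_0(m)$ is smaller than the universal gap constant, $(M_\infty, g_\infty(0))$ must be isometric to $(\R^m, g_E)$, which gives $AVR = \omega_m$ and contradicts the upper bound $AVR \leq (1-0.5\delta)\omega_m < \omega_m$.

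\emph{Main obstacle.} The principal technical difficulty is the limit extraction: the volume ratio is only bounded below by $(1-\delta)\omega_m$ with $\delta$ fixed (not necessarily small), so the pseudo-locality theorem (Theorem~\ref{thmin:ML14_2}) does not directly deliver the curvature and injectivity-radius estimates needed for smooth Cheeger-Gromov compactness. To circumvent this one must combine the initial Ricci lower bound $-(m-1)\xi_i$, the non-collapsing, and the almost non-negativity of $R$ along the flow established above, possibly through a Hochard-type cutoff construction as in Corollary~\ref{cly:CK21_1} or by working with an RCD limit of the initial data and then constructing the flow on the limit. Once the limit flow is secured, the rigidity step via the strong maximum principle and the Anderson-type gap theorem is essentially routine.
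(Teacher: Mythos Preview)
Your overall contradiction strategy---blow up, take a flow limit, use the strong maximum principle to get a static Ricci-flat limit, invoke an Anderson-type gap, and contradict the upper volume bound---matches the paper's. However, you misidentify the main obstacle and leave a genuine gap elsewhere.

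First, the limit extraction is \emph{not} the hard step. The hypothesis is $\delta \in (0,\delta_0(m))$, and $\delta_0(m)$ is a small dimensional constant chosen precisely so that the almost-Euclidean volume ratio $\geq (1-\delta)\omega_m$, together with $Rc \geq -(m-1)\xi_i$ on the large ball, feeds directly into Lemma~\ref{lma:MJ23_1} and hence into the pseudo-locality theorem (Corollary~\ref{cly:MJ23_2}). This gives the curvature and injectivity-radius bounds on $(0,1]$ needed for Hamilton's compactness outright---no Hochard construction is needed. (Relatedly, your scalar-curvature lower bound via the global maximum principle is not valid, since the initial Ricci bound holds only on $B(x_i,\xi_i^{-1})$; the paper uses the localized maximum principle, Theorem~\ref{thm:RD10_1}.)

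The real gap is the sentence ``extends continuously to a Ricci-flat initial metric $g_\infty(0)$'' followed by ``Cheeger--Colding volume convergence transfers the volume ratio bounds to the limit.'' The smooth convergence you have is only for $t>0$; you have not shown that the Cheeger--Colding limit of the initial slices $(M_i,x_i,g_i(0))$ coincides with the static Euclidean flow limit. This identification is the crux of the proof. The paper accomplishes it by first showing the flow limit is Euclidean at $t=1$ (via the $\boldsymbol{\nu}$-functional monotonicity and Theorem~\ref{thm:RH27_10}), then deriving an almost-Einstein estimate $\int_0^1\int |R|\,dv\,dt \to 0$, and finally applying the Tian--Wang distance-distortion lemma (Lemma~\ref{lma:PA05_1}) to conclude $|d_{g_i(0)}-d_{g_i(1)}| \to 0$ on compact sets. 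Only then does the Gromov--Hausdorff limit of the initial data become Euclidean, and Colding's volume continuity at $\rho=1$ contradicts the upper bound in (\ref{eqn:RE14_9}). Your argument needs this bridge between $t>0$ and $t=0$; without it the volume bounds cannot be transported to the flow limit. (As a minor point, your assertion $Rc_{g_\infty(t)}\geq 0$ is both unjustified---Ricci lower bounds are not preserved along Ricci flow in dimension $>3$---and unnecessary, since you only use $R_{g_\infty}\geq 0$.)
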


\begin{proof}
  Without loss of generality, we can assume $r=1$ up to rescaling. 

  We argue by contradiction.  If the statement were false, we could find a sequence of Ricci flows $\left\{ (M_i, g_i(t)), 0 \leq t \leq 1 \right\}$ with the following properties
  \begin{align}
    & 1-\delta \leq  \omega_{m}^{-1} \rho^{-m}|B(x_i,\rho)|_{g_i(0)} \leq 1- 0.5 \delta, \quad \forall\;  \rho \in [1, \xi_i^{-1}]; \label{eqn:RE14_1}\\
    & Rc(x, 0) \geq -(m-1)\xi_i, \quad \forall \; x \in B(x_i, \xi_{i}^{-1}); \label{eqn:RE14_2} \\
    & t_i R(x_i, t_i)  \leq \xi_{i}, \quad t_i \in [0.5, 1].   \label{eqn:RE14_3}
  \end{align}
  Note that the pseudo-locality theorem can be applied in the current setting. Therefore, the compactness theorem of Hamilton implies that
  \begin{align}
    \left\{ (M_i, x_i, g_i(t)), 0 <t \leq 1 \right\} \longright{C^{\infty}-Cheeger-Gromov} \left\{ (M_{\infty}, x_{\infty}, g_{\infty}(t)), 0<t \leq 1 \right\},
    \label{eqn:RE14_5}  
  \end{align}
  where $M_{\infty}$ is a smooth Riemannian manifold. 

  \begin{claim}
    The limit flow in (\ref{eqn:RE14_5}) is a static flow on Euclidean space in the sense that $g(t) \equiv g(1)$ and $(M_{\infty}, g(1))$ is isometric to $(\R^{m}, g_{E})$. 
    \label{clm:RE14_11}
  \end{claim}

  Note that the local maximum principle for $R$ implies that $R$ is almost non-negative on $B(x_i, L) \times [L^{-1}, 1]$ for each $L$.
  Combining this fact with the upper bound (\ref{eqn:RE14_3}) we have
  \begin{align*}
    0=R(x_{\infty}, t_{\infty})=\inf_{x \in M_{\infty}, 0<t \leq 1} R(x, t).
  \end{align*}
  Since $t_{\infty} \in [0.5, 1]$ by the condition (\ref{eqn:RE14_3}), the strong maximum principle implies that $R \equiv 0$,
  which in turn implies that $Rc \equiv 0$ on the limit flow $\left\{ (M_{\infty}, g_{\infty}(t)), 0<t \leq 1 \right\}$.
  By the effective local entropy monotonicity formula(cf. Theorem~\ref{thm:CA02_3}) and the smooth convergence, we have
  \begin{align*}
    \boldsymbol{\nu}(B(x_{\infty}, L), g_{\infty}(1), L^2)&=\lim_{i \to \infty} \boldsymbol{\nu}(B(x_i, L), g_i(1), L^2) \geq \lim_{i \to \infty} \left\{ \boldsymbol{\nu}(M_i, g_i(0), L^2+1) -\psi(L^{-1}|m) \right\}\\
    &=\lim_{i \to \infty} \left\{ \boldsymbol{\nu}(M_i, g_i(0), \xi_{i}^{-1}) -\psi(L^{-1}|m) \right\}\\
    &\geq -\psi(\delta|m) -\psi(L^{-1}|m), 
  \end{align*}
  where $\lim$ should be understood as $\liminf$ if necessary. The same convention will be used in the following argument. 
  For each fixed number $K>0$, taking limit of the above inequality and letting $L \to \infty$,  we obtain  
  \begin{align*}
    \boldsymbol{\nu}(M_{\infty}, g_{\infty}(1), K)=\lim_{L \to \infty} \boldsymbol{\nu}(B(x_{\infty}, L), g_{\infty}(1), K) \geq \lim_{L \to \infty} \boldsymbol{\nu}(B(x_{\infty}, L), g_{\infty}(1), L^2)  \geq  -\psi(\delta|m).
  \end{align*}
  By the arbitrary choice of $K$, we know that  $(M_{\infty}, g_{\infty}(1))$ is a Ricci-flat manifold satisfying
  \begin{align*}
    \boldsymbol{\nu}(M_{\infty}, g_{\infty}(1))=\inf_{K>0} \boldsymbol{\nu}(M_{\infty}, g_{\infty}(1), K) \geq - \psi(\delta|m).
  \end{align*}
  Then the gap theorem of Anderson type(cf.~Theorem~\ref{thm:RH27_10}) implies that the Ricci-flat manifold $(M_{\infty}, g_{\infty}(1))$ is isometric to the Euclidean space $(\R^{m}, g_{E})$.
  Since the limit flow is static in the sense that $Rc \equiv 0$, it is clear that $(M_{\infty}, g_{\infty}(t))$ is isometric to $(\R^{m}, g_{E})$ for each $t \in (0,1]$.
  The proof of Claim~\ref{clm:RE14_11} is complete. 

  We proceed our proof. In light of the almost Euclidean property of $(M_i, x_i, g_i(t))$ and the volume comparison at $t=0$,  via the deduction before (\ref{eqn:RE05_2}), we obtain 
  \begin{align*}
    \lim_{i \to \infty} \int_{0}^{1} \int_{B_{g_i(0)}(x_i, L)} |R|dv dt =0, \quad \forall \; L>0. 
  \end{align*}
  Applying Lemma~\ref{lma:PA05_1}, we can derive as in the proof of Theorem~\ref{thm:RE05_5} to obtain that
  \begin{align}
  \left| \log \frac{d_{g_i(t)}(x,y)}{d_{g_i(0)}(x,y)} \right|< \psi(\xi_{i}|m) \left\{ 1+ \log_{+} \frac{\sqrt{t}}{d_{g_i(0)}(x,y)} \right\}, \quad \forall \; x,y \in B_{g_i(0)}(x_i, L).
  \label{eqn:RE14_4}  
  \end{align}
  Using Cheeger-Colding theory, we obtain that
  \begin{align}
    (M_i, x_i, g_i(0)) \longright{pointed-Gromov-Hausdorff} \left( X_{\infty}, x_{\infty}, d_{\infty} \right). \label{eqn:RE14_6}
  \end{align}
  For each pair of points $z,w \in X_{\infty}$, we can find $z_i, w_i \in (M_i, g_i(0))$ such that $z_i \to z, w_i \to w$. Then it follows from (\ref{eqn:RE14_4}) and the Ricci-flatness of the limit flow  that
  \begin{align*}
    d(z,w)=\lim_{i\to \infty} d_{g_i(0)}(z_i, w_i)=\lim_{i \to \infty} d_{g_i(1)}(z_i, w_i).
  \end{align*}
  Therefore, the limit flow can be extended at the initial time.  We can combine (\ref{eqn:RE14_5}) and (\ref{eqn:RE14_6}) to obtain
  \begin{align}
    (M_i, x_i, g_i(t)) \longright{pointed-Gromov-Hausdorff} \left( \R^{m}, 0, g_{E} \right), \quad \forall\; t \in [0, 1],  \label{eqn:RE14_7}
  \end{align}
  with natural commutativity between limit process and identity map between different time slices.  In particular, we have 
  \begin{align*}
    \lim_{i \to \infty} d_{GH} \left\{ (B(0,1), g_{E}), \left( B(x_i,1), g_i(0) \right) \right\}=0.
  \end{align*}
  Then it follows from the volume continuity theorem of Colding(cf.~\cite{Co}) that
  \begin{align*}
    \lim_{i \to \infty} \omega_{m}^{-1}|B(x_i,1)|_{dv_{g_i(0)}}= 1, 
  \end{align*}
  which contradicts (\ref{eqn:RE14_1}) for $\rho=1$.  This contradiction establishes the proof of the proposition. 
\end{proof}

\begin{proposition}[\textbf{Asymptotic scalar curvature behavior}]
  Suppose $(M,g)$ is a non-flat complete Riemannian manifold with non-negative Ricci curvature and  $AVR(M,g)=(1-\delta) \omega_{m} \geq (1-\delta_0) \omega_{m}$. 
  Then we have 
  \begin{align}
     \liminf_{t \to \infty}  R(x,t)t  \geq \psi(\delta|m),  \label{eqn:RE14_13}
  \end{align}
  and the limit is independent the choice of $x$. 
  \label{prn:RE14_12}
\end{proposition}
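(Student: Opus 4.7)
The strategy is to combine the immortal Ricci flow produced by Corollary~\ref{cly:RH27_1} with the quantitative scalar-curvature lower bound of Proposition~\ref{prn:RE08_4}, applied at ever larger spatial scales of the initial metric. To start, I verify that Corollary~\ref{cly:RH27_1} is applicable: the assumption $Rc\ge 0$ combined with $AVR(M,g)=(1-\delta)\omega_{m}$ forces, via the isoperimetric estimate of Cavalletti-Mondino, the bound $\mathbf{I}(B(x,r),g)\ge (1-\psi(\delta|m))\,\mathbf{I}(\R^{m})$ at every scale, whence Lemma~\ref{lma:MJ25_1} yields $\bar{\boldsymbol{\nu}}(M,g)\ge -\psi(\delta|m)$. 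Provided $\delta_{0}$ is small enough, Corollary~\ref{cly:RH27_1} then produces an immortal Ricci flow $\{(M,g(t)),\,0\le t<\infty\}$ with $g(0)=g$, for which $t|Rm|(x,t)\le \psi$ and the bi-H\"older distance distortion (\ref{eqn:RE08_1}) both hold.

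Next, fix an arbitrary base point $x\in M$. By Bishop-Gromov monotonicity, the function $\rho\mapsto \omega_{m}^{-1}\rho^{-m}|B(x,\rho)|_{g}$ is non-increasing and tends to $1-\delta$ as $\rho\to\infty$. Let $\xi=\xi(\delta,m)$ be the constant furnished by Proposition~\ref{prn:RE08_4} corresponding to our fixed $\delta$. Then there exists $R_{0}=R_{0}(x,\delta)$ such that, for every $r\ge R_{0}$ and every $\rho\in[r,\xi^{-1}r]$,
\begin{align*}
1-\delta \;\le\; \omega_{m}^{-1}\rho^{-m}|B(x,\rho)|_{g}\;\le\; 1-\tfrac{\delta}{2},
\end{align*}
and the Ricci lower bound $Rc\ge 0\ge -(m-1)\xi r^{-2}$ is automatic. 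Applying Proposition~\ref{prn:RE08_4} at scale $r$ to the immortal flow (which exists on $[0,r^{2}]$ for any $r$), one obtains
\begin{align*}
tR(x,t)\;\ge\;\xi(\delta,m),\qquad \forall\, t\in[0.5\,r^{2},\,r^{2}].
\end{align*}
Letting $r$ range over $[R_{0},\infty)$, the union of these time intervals covers all $t\ge 0.5R_{0}^{2}$, so $\liminf_{t\to\infty}tR(x,t)\ge \xi(\delta,m)$, which will play the role of the desired $\psi(\delta|m)$.

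For the final assertion that the liminf does not depend on $x$, I would invoke a gradient estimate. Since $t|Rm|\le\psi$ on the time window $[t/2,t]$, Shi's derivative estimate applied after parabolic rescaling yields $|\nabla R|(\cdot,t)\le C(m)\,t^{-3/2}$ for all large $t$. For any two fixed points $x,y\in M$, the distance distortion (\ref{eqn:RE08_1}) gives $d_{g(t)}(x,y)\le e^{\psi}\,d_{g(0)}(x,y)^{1-\psi}\,t^{\psi/2}=o(\sqrt{t})$, and integrating $|\nabla R|$ along a shortest $g(t)$-geodesic between them produces
\begin{align*}
|tR(x,t)-tR(y,t)|\;\le\; C\, t^{\psi/2-1/2}\longrightarrow 0,
\end{align*}
so the liminfs at $x$ and $y$ coincide. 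The main technical point to check---more a bookkeeping issue than a genuine obstacle---is that the constant $\xi(\delta,m)$ of Proposition~\ref{prn:RE08_4} indeed tends to zero as $\delta\to 0$ and thus qualifies as $\psi(\delta|m)$; this is consistent with the rigidity in Theorem~\ref{thm:RC12_1} (when $\delta=0$, $(M,g)$ is Euclidean and $R\equiv 0$), and since $\xi$ depends only on $(\delta,m)$ while the $x$-dependence is confined to the threshold $R_{0}$, the resulting lower bound is uniform in the base point as claimed.
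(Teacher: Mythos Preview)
Your proposal is correct and takes essentially the same approach as the paper: the paper also derives (\ref{eqn:RE14_13}) directly from Proposition~\ref{prn:RE08_4} (you simply spell out the Bishop--Gromov sandwich and the time-covering details that the paper leaves implicit), and your independence argument via Shi's gradient estimate combined with the distance distortion (\ref{eqn:RE08_1}) is the same computation the paper carries out.
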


\begin{proof}
  The inequality (\ref{eqn:RE14_13}) follows directly from Proposition~\ref{prn:RE08_4}. 
  It remains to show that the limit in (\ref{eqn:RE14_13}) is independent of $x$. 
  Applying the pseudo-locality theorem and Shi's estimate(cf.~\cite{Shi}) on the flow $g(t)$, we have
  \begin{align*}
    t|Rm|(\cdot,t) + t^{\frac{3}{2}} |\nabla Rm|(\cdot, t)< \psi(\delta|m).
  \end{align*}
  In particular, we have
  \begin{align*}
    t^{\frac{3}{2}} |\nabla R| < \psi(\delta|m).
  \end{align*}
  Fix $x,y \in M$. For $t$ very large,  it follows from the distance distortion estimate Theorem~\ref{thm:RE05_5} and the above inequality that 
  \begin{align*}
    |R(x,t)-R(y,t)| &\leq \int_{\gamma} |\nabla R|(\gamma(\theta)) d\theta \leq \left\{ t^{-\frac{3}{2}} \psi(\delta|m) \right\} \cdot d_{g(t)}(x,y)\\
    &\leq 2 \left\{ t^{-\frac{3}{2}} \psi(\delta|m) \right\} \cdot d_{g(0)}^{1-\psi}(x,y) \left(\sqrt{t}\right)^{\psi},
  \end{align*}
  where $\gamma$ is a shortest geodesic connecting $x,y$ under the metric $g(t)$.  It follows that
  \begin{align*}
    \lim_{t \to \infty} |tR(x,t)-tR(y,t)| \leq \lim_{t \to \infty} 2\psi(\delta|m) \left( \sqrt{t} \right)^{-1+\psi} d_{g(0)}^{1-\psi}(x,y)=0. 
  \end{align*}
  Consequently, $\displaystyle \lim_{t \to \infty} tR(x,t)$ does not depend on $x$. 
\end{proof}

\begin{corollary}[\textbf{Divergence of the immortal Ricci flow}]
  Same conditions as in Proposition~\ref{prn:RE14_12}.  Define
  \begin{align*}
     \lambda_1(x,t) \coloneqq \inf_{0 \neq V \in T_{x}M} \frac{\langle V,V\rangle_{g(t)}}{\langle V,V\rangle_{g(0)}}.
  \end{align*}
  Then we have 
  \begin{align}
    \lim_{t \to \infty} \lambda_1(x,t)=0   \label{eqn:RE14_15}
  \end{align}
  for each $x \in M$. 
  \label{cly:RE14_14}
\end{corollary}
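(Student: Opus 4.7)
The plan is to reduce Corollary~\ref{cly:RE14_14} to a volume-shrinking computation based on Proposition~\ref{prn:RE14_12}, together with an elementary linear-algebra inequality relating the smallest eigenvalue of a positive-definite symmetric form to its determinant. By hypothesis we are in the setting of Corollary~\ref{cly:RH27_1}, so an immortal Ricci flow $\{(M, g(t)), 0 \leq t < \infty\}$ initiated from $(M,g)$ is at our disposal, and Proposition~\ref{prn:RE14_12} applies.

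First I would fix $x \in M$ and track the evolution of $\det\!\left(g(t) g(0)^{-1}\right)(x)$. Working in local coordinates around $x$, the Ricci flow equation $\partial_{t} g_{ij} = -2 R_{ij}$ and Jacobi's formula give
\begin{align*}
  \frac{d}{dt} \log \det\!\left( g(t) g(0)^{-1} \right)(x) = -2 R(x,t).
\end{align*}
By Proposition~\ref{prn:RE14_12}, there is a constant $\psi_{0}=\psi(\delta|m)>0$ and a time $T_{0}=T_{0}(x)$ with $R(x,s) \geq \psi_{0}/(2s)$ for all $s \geq T_{0}$. Integrating the displayed identity from $T_{0}$ to $t$ and exponentiating yields
\begin{align*}
  \det\!\left( g(t) g(0)^{-1} \right)(x) \leq \det\!\left( g(T_{0}) g(0)^{-1} \right)(x) \cdot \left( \frac{T_{0}}{t} \right)^{\psi_{0}} \longrightarrow 0 \quad \text{as } t \to \infty.
\end{align*}

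Next I would invoke the variational characterization of $\lambda_{1}(x,t)$. If $\lambda_{1}(x,t) \leq \lambda_{2}(x,t) \leq \cdots \leq \lambda_{m}(x,t)$ are the eigenvalues of $g(t)$ with respect to $g(0)$ at $x$, then $\lambda_{1}$ is precisely the infimum in the definition, and
\begin{align*}
  \det\!\left( g(t) g(0)^{-1} \right)(x) = \prod_{j=1}^{m} \lambda_{j}(x,t) \geq \lambda_{1}(x,t)^{m},
\end{align*}
since each $\lambda_{j} \geq \lambda_{1} > 0$. Combining this with the previous decay estimate gives $\lambda_{1}(x,t) \leq C(x) \, t^{-\psi_{0}/m}$, which tends to $0$. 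This establishes (\ref{eqn:RE14_15}) for the given $x$, and since $x$ was arbitrary, the corollary follows.

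There is essentially no serious obstacle once Proposition~\ref{prn:RE14_12} is in hand: the argument is a direct consequence of the scalar curvature lower bound on the decay of the volume form and the trivial bound $\det \geq \lambda_{1}^{m}$. The only subtle point is that Proposition~\ref{prn:RE14_12} provides the lower bound $\psi_{0}$ uniformly in $x$, so the conclusion $\lambda_{1}(x,t) \to 0$ holds pointwise for every $x \in M$ (with a time $T_{0}$ and multiplicative constant that a priori depend on $x$, but a rate $t^{-\psi_{0}/m}$ that does not).
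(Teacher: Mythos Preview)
Your proposal is correct and follows essentially the same approach as the paper: both bound $\lambda_{1}^{m}$ (equivalently $\lambda_{1}^{m/2}$) by the product of eigenvalues, identify this with the volume-form ratio governed by $\partial_{t}\log(dv_{t}/dv_{0})=-R$, and then invoke Proposition~\ref{prn:RE14_12} to conclude the integral $\int_{0}^{t}R(x,s)\,ds$ diverges. Your version is slightly more explicit in extracting the decay rate $t^{-\psi_{0}/m}$, but the argument is the same.
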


\begin{proof}
  Fix $x \in M$ and $t>0$. The linear space $T_{x} M$ has two metrics $g(0)$ and $g(t)$.
  We can choose orthonormal basis $\left\{ e_{i} \right\}_{i=1}^{m} \subset T_{x}M$ with respect to $g(0)$ such that
  \begin{align*}
    \langle e_i, e_{i}\rangle_{g(t)}=\lambda_i(t), \quad 0<\lambda_1 \leq \lambda_2 \leq \cdots \leq \lambda_m.
  \end{align*}
  Then we have 
   \begin{align*}
     \lambda_1^{\frac{m}{2}}(t) \leq \sqrt{\lambda_{1}(t) \lambda_2(t) \cdots \lambda_{m}(t)}=\frac{dv_{t}}{dv_{0}}=e^{-\int_{0}^{t} R(x,s)ds}.
   \end{align*}
   It follows that
   \begin{align*}
     \lambda_1(t) \leq e^{-\frac{2}{m} \int_{0}^{t} R(x,s)ds}. 
   \end{align*}
   In light of Proposition~\ref{prn:RE14_12}, we have
   \begin{align*}
     0\leq \lim_{t \to \infty} \lambda_1(t) \leq e^{\frac{-2\lim_{t \to \infty} \int_{0}^{t}R(x,s)ds}{m}}=0,
   \end{align*}
   which is nothing but (\ref{eqn:RE14_15}). 
\end{proof}

In view of Corollary~\ref{cly:RE14_14}, the immortal Ricci flow solution initiated from $(M, g)$ diverges at time infinity. 
In particular, for each $x \in M$ and constant $C$, we can find $T=T(x,C)$ such that
\begin{align}
  \frac{1}{C} g(x,0) < g(x,t)   \label{eqn:RH27_3} 
\end{align}
fails for each $t>T$. 
In Corollary~\ref{cly:RH27_1}, for fixed $x,y \in M$ and large $t$, we have the bi-H\"older equivalence between $d_{g(0)}(x,y)$ and $d_t(x,y)$ as
\begin{align*}
  e^{-\psi} d_{g(0)}^{1+\psi}(x,y) \left(\sqrt{t} \right)^{-\psi}
  <d_{g(t)}(x,y)< e^{\psi} d_{g(0)}^{1-\psi}(x,y) \left(\sqrt{t} \right)^{\psi},
\end{align*}
which follows from (\ref{eqn:RE08_1}). 
One may wonder whether we can improve the bi-H\"older equivalence to the bi-Lipschitz equivalence
\begin{align*}
  \frac{1}{C(m)} d_{g(0)}(x,y) < d_{g(t)}(x,y)< C(m) d_{g(0)}(x,y).
\end{align*}
By arbitrary choice of $x,y$ and $t>d_{g(0)}(x,y)$, such estimate will imply that
\begin{align}
  \frac{1}{C} g_{0} < g(t) < C g_{0},  \quad \forall \; t>1,     \label{eqn:RE08_3}   
\end{align}
which contradicts the failure of (\ref{eqn:RH27_3}).  Therefore, the distance distortion estimate (\ref{eqn:RE08_1}) can hardly be improved. 
In fact, we shall construct examples on closed K\"ahler manifolds in Section~\ref{sec:globalstability} to show that (\ref{eqn:RE08_1}) can be achieved.   Therefore, (\ref{eqn:RE08_1}) can be
regarded as optimal. 

\section{Continuous dependence on the initial data}
\label{sec:cdepend}

The purpose of this section is to show Theorem~\ref{thm:RG05_3}, which says that the Ricci flow solution depends on the initial data continuously in the Gromov-Hausdorff topology.
We shall first show this for unnormalized Ricci flow in Theorem~\ref{thm:PB03_1}, then we show the uniform volume continuity(cf. Lemma~\ref{lma:RB01_1}) along the Ricci flow and 
prove a version for normalized Ricci flow in Theorem~\ref{thm:RF24_2}.

\begin{theorem}[\textbf{Continuous dependence in the $C^{\infty}$-Cheeger-Gromov topology}]
  Suppose that  $\mathcal{N}=\{(N^{m}, h(t)), 0 \leq t \leq T\}$ is a Ricci flow solution initiated from $h=h(0)$, a smooth Riemannian metric on a closed manifold $N^{m}$. 
  For each $\epsilon$ small, there exists $\delta=\delta(\mathcal{N}, \epsilon)$ with the following properties. 

  Suppose $(M^{m}, g)$ is a Riemannian manifold satisfying
  \begin{align}
    Rc>-(m-1)\epsilon^{-1},  \quad d_{GH}\left\{ (M,g), (N,h) \right\}<\delta. 
    \label{eqn:PB03_3}
  \end{align}
  Then the Ricci flow initiated from $(M,g)$ exists on $[0, T]$. 
  Furthermore, there exists a family of diffeomorphisms $\left\{ \Phi_{t}: N \to M, \; \epsilon \leq t \leq T \right\}$ such that
 \begin{align}
   \sup_{t \in [\epsilon, T]} \norm{\Phi_{t}^{*}g(t)-h(t)}{C^{[\epsilon^{-1}]}(N,h(t))} < \epsilon.   \label{eqn:RF22_1}
 \end{align}
 \label{thm:PB03_1}
\end{theorem}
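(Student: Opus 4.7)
The plan is to split the argument into three stages: obtain a uniform existence time with strong local regularity for the flow on $M$, build a comparison diffeomorphism at some small positive time that realises a genuine $C^{0}$-proximity between the two flows, and then propagate this proximity in higher-order norms up to time $T$ via the Ricci--DeTurck trick.

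\textbf{Stage 1: uniform existence and regularity.} The first step is to convert the hypotheses (\ref{eqn:PB03_3}) into an almost-Euclidean entropy condition. Because $(N,h)$ is smooth on a closed manifold, there is a scale $r_{0}=r_{0}(\mathcal{N})$ on which every ball of $(N,h)$ has almost-Euclidean volume ratio and almost non-negative Ricci curvature. Combined with Colding's volume continuity and the Ricci lower bound in (\ref{eqn:PB03_3}), for $\delta$ small enough every ball $B_{g}(x,r_{0}) \subset (M,g)$ also has almost-Euclidean volume ratio and $Rc \geq -(m-1)\psi(\delta|\mathcal{N})$. Lemma~\ref{lma:MJ23_1} then converts these into the local $\boldsymbol{\mu}$-functional bound needed to invoke Theorem~\ref{thmin:ML14_2}, which yields a Ricci flow $\{(M,g(t)),\,0\le t\le \tau_{0}\}$, with $\tau_{0}=\tau_{0}(\mathcal{N})$, satisfying the pseudo-locality estimates (\ref{eqn:ML28_1})--(\ref{eqn:RH27_4}) everywhere. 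In particular $g(t)$ has bounded curvature and controlled injectivity radius for positive $t$.

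\textbf{Stage 2: comparison diffeomorphism at time $\xi_{0}$.} Fix a small $\xi_{0} \in (0,\tau_{0})$ depending only on $\mathcal{N}$ and $\epsilon$. On $(M,\xi_{0}^{-1}g(\xi_{0}))$ the pseudo-locality estimates make unit balls arbitrarily close to Euclidean ones. The local maximum principle Theorem~\ref{thm:RD10_1} applied to $-R$ gives $R \geq -\psi(\delta|\mathcal{N})$ on $[0,\xi_{0}]$, so the volume element is almost non-increasing. Following the volume-balancing argument that yields Proposition~\ref{prn:RE04_3} and Theorem~\ref{thm:RE05_5}, this forces the local almost-Einstein integral $\int_{0}^{\xi_{0}}\int_{\Omega}|R|\,dv\,dt$ to be $o(1)$ on unit balls $\Omega$ of the rescaled metric, whence the distance distortion estimate (\ref{eqn:RE08_1}) becomes bi-Lipschitz with constant close to $1$ on scales above $\sqrt{\xi_{0}}$. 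Running the parallel and strictly easier argument on $(N,h(t))$, the identity map is an almost-isometry between $\xi_{0}^{-1}h(0)$ and $\xi_{0}^{-1}h(\xi_{0})$. Now the Gromov--Hausdorff approximation between $(M,\xi_{0}^{-1}g(0))$ and $(N,\xi_{0}^{-1}h(0))$ supplied by (\ref{eqn:PB03_3}) induces a Gromov--Hausdorff approximation $\widetilde{G}:(N,\xi_{0}^{-1}h(\xi_{0})) \to (M,\xi_{0}^{-1}g(\xi_{0}))$ between two manifolds whose unit balls are both $\psi$-close to Euclidean with uniform $C^{k}$-bounds on the metric. A standard center-of-mass smoothing, carried out in harmonic charts on $(N,h(\xi_{0}))$ whose existence follows from Shi's estimate applied to $h(t)$, deforms $\widetilde{G}$ to a diffeomorphism $G:N\to M$ satisfying $\norm{G^{*}g(\xi_{0})-h(\xi_{0})}{C^{0}(N,h(\xi_{0}))}<\psi(\delta|\mathcal{N},\xi_{0})$. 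The smoothness of $h$ at $t=0$ then forces $\norm{G^{*}g(\xi_{0})-h(0)}{C^{0}(N,h)}<\psi(\delta|\mathcal{N},\xi_{0})$.

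\textbf{Stage 3: propagation and higher-order estimates.} Starting from $G^{*}g(\xi_{0})$ and from $h(0)$ respectively, I would run the Ricci--DeTurck flow with the smooth reference flow $h(t)$ as background. Strict parabolicity together with the uniform bounds on $h(t)$ over $[0,T]$ allows the standard uniqueness-type energy/maximum-principle estimate to propagate the $C^{0}$-smallness of the initial difference, giving a DeTurck solution $\widetilde{g}(t)$ on $N\times[0,T]$ with $\norm{\widetilde{g}(t)-h(t)}{C^{0}(N,h(t))}<C(\mathcal{N},T)\psi$. Solving the associated harmonic map heat flow produces diffeomorphisms $\Phi_{t}:N\to M$ with $\Phi_{t}^{*}g(t)=\widetilde{g}(t)$ for $t\in[\xi_{0},T]$; in particular the Ricci flow on $M$ is forced to exist all the way up to $T$. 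Finally Shi's derivative estimates, applied to $\Phi_{t}^{*}g(t)-h(t)$ viewed as a perturbation of the smooth reference flow, upgrade the $C^{0}$-closeness on $[\epsilon,T]$ to the $C^{[\epsilon^{-1}]}$-closeness required by (\ref{eqn:RF22_1}), at the price of choosing $\delta$ even smaller in terms of $\mathcal{N}$ and $\epsilon$.

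\textbf{Main obstacle.} The delicate step is the construction of the diffeomorphism $G$ in Stage~2 with genuine tensor-level $C^{0}$-control, not merely distance-level Gromov--Hausdorff control. The distance distortion estimates of Section~\ref{sec:topstable} only tell us about lengths, so one must combine the bi-H\"older estimate (\ref{eqn:RE08_1}) (which becomes effectively bi-Lipschitz above the scale $\sqrt{\xi_{0}}$), the almost-flat geometry at time $\xi_{0}$ coming from Theorem~\ref{thmin:ML14_2}, and a careful center-of-mass construction in charts on $(N,h(\xi_{0}))$ where one has genuine $C^{k}$-control. All the other stages are comparatively standard once this single approximation theorem is available: the pseudo-locality input is the content of Theorem~\ref{thmin:ML14_2}, the $C^{0}$-propagation along DeTurck is a by-now classical maximum principle, and the $C^{k}$-upgrade is Shi's estimate applied to a smooth background.
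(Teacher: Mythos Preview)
Your proposal is correct and follows essentially the same route as the paper: Stage~1 is the content of Lemma~\ref{lma:MJ23_1} combined with Theorem~\ref{thmin:ML14_2}; Stage~2 is exactly Lemma~\ref{lma:PI27_1} (which in turn rests on the center-of-mass construction of Lemma~\ref{lma:PB01_1} and the distance distortion of Theorem~\ref{thm:RE05_5}); and Stage~3 is Lemma~\ref{lma:RF24_1} plus the harmonic-map-flow pullback. The only cosmetic difference is in the final $C^{k}$-upgrade: rather than applying Shi's estimate directly to the perturbation $\Phi_{t}^{*}g(t)-h(t)$, the paper first uses the $C^{0}$-closeness to transfer the isoperimetric radius bound from $h(t)$ to $g(t)$, applies pseudo-locality and Shi again to bound $|\nabla^{k}Rm_{g(t)}|$ uniformly, and then runs a short contradiction/compactness argument (see (\ref{eqn:RF23_3})--(\ref{eqn:RF23_6})) to conclude (\ref{eqn:RF22_1}); your formulation is a slight shortcut of this step but the substance is the same.
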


In view of  Theorem~\ref{thm:PB03_1}, 
if we take a sequence $(M_i^{m}, g_i)$ converging to $(N^{m},h)$ in the Gromov-Hausdorff topology while keeping Ricci curvature  bounded from below,
then the existence time of the Ricci flow initiated from $(M_i, g_i)$ will be at least  $T$ and 
 \begin{align}
     \left( M_i, g_i(t) \right) \longright{C^{\infty}-Cheeger-Gromov} \left( M, g(t) \right) 
     \label{eqn:PI27_10}  
   \end{align}
 for each $t \in (0,T]$, by taking subsequence if necessary.  

The proof of Theorem~\ref{thm:PB03_1} needs three preliminary steps.  
We first construct diffeomorphism between locally almost flat manifolds with rough Gromov-Hausdorff approximations.
This step has nothing to do with flow.
In step two, we show that the almost flatness and rough Gromov-Hausdorff approximation conditions in Lemma~\ref{lma:PB01_1} are naturally realized
by the curvature-injectivity-radius estimate in the improved pseudo-locality.  Therefore, we are able to construct a diffeomorphism map from $N$ to $M$.
Via this diffeomorphism map and time shifting, we can regard $g(\xi)$ as a $C^{0}$-approximation of $h$ for very small $\xi$. 
This second step will be done in Lemma~\ref{lma:PI27_1}.
In the third step, we use a fact(cf.~Lemma~\ref{lma:RF24_1}) that the $C^{0}$-approximation is preserved for a long period of time along the Ricci-DeTurck flow 
with given background metric $h$.  With these three steps,  one can finish the proof of Theorem~\ref{thm:PB03_1} by the standard method of coupling Ricci-flow
with harmonic map flow.

\begin{lemma}[\textbf{Construction of diffeomorphisms}]
For each small $\epsilon>0$, there exists a constant $\delta$ with the following properties.

Suppose $(M^{m}, g)$ and $(N^{m}, h)$ are two complete Riemannian manifolds satisfying
\begin{align}
  |Rm| \leq \delta^{2}, \quad inj \geq \delta^{-1}.   \label{eqn:PB03_2}
\end{align}
Suppose there exist  (not necessarily continuous) maps $F: M \to N$ and $G: N \to M$ such that 
\begin{align}
    &\left|  d(x,y)-d(F(x), F(y)) \right| < \delta, \quad \forall \; x, y \in M \; \textrm{satisfying} \; d(x,y)<100;   \label{eqn:PI27_1}\\
    &\left| d(x,y) - d(G(x), G(y)) \right|< \delta, \quad \forall \; x,y \in N \; \textrm{satisfying} \; d(x,y)<100;   \label{eqn:PI27_2}\\
    &\sup_{x \in M} d(x, G \circ F(x)) + \sup_{y \in N} d(y, F \circ G(y))<\delta.   \label{eqn:PI27_5}
\end{align}
Then we have  diffeomorphisms $\tilde{F}: M \to N$ and $\tilde{G}: N \to M$ such that
\begin{align}
 \sup_{x \in M} \left|\tilde{F}^{*} h -g \right|_{g}(x)  + \sup_{y \in N} \left| \tilde{G}^{*}g-h \right|_{h}(y)< \epsilon.   \label{eqn:PB03_1}
\end{align}

 \label{lma:PB01_1}
\end{lemma}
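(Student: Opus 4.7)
The plan is to construct $\tilde{F}$ and $\tilde{G}$ via the Karcher center-of-mass smoothing, which is the standard tool for turning a rough almost-isometry between almost-flat Riemannian manifolds into a genuine diffeomorphism. The discontinuous map $F$ is first discretized onto a net, then smoothed by barycentric averaging in the target manifold; the curvature and injectivity radius assumption (\ref{eqn:PB03_2}) provides the convexity needed for the barycenter to exist and depend smoothly on its arguments.

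First I would fix an auxiliary scale $r = r(\epsilon)$, small enough that $r^2 \delta^2 \ll \epsilon$ but independent of $\delta$, and pick a maximal $r$-net $\{x_i\}_{i \in I} \subset M$ together with a smooth partition of unity $\{\eta_i\}$ subordinate to $\{B(x_i, 2r)\}$, satisfying $|\nabla^k \eta_i|_g \leq C_k r^{-k}$ with bounded overlap multiplicity (possible since $Rc$ is bounded below and $inj \geq \delta^{-1} \gg r$, so Bishop--Gromov applies). Set $y_i \coloneqq F(x_i) \in N$. The estimate (\ref{eqn:PI27_1}) says $\left| d(y_i, y_j) - d(x_i, x_j) \right| < \delta$, so within the stencil of each $\eta_i(x)$ all the chosen $y_j$'s lie in a common ball of radius $\lesssim r$ in $N$.

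Next I would define $\tilde{F}(x)$ as the Karcher barycenter in $N$ of the probability measure $\sum_i \eta_i(x) \delta_{y_i}$, i.e.\ as the unique minimizer of $y \mapsto \sum_i \eta_i(x) d_N(y, y_i)^2$ in a ball of radius $\sim r$ around any $y_{i_0}$ with $\eta_{i_0}(x) \neq 0$. Since $r^2 \delta^2 \ll 1$, Karcher's convexity theorem guarantees the minimizer exists, is unique, and depends smoothly on $x$ by the implicit function theorem applied to the gradient equation $\sum_i \eta_i(x) \exp_{\tilde{F}(x)}^{-1}(y_i) = 0$. The same recipe with the roles of $M,N,F,G$ reversed produces $\tilde{G}$. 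By (\ref{eqn:PI27_5}) together with the $C^0$ proximity $d(\tilde{F}(x), F(x)) \leq \psi(\delta, r | \epsilon)$ and similarly for $\tilde{G}$, the composition $\tilde{G} \circ \tilde{F}$ is $C^0$-close to the identity, which combined with local invertibility (proven below) upgrades both to diffeomorphisms.

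The main obstacle, and the only nontrivial computation, is verifying the first-order estimate (\ref{eqn:PB03_1}). Working in the exponential charts $\exp_{x_{i_0}}: B(0,10r) \subset T_{x_{i_0}} M \to M$ and $\exp_{y_{i_0}}: B(0,10r) \subset T_{y_{i_0}} N \to N$, the bound $|Rm| \leq \delta^2$ means both metrics are within $\psi(\delta r|\epsilon)$ of the flat metric in $C^k$ via the Jacobi field expansion of the exponential map. Pulling back, the net points $y_i$ correspond (through a fixed linear identification $L: T_{x_{i_0}} M \to T_{y_{i_0}} N$ obtained by composing $\exp_{y_{i_0}}^{-1} \circ F \circ \exp_{x_{i_0}}$ at a base point) to vectors that are $\delta$-close to the image of the corresponding $x_i$ vectors under $L$, thanks to (\ref{eqn:PI27_1}). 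Differentiating the implicit center-of-mass equation and using the curvature bound to control the Hessian of $d_N^2$, one shows that $d\tilde{F}_x$ agrees with $L$ up to error $\psi(\delta, r|\epsilon) + \psi(r|\epsilon)$ in the respective normal frames. Choosing $r$ small in $\epsilon$ first and then $\delta$ small in $(\epsilon, r)$ gives the desired $|\tilde{F}^* h - g|_g < \epsilon$, and the near-isometry of $d\tilde{F}$ confirms $\tilde{F}$ is a local diffeomorphism, completing the proof.
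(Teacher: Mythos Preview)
Your construction has a genuine gap at the derivative estimate. You define $\tilde F(x)$ as the Karcher barycenter of the \emph{fixed} net images $y_i=F(x_i)$ with moving weights $\eta_i(x)$. Differentiating the barycenter equation $\sum_i\eta_i(x)\exp_{\tilde F(x)}^{-1}(y_i)=0$ gives, in the flat model which is all that matters here, $d\tilde F(x)=\sum_i d\eta_i(x)\otimes\bigl(y_i-\tilde F(x)\bigr)$. This is bounded in norm by a constant independent of $r$ and $\delta$, but nothing forces it near a linear isometry: it depends only on the shape of the partition of unity and the positions of the $y_i$. Take $M=N=\mathbb{R}$, $F=\mathrm{id}$, $x_i=y_i=ir$; then $\tilde F(x)=\sum_i\eta_i(x)\cdot ir$, and if the $\eta_i$ are plateau cutoffs (identically $1$ near each $x_i$) one gets $d\tilde F\equiv 0$ on every plateau. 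The defect is scale-invariant in $r$ and persists even for exactly Euclidean data with $\delta=0$, so your closing two-parameter limit cannot repair it; the claim that $d\tilde F_x$ agrees with $L$ up to $\psi(\delta,r|\epsilon)+\psi(r|\epsilon)$ is simply false for this interpolant.

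The paper avoids this by averaging \emph{moving} points rather than fixed ones. For each net index $j$ it first builds a local diffeomorphism $F_j=\bar\phi_j\circ\phi_j^{-1}$ by composing normal-coordinate charts at $x_j$ and $y_j$, with the linear frames $u_j,\bar u_j$ aligned via parallel transport so that overlapping $F_j$'s agree up to $\psi(\delta|m)$; only then does it set $\tilde F(x)$ to be the center of mass of the points $\{F_j(x)\}$ with weights $\Phi_j(x)$. Since each $F_j$ is already a $\psi(\delta|m)$-isometry on its domain, the barycenter inherits the same first-order bound and (\ref{eqn:PB03_1}) follows. Your outline becomes correct if you insert this intermediate step---replace the atoms $\delta_{y_i}$ by $\delta_{F_i(x)}$ after constructing the local chart maps $F_i$.
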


\begin{proof}
   The proof originates from the celebrated work of Cheeger~\cite{Cheeger70}.  
   The diffeomorphisms $\tilde{F}$ and $\tilde{G}$ satisfying (\ref{eqn:PB03_1}) can be constructed explicitly.
   We shall basically follow the construction in Peters~\cite{SPeters} and Green-Wu~\cite{GreenWu}, where a uniform diameter of $M$ and $N$ is assumed.
   Checking the proofs there carefully, it is not hard to see that the diameter bound is not essentially used
   for deriving diffeomorphisms satisfying $C^{0}$-norm estimate (\ref{eqn:PB03_1}).

   Let $\left\{ x_1, x_2, x_3 \cdots, x_{k}, \cdots \right\}$ be points in $M$ such that
   $\{B(x_i, 1)\}_{i=1}^{\infty}$ are disjoint and $M \subset \cup_{i=1}^{\infty} B(x_i, 2)$.
   Denote $F(x_i)$ by $y_{i}$.  By condition (\ref{eqn:PI27_1}), we know
   \begin{align*}
     |d(y_i, y_j) -d(x_i, x_j)|<\delta, \quad \textrm{whenever}\;  d(x_i, x_j)<10. 
   \end{align*}
   By the choice of $x_i$ and the condition (\ref{eqn:PI27_2}), it is clear that  $B(y_i, 1-\delta)$ are disjoint and $B(y_i, 2+\delta)$ is a covering of $N$. 
   For each $i$, let $A(i)$ be the collection of $j$ such that $B(x_j, 2) \cap B(x_i, 2) \neq \emptyset$. 
   By volume comparison, it is easy to see that $\sharp\{A(i)\}<5^{m}$.

   Following Peters~\cite{SPeters}, we let $u_i: \R^{m} \to T_{x_i}M$ and $\bar{u}_i: \R^{m} \to T_{y_{i}}N$ be linear isometries.
   Then we set
   \begin{align*}
     &\phi_{i} \coloneqq exp_{x_{i}} \circ u_{i}: B(0,10) \subset \R^{m} \to B(x_{i}, 10) \subset M, \\
     &\bar{\phi}_{i} \coloneqq exp_{y_{i}} \circ \bar{u}_{i}: B(0,10) \subset \R^{m} \to B(y_{i}, 10) \subset N.  
   \end{align*}
   For each $j \in A(i)$, we denote the the parallel translation along the shortest geodesic connecting $x_i$ and $x_j$ by $P_{ij}$.
   Similarly, the parallel translation along the shortest geodesic connecting $y_i$ and $y_j$ is denoted by $\bar{P}_{ij}$. 
   In light of the distance distortion estimate (\ref{eqn:PI27_1}) and the curvature-injectivity-radius estimate (\ref{eqn:PB03_2}), for each fixed $i$, we could adjust $\phi_{j}$ for all $j \in A(i)$ such that(cf. Lemma of~\cite{SPeters}) 
   \begin{align*}
    &d(\phi_{j}^{-1}\phi_{i}, \bar{\phi}_{j}^{-1} \bar{\phi}_{i})<\psi(\delta|m), \\
    &\norm{u_{j}^{-1}P_{ij}u_{i}-\bar{u}_{j}^{-1}\bar{P}_{ij}\bar{u}_{i}}{}<\psi(\delta|m),
   \end{align*}
   for each $j \in A(i)$. 
   After these preparations, on each $B(x_i, 8)$, we define
   \begin{align*}
     F_{i} \coloneqq \bar{\phi}_{i} \phi_{i}^{-1}, \quad B(x_i, 8) \mapsto B(y_i, 8). 
   \end{align*}
   Clearly, $F_{i}$ is a locally defined diffeomorphism satisfying
   \begin{align*}
     F_{i}(x_i)=y_i, \quad  
     \inf_{x \in B(x_i, 8)} d(F(x), F_{i}(x))<\psi(\delta|m). 
   \end{align*}
   Let $\eta: \R \to \R^{+} \cup \left\{ 0 \right\}$ be a cutoff function such that $\eta \equiv 1$ on $(-\infty, 1)$ and $\eta \equiv 0$ on $(2, \infty)$.
   Furthermore, we have $\eta' \leq 0$ and $|\eta'| \leq 2$.   
   Let $f_i=\eta \left( d(\cdot, x_i) \right)$. It is clear that $f_i$ is supported on $B(x_i,2)$ and equals $1$ on $B(x_i,1)$. 
   Define
   \begin{align*}
     \Phi_{i}(x) \coloneqq \frac{f_i(x)}{\sum_{j} f_j(x)}=\frac{f_i(x)}{\sum_{j \in A(i)} f_j(x)}. 
   \end{align*}
   We see that $\left\{ \Phi_{i} \right\}_{i=1}^{\infty}$ is a partition of unity of $M$, with respect to the covering $\left\{ B(x_i,2) \right\}_{i=1}^{\infty}$.

   Fix an arbitrary point $x \in M$.  Without loss of generality, we may assume $x \in B(x_i,2)$.   Then $\Phi_j(x) \neq 0$ if and only if $j \in A(i)$.
   We define $\tilde{F}(x)$ to be the mass center of $F_{j}(x)$ with respect to the weight $\Phi_j(x)$. In other words, $\tilde{F}(x)$ is the unique point
   $y \in \cup_{j \in A(i)} B(y_j, 2+\delta) \subset B(y_i,5)$
   satisfying
   \begin{align*}
     \sum_{j \in A(i)}  \Phi_{j}(x) Exp_{y}^{-1} F_{j}(x)=0.
   \end{align*}
   The existence and uniqueness of $y$ are guaranteed by the implicit function theorem.   Note that in $B(y_i,5)$, the curvature is very small, and the injectivity
   radius is very large, provided by the condition (\ref{eqn:PB03_2}).  Further calculation implies that
   \begin{align}
     1-\psi(\delta|m) \leq \frac{|d\tilde{F}(\vec{v})|}{|\vec{v}|} \leq 1+\psi(\delta|m), \quad \forall \; \vec{v} \in T_x M \backslash \left\{ 0 \right\}. \label{eqn:PB05_1} 
   \end{align}
   By the arbitrary choice of $x$, the above inequalities imply that 
   \begin{align}
       \sup_{x \in M}  \left|\tilde{F}^* h -g \right|_{g}(x) < \psi.   \label{eqn:PI27_3}
   \end{align}
   By similar construction, we can perturb $G$ to a smooth map $\tilde{G}: N \to M$ such that
   \begin{align}
       \sup_{y \in N}  \left|\tilde{G}^* g -h \right|_{h}(x) < \psi.   \label{eqn:PI27_4}
   \end{align}
   Combining (\ref{eqn:PI27_3}) and (\ref{eqn:PI27_4}), by choosing $\delta$ sufficiently small, we arrive at (\ref{eqn:PB03_1}).

   It remains to show that both $\tilde{F}$ and $\tilde{G}$ are diffeomorphisms. 
   By symmetry, it suffices to show that $\tilde{F}: M \to N$ is a diffeomorphism.
   This can be realized by a standard argument(cf.~\cite{SPeters},~\cite{BuKa} and the references therein). 
   We write down a few more words for the convenience of the readers. 
   The center of mass construction guarantees that $\tilde{F}, \tilde{G}$ are just perturbations of $F$ and $G$.  Then it follows from (\ref{eqn:PI27_5}) that
   \begin{align*}
     \sup_{x \in M} d\left(x, \tilde{G} \circ \tilde{F}(x) \right)<\psi, \quad  \sup_{y \in N} d\left(y, \tilde{F} \circ \tilde{G}(y) \right) < \psi
   \end{align*}
   for some $\psi=\psi(\delta|m)$.
   In other words, both $\tilde{F} \circ \tilde{G}$ and $\tilde{G} \circ \tilde{F}$ are close to the identity map in the $C^0$-sense. 
   By condition (\ref{eqn:PI27_1}) and (\ref{eqn:PI27_2}), $\tilde{G} \circ \tilde{F}(x)$ locates in a small neighborhood of $x$ where the geometry is almost Euclidean and strong convexity holds.
   Therefore, for each $x \in M$,  there exists a unique shortest geodesic connecting $x$ to $\tilde{G} \circ \tilde{F}(x)$ for each $x \in M$.  
   Therefore, $\tilde{G} \circ \tilde{F}(x)$ is in the trivial homotopy class and can be smoothly deformed to $Id_{M \to M}$. 
   In particular, the degree of $\tilde{G} \circ \tilde{F}$ is $1$.  Note that $\tilde{G} \circ \tilde{F}$ is non-degenerate by (\ref{eqn:PI27_3}) and (\ref{eqn:PI27_4}). 
   This forces that for every point $y \in N$, the preimage $\tilde{F}^{-1}(y)$ contains only one point.
   Since $\tilde{F}: M \to N$ is a smooth  covering map by (\ref{eqn:PB05_1}), it is clear that $\tilde{F}$ is
   a diffeomorphism from $M$ to $N$.  By reversing the role of $\tilde{F}$ and $\tilde{G}$, we obtain that $\tilde{G}$ is a diffeomorphism from $N$ to $M$. 
 \end{proof}

 \begin{lemma}[\textbf{Construction of $C^{0}$-approximation}]
   Suppose $(N^{m},h)$ is a closed smooth Riemannian manifold. 
   For each small $\xi$, there exists $\delta=\delta(\xi|N,h)$ such that the following properties hold.

   If $(M^{m}, g)$ is a Riemannian manifold satisfying
   \begin{align}
     Rc \geq -(m-1)\xi^{-1}, \quad d_{GH}\left\{(M,g),  (N,h) \right\}<\delta,  \label{eqn:PI27_6}
   \end{align}
   then there exist a diffeomorphism map $\tilde{G}: N \to M$ and a time $t_0 \in (0, \xi)$ such that
   \begin{align}
     \left| \log \frac{\langle \vec{v}, \vec{v}\rangle_{\tilde{G}^{*}g(t_0)}}{\langle \vec{v}, \vec{v} \rangle_{h}} \right|<0.5\xi,
      \quad \forall \;  \vec{v} \in T_{x}N \backslash \left\{ 0 \right\}, \; x \in N,
      \label{eqn:PI28_1}   
   \end{align}
   where $g(t_0)$ is the time-$t_0$ slice of the Ricci flow solution started from $g(0)=g$.  In particular, we have
   \begin{align}
     \norm{\tilde{G}^{*}g(t_0)-h}{h}<\xi.
     \label{eqn:RF16_1}
   \end{align}
   \label{lma:PI27_1}
 \end{lemma}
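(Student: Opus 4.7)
The plan is to apply Theorem~\ref{thmin:ML14_2} in order to run the Ricci flow from $(M,g)$ on a short interval $[0,t_0]$ with effective curvature, volume and injectivity radius control, and then to invoke Lemma~\ref{lma:PB01_1} after rescaling by $t_0^{-1/2}$, using the distance distortion estimate of Theorem~\ref{thm:RE05_5} to transport the initial Gromov--Hausdorff approximation forward to time $t_0$.

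First I would fix a small parameter $\eta\ll\xi$ and set $t_0\coloneqq \eta^2$. Since $(N,h)$ is a fixed smooth closed manifold, there is a scale $r_1=r_1(N,h)$ on which every geodesic ball of $(N,h)$ has almost Euclidean volume ratio. Combined with $d_{GH}\{(M,g),(N,h)\}<\delta$, the Ricci lower bound $Rc_g\geq -(m-1)\xi^{-1}$, and Colding's volume continuity theorem, this forces every ball $B_{g(0)}(y,r)$ of radius $r\leq r_1$ in $(M,g)$ to have almost Euclidean volume ratio, provided $\delta$ is small enough depending on $\eta$ and $(N,h)$. Feeding this into Lemma~\ref{lma:MJ23_1} yields
\[
  \inf_{0<t\leq t_0}\boldsymbol{\mu}\bigl(B_{g(0)}(y,20A\sqrt{t}),g(0),t\bigr)\geq -A^{-2}
\]
for every $y\in M$, where $A$ is the pseudo-locality parameter corresponding to a prescribed arbitrarily small constant $\alpha$. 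Therefore Theorem~\ref{thmin:ML14_2} applies and the Ricci flow from $g$ exists on $[0,t_0]$ with $t|Rm|\leq\alpha$, almost Euclidean volume ratios on scales $\leq\alpha^{-1}\sqrt{t}$, and $t^{-1/2}\cdot inj(x,t)\geq\alpha^{-1}$ on that interval.

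Second, I would transfer the initial Gromov--Hausdorff approximation to time $t_0$ under the rescaled metrics $\tilde g\coloneqq t_0^{-1}g(t_0)$ and $\tilde h\coloneqq t_0^{-1}h(t_0)$. At the rescaled scale $100$, equivalently the original scale $100\sqrt{t_0}$, the hypothesis $d_{GH}\{(M,g),(N,h)\}<\delta$ supplies $\psi$-GH-approximations as soon as $\delta\ll\sqrt{t_0}$; Theorem~\ref{thm:RE05_5}, whose $\bar{\boldsymbol{\nu}}$-hypothesis is supplied by Lemma~\ref{lma:MJ23_1} and whose scalar-curvature-lower bound follows from Theorem~\ref{thm:RD10_1}, bounds the distortion of distances from $g(0)$ to $g(t_0)$ on scales comparable to $\sqrt{t_0}$ by $\psi(\eta|m)$; and the smoothness of the Ricci flow on the fixed compact manifold $N$ bounds the distortion from $h$ to $h(t_0)$ by $\psi(\eta|N,h)$. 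Composing these three ingredients produces (possibly discontinuous) maps $F:M\to N$ and $G:N\to M$ satisfying the rough distance-approximation conditions (\ref{eqn:PI27_1})--(\ref{eqn:PI27_5}) between $(M,\tilde g)$ and $(N,\tilde h)$ with constant $\psi(\delta,\eta\mid N,h)$. Choosing $\alpha$ small makes the curvature--injectivity-radius condition (\ref{eqn:PB03_2}) hold as well under both $\tilde g$ and $\tilde h$.

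Lemma~\ref{lma:PB01_1} then produces a diffeomorphism $\tilde G:N\to M$ with $|\tilde G^{*}\tilde g-\tilde h|_{\tilde h}<\psi$ pointwise on $N$; unrescaling by $t_0$ gives the same pointwise bound for $\tilde G^{*}g(t_0)$ against $h(t_0)$. Finally, the smoothness of $h(t)$ on $[0,t_0]$ yields $|h(t_0)-h|_{h}<\psi(\eta|N,h)$, so $|\tilde G^{*}g(t_0)-h|_{h}<0.5\xi$ pointwise on $N$, which is equivalent to the logarithmic ratio estimate (\ref{eqn:PI28_1}) up to a harmless constant, and (\ref{eqn:RF16_1}) follows. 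The main obstacle is the scale mismatch: the initial $d_{GH}$ bound is at scale $\delta$, whereas Lemma~\ref{lma:PB01_1} needs approximations on scale $\sim 1$ after rescaling by $t_0^{-1/2}=\eta^{-1}$, i.e.\ on scale $\eta$ in the original metric. This forces the quantifier order $\delta\ll\eta\ll\xi$, and one must check that the logarithmic distance distortion of Theorem~\ref{thm:RE05_5} survives the magnification by $\eta^{-1}$. It does, because the distortion factor $\psi\bigl(1+\log_{+}(\sqrt{t_0}/d_{g(0)})\bigr)$ is bounded on the range $d_{g(0)}\gtrsim\sqrt{t_0}$, which is precisely the regime produced by the rescaling.
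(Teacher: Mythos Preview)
Your proposal is correct and follows essentially the same route as the paper: both transfer almost-Euclidean volume ratios to $M$ via Colding's volume continuity, invoke the pseudo-locality theorem to run the flow with curvature and injectivity-radius control, transport the initial GH-approximation to time $t_0$ using the distance distortion estimate of Theorem~\ref{thm:RE05_5}, rescale by $t_0^{-1/2}$, apply Lemma~\ref{lma:PB01_1}, and then compare $h(t_0)$ with $h$ via the smoothness of the flow on $N$. The paper parameterizes with two auxiliary scales $(\beta,r_0)$ and sets $t_0=(\xi r_0)^2$, whereas you use a single $\eta$ with $t_0=\eta^2$; this is a cosmetic difference, and your remark that the scalar lower bound comes from Theorem~\ref{thm:RD10_1} is a minor slip (at $t=0$ it is immediate from $Rc\geq -(m-1)\xi^{-1}$), but it does not affect the argument.
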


 \begin{proof}
 Since $(N,h)$ has bounded geometry, for $\xi<1$ sufficiently small, we have $|Rm|_{h}\leq \frac{1}{100m^2 \xi}$.
 Then the maximum principle argument for the curvature evolution equation implies that the Ricci flow started from $(N, h)$ exists and preserves $|Rm|_{h(t)} \leq \frac{1}{2m\xi}$ on the time interval $[0, \xi]$. 
 Fix another small number $\beta$,  we can choose $r_0=r_0(\beta,\xi)<\xi$ sufficiently small such that the following estimates hold for every $x \in N$:
 \begin{align*}
 1-0.5\beta \leq \omega_{m}^{-1} r^{-m}|B(x,r)| \leq 1 + 0.5 \beta, \quad \forall \; r \in (0, r_{0}]. 
 \end{align*}
 Furthermore, in light of Bishop-Gromov volume comparison and Colding's volume continuity theorem(cf.~\cite{Co}), 
 the above inequalities are almost stable under the Gromov-Hausdorff convergence while keeping Ricci curvature bounded from below. 
 Namely, if $(M,g)$ satisfies (\ref{eqn:PI27_6}) and $\delta$ is sufficiently small,  we have the following estimates for every $x \in M$:
 \begin{align*}
  1-\beta \leq \omega_{m}^{-1} r^{-m}|B(x,r)| \leq 1 + \beta, \quad \forall \; r \in (0, r_{0}]. 
 \end{align*}
 In view of Theorem~\ref{thm:RH25_1}, we can apply Theorem~\ref{thm:RE05_5} to obtain the distance distortion estimate
   \begin{align}
     \left|\log \frac{d_{(\xi r_0)^2}(x,y)}{d_{g(0)}(x,y)} \right|
     <\psi(\beta|m, \xi), \quad \forall \; x,y \in M, \; 0.1 r_0 <d_{g(0)}(x,y)<10 r_0.
     \label{eqn:RF15_1}  
   \end{align}
   Let $\left\{ x_{i} \right\}$ be an $r_0$-net of $(M, g)$.
   Since $(M,g)$ satisfies (\ref{eqn:PI27_6}), we can find a $\delta$-Gromov-Hausdorff approximation map 
   $F:(M,g) \to (N,h)$ and $G: (N,h) \to (M,g)$ such that $y_i=F(x_i), x_i=G(y_i)$.  
   Clearly, $\left\{ y_i \right\}$ is an $(r_0+2\delta)$-net in $(N,h)$.   
   In light of the distance distortion estimate (\ref{eqn:RF15_1}) and its correspondence form for $N$, we know that
   both $F:(M, g(\xi^2r_0^2)) \to (N, h(\xi^2 r_0^2))$ and $G:(N, h(\xi^2 r_0^2)) \to (M, g(\xi^2 r_0^2))$ are
   $\psi(\beta|\xi,m) r_0$-Gromov-Hausdorff approximation, whenever $\delta$ is sufficiently small.
   Applying rescaling, we see that 
   \begin{align*}
     &F: (M, \xi^{-2} r_{0}^{-2} g(\xi^{2}r_{0}^{2})) \to (N, \xi^{-2} r_{0}^{-2} h(\xi^{2}r_{0}^{2})),\\ 
     &G: (N, \xi^{-2} r_{0}^{-2} h(\xi^{2}r_{0}^{2})) \to (M, \xi^{-2} r_{0}^{-2} g(\xi^{2}r_{0}^{2})) 
   \end{align*}
   are $\psi(\beta|\xi,m)$ approximations. 
   By the curvature estimate (\ref{eqn:ML28_1}) and injectivity radius estimate (\ref{eqn:RH27_4}) in the improved pseudo-locality theorem,
   we see that both $(M, \xi^{-2} r_{0}^{-2} g(\xi^{2}r_{0}^{2}))$ and $(N, \xi^{-2} r_{0}^{-2} h(\xi^{2}r_{0}^{2}))$ are locally almost flat on the scale $100$. 
   Therefore, we can apply Lemma~\ref{lma:PB01_1} here to deform $G$ to a diffeomorphism $\tilde{G}: N \to M$ satisfying
   \begin{align}
     \left| \log \frac{\langle \vec{v}, \vec{v}\rangle_{\tilde{G}^{*}g(\xi^2 r_0^2)}}{\langle \vec{v}, \vec{v}\rangle_{h(\xi^2 r_0^2)}} \right|
     =\left| \log \frac{\langle \vec{v}, \vec{v}\rangle_{\tilde{G}^{*}\left(\xi^{-2}r_0^{-2}g(\xi^2 r_0^2)\right)}}{\langle \vec{v}, \vec{v}\rangle_{\xi^{-2}r_0^{-2}h(\xi^2 r_0^2)}} \right|
     <\psi(\beta|\xi,m),
      \quad \forall \;  \vec{v} \in T_{x}N \backslash \left\{ 0 \right\}, \; x \in N. 
   \label{eqn:RF15_2}
   \end{align}
   In the above inequalities, we used the scaling invariance of the $C^{0}$-equivalence of metrics. 
   Since the curvature of $\{(N, h(t)), 0 \leq t \leq \xi \}$ is bounded by $\frac{1}{2m\xi}$, we have
   \begin{align}
    \left| \log \frac{\langle \vec{v}, \vec{v}\rangle_{h(\xi^2 r_0^2)}}{\langle \vec{v},\vec{v}\rangle_{h}} \right|<\xi r_0^2<\xi^{3},
      \quad \forall \;  \vec{v} \in T_{x}N \backslash \left\{ 0 \right\}, \; x \in N. 
    \label{eqn:RF15_3}
   \end{align}
   It follows from the combination of  (\ref{eqn:RF15_2}) and (\ref{eqn:RF15_3}) that
   \begin{align*}
    \left| \log \frac{\langle \vec{v}, \vec{v}\rangle_{\tilde{G}^* g(\xi^2 r_0^2)}}{\langle \vec{v},\vec{v}\rangle_{h}} \right|<\psi(\beta|\xi,m)+\xi^{3},
      \quad \forall \;  \vec{v} \in T_{x}N \backslash \left\{ 0 \right\}, \; x \in N. 
  \end{align*}
  For fixed $\xi$, we can choose $\beta$ sufficiently small such that $\psi(\beta|\xi, m)<\xi^{3}$.   
  Since $\xi<<1$, it is clear that (\ref{eqn:PI28_1}) follows from the above inequality by letting $t_0=\xi^2 r_0^2<\xi$. 
  Then (\ref{eqn:RF16_1}) is a direct consequence of (\ref{eqn:PI28_1}). 
 \end{proof}

The stability of Ricci-DeTurck flow under $C^{0}$-perturbation was first studied by M. Simon(cf. Theorem 1.1. of~\cite{MSimon}). 
For a given Ricci-DeTurck flow, the linearized equation of Ricci-DeTurck flow is strictly parabolic.
Applying the heat kernel of the given Ricci-Deturk space-time on linearized equation, one can estimate more precisely how does 
the evolving metrics depend on the initial metric.  This was observed and achieved by Koch-Lamm (cf. Theorem 4.3 of~\cite{KoLa}) when
the background metric is Euclidean. 
By further localization technique, this idea can be realized on Ricci-DeTurck flow on manifolds with bounded curvature(cf. Corollary 3.4 of Burkhardt-Guim~\cite{PBG}).
The following lemma is a special case of the results of ~\cite{MSimon} or~\cite{PBG}. 

\begin{lemma}[\textbf{Preservation of $C^{0}$-approximation}]
  Suppose $\left\{ (N, h(t)), 0 \leq t \leq T \right\}$ is a smooth Ricci-DeTurck flow solution on a closed manifold $N$, with background metric $\tilde{h}$. 
  Namely, we have
  \begin{align}
    \partial_t h_{ij}= -2R_{ij} + \nabla_i W_j + \nabla_j W_i, 
    \label{eqn:PB12_1}
  \end{align}
  where $W_i=h_{ij}\tilde{h}^{pq}\left( \Gamma_{pq}^{j} -\tilde{\Gamma}_{pq}^{j} \right)$.
  Then there exist  a small constant $\xi$ and a large constant $L$, both depending on the given Ricci-DeTurck flow,  with the following properties.

  Suppose $\left\{ (N, g(t)), 0 \leq t \leq T' \right\}$ is another Ricci-DeTurck flow with the same background metric $\tilde{h}$ and maximal existence time $T'$.  
  Suppose $\norm{g(0)-h(0)}{\tilde{h}} <\xi$, then we have $T' \geq T$ and 
  \begin{align}
    \sup_{t \in [0,T]} \norm{g(t)-h(t)}{\tilde{h}} < L \norm{g(0)-h(0)}{\tilde{h}}.   \label{eqn:PI28_3}
  \end{align}

  \label{lma:RF24_1}
\end{lemma}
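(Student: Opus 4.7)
The plan is to linearize the Ricci-DeTurck equation around the reference flow $h(t)$ and use strict parabolicity to propagate closeness. Writing $u \coloneqq g - h$, a direct expansion of $-2Rc(g) + \mathcal{L}_{W(g)} g$ about $g = h + u$ shows that $u$ satisfies a quasilinear parabolic equation of the form
\begin{equation*}
  \partial_{t} u_{ij} - h^{pq}(t) \nabla^{h(t)}_{p} \nabla^{h(t)}_{q} u_{ij} = A(h, \partial h, \partial^{2} h) \ast u + B(h, u, \partial h) \ast \partial u + Q(h, u, \partial u),
\end{equation*}
where the principal part is the heat operator of $h(t)$, the linear terms $A \ast u$ and $B \ast \partial u$ have coefficients bounded uniformly in terms of the smooth background $\{h(t)\}$, and $Q$ is quadratic in $(u, \partial u)$. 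Strict parabolicity, which is the defining feature of the DeTurck modification, is precisely what makes this work.

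First I would record uniform bounds on all geometric quantities of $\{(N, h(t)), 0 \le t \le T\}$ and of the background $\tilde h$. Since $N$ is closed and the Ricci-DeTurck flow is smooth on a compact interval, there is a constant $C_{0}$ depending on the flow $h(\cdot)$ with $|Rm_{h(t)}| + |\nabla^{k} h|_{\tilde h} \le C_{0}$ for $k \le 2$. Second, I would invoke short-time existence for the Ricci-DeTurck flow starting at $g(0)$, giving a maximal interval $[0, T')$ and a priori higher-order estimates on any subinterval where $u$ stays $C^{0}$-small (via Shi-type bounds for the strictly parabolic system, as in Koch-Lamm~\cite{KoLa} or Burkhardt-Guim~\cite{PBG}).

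The core estimate is a continuity/Gr\"onwall argument: fix a threshold $\xi_{0} = \xi_{0}(C_{0})$ so small that on $\{\norm{u}{C^{0}(\tilde h)} \le \xi_{0}\}$ the quasilinear remainder $Q$ is absorbed into $B \ast \partial u$, and the full operator on the left is uniformly strictly parabolic. Then the parabolic maximum principle applied to $|u|^{2}_{\tilde h}$, or equivalently Duhamel's principle using the heat kernel of the background, yields
\begin{equation*}
  \sup_{N} |u(t)|_{\tilde h} \le \sup_{N} |u(0)|_{\tilde h} + C_{1} \int_{0}^{t} \sup_{N} |u(s)|_{\tilde h} \, ds,
\end{equation*}
for $C_{1} = C_{1}(C_{0})$. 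Gr\"onwall then gives $\sup_{N} |u(t)|_{\tilde h} \le e^{C_{1} T} \sup_{N} |u(0)|_{\tilde h}$, which is (\ref{eqn:PI28_3}) with $L \coloneqq e^{C_{1} T}$.

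The main obstacle is the bootstrap that keeps us inside the regime where the linearization is valid, and that simultaneously rules out finite-time singularities so $T' \ge T$. I would set $\xi \coloneqq \min\{\xi_{0}, \xi_{0} e^{-C_{1} T}\}$ and let $T^{*}$ be the supremum of $t \in [0, T')$ on which $\sup_{N} |u(s)|_{\tilde h} \le \xi_{0}$ for all $s \le t$. On $[0, T^{*}]$ the Gr\"onwall bound above applies and gives $\sup_{N} |u(t)|_{\tilde h} \le e^{C_{1}T} \xi \le \xi_{0}$, which by continuity forces $T^{*} = \min\{T, T'\}$; combining this $C^{0}$-control with the parabolic Shi-type estimates precludes curvature blow-up of $g(t)$ before time $T$ and hence $T' \ge T$. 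Once $T' \ge T$ is established, the Gr\"onwall inequality holds on all of $[0, T]$ and yields (\ref{eqn:PI28_3}).
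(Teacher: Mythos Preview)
Your sketch is essentially the standard argument of M.~Simon~\cite{MSimon}, and the paper does not supply its own proof: it states the lemma as ``a special case of the results of~\cite{MSimon} or~\cite{PBG}'' and refers to the linearization/heat-kernel picture (Koch--Lamm, Burkhardt-Guim) in the paragraph preceding the statement. So there is no divergence in approach to compare.

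One point of imprecision worth flagging: you say that on the set $\{\norm{u}{C^{0}}\le\xi_{0}\}$ the quadratic remainder $Q(u,\partial u)$ ``is absorbed into $B\ast\partial u$''. That is not quite how the $\nabla u\ast\nabla u$ terms are handled; $C^{0}$-smallness of $u$ does not by itself make $\nabla u\ast\nabla u$ linear in $\partial u$. The correct mechanism is that when you compute the evolution of $|u|^{2}_{\tilde h}$ (or $|u|^{2}_{h(t)}$) with respect to the strictly parabolic operator, the good term $-2|\nabla u|^{2}$ coming from $\Delta|u|^{2}$ absorbs both the cross term $\langle u, B\ast\nabla u\rangle$ (via Young) and the quadratic term $\langle u, \nabla u\ast\nabla u\rangle\le C|u|\,|\nabla u|^{2}$ once $|u|$ is small. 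With that correction your Gr\"onwall step and the open--closed continuity argument for $T'\ge T$ go through exactly as you wrote.
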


Combining Lemma~\ref{lma:PB01_1}, Lemma~\ref{lma:PI27_1} and Lemma~\ref{lma:RF24_1},  we are now ready for the proof of Theorem~\ref{thm:PB03_1}. 

\begin{proof}[Proof of Theorem~\ref{thm:PB03_1}:]

  By compactness of the flow $\left\{ (N,h(t)), 0 \leq t \leq T \right\}$, we can find a small positive constant $\eta$ such that
  the flow can be extended to the time interval $[0, T+\eta]$. 

  Fix $\epsilon<\eta$. For each small positive constant $\xi<<\epsilon$ such that we can apply Lemma~\ref{lma:PI27_1} for constant $\xi$.
  By choose $\delta$ sufficiently small, we can find $t_0 \in [0,\xi]$ such that (\ref{eqn:PI28_1}) holds. As $\xi$ is very small, with respect to the background metric $h=h(0)$, we can apply Lemma~\ref{lma:RF24_1}
  to obtain a Ricci-DeTurck flow  $\left\{ (N, \bar{g}(t)), \; t_0 \leq t \leq T+t_{0} \right\}$, which starts from $\bar{g}(t_0) \coloneqq \tilde{G}^* g(t_0)$ and satisfies the estimate
  \begin{align}
    \sup_{t \in [0,T]} \norm{\bar{g}(t+t_0)-\tilde{h}(t)}{\tilde{h}(t)} < L \norm{\bar{g}(t_0)-h}{h}<L\xi.   \label{eqn:RF22_2} 
  \end{align}
  Here the constant $L$ depends on the compact space-time $\left\{ (N, h(t)), 0 \leq t \leq T \right\}$. 
  Via diffeomorphism $\tilde{G}: N \to M$, it is clear that $\tilde{g}(t) \coloneqq (\tilde{G}^{-1})^{*} \bar{g}(t)$ is a Ricci-DeTurck flow on $M$ with background metric $( \tilde{G}^{-1})^{*} h$ 
  and it starts from initial metric $\tilde{g}(t_0)=(\tilde{G}^{-1})^{*} \bar{g}(t_0)=g(t_0)$.
  Since the Ricci flow can be achieved as the Ricci DeTurck flow coupled with the harmonic map flow of diffeomorphisms,
  we can find time dependent diffeomorphisms $\left\{ \tilde{\Phi}_{t}: M \to M, t_0 \leq t \leq T+t_{0} \right\}$ with $\tilde{\Phi}_{t_0}=Id$ such that
  $\left\{\left(M, \left(\tilde{\Phi}_{t}^{-1}\right)^{*} \tilde{g}(t) \right), t_0 \leq t \leq T+t_0 \right\}$ is the Ricci flow solution initiated from $g(t_0)$. 
  Concatenating the Ricci flow constructed above with the Ricci flow $\left\{ (M, g(t)), 0 \leq t \leq t_0 \right\}$, we naturally  obtain a Ricci flow solution $\left\{ (M, g(t)), 0 \leq t \leq T+t_0 \right\}$. 

\begin{align}
\begin{split}
\begin{xy}
  <-11em,5em>*+{\textcolor{black}{(N,h(t))}}="u",
<11em,5em>*+{(M,g(t+t_0))}="v",
<-11em, 0em>*+{\textcolor{black}{(N,\tilde{h}(t))}}="w",
<0em,0em>*+{(N,\bar{g}(t+t_0))}="x", 
<11em,0em>*+{(M, \tilde{g}(t+t_0))}="y",
<-11em,-5em>*+{\textcolor{black}{(N,h)}}="z",
<0em,-5em>*+{(N, \bar{g}(t_0))}="m", 
<11em,-5em>*+{(M,g(t_0))}="n",
 "w";"x" **@{-} ?>*@{>}?<>(.5)*!/_0.5em/{\scriptstyle Id},
 "x";"y" **@{-} ?>*@{>} ?<>(.5)*!/_0.5em/{\scriptstyle \tilde{G}},
 "z";"m" **@{-} ?>*@{>} ?<>(.5)*!/_0.5em/{\scriptstyle Id},
 "m";"n" **@{-} ?>*@{>} ?<>(.5)*!/_0.5em/{\scriptstyle \tilde{G}},
 "z";"w" **@{-} ?>*@{>} ?<>(.5)*!/_0.5em/{\scriptstyle \textcolor{black}{Id}},
 "w";"u" **@{-} ?>*@{>} ?<>(.5)*!/_0.5em/{\scriptstyle \textcolor{black}{\tilde{\Psi}_{t}}}, 
 "m";"x" **@{-} ?>*@{>} ?<>(.5)*!/_0.5em/{\scriptstyle Id},
 "n";"y" **@{-} ?>*@{>} ?<>(.5)*!/_0.5em/{\scriptstyle Id}, 
 "y";"v" **@{-} ?>*@{>} ?<>(.5)*!/_0.5em/{\scriptstyle \tilde{\Phi}_{t+t_0}}, 
\end{xy}
\end{split}
\label{eqn:diffeomorphisms}
\end{align}

  We proceed to prove (\ref{eqn:RF22_1}).  Let $\tilde{h}(t)$ be the Ricci-DeTurck flow started from $h$, with background metric $h$. 
  Then there are time-dependent diffeomorphism $\tilde{\Psi}_{t}:N \to N$
  such that $\tilde{h}(t)=\tilde{\Psi}_{t}^{*}(h(t))$. 
  Plugging the facts $g(t)=\left(\tilde{G}^{-1} \circ \tilde{\Phi}_{t}^{-1} \right)^{*} \bar{g}(t)$ and $h(t)=(\tilde{\Psi}_{t}^{-1})^{*}\left(\tilde{h}(t) \right)$ into (\ref{eqn:RF22_2}), it is clear that 
  \begin{align*}
    &\sup_{t \in [0,T]} \norm{ \left( \tilde{\Phi}_{t+t_0} \circ \tilde{G} \circ \tilde{\Psi}_{t}^{-1} \right)^{*}  g(t+t_0)- h(t)}{h(t)} 
    =\sup_{t \in [0,T]} \norm{\left(\tilde{\Phi}_{t+t_0} \circ \tilde{G} \right)^{*} g(t+t_0)-\tilde{\Psi}_{t}^{*} h(t)}{\Psi_{t}^{*}(h(t))}\\ 
    &\quad=\sup_{t \in [0,T]} \norm{\bar{g}(t+t_0)-\tilde{h}(t)}{\tilde{h}(t)}< L\xi. 
  \end{align*}
  Define $\Phi_{t+t_0} \coloneqq \tilde{\Phi}_{t+t_0} \circ \tilde{G} \circ \tilde{\Psi}_{t}^{-1}$,  we have
  \begin{align}
    \sup_{t \in [0,T]} \norm{\Phi_{t+t_0}^{*}  g(t+t_0)- h(t)}{h(t)}<L\xi<\epsilon   \label{eqn:RF23_1}
  \end{align}
  since we have chosen $\xi << \epsilon$. Recall that $0<t_0<\xi<<\epsilon$. 
  The flow $\left\{ (N,h(t)), 0 \leq t \leq T \right\}$ has bounded geometry by smooth compactness.
  Thus the isoperimetric radius is uniformly bounded from below along this flow.
  By adjusting the gap constant $\epsilon_{0}$ in definition if necessary, it follows from (\ref{eqn:RF23_1})
  that the isoperimetric radius(cf. Definition~\ref{dfn:RH26_3}) of $g(s)$ for each $s \in [t_0, T+t_0]$ is uniformly bounded from below. 
  Via the pseudo-locality theorem and Shi's curvature estimate, we obtain
  \begin{align}
    \sup_{t \in [0.5\epsilon, T]}\norm{\nabla^{k} Rm}{g(t)}<C(\epsilon,k,\mathcal{N})
  \label{eqn:RF23_3}
  \end{align}
  for each non-negative integer $k$. In particular, we have
  $|Rc|_{g(t)}$ is uniformly bounded.  As metrics evolve by $-2Rc$ along Ricci flow, it follows that
  \begin{align}
    \sup_{t \in [0.5 \epsilon,T]} \norm{\Phi_{t}^{*}  g(t)- \Phi_{t+t_0}^{*}  g(t+t_0)}{\Phi_{t+t_0}^{*}  g(t+t_0)}<L \xi \label{eqn:RF23_2} 
  \end{align}
  by adjusting $L$ if necessary.  Combining (\ref{eqn:RF23_1}) and (\ref{eqn:RF23_2}), by adjusting $L$ again, we obtain 
  \begin{align}
    \sup_{t \in [\epsilon,T]} \norm{\Phi_{t}^{*}  g(t)- h(t)}{h(t)}<L\xi.  \label{eqn:RF23_4} 
  \end{align}
  For each integer $k \in [1, \epsilon^{-1}+1]$, we show that
  \begin{align}
    \sup_{t \in [\epsilon,T]} \norm{\Phi_{t}^{*}  g(t)- h(t)}{C^{k}(N,h(t))}<\epsilon  \label{eqn:RF23_5}
  \end{align}
  whenever $d_{GH}\left( (M,g), (N,h) \right)<\delta$ sufficiently small. 
  For otherwise, we can find $\delta_i \to 0$ and $t_i \in [\epsilon, T]$ such that 
  \begin{align}
    \norm{\Phi_{t_{i}}^{*}  g(t_{i})- h(t_{i})}{C^{k}(N,h(t_{i}))} \geq \epsilon.   \label{eqn:RF23_6}
  \end{align}
  By (\ref{eqn:RF23_3}) and (\ref{eqn:RF23_4}), we can take smooth limits of $\Phi_{t_{i}}^{*}  g(t_{i})$ and $h(t_i)$ in each fixed normal coordinate chart of $(N, h(T))$. 
  Let $g_{\infty}(t_{\infty})$ and $h_{\infty}(t_{\infty})$ be the limits respectively. 
  Since $\delta_i \to 0$, we can take $\xi_{i} \to 0$, then it follows from (\ref{eqn:RF23_4}) that 
  \begin{align*}
    g_{\infty}(t_{\infty}) - h_{\infty}(t_{\infty}) \equiv 0,
  \end{align*}
  which contradicts the smooth limit of (\ref{eqn:RF23_6}).  Therefore, the contradiction argument establishes the proof of (\ref{eqn:RF23_5}).
  Since $k$ is an arbitrary positive constant less that $\epsilon^{-1}+1$, it is clear that (\ref{eqn:RF22_1}) follows from the combination of (\ref{eqn:RF23_5}) and (\ref{eqn:RF23_4}).  
  The proof of the theorem is complete.   
\end{proof}

Then we proceed to generalize Theorem~\ref{thm:PB03_1} to normalized Ricci flow.  The following lemma is the key for our generalization.

\begin{lemma}[\textbf{Uniform volume continuity}]
   Same conditions as in Theorem~\ref{thm:PB03_1}. Suppose $\epsilon$ is small enough such that $|Rm|_{h} \leq \epsilon^{-1}$. 

   For each small positive number $\xi<\frac{\epsilon}{100}$, there is a $\delta=\delta(\xi, \epsilon, h)$ with the following property:

   If $d_{GH} \left(  (M,g), (N,h) \right)<\delta$, then 
   \begin{align}
     \left| \log   \frac{|M|_{dv_{g(t)}}}{|M|_{dv_{g(0)}}} \right|  < 4m^{2} \epsilon^{-1}  \xi, \quad \forall \; t \in (0, \xi).     \label{eqn:RB01_1}
   \end{align}

   \label{lma:RB01_1}
\end{lemma}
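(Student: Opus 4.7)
The plan is to leverage the preservation of the scalar curvature lower bound along the Ricci flow, together with a volume comparison at the terminal time $t=\xi$ obtained from Theorem~\ref{thm:PB03_1} and Colding's volume continuity theorem~\cite{Co}. Denote $V(t) \coloneqq |M|_{dv_{g(t)}}$. The upper direction of (\ref{eqn:RB01_1}) comes essentially for free: the hypothesis $Rc_{g(0)} \geq -(m-1)\epsilon^{-1}$ yields $R_{g(0)} \geq -m(m-1)\epsilon^{-1}$, this lower bound is preserved along the Ricci flow by the standard maximum principle, and the evolution
\begin{align*}
  V'(t)=-\int_{M} R_{g(t)}\, dv_{g(t)} \leq m(m-1)\epsilon^{-1} V(t)
\end{align*}
integrates to $V(t) \leq V(s)\, e^{m(m-1)\epsilon^{-1}(t-s)}$ for every $0\leq s\leq t\leq \xi$. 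Setting $s=0$ already gives $\log \frac{V(t)}{V(0)}\leq m(m-1)\epsilon^{-1}\xi$, while the same inequality in the form $V(\xi) \leq V(t)\, e^{m(m-1)\epsilon^{-1}(\xi-t)}$ will yield a matching lower bound once $V(\xi)$ has been compared to $V(0)$.

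The core of the argument is therefore to show $|\log V(\xi)/V(0)| \leq C(m)\epsilon^{-1}\xi$ with $C(m)$ well under $4m^{2}$, and I plan to do this by using $|N|_{dv_{h}}$ as a bridge. First I would apply Colding's volume continuity theorem under the uniform lower Ricci bound $Rc_g\geq -(m-1)\epsilon^{-1}$ to deduce that $V(0)/|N|_{dv_h} \to 1$ as $\delta \to 0$. Second, I would apply Theorem~\ref{thm:PB03_1} with auxiliary approximation parameter $\hat{\epsilon}\coloneqq \xi^{2}$ (and the corresponding $\delta=\delta(\xi,\epsilon,h)$), producing a diffeomorphism $\Phi_{\xi}:N\to M$ with $\norm{\Phi_{\xi}^{*}g(\xi)-h(\xi)}{C^{0}(N,h(\xi))}<\hat{\epsilon}$; this forces $V(\xi)/|N|_{dv_{h(\xi)}}=1+O(\hat{\epsilon})$. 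Third, the assumption $|Rm|_{h}\leq \epsilon^{-1}$ together with standard short-time curvature control on the smooth flow $\{(N,h(t))\}$ implies $|R_{h(t)}|\leq C(m)\epsilon^{-1}$ on $[0,\xi]$ (using $\xi<\epsilon/100$), so
\begin{align*}
  \left|\log \frac{|N|_{dv_{h(\xi)}}}{|N|_{dv_{h}}}\right| \leq C(m)\epsilon^{-1}\xi.
\end{align*}
Chaining these three comparisons and shrinking $\delta$ to absorb the $\delta$- and $\hat{\epsilon}$-errors into a single term bounded by $\epsilon^{-1}\xi$, I obtain $|\log V(\xi)/V(0)| \leq (C(m)+1)\epsilon^{-1}\xi$, with $C(m)$ bounded by a fixed polynomial in $m$.

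Combining the pieces then gives, for every $t\in(0,\xi)$,
\begin{align*}
  \log \frac{V(t)}{V(0)}\geq \log \frac{V(\xi)}{V(0)} - m(m-1)\epsilon^{-1}\xi \geq -\left(3m^{2}+1\right)\epsilon^{-1}\xi > -4m^{2}\epsilon^{-1}\xi,
\end{align*}
which together with the upper bound proves (\ref{eqn:RB01_1}). The main conceptual obstacle is the absence of any pointwise or $L^{1}$ upper bound on $R_{g(t)}$ from the hypotheses alone, which precludes obtaining the lower half by direct integration of the volume ODE; the detour through the model flow $(N,h(t))$ is unavoidable, and Theorem~\ref{thm:PB03_1} is exactly the tool that transfers the smoothness of $h(\xi)$ into $C^{0}$-control on $g(\xi)$ so that $V(\xi)$ becomes comparable to $V(0)$.
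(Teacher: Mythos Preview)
Your proposal is correct and follows essentially the same route as the paper: both use the preserved scalar curvature lower bound $R\geq -m(m-1)\epsilon^{-1}$ for the upper half and the almost-monotonicity inequality $V(t)\geq e^{-m(m-1)\epsilon^{-1}(\xi-t)}V(\xi)$ for the lower half, then compare $V(\xi)$ to $V(0)$ via the chain $V(0)\approx |N|_{dv_h}$ (Colding), $|N|_{dv_h}\approx |N|_{dv_{h(\xi)}}$ (curvature doubling on the smooth model flow), and $|N|_{dv_{h(\xi)}}\approx V(\xi)$ (Theorem~\ref{thm:PB03_1}). Your explicit choice $\hat\epsilon=\xi^{2}$ and constant tracking are minor cosmetic variations on the paper's presentation.
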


\begin{proof}
  By Theorem~\ref{thm:PB03_1}, it is clear that (\ref{eqn:RB01_1}) holds at $t=0$ and $t=\xi$ if we choose $\delta<\delta_{0}(N,h)$ sufficiently small. 
  The key of (\ref{eqn:RB01_1}) is that the inequality holds for arbitrary $t \in (0,\xi)$.  We shall exploit the almost monotonicity of volume element to prove it.

  On initial manifold $(M, g)$, the condition $Rc \geq -(m-1)\epsilon^{-1}$ implies that $R \geq -m(m-1)\epsilon^{-1}$, which is preserved under the Ricci flow by maximum principle.
  Therefore, the volume element is almost decreasing along the Ricci flow and we have 
  \begin{align}
    e^{-m(m-1)\epsilon^{-1}(\xi-t)} |M|_{dv_{g(\xi)}} \leq  |M|_{dv_{g(t)}} \leq e^{m(m-1)\epsilon^{-1}t} |M|_{dv_{g(0)}}, \quad \forall \; t \in (0, \xi).  \label{eqn:RB01_2}
   \end{align}
   If we choose $\delta$ sufficiently small, then by Colding's volume continuity theorem and Theorem~\ref{thm:PB03_1} respectively, we have
   \begin{align}
     \left| \log \frac{|M|_{dv_{g(0)}}}{|N|_{dv_{h(0)}}} \right|  \leq \psi, \quad   \left| \log \frac{|M|_{dv_{g(\xi)}}}{|N|_{dv_{h(\xi)}}} \right| \leq \psi, \label{eqn:RB01_3}
   \end{align}
   where $\psi=\psi(\delta|\xi,\epsilon,m)$.  
   Note that $\{(N, h(t)), 0 \leq t \leq \xi\}$ is a smooth, compact space-time. 
   Applying maximum principle on Riemannian curvature, we obtain  
   \begin{align*}
   |Rm|_{h(t)}(x) \leq 2 \sup_{z \in N}|Rm|_{h(0)}(z) \leq 2\epsilon^{-1}, \quad \forall \; x \in N, \; t \in (0, \xi]. 
   \end{align*}
   Therefore,  the evolution of volume element implies that
   \begin{align}
     \frac{|N|_{dv_{h(\xi)}}}{|N|_{dv_{h(0)}}} \geq e^{-2m(m-1)\epsilon^{-1}\xi}.  \label{eqn:RB01_4}
   \end{align}
   Plugging (\ref{eqn:RB01_3}) and (\ref{eqn:RB01_4}) into (\ref{eqn:RB01_2}), we derive that
   \begin{align*}
     |M|_{dv_{g(t)}} &\geq e^{-2m(m-1)\epsilon^{-1}(\xi-t)} |M|_{dv_{g(\xi)}}\\
     &\geq  e^{-\psi-2m(m-1)\epsilon^{-1}(\xi-t)} |N|_{dv_{h(\xi)}} \\
     &\geq  e^{-\psi-2m(m-1)\epsilon^{-1}(\xi-t) -2m(m-1)\epsilon^{-1}\xi} |N|_{dv_{h(0)}}\\
     &\geq  e^{-\psi-4m(m-1)\epsilon^{-1} \xi}  |M|_{dv_{g(0)}},
   \end{align*} 
   where we adjust $\psi$ in the last step. 
   Combining the above inequality with (\ref{eqn:RB01_2}) yields that 
   \begin{align*}
     e^{-\psi-4m(m-1)\epsilon^{-1}\xi} \leq \frac{|M|_{dv_{g(t)}}}{|M|_{dv_{g(0)}}} \leq e^{2m(m-1) \epsilon^{-1} \xi}.
   \end{align*}
   Consequently, by choosing $\delta$ sufficiently small,  we obtain
   \begin{align*}
     \left| \log \frac{|M|_{dv_{g(t)}}}{|M|_{dv_{g(0)}}} \right| \leq \psi + 4m(m-1)\epsilon^{-1}\xi \leq 4m^2 \epsilon^{-1} \xi, 
   \end{align*}
   which is exactly (\ref{eqn:RB01_1}). 
\end{proof}

We are ready to show the continuity dependence of the normalized Ricci flow.

\begin{theorem}[\textbf{Continuous dependence in the case of normalized Ricci flow}]
  Suppose that  $\mathcal{N}=\{(N^{m}, \tilde{h}(\tilde{t})), 0 \leq \tilde{t} \leq  \tilde{T}\}$ is a normalized Ricci flow solution initiated from $\tilde{h}(0)=h$, a smooth Riemannian metric on a closed manifold $N^{m}$. 
  For each $\epsilon$ small, there exists $\delta=\delta(\mathcal{N}, \epsilon)$ with the following properties. 

  Suppose $(M^{m}, g)$ is a Riemannian manifold satisfying
  \begin{align}
    Rc>-(m-1)\epsilon^{-1},  \quad d_{GH}\left\{ (M,g), (N,h) \right\}<\delta. 
    \label{eqn:RF24_3}
  \end{align}
  Then the normalized Ricci flow initiated from $(M,g)$ exists on $[0, \tilde{T}]$. 
  Furthermore, there exists a family of diffeomorphisms $\left\{ \tilde{\Phi}_{\tilde{t}}: N \to M, \; \epsilon \leq \tilde{t} \leq \tilde{T} \right\}$ such that
 \begin{align}
   \sup_{\tilde{t} \in [\epsilon, \tilde{T}]} \norm{\tilde{\Phi}_{\tilde{t}}^{*} \tilde{g}(\tilde{t})-\tilde{h}(\tilde{t})}{C^{[\epsilon^{-1}]}(N,\tilde{h}(\tilde{t}))} < \epsilon.   \label{eqn:RF24_4}
 \end{align}
 \label{thm:RF24_2}
\end{theorem}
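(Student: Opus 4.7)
The plan is to reduce Theorem~\ref{thm:RF24_2} to the unnormalized case handled in Theorem~\ref{thm:PB03_1} via the standard time-reparametrization between a normalized and an unnormalized Ricci flow. For any choice of $\lambda(t)$, if $g(t)$ solves the unnormalized equation, then $\tilde g(\tilde t) := \phi(\tilde t)\, g(t(\tilde t))$ solves (\ref{eqn:ML27_2}) provided $dt/d\tilde t = \phi(\tilde t)$ and $\phi'/\phi = -2\lambda$. Conversely, given a normalized solution $\tilde g(\tilde t)$, one recovers $g(t)=\phi(\tilde t)^{-1}\tilde g(\tilde t)$ and a bijective correspondence of time intervals $[0,T]\leftrightarrow [0,\tilde T]$.

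In the $c$-normalized case, $\phi(\tilde t)=e^{-2c\tilde t}$ and $t(\tilde t)=(1-e^{-2c\tilde t})/(2c)$ are universal and independent of the underlying manifold; applying Theorem~\ref{thm:PB03_1} to the induced unnormalized flows $\{(N,h(t))\}$ and $\{(M,g(t))\}$ on the common interval $[0,T]$ with $T=\int_0^{\tilde T}e^{-2c\tilde t}\,d\tilde t$ and setting $\tilde\Phi_{\tilde t} := \Phi_{t(\tilde t)}$ gives, after the rescaling by the same scalar factor $\phi(\tilde t)$, the desired $C^{[\epsilon^{-1}]}$-closeness in (\ref{eqn:RF24_4}). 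The unnormalized case ($c=0$) is Theorem~\ref{thm:PB03_1} itself.

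In the volume-normalized case, the rescaling factors differ between $M$ and $N$: one has $\phi_M(\tilde t)=(|M|_{dv_g}/|M|_{dv_{g(t_M(\tilde t))}})^{2/m}$ and $\phi_N(\tilde t)=(|N|_{dv_h}/|N|_{dv_{h(t_N(\tilde t))}})^{2/m}$, and correspondingly two time reparametrizations $t_M,t_N$ solving $dt_*/d\tilde t=\phi_*$. The strategy is then: (a) apply Theorem~\ref{thm:PB03_1} to get the unnormalized solutions and the diffeomorphisms $\Phi_t:N\to M$ with $\|\Phi_t^*g(t)-h(t)\|_{C^{[\epsilon^{-1}]}(N,h(t))}<\epsilon$ on $[\epsilon_0,T]$ for any prescribed $T$ and any prescribed $\epsilon_0,\epsilon$; (b) use this $C^{[\epsilon^{-1}]}$-closeness (which in particular controls the volume form) on $[\epsilon_0,T]$, together with Lemma~\ref{lma:RB01_1} to control the volume ratio uniformly on the short initial window $[0,\epsilon_0]$, to conclude $|\phi_M-\phi_N|<\psi$ uniformly on $[0,\tilde T]$; (c) integrate to get $|t_M(\tilde t)-t_N(\tilde t)|<\psi$ uniformly, and then define $\tilde\Phi_{\tilde t}:=\Phi_{t_M(\tilde t)}$. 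Writing
\begin{align*}
\tilde\Phi_{\tilde t}^*\tilde g(\tilde t)-\tilde h(\tilde t)
=\phi_M(\tilde t)\Phi_{t_M(\tilde t)}^*g(t_M(\tilde t))-\phi_N(\tilde t)h(t_N(\tilde t))
\end{align*}
and splitting into the comparison of $\phi_M\Phi_{t_M}^*g(t_M)$ with $\phi_M h(t_M)$ (controlled by Theorem~\ref{thm:PB03_1}) and of $\phi_M h(t_M)$ with $\phi_N h(t_N)$ (controlled by the closeness of $\phi$'s and $t$'s together with smoothness of $\mathcal{N}$), yields (\ref{eqn:RF24_4}).

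The main obstacle is controlling the ratio $|M|_{dv_{g(t)}}/|N|_{dv_{h(t)}}$ uniformly in $t$ over the full interval $[0,T]$, since the Ricci flow need not a priori preserve any $C^0$-closeness of volumes at the initial moment. This is where Lemma~\ref{lma:RB01_1} is essential: it gives the uniform volume continuity on the small time window $[0,\xi]$ where the high-order closeness of Theorem~\ref{thm:PB03_1} is not yet effective. Once past time $\xi$, the $C^{[\epsilon^{-1}]}$-convergence of $\Phi_t^*g(t)$ to $h(t)$ automatically gives $C^0$-closeness of the volume densities, and compactness of the space-time $\{(N,h(t)),0\le t\le T\}$ (so that the volume function $t\mapsto|N|_{dv_{h(t)}}$ is uniformly Lipschitz and bounded away from zero) lets us transfer this to a uniform bound on $|\phi_M-\phi_N|$ via the explicit formula. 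The remaining ingredients are routine ODE continuity and the standard interpolation between $C^0$ and higher-order norms afforded by Shi's estimates, which are already embedded in the proof of Theorem~\ref{thm:PB03_1}.
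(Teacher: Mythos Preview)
Your proposal is correct and follows essentially the same approach as the paper: reduce to the unnormalized case via the standard time-reparametrization, apply Theorem~\ref{thm:PB03_1}, and use Lemma~\ref{lma:RB01_1} to control the volume ratio (and hence the rescaling factors $\lambda_M,\lambda_N$) uniformly on the short initial window where the $C^{[\epsilon^{-1}]}$-closeness from Theorem~\ref{thm:PB03_1} is not yet available. The paper phrases the scaling factor as $\lambda_{N}(t)=(|N|_{dv_{h(t)}}/|N|_{dv_{h(0)}})^{-2/m}$ parametrized by unnormalized time $t$ (your $\phi_N^{-1}$ composed with the time change), but the content is the same; your splitting of $\tilde\Phi_{\tilde t}^*\tilde g(\tilde t)-\tilde h(\tilde t)$ makes explicit what the paper compresses into ``via the composition of the parametrization functions $\lambda_M$ and $\lambda_N$.''
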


\begin{proof}
  
Suppose $\left\{ (N, h(t)), 0 \leq t \leq T \right\}$ is the corresponding unnormalized Ricci flow solution on closed manifold $N$. 
Define
\begin{align}
  \lambda_{N}(t) \coloneqq \left( \frac{|N|_{dv_{h(t)}}}{|N|_{dv_{h(0)}}} \right)^{-\frac{2}{m}}, \quad  \tilde{t} \coloneqq \int_{0}^{t} \lambda_{N}(\theta) d\theta, 
  \quad \tilde{h}(\tilde{t}) \coloneqq \lambda_{N}(t) h(t). 
  \label{eqn:RB02_1}
\end{align}
Then direct calculation shows that $|N|_{dv_{\tilde{h}(\tilde{t})}} \equiv |N|_{dv_{h(0)}}$.  The evolving metrics $(N, \tilde{h}(\tilde{t}))$ satisfy the volume normalized Ricci flow equation. 
In particular, we have
\begin{align}
  \tilde{T}=\int_{0}^{T} \lambda_{N}(\theta) d \theta.   \label{eqn:RG05_1}
\end{align}
Similarly, we can define $\tilde{g}, \lambda_{M}$.

Since $\left\{ (N, h(t)), 0 \leq t \leq T \right\}$ is a compact smooth space-time, up to a slight extension, we could assume this flow exists on $[0, T+\epsilon']$. 
By Theorem~\ref{thm:PB03_1} and  Lemma~\ref{lma:RB01_1},  for each small $\xi$,  the Ricci flow started from $(M, g)$ shall exist on $[0, T+\epsilon']$ and satisfy
\begin{align*}
  \sup_{t \in [0,T]} |\lambda_{M}(t)-\lambda_{N}(t)| < \xi
\end{align*}
whenever $d_{GH}\left( (M,g), (N,h) \right)<\delta$ and $Rc_{g} \geq -(m-1) \epsilon^{-1}$.   
As $\delta \to 0$, we have $\lambda_{M}(t) \to \lambda_{N}(t)$ uniformly on $[0, T+\epsilon']$. 
The normalized Ricci flow initiated from $\tilde{g}(0)=g$ shall exist for the time 
\begin{align*}
  \int_{0}^{T+\epsilon'} \lambda_{M}(\theta) d\theta \to \int_{0}^{T+\epsilon'} \lambda_{N}(\theta) d\theta> \int_{0}^{T} \lambda_{N}(\theta) d\theta = \tilde{T}.  
\end{align*}
Replacing $\epsilon$ by $C(\mathcal{N})^{-1} \epsilon$ if necessary, we know that (\ref{eqn:RF24_4}) is a direct consequence of (\ref{eqn:RF22_1}), via the composition of
the parametrization functions $\lambda_{M}$ and $\lambda_{N}$. 
\end{proof}

We conclude this section by the proof of Theorem~\ref{thm:RG05_3}.

\begin{proof}[Proof of Theorem~\ref{thm:RG05_3}:]
  The unnormalized Ricci flow case  and the volume normalized Ricci flow case follow from Theorem~\ref{thm:PB03_1} and Theorem~\ref{thm:RF24_2} respectively. 
  It remains to show the $c$-normalized Ricci flow case. However, if we denote by $\left\{ (M, \tilde{g}(\tilde{t})), 0 \leq t \leq \tilde{T} \right\}$ the $c$-normalized Ricci flow corresponding
  to the unnormalized Ricci flow $\left\{ (M, g(t)), 0 \leq t \leq T \right\}$.  Then 
  \begin{align*}
    \tilde{t}=-\frac{1}{c} \log \left( 1-c t \right),  \quad \tilde{g}(\tilde{t})= \frac{1}{1-ct}g(t). 
  \end{align*}
  Following the argument in the proof of Theorem~\ref{thm:RF24_2}, all the desired estimates can be obtained from Theorem~\ref{thm:PB03_1} through a reparametrization. 
  Therefore,  the proof of Theorem~\ref{thm:RG05_3} is complete. 
\end{proof}

\section{Stability of  immortal Ricci flow solution}
\label{sec:globalstability}

In this section, we shall study the global behavior of the normalized Ricci flow near a given immortal solution initiated from $(N, h)$. 
For simplicity of notation, up to rescaling,  we always assume that  $(N, h)$ satisfies
\begin{align}
  Rc_{h} > -2(m-1).      \label{eqn:RG06_1}
\end{align}
The reason we use $-2(m-1)$ rather than $-(m-1)$ is to reserve rooms to perturb metrics nearby space form metrics of sectional curvature $-1$, whose Ricci curvature is $-(m-1)$. 

\begin{definition}
  A closed  Einstein metric $(N,h_{E})$ is called weakly stable if there is an $\epsilon=\epsilon(N, h_{E})$ with the following properties.

  For every smooth Riemannian metric $h$ satisfying
  \begin{align}
    |N|_{dv_{h}}=|N|_{dv_{h_{E}}}, \quad   \norm{h-h_{E}}{C^{[\epsilon^{-1}]}(h_{E})}<\epsilon,   \label{eqn:RG06_2}
  \end{align}
  the normalized Ricci flow initiated from $(N, h)$ exists immortally and converges(in smooth topology) to an Einstein metric $(N, h_{E}')$. 

  $(N,h_{E})$ is called strictly stable if each $h_{E}'$ is isometric to $h_{E}$ for some sufficiently small $\epsilon$. 
  \label{dfn:RB05_1}
\end{definition}

\begin{theorem}[\textbf{Stability nearby a stable Ricci flow}]
  Suppose $(N, h)$ is the initial metric of an immortal solution of normalized Ricci flow which converges to a weakly stable Einstein manifold $(N, h_{E})$.
  Then there exists a small constant $\epsilon=\epsilon(N,h)$ with the following properties. 

  If $(M,g)$ is a Riemannian manifold satisfying
  \begin{align}
    Rc_{g} > -2(m-1), \quad d_{GH} \left( (M,g), (N,h) \right)<\epsilon, 
    \label{eqn:RF12_1}
  \end{align}
  then the normalized Ricci flow solution initiated from $(M, g)$ exists immortally and converges to an Einstein manifold $(N, h_{E}')$.
  Furthermore, $h_{E}'$ is homothetic to $h_{E}$ if $h_{E}$ is strictly stable. 
  \label{thm:RF12_2}
\end{theorem}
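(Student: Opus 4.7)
The plan is to combine the continuous dependence Theorem~\ref{thm:RF24_2} with the weak stability hypothesis to upgrade finite-time closeness into immortal existence and convergence. First I would fix the stability constant $\epsilon_{0}=\epsilon(N,h_{E})$ provided by Definition~\ref{dfn:RB05_1}. Using the smooth immortal convergence of the reference flow $\{(N,\tilde{h}(\tilde{t}))\}$ to $h_{E}$, I choose a time $T=T(\epsilon_{0},\mathcal{N})$ so large that
\[
\norm{\tilde{h}(T)-h_{E}}{C^{[\epsilon_{0}^{-1}]+2}(h_{E})}<\frac{\epsilon_{0}}{10}.
\]
Next, applying Theorem~\ref{thm:RF24_2} to the truncated reference flow on $[0,T]$ with a small parameter $\epsilon_{1}\ll\epsilon_{0}$ supplies a constant $\delta=\delta(\mathcal{N},\epsilon_{1})$, a normalized Ricci flow $\{(M,\tilde{g}(\tilde{t}))\}$ existing on $[0,T]$, and a diffeomorphism $\tilde{\Phi}_{T}:N\to M$ with
\[
\norm{\tilde{\Phi}_{T}^{*}\tilde{g}(T)-\tilde{h}(T)}{C^{[\epsilon_{1}^{-1}]}(\tilde{h}(T))}<\epsilon_{1}.
\]
Combining these two inequalities and using the $C^{\infty}$-equivalence of $\tilde{h}(T)$ with $h_{E}$, I would conclude
\[
\norm{\tilde{\Phi}_{T}^{*}\tilde{g}(T)-h_{E}}{C^{[\epsilon_{0}^{-1}]}(h_{E})}<\frac{\epsilon_{0}}{2}.
\]
The hypothesis $Rc_{g}>-2(m-1)$ is comfortably stronger than the $Rc>-(m-1)\epsilon_{1}^{-1}$ required by Theorem~\ref{thm:RF24_2} for $\epsilon_{1}$ small.

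The remaining technical step is volume matching. Since the volume-normalized flow preserves total volume, $|M|_{dv_{\tilde{g}(T)}}=|M|_{dv_{g}}$, and by Colding's volume continuity this differs from $|N|_{dv_{h_{E}}}=|N|_{dv_{\tilde{h}(T)}}$ by a factor $\mu^{m/2}$ with $\mu\to 1$ as $\delta\to 0$. The rescaled metric $h^{\sharp}\coloneqq\mu\,\tilde{\Phi}_{T}^{*}\tilde{g}(T)$ then satisfies $|N|_{h^{\sharp}}=|N|_{h_{E}}$ exactly, and by shrinking $\delta$ further we may preserve $\norm{h^{\sharp}-h_{E}}{C^{[\epsilon_{0}^{-1}]}(h_{E})}<\epsilon_{0}$. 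Weak stability applied to $h^{\sharp}$ therefore yields an immortal normalized Ricci flow on $N$ converging smoothly to an Einstein metric $h_{E}'$. I would then transport this flow back to $M$ via $\tilde{\Phi}_{T}^{-1}$ and undo the homothety using the scaling covariance $g(t)\mapsto\lambda\,g(t/\lambda)$ of the volume-normalized Ricci flow. Together with the uniqueness of smooth normalized Ricci flow with prescribed initial data, this produces an immortal extension of $\tilde{g}(\tilde{t})$ on $M$ beyond time $T$ whose limit is an Einstein metric, naturally identified with $(N,h_{E}')$ through $\tilde{\Phi}_{T}$ and the scaling $\mu^{-1}$.

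In the strictly stable case, the Einstein metric produced by the weak stability hypothesis is isometric to $h_{E}$, so after undoing the homothety $\mu$ one obtains that the limit of $\tilde{g}(\tilde{t})$ (pulled back to $N$) is homothetic to $h_{E}$. The main obstacle I expect is the careful bookkeeping required to treat the three kinds of evolutions mentioned in Theorem~\ref{thm:RG05_3}: unnormalized, volume-normalized, and $c$-normalized Ricci flows possess analogous but distinct scaling covariances, and the transition at time $T$ between the flow on $[0,T]$ and the flow produced by weak stability must be tracked so that the concatenated solution really is a single normalized Ricci flow on $[0,\infty)$ rather than two short-time segments that agree only up to a diffeomorphism and a rescaling.
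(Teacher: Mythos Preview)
Your approach is essentially the same as the paper's: choose $T$ large so that $\tilde h(T)$ is $C^{k}$-close to $h_{E}$, invoke Theorem~\ref{thm:RF24_2} to bring $\tilde\Phi_{T}^{*}\tilde g(T)$ into a small $C^{k}$-neighborhood of $h_{E}$, apply weak stability, and concatenate. The paper does exactly this in four lines, simply writing that $\Phi^{*}g(T)$ lies in a $2\epsilon$-neighborhood of $h_{E}$ and then invoking Definition~\ref{dfn:RB05_1} directly.

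One remark: you are more careful than the paper about the volume constraint in Definition~\ref{dfn:RB05_1}, and your rescaling by $\mu$ together with the scaling covariance $\tilde g(t)\mapsto\mu\,\tilde g(t/\mu)$ of the volume-normalized flow handles this correctly. The paper suppresses this step but is clearly aware of it, since in the strictly stable case it writes $h_{E}'=c\,h_{E}$ with $|M|_{dv_{ch_{E}}}=|M|_{dv_{h}}$ rather than $h_{E}'=h_{E}$. Your closing worry about juggling the three flavours of normalized flow in Theorem~\ref{thm:RG05_3} is unnecessary here: the statement and Definition~\ref{dfn:RB05_1} concern only the volume-normalized flow, so only one scaling law is in play and the concatenation at time $T$ is a genuine continuation of a single solution by uniqueness.
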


\begin{proof}
  Since the normalized Ricci flow initiated from $(N,h)$ exists immortally and converges to $(N, h_{E})$, for each small $\epsilon$,  we may choose $T=T(\epsilon)$ such that 
  \begin{align}
    \norm{h(T)-h_{E}}{C^{[\epsilon^{-1}]}(h(T))}<\epsilon.  \label{eqn:RF13_4} 
  \end{align}
  By the continuous dependence of the normalized Ricci flow on the initial data(cf. Theorem~\ref{thm:RF24_2}), we can choose $\delta=\delta(\epsilon)$ such that if $(M,g)$ satisfies
  \begin{align*}
    d_{GH}\left( (M,g),(N,h) \right)<\delta, \quad Rc_{g} > -2(m-1),
  \end{align*}
  then the normalized Ricci flow initiated from $g(0)=g$ exists on time interval $[0,T]$ and
  \begin{align}
    \norm{h(T)-\Phi^{*}g(T)}{C^{[\epsilon^{-1}]}(h(T))}<\epsilon   \label{eqn:RF13_5}
  \end{align}
  for some diffeomorphism $\Phi:N \to M$.  Combining (\ref{eqn:RF13_4}) and (\ref{eqn:RF13_5}), we know that $\Phi^{*}g(T)$ locates in a $2\epsilon$-neighborhood of $h_{E}$.
  Since $h_{E}$ is weakly stable and $\epsilon$ is sufficiently small, it follows from Definition~\ref{dfn:RB05_1} that the normalized Ricci flow initiated from $\Phi^{*}g(T)$ exists immortally and converges to some Einstein metric $h_{E}'$ nearby $h_{E}$. 
  If $h_{E}$ is strictly stable, then we have $h_{E}'=c h_{E}$ for some positive constant $c$ nearby $1$ such that $|M|_{dv_{ch_{E}}}=|M|_{dv_{h}}$.   
  Concatenating the flows in an obvious way, we obtain the convergence of the normalized flow initiated from $(M,g)$. 
\end{proof}

In Theorem~\ref{thm:RF12_2}, if $(N,h)$ itself is a weakly stable Einstein manifold, then we immediately have the following corollary. 

\begin{corollary}[\textbf{Stability nearby a stable Einstein manifold}]
  Each weakly stable Einstein manifold $(N,h)$ has a Gromov-Hausdorff neighborhood such that every Riemannian metric in this neighborhood with $Rc > -2(m-1)$ 
  can be smoothly deformed to an Einstein metric via the normalized Ricci flow. Furthermore, the limit metric is homothetic to $(N,h)$ if $(N,h)$ is strictly stable.
  \label{cly:RF13_2}
\end{corollary}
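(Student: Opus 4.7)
The plan is to deduce Corollary~\ref{cly:RF13_2} as an essentially immediate specialization of Theorem~\ref{thm:RF12_2}. The key observation is that a weakly stable Einstein manifold $(N,h)$ can itself serve as the initial datum of an immortal normalized Ricci flow solution that converges (trivially) to a weakly stable Einstein manifold, namely to $(N,h)$ itself.

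First I would verify this triviality: if $(N,h)$ is Einstein, then under the normalized Ricci flow with the appropriate choice of $\lambda(t)$, the evolving metric $h(t)$ is simply a reparametrization of $h$, and in particular $h(t)\equiv h$ in the volume-normalized case (after adjusting the Einstein constant by a homothety if necessary). Hence the immortal normalized Ricci flow initiated from $(N,h)$ exists for all time and converges smoothly to the weakly stable Einstein manifold $(N,h_E)=(N,h)$. Therefore the hypotheses of Theorem~\ref{thm:RF12_2} are satisfied by $(N,h)$ in the role of the initial metric of the converging immortal solution.

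Next I would invoke Theorem~\ref{thm:RF12_2} directly: there exists $\epsilon=\epsilon(N,h)$ such that every Riemannian manifold $(M,g)$ satisfying
\begin{align*}
Rc_g > -2(m-1), \qquad d_{GH}\bigl((M,g),(N,h)\bigr) < \epsilon,
\end{align*}
admits an immortal normalized Ricci flow solution converging smoothly to an Einstein manifold $(N,h_E')$ close to $h=h_E$. This is exactly the first statement of Corollary~\ref{cly:RF13_2}, with the Gromov-Hausdorff neighborhood being the $\epsilon$-ball around $(N,h)$ intersected with the set of metrics satisfying the Ricci lower bound.

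Finally, for the strict stability statement, I would again appeal to the second conclusion of Theorem~\ref{thm:RF12_2}: if $(N,h)$ is strictly stable in the sense of Definition~\ref{dfn:RB05_1}, then the limit $h_E'$ must be isometric (and in fact homothetic, after volume normalization) to $h$. There is no substantive obstacle here; the corollary is a direct packaging of Theorem~\ref{thm:RF12_2}, and the only point requiring a brief remark is the triviality that an Einstein metric is a fixed point (up to homothety/reparametrization) of the appropriate normalized Ricci flow, so that it qualifies as the initial metric of an immortal converging solution in the sense required by Theorem~\ref{thm:RF12_2}.
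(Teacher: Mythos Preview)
Your proposal is correct and matches the paper's approach exactly: the paper simply remarks that ``if $(N,h)$ itself is a weakly stable Einstein manifold, then we immediately have the following corollary,'' i.e., one applies Theorem~\ref{thm:RF12_2} with the trivial (static) normalized Ricci flow initiated from the Einstein metric $(N,h)$. No further argument is given or needed.
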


There are many cases where we can apply Theorem~\ref{thm:RF12_2} and Corollary~\ref{cly:RF13_2}.   
For example, by the work of Guenther-Isenberg-Knopf~\cite{GIK}, each compact flat manifold is weakly stable.
Therefore,  starting from a Riemannian manifold nearby a closed flat manifold in the Gromov-Hausdorff topology  with $Rc > -2(m-1)$, the normalized Ricci flow
exists immortally and converges to a flat manifold in the smooth topology.   Note that the limit flat manifold may not be homothetic to the original model flat manifold,
as the flat manifold is only weakly stable.   On the other hand, each compact space form $(N,h)$ of curvature $\pm 1$ is strictly stable.
Therefore,  given any nearby metric $(M,g)$ in the Gromov-Hausdorff topology satisfying $Rc > -2(m-1)$ and $|M|_{dv_{g}}=|N|_{dv_{h}}$, the Ricci flow initiated from $(M,g)$ exists
forever and converges to $(N, h)$. For more results concerning the convergence of Ricci flow nearby closed Einstein manifold, see also \v{S}e\v{s}um~\cite{Sesum}, R.G. Ye~\cite{RYe}, etc.
A comprehensive review for the stability of Ricci flow can be found in Chapter 35 of the book~\cite{CCGGIIKLLN}. 
The case when $K=1$ is even more interesting, as we can combine Theorem~\ref{thm:RF12_2} with the great development in the last decade by B\"ohm-Wilking~\cite{BoWil}, Brendle-Schoen~\cite{BrSch} and Brendle~\cite{SB}. 
In particular, we have the following theorem.

\begin{theorem}[\textbf{Stability nearby a $PIC_{1}$-metric}]
  Suppose $(N,h)$ is a closed Riemannian manifold with $PIC_1$ curvature condition.
  Namely, $(N,h) \times (\R, g_{E})$ has positive isotropic curvature. 
  Then there exists a small constant $\epsilon=\epsilon(N,h)$ with the following properties. 

  If $(M,g)$ is a Riemannian manifold satisfying (\ref{eqn:RF12_1}),  
  then the normalized Ricci flow initiated from $(M,g)$ exists immortally and converges to a space form metric with positive curvature.  
  \label{thm:RF13_1}
\end{theorem}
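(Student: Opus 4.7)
My plan is to reduce Theorem~\ref{thm:RF13_1} to a direct application of Theorem~\ref{thm:RF12_2}. The model immortal flow will be the one starting at $(N,h)$ itself, and the limit Einstein metric will be a positively curved space form, which I will argue is strictly stable in the sense of Definition~\ref{dfn:RB05_1}.

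First, I would run the normalized Ricci flow $\{(N, \tilde h(\tilde t))\}_{\tilde t\ge 0}$ with $\tilde h(0)=h$. Since $PIC_1$ is preserved by the Ricci flow (by Brendle--Schoen~\cite{BrSch} when $m=4$ and by Brendle~\cite{SB} for $m\ge 5$; for $m=3$ the condition $PIC_1$ reduces to $Rc>0$ and is handled by Hamilton~\cite{Ha82}), the convergence results of B\"ohm--Wilking~\cite{BoWil}, Brendle--Schoen and Brendle apply: the normalized flow exists immortally and converges in the smooth topology to a metric $h_E$ of constant positive sectional curvature on $N$. In particular $(N, h, \tilde h(\tilde t))$ fits perfectly the set-up of Theorem~\ref{thm:RF12_2} with limit Einstein manifold $(N, h_E)$.

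Next I would verify that $(N, h_E)$ is strictly stable. The $PIC_1$ condition is open in the $C^2$-topology, so any metric sufficiently close to $h_E$ in $C^{[\epsilon^{-1}]}(h_E)$ satisfies $PIC_1$; applying the same Brendle/Brendle--Schoen/B\"ohm--Wilking convergence result to such a perturbation shows its normalized Ricci flow converges smoothly to some constant positive sectional curvature metric $h_E'$ on $N$. Since the volume is preserved along the volume-normalized flow and the constraint $|N|_{dv_{h_E'}}=|N|_{dv_{h_E}}$ from (\ref{eqn:RG06_2}) pins down the total volume, and since $N$ carries (up to isometry and scale) only one positive space form structure in its diffeomorphism class (given by the universal cover $S^m/\Gamma$), the metric $h_E'$ must be homothetic to $h_E$, with the homothety factor determined by volume, hence equal to $1$. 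This gives strict stability. Alternative references for this strict stability include Ye~\cite{RYe} and \v{S}e\v{s}um~\cite{Sesum}.

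With strict stability of $h_E$ in hand, Theorem~\ref{thm:RF12_2} applies directly: there exists $\epsilon=\epsilon(N,h)$ such that for every $(M,g)$ satisfying (\ref{eqn:RF12_1}), the normalized Ricci flow from $(M,g)$ exists immortally and converges smoothly to an Einstein metric on $M$ homothetic to $(N, h_E)$, which is precisely a space form of positive sectional curvature. The main technical obstacle in this proposal is not the stability argument itself but its hypothesis set-up: one must check that the input from Theorem~\ref{thm:RF24_2} (which gives $C^{[\epsilon^{-1}]}$ closeness of $\Phi^\ast g(T)$ to $h_E$ for large enough $T$) is strong enough so that $\Phi^\ast g(T)$ lies within the open $PIC_1$ neighborhood of $h_E$; this is automatic by choosing $\epsilon^{-1}$ larger than $2$ in the exponent and $T$ large enough that $\tilde h(T)$ is already smoothly close to $h_E$, and then invoking Theorem~\ref{thm:RF24_2} with a correspondingly small Gromov--Hausdorff parameter.
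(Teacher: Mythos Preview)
Your argument is correct, but it takes a slightly different route from the paper's proof. You reduce to Theorem~\ref{thm:RF12_2} by first running the flow from $(N,h)$ all the way to the round limit $h_{E}$ and then verifying that $h_{E}$ is strictly stable; this requires choosing $T$ large so that $\tilde h(T)$ is already close to $h_{E}$ and then invoking Theorem~\ref{thm:RF24_2}. The paper instead bypasses the abstract stability notion entirely: it fixes $T=1$, uses Theorem~\ref{thm:RF24_2} to make $\tilde g(1)$ $C^{5}$-close to $\tilde h(1)$, and observes that since $PIC_{1}$ is preserved along the flow, $\tilde h(1)$ is already $PIC_{1}$; openness of $PIC_{1}$ then puts $\tilde g(1)$ in $PIC_{1}$ as well, and a direct second application of Brendle~\cite{SB} finishes. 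The paper's approach is shorter because it never needs to reach the limit $h_{E}$ or to discuss stability, whereas your approach has the virtue of fitting cleanly into the general framework of Theorem~\ref{thm:RF12_2}; your verification of strict stability is in fact the same Brendle argument applied near $h_{E}$ rather than near $\tilde h(1)$.
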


\begin{proof}
  By the work of Brendle~\cite{SB}, the normalized Ricci flow initiated from $(N,h)$ has immortal existence and convergence.
  For $\epsilon$ sufficiently small, we know the normalized Ricci flow initiated from $g$ exists for at least time $T=1$ and $\tilde{g}(1)$ is very close to $\tilde{h}(1)$ in $C^{5}$-topology,
  via Theorem~\ref{thm:RF24_2}.  As $PIC_1$ condition is an open condition  preserved by the Ricci flow, we know $\tilde{h}(1)$ and $\tilde{g}(1)$ also satisfy $PIC_1$-condition.
  Applying~\cite{SB} again on $\tilde{g}(1)$, we obtain the desired immortal existence and convergence.  
\end{proof}

Theorem~\ref{thm:RF12_2} can also be applied to study the stability of the K\"ahler Ricci flows. For example, we have the following theorem. 

\begin{theorem}[\textbf{Stability nearby a K\"ahler Ricci flow of vanishing $c_{1}$}] 
  Suppose $(N^{n},h,J)$ is a closed K\"ahler manifold of complex dimension $n$ satisfying $c_1(N, J)=0$.
  There exists a small constant $\epsilon=\epsilon(N,h,J)$ with the following properties.

  If $(M,g)$ is a Riemannian manifold satisfying (\ref{eqn:RF12_1}) with $m=2n$.  
  Then the normalized Ricci flow initiated from $(M,g)$ exists immortally and converges to a Calabi-Yau metric $(N^{n}, h', J')$. 

  \label{thm:RF13_3}
\end{theorem}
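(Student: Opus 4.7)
The strategy is to fit $(N,h,J)$ into the framework of Theorem~\ref{thm:RF12_2} by verifying its two structural hypotheses, and then to upgrade the resulting Einstein limit to a Calabi-Yau one via the deformation theory of Ricci-flat K\"ahler metrics.

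First I would check that the normalized Ricci flow from $(N,h)$ converges to an Einstein metric. Since $c_{1}(N,J)=0$, the Ricci form of any K\"ahler metric on $(N,J)$ is $\partial\bar\partial$-exact, so the K\"ahler class and the total volume are both preserved along the unnormalized K\"ahler-Ricci flow initiated at $h$; hence this flow coincides with the volume-normalized Ricci flow. By Cao~\cite{HDC}, it exists immortally and converges in $C^{\infty}$ to a Calabi-Yau metric $(N,h_{E},J)$ in the class $[\omega_{h}]$.

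Second I would verify that $(N,h_{E})$ is weakly stable in the purely Riemannian sense of Definition~\ref{dfn:RB05_1}. This is the delicate point, because Definition~\ref{dfn:RB05_1} permits arbitrary (not necessarily K\"ahler) perturbations of $h_{E}$. I would invoke the dynamical stability theorem of Haslhofer-M\"uller for Ricci-flat metrics, which asserts that a closed Ricci-flat manifold is dynamically stable under the normalized Ricci flow provided (i) it is linearly stable, i.e.\ the Lichnerowicz Laplacian is nonnegative on transverse-traceless symmetric $2$-tensors, and (ii) all infinitesimal Einstein deformations are integrable. For Calabi-Yau metrics, (i) is a standard Bochner/Hodge computation using Ricci-flatness and the K\"ahler identities, while (ii) follows from the Tian-Todorov unobstructedness theorem combined with the identification of infinitesimal Einstein deformations with harmonic $(0,1)$-forms valued in $T^{1,0}N$. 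These two facts imply that every Riemannian metric sufficiently $C^{k}$-close to $h_{E}$ with matching volume flows under the normalized Ricci flow to a nearby Ricci-flat metric, yielding weak stability.

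With both hypotheses in hand, Theorem~\ref{thm:RF12_2} applies directly and gives immortal existence and smooth convergence of the normalized Ricci flow from $(M,g)$ to some Einstein metric $h_{E}'$ lying in a prescribed $C^{k}$-neighborhood of $h_{E}$, after pulling back by the diffeomorphism $N\to M$ supplied by Theorem~\ref{thm:RG05_3}. The final step is to identify $h_{E}'$ as K\"ahler: the local smoothness of the moduli space of Ricci-flat metrics near a Calabi-Yau (Dai-Wang-Wei, via Kodaira-Spencer-Tian-Todorov) implies that every Ricci-flat metric in a sufficiently small $C^{k}$-neighborhood of $h_{E}$ is K\"ahler with respect to some nearby complex structure $J'$ on $N$. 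This produces the desired Calabi-Yau limit $(N^{n},h',J')$ with $h'=h_{E}'$.

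The main obstacle is the weak-stability verification in the second step, since one cannot reduce to the K\"ahler-Ricci flow: Definition~\ref{dfn:RB05_1} tests against arbitrary Riemannian perturbations, and so the transverse-traceless (non-K\"ahler) directions must be controlled by the full Haslhofer-M\"uller machinery. The K\"ahler identification of the limit then hinges on the unobstructedness and smoothness of the Calabi-Yau moduli, which are the only available substitutes for the strict stability employed in Corollary~\ref{cly:RF13_2}.
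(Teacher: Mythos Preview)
Your proposal is correct and follows essentially the same architecture as the paper's proof: both reduce to Theorem~\ref{thm:RF12_2} by (i) invoking Cao~\cite{HDC} for convergence of the normalized flow on $(N,h,J)$ to a Calabi-Yau metric $h_{E}$, and (ii) verifying weak stability of $h_{E}$ via linear stability plus integrability of infinitesimal deformations (Bogomolov-Tian-Todorov). The only substantive difference is in the reference used for the dynamical-stability step: the paper appeals directly to Theorem~1.7 of Dai-Wang-Wei~\cite{DWW}, whereas you route through the Haslhofer-M\"uller framework. Both packages rest on the same two inputs (nonnegativity of the Lichnerowicz Laplacian on transverse-traceless tensors, and Tian-Todorov unobstructedness), so this is a difference of attribution rather than of strategy. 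Your explicit final paragraph identifying the Einstein limit $h_{E}'$ as K\"ahler with respect to a nearby complex structure $J'$---via the local smoothness of the Ricci-flat moduli near a Calabi-Yau point---makes visible a step that the paper's short proof leaves implicit in its citation of~\cite{DWW}.
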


\begin{proof}
  In light of the work of Cao~\cite{HDC}, the normalized Ricci flow initiated from $(N,h,J)$ converges to a Calabi-Yau metric $(N,h_{KE}, J)$. 
  By Bogomolov-Tian-Todorov theorem(cf.~\cite{Bo78},~\cite{T86} and~\cite{To89}), the complex structure $J$ is integrable. 
  Consequently, the Einstein manifold $(N, h_{KE})$ is weakly stable by Theorem 1.7 of~\cite{DWW}. 
  Therefore, we conclude the immortal existence and convergence by Theorem~\ref{thm:RF12_2}. 
\end{proof}

In all the above applications of Theorem~\ref{thm:RF12_2}, we \textit{a priori} assume the existence of a normalized Ricci flow solution with immortal existence and convergence.
It is desirable to drop such conditions and obtain existence and convergence only by initial geometric conditions like curvature, volume, etc.
Such expectation is partially achieved by the following theorem. 

\begin{theorem}[\textbf{$=$Theorem~\ref{thm:RG26_3}}]
  For each $m \geq 3$, there is a constant $\delta_{0}=\delta_{0}(m)$ with the following property.

  Suppose $(M^{m}, g)$ is a Riemannian manifold satisfying 
  \begin{align}
    \begin{cases}
    &Rc \geq (m-1)(1-\delta), \\
    &|M|_{dv_{g}}=(m+1)\omega_{m+1}, 
    \end{cases}
    \label{eqn:RG06_4}  
  \end{align}
  where $\omega_{m+1}$ is the volume of unit ball in $\R^{m+1}$, and $\delta \in (0, \delta_{0})$. 
  Then the normalized Ricci flow initiated from $(M^{m}, g)$ exists immortally and converges to a round metric $g_{\infty}$ exponentially fast.
  The limit metric $(M, g_{\infty})$ has constant sectional curvature $1$. 

  Furthermore, for each pair of points $x,y \in M$ satisfying $d_{g(0)}(x,y) \leq 1$, the following distance bi-H\"older estimate holds:
 \begin{align}
    e^{-\psi} d_{g(0)}^{1+\psi}(x,y) \leq d_{\infty}(x,y) \leq e^{\psi} d_{g(0)}^{1-\psi}(x,y),    \label{eqn:RF13_6} 
 \end{align}
 where $d_{g(0)}=d_{g}, \; d_{\infty}=d_{g_{\infty}}$, and $\psi=\psi(\delta|m)$.    
  \label{thm:RB04_1}
\end{theorem}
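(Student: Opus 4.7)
The plan is to reduce the global stability assertion to a high-regularity perturbation of the round sphere, and then combine the improved pseudo-locality theorem with the Tian-Wang style distance distortion estimate to obtain the bi-Hölder bound. First I would invoke Colding's volume rigidity (\cite{Co1}, \cite{Co2}): under the hypotheses (\ref{eqn:RG06_4}), for $\delta$ small, we have $d_{GH}\bigl((M^{m}, g), (S^{m}, g_{round})\bigr) < \psi(\delta|m)$. Since the static normalized Ricci flow on $(S^{m}, g_{round})$ is smooth and compact on any finite time interval, Theorem~\ref{thm:RF24_2} (continuous dependence of normalized Ricci flow on the initial data in $C^{k}$-sense, with $k$ as large as we wish) applies on $[0,1]$. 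It yields that the normalized Ricci flow from $g$ exists up to $t=1$ and admits a diffeomorphism $\Phi_{1}\colon S^{m}\to M$ with $\norm{\Phi_{1}^{*} g(1) - g_{round}}{C^{5}(S^{m}, g_{round})} < \psi(\delta|m)$.

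Second, I would feed this pinched metric $g(1)$ into the classical convergence machinery. In $C^{5}$-topology close to the round sphere, the curvature operator is pointwise pinched to satisfy the hypotheses of the convergence theorems of Hamilton (\cite{Ha82}, \cite{Ha86} for $m \leq 4$), Huisken (\cite{Huisken85} for $m \geq 5$), and more generally Böhm-Wilking (\cite{BoWil}) and Brendle-Schoen (\cite{BrSch}). Any of these produces, from $g(1)$, an immortal normalized Ricci flow that converges exponentially fast in smooth topology to a constant curvature metric $g_{\infty}$ with sectional curvature $1$; the volume constraint $|M|_{dv_{g}}=(m+1)\omega_{m+1}$ pins down the sectional curvature to be exactly $1$. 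Concatenating with the Ricci flow on $[0,1]$ gives the asserted immortal, exponentially convergent flow from $g$.

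Third, to establish the bi-Hölder estimate (\ref{eqn:RF13_6}), I would split the distortion into two regimes. On $[1, \infty)$, the exponential convergence of $g(t)$ to $g_{\infty}$ in $C^{5}$-topology implies
\begin{align*}
e^{-\psi} d_{g(1)}(x,y) \leq d_{g_{\infty}}(x,y) \leq e^{\psi} d_{g(1)}(x,y),
\end{align*}
so it suffices to bound the distortion between $g(0)$ and $g(1)$ on distances $\leq 1$. For this I would apply Theorem~\ref{thmin:ML14_2} and Theorem~\ref{thm:RE05_5} on unit-size balls: the condition (\ref{eqn:RG06_4}) gives almost non-negative Ricci and, by Cavalletti-Mondino~\cite{CaMo} and volume continuity~\cite{Co}, locally almost Euclidean volume ratio. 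This triggers the local almost Einstein condition of section 4 of Tian-Wang~\cite{TiWa}, which combined with the almost-Euclidean $\bar{\boldsymbol{\nu}}$-value provided by Lemma~\ref{lma:MJ23_1} yields the distance distortion bound (\ref{eqn:RE05_6}) between $g(0)$ and $g(t)$ for $t \in (0,1]$. Specializing to $t=1$ and $d_{g(0)}(x,y) \leq 1$ gives precisely the bi-Hölder inequality with exponent $1 \pm \psi(\delta|m)$.

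The main obstacle is not any single step but the coordination between the local-to-global transfer: pseudo-locality only delivers estimates on scales comparable to $\sqrt{t}$, while the convergence theorems require a small but fixed time at which the whole manifold is simultaneously smoothly close to $g_{round}$. The passage from the local almost-Euclidean picture (valid on geodesic balls of definite size) to a global $C^{5}$-closeness at $t=1$ is exactly what Theorem~\ref{thm:RF24_2} provides, and its applicability hinges on the stability of the static round-sphere flow, which is standard. A subtler point is that the distance distortion on $[0,1]$ must avoid being destroyed by the exponential factors coming from $[1,\infty)$ convergence; this is handled because distances $\leq 1$ in $g(0)$ remain bounded under the short-time pseudo-locality regime, and the $C^{5}$-bound at $t=1$ propagates forward with at most multiplicative constants tending to $1$ as $\delta \to 0$.
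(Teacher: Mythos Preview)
Your proposal follows essentially the same approach as the paper. One point needs sharpening: in your third step you claim that Theorem~\ref{thm:RE05_5} on unit-size balls yields the distance distortion bound for all $t \in (0,1]$, but that theorem applied to a ball of radius $r$ only controls $t \in (0, \epsilon^{2}r^{2})$, and on a closed manifold close to the round sphere you cannot enlarge $r$ (or $T$ in Corollary~\ref{cly:CK21_1}) to order $1$ while keeping $\bar{\boldsymbol{\nu}}(M,g,T)$ almost zero. The paper resolves this with an explicit three-part decomposition: Corollary~\ref{cly:CK21_1} covers $[0, \epsilon^{2}r_{0}^{2}]$ for a fixed small $r_{0}$, with the requisite entropy bound coming from the L\'evy--Gromov isoperimetric inequality rather than Cavalletti--Mondino; Theorem~\ref{thm:RF24_2} then shows the flow is $C^{\infty}$-close to the static sphere on $[\epsilon^{2}r_{0}^{2}, 1]$, so distances are bi-Lipschitz with constant $e^{\psi}$ there; and exponential convergence handles $[1,\infty)$. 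You have already established the middle piece in your second paragraph, so the fix is simply to invoke it again for the distance bridging on $[\epsilon^{2}r_{0}^{2},1]$; as written, the direct specialization to $t=1$ is not justified.
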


\begin{proof}
 Let $g_{round}$ be the space form metric on $S^{m}$ with constant sectional curvature $1$. Clearly, $|S^{m}|_{dv_{g_{round}}}=(m+1)\omega_{m+1}$. 
 By Colding's fundamental results~\cite{Co1},~\cite{Co2}, we have
 \begin{align}
  d_{GH}( (M,g), (S^{m}, g_{round}))<\psi(\delta|m).
  \label{eqn:RF13_7}
 \end{align}
  Note that $(S^{m}, g_{round})$ is a fixed point of the normalized Ricci flow. Then the immortal existence and convergence
  of the normalized Ricci flow initiated from $g$ is a consequence of Corollary~\ref{cly:RF13_2}. 
  The exponential convergence is due to Hamilton~\cite{Ha82},~\cite{Ha86}. Note that the flow limit metric $g_{\infty}$ is isometric to $g_{round}$. 

  We proceed to prove (\ref{eqn:RF13_6}).  Suppose $\delta$ is small enough, by the Bishop-Gromov volume comparison and the L\'{e}vy-Gromov isoperimetric constant estimate(cf.~\cite{Gmv86}),
  one can find a fixed scale $r_0 \in (0, 1)$ such that
  $\boldsymbol{\nu}(M,g, r_0^{2})$ is almost non-negative such that Corollary~\ref{cly:CK21_1} can be applied. It follows that 
  \begin{align}
    e^{-\psi} d_{g(0)}^{1+\psi}(x,y) \leq d_{g(\epsilon^{2} r_0^{2})}(x,y) \leq e^{\psi} d_{g(0)}^{1-\psi}(x,y), \quad \forall\; x, y \in M, \; d_{g(0)}(x,y)<1. 
  \label{eqn:RI27_1}  
  \end{align}
  Here $\psi=\psi(\delta|m)$ and $\epsilon, r_0$ are independent of $\delta$.
  In light of Theorem~\ref{thm:RF24_2}, the flow $g(t)$ converges to the static round sphere normalized Ricci flow on the time period $[(\epsilon r_0)^{2}, 1]$, as $\delta \to 0$.  
  Consequently, the following estimates hold:
  \begin{align}
    e^{-\psi} d_{g(\epsilon^{2} r_0^{2})}(x,y) \leq d_{g(1)}(x,y) \leq e^{\psi} d_{g(\epsilon^{2} r_0^{2})}(x,y),  \quad \forall\; x, y \in M.  
    \label{eqn:RI27_2}  
  \end{align}
  By exponential convergence of $g(t)$ to $g(\infty)$ and the fact that $g(1)$ is already very close to $g_{round}$ in the smooth topology, we have 
  \begin{align}
    e^{-\psi}  d_{g(1)}(x,y) \leq d_{\infty}(x,y) \leq e^{\psi} d_{g(1)}(x,y),  \quad \forall\; x, y \in M.  
    \label{eqn:RI27_3}  
  \end{align}
  Combining (\ref{eqn:RI27_1}), (\ref{eqn:RI27_2}) and (\ref{eqn:RI27_3}), up to modifying $\psi$ if necessary, we obtain (\ref{eqn:RF13_6}). 
\end{proof}

In Riemannian geometry, the round metrics on unit sphere $S^{m}$ are the most well-known classical metrics of positive curvature. 
Their counter parts in K\"ahler geometry are the Fubini-Study metrics on $\CP^{n}$, whose sectional curvatures vary from $1$ to $4$.
Since the Ricci curvature of the Fubini-Study metrics is $2(n+1)$, 
the following theorem can be regarded as the K\"ahler version of Theorem~\ref{thm:RB04_1}.

\begin{theorem}[\textbf{$=$Theorem~\ref{thm:RG26_5}} ]
  There is a constant $\delta_{0}=\delta_{0}(n)$ with the following properties.

Suppose $(M^{n},g,J)$ is a Fano manifold whose metric form is proportional to  $c_1(M,J)$ and satisfies
 \begin{align}
    \begin{cases}
    &Rc \geq 2(n+1)(1-\delta), \\
    &|M|_{dv_{g}}=\Omega_n,
    \end{cases}
    \label{eqn:RE15_2}  
  \end{align}
where $\Omega_{n}$ is the volume of $(\CP^{n}, g_{FS})$, and $\delta \in (0, \delta_{0})$.  
Then the normalized Ricci flow initiated from $(M, g)$ exists immortally and converges smoothly(without further diffeomorphisms) to a metric $g_{\infty}$  such that
$(M, g_{\infty}, J)$ is biholomorphic-isometric to $(\CP^{n}, g_{FS}, J_{FS})$.
Furthermore, for each pair of points $x,y \in M$ satisfying $d_{g(0)}(x,y) \leq 1$, the distance bi-H\"older estimate (\ref{eqn:RF13_6}) holds. 
\label{thm:RE14_15}
\end{theorem}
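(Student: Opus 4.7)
The plan is to reduce the statement to the Riemannian stability arguments in Theorem~\ref{thm:RB04_1}, upgraded by complex geometric input, and to control the initial-time behavior through an almost-Einstein blowup analysis. First I would observe that by K.~Zhang's volume pinching theorem applied to the condition (\ref{eqn:RE15_2}), $(M^{n},J)$ is biholomorphic to $\CP^{n}$, and since the K\"ahler class $[\omega_{g}]$ is proportional to $2\pi c_{1}(M,J)$, the normalization of the volume forces $[\omega_{g}]=[\omega_{FS}]$ under the biholomorphism. Consequently the normalized Ricci flow initiated from $g$ is a genuine Fano K\"ahler--Ricci flow, which by Cao~\cite{HDC} exists immortally. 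This settles the immortal existence part for free; the entire substance of the theorem lies in the convergence to $g_{FS}$ and the bi-H\"older distance estimate (\ref{eqn:RF13_6}).

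Second, I would prove the following compactness claim: as $\delta\to 0$, any sequence of Ricci flow space-times $\{(M_{i},g_{i}(t)),\;t\in[\tfrac12,2]\}$ arising from (\ref{eqn:RE15_2}) subsequentially converges, in the sense of Chen--Wang~\cite{CW17B}, to a static Einstein space-time. For this I would invoke Perelman's fundamental estimates along the Fano K\"ahler--Ricci flow in the strengthened form of Jiang~\cite{Jiang}, which yields uniform scalar curvature, diameter and $L^{1}$-Ricci-potential bounds on $[\tfrac12,2]$. These bounds feed directly into the Chen--Wang compactness theorem to produce a (possibly singular) K\"ahler--Einstein limit space-time with constant $2(n+1)$. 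Combining the partial $C^{0}$-estimate along the flow (from the Chen--Donaldson--Sun program~\cite{CDS1,CDS2,CDS3} and~\cite{CSW}) with the algebraic stability of $\CP^{n}$, this limit must be the static Fubini--Study space-time on $\CP^{n}$. Hence the convergence is actually smooth, and for $\delta$ sufficiently small the bisectional curvature of $g_{i}(t)$ is positive on $[\tfrac12,2]$. Then the exponential convergence of the original flow to $g_{FS}$ (without further diffeomorphisms, since the complex structure is already $J_{FS}$) follows from Chen--Tian~\cite{ChenTian} (or Phong--Song--Weinkove~\cite{PhSoWe}). Weak stability of $g_{FS}$ in the sense of Definition~\ref{dfn:RB05_1} then lets me identify the limit $(M,g_{\infty},J)$ with $(\CP^{n},g_{FS},J_{FS})$ via a diffeomorphism $\Phi$.

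Third, I need the distance bi-H\"older estimate, and this is the main obstacle. Unlike the Riemannian case (Theorem~\ref{thm:RB04_1}), Colding's $L^{2}$-equivalence theorem is not directly available to give a uniform lower bound on $\bar{\boldsymbol{\nu}}$ at the initial time; that is the reason I cannot simply invoke Corollary~\ref{cly:CK21_1}. The strategy is a gap argument at $t=0$. From the smooth convergence of $g(t_{0})$ to $g_{FS}$ for each $t_{0}\in(0,1)$ and the uniform $\int_{0}^{1}\int_{M}|Rc-2(n+1)g|\,dv\,dt\to 0$ along the degenerating sequence, I can apply the almost-Einstein limit argument of Tian--Wang~\cite{TiWa} to conclude that $(M_{i},g_{i}(0))$ subsequentially Gromov--Hausdorff converges to a (possibly singular) K\"ahler--Einstein space of positive scalar curvature with the correct volume. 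A gap argument (using Theorem~\ref{thm:PA08_1} and Theorem~\ref{thm:PA05_2}) then forces the limit to be smooth and isometric to $(\CP^{n},g_{FS})$. This delivers an almost-Euclidean condition of the form (\ref{eqn:RH17_4}) on a scale $r_{0}=r_{0}(n)$ independent of $\delta$.

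With this almost-Euclidean condition in hand, the final step is routine: I would apply the improved pseudo-locality theorem (Theorem~\ref{thmin:ML14_2}) at scale $r_{0}$ to obtain uniform curvature, volume and injectivity radius estimates on $(0,\epsilon^{2}r_{0}^{2}]$. Combined with the almost-Einstein condition, this yields, via the argument of Section~4 of~\cite{TiWa} (equivalently, through Proposition~\ref{prn:RE04_3} and Theorem~\ref{thm:RE05_5} in the present paper), the short-range distance distortion estimate at the initial time. Finally, Theorem~\ref{thm:RF24_2} controls the distortion between $g(\epsilon^{2}r_{0}^{2})$ and $g(1)$, and the exponential convergence of $g(t)$ to $g_{\infty}$ on $[1,\infty)$ gives the distortion between $g(1)$ and $g_{\infty}$. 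Concatenating the three distortion estimates, exactly as in the proof of Theorem~\ref{thm:RB04_1} but with the almost-Einstein input replacing Colding's volume continuity, yields (\ref{eqn:RF13_6}) with $\psi=\psi(\delta|n)$.
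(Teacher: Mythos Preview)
Your overall strategy matches the paper's: Zhang's theorem plus Cao for immortal existence, Jiang's bounds plus Chen--Wang compactness and CDS/CSW stability to identify the flow limit on $[\tfrac12,2]$ with the static Fubini--Study flow, and then Chen--Tian for exponential convergence. Your steps 1, 2, and 4 are essentially identical to the paper's.

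The gap is in step 3. You claim that once Tian--Wang's almost-Einstein theory gives a (possibly singular) K\"ahler--Einstein limit for $(M_i,g_i(0))$, a ``gap argument using Theorem~\ref{thm:PA08_1} and Theorem~\ref{thm:PA05_2}'' forces this limit to be smooth and equal to $(\CP^n,g_{FS})$. But those theorems go in the wrong direction: they say that an Einstein or cscK manifold with $\boldsymbol{\nu}$ (or $\bar{\boldsymbol{\nu}}$) already close to zero must be flat or have controlled curvature. Here the almost-Euclidean control on $g_i(0)$ is precisely what you are trying to establish; you have no a priori lower bound on $\boldsymbol{\nu}$ or the volume radius at $t=0$, so there is nothing for those gap theorems to bite on. Similarly, appealing to Tian--Wang's distance-distortion machinery to identify the $t=0$ limit with the $t>0$ limit is circular, since that machinery (Lemma~\ref{lma:PA05_1}, Proposition~\ref{prn:RE04_3}) already presupposes the almost-Euclidean entropy/volume condition on the initial slice.

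The paper resolves this by a direct volume-radius argument (Claim~\ref{clm:RE17_5}) that avoids the circularity. One assumes for contradiction that $\mathbf{vr}(x_i,g_i(0))\le \tfrac12 r_0$, passes to the Cheeger--Colding limit $(\bar M,\bar d)$ at $t=0$, and works first at \emph{regular} points $\bar y\in\mathcal{R}(\bar M)$, where pseudo-locality is available by definition. There, the Hamilton--Perelman distance estimate together with the almost-decreasing volume element and the smooth convergence of $g_i(1)$ to $g_{FS}$ yields $|B(\bar y,r)|\ge |B_{g_{FS}}(\bar y,r)|$, hence $\mathbf{vr}(\bar y)\ge r_0$. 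Density of $\mathcal{R}(\bar M)$ and volume continuity then push this to the bad limit point $\bar x$, contradicting $\mathbf{vr}(\bar x)\le \tfrac12 r_0$. Only after this bound is in hand does one invoke Theorem~\ref{thm:RH25_1} and Corollary~\ref{cly:CK21_1} to run the distance-distortion argument as in your step 4.
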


\begin{proof}
  By the result of Zhang~\cite{KZhang}, there exists a dimensional constant $\bar{\delta}(n)$ such that if $\delta <\bar{\delta}(n)$, 
  then $(M,J)$ is biholomorphic to $\CP^{n}$, which implies that $c_{1}(M,J)=c_1(\CP^{n},J_{FS})$.  According to our assumption, we have $[\omega]=2\pi \lambda  c_{1}(M,J)$,
  which together with (\ref{eqn:RE15_2}) forces that $\lambda=2(n+1)$.  Therefore, the normalized Ricci flow
  \begin{align}
    \frac{d}{dt} g_{ij}=-R_{ij} + 2(n+1) g_{ij}   \label{eqn:RE17_1}
  \end{align}
  preserves the K\"ahler class $2(n+1) \cdot 2\pi c_1(M,J)$. As volume is determined by K\"ahler class, the volume along the above flow is fixed. 
  Consequently, the flow (\ref{eqn:RE17_1}) is both normalized flow and $2(n+1)$-normalized flow.
  Therefore, the immortal existence of (\ref{eqn:RE17_1}) is guaranteed by the fundamental result of Cao~\cite{HDC}. 
  It suffices to show the convergence and distance bi-H\"older equivalence of the flow (\ref{eqn:RE17_1}).

  A modification of Perelman's argument by Jiang(cf.\cite{Jiang}) implies that 
  \begin{align}
    \diam(M,g(t))+ \norm{R}{C^{0}(M, g(t))}+ \norm{\nabla \dot{\varphi}}{C^{0}(M, g(t))}<C(n),  \quad \forall t \in [0.5,\infty),  \label{eqn:RE17_2}
  \end{align}
  where $\dot{\varphi}$ is the Ricci potential function.  Along the normalized flow (\ref{eqn:RE17_1}), the scalar curvature $R$ satisfies the evolution equation
  \begin{align*}
    \frac{\partial}{\partial t} \left( R-4n(n+1) \right) =\frac{1}{2} \Delta R +  \left| R_{ij}-2(n+1)g_{ij} \right|^2 + (R-4n(n+1)). 
  \end{align*}
  Then maximum principle implies that
  \begin{align}
    R_{min}(t)-4n(n+1)\geq \left( R_{min}(0)-4n(n+1) \right) e^{t} \geq -4n(n+1) e^{t} \delta. \label{eqn:RG07_1}
  \end{align}
  Note that $|M|_{dv_{g(t)}} \equiv \Omega_{n}$ and $\int_{M} (R-4n(n+1))dv \equiv 0$ along the flow (\ref{eqn:RE17_1}),  it follows from the above inequality that 
 \begin{align} 
   &\quad\int_{0}^{1} \int_{M} |R-4n(n+1)| dv dt \notag\\
   &\leq \int_{0}^{1} \int_{M} \left\{ |R-4n(n+1)+4n(n+1)\delta e^{t}| + 4n(n+1)\delta e^{t}  \right\} dvdt \notag \\
   &=\int_{0}^{1} \int_{M} \left\{ \left\{R-4n(n+1)+4n(n+1)\delta e^{t}\right\} + 4n(n+1)\delta e^{t}  \right\} dvdt \notag \\
   &=8n(n+1)\delta \int_{0}^{1} \int_{M} e^{t} dvdt \notag\\
   &=8n(n+1) \Omega_{n} (e-1) \cdot \delta.    \label{eqn:RE17_3}  
  \end{align}
  In light of (\ref{eqn:RE17_2}) and (\ref{eqn:RE17_3}), the K\"ahler condition implies 
  \begin{align}
    &\quad\int_{0.5}^{1} \int_{M} |R_{ij}-2(n+1)g_{ij}|^2 dv dt=\frac{1}{2} \int_{0.5}^{1} \int_{M} |R-4n(n+1)|^2 dv dt \notag\\
    &\leq C(n) \int_{0.5}^{1} \int_{M} |R-4n(n+1)|dv dt \leq C(n) \delta.   \label{eqn:RF05_1} 
  \end{align}

  \begin{claim}
    There exists $\delta_{a}=\delta_{a}(n)$ with the following properties. 
    For each $(M,g)$ satisfies (\ref{eqn:RE15_2}) for some $\delta \in (0, \delta_{a})$, we have
  \begin{align}
     d_{GH} \left( (M, g(1)), (\CP^{n}, g_{FS}) \right)<\psi(\delta|n).
  \label{eqn:RI27_5}
  \end{align}
    \label{clm:RE17_4}
  \end{claim}

  We argue by contradiction. 
  For otherwise, there exist $\delta_i \to 0$ and flows starting from $g_i(0)=g_i$ with $Rc \geq 2(n+1)(1-\delta_i) g_{i}$ and $|M|_{dv_{g_i}}=\Omega_{n}$
  such that 
  \begin{align}
    d_{GH}\left( (M_{i}, g_{i}), (\CP^{n}, g_{FS})  \right) \geq \xi>0. 
  \label{eqn:RI27_6}
  \end{align}
  In light of estimate (\ref{eqn:RF05_1}) and (\ref{eqn:RE17_2}), we obtain a sequence of polarized K\"ahler Ricci flows where Theorem 1.2 of Chen-Wang~\cite{CW17B} applies. 
  By taking subsequence if necessary, we have
  \begin{align}
    (M_i, g_i(1))  \longright{\hat{C}^{\infty}-Cheeger-Gromov} (M_{\infty}, g_{\infty}(1)),  \label{eqn:RI27_7}
  \end{align}
  where the limit is a K\"ahler Einstein conifold in the sense of Chen-Wang(cf. Definition 1.2 of~\cite{CW17A}). 
  The definition of $\hat{C}^{\infty}$-Cheeger-Gromov convergence can be found on page 2 of~\cite{CW17A}. 
  In view of the partial-$C^{0}$-estimate(cf. Theorem 1.10 of~\cite{CW17B}), we can construct a test-configuration which achieves a degeneration from $(M_i, g_i(1), J_i)$ to a $Q$-Fano K\"ahler-Einstein space
  $(M_{\infty}, g_{\infty}(1), J_{\infty})$.   Recall that each $M_i$ is already biholomorphic to $\CP^n$ and $g_i(0)$ corresponds to a metric form $\omega_{i}(0)$ in the pluri-anti-canonical class $4(n+1)\pi c_1(M_i, J_i)$.  
  By the stability theorem of Chen-Donaldson-Sun(cf.~\cite{CDS1},~\cite{CDS2} and~\cite{CDS3}), 
  we know that $(M_{\infty}, g_{\infty}(1),  J_{\infty})$ must be biholomorphic-isometric to the standard projective space $(\CP^{n}, g_{FS}, J_{FS})$.
  In particular, the limit space $(M_{\infty}, g_{\infty}(1),  J_{\infty})$ is smooth. Therefore, we can apply the backward pseudo-locality theorem(cf. Theorem 4.7 of~\cite{CW17B}) to improve the convergence (\ref{eqn:RI27_7}) as: 
  \begin{align}
    (M_i, g_i(1))  \longright{C^{\infty}-Cheeger-Gromov} (\CP^{n}, g_{FS}).   \label{eqn:RF09_4}
  \end{align}
  In particular, $d_{GH} \left( (M_{i}, g_{i}(1)), (\CP^{n}, g_{FS}) \right)$ could be arbitrarily small if we choose $i$ sufficiently large. 
  This contradicts our choice of $g_i$ in (\ref{eqn:RI27_6}).   The proof of Claim~\ref{clm:RE17_4} is complete.

  \begin{claim}
    There exists $\delta_{b}=\delta_{b}(n, \epsilon_{0})$ with the following properties. 
    For each $(M,g)$ satisfies (\ref{eqn:RE15_2}) for some $\delta \in (0, \delta_{b})$, 
    we have 
    \begin{align}
      2\mathbf{vr}_{\epsilon_0}(M,g) > \mathbf{vr}_{\epsilon_0}(\CP^{n}, g_{FS}) =:r_0, \label{eqn:RI27_8} 
    \end{align}
    where $\mathbf{vr}_{\epsilon_0}$ for some sufficiently small $\epsilon_0=\epsilon_0(2n)$ is defined in Definition~\ref{dfn:RH26_3}. 
    \label{clm:RE17_5}
  \end{claim}

  For simplicity of notation, we denote $\mathbf{vr}_{\epsilon_0}$ by $\mathbf{vr}$. 
  We argue by contradiction. For otherwise, we can find manifolds $(M_i, g_i)$ violating the statement with $\delta_i \to 0$.
  Therefore,  we can find points $x_i \in M_{i}$ such that  $\mathbf{vr}(x_i, g_i)=\mathbf{vr}(M_i, g_i) <\frac{1}{2} r_0$. 
  By Cheeger-Colding theory, we can assume that
  \begin{align}
    (M_i, g_i) \longright{Gromov-Hausdorff}  (\bar{M}, \bar{d}),    \label{eqn:RF09_5}
  \end{align}
  where $(\bar{M}, \bar{d})$ is a length space with a regular-singular decomposition $\bar{M}=\mathcal{R}(\bar{M}) \cup \mathcal{S}(\bar{M})$.
  Let $\bar{x}$ be the limit point of $x_i$, then volume continuity implies that 
  \begin{align}
    \mathbf{vr}(\bar{x}, \bar{d}) = \lim_{i \to \infty} \mathbf{vr}(x_i, g_i) \leq 0.5 r_0.  \label{eqn:RF11_1} 
  \end{align}
  We decompose each space, $\bar{M}$ or $M_i$, by setting $\mathcal{R}_{r}=\left\{ x|\mathbf{vr}(x) \geq r \right\}$ and $\mathcal{S}_{r}=\left\{ x|\mathbf{vr}(x)<r \right\}$. 
  Then we have
  \begin{align*}
    \mathcal{R}=\cup_{r>0} \mathcal{R}_{r}, \quad \mathcal{S}=\cap_{r>0} \mathcal{S}_{r}.
  \end{align*}
  For each $i$, as $M_i$ is a smooth manifold, it is clear that $\mathcal{S}(M_i)=\emptyset$.
  However, $\mathcal{S}(\bar{M}) \neq \emptyset$ in general. For simplicity of notation, we regard the $m$-dimensional Hausdorff measure as the default measure on 
  the limit space $\bar{M}$, and we regard $dv_{g_{i}}=dv_{g_i(0)}$ as the default measure on $M_{i}$. 
  Then it is known(cf.~\cite{CC}) that $\mathcal{S}(\bar{M})$ is a measure-zero set.  Consequently,  for each $\epsilon$ small, we can find $\rho$ such that
  \begin{align*}
    |\mathcal{S}_{\rho}(\bar{M})|=|\cap_{r \geq \rho} \mathcal{S}_{r}(\bar{M})|<\epsilon. 
  \end{align*}
  By volume continuity, we have
  \begin{align}
    |\mathcal{S}_{\rho}(M_i)|<\epsilon      \label{eqn:RF11_2}
  \end{align}
  for large $i$.

  Fix $\bar{y} \in \mathcal{R}(\bar{M})$. We can assume $y_{i} \in M_{i}$ such that $y_i \to \bar{y}$ along the convergence (\ref{eqn:RF09_5}).  

  Thanks to (\ref{eqn:RE17_3}), the sequence $(M_i, g_i)$ is an almost Einstein sequence in the sense of Tian-Wang(cf. Definition 1 of~\cite{TiWa}). 
  In light of (\ref{eqn:RE17_2}), (\ref{eqn:RE17_3}) and (\ref{eqn:RF09_4}), we can apply the compactness
  theorem of Chen-Wang(cf. Theorem 1.6 of~\cite{CW17B}) to obtain that
  \begin{align}
    \left\{ \left( M_i, g_i(t) \right), 0<t \leq 2 \right\} \longright{C^{\infty}-Cheeger-Gromov} \left\{ \left( \CP^{n}, g_{FS} \right), 0<t \leq 2 \right\}.
    \label{eqn:RF09_6}  
  \end{align}
  Combining the above convergence with the distance distortion estimate of Hamilton-Perelman(cf. Lemma 8.3(b) of~\cite{Pe1}) implies that
  \begin{align*}
    B_{g_i(1)}(y_i,r) \backslash \mathcal{S}_{\rho}(M_i)  \subset B_{g_{i}(0)}(y_i,(1+\psi) r)=B(y_i,(1+\psi) r)
  \end{align*}
  where $\psi=\psi(\delta_i|n, \mathbf{vr}(\bar{y},\bar{d}),\rho)$.   In view of (\ref{eqn:RG07_1}), the volume element is almost decreasing along the normalized Ricci flow (\ref{eqn:RE17_1}).
  Recalling that $g_i=g_{i}(0)$ is the default metric and $dv_{g_{i}}=dv_{g_i(0)}$ is the default measure,  we consequently have  
 \begin{align*}
   |B_{g_i(1)}(y_i,r)|_{dv_{g_i(1)}}&=|B_{g_i(1)}(y_i,r) \backslash \mathcal{S}_{\rho}(M_i)|_{dv_{g_i(1)}}+|B_{g_i(1)}(y_i,r) \cap \mathcal{S}_{\rho}(M_i)|_{dv_{g_i(1)}}\\
   &\leq |B_{g_{i}(0)}(y_{i}, (1+\psi)r)|_{dv_{g_i(1)}} + |B_{g_i(1)}(y_i,r) \cap \mathcal{S}_{\rho}(M_i)|_{dv_{g_i(1)}}\\
   &\leq e^{\psi}\left\{ |B_{g_{i}(0)}(y_{i}, (1+\psi)r)|_{dv_{g_i(0)}} + |B_{g_i(1)}(y_i,r) \cap \mathcal{S}_{\rho}(M_i)|_{dv_{g_i(0)}} \right\}\\
   &=e^{\psi}\left\{ |B(y_{i}, (1+\psi)r)| + |B_{g_i(1)}(y_i,r) \cap \mathcal{S}_{\rho}(M_i)| \right\}\\
   &\leq e^{\psi} \left\{ |B(y_i, (1+\psi) r)| + |\mathcal{S}_{\rho}(M_i)| \right\}. 
 \end{align*}
 It follows that
 \begin{align*}
   |B(y_i, (1+\psi) r)| \geq e^{-\psi}|B_{g_i(1)}(y_i,r)|_{dv_{g_i(1)}} - |\mathcal{S}_{\rho}(M_i)|.
 \end{align*}
 By (\ref{eqn:RF09_6}), for each small fixed $t$, the limit of $(M_i, g_i(t))$ is $(\CP^{n}, g_{FS})$. 
 Plugging (\ref{eqn:RF11_2}) into the above inequality and letting $i \to \infty$, we obtain
 \begin{align*}
   |B(\bar{y},r)| \geq |B_{g_{FS}}(\bar{y},r)|_{dv_{g_{FS}}} -\epsilon.
 \end{align*}
 By the arbitrary choice of $\epsilon$, we have $ |B(\bar{y},r)| \geq |B_{g_{FS}}(\bar{y},r)|_{dv_{g_{FS}}}$  for each $r \in (0, r_0]$.
 In particular, we know $\mathbf{vr}(\bar{y}, \bar{d}) \geq r_0$. By the arbitrary choice of $\bar{y} \in \mathcal{R}(\bar{M})$, we have proved that
 \begin{align*}
   \inf_{\bar{y} \in \mathcal{R}(\bar{M})} \mathbf{vr}(\bar{y}, \bar{d}) \geq r_0.
 \end{align*}
 Since $\mathcal{R}(\bar{M})$ is a dense subset of $\bar{M}$, we can choose $\bar{y} \in \mathcal{R}(\bar{M})$ such that $\bar{y} \to \bar{x}$. 
 Then the volume continuity implies that $\mathbf{vr}(\bar{x}) \geq r_0$. This contradicts (\ref{eqn:RF11_1}). 
 The proof of Claim~\ref{clm:RE17_5} is complete.\\

 Thanks to Claim~\ref{clm:RE17_5}, we obtain
 \begin{align*}
      \mathbf{vr}(M,g) > 0.5 r_0
 \end{align*}
 when $(M,g)$ satisfies (\ref{eqn:RE15_2}) for sufficiently small $\delta$.
 In light of Theorem~\ref{thm:RH25_1},  the above inequality implies uniform lower bound of entropy radius (cf. Definition~\ref{dfn:RH26_3} up to modifying $\epsilon_0$).
 Following the same route as in the proof of Theorem~\ref{thm:RB04_1}, we can apply Corollary~\ref{cly:CK21_1} to derive (\ref{eqn:RI27_1}) and (\ref{eqn:RI27_2}).
 Combining them together, we obtain
  \begin{align}
    e^{-\psi} d_{g(0)}^{1+\psi}(x,y) \leq d_{g(1)}(x,y) \leq e^{\psi} d_{g(0)}^{1-\psi}(x,y),  \quad \forall\; x, y \in M \;\textrm{satisfying} \; d_{g(0)}(x,y)<1.
  \label{eqn:RI27_4}  
  \end{align}
  The Bishop-Gromov volume comparison implies that the diameter of $(M,g)$ is uniformly bounded. Therefore, it follows from (\ref{eqn:RI27_4}) that
  \begin{align*}
    d_{GH}\left\{ (M, g(0)), (M, g(1)) \right\}<\psi(\delta|n),
  \end{align*}
  which combined with (\ref{eqn:RI27_5}) yields 
  \begin{align*}
    d_{GH}\left\{ (M, g(0)), (\CP^{n}, g_{FS}) \right\}<\psi(\delta|n).
  \end{align*}
 Therefore, $(M,g)$ converges in Gromov-Hausdorff topology to $(\CP^{n}, g_{FS})$ if $\delta \to 0$.

 In (\ref{eqn:RF09_6}), we can restrict our attention on the time slice $t=1$ and also consider the degeneration of $J$.
 As we already know that each $(M_{i}, J_{i})$ is biholomorphic to $(\CP^{n}, J_{FS})$, we have 
  \begin{align}
    \left( \CP^{n}, \bar{g}_{i}, J_{FS} \right)
   \cong \left( M_i, g_i(1),J_{i} \right) \longright{C^{\infty}-Cheeger-Gromov} \left( \bar{M}, \bar{g}, \bar{J} \right) \cong \left( \CP^{n}, g_{FS}, J_{FS} \right),
   \label{eqn:RI28_2} 
  \end{align}
  where ``$\cong$" means being biholomorphic-isometric.  For example, the last equality in (\ref{eqn:RI28_2}) means that there exists a diffeomorphism map $\Phi:\CP^{n} \mapsto \bar{M}$
  such that
  \begin{align*}
    \Phi^{*}(\bar{g})=g_{FS}, \quad \Phi^{*}(\bar{J})=J_{FS}.
  \end{align*}
  Following the definition of Cheeger-Gromov convergence, thanks to (\ref{eqn:RI28_2}), we can find diffeomorphims $\varphi_{i}: \CP^{n} \to \CP^{n}$ such that
  \begin{align}
    h_i \coloneqq \varphi_{i}^{*}(\bar{g}_{i}) \longright{C^{\infty}} g_{FS}. \label{eqn:RI28_1} 
  \end{align}
  It follows that
  \begin{align*}
    \sum_{k=0}^{5} \norm{\nabla^{k}Rm_{h_{i}} }{h_{i}} \leq C_{m}, 
    \quad 
    \sum_{k=0}^{5} \norm{\nabla^{k}\left\{Rc_{h_{i}}-2(n+1)h_{i}\right\}}{h_{i}} \to 0. 
  \end{align*}
  By the invariant property of curvature derivatives under diffeomorphism actions, we have
  \begin{align*}
    \sum_{k=0}^{5} \norm{\nabla^{k}Rm_{\bar{g}_{i}} }{\bar{g}_{i}} \leq C_{m}, 
    \quad 
    \sum_{k=0}^{5} \norm{\nabla^{k}\left\{Rc_{\bar{g}_{i}}-2(n+1)\bar{g}_{i}\right\}}{\bar{g}_{i}} \to 0. 
  \end{align*}
  Recall the definition of $\bar{g}_{i}$ in (\ref{eqn:RI28_2}), the above equation actually means that
  \begin{align}
    \sum_{k=0}^{5} \norm{\nabla^{k}Rm_{g(1)} }{g(1)} \leq C_{m}, 
    \quad 
    \sum_{k=0}^{5} \norm{\nabla^{k}\left\{Rc_{g(1)}-2(n+1)g(1)\right\}}{g(1)}<\psi(\delta|n), 
    \label{eqn:RI29_1}
  \end{align}
  whenever $g=g(0)$ satisfies (\ref{eqn:RE15_2}) for some $\delta \in (0, \delta_{c}(n))$. 
  Recall that the normalized Ricci flow solution  (\ref{eqn:RE17_1}) initiated from $g(1)$ stays in the K\"ahler class $4(n+1)\pi c_1(\CP^{n}, J_{FS})$.
  Then the condition (\ref{eqn:RI29_1}) guarantees that the flow converges exponentially fast to a metric $g_{\infty}=\Phi^{*}(g_{FS})$ for some $\Phi \in Aut(\CP^{n}, J_{FS})$.
  In particular, we have
  \begin{align}
    \{1-\psi(\delta|n)\} g_{\infty}\leq g(1) \leq \{1+\psi(\delta|n)\} g_{\infty}.   \label{eqn:RI29_4}
  \end{align}
  Recall the definitions of gaps at the beginning of this proof, in Claim~\ref{clm:RE17_4}, Claim~\ref{clm:RE17_5} and the one after (\ref{eqn:RI29_1}).  
  Now we set $\delta_{0}(n)=\min\left\{\bar{\delta}, \delta_{a}, \delta_{b}, \delta_{c} \right\}$. 
  Combining (\ref{eqn:RI29_4}) with (\ref{eqn:RI27_4}), we obtain the bi-H\"{o}lder distance estimate (\ref{eqn:RF13_6}) for each pair of points $x,y \in M$ satisfying
  $d_{g}(x,y)\leq 1$. 
 \end{proof}

 The following corollary is the direct consequence of Theorem~\ref{thm:RE14_15}, which seems to be unknown without exploiting the normalized Ricci flow. 

\begin{corollary}[\textbf{Equivalence of the volume closeness and the Gromov-Hausdorff closeness}]
  Suppose $(M^{n}, g, J)$ is a Fano manifold whose metric form class is proportional to $c_1(M, J)$ and $Rc \geq 2(n+1) g$.  
  Then $d_{GH}\left\{ (M,g), (\CP^{n}, g_{FS}) \right\}$ is close to $0$ if and only if  volume of $(M, g)$ is close to $\Omega_{n}$. 
  \label{cly:RF09_1}
\end{corollary}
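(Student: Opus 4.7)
The plan is to establish the two implications separately, each being a short reduction to results already developed in the paper.

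For the direction that Gromov--Hausdorff closeness implies volume closeness, I would directly invoke Colding's volume continuity theorem~\cite{Co}. The uniform lower Ricci bound $Rc\geq 2(n+1)g$ is built into the hypothesis, so along any sequence $(M_{i},g_{i},J_{i})$ satisfying the assumptions of the corollary with $d_{GH}\{(M_{i},g_{i}),(\CP^{n},g_{FS})\}\to 0$, Colding's theorem yields $|M_{i}|_{dv_{g_{i}}}\to\Omega_{n}$. This direction needs no new input.

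For the converse implication (volume close $\Rightarrow$ GH close), the plan is to reduce to the stability result Theorem~\ref{thm:RE14_15} by a rescaling argument. Given $(M,g,J)$ as in the corollary with $|M|_{dv_{g}}$ close to $\Omega_{n}$, I would set $\tilde g=cg$ with $c=(\Omega_{n}/|M|_{dv_{g}})^{1/n}$, so that $|M|_{dv_{\tilde g}}=c^{n}|M|_{dv_{g}}=\Omega_{n}$ exactly. Proportionality of the K\"ahler form to $c_{1}(M,J)$ is preserved under rescaling. Since the Ricci tensor is scale-invariant, $Rc_{\tilde g}=Rc_{g}\geq 2(n+1)g=2(n+1)c^{-1}\tilde g$, which gives $Rc_{\tilde g}\geq 2(n+1)(1-\delta)\tilde g$ with $\delta:=\max\{0,1-c^{-1}\}$ tending to $0$ as $|M|_{dv_{g}}\to\Omega_{n}$. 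Thus $(M,\tilde g,J)$ satisfies the hypotheses of Theorem~\ref{thm:RE14_15}, which produces the estimate $d_{GH}\{(M,\tilde g),(\CP^{n},g_{FS})\}<\psi(\delta|n)$. Finally, Myers' theorem gives a uniform diameter bound for $(M,g)$, and since $c$ is arbitrarily close to $1$, the identity map is a $\psi$-Gromov--Hausdorff approximation between $(M,g)$ and $(M,\tilde g)$; the triangle inequality then closes the argument.

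The corollary is essentially a formal consequence of Theorem~\ref{thm:RE14_15} together with Colding's continuity. The main substantive obstacle is therefore not in the present argument but in Theorem~\ref{thm:RE14_15} itself, which converts a single scalar invariant (the volume) into Gromov--Hausdorff closeness to the model space, and whose proof uses Perelman's diameter and scalar estimates, the Chen--Wang compactness for almost-Einstein K\"ahler Ricci flows, the partial-$C^{0}$ estimate, and the Chen--Donaldson--Sun stability theorem. Once this deep input is granted, the only genuine work in the corollary is the rescaling bookkeeping sketched above.
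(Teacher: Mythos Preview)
Your proposal is correct and matches the paper's approach: the paper presents this corollary as a direct consequence of Theorem~\ref{thm:RE14_15} without writing out a separate proof, and your rescaling argument together with Colding's volume continuity is precisely the natural way to make that deduction precise. The only minor remark is that the Gromov--Hausdorff closeness you invoke is not literally in the statement of Theorem~\ref{thm:RE14_15} but follows from its bi-H\"older estimate (\ref{eqn:RF13_6}) combined with the isometry $(M,g_\infty)\cong(\CP^n,g_{FS})$, which is also how the paper frames it in the introduction.
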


\begin{remark}
  Suppose $(M,J)$ is a Fano manifold with a K\"ahler Einstein metric $g_{KE}$ in the anti-canonical class $2\pi c_1(M,J)$. 
  If $g$ is an arbitrary metric in the K\"ahler class $2\pi c_1(M,J)$, then  
  \begin{align*}
    0=\int_{M} (R-2n) dv= \int_{M} \Delta f dv
  \end{align*}
  where $f$ is the Ricci potential function. Therefore, if $R \geq 2n$, then we must have $R \equiv 2n$.
  Along the normalized Ricci flow initiated from $g$, the maximum principle on $R$ implies that $R \geq 2n$ is preserved.
  Then we know $R \equiv 2n$ on the whole flow solution.  Evolution equation of scalar curvature along the flow 
  then implies $Rc-g \equiv 0$. Consequently, by the uniqueness of K\"ahler Einstein metrics, we obtain $g=\Phi^{*}(g_{KE})$
  for some $\Phi \in Aut(M,J)$. Since $Rc \geq 1$ implies $R \geq 2n$, we have the rigidity property:
  \begin{align*}
    Rc \geq 1 \;  \Leftrightarrow  d_{GH}\left\{ (M,g), (M,g_{KE}) \right\}=0. 
  \end{align*}
  Following the proof of Theorem~\ref{thm:RE14_15}, one
  can prove a quantative version of this property:
  \begin{align*}
    Rc \geq 1-\delta   \Leftrightarrow   d_{GH}\left\{ (M,g), (M,g_{KE}) \right\}<\psi(\delta|M,J).
  \end{align*}
  We believe that $Rc \geq 1-\delta$ in the above inequality could be replaced by $R \geq 2n(1-\delta)$. 
  \label{rmk:RI29_3}
\end{remark}

In the remainder of this section, we shall apply the tools we developed in this and the previous section to construct examples of immortal solutions of normalized Ricci flow.  
Our examples shall show that the bi-H\"older distance distortion estimate in Corollary~\ref{cly:CK21_1} is optimal.  Namely, it cannot be improved to bi-Lipschitz estimate. 

Let $(\CP^{1},g_{FS},J)$ be the standard Fubini-Study metric on $\CP^{1} \cong S^2$. Then both the Ricci curvature and the sectional curvature are equal to the constant $4$. 
Let $\omega_{FS}$ and $\rho_{FS}$ be the metric form and Ricci form of $g_{FS}$ respectively.  Then we have
\begin{align}
  Rc_{FS}=4g_{FS}, \quad \rho_{FS}=4\omega_{FS}.   \label{eqn:RG08_1}
\end{align}
Let $[u:v]$ be the homogeneous coordinate of $\CP^{1}$.   Let $U=\{ [u:v]|u \neq 0, |\frac{v}{u}|<1\}$. 
Let $z=\frac{v}{u}=x+\sqrt{-1}y$.
Then the Fubini-Study metric can be written explicitly as
\begin{align}
  g_{FS}= \frac{Re(dz \otimes d\bar{z})}{(1+r^2)^2}=\frac{dx^2+dy^2}{(1+r^2)^2}, \; \textrm{where} \; r=|z|.   \label{eqn:RG08_2}
\end{align}
We shall adjust $g_{FS}$ to a nearby metric with non-negative Ricci curvature, by adding a ``pimple" to $g_{FS}$. 
Let $\eta$ be a non-increasing smooth function on $\R$ satisfying $-2 \leq \eta'' \leq 0$ and 
\begin{align*}
  \eta(\theta)=
  \begin{cases}
    1, & \textrm{if} \; \theta <-2;\\
    -\theta, & \textrm{if} \; \theta>0. 
  \end{cases}
\end{align*}
Let $\tilde{\eta}(\theta)=-\eta(-\theta)+1$. Then $0 \leq \tilde{\eta}'' \leq 2$ and  
\begin{align*}
  \tilde{\eta}(\theta)=
  \begin{cases}
    -\theta+1, & \textrm{if} \; \theta<0;\\
    0, & \textrm{if} \; \theta>2.
  \end{cases}
\end{align*}
In the coordinate $U=\left\{ z| r=|z|<1 \right\}$, we set $s \coloneqq \log r$ and  define function 
\begin{align}
  F_{\epsilon}(s) \coloneqq
  \begin{cases}
    \epsilon^{-2},  & \textrm{if} \; s < -\epsilon^{-2}-2; \\
    \epsilon^{-2}-1+\eta(s+\epsilon^{-2}), & \textrm{if} \; -\epsilon^{-2}-2 <s<-\epsilon^{-2};\\
    -s-1, & \textrm{if} \; -\epsilon^{-2}<s<-2;\\
    \tilde{\eta}(s+2), & \textrm{if} \; -2<s<0. 
  \end{cases}
  \label{eqn:RG13_1}  
\end{align}
Then we define 
\begin{align}
  \varphi_{\epsilon} \coloneqq
\begin{cases}
  -\epsilon F_{\epsilon}, & \textrm{on} \; U;\\
  0, & \textrm{on} \; S^{2} \backslash U. 
\end{cases}
\label{eqn:RG13_5}
\end{align}

\begin{claim}
  The metric $g=g_{\epsilon} \coloneqq e^{-\varphi_{\epsilon}}g_{FS}$ satisfies 
  \begin{align}
    Rc_{g} \geq 0, \quad \lim_{\epsilon \to 0} d_{GH} \left( (S^{2}, g), (S^{2}, g_{FS}) \right)=0. 
    \label{eqn:RG13_3}
  \end{align}
  \label{clm:RG13_2}
\end{claim}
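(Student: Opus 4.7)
The plan is to treat the two assertions separately. In complex dimension one the Ricci curvature reduces to the Gaussian curvature times the metric, so the first assertion $Rc_{g_\epsilon}\geq 0$ is equivalent to $K_{g_\epsilon}\geq 0$, which I will verify from the standard conformal change formula in dimension two. The Gromov--Hausdorff convergence will then follow from the observation that the conformal factor is $O(\epsilon)$-small in $C^0$ outside an arbitrary fixed disk around $z=0$, while the radial profile $F_\epsilon$ has been carefully arranged so that the $g_\epsilon$-length of any radial segment converges to its $g_{FS}$-length as $\epsilon\to 0$.

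For the curvature assertion, the two-dimensional conformal formula applied to $g_\epsilon = e^{-\varphi_\epsilon}g_{FS}$, together with $K_{g_{FS}}=4$ from (\ref{eqn:RG08_1}), gives
\begin{align*}
  K_{g_\epsilon}=e^{\varphi_\epsilon}\Bigl(4+\tfrac{1}{2}\Delta_{g_{FS}}\varphi_\epsilon\Bigr),
\end{align*}
so it suffices to show $\Delta_{g_{FS}}\varphi_\epsilon\geq -8$. Using (\ref{eqn:RG08_2}) and the radial nature of $\varphi_\epsilon$, a direct computation gives $\Delta_{g_{FS}}F(\log r)=\frac{(1+r^2)^2}{r^2}F''(\log r)$, reducing the problem to piecewise control of $F_\epsilon''$ from (\ref{eqn:RG13_1}). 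On the constant and linear pieces $s<-\epsilon^{-2}-2$ and $s\in(-\epsilon^{-2},-2)$ one has $F_\epsilon''\equiv 0$; on the splicing interval $s\in(-\epsilon^{-2}-2,-\epsilon^{-2})$, $F_\epsilon''=\eta''\in[-2,0]$, which makes $\Delta_{g_{FS}}\varphi_\epsilon\geq 0$ automatically; and on the final splicing $s\in(-2,0)$, $F_\epsilon''=\tilde\eta''\in[0,2]$, with the geometric factor $\frac{(1+r^2)^2}{r^2}$ bounded on $r\in(e^{-2},1)$ by the explicit constant $C_0:=(1+e^{-4})^2 e^4$. Combining the three cases gives $|\Delta_{g_{FS}}\varphi_\epsilon|\leq 2C_0\epsilon$ throughout $S^2$, hence $4+\tfrac{1}{2}\Delta_{g_{FS}}\varphi_\epsilon\geq 4-C_0\epsilon>0$ for every $\epsilon$ less than a dimensional threshold, yielding $K_{g_\epsilon}\geq 0$.

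For the Gromov--Hausdorff convergence, I would use the identity as a correspondence and estimate its distortion. Since $\varphi_\epsilon\leq 0$, $g_\epsilon\geq g_{FS}$ and hence $d_{g_\epsilon}\geq d_{g_{FS}}$; the task is to bound the opposite side by showing $L_{g_\epsilon}(\gamma)-L_{g_{FS}}(\gamma)=O(\epsilon)$ for every $g_{FS}$-geodesic $\gamma$. On the portion of $\gamma$ outside $\{r\leq e^{-2}\}$, the bound $F_\epsilon\leq 1$ forces $|\varphi_\epsilon|\leq\epsilon$, so this contribution is at most $\epsilon\cdot\diam_{g_{FS}}(S^2)=O(\epsilon)$. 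For the portion inside $\{r\leq e^{-2}\}$, a piecewise evaluation of
\begin{align*}
  \int_{0}^{e^{-2}}\bigl(e^{-\varphi_\epsilon/2}-1\bigr)\frac{dr}{1+r^2},
\end{align*}
which bounds the worst radial case, shows that the annulus $r<e^{-\epsilon^{-2}}$ contributes at most $e^{1/(2\epsilon)-\epsilon^{-2}}$, doubly exponentially small, while on the transition annulus $r\in(e^{-\epsilon^{-2}},e^{-2})$ the integrand becomes $r^{-\epsilon/2}e^{-\epsilon/2}-1$ and an explicit antiderivative yields a contribution of order $\epsilon$. Summing, the distortion of the identity correspondence is $O(\epsilon)$, which gives $d_{GH}\bigl((S^2,g_\epsilon),(S^2,g_{FS})\bigr)\to 0$.

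The main subtlety is in the Gromov--Hausdorff step. Although the conformal factor $e^{-\varphi_\epsilon}$ blows up like $e^{1/\epsilon}$ at the south pole, this blow-up is confined to the doubly-exponentially small disk $\{r<e^{-\epsilon^{-2}}\}$, whose contribution to any line integral along a $g_{FS}$-geodesic is negligible. The crucial structural choice encoded in (\ref{eqn:RG13_1}) is the logarithmic profile $F_\epsilon=-s-1$ on the transition annulus $(-\epsilon^{-2},-2)$: it renders the line element $e^{-\varphi_\epsilon/2}\,dr=r^{-\epsilon/2}e^{-\epsilon/2}\,dr$ a uniformly integrable perturbation of $dr$ as $\epsilon\to 0$, which is precisely why the tip of the pimple never becomes geometrically far from the rest of the sphere and why the GH distortion collapses in the limit, even though the pointwise $C^0$-norm of the perturbation diverges.
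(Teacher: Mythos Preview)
Your proof is correct and close to the paper's. The curvature argument is literally the same computation in real versus complex dress: the paper writes $\rho=\rho_{FS}+\sqrt{-1}\partial\bar\partial\varphi_\epsilon$ and uses $\sqrt{-1}\partial\bar\partial F(s)=F''(s)\,\sqrt{-1}\partial s\wedge\bar\partial s$ with $\sqrt{-1}\partial s\wedge\bar\partial s=\tfrac{\sqrt{-1}}{4r^2}dz\wedge d\bar z$, which is exactly your identity $\Delta_{g_{FS}}F(\log r)=\tfrac{(1+r^2)^2}{r^2}F''$ followed by the same piecewise case analysis of $F_\epsilon''$. For the Gromov--Hausdorff statement the paper runs a two-scale argument: fix an auxiliary radius $r_0\ll 1$, observe that $g_\epsilon\to g_{FS}$ uniformly on $S^2\setminus B_{g_{FS}}(0,0.5r_0)$, and bound the $g_\epsilon$-radius of that small ball by $\sqrt{r_0}$ via the radial length integral (\ref{eqn:RG13_9}); then let $r_0\to 0$. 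You instead bound the distortion of the identity correspondence directly by $O(\epsilon)$, which is sharper but rests on the unproven clause that the radial integral ``bounds the worst case'' among all $g_{FS}$-geodesics through the cap $\{r\le e^{-2}\}$. That clause is true but deserves one line: parametrizing such a geodesic chord by arclength $s$ from its point of closest approach to $0$ gives $r(s)=\sqrt{r_{\min}^2+s^2}\ge|s|$, hence $r(s)^{-\epsilon/2}\le|s|^{-\epsilon/2}$, so the contribution is dominated by twice the radial integral. With that sentence added, both routes are complete; the paper's is more robust, yours more quantitative.
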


\begin{proof}
The Ricci form $\rho$ of the metric $g$ can be written as 
\begin{align*}
  \rho=-\sqrt{-1} \partial \bar{\partial} \log \det g=\rho_{FS} +\sqrt{-1} \partial \bar{\partial} \varphi_{\epsilon}=\omega_{FS}+\sqrt{-1} \partial \bar{\partial}  \varphi_{\epsilon}. 
\end{align*}
Note that $\sqrt{-1}\partial \bar{\partial}s=0$ and $\sqrt{-1}\partial s \wedge \bar{\partial}s=\frac{\sqrt{-1}}{4r^2}dz\wedge d\bar{z} \geq 0$.
It follows from direct computation that
\begin{align*} 
  \rho=
 \begin{cases}
   \omega_{FS}, &\textrm{if} \; s<-\epsilon^{-2}-2;\\
   \omega_{FS} + (-\epsilon \eta'') \sqrt{-1} \partial s \wedge \bar{\partial} s \geq \omega_{FS}, &\textrm{if} \; -\epsilon^{-2}-2<s<-\epsilon^{-2};\\
   \omega_{FS}, &\textrm{if} \; -\epsilon^{-2}<s<-2;\\
   \omega_{FS}+ (-\epsilon \tilde{\eta}'') \sqrt{-1} \partial s \wedge \bar{\partial} s \geq (1-100\epsilon) \omega_{FS}, &\textrm{if} \; -2<s<0. 
 \end{cases}
\end{align*}
In the last step, we used the facts that $|\tilde{\eta}''|\leq 2$ and
\begin{align*}
  0 \leq \sqrt{-1}\partial s \wedge \bar{\partial}s=\frac{\sqrt{-1}}{4r^2} dz \wedge d\bar{z} \leq \frac{\sqrt{-1}}{2} dz \wedge d\bar{z} \leq 50 \omega_{FS}.
\end{align*}
Since $\epsilon$ is sufficiently small, it is clear that $\rho \geq 0$ on $U$. 
On $S^{2} \backslash U$, since $\omega \equiv \omega_{FS}$, we have $\rho=\rho_{FS}=\omega_{FS} \geq 0$.
In summary, $\rho \geq 0$ on $S^{2}$. In other words, we prove the first inequality in (\ref{eqn:RG13_3}).  

We proceed to prove the second equality in (\ref{eqn:RG13_3}).
In light of the construction of $\eta$, $\tilde{\eta}$ and the fact that $s=\log r$,  we can apply (\ref{eqn:RG13_1}) to bound the metric $g$ on $U$ as follows
\begin{align*}
  g_{FS} \leq g \leq 
  \begin{cases}
    e^{\epsilon^{-1}} g_{FS}, & \textrm{if} \; 0<r<e^{-\epsilon^{-2}}; \\
    r^{-\epsilon} e^{-\epsilon}g_{FS}, &\textrm{if} \; e^{-\epsilon^{-2}}<r<e^{-2};\\
    e^{\epsilon} g_{FS}, &\textrm{if} \; e^{-2}<r<1.
  \end{cases}
\end{align*}
Fix $r_0<<e^{-2}<1$. Let $\gamma$ be a shortest geodesic (under metric $dx^2+dy^2$) connecting $0$ and a point $z$ with $|z|=r_0$. It follows from direct calculation that 
\begin{align}
  |\gamma|_{g} &\leq \int_{0}^{e^{-\epsilon^{-2}}} e^{0.5 \epsilon^{-1}} \frac{dr}{1+r^2} + \int_{e^{-\epsilon^{-2}}}^{r_0} r^{-0.5\epsilon} e^{-0.5\epsilon} \frac{dr}{1+r^2} \notag\\ 
  &\leq \int_{0}^{e^{-\epsilon^{-2}}} e^{0.5 \epsilon^{-1}} dr + \int_{e^{-\epsilon^{-2}}}^{r_0} r^{-0.5\epsilon} dr
  \leq e^{-0.5 \epsilon^{-2}} + \frac{r_0^{1-0.5\epsilon}}{1-0.5\epsilon}.   \label{eqn:RG13_9}
\end{align}
Choosing $\epsilon<<r_0$, it follows from the above inequalities that $ |\gamma|_{g} \leq  \sqrt{r_0}$. 
Note that $\gamma$ is also a shortest geodesic under metric $g_{FS}$ and $0.5 r_0 \leq |\gamma|_{g_{FS}}\leq r_0$. Therefore, we have 
\begin{align}
  B_{g}(0, \sqrt{r_0}) \supset B_{g_{FS}}(0, 0.5 r_0).
  \label{eqn:RG13_6}  
\end{align}
On $S^{2} \backslash B_{g_{FS}}(0, 0.5 r_0)$, it is clear that
\begin{align}
  g=e^{-\varphi_{\epsilon}} g_{FS}= e^{-\epsilon F_{\epsilon}} g_{FS} \to g_{FS}, \quad \textrm{as} \; \epsilon \to 0.  
  \label{eqn:RG13_7}  
\end{align}
Then the second equality in (\ref{eqn:RG13_3}) follows from the combination of (\ref{eqn:RG13_6}) and (\ref{eqn:RG13_7}). 
The proof of Claim~\ref{clm:RG13_2} is complete. 
\end{proof}

The metric $g=g_{\epsilon}$ in Claim~\ref{clm:RG13_2} should be modified further to satisfy the normalization condition. 
In light of (\ref{eqn:RG13_6}), we have 
\begin{align*}
  |S^{2}|_{dv_{g}}=|S^{2} \backslash B_{g_{FS}}(0, 0.5 r_0)|_{dv_{g}} + |B_{g_{FS}}(0, 0.5 r_0)|_{dv_{g}}
  \leq |S^{2} \backslash B_{g_{FS}}(0, 0.5 r_0)|_{dv_{g}} + |B_{g}(0,\sqrt{r_0})|_{dv_{g}}.
\end{align*}
Because of (\ref{eqn:RG13_3}), we can apply Bishop-Gromov volume comparison on $B_{g}(0, \sqrt{r_{0}})$.  Thus it follows from (\ref{eqn:RG13_7}) and the above inequality that 
\begin{align*}
  |S^{2}|_{dv_{g}} &\leq e^{\psi}  |S^{2} \backslash B_{g_{FS}}(0, 0.5 r_0)|_{dv_{g_{FS}}} + \pi r_0
  \leq e^{\psi} \cdot |S^{2}|_{dv_{g_{FS}}} + \pi r_0,
\end{align*}
where $\psi=\psi(\epsilon)$ satisfies $\displaystyle \lim_{\epsilon \to 0} \psi(\epsilon)=0$.  
Recall that $r_0<<1$ and $\epsilon <<r_0$.  We can temporarily fix $r_0$, let $\epsilon \to 0$, and then let $r_0 \to 0$.
It turns out that 
\begin{align}
  \lim_{\epsilon \to 0} |S^{2}|_{dv_{g_{\epsilon}}}=\pi=|S^{2}|_{dv_{g_{FS}}}.   \label{eqn:RG08_3}
\end{align}
Let $\tilde{g}_{\epsilon}=e^{c_{\epsilon}} g_{\epsilon}$ where $c_{\epsilon}$ is a constant such that 
\begin{align*}
  |S^{2}|_{dv_{\tilde{g}_{\epsilon}}}=|S^{2}|_{dv_{g_{FS}}}=\pi. 
\end{align*}
It follows from (\ref{eqn:RG08_3}) that $c_{\epsilon} \to 0$ as $\epsilon \to 0$. 
Then we can adjust (\ref{eqn:RG13_3}) to obtain
  \begin{align}
    Rc_{\tilde{g}_{\epsilon}} \geq 0, \quad \lim_{\epsilon \to 0} d_{GH} \left( (S^{2}, \tilde{g}_{\epsilon}), (S^{2}, g_{FS}) \right)=0. 
    \label{eqn:RG13_8}
  \end{align}
Since $\tilde{g}_{\epsilon}$ is conformal to $g_{FS}$ and $ |S^{2}|_{dv_{\tilde{g}_{\epsilon}}}=|S^{2}|_{dv_{g_{FS}}}$, we know that $\tilde{\omega}_{\epsilon}$ is in the same K\"ahler class of $\omega_{FS}$.
Therefore, the normalized Ricci flow initiated from $\tilde{g}_{\epsilon}(0)=\tilde{g}_{\epsilon}$ is also the K\"ahler Ricci flow preserving the K\"ahler class $\frac{\pi}{2} c_1(S^{2})$. 
In light of the result of Hamilton~\cite{Ha88} and Chow~\cite{BChow}, this flow converges to a round K\"ahler metric $(S^{2}, \tilde{g}_{\epsilon}(\infty), J)$.   
Since $(S^{2}, g_{FS}, J)$ is also a round K\"ahler metric in the same class $\frac{\pi}{2} c_{1}(S^{2})$, it follows from the uniqueness of K\"ahler Einstein metrics that $g_{FS}$ and $\tilde{g}_{\epsilon}(\infty)$
are identical up to a biholomorphic map. Namely, there exists a map $\Phi \in Aut(S^{2}, J)$ such that
\begin{align}
  \Phi^{*} g_{FS}=\tilde{g}_{\epsilon}(\infty), \quad \Phi_{*} J=J.    \label{eqn:RG08_4}
\end{align}
It is well known that $Aut(S^{2}, J)=PGL(2,\C)$.  Therefore, we can find a non-degenerate $2 \times 2$ matrix $A$ such that
\begin{align*}
  \Phi([u:v])=A.[u:v]=[du+cv:bu+av]
\end{align*}
in the homogeneous coordinate of $\CP^{1} \cong S^{2}$.  Here 
\begin{align}
  A=
  \left(
  \begin{matrix}
    d&c\\
    b&a
  \end{matrix}
  \right), \quad
  \det A=1. 
  \label{eqn:RG12_3}  
\end{align}
Note that the Fubini-Study metric here is $U(2)$-invariant.  Namely, if $\Psi \in Aut(\CP^{1}, J)$ with representative matrix $B \in U(2)$, then 
(\ref{eqn:RG08_4}) implies that $(\Psi \circ \Phi)^{*}(\omega_{FS})=\tilde{\omega}_{\epsilon}(\infty)$. 
Therefore, up to multiplying $A$ with a unitary matrix $B$ from left, we can simplify $A$ in (\ref{eqn:RG12_3}) to the following form
\begin{align}
  A=
  \left(
  \begin{matrix}
    1&0\\
    b&1
  \end{matrix}
  \right). 
  \label{eqn:RG12_4}  
\end{align}
Then we have $ \Phi(z)=z+b$ for each $z \in U$. Consequently, it follows from (\ref{eqn:RG08_4}) that
\begin{align}
  \tilde{\omega}_{\epsilon}(\infty)=\sqrt{-1}\frac{\partial \Phi \wedge \bar{\partial} \bar{\Phi}}{(1+|\Phi|^2)^{2}}
  =\sqrt{-1} \frac{dz \wedge d\bar{z}}{(1+|z+b|^2)^2}.  \label{eqn:RG12_1} 
\end{align}
Recall that
\begin{align}
  \omega_{FS}=\sqrt{-1} \frac{ dz \wedge d\bar{z}}{(1+|z|^2)^2}.  \label{eqn:RG12_2}
\end{align}
In light of the strong pseudo-locality theorem of Perelman(cf. Theorem 10.3 of~\cite{Pe1} and Theorem 3.1 of~\cite{CBL07}), we know that $\tilde{\omega}_{\epsilon}(\infty)$ converges to $\omega_{FS}$ uniformly on the circle 
$\left\{ z| |z|= 0.5 \right\}$ as $\epsilon \to 0$.  
Since $b$ is a constant depending on $\epsilon$, it follows from (\ref{eqn:RG12_1}) and (\ref{eqn:RG12_2}) that $\displaystyle \lim_{\epsilon \to 0} b=0$.  
Exploiting (\ref{eqn:RG12_1}) and (\ref{eqn:RG12_2}) again, it is not hard to see that  $\tilde{\omega}_{\epsilon}(\infty)$ converges to $\omega_{FS}$ uniformly on $S^{2}$.
Therefore, for each $\xi$ small, we can choose $\epsilon$ sufficiently small such that 
\begin{align*}
  (1-\xi) g_{FS}<\tilde{g}_{\epsilon}(\infty)<(1+\xi) g_{FS}. 
\end{align*}
By exponential convergence of the normalized flow, it is clear that
\begin{align*}
  (1-\xi) \tilde{g}_{\epsilon}(\infty)<\tilde{g}_{\epsilon}(1) < (1+\xi) \tilde{g}_{\epsilon}(\infty). 
\end{align*}
Combining the previous two steps yields that
\begin{align}
  (1-3\xi) g_{FS}<\tilde{g}_{\epsilon}(1)<(1+3\xi) g_{FS}.   \label{eqn:RG12_8} 
\end{align}
However, according its construction, we have 
\begin{align}
  \tilde{g}_{\epsilon}=\tilde{g}_{\epsilon}(0)=e^{\epsilon^{-1}+c_{\epsilon}} g_{FS}, \; \textrm{if} \; |z|< e^{-\epsilon^{-2}-2}  
  \label{eqn:RG13_10}  
\end{align}
with $c_{\epsilon} \to 0$.  Therefore, along the normalized Ricci flow initiated from $\tilde{g}_{\epsilon}(0)=\tilde{g}_{\epsilon}$, the metric $\tilde{g}_{\epsilon}(1)$ cannot be uniformly
bi-Lipschitz to $\tilde{g}_{\epsilon}$.

We shall further show that bi-H\"older estimate can be achieved. 
Fix $\epsilon$ small and choose $r_{0}=\sqrt{\epsilon}$. It is clear that $\epsilon << r_{0} << 1$. 
Then we can choose $\gamma: [0, r_{0}] \to U$ such that $\gamma(t)=tz$.
In view of the rotational symmetry, it is clear that the image of $\gamma$ is the support set of the shortest geodesics connecting 
$0$ and $z$ under both metrics $g_{FS}$ and $\tilde{g}_{\epsilon}$. 
On the one hand, it follows from (\ref{eqn:RG08_2}) that 
\begin{align*}
  \frac{r_0}{1+r_0^2} \leq |\gamma|_{g_{FS}} \leq r_0.  
\end{align*}
On the other hand, by the calculation nearby  (\ref{eqn:RG13_9}) and the fact $c_{\epsilon} \to 0$, it is clear that the major part of $|\gamma|_{\tilde{g}_{\epsilon}}$ is
$\int_{e^{-\epsilon^{-2}}}^{r_0} r^{-0.5\epsilon} e^{-0.5\epsilon} \frac{dr}{1+r^2}$. Consequently, we have
\begin{align*}
  (1-\psi(\epsilon)) r_0^{1-0.5\epsilon} \leq |\gamma|_{\tilde{g}_{\epsilon}} \leq (1+\psi(\epsilon)) r_0^{1-0.5\epsilon},   
\end{align*}
where $\psi(\epsilon)$ satisfies $\displaystyle \lim_{\epsilon \to 0} \psi(\epsilon)=0$. 
Combining the previous two steps yields that 
\begin{align*}
 \{1-\psi(\epsilon)\} |\gamma|_{g_{FS}}^{1-0.5\epsilon} \leq  |\gamma|_{\tilde{g}_{\epsilon}} \leq  \{1+\psi(\epsilon)\} |\gamma|_{g_{FS}}^{1-0.5\epsilon}.  
\end{align*}
In light of (\ref{eqn:RG12_8}), we have
\begin{align*}
 \{1-\psi(\epsilon)\} d_{\tilde{g}_{\epsilon}(1)}^{1-0.5\epsilon}(0, z) \leq d_{\tilde{g}_{\epsilon}(1)}(0, z) 
 \leq  d_{\tilde{g}_{\epsilon}(0)}(0, z) \leq \{1+\psi(\epsilon)\} d_{\tilde{g}_{\epsilon}(1)}^{1-0.5\epsilon}(0, z).   
\end{align*}
This means that the bi-H\"older distortion estimate in Corollary~\ref{cly:CK21_1} can be achieved. 
Since the bi-Lipschitz estimate fails by the combination of  (\ref{eqn:RG12_8}) and (\ref{eqn:RG13_10}), 
we conclude that the bi-H\"older estimate in Corollary~\ref{cly:CK21_1} is optimal. 

The construction above can be used to develop high dimensional examples.   
For simplicity, we only point out that uniform bi-Lipschitz estimate fails along the normalized Ricci flow.

\begin{proposition}[\textbf{The failure of bi-Lipschitz equivalence}]
  For each positive integer $n$ and small positive numbers $\xi, \epsilon$, there exists a Fano manifold $(M^{n}, g, J)$ satisfying
\begin{align}
  \mathbf{vr}(M,g) \geq 1, \quad Rc_{g} \geq 0,   
  \label{eqn:RG12_6}
\end{align}
 with the following properties.

  There exists a point $x_0 \in M$ and $\vec{v} \in T_{p}M \backslash \left\{ 0 \right\}$ such that along the normalized Ricci flow initiated from $g(0)=g$, we have
  \begin{align}
    \frac{\langle \vec{v},\vec{v}\rangle_{g(\xi)}}{\langle \vec{v},\vec{v}\rangle_{g(0)}}< \epsilon.
    \label{eqn:RG12_7}  
  \end{align}
  Consequently, (\ref{eqn:RG12_6}) cannot imply uniform bi-Lipschitz equivalence between $g(0)$ and $g(\xi)$, no matter how small $\xi$ is. 
  \label{prn:RG12_5}
\end{proposition}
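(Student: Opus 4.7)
My plan is to reduce Proposition~\ref{prn:RG12_5} to the detailed construction on $S^{2}\cong\CP^{1}$ carried out in the paragraphs immediately preceding the statement. That construction produces, for each small $\epsilon$, a smooth K\"ahler metric $\tilde{g}_{\epsilon}$ on $\CP^{1}$ with $Rc\geq 0$ and $|S^{2}|_{dv_{\tilde{g}_\epsilon}}=|S^{2}|_{dv_{g_{FS}}}$, and with the two pieces of information we need: at the pimple location $x_{0}=0$ the initial metric equals $e^{\epsilon^{-1}+c_{\epsilon}}g_{FS}$ (cf.~(\ref{eqn:RG13_10})), while along the normalized K\"ahler Ricci flow the time-$1$ slice satisfies the bi-Lipschitz estimate (\ref{eqn:RG12_8}). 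Taking any nonzero $\vec v\in T_{x_{0}}\CP^{1}$ and comparing these two facts yields
\[
 \frac{\langle \vec v, \vec v\rangle_{\tilde{g}_\epsilon(1)}}{\langle \vec v, \vec v\rangle_{\tilde{g}_\epsilon(0)}}\ \leq\ (1+3\xi)\,e^{-\epsilon^{-1}-c_\epsilon},
\]
which is smaller than any prescribed $\epsilon>0$ once the perturbation parameter is chosen small enough. This already gives the proposition in complex dimension $n=1$ at the specific time $\xi=1$.

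To promote the statement to arbitrary complex dimension $n\geq 2$, I would replace $\CP^{1}$ by the product Fano manifold $M=\CP^{1}\times\CP^{n-1}$ with the product complex structure, equipped with the product metric $g=\tilde{g}_{\epsilon}\oplus \mu\, g_{FS}^{(n-1)}$, where $\mu>0$ is chosen so that the two factors share a common Einstein constant (so that the unperturbed product $g_{FS}\oplus\mu g_{FS}^{(n-1)}$ is K\"ahler-Einstein in an anti-canonical class). Since Ricci curvature, volume, and the K\"ahler-Ricci flow all split on products, the normalized flow initiated from $g$ decouples; the second factor is stationary, and $Rc_{g}\geq 0$ follows from Claim~\ref{clm:RG13_2} on the first factor. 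Picking the base point $x_{0}=(0,q)\in M$ and $\vec v\in T_{0}\CP^{1}\hookrightarrow T_{x_{0}}M$, the bi-Lipschitz collapse in the first factor immediately propagates to $M$, and after a fixed overall rescaling to arrange $\mathbf{vr}(M,g)\geq 1$ (which only enlarges $\epsilon^{-1}$ and preserves (\ref{eqn:RG12_7})), the conditions (\ref{eqn:RG12_6}) are satisfied.

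To accommodate an arbitrarily small prescribed time $\xi$ in place of $\xi=1$, I would apply a parabolic rescaling. Converting the normalized flow into the unnormalized K\"ahler Ricci flow by the standard reparametrization, performing the parabolic scaling $g_{\lambda}(t)=\lambda\, g(t/\lambda)$, and converting back to the normalized flow (with an appropriate compensating choice of the normalization constant tied to the K\"ahler class), one translates the failure time from $1$ to $\xi$; the only effect on the initial metric is a uniform conformal rescaling, which preserves $Rc\geq 0$ and can be re-absorbed in the volume radius normalization. Since the ratio (\ref{eqn:RG12_7}) is scale-invariant, it is unaffected by these reparametrizations.

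The only delicate point is the bookkeeping for the K\"ahler class normalization under the parabolic rescaling, because the $c$-normalization constant is pinned down by the class and therefore shifts when we rescale. I expect this to be a routine but careful computation, rather than a genuinely new analytic difficulty, since the qualitative dynamics are already pinned down by the stability statement (\ref{eqn:RG12_8}) and the explicit size bound (\ref{eqn:RG13_10}) established in the preceding construction.
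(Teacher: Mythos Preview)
Your reduction to the $\CP^{1}$ construction and the passage to higher dimensions by a product are exactly what the paper does; the paper uses $M=(\CP^{1})^{n}$ with $g=\tilde g_{\epsilon}\times g_{FS}\times\cdots\times g_{FS}$ rather than your $\CP^{1}\times\CP^{n-1}$, but this is cosmetic.

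The substantive difference is how you handle an arbitrary small time $\xi$. You propose to obtain the failure at time~$1$ and then transport it to time~$\xi$ by parabolic rescaling, and you correctly flag that the normalized flow does not rescale cleanly (the Einstein constant is pinned by the class, so after rescaling you are no longer solving the same normalized equation, and undoing this by a further homothety feeds back into the time parameter). The paper sidesteps this entirely: since $d_{GH}\bigl((M,g),(M,g_{KE})\bigr)\to 0$ as the perturbation parameter goes to zero, Theorem~\ref{thm:RG05_3} applied to the static limit flow gives, for each \emph{fixed} $\xi>0$, that $g(\xi)$ is $(1+\xi)$-bi-Lipschitz to $g_{KE}$ once the perturbation is small enough. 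Combining this directly with $g(0)=e^{\epsilon^{-1}+c_{\epsilon}}g_{FS}$ at the pimple point yields
\[
\frac{\langle \vec v,\vec v\rangle_{g(\xi)}}{\langle \vec v,\vec v\rangle_{g(0)}}
\;\leq\;(1+\xi)\,\frac{\langle \vec v,\vec v\rangle_{g_{KE}}}{\langle \vec v,\vec v\rangle_{g(0)}}
\;<\;2\,e^{-\epsilon^{-1}}\;<\;\epsilon,
\]
with no rescaling gymnastics needed. Your route can be made to work, but the continuous-dependence theorem (already proved earlier in the paper) is the cleaner tool here and removes precisely the bookkeeping you identified as delicate.
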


\begin{proof}
  Let $\tilde{g}_{\epsilon}$ be the metric constructed above on $\CP^1$.   Then let $g=\tilde{g}_{\epsilon} \times g_{FS} \times \cdots \times g_{FS}$ on the product complex manifold
  $M=\CP^{1} \times \cdots \times \CP^{1}$, which is Fano. The metric form is proportional to the anti-canonical class.
  Therefore, the normalized K\"ahler Ricci flow initiated from $g$ will preserve the K\"ahler class and exist immortally.
  Moreover, the flow converges to the K\"ahler Einstein metric $g_{KE}=g_{FS} \times g_{FS} \times \cdots \times g_{FS}$.
  Note that $(M,g)$ satisfies (\ref{eqn:RG12_6}) up to a rescaling of a fixed scale. 
  Since $d_{GH}\left( (M,g), (M, g_{KE}) \right) \to 0$ as $\epsilon \to 0$, we can apply Theorem~\ref{thm:RG05_3}.  
  Therefore, by the same deduction as before, we know that $g_{KE}$, $g(\xi)$ and $g(\infty)$ are all bi-Lipschitz equivalent among each other with Lipschitz constant $1+\xi$, whenever $\epsilon$ is small enough.
  However, by choosing $p=\left( [1:0], [1:0], \cdots, [1:0] \right)$ and $\vec{v} \in T_{[1:0]} \CP^{1} \times \left\{ 0 \right\} \times \cdots \times \left\{ 0 \right\} \subset T_{p}M$, 
  it follows from the construction that 
  \begin{align*}
    \frac{\langle \vec{v},\vec{v}\rangle_{g(\xi)}}{\langle \vec{v},\vec{v}\rangle_{g(0)}}
    \leq (1+\xi) \frac{\langle \vec{v},\vec{v}\rangle_{g_{KE}}}{\langle \vec{v},\vec{v}\rangle_{g(0)}} < 2 e^{-\epsilon^{-1}}<\epsilon, 
  \end{align*}
  which is nothing but (\ref{eqn:RG12_7}). 
\end{proof}

Note that normalization in Proposition~\ref{prn:RG12_5} is not important, since the logarithm of volume is bounded along the unnormalized Ricci flow for a fixed amount of time 
by condition (\ref{eqn:RG12_6}).  In the study of Riemannian manifolds of Ricci curvature bounded below,  Ricci flow smoothing is a convenient method.  
Proposition~\ref{prn:RG12_5} indicates that there is no uniform control of the time $\xi$ such that the metric $g(\xi)$ and $g(0)$ are uniformly bi-Lipschitz equivalent, even if we have
the extra K\"ahler condition. Since K\"ahler geometry has independent interest, it is desirable to find conditions to guarantee the lower bound of $\xi$ along the K\"ahler Ricci flow. 
It turns out that scalar curvature being locally bounded is a sufficient condition, which will be studied in the next section.

\section{The pseudo-locality theorem in the K\"ahler Ricci flow}
\label{sec:kpseudo}

A K\"ahler manifold $(M^{n}, g, J)$ is a Riemannian manifold with real dimension $m=2n$ and an almost complex structure $J$ which is parallel under the Levi-Civita connection, i.e. $\nabla_{g}J \equiv 0$. 
If we run the Ricci flow starting from a closed  K\"ahler manifold $(M^{n}, g, J)$,  then it is well known that the K\"ahler condition is preserved by the flow.
Namely, for each $t$, we have $\nabla_{g(t)}J \equiv 0$. 
The K\"ahler Ricci flow admits some amazing rigidities. 
For example, the identity map between different time slices of a K\"ahler Ricci flow is automatically 
biholomorphic and therefore harmonic.  In this section, we shall take advantage of the special properties of the K\"ahler Ricci flow to further improve the pseudo-locality theorems.

This section is structured as follows.
First we show a key improvement from the Riemannian Ricci flow:  the metric bi-Lipschitz equivalence. 
Roughly speaking,  when the initial metric is locally almost Euclidean,  then near the central part along the flow, all $g(\cdot,t)$ are quasi-isometric to $g(\cdot,0)$. 
Note that here almost Euclidean condition also requires scalar curvature being bounded.
This bi-Lipschitz equivalence is achieved by Theorem~\ref{thm:RJ08_0} and Corollary~\ref{cly:RD29_4}. 
Then we construct local holomorphic charts and study the K\"ahler Ricci flow in the local charts.
It turns out that the localization in the K\"ahler Ricci flow is much more straightforward than the one in the Riemannian Ricci flow.
Besides $-R$, we have another quantity $(R+|\nabla \dot{\varphi}|^2)$ as a local sub-heat-solution.
Here $\dot{\varphi}$ is local Ricci potential function, which can be adjusted by adding linear functions.   
Exploiting the local maximum principle for them, we obtain the growth estimates for the scalar curvature bounds in Proposition~\ref{prn:RC13_1}.
Combining the scalar curvature bounds and the metric bi-Lipschitz equivalence, we then deduce $C^{1,\alpha}$-estimate on the metrics via complex Monge-Amp\`{e}re  equations. 
This is done in Theorem~\ref{thm:RC10_1}.  
As applications, in Theorem~\ref{thm:RB21_12}, we show a $C^{1,\alpha}$-compactness theorem for K\"ahler manifolds whose scalar curvatures are uniformly 
bounded and whose entropy radii are uniformly bounded from below.  If the limit is a smooth K\"ahler manifold, we also obtain the lower and upper bound of scalar curvatures being preserved under the $C^{1,\alpha}$-convergence.
In Corollary~\ref{cly:RD29_3}, we show that the assumption of the smooth K\"ahler manifold limit is natural under some circumstances.  
Finally, we finish this section by proving Theorem~\ref{thm:RG26_8} and Theorem~\ref{thm:RH16_1}.

\begin{theorem}(\textbf{Metric bi-Lipschitz equivalence at the center point})
For each small constant $\xi$, there exists $\epsilon=\epsilon(n, \xi)$ with the following property. 
  
Suppose $\{(M^n,g(t),J), 0 \leq t \leq 1\}$ is a K\"ahler Ricci flow solution.
Suppose for  $x_0 \in M$ and $r \in (0,1]$, we have 
    \begin{align}
       \begin{cases}
	 &\boldsymbol{\bar{\nu}}(B_{g(0)}(x_0, r), g(0), r^2) \geq -\epsilon;\\
	 &|R|_{g(0)} \leq r^{-2}, \; on \;  B_{g(0)}(x_0, r).    
       \end{cases}
   \label{eqn:RB16_1}
   \end{align}
  Then the following metric equivalence holds:
  \begin{align}
    e^{-\xi} g(x_0,0) \leq g(x_0,t) \leq e^{\xi} g(x_0,0), \quad \forall \; t \in [0, (\epsilon r)^{2}].
    \label{eqn:RB16_3}  
  \end{align}
  \label{thm:RJ08_0}
\end{theorem}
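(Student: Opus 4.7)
The plan is to argue by contradiction, choosing the first time when bi-Lipschitz equivalence at the center degenerates. Suppose the conclusion fails; then for some fixed small $\xi$ there exists a sequence of K\"ahler Ricci flows $\{(M_i^n, g_i(t), J_i), 0\le t\le 1\}$ with base points $x_i$ and scales $r_i \in (0,1]$ satisfying \eqref{eqn:RB16_1} for $\epsilon_i \to 0$, and for which the first time $t_{0,i} \in (0, (\epsilon_i r_i)^2]$ where $g_i(x_i, t_{0,i})$ fails to be $(1+\xi)$-bi-Lipschitz to $g_i(x_i,0)$ is well-defined by continuity. Parabolic rescaling by $t_{0,i}$ normalizes this first-violation time to $1$, and the rescaled initial balls have radius $\epsilon_i^{-1}r_i/\sqrt{t_{0,i}} \to \infty$. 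After this rescaling, the lower bound on $\bar{\boldsymbol{\nu}}$ on these enlarged balls still tends to $0$, by scaling invariance of $\bar{\boldsymbol{\nu}}$, while the initial scalar curvature bound becomes $|R|_{g_i(0)} \le o(1)$ on balls of increasing radius.

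The second step is to produce uniform regularity for both time slices $t=0$ and $t=1$ on a fixed compact holomorphic chart. At time $t=1$, Theorem~\ref{thmin:ML14_2} combined with Shi's estimates applied to the rescaled flow yields uniform $C^\infty$-bounds on $g_i(1)$ and an injectivity radius lower bound, so that $(M_i, x_i, g_i(1)) \to (\C^n, 0, g_\infty(1))$ smoothly, with $g_\infty(1)$ Ricci-flat and locally almost Euclidean. The gap/rigidity results in Section~\ref{sec:rigidity} (in fact the $C^\infty$-Cheeger-Gromov limit argument used in Theorem~\ref{thm:PA08_1}) force $g_\infty(1) = \delta_{k\bar l}$, the flat Euclidean K\"ahler metric on $\C^n$. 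On the initial slice the situation is more delicate: one only has a bound on $|R|$, not on $|Rm|$. The plan is to construct harmonic/holomorphic coordinates on $(M_i, g_i(1))$ (available by the smooth convergence) and transport them backwards via the identity map to $g_i(0)$; since $g_i(1)$ and $g_i(0)$ are a priori $(1+\xi)$-bi-Lipschitz at the base and bounded on the whole chart by the definition of first violation time, these yield uniform holomorphic charts on $g_i(0)$ too. In such a chart the K\"ahler potential satisfies a complex Monge-Amp\`ere equation $\det g_{i,\alpha\bar\beta}(0) = e^{F_i}$ with $\Delta F_i = -R_i$ bounded, so uniform $C^{1,\alpha}$-estimates on $g_i(0)$ follow from standard elliptic theory, and along a subsequence $g_i(0) \to \hat g$ in $C^{1,\alpha}$ on compact subsets of $\C^n$, with $\det \hat g \equiv 1$.

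The third step is to identify the two limits. Because the condition \eqref{eqn:RB16_1} implies an almost Einstein condition in the sense of Section~\ref{sec:topstable}, Theorem~\ref{thm:RE05_5} gives a distance distortion estimate between $g_i(0)$ and $g_i(1)$ on the rescaled balls that degenerates to the identity on any fixed ball as $i \to \infty$. Passing to the limit, the identity map from $(\C^n, \hat g)$ to $(\C^n, g_\infty(1)) = (\C^n, \delta_{k\bar l})$ is a distance-preserving map, hence an isometry at the metric level. Since $\hat g$ is of class $C^{1,\alpha}$ and satisfies the homogeneous Monge-Amp\`ere equation $\det \hat g = 1$, Evans-Krylov regularity theory for complex Monge-Amp\`ere equations upgrades $\hat g$ to a smooth K\"ahler metric with $\det \hat g \equiv 1$ on all of $\C^n$; combined with the metric-isometry property, $\hat g$ must itself be the flat metric $\delta_{k\bar l}$.

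The final step is to extract the contradiction from the $C^{1,\alpha}$ convergence topology. At the base point we have $g_i(x_i,0) \to \hat g(0) = \delta_{k\bar l}$ and $g_i(x_i,1) \to g_\infty(x_\infty,1) = \delta_{k\bar l}$, so the ratio of quadratic forms $g_i(x_i,0)^{-1}g_i(x_i,1)$ tends to the identity on a sequence realizing the first-violation time. This contradicts the defining property of $t_{0,i}$, namely that this ratio has bi-Lipschitz constant exactly $1+\xi$. I expect the main obstacle to be making the third step fully rigorous: both the backward transport of the holomorphic charts from $t=1$ to $t=0$ (which requires care because $g(0)$ has only bounded scalar curvature, not bounded Riemann curvature) and the passage to a $C^{1,\alpha}$ limit solving the Monge-Amp\`ere equation globally on $\C^n$ rely on the distance distortion estimate as the bridge, and ensuring uniformity of these charts independent of $i$ is the technical heart of the argument.
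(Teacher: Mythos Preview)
Your overall strategy matches the paper's, but there is a genuine gap in your Step~2 that the paper addresses with an extra point-selection argument you have omitted. You write that ``$g_i(1)$ and $g_i(0)$ are a priori $(1+\xi)$-bi-Lipschitz at the base and bounded on the whole chart by the definition of first violation time.'' The first clause is correct; the second is not. The first-violation time $t_{0,i}$ controls only the quadratic form $g_i(x_i,t)$ at the single point $x_i$; it says nothing about $g_i(y,0)$ versus $g_i(y,t)$ for nearby $y$. Without metric equivalence on a full neighborhood you cannot transport the holomorphic chart from $t=1$ back to $t=0$ with uniform ellipticity, so the Monge--Amp\`ere/Chen--Cheng estimates you invoke for $g_i(0)$ have no footing. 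The paper fixes this by minimizing the scale-invariant quantity $I_y/d_{g(0)}^2(y,\partial B)$ (where $I_y$ is the first-violation time at $y$) over the ball, moving the base point to the minimizer $y_0$; the minimizing property then forces $I_y \ge \tfrac14 I_{y_0}$ for all $y$ in a ball of radius comparable to $\epsilon^{-1}\sqrt{I_{y_0}}$, which after rescaling gives $e^{-\xi}g(y,0)\le g(y,t)\le e^{\xi}g(y,0)$ on a large ball for $t\in[0,\tfrac14]$. This neighborhood-wide equivalence is what makes the chart transport and the $C^{1,\alpha}$ estimates at $t=0$ legitimate.

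There is a second, related issue in your Step~3. You invoke Theorem~\ref{thm:RE05_5} to get the distance distortion estimate, but that theorem requires the volume-ratio upper bound \eqref{eqn:RE04_1c} at $t=0$, which in Section~\ref{sec:topstable} comes from a Ricci lower bound via Bishop--Gromov. Here you have only $|R|$ bounded, so Bishop--Gromov is unavailable. The paper instead derives the volume-ratio upper bound \emph{after} obtaining the $C^{3,\frac12}$-regularity of the initial potential (Claim~\ref{clm:RB21_5}), which in turn depends on the neighborhood-wide metric equivalence from the point selection. So the logical order in the paper is: point selection $\Rightarrow$ metric equivalence on a ball $\Rightarrow$ $C^{3,\frac12}$ estimate on $\varphi_0$ $\Rightarrow$ volume-ratio upper bound $\Rightarrow$ almost-Einstein condition $\Rightarrow$ distance distortion. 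Your proposal short-circuits this chain and becomes circular. Once the point-selection step is inserted, the rest of your outline is essentially the paper's argument.
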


\begin{proof}
Without loss of generality, we may assume $r=1$. 
We argue by contradiction.  
If this proposition were wrong,  for every small $\epsilon$, we were able to find a K\"ahler Ricci flow solution
$\{(M, x_0, g(t), J),  0 \leq t \leq 1\}$ satisfying
\begin{align}
  \boldsymbol{\bar{\nu}}(B_{g(0)}(x_0, 1), g(0), 1) \geq -\epsilon;  \quad |R|_{g(0)} \leq 1 \; on \;  B_{g(0)}(x_0, 1).    \label{eqn:RB15_4}
\end{align}
However, (\ref{eqn:RB16_3}) fails. 
We shall finish the contradiction argument in the following steps.\\ 

\noindent
\textit{Step 1. Shift the base point via a maximum principle argument.}\\

For each point $y \in B_{g(0)}(x_0, 1)$, we define $I_{y}$ to be the first time such that $e^{\xi}$-equivalence with initial metric starts to fail. 
Namely, we have
\begin{align*}
 &e^{-\xi} g(y,0) \leq g(y,t) \leq e^{\xi} g(y,0), \quad \forall \; t \in [0, I_{y}];\\
 &\left| \log \frac{\langle \vec{v}_i,\vec{v}_i\rangle_{g(t_i)}}{\langle \vec{v}_i,\vec{v}_i\rangle_{g(0)}} \right|>\xi,  \quad \textrm{for a sequence of}\; \vec{v}_i \in T_{y}M\;  \textrm{and} \; t_i  \to (I_{y})_{+}. 
\end{align*}
According to our choice,  it is clear that $I_{x_0}<\epsilon^2$.    The function $\frac{I_{y}}{d_{g(0)}^2(y, \partial B_{g(0)}(x_0,1))}$ 
is clearly a scaling invariant lower semi-continuous function.
Since this function takes value $\infty$ on $\partial B_{g(0)}(x_0, 1)$, we can assume its minimum value is achieved at some interior point, say $y_0$. 
Then we know
\begin{align*}
  \frac{I_{y_0}}{d_{g(0)}^2(y_0, \partial B_{g(0)}(x_0,1))} \leq  \frac{I_{x_0}}{d_{g(0)}^2(x_0, \partial B_{g(0)}(x_0, 1))}=I_{x_0} <\epsilon^2. 
\end{align*}
For each point $y \in B_{g(0)}\left(y_0, 0.5 \epsilon^{-1} \sqrt{I_{y_0}} \right)$, the triangle inequality then implies that
\begin{align*}
  I_{y} \geq \frac{d_{g(0)}^2(y, \partial B_{g(0)}(x_0,1))}{d_{g(0)}^2(y_0, \partial B_{g(0)}(x_0,1))} \cdot I_{y_0} \geq \frac{1}{4} I_{y_0}. 
\end{align*}
Now we rescale the flow $g$ to $\tilde{g}$ by setting $\tilde{g}(\cdot,t)=I_{y_0}^{-1} g(\cdot, I_{y_0} t)$.   
For simplicity of notation, we denote the rescaled flow again by $g(t)$, and denote $g(0)$ by $g$, which is reserved as the default metric. 
Therefore, the subscript $g$ will be omitted, unless we need to emphasize its importance and reduce confusions.  

Under these notational conventions, we then have the following properties:
\begin{align}
  &\boldsymbol{\bar{\nu}}(B(y_0, 0.5 \epsilon^{-1}), g, 1) \geq \boldsymbol{\bar{\nu}}(B(x_0, I_{y_0}^{-\frac12}), g, 1) \geq -\epsilon;  \label{eqn:RB19_1}\\
  &|R| \leq I_{y_0} \leq \epsilon^2 \; on \;  B(y_0,  0.5 \epsilon^{-1});   \label{eqn:RB19_2}\\
  &e^{-\xi} g(y,0) \leq g(y,t) \leq e^{\xi} g(y,0), \quad \forall \; y \in B(y_0, 0.5 \epsilon^{-1}), \;  t \in [0, 0.25]. \label{eqn:RB19_3}
\end{align}
Furthermore, we have a nonzero vector $\vec{v} \in T_{y_0} M$ such that 
\begin{align}
  \left| \log \frac{\langle \vec{v},\vec{v} \rangle_{g(1)}}{\langle \vec{v}, \vec{v}\rangle_{g(0)}} \right|=\xi.   \label{eqn:RB19_4}
\end{align}
The condition (\ref{eqn:RB19_1}) assures us to apply the curvature estimate (\ref{eqn:ML28_1}) in the improved pseudo-locality theorem(cf. Figure~\ref{fig:threelayers} for intuition).
Therefore, on the ball $B(y_0, 0.1 \epsilon^{-1})$, 
the metric $g(t)$ is almost static on the time period $[0.25, 1]$. In particular, we can improve (\ref{eqn:RB19_3}) to 
\begin{align}
  e^{-2\xi} g(y,0) \leq g(y,t) \leq e^{2\xi} g(y,0), \quad \forall \; y \in B(y_0, 0.1 \epsilon^{-1}), \;  t \in [0, 1]. \label{eqn:RB19_5}
\end{align}

 \begin{figure}[H]
 \begin{center}
 \psfrag{A}[c][c]{$(y_0, 1)$}
 \psfrag{B}[c][c]{$(y_0, 0)$}
 \psfrag{C}[c][c]{\color{blue}{$B_{g(0)}(x_0, 0.5 \epsilon^{-1})$}}
 \psfrag{D}[c][c]{\color{green}{$B_{g(0)}(x_0, 0.1 \epsilon^{-1})$}}
 \psfrag{E}[c][c]{\color{blue}{$e^{-\xi}g(\cdot, 0) \leq g(\cdot, t) \leq e^{\xi}g(\cdot, 0)$}}
 \psfrag{F}[c][c]{\color{green}{$t|Rm|(\cdot, t)<\psi(\epsilon|n)$}}
 \psfrag{G}[c][c]{\color{red}{$B_{g(1)}(y_0, L)$}}
 \psfrag{H}[c][c]{$t=0$}
 \psfrag{I}[c][c]{$t=0.25$}
 \psfrag{J}[c][c]{$t=1$}
 \includegraphics[width=0.8 \columnwidth]{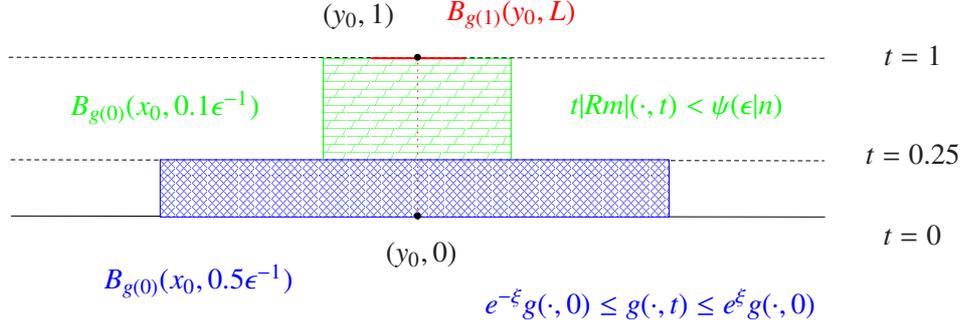}
 \caption{Shift the base point to a place with good space-time neighborhood}
 \label{fig:threelayers}
 \end{center}
 \end{figure}

\noindent
\textit{Step 2. Restrict the K\"ahler Ricci flow to a local chart.}\\

For simplicity of notation, we denote $g(0)$ by $g$ and $g(1)$ by $h$. 
Fix a large number $L$. In light of (\ref{eqn:ML28_1}), it is clear that 
\begin{align*}
  |Rm|_{h}+|\nabla Rm|_{h}<\psi(\epsilon|n)<<L^{-3},   
\end{align*}
in the geodesic ball $B_{h}(y_{0}, 2L)$. Clearly, the ball $(B_{h}(y_{0}, 2L), L^{-2}h)$ has radius $2$ and satisfies
\begin{align}
  |Rm|_{L^{-2}h}+|\nabla Rm|_{L^{-2}h}<<1.     \label{eqn:RJ08_1}
\end{align}
Following a standard application of H\"ormander's $L^{2}$-estimate(cf. Proposition 1.2 of~\cite{TianYau1} and Lemma 2.2 of~\cite{LiWaZh}),
the above condition assures us to construct a biholomorphic map
\begin{align}
  \Phi: B_{h}(y_{0}, L) \to U \in \C^n
  \label{eqn:RJ07_0}
\end{align}
satisfying the following estimates
\begin{align}
   &  B(0, C^{-1} L) \subset U \subset B(0, C L);  \label{eqn:RB23_4a}\\
   &(\Phi_* h)_{\alpha \bar{\beta}}(0)=\delta_{\alpha \bar{\beta}}(0);  \label{eqn:RB23_4b}\\
   &\frac{1}{C}\delta_{\alpha \bar{\beta}} <\Phi_* h< C \delta_{\alpha \bar{\beta}}, \quad \textrm{on} \; U; \label{eqn:RB23_4}\\
   &\norm{\Phi_{*}h}{C^{2,\frac{1}{2}}(U)}<C L^{2+\frac{1}{2}}. \label{eqn:RJ08_20}
\end{align}
Here $C=C(n)>1$. 
Note that we can first construct $\Phi$ for the unit ball with respect to metric $L^{-2}h$, which is guaranteed by (\ref{eqn:RJ08_1}),  and then rescale $L^{-2}h$ to $h$.
This is the reason why the constant $C$ in (\ref{eqn:RB23_4a}) and (\ref{eqn:RB23_4}) is independent of $L$. 
Then we define 
\begin{align}
  V \coloneqq \Phi(B_{h}(y_{0},  0.1 C^{-1}L)) \Subset U \subset \C^{n}  \label{eqn:RJ07_1}
\end{align}
for the exact $C$ in (\ref{eqn:RB23_4a}).  Similar to (\ref{eqn:RB23_4a}), we have
\begin{align}
     B(0, 0.01 C^{-1}L) \subset V \subset B(0, C^{-1}L).     \label{eqn:RJ11_1}
\end{align}

 \begin{figure}[H]
 \begin{center}
 \psfrag{A}[c][c]{\color{blue}{$B_{g(0)}(y_0,L)$}}
 \psfrag{B}[c][c]{$\Phi$}
 \psfrag{C}[c][c]{\color{blue}{$U$}}
 \psfrag{D}[c][c]{$B_{g(0)}(y_0, 0.1 L)$}
 \psfrag{E}[c][c]{$V$}
 \psfrag{F}[c][c]{$y_0$}
 \psfrag{G}[c][c]{$0$}
 \psfrag{H}[c][c]{\color{green}$B(0,0.01C^{-1}L) \subset B(0, C^{-1}L) \subset B(0, CL)$}
 \psfrag{I}[c][c]{\color{blue}{$U=\Phi(B_{g(0)}(y_0, L))$}}
 \psfrag{J}[c][c]{$V=\Phi(B_{g(0)}(y_0, 0.1L))$}
 \includegraphics[width=0.8 \columnwidth]{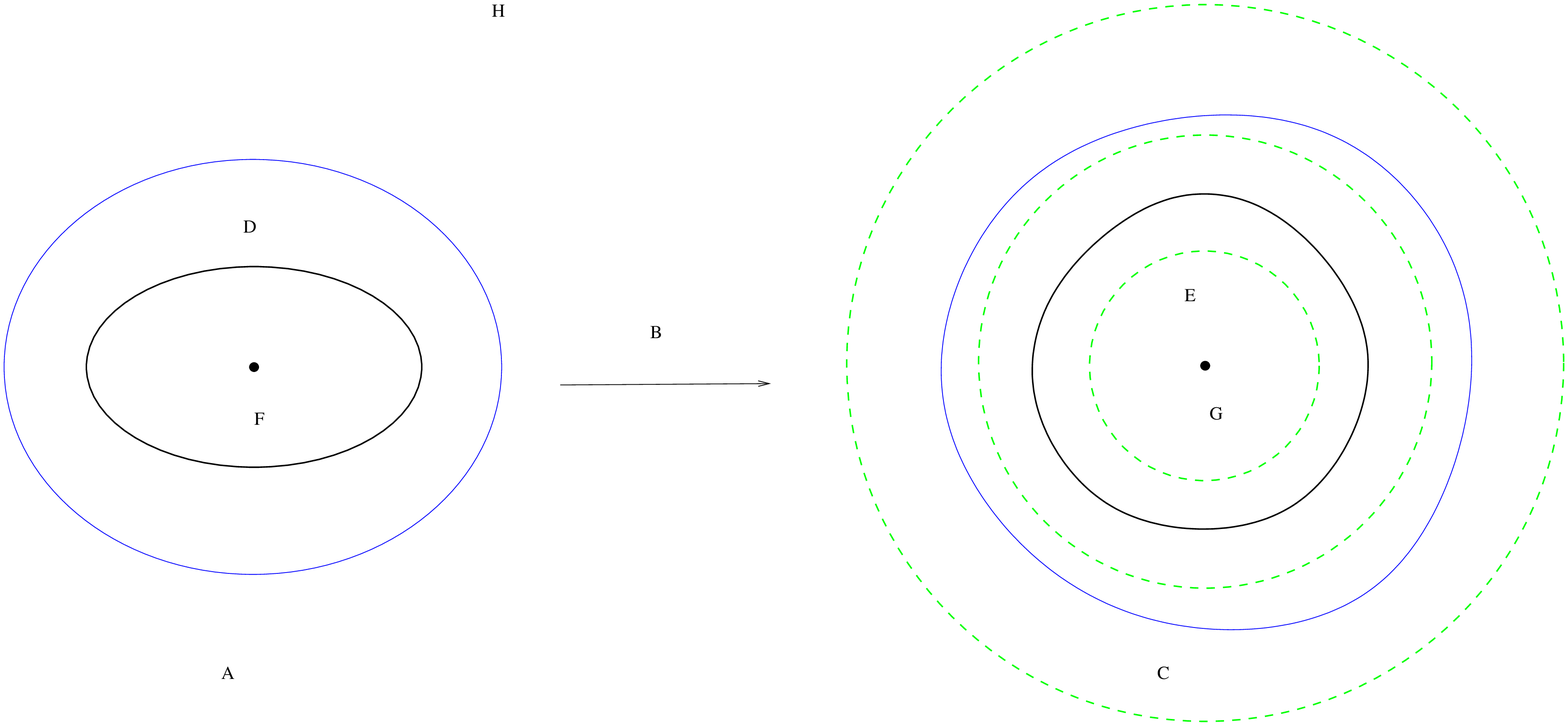}
 \caption{Construction of holomorphic coordinate chart}
 \label{fig:holomorphicmap}
 \end{center}
 \end{figure}

The identity map between different time slices of a K\"ahler Ricci flow is always biholomorphic.
One can apply $\Phi$ to obtain a local K\"ahler Ricci flow $\Phi_* g(t)$ on $U$ as follows.  
We denote the flat metric form on $U$ by $\omega_{E}$, and the metric form of $\Phi_* g(t)$ by $\omega_t$. 
By the construction of $\Phi$, there exists a smooth function $\varphi_1$ such that $\omega_{1}=\omega_{E}+\sqrt{-1} \partial \bar{\partial} \varphi_1$ is the metric form of $\Phi_{*}h=\Phi_{*} g(1)$.
Note that the choice of $\varphi_{1}$ is not unique,  as $\partial \bar{\partial}f=0$ for every linear function $f$. 
We make the choice by requiring
\begin{align}
  \varphi_{1}(0)=\frac{\partial \varphi_{1}}{\partial z^{\alpha}}(0)=\frac{\partial \varphi_{1}}{\partial z^{\bar{\alpha}}}(0)=0, \quad \forall \; \alpha \in \left\{ 1, 2, \cdots, n \right\}.
  \label{eqn:RJ08_18}  
\end{align}
By metric equivalence (\ref{eqn:RB23_4}), we have $ \frac{1}{C} \leq n+\Delta \varphi_{1}= tr_{g_{E}} g_{\varphi_{1}} \leq C$. 
Therefore, $\Delta \varphi_{1}$ is uniformly bounded. By adjusting $U$ at the beginning if necessary, we can apply Schauder estimate 
to obtain uniform gradient bound of $\varphi_{1}$ on $U$.  
In particular, it follows from (\ref{eqn:RB23_4}), (\ref{eqn:RJ08_18}) and path integration that 
\begin{align}
 \norm{\varphi_1}{C^0(U)}<C(n,L). 
 \label{eqn:RJ06_6}
\end{align}
Note that (\ref{eqn:RB23_4b}) is equivalent to 
\begin{align}
  \omega_{E}(0)=\omega_{1}(0).   \label{eqn:RJ08_12}
\end{align}
On $U$, we define
\begin{align}
  \varphi(\cdot, t) \coloneqq \varphi_1 + \int_{1}^{t }\log \frac{\omega_s^n}{\omega_{E}^n} ds.    \label{eqn:RB20_1}
\end{align}
Direct calculation shows that 
\begin{align*}
  \frac{d}{dt} \left( \omega_{E}+\sqrt{-1} \partial \bar{\partial} \varphi \right)=\sqrt{-1} \partial \bar{\partial} \dot \varphi=\sqrt{-1}  \partial \bar{\partial} \log \frac{\omega_t^n}{\omega_{E}^n}
  =-\rho(\omega_t)=\frac{d}{dt} \omega_t,  
\end{align*}
where $\rho(\omega_t)$ is the Ricci form of $\omega_t$.
Since $\omega_{1}$ is chosen already, by uniqueness of ODE solution with given initial value on each world-line $\left\{ (x,t)| t \in [0,1] \right\}$,
it is clear that
\begin{align*}
  \omega_t(x) \equiv \omega_{\varphi(\cdot, t)}(x), \quad \forall \; x \in U, \; t \in [0,1]. 
\end{align*}
Therefore, we have naturally localized the global K\"ahler Ricci flow to be a local K\"ahler Ricci flow on $U$, with $\varphi(\cdot, t)$ being the local K\"ahler potential.  
The geometry of $(U, g(t))$ is then completely determined by the potential function $\varphi(\cdot, t)$. 
Direct calculation shows that $\varphi$ satisfies the following equation
\begin{align}
  &\dot{\varphi}= \log \frac{\omega_{\varphi}^{n}}{\omega_{E}^{n}},  \label{eqn:RJ08_3}\\
  &\ddot{\varphi}= \Delta_{\varphi} \dot{\varphi}=-R. \label{eqn:RJ08_4}
\end{align}
Here $\Delta_{\varphi}$ means the Laplacian operator with respect to the metric form $\omega_{\varphi}=\omega_{E}+\sqrt{-1}\partial \bar{\partial} \varphi$.
Namely, if we denote the metric tensor compatible with $\omega_{\varphi}$ 
by $g_{\varphi}$ such that $(g_{\varphi})_{\alpha \bar{\beta}}=\delta_{\alpha \bar{\beta}} + \frac{\partial^{2} \varphi_{0}}{\partial z^{\alpha} \partial z^{\bar{\beta}}}$
and denote the inverse of $(g_{\varphi})_{\alpha \bar{\beta}}$ by $g_{\varphi}^{\alpha \bar{\beta}}$,  then
\begin{align}
  \Delta_{\varphi} \coloneqq g_{\varphi}^{\alpha\bar{\beta}} \frac{\partial^2 }{\partial z^{\alpha} \partial z^{\bar{\beta}}}.  
  \label{eqn:RJ08_2}
\end{align}
Similar to the global K\"ahler Ricci flow, we shall develop estimates for $\varphi_{t}=\varphi(\cdot, t)$ and $\dot{\varphi}_{t}=\dot{\varphi}(\cdot, t)$ to determine the geometry of $g(t)$.
The time derivatives of $\varphi$ at $t=0,1$ are of great importance, so we denote them by particular notation
\begin{align}
  F \coloneqq \dot{\varphi}_{0}, \quad H \coloneqq \dot{\varphi}_{1}.  \label{eqn:RJ08_19}
\end{align}\\

\noindent
\textit{Step 3. Develop a priori estimates for local potential functions at time $t=1$. More specifically,  let $V \Subset U$ be defined in (\ref{eqn:RJ07_1}), then we have
 \begin{align}
    &\norm{H}{C^{0}(V)}< C(n),    \label{eqn:RJ08_28}\\
    &\norm{\varphi_{1}}{C^{k,\frac{1}{2}}(V)} + \norm{H}{C^{k,\frac{1}{2}}(V)}< C_{k}.   \label{eqn:RJ08_24} 
 \end{align}
 }

Note that (\ref{eqn:RJ08_28}) follows directly from the combination of (\ref{eqn:RB23_4}), (\ref{eqn:RJ08_3}) and (\ref{eqn:RJ08_19}).  
We focus on the proof of (\ref{eqn:RJ08_24}). 
Direct calculation shows that
\begin{align}
  &\det \left( \delta_{\alpha \bar{\beta}} + (\varphi_{1})_{\alpha \bar{\beta}} \right)=e^{H}, \label{eqn:RJ08_21} \\
  &-\Delta_{\varphi_{1}} H=R_{g(1)}.  \label{eqn:RJ08_22}
\end{align}
In light of the improved curvature estiamte (\ref{eqn:ML28_1}) and Shi's estimates of curvature derivatives, we know $|\nabla_{g(1)}^{k} R_{g(1)}|_{g(1)}<C_{k}$ on $U$ for each non-negative integer $k$.
Then we can start bootstrapping as follows.
By (\ref{eqn:RJ08_20}) and uniform ellipticity (\ref{eqn:RB23_4}), we apply Schauder estimate on (\ref{eqn:RJ08_22}) to obtain $H$ has $C^{4,\frac{1}{2}}(U')$-norm for each  $U' \Subset U$.
Taking derivatives on (\ref{eqn:RJ08_21}) with respect to space directions, we can apply Schauder estimate on the resulting equation to obtain that $\varphi_{1}$ has $C^{6,\frac{1}{2}}(U')$-norm, by shrinking $U'$ if necessary. 
Then Schauder estimate on (\ref{eqn:RJ08_22}) again implies $H$ has uniform $C^{6,\frac{1}{2}}(U')$-norm, and so on. In each step we shall slightly shrink $U'$. 
For each $k$, we shall arrive (\ref{eqn:RJ08_24}) after finite steps of bootstrapping. \\

\noindent
\textit{Step 4. Develop a priori estimates for local potential functions at time $t=0$. More precisely, there exists a constant $C=C(n,L)$ such that
  \begin{align}
    \norm{\varphi_{0}}{C^{3,\frac{1}{2}}(V)} + \norm{F}{C^{1,\frac{1}{2}}(V)}<C.
    \label{eqn:RJ08_25}
  \end{align}
}

We shall finish the proof of (\ref{eqn:RJ08_25}) in Claim~\ref{clm:RJ06_10} and Claim~\ref{clm:RJ06_3}. 

\begin{claim}
  The following $C^{0}$-estimates of $\varphi_{0}$ and $F$ hold:
\begin{align}
 &\norm{\varphi_0}{C^0(U)}<C(n,L),   \label{eqn:RJ06_8} \\
 &\norm{F}{C^{0}(U)}<C(n). \label{eqn:RJ08_5}
\end{align}
Furthermore, $\varphi_{0}$ satisfies the following $C^{1,1}$-estimate on $U$:
\begin{align}
  \frac{1}{C(n)}\delta_{\alpha \bar{\beta}} \leq  \delta_{\alpha\bar{\beta}} + \frac{\partial^2 \varphi_{0}}{\partial z^{\alpha} \partial z^{\bar{\beta}}}  \leq C(n)  \delta_{\alpha \bar{\beta}}.  \label{eqn:RJ06_9}
\end{align}
  \label{clm:RJ06_10}
\end{claim}

We first show (\ref{eqn:RJ06_9}). 
The metric equivalence (\ref{eqn:RB19_5})  implies that $e^{-4\xi} \omega_{1} \leq \omega_t \leq e^{4\xi} \omega_{1}$ for all $t \in [0,1]$. 
On the other hand,  (\ref{eqn:RB23_4}) implies $\frac{1}{C(n)} \omega_{E} < \omega_{1} < C(n) \omega_{E}$. 
Therefore, by adjusting $C$ if necessary, we have
\begin{align}
  \frac{1}{C(n)} \omega_{E} \leq \omega_t \leq C(n) \omega_{E},  \quad \forall \; t \in [0,1]. \label{eqn:RJ07_14}
\end{align}
which is the same as (\ref{eqn:RJ06_9}). 
Then we move on to prove (\ref{eqn:RJ06_8}). 
Note that (\ref{eqn:RJ07_14}) implies that 
\begin{align}
  \frac{1}{C(n)} \leq \frac{\omega_{t}^{n}}{\omega_{E}^{n}} \leq C(n), \quad \forall \; t \in [0,1].  \label{eqn:RJ06_7} 
\end{align}
Plugging (\ref{eqn:RJ06_7}) into  (\ref{eqn:RB20_1}), we obtain
\begin{align*}
  |\varphi_{t}-\varphi_{1}| \leq \left| \int_{1}^{t} \log \frac{\omega_{s}^{n}}{\omega_{E}^{n}} ds  \right| \leq C (1-t) \leq C(n), \quad \forall \; t \in [0,1].  
\end{align*}
Then (\ref{eqn:RJ06_8}) follows from the combination of the above inequality and (\ref{eqn:RJ06_6}).
In (\ref{eqn:RJ06_7}), by setting $t=0$, we obtain the uniform bound of $|\log \frac{\omega_{1}^{n}}{\omega_{E}^{n}}|=|\log F|$, which shows  (\ref{eqn:RJ08_5}). 
The proof of Claim~\ref{clm:RJ06_10} is complete. \\

Then we improve the regularity of $\varphi_{0}$ under the help of complex Monge-Amp\`{e}re equation.

\begin{claim}
  On each $U' \Subset U$, there is a constant $C=C(n,U,U')$ such that
\begin{align}
  &\norm{\varphi_{0}}{C^{3,\frac{1}{2}}(U')}<C,  \label{eqn:RB21_1}\\
  &\norm{F}{C^{1,\frac{1}{2}}(U')}<C.   \label{eqn:RJ06_5}
\end{align}
  \label{clm:RJ06_3}
\end{claim}

The proof of this claim is basically a local version of Proposition 1.2 of~\cite{ChenCheng1}. 
We say a few words here for the convenience of the readers. 
We first show (\ref{eqn:RJ06_5}). 
On the open manifold $(U, \omega_{\varphi_{0}})$, the scalar curvature satisfies
\begin{align}
  -\Delta_{\varphi_0} \log \det \left( \delta_{\alpha \bar{\beta}} + (\varphi_0)_{\alpha \bar{\beta}} \right)=R.    \label{eqn:RB20_3} 
\end{align}
As usual(cf.~\cite{ChenXX}), we decompose the above equation into two equations
\begin{align}
  &\det \left( \delta_{\alpha \bar{\beta}} + (\varphi_0)_{\alpha \bar{\beta}} \right)=e^{F}, \label{eqn:RG19_1}\\
  &-\Delta_{\varphi_0} F=R.  \label{eqn:RJ06_4}
\end{align}
In light of (\ref{eqn:RB19_2}), the right hand side of equation (\ref{eqn:RJ06_4}) is  uniformly bounded by $\epsilon^2$.
If the coefficients $g_{\varphi_{0}}^{\alpha\bar{\beta}}$ were $\delta^{\alpha\bar{\beta}}$, we could apply the standard Schauder estimate 
to obtain $F$ is uniformly $C^{1,\frac{1}{2}}$ in each compact subset of $U$. 
However, now there exists only a uniform quasi-isometry between $g_{\varphi_{0}}^{\alpha \bar{\beta}}$ and $\delta^{\alpha \bar{\beta}}$ on $U$, the direct application of Schauder estimate breaks down. 
Fortunately, under the help of the extra equation (\ref{eqn:RG19_1}), we can draw the same conclusion. 
Actually, in view of (\ref{eqn:RJ06_9}), which is the same as $e^{-2\xi}\delta^{\alpha \bar{\beta}} \leq g_{\varphi_{0}}^{\alpha \bar{\beta}} \leq e^{2\xi} \delta^{\alpha \bar{\beta}}$,
we can apply the De Giorgi-Nash-Moser theory(cf. Theorem 8.22 of~\cite{GT}) on (\ref{eqn:RJ06_4}).
Therefore, for each domain $U' \Subset U$,  we obtain $\norm{F}{C^{\frac{2}{3}}(U')}<C'$ for some $C'$ depending on $U'$.
Plugging this estimate into (\ref{eqn:RG19_1}), by shrinking $U'$ if necessary, we then apply the theory of Caffarelli-Evans-Krylov(cf.~\cite{WY} and references therein) to obtain
uniform bound of $\norm{\varphi_{0}}{C^{2,\frac{1}{2}}(U')}$. This means that the metric form $\omega_{\varphi_{0}}$, or the metric tensor $g_{\varphi_{0}}$ in $U'$ has uniform $C^{\frac{1}{2}}(U')$-norm.
Consequently, in equation  (\ref{eqn:RJ06_4}),  the coefficient term $g_{\varphi_{0}}^{\alpha\bar{\beta}}$ has uniform $C^{\frac{1}{2}}(U')$-norm.
Therefore, we now can apply the standard Schauder theory(cf. Theorem 8.32 of~\cite{GT}) on (\ref{eqn:RJ06_4}) to obtain (\ref{eqn:RJ06_5}).  
Now we plugging (\ref{eqn:RJ06_5})  into (\ref{eqn:RG19_1}). 
By taking space derivatives, one can apply standard Schauder estimate for linear elliptic equations to obtain (\ref{eqn:RB21_1}), by shrinking $U'$ if necessary. 
The proof of Claim~\ref{clm:RJ06_3} is complete. 

Clearly, (\ref{eqn:RJ08_25}) follows from the combination of Claim~\ref{clm:RJ06_10} and Claim~\ref{clm:RJ06_3}.\\

\noindent
\textit{Step 5. Derive the local almost Einstein condition, which is used to connect $g(0)$ to $g(1)$. More precisely, we shall show the following curvature and distance estimates:
  \begin{align}
    &\int_{0}^{1} \int_{V} |R| dvdt<\psi(\epsilon|n,L),     \label{eqn:RB21_7}\\
    &\sup_{x,y \in V} \left\vert d_{g(0)}(x,y) -d_{g(1)}(x,y) \right\vert < \psi(\epsilon|n,L).   \label{eqn:RB21_11} 
  \end{align}
}

Now we change our point of view from PDE to Riemannian geometry. 
The uniform regularity estimate (\ref{eqn:RB21_1}) implies the following intrinsic volume ratio estimate and entropy radius estimate for the initial metric. 
\begin{claim}
  There is a constant $r_0=r_0(n,L)$ such that 
  \begin{align}
    & \sup_{r \in (0,r_0)}  \omega_{2n}^{-1}r^{-2n}\left| B_{g}(y,r)\right|_{dv_{g}} \leq   1+ \sqrt{r},   \label{eqn:RB21_6}\\
    & \mathbf{er}_{\epsilon_{0}}(x) > r_{0},  \label{eqn:RJ07_16}
  \end{align}
  for each $y \in V$, where $V$ is a subset of $U$ defined in (\ref{eqn:RJ07_1}), and $\mathbf{er}_{\epsilon_{0}}$ is the entropy radius defined in (\ref{dfn:RH26_3}).

  \label{clm:RB21_5}
\end{claim}

Recall that $\omega_{\varphi_{0}}$ is the metric form of the initial metric $g=g(0)$.  
We can regard $\omega_{\varphi_0}$ as matrix-valued function. 
For each pair of points $x,y \in V$, using the metric equivalence (\ref{eqn:RJ06_9}) between $\omega_{\varphi_0}$ and $\omega_{E}$, 
and the $C^{3,\frac{1}{2}}$-regularity estimate of potential function (\ref{eqn:RB21_1}), we have 
\begin{align}
  |\omega_{\varphi_0}(x)-\omega_{\varphi_0}(y)|<C|x-y|<C d_{\omega_{\varphi_0}}(x,y)    \label{eqn:RB21_2} 
\end{align}
for some $C=C(n,L)$. 
Let $G$ be the matrix of $g(y)$.   
For each $y \in V$ and $r \in (0, r_0)$, we define
\begin{align*}
  D_{r}(y) \coloneqq \left\{ x \in U | (x-y)^{\tau} G (x-y) \leq  r^2 \right\},
\end{align*}
where $\tau$ means transpose.
If $g$ is a constant matrix valued function, then $D_{r}(y)$ is nothing but an $r$-geodesic ball centered at $y$.
Estimate (\ref{eqn:RB21_2}) implies that $D_{r}(y)$ is almost an $r$-geodesic ball. 
In fact, by combining (\ref{eqn:RB19_5}), (\ref{eqn:RB21_1}) and (\ref{eqn:RB21_2}) together, we obtain 
\begin{align*}
  B_{g}(y, (1-Cr) r) \subset  D_{r}(y) \subset B_{g}(y, (1+Cr) r) \subset D_{r(1+Cr)}(y) \subset D_{2r}(y). 
\end{align*}
The determinant of $g$ on $D_{2r}(y)$ is bounded above by $\left( 1+Cr \right)\det G$. 
Therefore, we have
\begin{align*}
  |B_{g}(y,r)|_{dv_{g}} \leq |D_{r(1+Cr)}(y)|_{dv_{g}} \leq (1+Cr) \omega_{2n} r^{2n}. 
\end{align*}
Since $r<r_0$ very small, $Cr^{\frac{1}{2}}<1$. Therefore, the estimate (\ref{eqn:RB21_6}) follows from the above inequality.
Similarly, through the metric quasi-isometry (\ref{eqn:RB21_2}), we can follow the above argument to show that the isoperimetric constant of $B_{g}(y,r)$ is very close to that of a standard Euclidean ball, 
then we obtain (\ref{eqn:RJ07_16}) through the scalar curvature bound (\ref{eqn:RB19_2}) and the entropy estimate in Lemma~\ref{lma:MJ25_1}. 
The proof of Claim~\ref{clm:RB21_5} is complete. \\

We further put the flow structure into consideration and finish the proof of (\ref{eqn:RB21_7}) and (\ref{eqn:RB21_11}).

Recall the terminology(cf. (\ref{eqn:PA05_1}) and (\ref{eqn:PA05_2})) of $A_{+.r}$ and $A_{-,r}$ in Lemma~\ref{lma:PA05_1}.
On the one hand, $A_{+,r}$ is bounded from above by $1+\sqrt{r}$ for each $r \in (0, r_{0})$, as indicated by (\ref{eqn:RB21_6}). 
On the other hand, the curvature estimate (\ref{eqn:ML28_1}) in the improved pseudo-locality theorem implies that $A_{-,r} \geq 1-\psi(\epsilon|n)$.
Combining them together, we obtain 
\begin{align}
  A_{+,r}- A_{-,r} \leq  \sqrt{r} + \psi(\epsilon|n),  \label{eqn:RB21_9} 
\end{align}
which in turn yields that
\begin{align}
  r^{-2n} \int_{0}^{r^2}\int_{B(y,r)} |R| dvdt  \leq \psi(\epsilon|n) + \omega_{2n} (A_{+,r}-A_{-,r}) \leq \psi(\epsilon|n) +  \sqrt{r}.  \label{eqn:RB21_10} 
\end{align}
Now we cover $V$ by geodesic balls $B_{g}(z_i, r)$ such that $B_{g}(z_i,0.5r)$ disjoint to each other. Because of (\ref{eqn:RB21_6}), the total number of such balls is bounded by $C(n,L)r^{-2n}$.  
Therefore, we have 
\begin{align*}
  \int_{0}^{r^2} \int_{V} |R| dvdt \leq \sum_{i} \int_{0}^{r^2} \int_{B(z_i, r)} |R| dvdt 
  \leq C(n,L) \cdot r^{-2n}  \cdot r^{2n} \cdot \left\{ \psi(\epsilon|n) + \sqrt{r} \right\}.  
\end{align*}
Recall that (\ref{eqn:ML28_1}) can be rewritten as $t|Rm|(x,t)<\psi(\epsilon|n)$ in the current setting. 
Therefore, the almost flatness of $(V, g(t))$ for $t \in [r^2, 1]$ then implies that 
\begin{align*}
  \int_{r^2}^{1} \int_{V} |R| dvdt \leq \psi(\epsilon|n,L) \log r^{-2}.  
\end{align*}
Combining the previous two steps, we obtain
\begin{align*}
  \int_{0}^{1} \int_{V} |R| dvdt \leq \psi(\epsilon|n,L) \log r^{-2} + \sqrt{r}. 
\end{align*}
Note that the above estimate holds for arbitrary $r \in (0, r_0)$. In particular, we can choose $r=\psi^2(\epsilon|n,L)$. It follows that
\begin{align}
  \int_{0}^{1} \int_{V} |R| dvdt \leq 2 \psi |\log \psi|. \label{eqn:RJ07_15} 
\end{align}
Since $\psi |\log \psi| \to 0$ as $\psi \to 0$, we arrive at (\ref{eqn:RB21_7}) by setting $2\psi |\log \psi|$ as the new $\psi$. 

The estimate (\ref{eqn:RB21_11}) basically follows from the application of Lemma~\ref{lma:PA05_1}.
By Claim~\ref{clm:RB21_5}, we know $\mathbf{er}_{\epsilon_{0}}(y)>r_{0}$.
Fix $r \in (0,r_{0})$. It follows from Lemma~\ref{lma:PA05_1} that 
\begin{align*}
  \sup_{x,y \in V} \left|d_{g(0)}(x,y)-d_{g(r^2)}(x,y) \right| < \psi(\epsilon|r,n,L).
\end{align*}
The improved curvature estimate (\ref{eqn:ML28_1}) then implies that
\begin{align*}
  \sup_{x,y \in V} \left|d_{g(r^{2})}(x,y)-d_{g(1)}(x,y) \right| < \psi(\epsilon|n,L) |\log r|. 
\end{align*}
Similar to the deduction of (\ref{eqn:RJ07_15}), by combining the previous two steps, we can appropriately choose $r$ 
and modify $\psi(\epsilon|n,L)$ to obtain (\ref{eqn:RB21_11}).
In summary, we have already finished the proof of both (\ref{eqn:RB21_7}) and (\ref{eqn:RB21_11}).\\

Up to now, the scale $L$ is fixed. 
In (\ref{eqn:RJ07_0}) and (\ref{eqn:RJ07_1}), the definitions of $U$ and $V$ depends on the scale $L$.
For completeness of information, we now denote them by $U_{L}$ and $V_{L}$.
We now let $\epsilon \to 0$. Note that we can always fix $L$ first and then let $L \to \infty$.
Then we shall use the estimates developed in the previous steps to guarantee the convergence of the metric tensors in proper topology.\\

\noindent
\textit{Step 6. As $\epsilon \to 0$, we have
  \begin{align}
    (\varphi_{1}, H) \longright{C_{local}^{\infty}} (\hat{\varphi}_{1}, \hat{H})=(0,0).    
    \label{eqn:RJ09_1}  
  \end{align}
}

The existence of smooth convergence and smooth limit is guaranteed by estimate (\ref{eqn:RJ08_24}).
We only need to show the limit is zero.
As $\epsilon \to 0$, it follows from the improved curvature estimate (\ref{eqn:ML28_1}) and injectivity radius estimate (\ref{eqn:RH19_3}) that $g_{\hat{\varphi}_{1}}$ is a smooth flat Euclidean metric.
Furthermore, $g_{\hat{\varphi}_{1}}$ is uniformly quasi-isometric to the background Euclidean metric(cf. (\ref{eqn:RB23_4})).
Taking smooth limits of equations (\ref{eqn:RJ08_21}) and (\ref{eqn:RJ08_22}), we arrive at 
\begin{align}
  &\det \left( \delta_{\alpha \bar{\beta}} + (\hat{\varphi}_{1})_{\alpha \bar{\beta}} \right)=e^{\hat{H}}, \label{eqn:RJ08_26} \\
  &-\Delta_{\hat{\varphi}_{1}} \hat{H}=0.  \label{eqn:RJ08_27}
\end{align}
As $\hat{H}$ is a bounded smooth harmonic function(cf.~(\ref{eqn:RJ08_28}) and (\ref{eqn:RJ08_27})) on the Euclidean space $(\C^{n}, g_{\hat{\varphi}_{1}})$, 
it follows from classical Liouville theorem that $\hat{H} \equiv \hat{H}(0)=0$, thanks to (\ref{eqn:RJ08_12}). 
Then (\ref{eqn:RJ08_26}) is simplified as
\begin{align*}
  \det \left( \delta_{\alpha \bar{\beta}} + (\hat{\varphi}_{1})_{\alpha \bar{\beta}} \right)=1. \label{eqn:RJ08_29}
\end{align*}
Since identity map induces uniform quasi-isometry(cf. (\ref{eqn:RB23_4})) between $g_{\hat{\varphi}_1}$ and $g_{E}$, we know(cf. (3.16) in the proof of Lemma 3.5 of Chen-Wang~\cite{CW17B}) that $\hat{\varphi}_{1}$ is a quadratic function. 
Consequently, we have
\begin{align}
  \omega_{\hat{\varphi}_{1}} \equiv \omega_{\hat{\varphi}_{1}}(0)=\omega_{E}.  \label{eqn:RJ08_30}
\end{align}
It follows from smooth convergence and the restrictions (\ref{eqn:RJ08_18}) that $\hat{\varphi}_{1} \equiv 0$.
Thus we finish the proof of (\ref{eqn:RJ09_1}).\\

\noindent
\textit{Step 7. As $\epsilon \to 0$, we have
  \begin{align}
    \varphi_{0} \longright{C_{local}^{3, \frac{1}{3}}} \hat{\varphi}_{0},  \quad  F \longright{C_{local}^{1, \frac{1}{3}}} \hat{F} \equiv 0,     
    \label{eqn:RJ09_2}  
  \end{align}
  where $\hat{\varphi}_{0}$ is a smooth function. 
}

The existence of proper convergence topology and limits is guaranteed by (\ref{eqn:RB21_1}) and  (\ref{eqn:RJ06_5}).
We only need to prove $\hat{\varphi}_{0}$ is smooth and $\hat{F} \equiv 0$. 
Since the convergence of $\varphi_{0}$ happens in $C^{3,\frac{1}{3}}$-topology,  the $C^{1,1}$-estimate (\ref{eqn:RJ06_9}) also holds on the limit.
\begin{align}
  \frac{1}{C(n)}\delta_{\alpha \bar{\beta}} \leq  \delta_{\alpha\bar{\beta}} + \frac{\partial^2 \hat{\varphi}_{0}}{\partial z^{\alpha} \partial z^{\bar{\beta}}}  \leq C(n)  \delta_{\alpha \bar{\beta}}.  \label{eqn:RJ08_9}
\end{align}
Since $\epsilon \to 0$ as $L \to \infty$, it follows from (\ref{eqn:RB19_2}) that the scalar curvature tends to zero. 
By taking limits of (\ref{eqn:RG19_1}) and (\ref{eqn:RJ06_4}), we know $\hat{F}$ and $\hat{\varphi}_{0}$ satisfy
\begin{align}
  &\det \left( \delta_{\alpha \bar{\beta}} + (\hat{\varphi}_{0})_{\alpha \bar{\beta}} \right)=e^{\hat{F}}, \label{eqn:RJ07_17} \\
  &-\Delta_{\hat{\varphi}_{0}} \hat{F}=0,  \label{eqn:RJ07_18}
\end{align}
in the distribution sense.  In light of the uniform quasi-isometry of $g_{\hat{\varphi}_{0}}$ and $g_{E}$ by (\ref{eqn:RJ08_9}),
both (\ref{eqn:RJ07_17}) and (\ref{eqn:RJ07_18}) are uniformly elliptic. 
Also note that $\hat{F}$ is a $C_{local}^{1,\frac{1}{3}}$-function, $\hat{\varphi}_{0}$ is a $C_{local}^{3,\frac{1}{3}}$-function.
Alternative applying (\ref{eqn:RJ07_18}) and (\ref{eqn:RJ07_17}), we can improve the regularity of $\hat{F}$ and $\hat{\varphi}_{0}$ 
step by step(cf.~Proposition 1.2 of~\cite{ChenCheng1} and Lemma 4.11 of Li-Li-Wang~\cite{LiLiWang18} for similar arguments.)
It turns out that both $\hat{F}$ and $\hat{\varphi}_{0}$ must be smooth functions. 

We now show $\hat{F} \equiv 0$. Note that
\begin{align*}
  \int_{0}^{1} R(x,t)dt=\log \frac{dv_{g(0)}}{dv_{g(1)}}= \log \frac{\omega_{0}^{n}}{\omega_{1}^{n}}=\log \left\{ \frac{\omega_{0}^{n}}{\omega_{E}^{n}} \cdot \frac{\omega_{E}^{n}}{\omega_{1}^{n}} \right\}
  =F-H.
\end{align*}
By the uniform volume element equivalence (\ref{eqn:RJ06_7}), the almost Einstein condition (\ref{eqn:RB21_7}) then implies that 
\begin{align*}
  \int_{V_{L}} |F-H| \omega_{E}^{n} < \psi(\epsilon|n,L).
\end{align*}
Taking limit of the above equation and noting that $\hat{H} \equiv 0$, we obtain
\begin{align*}
 \int_{\C^{n}} |\hat{F}| \omega_{E}^{n}=\int_{\C^{n}} |\hat{F}-\hat{H}| \omega_{E}^{n}=0, 
\end{align*}
which forces $\hat{F} \equiv 0$ as $\hat{F} \in C_{local}^{1,\frac{1}{3}}$.\\

\noindent
\textit{Step 8. As $\epsilon \to 0$, we have
  \begin{align}
    g_{\varphi_{0}} \longright{C_{local}^{1, \frac{1}{3}}} g_{E}.    
    \label{eqn:RJ09_3}  
  \end{align}
}

Since $\hat{F} \equiv 0$, the equation (\ref{eqn:RJ07_17}) can be simplified as 
\begin{align}
  \det \left( \delta_{\alpha \bar{\beta}} + (\hat{\varphi}_{0})_{\alpha \bar{\beta}} \right)=1. \label{eqn:RJ08_15} 
\end{align}
Since we already know $\hat{\varphi}_{0}$ is smooth, we can apply the argument as the one nearby (\ref{eqn:RJ08_30}) to obtain $\omega_{\hat{\varphi}_{0}} \equiv \omega_{\hat{\varphi}_{0}}(0)$.
This means that $g_{\hat{\varphi}_{0}} \equiv G$ where $G$ is a constant matrix satisfying $\det G=1$. 
In the remainder of this step, we aim to show that $G$ is the identity matrix. 
For this purpose, we fix an arbitrary vector $\vec{v} \in \C^{n}$ such that $|\vec{v}|=1$. Let $q=\vec{v}$, 
$\gamma$ be the unit speed shortest geodesic(under metric $g_{E}$) connecting $0$ and $q$ such that $\gamma(0)=0$ and $\gamma(1)=q$.
Then $\gamma'(0)=\vec{v}$. It follows that
\begin{align}
  d_{g(0)}\left( 0, q \right) &\leq |\gamma|_{g(0)}=\int_{0}^{1} \sqrt{ \langle \gamma'(\theta), \gamma'(\theta)\rangle_{g(0)}} d\theta =\int_{0}^{1} \sqrt{\langle \vec{v}, \vec{v} \rangle_{g(0)}} d\theta
  \to \sqrt{G(\vec{v}, \vec{v})}. 
  \label{eqn:RJ09_4}  
\end{align}
As $g_{\varphi_{1}} \longright{C^{\infty}} g_{\hat{\varphi}_{1}}=g_{E}$,  it is clear that $ d_{g(1)}\left( 0, q \right) \to 1$, which implies $d_{g(0)}(0,q) \to 1$, 
via the metric distortion estimate (\ref{eqn:RB21_11}).
Then it follows from (\ref{eqn:RJ09_4}) that $G(\vec{v}, \vec{v}) \geq 1$. By the arbitrary choice of the unit vector $\vec{v}$, we obtain that $G_{\alpha \bar{\beta}} \geq \delta_{\alpha \bar{\beta}}$.
This inequality forces $G=\delta_{\alpha \bar{\beta}}$ as $\det G=1$.
Namely, we have $G_{\alpha \bar{\beta}}=\delta_{\alpha \bar{\beta}}$, which implies $g_{\hat{\varphi}_{0}} \equiv g_{E}$.
Consequently, (\ref{eqn:RJ09_3}) follows from (\ref{eqn:RJ09_2}).\\ 

\noindent
\textit{Step 9. Derive the desired contradiction.}\\

From previous steps, we already know that $g_{\varphi_{0}} \longright{C_{local}^{1,\frac{1}{3}}}  g_{E}$ and $g_{\varphi_{1}} \longright{C_{local}^{\infty}} g_{E}$.
Therefore, $g_{\varphi_{0}}(0)$ and $g_{\varphi_{1}}(0)$ could be arbitrarily close in the sense that
\begin{align*}
  |g_{\varphi_{0}}-g_{\varphi_{1}}|_{g_{\varphi_1}}(0) < \psi(\epsilon|n) << \xi. 
\end{align*}
Since $0$ is the coordinate of the point $y_{0} \in M$, the above inequality contradicts our assumption (\ref{eqn:RB19_4}). 
This contradiction establishes the proof of the theorem. 
\end{proof}

\begin{corollary}(\textbf{Metric bi-Lipschitz equivalence near the center point})
  Same conditions as in Theorem~\ref{thm:RJ08_0}. By shrinking $\epsilon$ if necessary, we have 
  \begin{align}
     e^{-2\xi} g(y,0) \leq g(y, t) \leq  e^{2\xi} g(y, 0), \quad \forall \; y \in B(x_0, 0.5r), \quad t \in [0, (\epsilon r)^2]. 
     \label{eqn:RD29_7}   
  \end{align}
\label{cly:RD29_4}
\end{corollary}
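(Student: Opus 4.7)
The plan is to reduce the uniform bi-Lipschitz estimate on $B_{g(0)}(x_0,0.5r)$ to the pointwise version already provided by Theorem~\ref{thm:RJ08_0}. The key observation is that the hypotheses of that theorem are preserved under shrinking the underlying ball and the scale, thanks to the monotonicity properties of the local $\bar{\boldsymbol{\nu}}$-functional recorded in Section~\ref{sec:pre}.

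Fix an arbitrary point $y \in B_{g(0)}(x_0, 0.5r)$ and set $r' \coloneqq 0.5 r$. By the triangle inequality, $B_{g(0)}(y, r') \subset B_{g(0)}(x_0, r)$. First, for the scalar curvature hypothesis, we have $|R|_{g(0)} \leq r^{-2} \leq (r')^{-2}$ on $B_{g(0)}(y, r')$. Second, chaining the inclusion monotonicity of Proposition~\ref{prn:CA01_2} with the scale monotonicity (\ref{eqn:CF10_1}) in Proposition~\ref{prn:RH31_1} yields
\begin{align*}
\bar{\boldsymbol{\nu}}\left(B_{g(0)}(y,r'), g(0), (r')^2\right)
\geq \bar{\boldsymbol{\nu}}\left(B_{g(0)}(x_0,r), g(0), (r')^2\right)
\geq \bar{\boldsymbol{\nu}}\left(B_{g(0)}(x_0,r), g(0), r^2\right)
\geq -\epsilon,
\end{align*}
so both hypotheses of Theorem~\ref{thm:RJ08_0} are satisfied at $(y, r')$ whenever $\epsilon \leq \bar{\epsilon}(n,\xi)$, where $\bar{\epsilon}(n,\xi)$ is the constant furnished by Theorem~\ref{thm:RJ08_0} applied with the parameter $\xi$.

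Applying Theorem~\ref{thm:RJ08_0} at the base point $y$ with radius $r'$, we obtain the pointwise bi-Lipschitz estimate
\begin{align*}
e^{-\xi} g(y,0) \leq g(y,t) \leq e^{\xi} g(y,0), \quad \forall\, t \in [0, (\bar{\epsilon} r')^2] = [0, (\bar{\epsilon} \cdot 0.5 r)^2].
\end{align*}
Now shrink the corollary's constant so that $\epsilon \leq 0.5 \bar{\epsilon}(n,\xi)$; then $(\epsilon r)^2 \leq (\bar{\epsilon} \cdot 0.5 r)^2$, and the claimed time interval lies inside the one provided by the theorem. Since $y$ was an arbitrary point of $B_{g(0)}(x_0, 0.5r)$ and the constant $\bar{\epsilon}$ depends only on $(n,\xi)$ and not on $y$, this yields (\ref{eqn:RD29_7}) uniformly; the $2\xi$ exponent in the statement simply absorbs any loss one would incur by, e.g., applying the theorem with $\xi$ replaced by a slightly smaller parameter to leave headroom, which is harmless since we are free to shrink $\epsilon$.

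There is no substantive obstacle: the entire argument is an exercise in pushing the single-point conclusion of Theorem~\ref{thm:RJ08_0} around $B_{g(0)}(x_0, 0.5r)$. The only point deserving care is the verification that the $\bar{\boldsymbol{\nu}}$-hypothesis at the smaller scale $(r')^2$ and the shifted center $y$ is implied by the original hypothesis at $(x_0,r,r^2)$, which is precisely what the two monotonicity properties (inclusion and scale) combine to give.
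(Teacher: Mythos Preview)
Your proof is correct and follows essentially the same strategy as the paper: apply Theorem~\ref{thm:RJ08_0} at each shifted center $y\in B_{g(0)}(x_0,0.5r)$ with the half-radius $r'=0.5r$, using the inclusion and scale monotonicity of $\bar{\boldsymbol{\nu}}$ to transfer the hypotheses. The only cosmetic difference is in the bookkeeping of the time interval: the paper keeps the same $\epsilon$ as in Theorem~\ref{thm:RJ08_0}, so the direct application at radius $r'=r/2$ yields the $e^{\pm\xi}$ bound only on $[0,(\epsilon r')^2]$ (i.e.\ $[0,0.25]$ after their rescaling $r=\epsilon^{-1}$), and then invokes the pseudo-locality curvature estimate $t|Rm|<\psi(\epsilon|n)$ on the remaining interval $[0.25,1]$ to extend, which is precisely where the factor $2$ in $e^{\pm2\xi}$ is spent. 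You instead absorb this by taking the corollary's $\epsilon\le 0.5\bar\epsilon(n,\xi)$, which is equally valid and slightly cleaner.
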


\begin{proof}
 Without loss of generality,  we may assume $r=\epsilon^{-1}$ up to rescaling. 
 Note that the flow exists on $t \in [0, \epsilon^{-2}]$ and satisfies
    \begin{align}
       \begin{cases}
	 &\boldsymbol{\bar{\nu}}(B(x_0, \epsilon^{-1}), g, \epsilon^{-2}) \geq -\epsilon;\\
	 &|R| \leq  \epsilon^2, \quad on \;  B(x_0, \epsilon^{-1}).      
       \end{cases}
   \label{eqn:RD29_5}
   \end{align}
   For each point $y \in B(x_0, 0.5 \epsilon^{-1})$, the ball $B(y, 0.5 \epsilon^{-1})$ is a subset of $B(x_0, \epsilon^{-1})$. 
   By the monotonicity of the $\bar{\boldsymbol{\nu}}$-functional, we have 
   \begin{align}
       \begin{cases}
	 &\boldsymbol{\bar{\nu}}(B(y, (2 \epsilon)^{-1}), g, (2\epsilon)^{-2}) \geq -\epsilon;\\
	 &|R| \leq  (2 \epsilon)^2, \quad on \;  B(x_0, (2 \epsilon)^{-1}).   
       \end{cases}
   \label{eqn:RB29_6}
   \end{align}
   By choosing $\epsilon$ sufficiently small, we can apply Theorem~\ref{thm:RJ08_0} to obtain the following metric equivalence
   \begin{align*}
     e^{-\xi} g(y,0) \leq g(y, t) \leq  e^{\xi} g(y, 0), \quad \forall \; y \in B(x_0, (2\epsilon)^{-1}), \quad t \in [0, 0.25]. 
   \end{align*}
   Note that $|Rm|(y,t)<\psi(\epsilon|n)$ for each $y \in B(x_0, (2\epsilon)^{-1})$ and $t \in [0.25, 1]$.  We can combine the previous two  metric equivalence estimates to obtain
   \begin{align*}
      e^{-2\xi} g(y,0) \leq g(y, t) \leq  e^{2\xi} g(y, 0), \quad \forall \; y \in B(x_0, (2\epsilon)^{-1}), \quad t \in [0, 1], 
   \end{align*}
   whence we arrive at (\ref{eqn:RD29_7}) as $r=\epsilon^{-1}$. 
\end{proof}

We have shown the metric bi-Lipschitz estimates in Theorem~\ref{thm:RJ08_0} and Corollary~\ref{cly:RD29_4}.
We now move on to show that the scalar curvature bounds are preserved for a short time period. 

\begin{proposition}(\textbf{Growth rate control of the scalar curvature})
  Same conditions as in Theorem~\ref{thm:RJ08_0}.  Then we have 
  \begin{align}
    &\left\{\inf_{y \in B(x_0,r)} R(y,0) \right\} r^2- C\left( r^{-2}t \right)^{\frac{1}{5}}
    <R(x_0,t) r^2<\left\{\sup_{y \in B(x_0,r)} R(y,0) \right\} r^2 + C\left( r^{-2}t \right)^{\frac{1}{5}}  \label{eqn:RD18_2} 
  \end{align}
  for each $t \in [0, (\epsilon r)^{2}]$.   Here $C$ can be chosen as $\epsilon^{-\frac{12}{5}}$. 
   
\label{prn:RC13_1}
\end{proposition}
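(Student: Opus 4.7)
The plan is to derive Proposition~\ref{prn:RC13_1} from the localized maximum principle of Theorem~\ref{thm:RD10_1} applied separately to two quantities along the K\"ahler Ricci flow. After a parabolic rescaling we may assume $r=1$. Under the hypotheses (\ref{eqn:RB16_1}), Theorem~\ref{thm:RJ08_0} together with Corollary~\ref{cly:RD29_4} supplies the bi-Lipschitz equivalence of $g(\cdot,t)$ and $g(\cdot,0)$ on $B_{g(0)}(x_0,1/2)\times[0,\epsilon^2]$, while the improved pseudo-locality theorem (Theorem~\ref{thmin:ML14_2}) gives $t|Rm|(y,t)\leq \psi(\epsilon|n)$. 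Together these guarantee that the Ricci curvature hypothesis (\ref{eqn:RJ04_4}) of Theorem~\ref{thm:RD10_1} is available on the relevant parabolic cylinders centered near $(x_0,t)$.

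For the lower bound of $R$, observe that along the K\"ahler Ricci flow $\square R = |Rc|^{2}\geq 0$, so $f:=-R$ is a sub-heat-solution. Applying Theorem~\ref{thm:RD10_1} to $f$ with $\sigma\leq 1$ (from $|R(\cdot,0)|\leq 1$) produces, for each admissible parameter $A$,
\begin{align*}
-R(x_0,t)\;\leq\;\sup_{B_{g(0)}(x_0,10A)}\bigl(-R(\cdot,0)\bigr)\bigl(1+A^{-1}t\bigr)+A^{-4}t.
\end{align*}
Optimizing $A$ so that the excess terms $A^{-1}t$ and $A^{-4}t$ contribute comparably, namely $A\sim t^{-1/5}$ (subject to the lower bound $A\gtrsim \epsilon^{-1}$ forced by the curvature hypothesis (\ref{eqn:RJ04_4})), yields $R(x_0,t)\geq \inf_{B(x_0,1)} R(\cdot,0)- C\, t^{1/5}$ with $C\leq \epsilon^{-12/5}$.

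For the upper bound of $R$, we work in the local holomorphic chart $(U,\omega_E)$ constructed in the proof of Theorem~\ref{thm:RJ08_0}, with local K\"ahler potential $\varphi(\cdot,t)$ and Ricci potential $\dot\varphi$ satisfying $\Delta_\varphi \dot\varphi = -R$ and $\ddot\varphi=-R$. A standard Bochner-type computation along the K\"ahler Ricci flow gives
\begin{align*}
\square\bigl(R+|\nabla\dot\varphi|^{2}\bigr)\;\leq\;0,
\end{align*}
so $R+|\nabla\dot\varphi|^{2}$ is itself a sub-heat-solution. Since $\sqrt{-1}\partial\bar\partial\ell\equiv 0$ for any affine function $\ell$ on $U$, we replace $\dot\varphi(\cdot,0)$ by $\dot\varphi(\cdot,0)-\ell$ so that $\nabla\dot\varphi(x_0,0)=0$, without changing any of the intrinsic quantities $\omega_{\varphi(t)}$ or $R$. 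To bound $\sup|\nabla\dot\varphi(\cdot,0)|^{2}$ on the enlarged ball entering Theorem~\ref{thm:RD10_1}, we control $|\nabla^{2}\dot\varphi(\cdot,0)|$ using the $C^{2,1/2}$-estimate on $\varphi_{0}$ and the equation $\Delta_\varphi \dot\varphi=-R$, both of which were established en route to Theorem~\ref{thm:RJ08_0} via the Caffarelli--Evans--Krylov theory applied to the complex Monge-Amp\`ere equation (\ref{eqn:RG19_1}); a path integration then produces $|\nabla\dot\varphi(y,0)|^{2}\leq C\, d_{g(0)}^{2}(x_0,y)$. Feeding this into Theorem~\ref{thm:RD10_1} and optimizing $A$ as before yields the matching upper bound $R(x_0,t)\leq \sup_{B(x_0,1)}R(\cdot,0)+Ct^{1/5}$ with $C\leq\epsilon^{-12/5}$.

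The main obstacle is the upper-bound half. The identity $\square(R+|\nabla\dot\varphi|^{2})\leq 0$ is very natural once the local Ricci potential is in place, but arranging the initial data so that $|\nabla\dot\varphi(\cdot,0)|^{2}$ is small in a definite neighborhood of $x_0$ (not merely at $x_0$ itself) requires transferring interior elliptic regularity from the complex Monge-Amp\`ere estimates of Theorem~\ref{thm:RJ08_0} back to the initial slice; these regularity bounds themselves rest on the bi-Lipschitz equivalence between $g(0)$ and $g(t)$ supplied by Theorem~\ref{thm:RJ08_0}, so this growth estimate for $R$ effectively cashes in all of the K\"ahler machinery developed earlier in the section. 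The exponent $1/5$ and the constant $\epsilon^{-12/5}$ are then dictated by optimizing over the single parameter $A$ in the localized maximum principle, subject to the constraint $A\gtrsim \epsilon^{-1}$ imposed by the Ricci hypothesis (\ref{eqn:RJ04_4}).
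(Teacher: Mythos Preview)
Your strategy coincides with the paper's: apply Theorem~\ref{thm:RD10_1} to $\sigma_a-R$ for the lower bound and to $R+|\nabla\dot{\tilde\varphi}|^{2}$ for the upper bound, after the affine normalization of the local Ricci potential so that $\nabla\dot{\tilde\varphi}_0(x_0)=0$, with the requisite regularity of $\dot\varphi_0$ borrowed from the complex Monge--Amp\`ere estimates inside the proof of Theorem~\ref{thm:RJ08_0}. Two details in your write-up do not match and should be corrected. First, you invoke a bound on $|\nabla^{2}\dot\varphi(\cdot,0)|$; but from $-\Delta_{\varphi_0}F=R$ with $R$ merely bounded and coefficients in $C^{\frac12}$ one only gets $F=\dot\varphi_0\in C^{1,\frac12}$ (cf.~(\ref{eqn:RJ06_5})), so the correct pointwise control is $|\nabla\dot{\tilde\varphi}_0|(x)\leq C|x|^{1/2}$, and after interpolating with the $C^{1,\frac13}$-smallness (\ref{eqn:RJ09_2}) one obtains $|\nabla\dot{\tilde\varphi}_0|^{2}(x)\leq\psi\,|x|^{1/2}$ as in (\ref{eqn:RJ11_10}), not $\leq C\,d^{2}$. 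Second, the exponent $\tfrac15$ does not come from optimizing $A$: the paper keeps $A=1000$ fixed and instead applies Theorem~\ref{thm:RD10_1} at the \emph{scale} $r=t^{2/5}$, so that both $r^{-2}\sigma\sim\psi\,(Ar)^{1/2}$ and $r^{-2}A^{-4}t$ are of order $t^{1/5}$; the constant $\epsilon^{-12/5}$ then drops out when one undoes the rescaling $r\mapsto\epsilon^{-1}$. The verification of the growth hypothesis $t\bigl(R+|\nabla\dot{\tilde\varphi}|^{2}\bigr)<\psi$ (Claim~\ref{clm:RD18_11}) via Schauder estimate at each time $t$ is also a step you should make explicit.
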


\begin{proof}
  Since (\ref{eqn:RD18_2}) is scaling invariant, we may assume $r=\epsilon^{-1}$ up to rescaling. Define
\begin{align}
  \sigma_{a} \coloneqq \inf_{x \in B(x_0,\epsilon^{-1})} R(x,0), \quad \sigma_{b} \coloneqq \sup_{x \in B(x_0,\epsilon^{-1})} R(x,0).  
  \label{eqn:RD28_4}
\end{align}
It follows from assumption (\ref{eqn:RB16_1}) that
 \begin{align}
   |R|(x,0) \leq \max\left\{ |\sigma_{a}|, |\sigma_{b}| \right\} \leq \epsilon^{2}, \quad \forall \;  x \in B(x_0,\epsilon^{-1}). \label{eqn:RJ11_12}
 \end{align}
 Then (\ref{eqn:RD18_2}) reads as
 \begin{align}
   \sigma_{a}- t^{\frac{1}{5}}<R(x_0,t)<\sigma_{b}+ t^{\frac{1}{5}}.
   \label{eqn:RD28_3}
 \end{align}
 We first focus our attention on the second inequality of (\ref{eqn:RD28_3}).
 In light of the improved curvature estimate (\ref{eqn:ML28_1}) and the improved injectivity radius estimate (\ref{eqn:RH27_4}), 
 we can find a biholomorphic map $\Phi: B(x_0, L) \to U \subset \C^n$ satisfying the conditions (\ref{eqn:RB23_4a}), (\ref{eqn:RB23_4b}) and (\ref{eqn:RB23_4}), at time $t=1$. 
 Here $L>>10000$ is a large constant to be determined later. Then we set $V=\Phi(B(x_0, 0.1L))$ as done in (\ref{eqn:RJ07_1}). 
 By identifying $x_0$ with $0$, $x$ with $\Phi(x)$, $g(t)$ with $\Phi_{*}(g(t))$, we restrict the K\"ahler Ricci flow onto $U$ as done previously in step 2 of the proof of Theorem~\ref{thm:RJ08_0}.

  The local K\"ahler Ricci flow is completely determined by the local potential function $\varphi$, which can be adjusted by adding a linear function.  
  We define
  \begin{align}
    &a_{\alpha} \coloneqq  \frac{\partial \dot{\varphi}_{0}}{\partial z^{\alpha}}, \label{eqn:RD18_9}\\
    &\tilde{\varphi} \coloneqq \varphi -\varphi_{0}(0) - t \left(F(0)+a_{\alpha} z^{\alpha}+ \overline{a_{\alpha}z^{\alpha}}  \right),    \label{eqn:RD18_10}
  \end{align}
  where $F=\dot{\varphi}_{0}$ by (\ref{eqn:RJ08_19}).  
  In light of (\ref{eqn:RJ08_18}), it is clear that 
  \begin{align}
    \tilde{\varphi}_{0}(0)=\frac{\partial \tilde{\varphi}_{0}}{\partial z^{\alpha}}(0)=\frac{\partial \tilde{\varphi}_{0}}{\partial z^{\bar{\alpha}}}=0, 
     \quad \forall \; \alpha \in \left\{ 1, 2, \cdots, n \right\}.
  \label{eqn:RJ10_2}  
  \end{align}
  Taking time derivative of (\ref{eqn:RD18_10}) yields that
  \begin{align}
    \dot{\tilde{\varphi}}=\dot{\varphi} - \left( F(0)+a_{\alpha} z^{\alpha}+ \overline{a_{\alpha}z^{\alpha}} \right).   \label{eqn:RJ11_8}
  \end{align}
  By the choice of $a_{\alpha}$ in (\ref{eqn:RD18_9}), we obtain  
  \begin{align}
     \dot{\tilde{\varphi}}_{0}(0)=\frac{\partial \dot{\tilde{\varphi}}_{0}}{\partial z^{\alpha}}(0)=\frac{\partial \dot{\tilde{\varphi}}_{0}}{\partial z^{\bar{\alpha}}}=0,
     \quad \forall \; \alpha \in \left\{ 1, 2, \cdots, n \right\}.
     \label{eqn:RJ11_3}   
  \end{align}
  In view of (\ref{eqn:RJ09_2}), we have 
  \begin{align*}
    |F(0)|+ \sum_{\alpha=1}^{n} |a_{\alpha}| <\psi(\epsilon|n,L),
  \end{align*}
  which means the difference between $\tilde{\varphi}$ and $\varphi$ is very small.
  Combining it with (\ref{eqn:RJ06_5}) and (\ref{eqn:RJ09_2}), we have
  \begin{align*}
    \norm{\dot{\tilde{\varphi}}_{0}}{C^{1,\frac{1}{2}}(V)} < C(n,L), \quad \norm{\dot{\tilde{\varphi}}_{0}}{C^{1,\frac{1}{3}}(V)} < \psi(\epsilon|n,L).    
  \end{align*}
  Since $\nabla \dot{\tilde{\varphi}}_{0}(0)=0$ by (\ref{eqn:RJ11_3}), it follows from the above inequality that
  \begin{align*}
    |\nabla \dot{\tilde{\varphi}}_{0}|(x) \leq \min\left\{ \psi(\epsilon|n,L), C(n,L) \sqrt{|x|} \right\}, \quad \forall \; x \in V. 
   \end{align*}
  Consequently, we obtain
  \begin{align}
    |\nabla \dot{\tilde{\varphi}}_{0}|^2(x) \leq \psi(\epsilon|n,L) \cdot |\nabla \dot{\tilde{\varphi}}_{0}|(x)  \leq \psi(\epsilon|n,L) \sqrt{|x|}.  \label{eqn:RJ11_10} 
  \end{align}
  Elementary calculation shows that 
  \begin{align*}
    \square \left( R+|\nabla \dot{\tilde{\varphi}}|^2 \right)=-|\nabla \nabla \dot{\tilde{\varphi}}|^2-|\bar{\nabla} \bar{\nabla} \dot{\tilde{\varphi}}|^2 \leq 0, 
  \end{align*}
  which means that $R+|\nabla \dot{\tilde{\varphi}}|^2$ is locally a sub-heat-solution.
  In order to apply the localized maximum principle(cf. Theorem~\ref{thm:RD10_1}) on $R+|\nabla \dot{\tilde{\varphi}}|^2$, 
  we need to adjust $R+|\nabla \dot{\tilde{\varphi}}|^2$ by a constant and estimate its initial value and growth rate. 
  This leads to the following claim. 
   
   \begin{claim}
    The following estimates hold true: 
   \begin{align}
       &R(x,0)-\sigma_{b}+|\nabla \dot{\tilde{\varphi}}|^2(x,0) \leq \psi(\epsilon|n,L) \sqrt{|x|}, \quad \forall \; x \in V;    \label{eqn:RC13_1}\\
       &t\left(R(x,t)-\sigma_{b}+|\nabla \dot{\tilde{\varphi}}|^2(x,t) \right)<\psi(\epsilon|n,L), \quad  \forall\; x \in V, t \in (0,1).    \label{eqn:RD18_12}
   \end{align}
    \label{clm:RD18_11}
   \end{claim}

 Note that (\ref{eqn:RC13_1}) follows directly from (\ref{eqn:RJ11_10}) and (\ref{eqn:RJ11_12}).
 Thus it suffices to prove (\ref{eqn:RD18_12}). 
 As the linear term does not contribute to Laplacian, it follows from (\ref{eqn:RJ08_4}) and (\ref{eqn:RD18_10}) that 
   \begin{align*}
     \Delta_{g_{\tilde{\varphi}}} \dot{\tilde{\varphi}}=  \Delta_{\varphi} \dot{\varphi}=-R. 
   \end{align*}
 Rescaling the metric $g_{\tilde{\varphi}_{t}}$ by $t^{-1}$, we can apply the Schauder estimate on the above equation.
 It follows from the improved curvature estimate (\ref{eqn:ML28_1}) that $t|\nabla \dot{\tilde{\varphi}}|^2< \psi(\epsilon|n,L)$ on $V \times (0, 1]$.
 Applying (\ref{eqn:ML28_1}) again directly, we know $t|R|(x,t)<\psi(\epsilon|n,L)$. Also note that $|\sigma_{b}|<\epsilon^2$ by (\ref{eqn:RJ11_12}). 
 Combining the previous steps together, we arrive at (\ref{eqn:RD18_12}). Therefore, the proof of Claim~\ref{clm:RD18_11} is complete.

 Now we are ready to apply Theorem~\ref{thm:RD10_1} on the sub-heat-solution $f=R-\sigma_{b}+|\nabla \dot{\tilde{\varphi}}|^2$. 
 In Theorem~\ref{thm:RD10_1}, we choose $r=t^{\frac{2}{5}}$ and $A=1000$. By choosing $L>>100A$, we have
 \begin{align*}
   B_{g(0)}(x_0, 10A\rho) \subset B_{g(0)}(x_{0}, 10A) \subset  \Phi^{-1}(V).  
 \end{align*}
 In light of Claim~\ref{clm:RD18_11}, we see that $f=R-\sigma_{b}+|\nabla \dot{\tilde{\varphi}}|^2$ satisfies the desired initial condition and
 growth condition on the ball $B_{g(0)}(x_0, 10A\rho)$. It follows from Theorem~\ref{thm:RD10_1} that 
 \begin{align*}
     R(x_0,t)-\sigma_{b} +|\nabla \dot{\tilde{\varphi}}|^{2}(x_0, t)&< \left\{ \psi (1+A^{-1}t) + A^{-4} \right\} t^{\frac{1}{5}}< t^{\frac{1}{5}},
 \end{align*}
 and thus we arrive at the second inequality of (\ref{eqn:RD28_3}).

 It remains to show the first inequality of (\ref{eqn:RD28_3}). Actually, this part can be deduced in a similar but simpler way.
 Note that $\sigma_a-R \leq 0$ at $t=0$ and $\square \left( \sigma_a -R \right)=-2|Rc|^2 \leq 0$ along the flow. 
 Clearly, $\sigma_a-R$ satisfies the growth rate condition. Therefore, Theorem~\ref{thm:RD10_1} can be applied on $\sigma_a-R$ to obtain that  
 \begin{align*}
   \sigma_{a}-R(x_0,t) < A^{-4}t <t<t^{\frac{1}{5}}, 
 \end{align*}
 which implies the first inequality of (\ref{eqn:RD28_3}). 
 Therefore, we finish the proof of (\ref{eqn:RD28_3}).   The proof of the proposition is complete. 
 \end{proof}

 The scalar curvature bounds in Proposition~\ref{prn:RC13_1} can be used to improve the metric bi-Lipschitz estimates to $C^{1,\alpha}$-estimates.
 The key input is the application of the theory of complex Monge-Amp\`{e}re equation. 

 \begin{theorem}(\textbf{Metric $C^{1,\alpha}$-equivalence near the center point})
   For each  $\alpha \in (0,1)$ and $Q>1$, there is a constant $\bar{\epsilon}=\bar{\epsilon}(n,\alpha,Q)$ with the following property. 

  Suppose $\left\{ (M^{n}, g(t), J), 0 \leq t \leq 1 \right\}$ is a Ricci flow solution, $x_0 \in M$ and $r \in (0,1)$.
  Suppose (\ref{eqn:RB16_1}) hold for $\epsilon \in (0, \bar{\epsilon})$,  then there is a biholomorphic map $\Phi: B_{g(0)}(x_0, \epsilon r) \to U \subset  \C^n$ such that
  \begin{align}
    & B(0, Q^{-1}) \subset U \subset B(0, Q);  \label{eqn:RC15_10} \\
    & \norm{(\epsilon r)^{-2}\Phi_{*} g(t)}{C^{1,\alpha}(U)} < Q,  \;  \forall \; t \in [0, \epsilon^2 r^2].   \label{eqn:RC15_11}
  \end{align}
\label{thm:RC10_1}
\end{theorem}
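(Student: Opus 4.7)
The plan is to combine the three main ingredients already developed in this section: the biholomorphic chart construction, the metric bi-Lipschitz estimate of Corollary~\ref{cly:RD29_4}, and the scalar curvature growth bound of Proposition~\ref{prn:RC13_1}, and then upgrade from $C^0$-equivalence to $C^{1,\alpha}$-equivalence via the Evans-Krylov regularity theory for the complex Monge-Amp\`ere equation. By the scaling invariance of all hypotheses and conclusions, I may assume $r=1$ throughout. The parameter $\bar{\epsilon}=\bar{\epsilon}(n,\alpha,Q)$ will be chosen at the end, small enough that every auxiliary $\psi(\epsilon\mid n,L)$ that appears is below a threshold depending on $\alpha$ and $Q$.

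First, I would apply Theorem~\ref{thmin:ML14_2} to pass from the $\bar{\boldsymbol{\nu}}$-condition together with the scalar-curvature upper bound to the curvature, volume and injectivity-radius estimates $t|Rm|(x,t)<\psi$ and $t^{-1/2}\,inj(x,t)>\psi^{-1}$ on a large geodesic ball around $(x_0,t)$ for $t\in(0,1]$, with $\psi=\psi(\epsilon\mid n)$. Working at $t=1$ (or any time of order $\epsilon^2 r^2$ after unwinding the scaling), I would then use H\"ormander's $L^2$-estimate exactly as in Step~2 of the proof of Theorem~\ref{thm:RJ08_0} to manufacture a biholomorphic map $\Phi$ from $B_{g(1)}(x_0,L)$ onto a domain $U\subset\C^n$ satisfying $B(0,C^{-1}L)\subset U\subset B(0,CL)$, $(\Phi_\ast g(1))_{\alpha\bar\beta}(0)=\delta_{\alpha\bar\beta}$, and uniform quasi-isometry to $\omega_E$. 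After rescaling $U$ so that its size matches $Q$, this gives (\ref{eqn:RC15_10}) together with a smooth bound on $\Phi_\ast g(1)$ itself.

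Next, I would pull the K\"ahler Ricci flow back through $\Phi$ and write $\omega_{\varphi(t)}=\omega_E+\sqrt{-1}\partial\bar\partial\varphi(t)$ exactly as in (\ref{eqn:RB20_1})-(\ref{eqn:RJ08_4}), and apply Corollary~\ref{cly:RD29_4} to conclude that, uniformly in $t\in[0,\epsilon^2 r^2]$ and $y$ in the shrunken ball,
\begin{align*}
e^{-2\xi}\,g_{\varphi(0)}(y)\;\le\;g_{\varphi(t)}(y)\;\le\;e^{2\xi}\,g_{\varphi(0)}(y),
\end{align*}
with $\xi$ as small as desired by choosing $\bar\epsilon$ small. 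In particular $\omega_{\varphi(t)}$ is uniformly equivalent to $\omega_E$ on $U$, so the complex Monge-Amp\`ere equation $\det(\delta_{\alpha\bar\beta}+\varphi(t)_{\alpha\bar\beta})=e^{\dot\varphi(t)}$ is uniformly elliptic. Now I would invoke Proposition~\ref{prn:RC13_1}: for each fixed $t\in[0,\epsilon^2 r^2]$ the scalar curvature $R(\cdot,t)$ on $U$ satisfies a uniform $L^\infty$-bound of order $r^{-2}+t^{1/5}r^{-12/5}$, and since $-\Delta_{\varphi(t)}\dot\varphi(t)=R(\cdot,t)$, the De~Giorgi-Nash-Moser estimate (Theorem~8.22 of~\cite{GT}) applied to $\dot\varphi(t)$, combined with the Caffarelli-Evans-Krylov estimate applied to the Monge-Amp\`ere equation as in Proposition~1.2 of Chen-Cheng~\cite{ChenCheng1}, upgrades the $C^{1,1}$-bound on $\varphi(t)$ to a $C^{2,\beta}$-bound for some $\beta=\beta(n)\in(0,1)$ on a slightly smaller domain; differentiating once more and applying linear Schauder estimates gives the $C^{1,\alpha}$-bound on $g_{\varphi(t)}=\Phi_\ast g(t)$ required by (\ref{eqn:RC15_11}), with constants depending only on $n$, $\alpha$ and $Q$.

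The main obstacle will be ensuring that the Chen-Cheng regularity scheme runs uniformly in $t$ rather than only at a single fixed time, and in particular that the Caffarelli-Evans-Krylov step can be applied with constants independent of $t\in[0,\epsilon^2 r^2]$. This is where the bi-Lipschitz estimate from Corollary~\ref{cly:RD29_4} is essential: it provides the time-uniform ellipticity of the Monge-Amp\`ere operator, while Proposition~\ref{prn:RC13_1} provides the time-uniform $L^\infty$-bound on the right-hand side $\dot\varphi(t)$ through the identity $\ddot\varphi=-R$ and $\dot\varphi(0)=F$. A secondary technical point is the choice of the exponent $\alpha$: the Evans-Krylov estimate produces some dimensional $\beta$, so to reach an arbitrary $\alpha\in(0,1)$ one must bootstrap once using the linear elliptic equation $-\Delta_{\varphi(t)}\dot\varphi(t)=R(\cdot,t)$ together with $\det(I+\varphi(t)_{\alpha\bar\beta})=e^{\dot\varphi(t)}$, exactly in the spirit of Claim~\ref{clm:RJ06_3} inside the proof of Theorem~\ref{thm:RJ08_0}, being careful to shrink $U$ at each step but only finitely many times so that the final domain still satisfies (\ref{eqn:RC15_10}) after a harmless rescaling built into the definition of $Q$.
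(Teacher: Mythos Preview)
Your proposal is correct and follows essentially the same approach as the paper: construct the holomorphic chart via H\"ormander as in Step~2 of Theorem~\ref{thm:RJ08_0}, use the bi-Lipschitz equivalence (Corollary~\ref{cly:RD29_4}) for uniform ellipticity, use Proposition~\ref{prn:RC13_1} for the scalar curvature bound, and then run the Chen--Cheng regularity scheme exactly as in Claim~\ref{clm:RJ06_3}. The only point the paper makes more explicit is how to obtain the sharp constant $Q$ in (\ref{eqn:RC15_10}): rather than rescaling $U$, the paper invokes the convergence statements (\ref{eqn:RJ09_1}), (\ref{eqn:RJ08_30}), (\ref{eqn:RJ09_3}) from the proof of Theorem~\ref{thm:RJ08_0}, which show that as $\epsilon\to 0$ the pushed-forward metric converges to $g_E$, so the dimensional constant $C(n)$ in (\ref{eqn:RB23_4a})--(\ref{eqn:RB23_4}) may be taken arbitrarily close to $1$ and in particular below $Q$.
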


\begin{proof}
  This theorem basically follows from the combination of Theorem~\ref{thm:RJ08_0} and Proposition~\ref{prn:RC13_1}. Since no new ingredients will be needed in the proof, we shall be sketchy. 
  We choose the biholomorphic map $\Phi$ as done nearby (\ref{eqn:RB23_4}) in the proof of Theorem~\ref{thm:RJ08_0}, with $\frac{1}{2}$ being replaced by any $\alpha \in (0,1)$ and $L=100$.
  By (\ref{eqn:RJ09_1}), (\ref{eqn:RJ08_30}) and (\ref{eqn:RJ09_3}), we see that the constant $C$ in (\ref{eqn:RB23_4}) could be replaced by an arbitrary number $Q>1$, whenever $\epsilon$ is sufficiently small.
  Therefore  (\ref{eqn:RC15_10}) is proved.   It follows from Proposition~\ref{prn:RC13_1} that the scalar curvature is uniformly bounded on $B(x_0, 2 \epsilon r) \times [0, (\epsilon r)^2]$.
  Following the argument in Claim~\ref{clm:RJ06_3} and in the proof of (\ref{eqn:RJ09_3}), we obtain (\ref{eqn:RC15_11}).   
\end{proof}

Then we discuss some applications of the improved pseudo-locality theorem in K\"ahler geometry. 

\begin{theorem}[\textbf{Compactness of moduli of K\"ahler manifolds with proper bounds}]
  Suppose $(M_i^{n}, g_i, J_i)$ is a sequence of K\"ahler manifolds of complex dimension $n$ satisfying 
  \begin{align}
    \mathbf{er}(M,g) \geq \iota, \quad \diam(M,g) \leq D, \quad  \sigma_a \leq R \leq \sigma_b.   \label{eqn:RD29_1}
  \end{align}
  Here $\mathbf{er}=\mathbf{er}_{\epsilon_0}$ is the entropy radius defined in Definition~\ref{dfn:RH26_3} for some sufficiently small $\epsilon_0=\epsilon_0(n)$. 
  For each $\alpha \in (0,1)$,  by taking subsequence if necessary, we have 
  \begin{align}
    \left( M_i, g_i, J_i \right) \longright{C^{1,\alpha}-Cheeger-Gromov} \left( M_{\infty}, g_{\infty}, J_{\infty} \right),
    \label{eqn:RC11_1}  
  \end{align}
  where $(M_{\infty}, g_{\infty}, J_{\infty})$ is a $C^{1,\alpha}$-K\"ahler manifold.  
  Furthermore, if we assume $(M_{\infty}, g_{\infty}, J_{\infty})$ is a smooth K\"ahler manifold, then it also satisfies estimate (\ref{eqn:RD29_1}). 
\label{thm:RB21_12}
\end{theorem}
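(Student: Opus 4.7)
The plan is to use the K\"ahler Ricci flow as a smoothing device: short-time existence plus the improved K\"ahler pseudo-locality theorem just proved will yield uniform holomorphic charts with a priori $C^{1,\alpha}$ bounds on the initial metric, at which point a standard diagonal-subsequence argument will produce the desired limit. First, I would fix an arbitrary point $x \in M_i$ and observe that, because $\mathbf{er}_{\epsilon_0}(x,g_i)\geq \iota$ and $|R_{g_i}|\leq \max\{|\sigma_a|,|\sigma_b|\}\eqqcolon \sigma$, by scaling down to a fixed small radius $r_0=r_0(n,\iota,\sigma)$ we can arrange that the hypothesis (\ref{eqn:RB16_1}) holds at every $x\in M_i$ with the same $\epsilon$ and the same $r=r_0$; this $\epsilon$ may be chosen as small as one wishes by shrinking $r_0$. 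In particular, Corollary~\ref{cly:RJ17_9} guarantees a uniform lower bound on the existence time of the (unnormalized) K\"ahler Ricci flow $\{(M_i,g_i(t))\}$ initiated at $g_i(0)=g_i$, and this flow inherits all the conclusions of Theorem~\ref{thmin:ML14_2} on a definite parabolic scale around every point.

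Next, fix any $\alpha\in(0,1)$ and any $Q>1$, and apply Theorem~\ref{thm:RC10_1}: for every $x\in M_i$ there is a biholomorphic chart $\Phi_{i,x}\colon B_{g_i}(x,\epsilon r_0)\to U_{i,x}\subset\mathbb{C}^n$ in which $(\epsilon r_0)^{-2}(\Phi_{i,x})_*g_i$ has $C^{1,\alpha}$-norm bounded by $Q$, and the image $U_{i,x}$ is sandwiched between $B(0,Q^{-1})$ and $B(0,Q)$. Because $\diam(M_i,g_i)\leq D$ and the entropy radius lower bound plus Theorem~\ref{thm:CF21_3} force a uniform volume lower bound for each $(\epsilon r_0)$-ball, the Bishop--Gromov-type covering argument shows that $M_i$ is covered by at most $N=N(n,\iota,\sigma,D)$ such charts. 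The transition maps between overlapping charts are biholomorphisms of domains in $\mathbb{C}^n$ whose differentials are uniformly controlled in $C^{\alpha}$ by the $C^{1,\alpha}$-metric bounds, so standard Arzel\`a--Ascoli arguments extract a subsequence along which the charts, the transition maps and the pulled-back metrics all converge. This produces a limiting atlas of holomorphic charts on a limit manifold $(M_\infty,J_\infty)$ together with a $C^{1,\alpha}$ K\"ahler metric $g_\infty$, and by construction one obtains diffeomorphisms realizing the $C^{1,\alpha}$-Cheeger-Gromov convergence (\ref{eqn:RC11_1}).

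It remains to verify that, when the limit $(M_\infty,g_\infty,J_\infty)$ happens to be smooth, it satisfies the same bounds (\ref{eqn:RD29_1}). The diameter bound passes to the limit by continuity of Gromov--Hausdorff distance under $C^0$-convergence. For the entropy radius, one uses the lower semi-continuity of the localized $\bar{\boldsymbol{\nu}}$-functional: on each geodesic ball in the limit, $\bar{\boldsymbol{\nu}}(B,g_\infty,r^2)$ is controlled by $\liminf_i \bar{\boldsymbol{\nu}}(B_i,g_i,r^2)$, which combined with the smoothness of $g_\infty$ and Corollary~\ref{cly:CB19_2} gives $\mathbf{er}(M_\infty,g_\infty)\geq \iota$. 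The main obstacle is propagating the two-sided scalar curvature bounds through a merely $C^{1,\alpha}$-convergence, where $R$ is only defined distributionally. The idea is to run the K\"ahler Ricci flow $g_i(t)$ for time $t\in[0,\tau_0]$ and apply Proposition~\ref{prn:RC13_1} at every point, which yields $\sigma_a-Ct^{1/5}\leq R_{g_i(t)}\leq \sigma_b+Ct^{1/5}$ on a uniform scale; by the pseudo-locality estimates the flow converges smoothly for $t>0$ to the corresponding flow on $M_\infty$, so at each positive time the same two-sided bound holds for $R_{g_\infty(t)}$. Assuming $g_\infty$ is smooth, the short-time K\"ahler Ricci flow initiated at $g_\infty$ is uniquely defined and agrees with this limit flow, whence letting $t\to 0^+$ in $\sigma_a-Ct^{1/5}\leq R_{g_\infty(t)}\leq \sigma_b+Ct^{1/5}$ yields $\sigma_a\leq R_{g_\infty}\leq \sigma_b$. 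This last step -- reconciling the distributional scalar curvature in the low regularity setting with the smoothing produced by the flow -- is exactly the heart of the argument and is where Theorem~\ref{thm:RC10_1} and Proposition~\ref{prn:RC13_1} do their essential work.
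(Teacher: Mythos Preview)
Your proposal is correct and reaches the same conclusion, but the construction of the limit in the first part differs from the paper's. You build $(M_\infty,g_\infty,J_\infty)$ directly at $t=0$: apply Theorem~\ref{thm:RC10_1} pointwise to produce a uniformly bounded holomorphic atlas for each $(M_i,g_i)$, bound the multiplicity via the diameter and volume-ratio estimates, and then patch charts by Arzel\`a--Ascoli (using that transition maps are biholomorphic, hence have all derivatives controlled by Cauchy estimates). The paper instead runs the flow to a fixed small time $t_0>0$, invokes Hamilton's $C^\infty$-compactness there to obtain diffeomorphisms $\varphi_i:M_\infty\to M_i$ in one stroke, and then uses Theorem~\ref{thm:RC10_1} only to bound $\norm{g_i(0)-g_i(t_0)}{C^{1,\alpha}(g_i(t_0))}$, so that $\varphi_i^*g_i\to g_\infty$ in $C^{1,\alpha}$ with respect to the \emph{same} diffeomorphisms. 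Your route is more self-contained but requires the explicit chart-patching bookkeeping; the paper's route outsources the manifold construction to Hamilton's theorem and is correspondingly shorter. For the second part (propagating the scalar bounds to a smooth limit via Proposition~\ref{prn:RC13_1} and letting $t\to 0^+$) the two arguments are essentially identical. One minor point: for the entropy-radius bound you cite Corollary~\ref{cly:CB19_2}, which is stated for $C^\infty$-convergence; the paper instead appeals to Lemma~\ref{lma:MJ25_1}, since the isoperimetric constant is manifestly stable under $C^{1,\alpha}$-convergence.
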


\begin{proof}
  For each $t_0$ sufficiently small(determined by $n$ and $\iota$ in (\ref{eqn:RD29_1})), it follows from the geometric estimates (\ref{eqn:ML28_1}), (\ref{eqn:ML28_2}) in the pseudo-locality theorem
  and Shi's estimates of curvature derivatives that
  \begin{align*}
         \left( M_i, g_i(t_0), J_i \right) \longright{C^{\infty}-Cheeger-Gromov} \left( M_{\infty}, g_{\infty}(t_0), J_{\infty} \right).
  \end{align*}
  By definition of $C^{\infty}$-Cheeger-Gromov convergence, the above equation guarantees that there exist diffeomorphisms $\varphi_{i}: M_{\infty} \to M_{i}$ such that
  \begin{align}
    \varphi_{i}^{*}\left( g_{i}(t_{0}) \right) \overset{C^{\infty}}{\rightarrow} g_{\infty}(t_0), \quad 
    \varphi_{i}^{*}\left( J_{i} \right) \overset{C^{\infty}}{\rightarrow} J_{\infty}. 
    \label{eqn:RI30_1}
  \end{align}
  In view of Theorem~\ref{thm:RC10_1}, we know $\norm{g_i(0)-g_i(t_0)}{C^{1,\alpha}(g_i(t_0))}$ are uniformly bounded.
  Since $g_i(0)=g_{i}$, we consequently have
  \begin{align}
    \norm{\varphi_{i}^{*}(g_i)- \varphi_{i}^{*}(g_i(t_0))}{C^{1,\alpha}(\varphi_{i}^{*}(g_i(t_0)))}
    =\norm{\varphi_{i}^{*}(g_i(0))- \varphi_{i}^{*}(g_i(t_0))}{C^{1,\alpha}(\varphi_{i}^{*}(g_i(t_0)))}<C.   \label{eqn:RI30_2}
  \end{align}
  Combining (\ref{eqn:RI30_2}) with (\ref{eqn:RI30_1}), we obtain
  \begin{align*}
    \norm{\varphi_{i}^{*}(g_i)- g_{\infty}(t_{0})}{C^{1,\alpha}(g_{\infty}(t_{0}))}<C.    
  \end{align*}
  By shrinking $\alpha$ if necessary, we then obtain 
  \begin{align}
    \varphi_{i}^{*}\left( g_{i} \right) \overset{C^{1,\alpha}}{\rightarrow} g_{\infty}, \quad 
    \varphi_{i}^{*}\left( J_{i} \right) \overset{C^{\infty}}{\rightarrow} J_{\infty}. 
    \label{eqn:RI30_3}
  \end{align}
  In particular, we obtain (\ref{eqn:RC11_1}). 

  If we further assume that $(M_{\infty}, g_{\infty}, J_{\infty})$ is a smooth K\"ahler manifold, then we can run the K\"ahler Ricci flow starting from $(M_{\infty}, g_{\infty}, J_{\infty})$.
    The flow exists for a definite time period $[0, \delta_0]$.  It is important to observe that $\left\{ (M_{\infty}, g_{\infty}(t), J_{\infty}), 0 \leq t \leq \delta_0 \right\}$ is a smooth space-time.
    Fix each point $x \in M_{\infty}$ and $t \in (0, \delta_0]$,  the commutativity of Gromov-Hausdorff convergence and the Ricci flow implies that
    \begin{align*}
      \sigma_a-C t^{\frac{1}{5}} \leq R(x,t)=\lim_{i \to \infty} R(x_i,t) \leq \sigma_{b} + C t^{\frac{1}{5}}, 
    \end{align*}
    where $x_i \in M_i$ and $x_i \to x$ as $(M_i, g_i(t))$ converges to $(M_{\infty}, g_{\infty}(t))$. Then we let $t \to 0^{+}$ in the above inequality and obtain $\sigma_a \leq R(x,0) \leq \sigma_b$.
    By the arbitrary choice of $x$, we arrive at the scalar curvature bound of $(M_{\infty}, g_{\infty})$ in (\ref{eqn:RD29_1}). The entropy radius lower bound follows from the $C^{1,\alpha}$-convergence
    and Lemma~\ref{lma:MJ25_1}.  The diameter upper bound is obvious. Therefore, all the estimates in (\ref{eqn:RD29_1}) hold for $(M_{\infty}, g_{\infty}, J_{\infty})$. 
\end{proof}

In Theorem~\ref{thm:RB21_12}, the assumption that $(M_{\infty}, g_{\infty}, J_{\infty})$ is a smooth K\"ahler manifold seems strong.
The following corollary shows that if the scalar curvatures oscillate in  magnitudes tending to zero, then the limit is automatically a smooth K\"ahler manifold.

\begin{corollary}[\textbf{Smooth cscK limit spaces}]
  Same conditions as in Theorem~\ref{thm:RB21_12}. 
  If we further assume $K-\delta_i \leq R_{M_i} \leq K+\delta_i$ for $\delta_i \to 0$, 
  then $(M_{\infty}, g_{\infty}, J_{\infty})$ is a smooth K\"ahler manifold whose scalar curvature equals $K$ everywhere.  
  \label{cly:RD29_3}
\end{corollary}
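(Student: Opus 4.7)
The plan is to work in local holomorphic coordinate charts, pass to the limit in the coupled complex Monge-Amp\`ere and scalar curvature equations, and then bootstrap regularity. First, I would invoke Theorem~\ref{thm:RB21_12} to extract, after passing to a subsequence, the $C^{1,\alpha}$-Cheeger-Gromov convergence $(M_i, g_i, J_i) \to (M_\infty, g_\infty, J_\infty)$, where $g_\infty$ is \emph{a priori} only $C^{1,\alpha}$. The goal is to upgrade this limit to a smooth K\"ahler metric satisfying $R_{g_\infty} \equiv K$.

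Second, I would localize using Theorem~\ref{thm:RC10_1}. For any $x_\infty \in M_\infty$ and approximating $x_i \in M_i$, Theorem~\ref{thm:RC10_1} produces biholomorphic charts $\Phi_i : B_{g_i}(x_i, r_0) \to U_i \subset \mathbb{C}^n$ with uniform containments $B(0,Q^{-1}) \subset U_i \subset B(0,Q)$ and uniform $C^{1,\alpha}$-bounds on the push-forward metrics. Because $J$ is preserved along the K\"ahler Ricci flow, the smooth Cheeger-Gromov convergence at any small positive time $t_0$ (used inside the proof of Theorem~\ref{thm:RB21_12}) ensures that $J_i$ converges smoothly to $J_\infty$ under the limiting diffeomorphisms, so these charts can be arranged to converge to a bona fide holomorphic chart $\Phi_\infty$ for $(M_\infty, J_\infty)$. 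Following Claim~\ref{clm:RJ06_10} and Claim~\ref{clm:RJ06_3}, in each $U_i$ the local K\"ahler potential $\varphi_i$ obeys
\begin{align*}
\det\bigl(\delta_{\alpha\bar\beta} + (\varphi_i)_{\alpha\bar\beta}\bigr) &= e^{F_i}, \\
-\Delta_{\varphi_i} F_i &= R_i,
\end{align*}
with uniform $C^{3,\alpha}$-bound on $\varphi_i$ and $C^{1,\alpha}$-bound on $F_i$. After a further subsequence, $\varphi_i \to \varphi_\infty$ in $C^{3,\alpha'}_{loc}$ and $F_i \to F_\infty$ in $C^{1,\alpha'}_{loc}$ for any $\alpha' < \alpha$. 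The pinching assumption $R_i \to K$ uniformly allows me to pass to the limit and obtain the closed system
\begin{align*}
\det\bigl(\delta_{\alpha\bar\beta} + (\varphi_\infty)_{\alpha\bar\beta}\bigr) &= e^{F_\infty}, \\
-\Delta_{\varphi_\infty} F_\infty &= K.
\end{align*}

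Third, I would iterate Schauder estimates alternately on the two equations to promote regularity, exactly as in the proof of Claim~\ref{clm:RJ06_3}. The linear equation is uniformly elliptic with $C^{\alpha'}$-coefficients (coming from the $C^{1,\alpha'}$-limit metric) and constant right-hand side, so Schauder theory lifts $F_\infty$ to $C^{2,\alpha'}_{loc}$. Substituting into the Monge-Amp\`ere equation, Caffarelli-Evans-Krylov theory raises $\varphi_\infty$ to $C^{4,\alpha'}_{loc}$, whence the coefficients of $\Delta_{\varphi_\infty}$ are $C^{2,\alpha'}$. The scalar curvature equation then gives $F_\infty \in C^{4,\alpha'}_{loc}$, feeding back to $\varphi_\infty \in C^{6,\alpha'}_{loc}$, and so on. Iterating, $\varphi_\infty$ is smooth, hence $g_\infty$ is a smooth K\"ahler metric with $R_{g_\infty} = K$. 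The main obstacle lies in this first bootstrap step: one must verify that the $C^{3,\alpha'}$-convergence of the $\varphi_i$ combined with the $C^{1,\alpha'}$-convergence of the $F_i$ is strong enough for the distributional identity $-\Delta_{\varphi_i} F_i = R_i$ to pass to a classical elliptic equation $-\Delta_{\varphi_\infty} F_\infty = K$ with coefficients regular enough to trigger Schauder theory. Once this first step is in place, the alternation with Evans-Krylov is routine and terminates in full smoothness.
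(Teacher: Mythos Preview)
Your proposal is correct and follows essentially the same route as the paper: localize in the uniform holomorphic charts provided by Theorem~\ref{thm:RC10_1}, pass to the limit in the scalar curvature equation (the paper writes it as the single equation $-\Delta \log\det g_{\beta\bar\gamma}=R$, while you use the equivalent Chen--Cheng decomposition already appearing in Claim~\ref{clm:RJ06_3}), and then bootstrap. Your worry about the first bootstrap step is unwarranted: with $\varphi_i\to\varphi_\infty$ in $C^{3,\alpha'}$ the coefficients $g_{\varphi_\infty}^{\alpha\bar\beta}$ are $C^{1,\alpha'}$ and $F_\infty\in C^{1,\alpha'}$, so $-\Delta_{\varphi_\infty}F_\infty=K$ holds classically and Schauder applies immediately.
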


\begin{proof}
    Fix each point $x \in M_{\infty}$, we can regard it as the limit point of $x_i \in M_i$.  Therefore, by assumption and Theorem~\ref{thm:RC10_1}, we can find uniform  $C^{1,\alpha}$-holomorphic chart
    near $x_i$.  In the holomorphic chart, we have 
    \begin{align*}
      -\Delta \log \det g_{\beta \bar{\gamma}} = R.
    \end{align*}
    Since $\norm{R-K}{C^0(M_i)} \to 0$, in the holomorphic coordinate chart around $x$,  the metric tensor $g_{\beta \bar{\gamma}}$ has uniform $C^{1,\alpha}$ estimate and satisfies the 
    equation
    \begin{align*}
        -\Delta \log \det g_{\beta \bar{\gamma}}-K=0 
    \end{align*}
    in the distribution sense. Then standard bootstrapping argument implies that $g_{\beta\bar{\gamma}}$ is a smooth metric tensor with constant scalar curvature $K$ nearby $x$.  
    By the arbitrary choice of $x \in M_{\infty}$, we know that $(M_{\infty}, g_{\infty}, J_{\infty})$ is a smooth K\"ahler manifold with constant scalar curvature $K$.  
\end{proof}

In view of Lemma~\ref{lma:MJ25_1}, the entropy radius is bounded from below by the isoperimetric radius(cf.~Remark~\ref{rmk:RH26_4}), up to multiplying a definite constant. 
Therefore, Theorem~\ref{thm:RB21_12} and Corollary~\ref{cly:RD29_3} can be regarded as generalizations of  Theorem~\ref{thm:RG23_1}, 
replacing cscK manifolds by K\"ahler manifolds with bounded scalar curvature.

We close this section by the proof of Theorem~\ref{thm:RG26_8} and Theorem~\ref{thm:RH16_1}.
Note that Theorem~\ref{thm:RH16_1} provides another condition to guarantee that the limit $(M_{\infty}, g_{\infty}, J_{\infty})$ in Theorem~\ref{thm:RB21_12} is 
a smooth K\"ahler manifold. 

\begin{proof}[Proof of Theorem~\ref{thm:RG26_8}:]
  It follows from the combination of Proposition~\ref{prn:RC13_1} and Theorem~\ref{thm:RC10_1}.  
\end{proof}

\begin{proof}[Proof of Theorem~\ref{thm:RH16_1}:]
  The preservation of both upper and lower bounds of scalar curvature were already proved in Theorem~\ref{thm:RB21_12}.
  The only thing we need to prove is the existence of a smooth K\"ahler structure compatible with the smooth metric $g$. 

  In light of Lemma~\ref{lma:MJ25_1}, the $C^{0}$-approximation condition (\ref{eqn:RH16_1}) implies that entropy radius is bounded from below.
  Therefore, (\ref{eqn:RD29_1}) holds and we can apply the pseudo-locality theorem.  
  Following the proof of Theorem~\ref{thm:PB03_1}, via the pseudo-locality theorem and the Ricci-DeTurck flow, we obtain the commutativity of taking $C^{0}$-limit and running the Ricci flow.
  For simplicity of notation, we denote $(M_{i}^{n}, f_{i}^{*}g_{i}, f_{i}^{*}J_{i})$ by $(M_{i}^{n}, g_i, J_i)$. Then we can rewrite (\ref{eqn:RH16_1}) as 
  \begin{align}
    g_i \overset{C^{0}}{\rightarrow} g.    \label{eqn:RI30_7}
  \end{align}
  For each small $t$ fixed, we have 
  \begin{align*}
    (M_i, g_{i}(t)) \longright{C^{\infty}-Cheeger-Gromov} (N, g(t)), 
  \end{align*}
  where $g(t)$ is the time-$t$-slice of the Ricci flow initiated from $g$.
  Fix $t_0$ sufficiently small and set $r=\epsilon^{-1}\sqrt{t_{0}}$, where $\epsilon$ is the small constant in Theorem~\ref{thm:RC10_1}. 
  Put the K\"ahler structures $J_{i}$ into consideration. 
  The above convergence can be improved to
  \begin{align}
    \left( M_i, g_i(t_{0}), J_i \right) \longright{C^{\infty}-Cheeger-Gromov} \left(N, g(t_0), J \right),  \label{eqn:RJ02_2}
  \end{align}
  which means that we can find diffeomorphisms $\varphi_{i}:N \to M_{i}$ such that
   \begin{align}
     h_{i} \coloneqq \varphi_{i}^{*}\left( g_{i}(t_{0}) \right) \overset{C^{\infty}}{\rightarrow} g(t_{0}), \quad
    \varphi_{i}^{*}\left( J_{i} \right) \overset{C^{\infty}}{\rightarrow} J.
    \label{eqn:RI30_8} 
   \end{align}
  Fix an arbitrary point $p \in N$.   In light of Theorem~\ref{thm:RC10_1}, for each $i$, one can find a holomorphic function
  \begin{align}
    \Phi_{i}: \left( B_{\varphi_{i}^{*}(g_{i}(t_{0}))}(p,10r), \;\varphi_{i}^{*}(g_{i}(t_{0})), \;\varphi_{i}^{*}(J_{i})\right) \to U_{i}' \subset \C^{n}.   \label{eqn:RJ02_1}
  \end{align}
  The choice of $t_{0}$ and the estimate (\ref{eqn:RC15_11}) guarantees that 
  \begin{align*}
    B_{h_{i}}(p, 2r)=B_{\varphi_{i}^{*}(g_i(0))}(p, 2r) \subset B_{\varphi_{i}^{*}(g_i(t_0))}(p,10r). 
  \end{align*}
  Since the K\"ahler Ricci flow preserves the K\"ahler structure, we can restrict $\Phi_{i}$ to a smaller set to obtain another holomorphic map:
  \begin{align*}
    \Phi_{i}: \left( B_{h_{i}}(p,2r), h_{i}, \varphi_{i}^{*}(J_{i})  \right) \to U_{i} \subset \C^{n}.
  \end{align*}
  Clearly, the holomorphicity implies harmonicity on a K\"ahler manifold.
  Therefore, it follows from (\ref{eqn:RC15_11}) that 
  \begin{align*}
    \left( U_{i}, (\Phi_{i})_{*} (h_{i}) \right)  \longright{Id}  \left( U_{i}, g_{E} \right)
  \end{align*}
  is a harmonic map such that $\norm{\Phi_{*} (h_{i})}{C^{1,\alpha}}$ is uniformly bounded.  
  By taking subsequence and shrinking $\alpha$ if necessary,
  we can apply the Arzela-Ascolli lemma to obtain
  \begin{align}
    (\Phi_{i})_{*} (h_{i}) \longright{C^{1,\alpha}} \bar{h}.  \label{eqn:RJ02_3}
  \end{align}
  It is not hard to see that the harmonicity passes to the limit.  Namely, we know
  \begin{align}
  \left( U, \bar{h} \right) \longright{Id}  \left( U, g_{E} \right)
  \label{eqn:RJ02_6}
  \end{align}
  is a harmonic map. 
  Abusing notation, we denote $\bar{h}$ by $h$.
  Through the identifications described above, we naturally regard $h$ as a $C^{1,\alpha}$-metric tensor in the coordinate $U$. 
  On the other hand, with respect to the underlying metric $g_{i}(t_{0})$, we can apply (\ref{eqn:RI30_8}) to take limit of (\ref{eqn:RJ02_1}).
  It turns out that we obtain a smooth harmonic map 
  \begin{align*}
    \Phi: \left( B_{g(t_{0})}(p,5r), g(t_{0})\right) \mapsto (U', g_{E}),
  \end{align*}
  where $U'$ satisfies $U \subset U' \subset \C^{n}$. This implies that $\Phi_{*}(g(t_{0}))$ is a smooth metric tensor on $U \subset U'$.
  Since $\left\{ (N, g(t)), 0 \leq t \leq t_{0} \right\}$ is a smooth Ricci flow solution. It is clear that $\Phi_{*}(g)=\Phi_{*}(g(0))$
  is a smooth metric tensor on $U$.
  Abusing notation again, we may denote $\Phi_{*}(g)$ by $g$. Then $g$ is a smooth metric tensor in the coordinate $U$.

  We can cover $N$ by $U=U_{p}$ and then select a finite covering $\left\{ U_{\beta} \right\}_{\beta \in I}$ for a finite index set $I$.
  In conclusion, we have obtained an atlas $\left\{ U_{\beta} \right\}_{\beta \in I}$ of $(N, g(t_{0}), J)$ such that $g$ is smooth and $h$ is $C^{1,\alpha}$
  with respect to this coordinate atlas. For each pair of smooth vector fields $V,W$ on $N$, we have
  \begin{align*}
    \langle \varphi_{i}^{*}(J_{i}) \cdot V, \varphi_{i}^{*}(J_{i}) \cdot W\rangle_{h_i}=\langle V, W\rangle_{h_{i}}. 
  \end{align*}
  Thanks to (\ref{eqn:RI30_8}) and (\ref{eqn:RJ02_3}), taking limit of the above equation yields
  \begin{align}
    \langle J V, JW \rangle_{h}=\langle V,W\rangle_{h},  \label{eqn:RJ02_4}
  \end{align}
  which means that $J$ is an almost complex structure compatible with $h$.    
  Similarly, we can take limit of the K\"ahler condition $\nabla_{h_{i}} \left( \varphi_{i}^{*} J_{i} \right) \equiv 0$
  to obtain 
  \begin{align}
    \nabla_{h} J \equiv 0.     \label{eqn:RJ02_5}
  \end{align}
  Therefore, we know that $(N,h,J)$ is a K\"ahler manifold such that $h$ is $C^{1,\alpha}$ with respect to a holomorphic coordinate chart atlas $\left\{ U_{\beta} \right\}_{\beta \in I}$ of $(N, J)$.
  On the other hand, the differential manifold $N$ admits a smooth Riemannian metric $g$, which is smooth with respect to $\left\{ U_{\beta} \right\}_{\beta \in I}$.
  However, $g$ may not be compatible with $J$. In the remainder of this proof, we shall improve the regularity of $h$ to be smooth in the atlas $\left\{ U_{\beta} \right\}_{\beta \in I}$.
  The essential reason we can do this is due to the fact that the metric in harmonic coordinate charts has the best regularity(cf.~\cite{DeTKaz}), 
  and the regularity improvement for isometries(cf.~\cite{CalaHart},~\cite{Taylor}).

  Since (\ref{eqn:RJ02_3}) holds in each $U_{\beta}$, following our conventions of notation we have
  \begin{align*}
    h_{i} \longright{C^{1,\alpha}} h,
  \end{align*}
  which combined with (\ref{eqn:RI30_7}) implies that $(N,g)$ and $(N,h)$ are isometric to each other. 
  Therefore, there exists an isometry $F:(N, g) \to (N, h)$. Namely, for each pair of points $x,y \in N$, we have
  \begin{align*}
    d_{g}(x,y)=d_{h}(F(x), F(y)). 
  \end{align*}

  For each $p \in N$, as discussed above, we can choose $r$ small enough and find $V \subset \C^{n}$ such that
  $(V,h)$ is isometric to a geodesic ball $\left( B_{h}(F(p),r), h \right)$ and 
  \begin{align}
    (V,h) \longright{Id} (V, g_{E})    \label{eqn:RJ02_7}
  \end{align}
  is a harmonic map, and $h$ is a $C^{1,\alpha}$-metric tensor on $V$. Also, we can find $U \subset \C^{n}$ such that $(U,g)$ is isometric to the geodesic ball $(B_{g}(p,r),g)$ such 
  that $g$ is a smooth metric tensor on $U$. 
  By the definition of $F$, it is clear that $F|_{(B_{g}(p,r),g)}$ is an isometry map from $(B_{g}(p,r),g)$ to $(B_{h}(F(p), r),h)$. 
  Consequently, abusing notation again, we have an isometry
  \begin{align}
    (U,g) \longright{F} (V,h). \label{eqn:RJ02_8}
  \end{align}
  Notice that both $U$ and $V$ are open sets in Euclidean space. 
  Now $g$ is a $C^{\infty}$-metric tensor field, while $h$ is only a $C^{1,\alpha}$-metric tensor field.
  The isometry property of $F$ clearly implies that $F$ is a bi-Lipschitz map. 
  The harmonicity in (\ref{eqn:RJ02_7}) and the isometry in (\ref{eqn:RJ02_8}) together then imply that each coordinate of $F$ is a continuous harmonic function(cf. Section 2 of~\cite{Taylor}).
  Let $F=(y^{1}, y^{2}, \cdots, y^{2n})$ with each $y^{k}=y^{k}(x^{1}, x^{2}, \cdots, x^{2n})$.
  The harmonicity then implies that
  \begin{align*}
    0=\Delta_{g} y^{k}
    =\left\{ g^{ij} \frac{\partial^{2}}{\partial x^{i} \partial x^{j}} - \frac{1}{\sqrt{g}} \frac{\partial}{\partial x^{i}}\left( \sqrt{g} g^{ij} \right)\frac{\partial}{\partial x^{j}} \right\}y^{k}
  \end{align*}
  for each $k \in \left\{ 1,2,\cdots, 2n \right\}$, in the distribution sense. 
  Since $g$ is a smooth metric tensor, standard regularity theory of elliptic PDE then implies that each $y^{k}$ is a smooth function. 
  Therefore, $F$ is a smooth map from $U$ to $V$. 
  Consequently, the isometry property in (\ref{eqn:RJ02_8}) forces $h=F_{*}g$ to be smooth metric tensor field on $V$.
  According to the choice of $V$, it is clear that $V$ is compatible with $\left\{ U_{\beta} \right\}_{\beta \in I}$.  
  This means that $h$ is a smooth metric tensor around point $F(p)$.   By the arbitrary choice of $p \in N$, we know that $h$ is a smooth metric tensor field with respect to $\left\{ U_{\beta} \right\}_{\beta \in I}$.
  Combining this smoothness with (\ref{eqn:RJ02_4}) and (\ref{eqn:RJ02_5}), we know that $(N,h,J)$ is a smooth K\"ahler manifold.
  By the arbitrary choice of $p$ and the local smoothness of isometry in (\ref{eqn:RJ02_8}), we know that the isometry $F:(N,g) \to (N,h)$ is a smooth map so that $h=F_{*}(g)$.
  This means that $(N,g,F^{*}(J))$ is a smooth K\"ahler manifold. 
  Replacing $F^{*}(J)$ by $J$ if necessary, we obtain the desired smooth K\"ahler structure compatible with $g$. 
  The proof of Theorem~\ref{thm:RH16_1} is complete. 
\end{proof}

\section{Further Questions}
\label{sec:further}

In our stability theorems, we only prove the existence of small gap constants.  It seems very natural to ask the following question.

 \begin{question}
   Explicitly calculate the sharp gap constant in Theorem~\ref{thm:RG26_3}, Theorem~\ref{thm:RG26_5} and Theorem~\ref{thm:RH19_5}. 
   In the particular case $m=4$, we conjecture the constant in Theorem~\ref{thm:RG26_3} is $\frac{1}{4}$, which represents the gap
   of volume ratio between $(S^{4}, g_{round})$ and $(\CP^{2}, g_{FS})$. 
  \label{qun:RG25_4}
 \end{question}

More boldly, we may even drop the initial Ricci curvature condition, or replace it with scalar curvature lower bound condition at least. 
Recall that the main theme of~\cite{BWang17local} and the current paper is: under Ricci flow, the local entropy functionals $\boldsymbol{\mu}$ and $\boldsymbol{\nu}$ are more natural than the volume.
Although the Ricci curvature lower bound and the volume lower bound are currently needed for many technical purposes, we believe that finally they can be essentially replaced by a lower bound
condition concerning only the local functional $\boldsymbol{\mu}$ or $\boldsymbol{\nu}$.
Therefore, the main theorems in this paper should have versions where only initial entropy conditions are assumed. 
Inspired by the recent development in the mean curvature flow through a series of work of Colding-Ilmanen-Minicozzi-White~\cite{CIMW}, Bernstein-Wang~\cite{BeWa}, Ketover-Zhou~\cite{KeZhou} and J.J. Zhu~\cite{JJZhu}, 
we make the following conjectures. 

\begin{conjecture}
  Among all Fano manifolds $(M^{n}, g, J)$ such that $[\omega]$ is proportional to $2\pi c_1(M,J)$, we have
  \begin{align*}
    \boldsymbol{\nu}(M,g) \leq \boldsymbol{\nu}(\CP^n, g_{FS}).
  \end{align*}
  The equality holds if and only if $(M,g,J)$ is biholomorphic-isometric to $(\CP^n,g_{FS},J_{FS})$.  
  Furthermore, there exists a constant $\epsilon=\epsilon(n)$ such that if 
  \begin{align}
    \boldsymbol{\nu}(M,g) > \boldsymbol{\nu}(\CP^n, g_{FS})-\epsilon,   \label{eqn:RG03_2}
  \end{align}
  then $(M,J)$ is biholomorphic to $(\CP^{n}, J_{FS})$ and the normalized  Ricci flow initiated from $(M,g)$ 
  will converge exponentially fast to a metric with constant positive holomorphic sectional curvature.  
  \label{cje:RH31_3}
 \end{conjecture}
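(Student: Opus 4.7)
The plan is to establish the three parts of the conjecture -- the upper bound, the equality rigidity, and the effective stability -- in that order. For the upper bound $\boldsymbol{\nu}(M,g)\le \boldsymbol{\nu}(\CP^{n},g_{FS})$, I would run the normalized K\"ahler Ricci flow $\{(M,\tilde g(\tilde t)),\tilde t\ge 0\}$ starting from $\tilde g(0)=g$. Since $[\omega_{g}]\propto 2\pi c_{1}(M,J)$ the flow stays in this K\"ahler class and exists immortally by Cao~\cite{HDC}. The $\boldsymbol{\nu}$-functional is parabolic-scaling invariant, so Perelman's monotonicity along the unnormalized flow transfers, after a time reparametrization, to the non-decreasingness of $\boldsymbol{\nu}(M,\tilde g(\tilde t))$. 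By the Hamilton-Tian conjecture proved by Chen and the author~\cite{CW17A,CW17B,CW19}, any sequence $\tilde g(\tilde t_{i})$ with $\tilde t_{i}\to\infty$ admits a subsequence converging in the $\hat C^{\infty}$-Cheeger-Gromov topology to a $Q$-Fano K\"ahler-Einstein space $(X_{\infty},g_{\infty},J_{\infty})$ with $Rc_{g_{\infty}}=2(n+1)g_{\infty}$ and singular set of complex codimension at least two. Upgrading Corollary~\ref{cly:CB19_2} to this setting, via a careful analysis of $\mathcal{W}$-minimizers near the singular locus (exploiting the codimension bound to cut off with negligible loss), would yield $\boldsymbol{\nu}(M,g)\le \boldsymbol{\nu}(X_{\infty},g_{\infty})$.

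The problem then reduces to the sharp inequality $\boldsymbol{\nu}(X_{\infty},g_{\infty})\le \boldsymbol{\nu}(\CP^{n},g_{FS})$ among all $n$-dimensional $Q$-Fano K\"ahler-Einstein spaces, with equality iff $(X_{\infty},J_{\infty})\cong (\CP^{n},J_{FS})$. Here the Kobayashi-Ochiai theorem provides $c_{1}(X_{\infty},J_{\infty})^{n}\le c_{1}(\CP^{n},J_{FS})^{n}=(n+1)^{n}$ with equality forcing $X_{\infty}\cong \CP^{n}$. Combining this volume bound with a sharp logarithmic Sobolev inequality on Fano K\"ahler-Einstein spaces -- of Bakry-Emery-Ledoux type, extended to singular $Q$-Fano settings using the RCD/optimal-transport framework of Cavalletti-Mondino~\cite{CaMo} -- should yield the desired upper bound, and the equality case of the sharp log-Sobolev isolates the round Fubini-Study metric.

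With the upper bound in hand, the rigidity in the equality case follows by chasing equalities: $\boldsymbol{\nu}(M,g)=\boldsymbol{\nu}(\CP^{n},g_{FS})$ forces the monotonicity of $\boldsymbol{\nu}$ along $\tilde g(\tilde t)$ to be saturated, so $g(t)$ is itself a gradient shrinking soliton, while the sharp Sobolev rigidity pins down $(X_{\infty},J_{\infty})\cong(\CP^{n},J_{FS})$; the two together identify $(M,g,J)$ with $(\CP^{n},g_{FS},J_{FS})$ from the outset. For the effective version under (\ref{eqn:RG03_2}), I would run a contradiction argument parallel to the proof of Theorem~\ref{thm:RE14_15}: a putative violating sequence $(M_{i},g_{i},J_{i})$ with $\boldsymbol{\nu}(M_{i},g_{i})\to \boldsymbol{\nu}(\CP^{n},g_{FS})$ produces normalized flows that are almost-Einstein in the sense of Tian-Wang~\cite{TiWa} on any fixed time window, to which Theorem 1.2 of Chen-Wang~\cite{CW17B} applies. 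The partial-$C^{0}$-estimate and the stability theorem of Chen-Donaldson-Sun~\cite{CDS1,CDS2,CDS3,CSW}, combined with the just-proved rigidity, force the limit to be the static Fubini-Study space-time. Hence $(M_{i},J_{i})\cong(\CP^{n},J_{FS})$ for large $i$ and Theorem~\ref{thm:RE14_15} delivers the exponential convergence.

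The principal obstacle is the sharp inequality among $Q$-Fano K\"ahler-Einstein spaces together with its equality case; by analogy with the Bernstein-Wang~\cite{BeWa} resolution of the Colding-Ilmanen-Minicozzi-White entropy conjecture for self-shrinkers, this step appears to require a new sharp functional inequality that couples the algebraic-geometric volume bound of Kobayashi-Ochiai with the analytic minimizers of Perelman's $\mathcal{W}$-functional on singular K\"ahler-Einstein spaces. A secondary but nontrivial difficulty is making $\boldsymbol{\nu}$ upper semicontinuous under $\hat C^{\infty}$-Cheeger-Gromov degeneration to a $Q$-Fano limit, which demands boundary/regularity estimates for $\mathcal{W}$-minimizers approaching the codimension-$\ge 4$ singular set of $X_{\infty}$.
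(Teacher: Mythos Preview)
The statement you are attempting to prove is labeled a \emph{Conjecture} in the paper and appears in Section~\ref{sec:further} (``Further Questions''). The paper provides no proof; it explicitly presents Conjecture~\ref{cje:RH31_3} (together with Conjecture~\ref{cje:RH31_4}) as an open problem, motivated by analogy with the entropy conjectures for self-shrinkers in mean curvature flow resolved by Colding--Ilmanen--Minicozzi--White, Bernstein--Wang, Ketover--Zhou, and Zhu. There is therefore nothing in the paper to compare your proposal against.

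Your outline is a reasonable strategic sketch, and you are candid about where the real difficulty lies. But precisely the step you flag as the ``principal obstacle''---the sharp inequality $\boldsymbol{\nu}(X_\infty,g_\infty)\le\boldsymbol{\nu}(\CP^n,g_{FS})$ among $Q$-Fano K\"ahler--Einstein spaces with equality rigidity---is the entire content of the conjecture after the reductions you describe, and you do not supply a mechanism to prove it. Invoking a ``sharp logarithmic Sobolev inequality of Bakry--\'Emery--Ledoux type'' coupled to Kobayashi--Ochiai is a plausible heuristic, but no such sharp inequality characterizing $g_{FS}$ via $\boldsymbol{\nu}$ is known; if it were, the conjecture would not be open. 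The secondary obstacle you name---upper semicontinuity of $\boldsymbol{\nu}$ under $\hat C^\infty$-Cheeger--Gromov convergence to a singular $Q$-Fano limit---is also genuinely nontrivial and not addressed by the tools in the paper (Corollary~\ref{cly:CB19_2} assumes smooth limits). In short, your proposal correctly identifies a natural reduction and the two hard analytic inputs it would require, but it does not close either gap, which is consistent with the statement's status as a conjecture.
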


 \begin{conjecture}
    Among all closed Riemannian manifold $(M,g)$ of real dimension $m$, we have
    \begin{align*}
      \boldsymbol{\nu}(M,g) \leq \boldsymbol{\nu}(S^m, g_{round}). 
    \end{align*}
    The equality holds if and only if $M$ is isometric to the round sphere $(S^{m}, g_{round})$.
    Furthermore, there exists a constant $\epsilon=\epsilon(m)$ such that if 
  \begin{align}
    \boldsymbol{\nu}(M,g) > \boldsymbol{\nu}(S^{m}, g_{round})-\epsilon,   \label{eqn:RG03_3}
  \end{align}
   then $M$ is diffeomorphic to $S^{m}$ and the normalized  Ricci flow initiated from $(M,g)$ will converge exponentially fast to a round sphere. 
 \label{cje:RH31_4}  
 \end{conjecture}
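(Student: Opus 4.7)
The plan is to reduce the inequality to a sharp entropy gap statement among compact gradient shrinking Ricci solitons, and then bootstrap the stability from the improved pseudo-locality and convergence results already established in this paper. Running the unnormalized Ricci flow $\{(M, g(t)), 0 \leq t < T\}$ from $g(0)=g$, the monotonicity of Perelman's $\boldsymbol{\nu}$-functional gives $\boldsymbol{\nu}(M,g) \leq \boldsymbol{\nu}(M,g(t))$ for all $t$. Since $M$ is closed the flow must develop a finite-time singularity at some $T < \infty$ (the Ricci-flat case is immediate because $\boldsymbol{\nu}(M,g) \leq 0 < \boldsymbol{\nu}(S^{m}, g_{round})$ by Proposition~\ref{prn:RB06_1}). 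At the singular time, the improved no-local-collapsing theorem (Theorem~\ref{thmin:ML14_1}) combined with the compactness theory for Ricci shrinkers developed in~\cite{LiLiWang18},~\cite{HuangLiWang18},~\cite{YLBW19} extracts a rescaled limit which is a normalized gradient shrinking Ricci soliton $(N_{\infty}, g_{\infty}, f_{\infty})$. Upper-semicontinuity of $\boldsymbol{\nu}$ along this convergence yields
\begin{align*}
\boldsymbol{\nu}(M,g) \leq \lim_{t \to T^{-}} \boldsymbol{\nu}(M,g(t)) \leq \boldsymbol{\nu}(N_{\infty}, g_{\infty}).
\end{align*}

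The problem is thereby reduced to proving the sharp gap $\boldsymbol{\nu}(N,g) \leq \boldsymbol{\nu}(S^{m}, g_{round})$ for every gradient shrinking Ricci soliton, with equality only in the round sphere case. Non-compact shrinkers have non-positive $\boldsymbol{\nu}$, so the statement is immediate there. The delicate case is a sharp entropy gap inside the moduli of compact shrinkers. I would attack this by normalizing to a fixed scale, invoking $C^{\infty}$-compactness of compact shrinkers with bounded entropy, and running an $\epsilon$-regularity argument in the spirit of Theorem~\ref{thm:RH27_10} to force any entropy-maximizing sequence to converge smoothly to $(S^{m}, g_{round})$. A rigidity argument inside the smooth locus (for example, via the equality case of Bishop-Gromov coupled with strict monotonicity of $\boldsymbol{\nu}$ under the Ricci flow) would then yield both the inequality and the characterization of equality.

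For the stability part, assume $\boldsymbol{\nu}(M,g) > \boldsymbol{\nu}(S^{m}, g_{round}) - \epsilon$. The blowup limit $(N_{\infty}, g_{\infty})$ then has entropy within $\epsilon$ of the sphere's, so by the quantitative gap just described it must be smoothly $\psi(\epsilon|m)$-close to $(S^{m}, g_{round})$. Since the limit is smooth and positively curved, a backward pseudo-locality argument analogous to Theorem 4.7 of~\cite{CW17B} (used in the proof of Theorem~\ref{thm:RE14_15}) propagates this closeness backward through the flow to a definite time-slice. In particular $M$ is diffeomorphic to $S^{m}$ and a suitable rescaling of $(M, g(t))$ is $C^{\infty}$-close to $(S^{m}, g_{round})$, which by Theorem~\ref{thm:RG05_3} transfers to the initial data via a reverse-time controlled distance distortion. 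Then Theorem~\ref{thm:RG26_3} (whose almost-volume-ratio hypothesis is now verified) delivers the exponential convergence of the normalized flow to a round metric.

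The hard part will be the sharp compact-shrinker gap in step two. In dimensions $m \geq 5$ the moduli of compact shrinkers is largely mysterious; even restricting to compact positive Einstein manifolds one must show that $(S^{m}, g_{round})$ strictly maximizes $\boldsymbol{\nu}$, which is not a formal consequence of any existing pinching or rigidity theorem. A plausible route is to couple the Chen-Wang almost-Einstein compactness machinery with a conifold-structure analysis, so that an entropy-maximizing sequence must concentrate on a smooth positively curved Einstein manifold and then be forced to roundness by an Anderson-type gap theorem (Theorem~\ref{thm:RH27_10}). The K\"ahler analogue (Conjecture~\ref{cje:RH31_3}) looks more tractable, because the partial $C^{0}$-estimate combined with the Chen-Donaldson-Sun stability result and the Zhang biholomorphism theorem (all invoked in the proof of Theorem~\ref{thm:RE14_15}) bypass the need for a classification of shrinkers and should, together with a careful entropy-maximality analysis, yield the K\"ahler gap rather directly.
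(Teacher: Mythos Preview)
The statement you are attempting to prove is Conjecture~\ref{cje:RH31_4} in the paper; it appears in Section~\ref{sec:further} (``Further Questions'') and is explicitly left open. The paper supplies no proof and does not claim one, so there is nothing to compare your proposal against.

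On the substance of your outline, beyond the central gap you already flag (the sharp entropy maximality of the round sphere among compact shrinkers, which is genuinely open), there are two concrete errors. First, your handling of the Ricci-flat case is wrong: Proposition~\ref{prn:CA01_3} and Proposition~\ref{prn:RB06_1} together give $\boldsymbol{\nu}(S^{m},g_{round})<0$, so the chain $\boldsymbol{\nu}(M,g)\leq 0<\boldsymbol{\nu}(S^{m},g_{round})$ is false. Second, the assertion that the unnormalized Ricci flow on a closed manifold always develops a finite-time singularity is false without a positivity assumption on scalar curvature (e.g.\ compact hyperbolic manifolds flow for all time), so your reduction to a shrinker blowup limit does not go through in general. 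Even granting a singularity, upper-semicontinuity of $\boldsymbol{\nu}$ under the singular-model convergence you invoke is not established in the references cited and would itself require justification. In short, your proposal correctly identifies the shape of a plausible attack and its main obstruction, but the reduction steps leading to that obstruction already contain gaps that would need to be repaired before the core difficulty is even reached.
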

 
 Similar to Question~\ref{qun:RG25_4},  it is intriguing to sharply calculate the value of $\epsilon$ in (\ref{eqn:RG03_2}) and (\ref{eqn:RG03_3}).


\vspace{0.5in}

Bing Wang, Institute of Geometry and Physics, and Key Laboratory of Wu Wen-Tsun Mathematics, School of Mathematical Sciences, University of Science and Technology of China, No. 96 Jinzhai Road, Hefei, Anhui Province, 230026, China; topspin@ustc.edu.cn.\\

\end{document}